\theoremstyle{plain}
\newtheorem{te}{Theorem}[section]
\newtheorem{prop}[te]{Proposition}
\newtheorem{co}[te]{Corollary}
\newtheorem{lemma}[te]{Lemma}
\theoremstyle{definition}
\newtheorem{de}[te]{Definition}
\newtheorem{facts}[te]{Facts}
\newtheorem{q}[te]{Question}
\theoremstyle{remark}
\newtheorem{re}[te]{Remark}
\newcommand{\Q}{\mathbb{Q}}
\newcommand{\N}{\mathbb{N}}
\newcommand{\R}{\mathbb{R}}
\newcommand{\Z}{\mathbb{Z}}
\newcommand{\CP}{\mathbb{CP}}
\newcommand{\CPbar}{\overline{\CP}\,\!^2}
\newcommand{\acc}{\mathcal{S}}
\newcommand{\Gc}{\mathcal{G}}
\newcommand{\Lc}{\mathcal{L}}
\newcommand{\Rc}{\mathcal{R}}
\newcommand{\spinc}{spin$^c$ }
\newcommand{\unknot}{\mathcal{O}}
\renewcommand{\epsilon}{\varepsilon}
\DeclareMathOperator{\rk}{rk}
\DeclareMathOperator{\Tor}{Tor}
\newcommand{\bb}{\mathbf{b}}
\newcommand{\cc}{\mathbf{c}}
\title{surgeries on torus knots, rational balls, and cabling}
\author{Paolo Aceto}
\address{Mathematical Institute, University of Oxford, Oxford, United Kingdom}
\email{paoloaceto@gmail.com}
\author{Marco Golla}
\address{CNRS and Laboratoire de math\'ematiques J. Leray, University of Nantes, Nantes, France}
\email{marco.golla@univ-nantes.fr}
\author{Kyle Larson}
\address{Department of Mathematics, University of Georgia, Athens, Georgia U.S.A.}
\email{kyle.larson@uga.edu}
\author{Ana G. Lecuona}
\address{School of mathematics and statistics, University of Glasgow, Glasgow, United Kingdom}
\email{ana.lecuona@glasgow.ac.uk}
\begin{document}
\begin{abstract}
We classify which positive integral surgeries on positive torus knots bound rational homology balls.
Additionally, for a given knot $K$ we consider which cables $K_{p,q}$ admit integral surgeries that bound rational homology balls.
For such cables, let ${\acc}(K)$ be the set of corresponding rational numbers $\frac{q}{p}$. We show that ${\acc}(K)$ is bounded for each $K$.
Moreover, if $n$-surgery on $K$ bounds a rational homology ball then $n$ is an accumulation point for ${\acc}(K)$.
\end{abstract}

\maketitle
\tableofcontents

\begin{section}{Introduction}
In the Kirby problem list Casson asks~\cite[Problem~4.5]{Kirbylist}: \emph{which rational homology $3$-spheres bound rational homology $4$-balls}?
While it is hard to imagine a complete answer to such a question, in the past two decades much progress has been made by restricting to special families of 3-manifolds, notably Seifert fibered spaces.
In~\cite{Lisca-ribbon} Lisca provided a complete list of which lens spaces bound rational homology balls. The proof relies on an obstruction based on Donaldson's diagonalization theorem, and its implementation requires a careful analysis of the integral lattices associated with certain definite 4-manifolds bounded by lens spaces.
Following his work, several authors extended this approach, sometimes in conjunction with obstructions from Heegaard Floer homology, to various families of Seifert fibered spaces (see, for example,~\cite{Lecuona} and references therein).

In~\cite{AcetoGolla} the first and the second author considered the question of which Dehn surgeries on a given knot bound rational homology balls.
From this perspective, Lisca's work provides a complete answer for the unknot.
In~\cite{AcetoGolla} the authors determined which positive integral surgeries on positive torus knots $T_{p,q}$ with $q\equiv \pm1\pmod p$ bound rational homology balls, and gave restrictions for surgeries along arbitrary torus knots to bound rational homology balls.
Note that such surgeries are Seifert fibered spaces.
The restriction $q\equiv \pm1\pmod p$ is unnatural and was primarily a device to simplify the technical analysis involving certain integral lattices.

In this paper we remove the restriction on the pair $(p,q)$ and give a complete classification of which positive integral surgeries on positive torus knots bound rational homology balls. We will define three sets of triples of positive integers, $\Gc$, $\Rc$, and $\Lc$ below. We fix the convention that our torus knots $T_{p,q}$ have $p<q$.

\begin{te}\label{maintheorem}
Let $S^3_n(T_{p,q})$ be the result of $n$-surgery on the positive torus knot $T_{p,q}$, with $n\in\N$. Then $S^3_n(T_{p,q})$ bounds a rational homology ball if and only if the triple $(p,q;n)$ belongs to a set of triples $\Gc \cup \Rc \cup \Lc$.

Triples in $\Gc$ correspond to surgeries that are Seifert fibered $3$--manifolds with three singular fibers, those in $\Rc$ correspond to connected sums of two lens spaces, and those in $\Lc$ correspond to lens space surgeries.
\end{te}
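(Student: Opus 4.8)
The plan is to prove the two implications separately, starting from the classical description of surgeries on torus knots. By Moser's theorem, $S^3_n(T_{p,q})$ is a Seifert fibered space over $S^2$ with at most three exceptional fibers, of multiplicities $p$, $q$ and $|pq-n|$; it is reducible, namely a connected sum of two lens spaces, exactly when $n=pq$, and it is a lens space exactly when $n=pq\pm1$. Since a rational homology ball has $|H_1|$ a perfect square and its boundary has metabolic linking form, a first necessary condition is that $n=m^2$ for some $m\in\N$; so throughout we may assume $n$ is a square, and the third multiplicity is $|pq-m^2|$.

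For the ``only if'' direction I would run the obstruction coming from Donaldson's diagonalization theorem. According to the sign of the orbifold Euler number --- equivalently, according to the sign of $pq-m^2$ --- either $Y=S^3_{m^2}(T_{p,q})$ or $-Y$ bounds the canonical negative-definite plumbing $X$ associated to the Seifert data. If $Y$ also bounds a $\QHB$ $W$, gluing $X$ to $-W$ produces a closed negative-definite $4$--manifold, so by Donaldson's theorem its intersection form embeds in the negative diagonal lattice $-\Z^{b_2(X)}$. Translating this embedding into the language of the continued-fraction expansions attached to the three legs of the star-shaped graph produces arithmetic restrictions on $(p,q;m^2)$. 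This is the technical heart of the argument, and the step I expect to be the main obstacle: once the hypothesis $q\equiv\pm1\pmod p$ is dropped the relevant continued fractions are no longer uniformly short, so the analysis of the lattice embeddings becomes a genuine case division, in the spirit of Lisca's lens space classification, rather than a short calculation. For the finitely many families of triples that survive Donaldson's obstruction but do not lie in $\Gc\cup\Rc\cup\Lc$, I would appeal to the Heegaard Floer correction terms: if $\fs$ extends over a rational homology ball then $d(Y,\fs)=0$, there are exactly $m$ such \spinc structures, and they are constrained by a metabolizer of the linking form; these $d$-invariants are algorithmically computable for Seifert fibered spaces (via the plumbing formula of Ozsv\'ath--Szab\'o, or graded roots), which should rule out the remaining spurious cases.

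For the ``if'' direction I would exhibit an explicit rational homology ball for each triple in $\Gc\cup\Rc\cup\Lc$. The triples in $\Lc$ give lens spaces that appear on Lisca's list, so the desired fillings (ribbon disk complements, equivalently short linear plumbings) are already available. The triples in $\Rc$ occur when $m^2=pq$, which forces both $p$ and $q$ to be squares; then $Y$ is a connected sum of two lens spaces whose linking forms assemble into a metabolic form, and a $\QHB$ can be built directly by Kirby calculus on the connected sum. The genuinely new case is $\Gc$, the Seifert fibered spaces with three singular fibers: here I would construct the ball through the cabling machinery developed in the body of the paper, recognizing $S^3_{m^2}(T_{p,q})$ --- note that $T_{p,q}$ is itself a cable of the unknot --- as a surgery obtained by cabling from a surgery on a simpler knot that is already known to bound a $\QHB$, and then transporting that ball through the cabling construction. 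It then remains to record the (few) overlaps among $\Gc$, $\Rc$ and $\Lc$ and to verify that the obstructions above cut out exactly $\Gc\cup\Rc\cup\Lc$, which completes the classification.
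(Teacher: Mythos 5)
Your overall architecture matches the paper's (Heegaard Floer correction terms plus a Donaldson-type lattice obstruction for the ``only if'' direction in the irreducible case, Lisca's classifications for $n\in\{pq,pq\pm1\}$, explicit fillings for the ``if'' direction), and you correctly identify the lattice-embedding analysis as the technical heart. But two of your steps would fail as stated.

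First, the construction of the rational balls for $\Gc$. The cabling cobordism produces a rational homology ball for $S^3_{m^2n}(K_{m,mn\pm1})$ only when the surgery coefficient has the form $m^2n$ with $q=mn\pm1$; this covers families (1)--(3) (cables of the unknot over $n=1$ and $n=4$), but it does \emph{not} produce the genuinely new family (4), $(R_k,R_{k+1};R_kR_{k+1}-2)$, nor the sporadic cases $(3,22;64)$ and $(6,43;256)$: there the coefficient $pq-2$ is not of the form $p^2n$, so these manifolds are not obtained by the cabling construction from any surgery on a simpler knot. The paper instead exhibits, for each of these, an explicit $2$--handle attachment (equivalently a single band move on the branch locus) taking the manifold to $S^1\times S^2$, so that the ball is a two-handlebody built on $S^1\times D^3$. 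Without some such ad hoc construction your ``if'' direction is incomplete precisely for the family that the whole lattice analysis is designed to isolate.

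Second, the $\Rc$ case. That $m^2=pq$ forces $p$ and $q$ to be squares is fine, but ``the linking forms assemble into a metabolic form, and a $\QHB$ can be built directly by Kirby calculus'' is not a proof: a metabolic linking form is necessary but nowhere near sufficient (this is the entire content of Lisca's theorems), and most pairs of square $p,q$ do \emph{not} work --- witness the sparse list (Fibonacci squares, Pell-type squares, two sporadic pairs). What is actually needed is (i) the nonobvious lemma that, for $\gcd(p,q)=1$, $L(p,-q)\# L(q,-p)$ bounds a rational ball if and only if each summand does (false without the coprimality hypothesis, e.g.\ $Y\#-Y$ always bounds; the paper deduces it from Lisca's classification of relations among lens spaces in the rational cobordism group), and then (ii) an arithmetic matching of the continued fractions of $q/p$ and $p/q$ against Lisca's eleven families of bounding lens spaces. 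The same matching, applied to the expansion of $(pq\pm1)/p^2$, is also what produces the ``only if'' direction for $\Lc$, which your proposal does not address: knowing that triples in $\Lc$ bound is immediate, but showing no other $pq\pm1$ surgery bounds requires this classification step.
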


We now define the three sets $\Gc$, $\Rc$, and $\Lc$.
Recall the definition of the Fibonacci sequence $F_{k+1}=F_{k}+F_{k-1}$, with $F_0=0$ and $F_1=1$.
We also define three auxiliary sequences\footnote{These sequences are in fact well-known: $R_k$ is the $2k^{\rm th}$ half-companion Pell number; $S_k$ is the $(2k-1)^{\rm th}$ Pell number; $2T_k$ is the $2k^{\rm th}$ Pell number.}
$\{R_k\}$, $\{S_k\}$, and $\{T_k\}$. Note that we use the same recursive relation and only change the initial values.
\[
\left\{
\begin{array}{l}
R_0 = 1,\\
R_1 = 3,\\
R_{k+1} = 6R_k - R_{k-1};
\end{array}
\right.
\quad
\left\{
\begin{array}{l}
S_0 = 1,\\
S_1 = 1,\\
S_{k+1} = 6S_k - S_{k-1};
\end{array}
\right.
\quad
\left\{
\begin{array}{l}
T_0 = 0,\\
T_1 = 1,\\
T_{k+1} = 6T_k - T_{k-1}.
\end{array}
\right.
\]

The set $\Gc$ contains the following four families and two exceptional cases:
\begin{enumerate}
   \item $(p,q;n)=(p,p+1;p^2)$ for some $p\geq 2$;
   \item $(p,q;n)=(p,p+1;(p+1)^2)$ for some $p\geq 2$;
   \item $(p,q;n)=(p,4p\pm1;(2p)^2)$ for some $p\geq 2$;
   \item $(p,q;n)=(R_k,R_{k+1};R_kR_{k+1}-2)$ for some $k\geq 1$;
   \item $(p,q;n)\in\{(3,22;64),(6,43;256)\}$.
\end{enumerate}

The set $\Rc$ comprises the following seven families and two exceptional cases:
\begin{enumerate}\setcounter{enumi}{5}
   \item $(p,q;n)=(r^2,(r+1)^2;r^2(r+1)^2)$ for some $r\geq2$;
   \item $(p,q;n)=(r^2,(2r\pm 1)^2;r^2(2r\pm 1)^2)$ for some $r\equiv 2\pmod 4$ and $r\geq 2$;
   \item $(p,q;n)=(r^2,(2r\pm 2)^2;r^2(2r\pm 2)^2)$ for some $r\equiv \mp 3\pmod 8$ and $r\geq 5$;
   \item $(p,q;n)=(F_k^2,F_{k+1}^2;F_k^2F_{k+1}^2)$ for some $k\geq 3$;
   \item $(p,q;n)=(F_{2k-1}^2,F_{2k+1}^2;F_{2k-1}^2F_{2k+1}^2)$ for some $k\geq 2$;
   \item $(p,q;n)=(S_k^2,4T_k^2;4T_k^2S_k^2)$ for some $k\geq 2$;
   \item $(p,q;n)=(4T_k^2,S_{k+1}^2;S_{k+1}^24T_k^2)$ for some $k\geq 2$;
   \item $(p,q;n)\in\{(9^2,14^2;9^2\cdot14^2),(11^2,14^2;11^2\cdot14^2)\}$;
\end{enumerate}

Finally, the set $\Lc$ comprises the following five families:
\begin{enumerate}\setcounter{enumi}{13}
   \item $(p,q;n)=(2r+1,2r+3;4(r+1)^2)$ for some $r\geq 1$;
   \item $(p,q;n)=(F_{2k},F_{2k+2};F_{2k}F_{2k+2}+1)$ for some $k\geq 1$;
   \item $(p,q;n)=(F_{2k+1},F_{2k+3};F_{2k+1}F_{2k+3}-1)$ for some $k\geq 1$;
   \item $(p,q;n)=(F_{2k+1},F_{2k+5};F_{2k+1}F_{2k+5}-1)$ for some $k\geq 1$;
   \item $(p,q;n)=(S_{k+1},S_{k+2};S_{k+1}S_{k+2}-1)$ for some $k\geq 1$.
\end{enumerate}

Some of these families were already known. Indeed, families (1)--(3) and the two exceptional cases in (5) correspond to torus knots with $q\equiv \pm1\pmod p$ and hence come from~\cite{AcetoGolla}. Families (2), (10), (18), the two sporadic cases in (5), and the sub-family with $q=4p-1$ in (3) arise in the study of rational cuspidal curves in $\CP^2$. Indeed, in~\cite{55letters}, Fern\'andez de Bobadilla, Luengo, Melle-Hern\'andez, and N\'emethi classify all rational complex curves in $\CP^2$ with one singularity of topological type $x^p = y^q$ (with $\gcd(p,q) = 1$); a regular neighborhood of such a curve is the trace of $d^2$--surgery along $T_{p,q}$, where $d$ is the degree of the curve, and its complement is a rational homology ball bounded by the surgery (with its orientation reversed).
Some lens spaces and connected sums of lens spaces also appear in complex geometry, when classifying certain singular surfaces which admit a smoothing to $\CP^2$~\cite{HackingProkhorov}, and in the corresponding topological version of smooth embeddings of rational homology balls bounded by lens spaces~\cite{Owens-embeddings}.
In the context of the theory of surface singularities, Stipsicz, Szab\'o and Wahl~\cite{StipsiczSzaboWahl} studied the existence of symplectic rational homology ball fillings of canonical contact structures on links of singularities. There is some overlap between the two resulting lists, also in the case of Seifert fibered spaces: for instance, family (5) also appears in their list.

Surgeries along the torus knot $T_{p,q}$ are reducible if the surgery coefficient is $pq$, lens spaces if the surgery coefficient is $pq\pm 1$, and three-legged Seifert fibered spaces otherwise~\cite{Moser}. We recall that Lisca has classified which lens spaces~\cite{Lisca-ribbon} and connected sums of lens spaces~\cite{Lisca-sums} bound rational homology balls. The analysis for the corresponding torus knot surgeries, which yields the two sets $\Rc$ and $\Lc$ amounts to matching two different lists: the list of all connected sums of lens spaces arising as surgeries along torus knots~\cite{Moser} and the one coming from Lisca's work.
This is mostly an arithmetic argument and is carried out in Appendix~\ref{s:appendix}. Therefore the essential part of the proof of Theorem~\ref{maintheorem} consists in showing that if $q\not\equiv \pm1\pmod p$ and the surgery coefficient $n$ is not in $\{pq,pq\pm1\}$, then $S^3_{n}(T_{p,q})$ bounds a rational homology ball if and only if $(p,q;n)$ belongs to family $(4)$.

Our result is obtained using a mixture of Heegaard Floer homology and lattice embeddings via Donaldson's Theorem A~\cite{Donaldson}. In this paper, the input from correction terms in Heegaard Floer homology is very limited, consisting mostly of an extension of~\cite[Theorem~1.3]{AcetoGolla}, while the bulk of the proof is a completely novel analysis of \emph{obstructions} to embeddability of lattices. The lattices under consideration are intersection lattices of plumbed 4--manifolds bounded by small Seifert spaces. It is worth stressing here that the umbrella term of ``analysis of embeddability of lattices'' encompasses a wide range of strikingly different ideas and techniques. In his pioneering work Lisca~\cite{Lisca-ribbon,Lisca-sums} studied the embeddability of all lattices arising from linear graphs into diagonal lattices of the same rank. 
Some years later, Greene~\cite{Greene-Berge} studying the Berge conjecture analyzed 
again lattices arising from linear graphs, but this time embedded into codimension-1 diagonal lattices and subject to a further condition arising from Heegaard Floer homology. Needless to say that Greene's and Lisca's work and approaches to the embeddability of their respective lattices are completely different from each other. Both of these groundbreaking works focused on embeddability while our approach is an intricate \emph{obstruction} to the embeddability of a subfamily of lattices associated to small Seifert spaces. The lattice embedding results in this paper shed some further light into the still open question of which are the small Seifert spaces whose associated lattices embed in diagonal ones of the same rank. There is no known or proposed strategy to tackle this very interesting problem.

We can collect some of the information contained in Theorem~\ref{maintheorem} by defining the following set of rational numbers
\[
{\acc}:=\left\{\frac{q}{p} \ | \ S^3_{n}(T_{p,q}) \ \text{bounds a rational ball for some} \ n\in\N\right\}.
\]
Each infinite family in Theorem~\ref{maintheorem} determines an accumulation point for ${\acc}$.

\begin{co}\label{accumulationpoints}
Let $\phi=\frac{1}{2}(1+\sqrt{5})$ be the golden ratio and let $\psi=3+2\sqrt{2}$. 
The set of accumulation points for ${\acc}$ is $\{1,\phi^2,4,\psi,\phi^4\}$.
Moreover ${\acc}\subset[1,\frac{22}{3}]$. 
\end{co}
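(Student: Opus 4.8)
The plan is to derive Corollary~\ref{accumulationpoints} directly from Theorem~\ref{maintheorem} by a careful asymptotic analysis of each of the eighteen families. The key observation is that $\acc$ is, by definition, the set of ratios $q/p$ appearing in the triples $(p,q;n)$ of $\Gc\cup\Rc\cup\Lc$, so I only need to understand the closure of this set of ratios. First I would note that a single triple contributes an isolated point, so accumulation points come only from the infinite families; the finitely many sporadic cases (item (5), item (13)) can be discarded immediately. For each infinite family I would compute $\lim_{k\to\infty} q/p$ (or $\lim_{r\to\infty}$, $\lim_{p\to\infty}$ as appropriate).

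For the Fibonacci-type families this is standard: $F_{k+1}/F_k\to\phi$, so $F_{k+1}^2/F_k^2\to\phi^2$ (families (9), (15)) and $F_{k+1}/F_{k-1}$-type ratios like $F_{2k+1}/F_{2k-1}\to\phi^2$ give $F_{2k+1}^2/F_{2k-1}^2\to\phi^4$ (family (10)); likewise $F_{2k+5}/F_{2k+1}\to\phi^4$ (family (17)) and $F_{2k+2}/F_{2k}\to\phi^2$ (family (15)). For the Pell-type sequences, the characteristic equation of the recursion $x_{k+1}=6x_k-x_{k-1}$ has dominant root $\psi=3+2\sqrt 2$, so $R_{k+1}/R_k\to\psi$ and $S_{k+1}/S_k\to\psi$, giving accumulation point $\psi$ from family (4) and from family (18); for families (11) and (12) one checks $4T_k^2/S_k^2$ and $S_{k+1}^2/4T_k^2$ converge, and using the identities relating $S_k$ and $T_k$ to $\psi$ (indeed $S_k\sim c\psi^{k}$ and $2T_k\sim c'\psi^k$ with $c=c'$, so $4T_k^2/S_k^2\to 1$) one sees these contribute the point $1$. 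The linear families (1)--(3), (6), (7), (8), (14), (16) have ratios $q/p$ of the form $(p+1)/p\to1$, $(4p\pm1)/p\to4$, $(r+1)^2/r^2\to1$, $(2r\pm1)^2/r^2\to4$, $(2r\pm2)^2/r^2\to4$, $(2r+3)/(2r+1)\to1$, $F_{2k+3}/F_{2k+1}\to\phi^2$ respectively. Collecting all these limits gives exactly $\{1,\phi^2,4,\psi,\phi^4\}$, and since each limit is genuinely approached by infinitely many distinct ratios, each is a bona fide accumulation point; conversely, since every infinite family has one of these five limits and there are finitely many families plus finitely many sporadic triples, there are no other accumulation points.

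For the bound $\acc\subset[1,\tfrac{22}{3}]$ I would argue as follows. The lower bound $q/p\geq 1$ is immediate from the standing convention $p<q$ (so in fact $q/p>1$, but $1$ is a limit point). For the upper bound, I would check family by family that $q/p\leq 22/3$, with equality achieved only by the sporadic triple $(3,22;64)$ in item (5); every other family has $q/p$ either bounded by $4$ (the linear families and the Pell families, whose ratios increase monotonically to $\psi=3+2\sqrt2<6<22/3$ or to $4$) or bounded by $\phi^4=\tfrac{7+3\sqrt5}{2}<7<\tfrac{22}{3}$ (the Fibonacci families, whose ratios are increasing), or equal to one of the finitely many sporadic values, all of which are at most $22/3$. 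The main obstacle is purely bookkeeping: one must verify \emph{monotonicity} of $q/p$ within each family (so that the supremum is attained either in the limit or at the first term) and then take the maximum over the finite collection of these eighteen suprema together with the handful of sporadic ratios. This requires knowing, for instance, that $F_{k+1}/F_k$ and $F_{2k+1}/F_{2k-1}$ are monotone (they oscillate around $\phi$ and $\phi^2$ but along even/odd indices are monotone, and in the families above only one parity appears or the square removes the oscillation), and similarly for the Pell sequences where the ratios $R_{k+1}/R_k$ etc.\ are genuinely monotone increasing to $\psi$. None of this is deep, but it is the part where care is needed; once the monotonicity and the limits are in hand, the corollary follows.
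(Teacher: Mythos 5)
Your approach is exactly the one the paper intends (the corollary is left as a read-off from Theorem~\ref{maintheorem}, using the limits of the recursive sequences recorded in the appendix), and in outline it is fine. However, there is one concrete computational error: for families (11) and (12) you claim $S_k\sim c\psi^k$ and $2T_k\sim c'\psi^k$ with $c=c'$, so that $4T_k^2/S_k^2\to 1$. This is false. From the closed forms one gets $c=\tfrac{2-\sqrt2}{4}$ and $c'=\tfrac{\sqrt2}{4}$, hence $2T_k/S_k\to \tfrac{\sqrt2}{2-\sqrt2}=1+\sqrt2$ and likewise $S_{k+1}/(2T_k)\to 1+\sqrt2$ (this is stated explicitly in the appendix of the paper). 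Consequently both families (11) and (12) have $q/p\to(1+\sqrt2)^2=\psi$, not $1$; a numerical sanity check confirms this, e.g.\ for $k=2$ family (11) gives $q/p=144/25=5.76$, already far from $1$ and close to $\psi\approx 5.83$. The error happens not to change the final answer, since $\psi$ is already contributed by families (4) and (18) and these ratios stay below $6<\tfrac{22}{3}$, but as written the step is wrong and directly contradicts the identity you invoke.

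Two smaller points on the bound $\acc\subset[1,\tfrac{22}{3}]$. First, monotonicity is more than you need and is not quite what you have: $F_{k+1}/F_k$ genuinely oscillates around $\phi$, and squaring does not remove the oscillation (family (9) with $k\geq 3$ gives $9/4,\,25/9,\,64/25,\dots$, which is not monotone). What suffices is boundedness: each family's ratio is bounded by the maximum of its first few terms and its limit, and all of these are easily checked to be at most $\tfrac{22}{3}$, with equality only for the sporadic triple $(3,22;64)$ (the other sporadic value $43/6$ is strictly smaller). Second, note that family (12) approaches $\psi$ from \emph{above} ($S_3^2/4T_2^2=841/144\approx 5.84>\psi$), so one cannot argue "increasing to the limit" uniformly; again, boundedness is the right statement.
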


Note that the integral accumulation points for ${\acc}$ are precisely the positive integral surgeries on the unknot which bound rational homology balls.
Since torus knots are cables of the unknot this observation suggests the following definition. Let $K_{p,q}$ be the $(p,q)$-cable\footnote{We adopt the convention that the $K_{p,q}$ lives on the bounday of a tubular neighbourhood of $K$, in the homology class of the curve $p\lambda + q\mu$, where $\lambda$ and $\mu$ are the Seifert longitute and meridian of $K$, respectively. We can and will always assume $p>0$; since $K_{1,q} = K$ for every $q$, we exclude all these cases.} of a knot $K$ and define:
\[
{\acc}(K):=\left\{\frac{q}{p} \mid p > 1, q > 0, S^3_{n}(K_{p,q}) \ \text{bounds a rational ball for some} \ n\in\N\right\}.
\]
We have ${\acc}={\acc}(\unknot) \cap [1,\infty)$, where $\unknot$ is the unknot: that is, we think of $\acc$ as the set of cabling slopes $q/p > 1$ for which the corresponding cable of the unknot has a surgery that bounds a rational homology ball. In fact, $\acc(\unknot) = \acc \cup \{r^{-1} \mid r \in \acc\} \cup \{\frac1p \mid p \ge 2\}$, where the last family comes from the fact that $\unknot_{p,1} = \unknot$ for every $p$.

With this notation in place we can state the following theorem, which explains and generalizes the appearance of $1$ and $4$ as accumulation points for ${\acc}$.

\begin{te}\label{rationalcob}

Let $K \subset S^3$ be a knot and $n$ an integer. For every integer $m>1$ there exists a rational homology cobordism between $S^3_n(K)$ and $S^3_{m^2n}(K_{m,mn \pm 1})$.
In particular, if $S^3_n(K)$ bounds a rational homology ball then $n$ is a two-sided accumulation point for ${\acc}(K)$.
\end{te}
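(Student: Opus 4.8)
The plan is to localize both surgeries near the companion torus $T=\partial\nu(K)$, build the cobordism for the complementary pieces, and then reattach $E(K)\times I$.

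Write $q=mn+\varepsilon$ with $\varepsilon\in\{+1,-1\}$; since $\gcd(m,q)=\gcd(m,\varepsilon)=1$, the cable $K_{m,q}$ is a genuine knot, with cabling slope $q/p=n+\varepsilon/m$. By the JSJ description of satellites, $E(K_{m,q})=E(K)\cup_T C_{m,q}$, where $C_{m,q}\subset S^1\times D^2$ is the cable space: it is Seifert fibered over the annulus with one exceptional fiber of multiplicity $m$, and its regular fiber meets $\partial\nu(K_{m,q})$ along the cabling framing $mq$. Since $m>1$ we have $|m^2n-mq|=|{-}m\varepsilon|=m\ge 2$, so Dehn filling $\partial\nu(K_{m,q})$ along slope $m^2n$ creates a \emph{second} exceptional fiber, again of multiplicity $m$. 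Hence $S^3_{m^2n}(K_{m,q})=E(K)\cup_T M$, where $M$ is Seifert fibered over the disk with exactly two exceptional fibers, both of multiplicity $m$, whose Seifert invariants depend explicitly on $m$, $\varepsilon$, $n$. On the other hand $S^3_n(K)=E(K)\cup_T V$, with $V$ a solid torus glued so that the slope--$n$ curve bounds a meridian disk. Since nothing happens over $E(K)$, it suffices to construct a rational homology cobordism $Z$ from $V$ to $M$ relative to $T$ (that is, restricting to $T\times I$ on the side boundary) with $\chi(Z)=0$; then $W:=(E(K)\times I)\cup_{T\times I}Z$ is a cobordism from $S^3_n(K)$ to $S^3_{m^2n}(K_{m,q})$, and $H_*(W,S^3_n(K);\Q)=0$ follows from Mayer--Vietoris.

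The heart of the argument is the construction of $Z$. The point is that $M$ is obtained from the solid torus $V$ by removing a tubular neighbourhood of an explicit knot $\gamma\subset\mathrm{int}\,V$ --- essentially the torus knot $K_{m,q}$ sitting inside $\nu(K)$, which becomes the core of the new exceptional fiber --- and refilling. Dually, the trace $Z$ of this operation can be built from $V\times I$ by attaching one $1$--handle and one $2$--handle, with the $2$--handle running $m$ times over the $1$--handle. This numerology alone forces $\chi(Z)=0$ and $H_*(Z,V;\Q)=0$, and gives $|H_1(S^3_{m^2n}(K_{m,q}))|=m^2\,|H_1(S^3_n(K))|=m^2n$, as it must be. What has to be verified is that the attaching data can be arranged so that the upper end of $Z$ is exactly $M$, with the right parametrization of $T$: the lens space bounding a neighbourhood of the capping chain of spheres has the form $L(m^2,mc-1)$ with $\gcd(m,c)=1$, which bounds a rational homology ball by Casson--Harer and Lisca --- this is the only external input --- and recognizing our chain as (the mirror of) the standard one is what makes $Z$ a rational homology $(S^1\times D^2)\times I$.

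Gluing as above produces the claimed rational homology cobordism $W$ between $S^3_n(K)$ and $S^3_{m^2n}(K_{m,q})$. If moreover $S^3_n(K)=\partial B$ for a rational homology ball $B$, then $S^3_{m^2n}(K_{m,q})=\partial(B\cup_{S^3_n(K)}W)$, and $B\cup_{S^3_n(K)}W$ is again a rational homology ball; hence $\tfrac{mn+\varepsilon}{m}\in\acc(K)$ for all $m>1$ and $\varepsilon\in\{+1,-1\}$ (for $m$ large, $mn+\varepsilon>0$). Since $\tfrac{mn+1}{m}$ decreases to $n$ and $\tfrac{mn-1}{m}$ increases to $n$ as $m\to\infty$, $n$ is a two-sided accumulation point of $\acc(K)$.

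The main obstacle is the explicit construction of $Z$: one has to pin down the Seifert invariants of $M$ and the identification $T\cong\partial E(K)$ precisely enough to exhibit the capping chain of spheres, match it with the standard linear plumbing bounded by a rational homology ball, and carry this through in both cases $\varepsilon=\pm 1$, with their continued-fraction and orientation bookkeeping. There is no conceptual difficulty, but the bookkeeping is delicate; an equivalent and perhaps less error-prone route is to present $W$ directly as a Kirby diagram --- $S^3_n(K)\times I$ with one $1$--handle and one $2$--handle, the $2$--handle running $m$ times over the $1$--handle --- and to verify by handle slides that its upper boundary is $S^3_{m^2n}(K_{m,mn\pm1})$.
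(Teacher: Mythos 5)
Your framework is sound as far as it goes: localizing at the companion torus is legitimate (the Seifert description of $M$ with two multiplicity-$m$ exceptional fibers is correct, and gluing $E(K)\times I$ back recovers the cobordism by Mayer--Vietoris), and the closing paragraph even names the construction the paper actually uses --- attach to $S^3_n(K)\times I$ one $1$--handle and one $0$--framed $2$--handle running $m$ times over it. But the proof stops exactly where the real content begins. The entire substance of this theorem is the verification that the upper boundary of that two-handle cobordism is $S^3_{m^2n}(K_{m,mn\pm1})$ (equivalently, that the top of your $Z$ is $M$ with the correct parametrization of $T$); you explicitly defer it (``what has to be verified\dots'', ``the bookkeeping is delicate'') and never perform it. In the paper this is done concretely: trade the surgery coefficient $n$ for a $-1/n$-framed meridian, surger the dotted circle to a $0$-framed unknot, slide it $m$ times over $K$ so that it becomes a $0$-framed meridian and can be slam-dunked away, and finally Rolfsen-twist along the $-1/n$-framed unknot, which puts $n$ full twists in the $m$ parallel strands and turns the $0$-framing into $m^2n$, exhibiting the attaching curve as $K_{m,mn+1}$ (the $-$ case follows by mirroring). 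Without some version of this computation the claimed cobordism has not been shown to have the right top boundary, so the proof is incomplete.

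A secondary problem is the middle of your ``heart of the argument'' paragraph, which conflates two different constructions. The handle description of $Z$ (one $1$--handle, one $2$--handle running algebraically $m\neq 0$ times over it) already gives $\chi(Z)=0$ and $H_*(Z,V;\Q)=0$; no external input is needed, and in particular the appeal to $L(m^2,mc-1)$ bounding a rational homology ball (Casson--Harer, Lisca) plays no role in this theorem --- there is no ``capping chain of spheres'' in the cobordism, and invoking it suggests a rational-blowdown argument that you neither need nor carry out. Strip that out, and replace the deferred verification by the explicit Kirby moves (or an equivalent identification of the Seifert invariants of $M$ together with the boundary gluing), and the argument becomes the paper's proof.
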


Indeed, by work of Lisca~\cite{Lisca-ribbon}, $1$ and $4$ are the only positive integers $n$ for which $n$--surgery on the unknot bounds a rational homology ball. We have not been able to establish a similar relationship with rational slopes; however, we observe that $S^3_{p^2/{kp-1}}(\unknot)$ bounds a rational homology ball for every $p$ prime and $k$ coprime with $p$, and so the set of slopes $r$ such that $S^3_r(\unknot)$ bounds a rational homology ball is actually dense in $\R$.

The appearance of the numbers $\phi^2$, $\phi^4$, and $\psi$ is quite mysterious; their square roots $\phi$, $\phi^2 = \frac{3+\sqrt{5}}2$, and $\sqrt\psi = 1+\sqrt{2}$ (which is also known as the silver ratio), are among the numbers that are worst approximated by rational numbers (this dates back to work of Markov~\cite{Markov} and Hurwitz~\cite{Hurwitz}, see~\cite[Theorem~193]{HardyWright}).

As stated in Corollary~\ref{accumulationpoints} we have ${\acc}\subset[1,\frac{22}{3}]$. The fact that ${\acc}$ is bounded was first shown in~\cite[Theorem~1.3]{AcetoGolla} using tools from Heegaard Floer homology. Our next result generalizes this property.

\begin{te}\label{bound}
For every knot $K \subset S^3$ the set ${\acc}(K)$ is bounded.
\end{te}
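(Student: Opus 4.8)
The plan is to run, for the cable $K_{p,q}$, the correction-term argument used by the first two authors in the case $K=\unknot$ (\cite[Theorem~1.3]{AcetoGolla}), and to show that it already confines $\acc(K)$ to an interval whose length depends only on $K$ -- in fact only on the Seifert genus $g(K)$.

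Fix a slope $q/p\in\acc(K)$ and an integer $n$ with $S^3_n(K_{p,q})$ bounding a rational homology ball $W$. From $|H_1(S^3_n(K_{p,q}))|=n$ and the fact that $|H_1(\partial W)|$ is a perfect square we get $n=m^2$. The correction terms of $S^3_{m^2}(K_{p,q})$ must vanish along a coset of a metabolizer of the linking form; since $H_1(S^3_{m^2}(K_{p,q}))\cong\Z/m^2\Z$ is cyclic, this metabolizer is forced to be the unique index-$m$ subgroup $m\Z/m^2\Z$. Feeding this into the Ni--Wu surgery formula
\[
d\bigl(S^3_{m^2}(K_{p,q}),i\bigr)=d\bigl(L(m^2,1),i\bigr)-2\max\{V_i(K_{p,q}),\,V_{m^2-i}(K_{p,q})\},
\]
using $d(L(m^2,1),m\ell)=\tfrac{(m-2\ell)^2-1}{4}$ and the monotonicity of the $V_i$, the vanishing of $d$ along the metabolizer pins the $V$-invariants of $K_{p,q}$, sampled at step $m$, to the quadratic sequence
\[
V_{m\ell}(K_{p,q})=\frac{(m-2\ell)^2-1}{8},\qquad 0\le\ell\le\bigl\lfloor\tfrac m2\bigr\rfloor
\]
(with a half-step shift when $m$ is even). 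The reducible case $m^2=pq$ -- where $S^3_{m^2}(K_{p,q})$ is a connected sum of two lens spaces -- and the near-reducible lens-space cases are instead handled via the connected-sum formula for $d$ and Lisca's classifications, exactly as in the arithmetic of Appendix~\ref{s:appendix}. In the remaining (generic) case one has $m^2=8V_0(K_{p,q})+1$ for $m$ odd, so $m^2\asymp pq$; and the number of "drops'' of the non-increasing step function $i\mapsto V_i(K_{p,q})$ on each window $[m\ell,m(\ell+1))$ is pinned to exactly $\tfrac{m-2\ell-1}{2}$ -- a density that decreases from almost $\tfrac12$ at the bottom to $0$ as $i$ approaches $\nu^+(K_{p,q})\le g(K_{p,q})$.

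It then remains to contradict this for $q/p$ large. Here one inputs the cabling formula for $CFK^\infty$ of Hedden and Hom (or just the monotonicity and positivity consequences one actually needs), together with $g(K_{p,q})=p\,g(K)+\tfrac{(p-1)(q-1)}{2}$ and $\nu^+(K_{p,q})\le g(K_{p,q})$, to show that the step function $i\mapsto V_i(K_{p,q})$ is, in the relevant range, a bounded ($K$-dependent) perturbation of that of the torus knot $T_{p,q}$, whose staircase is explicit: near the bottom its drops occur essentially one per $p$ indices, i.e.\ with density $\approx\tfrac1p$. Comparing this near-torus-knot profile with the rigid quadratic profile forced above leaves only finitely much room and yields an explicit bound $q/p\le C(g(K))$: once $q/p$ exceeds this, the large drop counts demanded on the first windows cannot be supplied, or else $\nu^+(K_{p,q})\le g(K_{p,q})$ is too small to reach a metabolizer whose last nonzero sample sits near $m^2/2\asymp pq/2$. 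This gives Theorem~\ref{bound}; the $K=\unknot$ instance of this comparison is precisely the case analysis behind Theorem~\ref{maintheorem}, which sharpens the bound to the interval $[1,\tfrac{22}{3}]$ of Corollary~\ref{accumulationpoints}.

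The crux is this last comparison. For $K=\unknot$ the function $V_\bullet$ \emph{is} the torus-knot staircase and one may equivalently argue with the Seifert-fibered intersection lattices; for a general $K$ one must extract from the cabling formula enough quantitative, $p$- and $q$-uniform control to see the incompatibility, and in particular rule out that the $K$-bounded perturbation enlarges the admissible window of $q/p$ without bound -- the point being that once $q/p$ is large this perturbation is of lower order than the quadratic profile it would have to disturb. A secondary difficulty is the degeneration of the metabolizer argument for small or even $m$ and for the reducible and lens-space surgeries, which one disposes of using the connected-sum and lens-space results invoked above.
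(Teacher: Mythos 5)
Your toolkit is essentially the paper's: the surgery coefficient is a square $m^2$, the vanishing of correction terms pins $V_{m\ell}(K_{p,q})=\tfrac{(m-2\ell)^2-1}{8}$ (this is exactly \eqref{e:AG5.3} re-indexed), the cabling formula makes $V_\bullet(K_{p,q})$ a $V_0(K)$-bounded perturbation of $V_\bullet(T_{p,q})$ (this is Wu's formula \eqref{e:Wu-cables}), and the torus-knot staircase is read off the semigroup via \eqref{e:BL2}. But the step you yourself call the crux is left open, and the mechanism you propose for it does not work. For $q\gg p$ the contradiction does \emph{not} come from the first windows: the number of drops of $V_\bullet(T_{p,q})$ on $[m\ell,m(\ell+1))$ equals the number of gaps of $\Gamma_{p,q}$ in $[g+m\ell,g+m(\ell+1))$, and just above $g$ an interval of length $m\ll q$ contains roughly $\tfrac{p}{2}\cdot\tfrac{m}{p}=\tfrac m2$ semigroup elements, hence about $\tfrac m2$ gaps -- exactly the density $\tfrac{m-2\ell-1}{2m}\approx\tfrac12$ that the quadratic profile demands. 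So ``the large drop counts on the first windows cannot be supplied'' is false, and $\nu^+(K_{p,q})\le g(K_{p,q})$ is not ``too small'' either, since $m^2\approx 2\nu^+(K_{p,q})$ is precisely what \eqref{e:AG5.2v2} permits. The paper's Theorem~\ref{t:q<Np} instead samples the forced profile at the two consecutive values $\tfrac{x(x+1)}2,\tfrac{(x+1)(x+2)}2$ with $x$ chosen minimally above $\nu^+(K)$, for two reasons your sketch does not supply: (i) there Wu's correction term is \emph{identically zero} (guaranteed by \eqref{e:xinbetween}); a merely ``bounded'' perturbation of size up to $V_0(K)$ is useless at this scale, it must be constant; (ii) in that range the jumps of the Borodzik--Livingston staircase of $T_{p,q}$ are spaced exactly $p$ apart. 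Subtracting the two sampled identities then gives $m<(x+2)p$ with $x\approx\sqrt{2\nu^+(K)}$, which against $\tfrac{(p-1)(q-1)}2\le\nu^+(K_{p,q})\le\tfrac{m(m-1)}2$ forces $q/p=O(\nu^+(K))$. Without the choice of sampling window and this $p$-spacing rigidity your comparison has no teeth, so there is a genuine gap.

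Two smaller corrections. The coefficients $m^2=pq,\,pq\pm1$ need no separate treatment, and for a general $K$ they \emph{cannot} be handled ``as in Appendix~\ref{s:appendix}'': $S^3_{pq}(K_{p,q})\cong S^3_{q/p}(K)\#L(p,q)$ is not a connected sum of lens spaces, so Lisca's lists do not apply; the correction-term argument above applies uniformly to every integer surgery. Finally, when $\nu^+(K)=0$ your comparison degenerates (and a bound of the form $20\nu^+(K)$ would be vacuous); the paper handles this case by noting that then all correction terms of $S^3_n(K_{p,q})$ coincide with those of $S^3_n(T_{p,q})$, so the bound of \cite[Theorem~1.3]{AcetoGolla} applies verbatim.
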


We will give concrete bounds on $\acc(K)$ in Theorems~\ref{t:p<Nq} and~\ref{t:q<Np}; these bounds will be expressed in terms of the concordance invariant $\nu^+$ defined by Hom and Wu using Heegaard Floer homology~\cite{HomWu}. In particular, we will show that if $\nu^+(K) > 0$, then $\acc(K)$ is also bounded away from $0$.

In a different direction, we look at fusion numbers and at applications to the slice-ribbon conjecture for Montesinos knots and links.
The \emph{fusion number} of a ribbon knot $K$ is the minimal number of bands needed to build a ribbon disc for $K$.
Recall that every small Seifert fibered space is the double cover of $S^3$ branched over a Montesinos knot or a link; in our case, $S^3_n(T_{p,q})$ is a knot if $n$ is odd, and a $2$--component link if $n$ is even. 
We will show the following.

\begin{prop}\label{p:ribbon}
Let $M(r_1,r_2,r_3)\subset S^{3}$ be a Montesinos link whose branched double cover is $S^3_n(T_{p,q})$ for $(p,q;n) \in \Gc$. Then $M(r_1,r_2,r_3)$ bounds a surface (with no closed components) of Euler characteristic $1$ properly embedded in $B^{4}$. Moreover, if $M(r_1,r_2,r_3)$ is a knot, then it is ribbon and has fusion number $1$.
\end{prop}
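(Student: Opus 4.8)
The plan is to exhibit, for each triple $(p,q;n)\in\Gc$, an explicit band presentation realizing the desired surface, and then to translate it through the branched-double-cover correspondence. The key observation is that every triple in $\Gc$ corresponds to a surgery $S^3_n(T_{p,q})$ that bounds a rational homology ball $W$; by Theorem~\ref{maintheorem}, for $(p,q;n)\in\Gc$ this ball can moreover be taken to be the complement of a regular neighbourhood of a rational cuspidal curve in $\CP^2$ (for the families arising from~\cite{55letters}), or more generally a rational homology ball whose structure we understand well enough to read off a handle decomposition with no $3$-- or $4$--handles and exactly one $1$--handle. First I would recall that if $\Sigma \subset S^3$ is the Montesinos link with $\Sigma_2(\Sigma) = S^3_n(T_{p,q})$, then an embedded surface $F \subset B^4$ with $\partial F = \Sigma$ lifts, in the double cover of $B^4$ branched over $F$, to a $4$--manifold bounded by $S^3_n(T_{p,q})$; conversely a suitable $\Z/2$--equivariant handle structure on $W$ descends to a band presentation of $\Sigma$. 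Concretely, a $4$--manifold built from one $0$--handle, one $1$--handle, and one $2$--handle, which is the model for a rational homology ball here, is the branched double cover of $B^4$ along a surface built from a single $0$--handle (a disc) and a single band — hence of Euler characteristic $1$ — and when $\Sigma$ is a knot this surface is a ribbon disc with one band, giving fusion number $\le 1$; since a ribbon knot has fusion number $0$ only if it is the unknot (which $\Sigma$ is not, as its branched double cover is not $S^3$), the fusion number equals $1$.

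The steps, in order, would be: (i) For each of the four families and two exceptional cases in $\Gc$, identify the rational homology ball $W$ bounded by $S^3_n(T_{p,q})$ constructed in the proof of Theorem~\ref{maintheorem}, and verify it admits a handle decomposition with a single $1$--handle and a single $2$--handle (and no handles of index $3$ or $4$). For families (1)--(3) and the sporadic cases in (5), this is already essentially in~\cite{AcetoGolla} and~\cite{55letters}; for family (4), one reads it off the plumbing/surgery description used to build the ball. (ii) Check that the $\Z/2$--action exhibiting $S^3_n(T_{p,q})$ as a branched double cover of the Montesinos link extends over $W$ compatibly with this handle structure — equivalently, that $W$ is the branched double cover of $B^4$ over a properly embedded surface $F$. (iii) Count: a $(0,1,2)$--handle decomposition of $W$ downstairs becomes a $(0\text{-handle},\ \text{one band})$ decomposition of $F$, so $\chi(F) = 1$ and $F$ has no closed components. (iv) When $n$ is odd, $\Sigma$ is a knot, $F$ is a disc-with-a-band, i.e. a ribbon disc presented with one band, so the fusion number is at most $1$; rule out $0$ by noting $\Sigma \ne \unknot$.

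The main obstacle I anticipate is step (ii): producing an \emph{equivariant} handle decomposition of $W$, or equivalently verifying directly that $W$ arises as a branched double cover of $B^4$ along a surface of the predicted Euler characteristic, rather than merely knowing $\partial W$ is a branched double cover. For the families coming from algebraic geometry this is handled by the complex-curve picture (the curve complement is visibly the double branched cover along the pushed-in curve together with a band move), but family~(4) — the genuinely new case, governed by the Pell-type sequence $R_k$ — requires a hands-on argument: one must present the rational homology ball built in the proof of Theorem~\ref{maintheorem} in a sufficiently symmetric way. A convenient route is to use the fact that $S^3_n(T_{p,q})$ with $(p,q;n)=(R_k,R_{k+1};R_kR_{k+1}-2)$ bounds a linear plumbing that, after blow-downs, reduces to a short negative-definite string admitting the standard involution; the band presentation of $M(r_1,r_2,r_3)$ is then the image of the surgery arc. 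A secondary, more bookkeeping-type obstacle is ensuring that in every case the resulting surface genuinely has $\chi = 1$ and not $\chi \le 1$ with a closed component, which amounts to checking connectivity of $F$; this follows once one knows the $2$--handle is attached along a curve that, downstairs, is a band connecting the disc to itself rather than a separate closed component.
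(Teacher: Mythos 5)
Your overall strategy---realize the rational homology ball as a double cover of $B^4$ branched over a surface built from discs and a single band, by making the handle structure equivariant under the covering involution---is exactly the route the paper takes for family (4) and the two sporadic cases in (5). But the step you flag as ``the main obstacle,'' namely (ii), \emph{is} the proof in those cases, and your proposal does not actually supply it. The paper resolves it by isotoping the surgery curve of the cobordism from $S^3_n(T_{p,q})$ to $S^1\times S^2$ (the one constructed in Proposition~\ref{bounding} and its analogues) into an explicitly equivariant position with respect to the involution of the plumbing diagram (Figure~\ref{f.eqsurgeries}); the attachment then descends to a band move on the Montesinos link, and since the double cover of $S^3$ branched over the resulting link is $S^1\times S^2$, that link must be the two-component unlink, which is what pins down the surface. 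Your suggested shortcut---blowing down to ``a short negative-definite string admitting the standard involution''---does not obviously close this gap: the blow-downs must themselves be performed equivariantly and the surgery arc tracked through them, which is precisely the content of the figure. Note also that for families (1)--(3) the paper does not use the rational-ball route at all: it identifies the Montesinos links explicitly (pretzel links $M(p,-p,-p-1)$, $M(p,-p-1,p+1)$, and the links $M(p,\mp p,\mp\frac{4p\mp1}{4})$) and exhibits band moves directly on the diagrams, citing~\cite{lec-pretzel} for the pretzel knots. Running your equivariance argument there instead would require showing that the balls of Propositions~\ref{embedding-fam12} and~\ref{embedding-fam3}, built via blow-ups along a braid axis inside $\CP^2$, carry an involution extending the covering involution on the boundary---a nontrivial extra verification.

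There is also a small but real slip in your dictionary in step (iii): a branched double cover of $(B^4,F)$ with one $1$--handle and one $2$--handle corresponds to $F$ built from \emph{two} discs and one band (so $\chi(F)=2-1=1$, and in the knot case a ribbon disc of fusion number $1$), not from one disc and one band, which would give $\chi(F)=0$. Your stated conclusion $\chi(F)=1$ is correct, but only after fixing this count; the general rule is that $n$ discs and $k$ bands downstairs give $n-1$ one-handles and $k$ two-handles upstairs.
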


In particular, if $M \subset S^3$ is a Montesinos knot whose branched double cover is $S^3_n(T_{p,q})$ for some positive integers $p$, $q$, $n$ with $\gcd(p,q) = 1$, then $M$ is slice if and only if it is ribbon, which in turn happens if and only if $(p,q;n)\in \Gc$. The proposition above parallels Lisca's statement for $2$--bridge links~\cite{Lisca-ribbon}; however, note that in our case the restriction that the branched double cover of $M$ is $S^3_n(T_{p,q})$ for some $p,q,n$ is quite strong.

\subsection{Comments and further questions}

In this paper we only consider \emph{positive} integral surgeries on \emph{positive} torus knots. Since bounding a rational homology ball is a property which is independent 
of the orientation, the only other case to consider is that of \emph{negative} integral surgeries on \emph{positive} torus knots.
Since $S^3_{-n}(T_{p,q})=-S^3_{n}(-T_{p,q})$ and $\nu^+(-T_{p,q})=0$, it follows from~\cite[Theorem~5.1]{AcetoGolla} that if $S^3_{-n}(T_{p,q})$ bounds a rational homology ball then $n\in\{1,4\}$.
However, even with this restriction our techniques fail to obstruct the existence of rational homology balls and standard constructions (of rational balls) 
do not seem to apply. Two examples are representative of this issue. First, the manifold $S^3_{-1}(T_{2,3})$, which is the Brieskorn sphere $\Sigma(2,3,7)$, is known
to bound a rational homology ball~\cite{FintushelStern}, but the construction of the rational homology ball is highly nontrivial. Indeed, since the Rokhlin invariant of $\Sigma(2,3,7)$ is non-trivial, any handle decomposition of such a rational homology ball must contain a 3--handle.
All the rational homology balls we construct comprise solely 1-- and 2--handles.
Secondly, it is remarkable that (to the best of our knowledge) it is still unknown whether or not the manifold $S^3_{-4}(T_{2,3})$ bounds a rational homology ball.
We also note that the techniques employed in this paper do not obstruct $S^3_{-1}(T_{p,q})$ or $S^3_{-4}(T_{p,q})$ from bounding a rational homology ball: indeed, the negative definite plumbing corresponding to these surgeries is a blow up of the surgery trace, and therefore its intersection form is $\langle -1\rangle^{\oplus n}$ or $\langle -4 \rangle\oplus \langle -1\rangle^{\oplus n-1}$, both of which embed in a diagonal lattice in codimension 0.

While the proof of Theorem~\ref{maintheorem} requires a significant effort with lattice embeddings, numerical evidence suggests that $S^3_n(T_{p,q})$ bounds a rational homology ball if and only if it is unobstructed by Heegaard Floer correction terms.
Lens spaces exhibit the same behavior: they bound a rational homology ball if and only if they pass the lattice obstruction (in their case, with both orientations)~\cite{Lisca-ribbon}, and this happens if and only if they pass the correction terms criterion~\cite{Greene}.
The common point between lens spaces and Seifert fibered rational homology spheres is that they both bound \emph{sharp} 4--manifolds~\cite{OzsvathSzabo-plumbed}.
It is therefore natural to ask the following question.

\begin{q}\label{q:dinvariants-converse}
Does $S^3_n(T_{p,q})$ bound a rational homology ball if and only if it has $\sqrt{n}$ vanishing correction terms?
\end{q}

Since correction terms of $S^3_n(T_{p,q})$ are computed in terms of the semigroup generated by $p$ and $q$, i.e. the set $\{hp + kq \mid h,k \in \Z_\ge 0\}$, arithmetics might play a non-trivial role in answering Question~\ref{q:dinvariants-converse}.
Also, as mentioned above, it is intriguing to understand whether non-integral accumulation points for ${\acc}(K)$ carry topological significance. In the case of $\acc(\unknot)$, we have no explanation for the appearance of $\phi^2$, $\phi^4$, and $\psi$, but it might be related to measures of approximability by rational numbers.

Since Theorem~\ref{bound} tells us that $\acc(K)$ is bounded, it is natural to ask about its cardinality.

\begin{q}
Can ${\acc}(K)$ be finite? Can it be empty?
\end{q}

In Theorem~\ref{rationalcob} we saw that if $S^3_n(K)$ bounds a rational homology ball then $n$ is a two-sided integral accumulation point for ${\acc}(K)$. It is natural to ask about the converse.

\begin{q}
Is there a knot $K$ such that ${\acc}(K)$ has a one-sided integral accumulation point?
Suppose $n$ is a two-sided integral accumulation point for ${\acc}(K)$. Does $S^3_n(K)$ bound a rational ball?
\end{q}

Note that Theorem~\ref{maintheorem} implies that the answer is affirmative if $K$ is the unknot.

Finally, as mentioned above, some of the families and exceptional cases listed above arise as boundary of knot traces embedded in $\CP^2$. For some of the remaining families, namely (1) and the rest of (3), it is easy to verify that the corresponding knot traces embed in $\CP^2$ (Propositions~\ref{embedding-fam12} and~\ref{embedding-fam3}).
For all other cases, the following argument shows that all their knot traces embed in a homotopy $\CP^2$: since all our rational balls, as well as Lisca's rational balls, are constructed only using handles of index at most 2, if we glue the knot trace $X_n(T_{p,q})$ to the corresponding rational ball $B_{p,q;n}$ turned upside down, we have a handle decomposition of a 4--manifold $Z_{p,q;n}$ without 1--handles. In particular, $Z_{p,q;n}$ is simply connected, it has Euler characteristic $\chi(Z_{p,q;n}) = \chi(X_n(T_{p,q})) + \chi(B_{p,q;n}) = 3$, and $b^+_2(Z_{p,q;n}) \ge 1$ (since it contains the trace of a positive surgery), and therefore it is a homotopy $\CP^2$.
We believe that the answer to the following question is affirmative.

\begin{q}
Does $S^3_n(T_{p,q})$ bound a rational homology ball if and only if $X_n(T_{p,q})$ embeds in $\CP^2$?
\end{q}

\subsection*{Outline of the paper}
We start in Section~\ref{accpoints} with the proof of Theorem~\ref{rationalcob}. This result is independent of the rest of the paper. Section~\ref{surgtolatt} associates to the manifolds $S^{3}_{n}(T_{p,q})$ two families of plumbing graphs (Proposition~\ref{seifertplumbing}). The question of whether or not $S^{3}_{n}(T_{p,q})$ bounds a rational homology ball will be addressed via a careful analysis of a lattice embedding problem concerning the aforementioned graphs. The answer to the embedding problem can be found in Proposition~\ref{p:mainA}. Section~\ref{surgtolatt} culminates with the proof of our main theorem, modulo the work in Section~\ref{latticeemb}. In Section~\ref{exceptional} we construct rational homology balls associated to the graphs in Proposition~\ref{p:mainA} which admit an embedding (Proposition~\ref{bounding}). The next section, Section~\ref{cablesHF}, collects some results on the more general problem of cables of arbitrary knots bounding rational homology balls, and proves Theorem~\ref{bound}.
The longest section in this article, Section~\ref{latticeemb}, is devoted to the highly technical proof of Proposition~\ref{p:mainA}. Finally, for completeness, we have added an appendix where we determine which lens spaces (and sums thereof) that bound rational homology balls can be obtained as surgeries on torus knots.

\subsection*{Acknowledgements} We would like to thank Frank Swenton for developing the Kirby calculator software (KLO)~\cite{KLO}, which helped us in several proofs. We would also like to thank Brendan Owens for many interesting comments. The third author was partially supported by NKFIH Grant K112735 and by the ERC Advanced Grant LDTBud.
\end{section}


\section{Accumulation points}\label{accpoints}
Our study of which surgeries on torus knots bound rational homology balls has led to the discovery of an intriguing relationship between surgery coefficients and cables of knots. Indeed, as stated in Theorem~\ref{rationalcob}, if $K$ is a knot in $S^3$ and $K_{p,q}$ denotes its $(p,q)$-cable, the set of rational points
\[
{\acc}(K):=\left\{\frac{q}{p} \mid p > 1, q > 0, S^3_{n}(K_{p,q}) \ \text{bounds a rational ball for some} \ n\in\N\right\}.
\]
has accumulation points corresponding to all integers $m$ for which $S^3_{m}(K)$ bounds a rational homology ball.
This follows from the fact that for each knot $K$ and all integers $m > 1$ and $n$, we can construct a rational homology cobordism from $S^3_n(K)$ to $S^3_{m^2n}(K_{m,mn + 1})$. A generalization of this fact when $n=0$ appeared previously in~\cite[Theorem~2.1 and Corollary~2.3]{CFHH}.

\begin{figure}[!htbp]
\labellist
\pinlabel $K$ at 16 447
\pinlabel a) at 16 380
\pinlabel $n$ at 16 495
\pinlabel $=$ at 107 447
\pinlabel $K$ at 140 447
\pinlabel $-1/n$ at 127 505
\pinlabel $0$ at 218 493
\pinlabel $-1/n$ at 292 505
\pinlabel $K$ at 303 447
\pinlabel b) at 433 380
\pinlabel $0$ at 353 517
\pinlabel $0$ at 392 502
\pinlabel $K$ at 42 275
\pinlabel c) at 16 205
\pinlabel $-1/n$ at 20 331
\pinlabel $0$ at 82 325
\pinlabel $0$ at 131 330
\pinlabel $0$ at 180 330
\pinlabel $K$ at 297 275
\pinlabel d) at 433 205
\pinlabel $-1/n$ at 277 335
\pinlabel $0$ at 438 325
\pinlabel $K$ at 60 80
\pinlabel e) at 16 0
\pinlabel $-1/n$ at 40 140
\pinlabel $0$ at 152 140
\pinlabel f) at 433 0
\pinlabel $K$ at 315 62
\pinlabel $m^2n$ at 412 140
\endlabellist
\centering
\includegraphics[scale=.80]{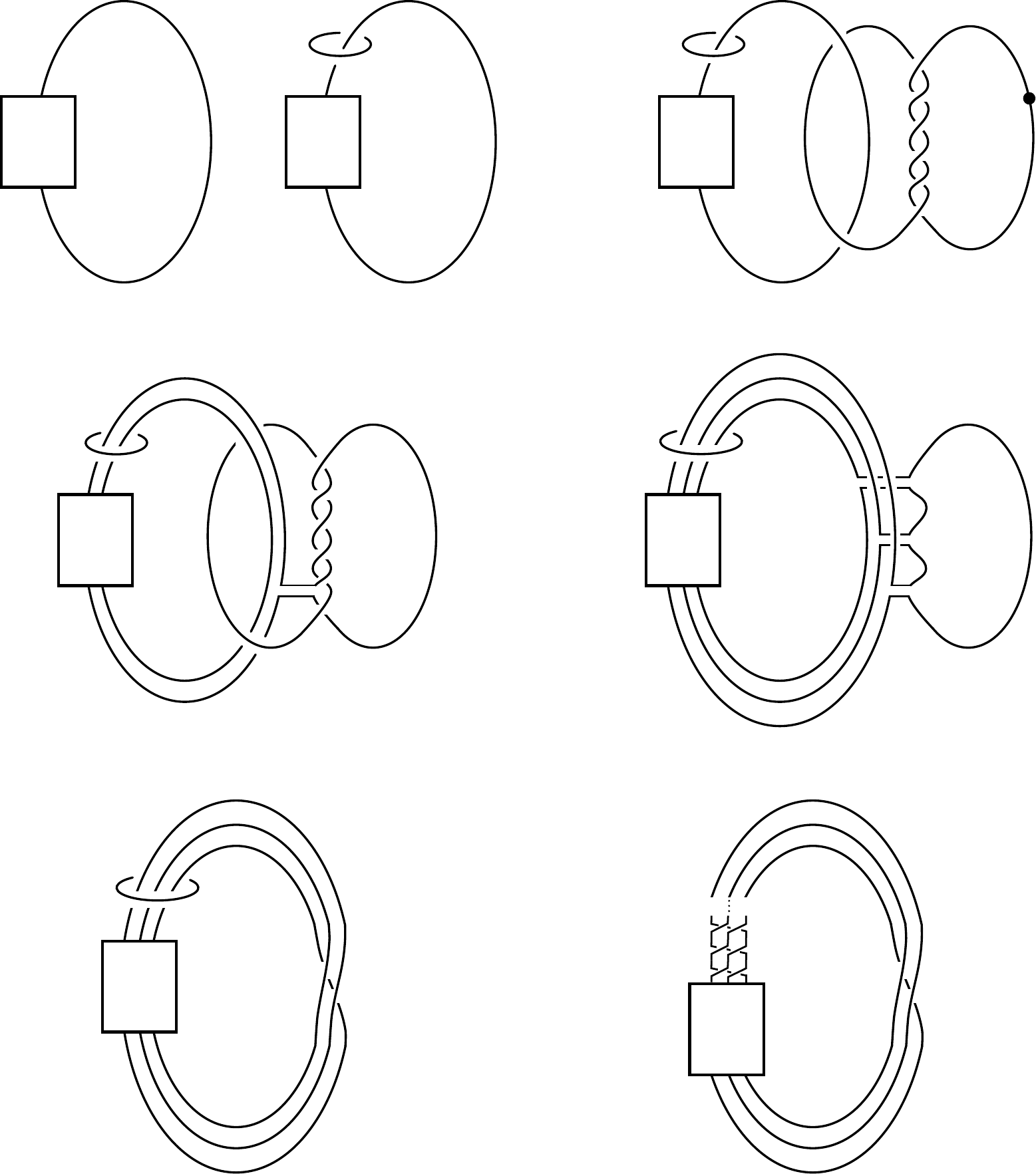}
\caption{A rational homology cobordism.}\label{f:cabling}
\end{figure}

\begin{proof}[Proof of Theorem~\ref{rationalcob}]
The second assertion is immediate from the first: since $S^3_{m^2n}(K_{m,mn+1})$ is rational homology cobordant to $S^3_n(K)$, if the latter bounds a rational homology ball, then so does the former. (Concretely: one such ball is obtained gluing any rational ball whose boundary is $S^3_n(K)$ with the rational cobordism of the first part of the statement.)

To prove the first assertion, we first reduce to the case of $S^3_{nm^2}(K_{m,mn+1})$. To this end, suppose that the statement holds with the $+$ sign for every $K$, $m$, and $n$ as in the statement. Then, $S^3_{nm^2}(K_{m,mn-1}) = -S^3_{-nm^2}(-K_{m,mn+1})$, where $-J$ denotes the mirror of $J$, and the latter is rational homology cobordant to $-S^3_{-n}(-K) = S^3_n(K)$ by hypothesis.

We now proceed to build the rational cobordism from $S^3_n(K)$ to $S^3_{nm^2}(K_{m,mn+1})$, using one 1-handle and one 2-handle.

A surgery diagram for $S^3_n(K)$ is given in Figure \ref{f:cabling}(a), where the right and left pictures are related by a slam dunk, or, equivalently, a Rolfsen twist. We build a rational
homology cobordism by attaching a 1-handle and a 2-handle to $S^3_n(K) \times I$ as in Figure \ref{f:cabling}(b). Note that in the pictures we draw the case $m=3$;
for the general case the 2-handle in Figure \ref{f:cabling}(b) will run over the 1-handle $m$ times.
(The 2-handle is 0-framed; its attaching curve and the dotted circle representing the 1-handle form a $(2,2m)$-torus link.)

Now to see the 3-manifold resulting from the cobordism we surger the 1-handle to a 0-framed 2-handle, and slide this new 2-handle $m$ times over the knot $K$. 
The first such slide is indicated in Figure \ref{f:cabling}(c). After $m$ slides the middle 2-handle will be a 0-framed meridian of $K$, and by slam dunking we can remove both of these components 
from the surgery diagram, resulting in Figure \ref{f:cabling}(d). An isotopy gives Figure \ref{f:cabling}(e), where in the general case there will be $m$ strands with a $1/m$ twist in them. Lastly we perform a Rolfsen 
twist on the unknotted component with surgery coefficient $-1/n$ to remove this component from the diagram. This puts $n$ full twists in the $m$ strands and changes the surgery coefficient
from 0 to $m^2n$, giving Figure \ref{f:cabling}(f). In Figure \ref{f:cabling}(f) we recognize the resulting knot to be $K_{m,mn+1}$, completing the proof.
\end{proof}

In~\cite{AL} the first and third authors studied the natural homomorphism $\psi: \Theta^3_\mathbb{Z} \rightarrow \Theta^3_\mathbb{Q}$
from the integral homology cobordism group to the rational homology cobordism group, and showed that no lens space (or connected sum of lens spaces) represents a nontrivial element in the image.
In fact, in general it appears difficult to produce families of interesting examples in the image.
Here we observe that Theorem~\ref{rationalcob} gives a nice way to find rational homology spheres that do belong to the image of $\psi$.

\begin{co}
For any knot $K \subset S^3$ and integer $m>1$, the manifold $S^3_{\pm m^2}(K_{m,\pm m\pm 1})$ belongs to the image of $\psi$.
\end{co}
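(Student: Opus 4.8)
The plan is to apply Theorem~\ref{rationalcob} with $n = 0$ and then identify what object we obtain. Set $n=0$ in the theorem: for every integer $m>1$ there is a rational homology cobordism between $S^3_0(K)$ and $S^3_{0}(K_{m,\pm 1})$, but this is not quite what we want since we need the $\pm m^2$ surgery coefficient. Instead, the correct substitution is to take $n$ such that the cable becomes $K_{m,\pm m\pm 1}$ and the surgery coefficient becomes $\pm m^2$; examining the statement of Theorem~\ref{rationalcob}, with $n = \pm 1$ we get a rational homology cobordism between $S^3_{\pm 1}(K)$ and $S^3_{m^2 \cdot (\pm 1)}(K_{m, m(\pm 1)\pm 1}) = S^3_{\pm m^2}(K_{m,\pm m \pm 1})$. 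Now $S^3_{\pm 1}(K)$ is an \emph{integral} homology sphere for every knot $K$, so it represents a (possibly trivial) class in $\Theta^3_\Z$; call this class $[S^3_{\pm1}(K)]$.

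Next I would unwind the definition of the map $\psi\colon \Theta^3_\Z \to \Theta^3_\Q$. Two rational homology spheres are identified in $\Theta^3_\Q$ precisely when they are rational homology cobordant, and $\psi$ simply sends the $\Z$--homology cobordism class of an integral homology sphere to its $\Q$--homology cobordism class. Since $S^3_{\pm m^2}(K_{m,\pm m\pm1})$ is a rational homology sphere (the surgery coefficient $\pm m^2$ is nonzero) that is rational homology cobordant to the integral homology sphere $S^3_{\pm1}(K)$ by the theorem, we conclude
\[
\big[S^3_{\pm m^2}(K_{m,\pm m\pm1})\big] = \psi\big([S^3_{\pm 1}(K)]\big) \in \Theta^3_\Q,
\]
which is exactly the assertion that $S^3_{\pm m^2}(K_{m,\pm m\pm1})$ lies in the image of $\psi$.

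There is essentially no hard part here: the entire content is contained in Theorem~\ref{rationalcob}, and the only thing to check is the bookkeeping that $S^3_{\pm1}(K)$ is an integral homology sphere (immediate, since $\pm1$--surgery on any knot in $S^3$ yields a homology sphere) and that the surgery coefficients and cabling parameters match up with the substitution $n=\pm1$ in the theorem. The one point requiring a small amount of care is matching signs: Theorem~\ref{rationalcob} as stated gives a cobordism between $S^3_n(K)$ and $S^3_{m^2 n}(K_{m, mn\pm1})$, so taking $n = 1$ produces $K_{m, m\pm 1}$ with coefficient $m^2$, while taking $n = -1$ produces $K_{m, -m\pm1}$ with coefficient $-m^2$; together these cover all four sign combinations in the statement $S^3_{\pm m^2}(K_{m,\pm m\pm 1})$.
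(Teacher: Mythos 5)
Your proof is correct and is essentially identical to the paper's: the paper's entire argument is to apply Theorem~\ref{rationalcob} with $n=\pm 1$ and observe that $S^3_{\pm 1}(K)$ is an integral homology sphere. Your additional bookkeeping about the definition of $\psi$ and the matching of signs is accurate (the brief false start with $n=0$ is harmless since you immediately correct it).
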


\begin{proof}
By Theorem~\ref{rationalcob}, $S^3_{\pm m^2}(K_{m,\pm m\pm 1})$ is rational homology cobordant to $S^3_{\pm 1}(K)$, an integral homology sphere.
\end{proof}
\begin{section}{From surgeries to lattices}\label{surgtolatt}
The aim of this section is to reduce Theorem \ref{maintheorem} to a series of technical statements regarding integral lattices.
Recall that surgeries on torus knots are Seifert fibered spaces. More precisely, by \cite[Lemma~4.4]{OwensStrle} (see also~\cite{Moser}), we have
\[
S^3_{n}(T_{p,q})=Y\left(2;\frac{p}{q^*},\frac{q}{p^*},\frac{pq-n}{pq-n-1}\right),
\]
where $q^*$ is uniquely determined by $1\leq q^*\leq p-1$ and $qq^*\equiv 1 \pmod p$ (similarly for $p^*$ where we exchange the roles of $p$ and $q$). Up to orientation reversal, any such manifold arises as the boundary of a positive definite plumbed $4$-manifold. Here our convention for three-legged Seifert fibered spaces is the one described in Figure~\ref{f:SFS-convention}.

\begin{figure}
\labellist
\pinlabel $e$ at 10 10
\pinlabel $r_1$ at 26 100
\pinlabel $r_2$ at 57 104
\pinlabel $r_3$ at 118 100
\endlabellist
\includegraphics{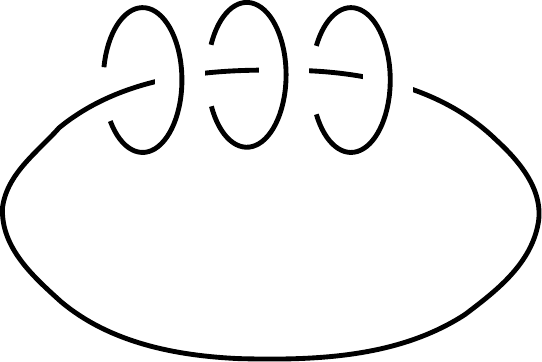}
\caption{A surgery description of the Seifert fibered space $Y(e;r_1,r_2,r_3)$.}\label{f:SFS-convention}
\end{figure}

We describe these 4-manifolds in the next proposition. The easy proof is left as an exercise for the reader.
We refer to \cite[Section~2]{AcetoGolla} for more details on plumbed 4-manifolds.

\begin{prop}\label{seifertplumbing}
Let $n\in\N\setminus\{pq-1,pq,pq+1\}$. Then, the manifold $S^3_{n}(T_{p,q})$ (up to possibly reversing orientation) can be described as the boundary of the positive definite plumbed $4$-manifold associated to one of the following graphs. 
\begin{itemize}
\item If $1\leq n\leq pq-2$ the plumbed 4-manifold whose boundary is $S^3_{n}(T_{p,q})$ is
associated to the graph:
 \[
  \begin{tikzpicture}[xscale=1,yscale=-0.5]
  	\node (A2_-1) at (-2,3) {$\Gamma_{1}:$};
	\node (A0_4) at (4, 0) {$b_1$};
    \node (A0_6) at (6, 0) {$b_m$};
    \node (A0_7) at (7, 0) {$k+1$};
    \node (A1_4) at (4, 1) {$\bullet$};
    \node (A1_5) at (5, 1) {$\dots$};
    \node (A1_6) at (6, 1) {$\bullet$};
    \node (A1_7) at (7, 1) {$\bullet$};
    \node (A2_0) at (0, 2) {$a_n$};
    \node (A2_2) at (2, 2) {$a_1$};
    \node (A2_3) at (3, 2) {$2$};
    \node (A3_0) at (0, 3) {$\bullet$};
    \node (A3_1) at (1, 3) {$\dots$};
    \node (A3_2) at (2, 3) {$\bullet$};
    \node (A3_3) at (3, 3) {$\bullet$};
    \node (A4_4) at (4, 4) {$2$};
    \node (A4_6) at (6, 4) {$2$};
    \node (A5_4) at (4, 5) {$\bullet$};
    \node (A5_5) at (5, 5) {$\dots$};
    \node (A5_6) at (6, 5) {$\bullet$};
\draw[decorate,decoration={brace,amplitude=7pt,mirror},xshift=0.4pt,yshift=-0.3pt](3.9,5.3) -- (6.1,5.3) node[black,midway,yshift=-0.5cm] {\footnotesize $N$};
    \path (A1_4) edge [-] node [auto] {$\scriptstyle{}$} (A1_5);
    \path (A3_0) edge [-] node [auto] {$\scriptstyle{}$} (A3_1);
    \path (A1_6) edge [-] node [auto] {$\scriptstyle{}$} (A1_7);
    \path (A1_5) edge [-] node [auto] {$\scriptstyle{}$} (A1_6);
    \path (A3_3) edge [-] node [auto] {$\scriptstyle{}$} (A1_4);
    \path (A5_4) edge [-] node [auto] {$\scriptstyle{}$} (A5_5);
    \path (A3_2) edge [-] node [auto] {$\scriptstyle{}$} (A3_3);
    \path (A3_3) edge [-] node [auto] {$\scriptstyle{}$} (A5_4);
    \path (A3_1) edge [-] node [auto] {$\scriptstyle{}$} (A3_2);
    \path (A5_5) edge [-] node [auto] {$\scriptstyle{}$} (A5_6);
   \end{tikzpicture}
  \]

\item If $ n\geq pq+2$ the plumbed 4-manifold whose boundary is $-S^3_{n}(T_{p,q})$ is associated to the graph:
 \[
  \begin{tikzpicture}[xscale=1,yscale=-0.5]
    \node at (-2,3) {$\Gamma_{2}:$};
    \node (A0_4) at (4, 0) {$a_1$};
    \node (A0_6) at (6, 0) {$a_n+1$};
    \node (A0_7) at (7, 0) {$2$};
    \node (A0_9) at (9,0)  {$2$};
    \node (A1_4) at (4, 1) {$\bullet$};
    \node (A1_5) at (5, 1) {$\dots$};
    \node (A1_6) at (6, 1) {$\bullet$};
    \node (A1_8) at (8, 1) {$\dots$};
    \node (A1_9) at (9, 1) {$\bullet$};
    \node (A2_0) at (0, 2) {$b_m$};
    \node (A1_7) at (7, 1) {$\bullet$};
    \node (A1_6) at (6, 1) {$\bullet$};
    \node (A2_2) at (2, 2) {$b_1$};
    \node (A2_3) at (3, 2) {$2$};
    \node (A3_0) at (0, 3) {$\bullet$};
    \node (A3_1) at (1, 3) {$\dots$};
    \node (A3_2) at (2, 3) {$\bullet$};
    \node (A3_3) at (3, 3) {$\bullet$};
    \node (A4_4) at (4, 4) {$2$};
    \node (A4_6) at (6, 4) {$2$};
    \node (A5_4) at (4, 5) {$\bullet$};
    \node (A5_5) at (5, 5) {$\dots$};
    \node (A5_6) at (6, 5) {$\bullet$};\draw[decorate,decoration={brace,amplitude=7pt,mirror},xshift=0.4pt,yshift=-0.3pt](6.9,1.3) -- (9.1,1.3) node[black,midway,yshift=-0.5cm] {\footnotesize $k-1$};
\draw[decorate,decoration={brace,amplitude=7pt,mirror},xshift=0.4pt,yshift=-0.3pt](3.9,5.3) -- (6.1,5.3) node[black,midway,yshift=-0.5cm] {{\footnotesize $N$}};
    \path (A1_4) edge [-] node [auto] {$\scriptstyle{}$} (A1_5);
    \path (A1_6) edge [-] node [auto] {$\scriptstyle{}$} (A1_7);
    \path (A1_8) edge [-] node [auto] {$\scriptstyle{}$} (A1_9);
    \path (A1_7) edge [-] node [auto] {$\scriptstyle{}$} (A1_8);
    \path (A3_0) edge [-] node [auto] {$\scriptstyle{}$} (A3_1);
    \path (A1_5) edge [-] node [auto] {$\scriptstyle{}$} (A1_6);
    \path (A3_3) edge [-] node [auto] {$\scriptstyle{}$} (A1_4);
    \path (A5_4) edge [-] node [auto] {$\scriptstyle{}$} (A5_5);
    \path (A3_2) edge [-] node [auto] {$\scriptstyle{}$} (A3_3);
    \path (A3_3) edge [-] node [auto] {$\scriptstyle{}$} (A5_4);
    \path (A3_1) edge [-] node [auto] {$\scriptstyle{}$} (A3_2);
    \path (A5_5) edge [-] node [auto] {$\scriptstyle{}$} (A5_6);
  \end{tikzpicture}
\]
\end{itemize}
where the numbers $k$ and $N$ and the strings of integers $(a_1,\dots,a_n)$, $(b_1,\dots,b_m)$ are uniquely determined by the following: 
\begin{itemize}
 \item $k>0$ satisfies $q=kp+r$ with $1\leq r\leq p-1$;
 \item $N=|pq-n|-1 \ge 1$;
 \item $[b_m,\dots,b_1]^-:=b_m-\frac{1}{b_{m-1}-\frac{1}{\dots}}=\frac{p}{p-r} \ \ ; \ \ b_i\geq 2 \ \ \text{for each} \ i$;
 \item $[a_n,\dots,a_1]^-:=a_n-\frac{1}{a_{n-1}-\frac{1}{\dots}}=\frac{p}{r} \ \ ; \ \ a_i\geq 2 \ \ \text{for each} \ i$.
\end{itemize} 
 \end{prop}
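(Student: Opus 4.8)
The statement to prove is Proposition~\ref{seifertplumbing}, describing explicit plumbing graphs $\Gamma_1$ and $\Gamma_2$ bounding $S^3_n(T_{p,q})$. The plan is to start from the Seifert invariants
\[
S^3_{n}(T_{p,q})=Y\left(2;\frac{p}{q^*},\frac{q}{p^*},\frac{pq-n}{pq-n-1}\right)
\]
recalled before the statement, and expand each rational leg parameter into its negative continued fraction. First I would recall that the Seifert fibered space $Y(e;r_1,r_2,r_3)$ with the convention of Figure~\ref{f:SFS-convention} is the boundary of the star-shaped plumbing with central vertex of weight $e$ and three legs, the $i$-th leg being the linear plumbing on the continued fraction expansion of $r_i$ (or $1/r_i$, depending on orientation conventions — I would fix this carefully at the outset). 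So the entire proof is a bookkeeping exercise: identify the three continued fractions, simplify the central part, and reconcile orientations.

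The key steps, in order: (1) Compute $q^*$ and $p^*$: since $q = kp+r$ with $1\le r\le p-1$, one checks $q \equiv r \pmod p$, so $q^* \equiv r^* \pmod p$ where $rr^*\equiv 1$, and a short computation relates the continued fraction of $p/q^*$ to that of $p/r$ and $p/(p-r)$ — this is where the strings $(a_1,\dots,a_n)$ with $[a_n,\dots,a_1]^- = p/r$ and $(b_1,\dots,b_m)$ with $[b_m,\dots,b_1]^- = p/(p-r)$ enter. (2) Expand $\frac{pq-n}{pq-n-1}$: writing $M = pq-n$, we have $\frac{M}{M-1} = [2,2,\dots,2]^-$ with $M-1$ twos, which produces the third leg of length $N = |pq-n|-1$ when $n < pq$. (3) Handle the central vertex: the $e=2$ central vertex together with the short $[2]$-legs and the way the $k+1$ (resp.\ $a_n+1$) vertices appear comes from absorbing/blowing down to get the stated graphs; here one uses standard moves on plumbing graphs (blow-ups/blow-downs and the slam-dunk-type simplifications) to pass between the ``raw'' star-shaped plumbing and the cleaned-up $\Gamma_1$. (4) For $n \ge pq+2$, the manifold is orientation-reversed, so one takes the mirror plumbing (negate all weights, then re-expand into positive/negative definite form), which swaps the roles of $p/r$ and $p/(p-r)$ and explains why in $\Gamma_2$ the $(a_i)$ string appears on the horizontal leg with the chain of $k-1$ twos attached — this is exactly the continued fraction manipulation $\frac{p}{r} \leftrightarrow \frac{q}{q - \text{something}}$ together with the identity $\frac{q}{p^*}$ having expansion built from $k$ and $p/r$.

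The main obstacle I expect is step (4) and the orientation bookkeeping throughout: keeping track of which manifold ($S^3_n(T_{p,q})$ or its reverse) bounds a \emph{positive} definite plumbing, and verifying that the continued fraction expansions of $p/q^*$ and $q/p^*$ genuinely reorganize into the displayed legs (with the correct vertex $k+1$ or $a_n+1$ at the junction) rather than something merely equivalent after further blow-downs. Concretely, one must check the continued fraction identity relating $\frac{q}{p^*}$ to the data $(k; a_1,\dots,a_n)$: since $q = kp + r$ and $p^*p \equiv 1 \pmod q$, a Euclidean-algorithm argument shows $[\,\underbrace{2,\dots,2}_{k-1}, a_n+1, a_{n-1},\dots,a_1]^-$ (or its reverse) equals the relevant ratio — this is the one genuinely computational lemma, but it is routine once set up, which is why the authors can leave it ``as an exercise.'' Everything else is assembling standard facts about plumbing calculus and Seifert invariants of surgeries on torus knots (as in \cite{Moser}, \cite{OwensStrle}, \cite[Section~2]{AcetoGolla}).
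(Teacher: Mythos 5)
Your proposal is correct and is exactly the computation the authors have in mind: the paper explicitly leaves this proof as an exercise, and the intended argument is precisely the expansion of the three Seifert parameters $p/q^*$, $q/p^*$, $(pq-n)/(pq-n-1)$ into negative continued fractions (using Riemenschneider duality between $p/r$ and $p/(p-r)$, the reversal duality relating $p/r$ and $p/r^*$, and the identity $[k+1,b_m,\dots,b_1]^-=q/p$), together with orientation reversal when $n\ge pq+2$. The only cosmetic remark is that for $\Gamma_1$ no blow-downs are actually needed — the $k+1$ vertex is just the leading term of the expansion of $q/p$, sitting at the far end of its leg by the reversal duality — but this does not affect the validity of your outline.
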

An \emph{integral lattice} is a pair $(G,Q)$ where $G$ is a finitely generated, free, Abelian group and $Q:G \times G\rightarrow\Z$ a symmetric bilinear form. A morphism of integral lattices is a homomorphism of Abelian groups which preserves the bilinear form.
An \emph{embedding} of integral lattices is an injective morphism. We denote by $(\Z^n,\mathrm{Id})$ the standard positive definite lattice of rank $n$. To a given 4-manifold $X$ we can associate 
an integral lattice $(H_2(X;\Z)/\Tor,Q_X)$ where $Q_X$ is the intersection form defined on the second homology group with integral coefficients. If a 4-manifold arises from a plumbing graph $\Gamma$ we denote the associated integral 
lattice by $(\Z\Gamma,Q_{\Gamma})$.

Our main ingredient for the proof of Theorem \ref{maintheorem} is an obstruction based on Donaldson's diagonalization theorem \cite{Donaldson, Donaldson2}. This obstruction relates the existence of a rational ball with morphisms between integral lattices.

\begin{prop}\label{Donaldsonobstruction}
 Let $Y$ be a rational homology sphere. Let $X$ be a smooth 4-manifold with positive-definite intersection form such that $\partial X=Y$. If $Y$ bounds a rational homology ball then there exists an embedding of integral lattices
\[
 (H_2(X;\Z)/\Tor,Q_X)\longrightarrow (\Z^m,\mathrm{Id})
\]
 with $m=\rk(H_2(X;\Z))$.
\end{prop}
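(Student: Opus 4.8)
The plan is to cap off $X$ with a rational homology ball along $Y$ to produce a closed positive-definite $4$--manifold, diagonalize its intersection lattice with Donaldson's theorem, and then read off the desired embedding by restricting to $X$.

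Concretely, let $W$ be a rational homology ball with $\partial W = Y$, and form the closed oriented smooth $4$--manifold $Z := X \cup_Y \overline{W}$, obtained by gluing $X$ to $\overline W$ along their common boundary (using collars to make the gluing smooth). First I would compute $H_*(Z;\Q)$ from the Mayer--Vietoris sequence of this decomposition. Since $Y$ is a rational homology $3$--sphere we have $H_1(Y;\Q) = H_2(Y;\Q) = 0$, and since $\overline W$ is a rational homology ball $H_i(\overline W;\Q) = 0$ for all $i>0$; the portion of the sequence near degree $2$ therefore reads
\[
0 \longrightarrow H_2(X;\Q) \longrightarrow H_2(Z;\Q) \longrightarrow 0,
\]
so the inclusion $\iota\colon X \hookrightarrow Z$ induces an isomorphism $\iota_*\colon H_2(X;\Q) \xrightarrow{\ \cong\ } H_2(Z;\Q)$, and in particular $b_2(Z) = m := \rk H_2(X;\Z)$.

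Next I would check that $\iota_*$ is compatible with the intersection forms: for $a,b \in H_2(X;\Z)$ one has $Q_Z(\iota_* a, \iota_* b) = Q_X(a,b)$, because intersection numbers can be computed by representing classes by cycles pushed into the interior of $X$, where the local intersection data is the same whether one works in $X$ or in $Z$. Combined with the previous paragraph, this shows that $(H_2(Z;\Z)/\Tor, Q_Z)$ is a positive-definite unimodular lattice of rank $m$ (equivalently, $-Z$ is a closed oriented smooth $4$--manifold with negative-definite intersection form). Donaldson's diagonalization theorem~\cite{Donaldson, Donaldson2} then supplies an isomorphism of integral lattices $(H_2(Z;\Z)/\Tor, Q_Z) \cong (\Z^m,\mathrm{Id})$.

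Finally, $\iota_*\colon H_2(X;\Z) \to H_2(Z;\Z)$ descends to a map $H_2(X;\Z)/\Tor \to H_2(Z;\Z)/\Tor$, which is a morphism of integral lattices by the form compatibility above and is injective because it becomes an isomorphism after tensoring with $\Q$ (so any class in its kernel is torsion in $H_2(X;\Z)$). Composing with the diagonalizing isomorphism yields the required embedding
\[
(H_2(X;\Z)/\Tor, Q_X) \hookrightarrow (\Z^m,\mathrm{Id}).
\]
The only step that demands genuine care is the compatibility of the two intersection forms under $\iota_*$ — and, relatedly, verifying that $Q_Z$ is honestly definite rather than merely nondegenerate of the expected rank over $\Q$; the rest is routine Mayer--Vietoris bookkeeping, with the substantive input being the appeal to Donaldson's theorem.
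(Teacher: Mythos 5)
Your proof is correct and is exactly the standard capping-off argument (glue $X$ to the reversed rational ball, apply Donaldson's diagonalization to the resulting closed definite $4$--manifold, restrict to $X$) that the paper relies on; the paper in fact states Proposition~\ref{Donaldsonobstruction} without proof, treating this argument as well known. All the checkpoints you flag — the Mayer--Vietoris computation, compatibility of the intersection forms under inclusion, definiteness of $Q_Z$, and injectivity modulo torsion — are handled correctly.
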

With the notation from Proposition \ref{seifertplumbing} we can specialize the above statement to surgeries on torus knots. 
\begin{co}\label{Donaldsonobstructionforus}
 Let $n\in\N\setminus\{pq-1,pq,pq+1\}$. Suppose that $S^3_{n}(T_{p,q})$ bounds a rational homology ball. Then, there exists an embedding of integral lattices
\[
 (\Z\Gamma_i,Q_{\Gamma_i})\longrightarrow (\Z^m,\mathrm{Id})
\]
 where $m$ is the number of vertices of $\Gamma_i$ and the value of $i\in\{1,2\}$ is determined as in Proposition \ref{seifertplumbing}.
\end{co}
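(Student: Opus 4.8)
The plan is to simply combine Proposition~\ref{seifertplumbing} with Proposition~\ref{Donaldsonobstruction}, the only delicate point being the bookkeeping of orientations. First I would record the elementary fact that the property of bounding a rational homology ball is insensitive to orientation: if $W$ is a rational homology ball with $\partial W = Y$, then $W$ with the reversed orientation is a rational homology ball with boundary $-Y$. Hence $S^3_n(T_{p,q})$ bounds a rational homology ball if and only if $-S^3_n(T_{p,q})$ does.

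Next, since $n\in\N$ (so $n\geq 1$) and $n\neq pq$, the manifold $S^3_n(T_{p,q})$ is a rational homology sphere, and as $n\notin\{pq-1,pq,pq+1\}$ Proposition~\ref{seifertplumbing} applies and splits into two cases. If $1\leq n\leq pq-2$, let $X$ be the positive definite plumbed $4$-manifold associated to $\Gamma_1$, so that $\partial X = S^3_n(T_{p,q})$; if $n\geq pq+2$, let $X$ be the positive definite plumbed $4$-manifold associated to $\Gamma_2$, so that $\partial X = -S^3_n(T_{p,q})$. In either case set $Y=\partial X$, so that $Y = \pm S^3_n(T_{p,q})$ is a rational homology sphere which, by the previous paragraph, bounds a rational homology ball.

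Now I would apply Proposition~\ref{Donaldsonobstruction} to this pair $(Y,X)$: since $X$ is positive definite with $\partial X = Y$ and $Y$ bounds a rational homology ball, there is an embedding of integral lattices $(H_2(X;\Z)/\Tor,\,Q_X)\hookrightarrow(\Z^m,\mathrm{Id})$ with $m=\rk H_2(X;\Z)$. It remains to identify the left-hand lattice with $(\Z\Gamma_i,Q_{\Gamma_i})$ and to compute $m$. The graphs $\Gamma_1$ and $\Gamma_2$ are trees, so the associated plumbed $4$-manifold $X$ admits a handle decomposition with a single $0$-handle and one $2$-handle for each vertex of $\Gamma_i$; therefore $X$ is simply connected, $H_2(X;\Z)$ is free abelian of rank equal to the number of vertices of $\Gamma_i$ with no torsion, and its intersection form is by construction $Q_{\Gamma_i}$. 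Thus $(H_2(X;\Z)/\Tor,Q_X)=(\Z\Gamma_i,Q_{\Gamma_i})$, $m$ equals the number of vertices of $\Gamma_i$, and the value of $i\in\{1,2\}$ is exactly the one dictated by Proposition~\ref{seifertplumbing}, completing the proof.

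The whole argument is essentially a formality; the single place that requires any attention is the $\Gamma_2$ case, where the plumbed manifold bounds $-S^3_n(T_{p,q})$ rather than $S^3_n(T_{p,q})$, which is why the orientation-independence remark in the first paragraph is needed. No step presents a genuine obstacle.
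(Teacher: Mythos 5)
Your proposal is correct and is exactly the argument the paper intends (the corollary is stated without proof as an immediate consequence of Propositions~\ref{seifertplumbing} and~\ref{Donaldsonobstruction}): handle the orientation for the $\Gamma_2$ case via the orientation-insensitivity of bounding a rational homology ball, then identify the intersection lattice of the simply connected plumbing on a tree with $(\Z\Gamma_i,Q_{\Gamma_i})$. No issues.
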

In order to state our main technical result, we need to introduce some notation. In both $\Gamma_1$ and $\Gamma_2$ the strings $(a_1,\dots,a_n)$ and $(b_1,\dots,b_m)$ satisfy  
\begin{equation*}
\frac1{[a_n,\dots,a_1]^-}+\frac1{[b_m,\dots,b_1]^-}=1.
\end{equation*}
We refer to such strings as \emph{complementary}. We will refer to the bottom right 2-chain in $\Gamma_1$ and $\Gamma_2$ as the $2$-leg, and to the other two linear connected components obtained after deleting the trivalent vertex as the qc-legs (where qc stands for \emph{quasi complementary}). The next proposition is the key ingredient in the proof of our main theorem, Theorem~\ref{maintheorem}.

\begin{prop}\label{p:mainA}
Let $\Gamma_1$ and $\Gamma_2$ be as in Proposition \ref{seifertplumbing}. Suppose further that $1\leq k \leq 7$ and that neither of the strings $(a_1,\dots,a_n)$ and $(b_1,\dots,b_m)$ is a $2$-chain. Finally, in the case of the graphs $\Gamma_{1}$ assume additionally that if $k=7$ then $N\geq 4$. Under these constraints,
\begin{enumerate}
\item The only lattices of the form $(\Z\Gamma_{1},Q_{\Gamma_{1}})$ which admit an embedding into the standard lattice of the same rank $(\Z^{\rk(\Gamma_{1})},\mathrm{Id})$ belong to the subfamily
\[
  \begin{tikzpicture}[xscale=1,yscale=-0.5]
	\node (A0_4) at (4, 0) {$3$};
	\node (A0_4) at (5, 0) {$6$};
    \node (A0_6) at (7, 0) {$6$};
    \node (A0_7) at (8, 0) {$5+1$};
    \node (A1_4) at (4, 1) {$\bullet$};
    \node (A1_5) at (5, 1) {$\bullet$};
    \node (A1_6) at (6, 1) {$\dots$};
    \node (A1_7) at (7, 1) {$\bullet$};
    \node (A1_8) at (8, 1) {$\bullet$};
    \node (A2_0) at (0, 2) {$2$};
    \node (A2_2) at (1, 2) {$3$};
    \node (A2_3) at (2, 2) {$2$};
    \node at (3, 2) {$2$};
    \node  at (-2, 2) {$2$};
    \node  at (-1, 2) {$2$};
    \node  at (0, 2) {$2$};
    \node  at (-4, 2) {$2$};
    \node (A3_-3) at (-4, 3) {$\bullet$};
    \node (A3_-2) at (-3, 3) {$\dots$};
    \node (A3_-1) at (-2, 3) {$\bullet$};
    \node (A3_0) at (-1, 3) {$\bullet$};
    \node (A3_1) at (0, 3) {$\bullet$};
    \node (A3_2) at (1, 3) {$\bullet$};
    \node (A) at (2, 3) {$\bullet$};
    \node (A3_3) at (3, 3) {$\bullet$};
    \node (A4_4) at (4, 4) {$2$};
    \node (A5_4) at (4, 5) {$\bullet$};
    \draw[decorate,decoration={brace,amplitude=7pt,mirror},xshift=0.4pt,yshift=-0.3pt](4.9,1.3) -- (7.1,1.3) node[black,midway,yshift=-0.5cm] {\footnotesize $r$};
   \draw[decorate,decoration={brace,amplitude=7pt,mirror},xshift=0.4pt,yshift=-0.3pt](-2.1,3.3) -- (1.1,3.3) node[black,midway,yshift=-0.5cm] {\footnotesize $\mathcal{P}$}; 
    \path (A1_4) edge [-] node [auto] {$\scriptstyle{}$} (A1_5);
    \path (A3_0) edge [-] node [auto] {$\scriptstyle{}$} (A3_1);
    \path (A1_6) edge [-] node [auto] {$\scriptstyle{}$} (A1_7);
    \path (A1_7) edge [-] node [auto] {$\scriptstyle{}$} (A1_8);
    \path (A1_5) edge [-] node [auto] {$\scriptstyle{}$} (A1_6);
    \path (A3_3) edge [-] node [auto] {$\scriptstyle{}$} (A1_4);
    \path (A3_3) edge [-] node [auto] {$\scriptstyle{}$} (A5_4);
    \path (A3_1) edge [-] node [auto] {$\scriptstyle{}$} (A3_0);
    \path (A3_1) edge [-] node [auto] {$\scriptstyle{}$} (A3_2);
    \path (A3_2) edge [-] node [auto] {$\scriptstyle{}$} (A);
    \path (A) edge [-] node [auto] {$\scriptstyle{}$} (A3_3);
    \path (A3_0) edge [-] node [auto] {$\scriptstyle{}$} (A3_-1);
    \path (A3_-1) edge [-] node [auto] {$\scriptstyle{}$} (A3_-2);
    \path (A3_-2) edge [-] node [auto] {$\scriptstyle{}$} (A3_-3);
   \end{tikzpicture}
  \]
where on the left qc-leg the pattern $\mathcal{P} := [2,2,2,3]$ is repeated $r>0$ times between two weight $2$ vertices.
\item The lattice $(\Z\Gamma_{2},Q_{\Gamma_{2}})$ admits no embedding into $(\Z^{\rk(\Gamma_{2})},\mathrm{Id})$.
\end{enumerate}
\end{prop}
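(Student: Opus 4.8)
The proof of Proposition~\ref{p:mainA} is fundamentally a case analysis of lattice embeddings $\iota \colon (\Z\Gamma_i, Q_{\Gamma_i}) \hookrightarrow (\Z^m, \mathrm{Id})$, organized by the value of $k \in \{1,\dots,7\}$ and by the combinatorics of how the basis vertices of $\Gamma_i$ map to the standard basis $e_1,\dots,e_m$ of $\Z^m$. The starting point is the observation that the three legs of $\Gamma_i$ plus the trivalent vertex give an explicit positive-definite plumbing lattice, and that the existence of $\iota$ forces strong local constraints: each vertex of square $c$ is sent to a sum of $\pm e_j$'s hitting exactly $c$ basis vectors (counted with multiplicity, but in fact with multiplicity one since the legs carry weights $\geq 2$ and the combinatorial type is a tree), and adjacent vertices must share exactly one basis vector while non-adjacent vertices must be orthogonal. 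This turns the problem into a bookkeeping problem about how the ``standard'' basis vectors $e_j$ are distributed among the vertices along each leg, which is governed by continued-fraction expansions — precisely the data $[a_n,\dots,a_1]^- = p/r$ and $[b_m,\dots,b_1]^- = p/(p-r)$ together with the $2$-leg and the $N$ extra $(-2)$-vertices (or $k-1$ extra $(-2)$-vertices in $\Gamma_2$).

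**Key steps in order.** First I would set up the local normal forms: fix $\iota$, record for each leg the ``stack'' of basis vectors, and use the fact that a $(-2)$-chain of length $\ell$ embedded in $\Z^m$ with each consecutive pair sharing one vector, and the endpoints of the chain interacting with the trivalent vertex in the prescribed way, has an essentially unique shape (this is the kind of analysis Lisca carried out for linear lattices). Second, I would exploit the presence of the long $(-2)$-chain of length $N$ (resp.\ the $2$-leg) attached to the trivalent vertex to pin down the image of the trivalent vertex and its neighbors, which dramatically restricts the possibilities: the trivalent vertex has square $2$ (in $\Gamma_1$, the vertex $a_1$ has weight $2$, etc.), so it occupies two basis vectors, one shared with each of its neighbors along the three legs — but a weight-$2$ vertex with three neighbors cannot literally have each neighbor share a distinct basis vector, so the combinatorics around the branch point already forces one of the legs to begin in a constrained way. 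Third, with the branch point controlled, I would propagate the constraints outward along the qc-legs, using the complementarity relation $1/[a_n,\dots,a_1]^- + 1/[b_m,\dots,b_1]^- = 1$ and the hypothesis that neither string is a bare $2$-chain, to show that for each fixed $k$ only the displayed periodic pattern $\mathcal{P} = [2,2,2,3]$ can occur on the left qc-leg (for $\Gamma_1$), and that no configuration at all is consistent for $\Gamma_2$ because the extra $(k-1)$ $(-2)$-vertices inserted on the $a$-leg side create an obstruction that cannot be resolved by any distribution of basis vectors. Fourth, I would handle the boundary values of $k$ (especially $k=7$, where the hypothesis $N \geq 4$ is needed to rule out a small sporadic embedding) and verify the surviving family genuinely embeds by exhibiting the map explicitly.

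**The main obstacle.** The hard part is the sheer size and delicacy of the case analysis in Step 3: for each $k$ one must track how the continued-fraction data of $p/r$ and $p/(p-r)$ interacts with the rigidity imposed by the branch point and the length-$N$ tail, and rule out all but one periodic family, while in parallel showing $\Gamma_2$ is always obstructed. The difficulty is compounded by the fact that embeddings of $(-2)$-chains are not quite unique — there is a ``fold'' ambiguity at each step — so one needs a clean invariant (for instance, tracking which basis vectors are ``used up'' versus ``available,'' or a parity/counting argument comparing $\mathrm{rk} = $ number of vertices against the number of basis vectors actually hit) to cut down the branching. I expect the cleanest route is to phrase the whole thing in terms of the combinatorial gadget Lisca calls a ``standard subset'' (or Greene's ``changemaker''-adjacent language): assign to each leg a string recording multiplicities, show the embedding condition is equivalent to a system of equalities among these strings, and then solve that system by induction on leg length, with the induction base cases being exactly the $k \leq 7$ computations. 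The write-up will necessarily be long — this is flagged in the outline as ``the longest section in this article'' — but conceptually it is a single sustained rigidity argument, and the novelty (as the introduction stresses) is that it produces an \emph{obstruction} rather than a classification of embeddable lattices.
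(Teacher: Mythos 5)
Your setup rests on a false local rigidity claim that undermines the whole ``normal form'' strategy: it is not true that a vertex of square $c$ must hit exactly $c$ basis vectors, each with coefficient $\pm 1$, nor that adjacent vertices share exactly one basis vector. A weight-$5$ vertex can map to $2e_1+e_2$, and two adjacent vertices can share several basis vectors whose products partially cancel (e.g.\ $e_1+e_2+e_3$ and $e_1+e_2-e_3+e_4$ pair to $1$); the actual proof has to fight exactly these configurations throughout. Because of them the embeddings of the qc-legs are nowhere near rigid enough to be ``propagated outward from the branch point'': for each fixed $k$ the legs have unbounded length and the continued-fraction data $p/r$ is arbitrary, so your proposed induction on leg length has no finite list of cases and no identified invariant to control the branching. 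You sense this yourself when you ask for ``a clean invariant \dots comparing $\mathrm{rk}$ against the number of basis vectors actually hit,'' but you never produce one, and that invariant is precisely where the proof lives.

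The paper's mechanism is a double count of $I(\Gamma)=\sum_i (v_i\cdot v_i-3)$: computed from the weights this equals $k-N-5$ for $\Gamma_1$ and $-N-k-1$ for $\Gamma_2$, while computed from the embedding it is a sum over \emph{basis vectors} of $\bigl(\sum_i(\alpha_j^i)^2\bigr)-3$, so only a basis vector hitting one vertex (contribution $-2$) or exactly two vertices with coefficients $\pm1$ (contribution $-1$) can contribute negatively. The bulk of the section then shows (i) no basis vector hits only one vertex, and (ii) the negative contribution of each basis vector hitting exactly two vertices is ``compensated'' by companion vectors hitting an internal vertex of weight $2$ or $3$, with at most two exceptions; Lisca's classification of good and bad linear subsets enters here not as a propagation tool but to rule out degenerate subconfigurations obtained by deleting vertices and basis vectors and testing determinants. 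The resulting bound $I(\Gamma)\ge -2$, improved by the contribution of at least $N$ from the $2$-leg, kills $\Gamma_2$ outright and forces $N=1$ for $\Gamma_1$; only then does a short structural analysis near the trivalent vertex isolate the displayed family. Without the $I$-count and the compensation lemmas, your plan does not close.
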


We are now ready to present the proof of Theorem~\ref{maintheorem}. Several key facts in it will be stated and proved in subsequent sections.
 
\begin{proof}[Proof of Theorem~\ref{maintheorem}]
We want to determine which manifolds $S^3_n(T_{p,q})$ bound rational homology balls. We distinguish two main cases. In what follows, let $k = \lfloor \frac qp \rfloor$, i.e. $q = kp + r$ for some $1 \le r \le p-1$.
\begin{enumerate}
\item First, let us assume that $S^3_n(T_{p,q})$ is Seifert fibered with three exceptional fibers, i.e.\ that $n\in \N\setminus\{pq,pq\pm 1\}$~\cite{Moser}. 
	\begin{itemize}
	\item If $q\equiv \pm1 \bmod p$ then we can directly apply \cite[Theorem 1.4]{AcetoGolla}. This accounts for the families (1),(2),(3) and (5) in the statement. 
	\item If $q\not\equiv \pm1 \bmod p$ we argue as follows. On the one hand, it follows easily from Proposition~\ref{seifertplumbing} that, 
since $q\not\equiv \pm1 \bmod p$, neither $(a_1,\dots,a_n)$ nor $(b_1,\dots,b_m)$ is a $2$-chain.
Moreover, by Proposition~\ref{pruning1}, we know that in this case, if $S^3_n(T_{p,q})$ is the boundary of a rational homology ball, then $1\leq k \leq 7$.
Furthermore, Proposition~\ref{pruning2} guarantees that, if $S^3_n(T_{p,q})$ is associated to a graph in the family $\Gamma_{1}$ via Proposition~\ref{seifertplumbing} and bounds a rational homology ball, then its parameters do not satisfy $k=7$ and $N\in\{1,2,3\}$.

We want to use Corollary~\ref{Donaldsonobstructionforus} as an obstruction to $S^3_n(T_{p,q})$ bounding a rational homology ball. By the above remarks, it suffices to study the cases in the statement of Proposition~\ref{p:mainA}. By Corollary~\ref{Donaldsonobstructionforus}, the only manifolds $S^3_n(T_{p,q})$ which in this case might bound rational homology balls are the ones in the family in the first statement of Proposition~\ref{p:mainA}. In Proposition~\ref{bounding} we construct explicit rational homology balls bounding them. This is family (4) in the statement of Theorem~\ref{maintheorem}.
\end{itemize}
\item Now assume that $S^3_n(T_{p,q})$ is either a lens space or a connected sum of two lens spaces, i.e.\ that $n\in \{pq,pq\pm 1\}$. In this case Lisca's work (see~\cite{Lisca-ribbon} and~\cite{Lisca-sums}) provides us with a complete list of which lens spaces or sums of lens spaces bound rational balls. The next step is then to identify which manifolds in Lisca's lists arise as Dehn surgery on torus knots. This work is carried out in the appendix (see Proposition~\ref{reduciblebounding} and Proposition~\ref{lensspacesbounding}).
From this analysis we obtain the remaining families stated in Theorem \ref{maintheorem}.\qedhere
\end{enumerate}
\end{proof}
\end{section}
\section{Explicit rational homology balls, ribbon disks, and trace embeddings}\label{exceptional}

In this section we will exhibit rational homology balls corresponding to the triples in $\Gc$, i.e. those which yield three-legged Seifert fibered spaces. We show that these rational homology balls arise as double covers of the 4--ball branched over slice surfaces for Montesinos links, and, for triples in families (1)--(3) we show that the corresponding trace surgeries embed in $\CP^2$.

To fix the notation, we write $m(K)$ for the mirror of the knot $K \subset S^3$.

\subsection{Families (1)--(3)}

We begin with families (1) and (2). Recall that we denote with $X_n(K)$ the trace of the surgery along the knot $K$, i.e. the 4--manifold obtained by attaching an $n$--framed 2--handle to $B^4$ along $K$.
Here, for convenience, we temporarily drop our convention that in $T_{p,q}$ we always have $p < q$, so that we write $T_{p,p-1}$ instead of $T_{p-1,p}$

\begin{prop}\label{embedding-fam12}
The knot trace $X_{p^2}(T_{p,p\pm 1})$ embeds in $\CP^2$. Moreover, we can choose the embedding so that the complement is a rational homology ball built with one handle of each index 0, 1, and 2.
\end{prop}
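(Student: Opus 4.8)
\textbf{Proof proposal for Proposition~\ref{embedding-fam12}.}

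The plan is to exhibit the embedding by an explicit Kirby calculus argument. Recall that $T_{p,p-1}$ is the torus knot that arises as the singular link of the curve $x^p = y^{p-1}$, and more relevantly that $T_{p,p+1}$ and $T_{p,p-1}$ appear in the classical picture of rational unicuspidal curves: a smooth rational curve of degree $p$ in $\CP^2$ with a singular point of type $(p, p\mp 1)$ has a regular neighborhood which is precisely the trace $X_{p^2}(T_{p, p\pm 1})$ (the self-intersection of a degree $p$ rational curve is $p^2$, which matches the surgery coefficient). Thus the natural strategy is: first realize such a curve (or, at the smooth level, such a neighborhood) concretely inside $\CP^2$, and then identify the complement. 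Concretely, I would start from $\CP^2$ with its standard handle decomposition (a $0$--handle, a $2$--handle attached along an unknot with framing $+1$, and a $4$--handle), and perform a sequence of blow-ups and handleslides to produce a diagram in which an unknot with framing $p^2$, which after a further isotopy becomes $T_{p,p\pm1}$ with framing $p^2$, is visibly split off as a summand. Equivalently, one can build $X_{p^2}(T_{p,p\pm1})$ from $B^4$ and attach the ``rest'' of $\CP^2$; the point is to check that what remains, turned upside down, is a rational homology ball.

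The key steps, in order, would be: (i) Write down the plumbing/Kirby diagram for the canonical negative-definite resolution associated to the surgery $S^3_{p^2}(T_{p,p\pm1})$ — this is exactly the graph $\Gamma_1$ of Proposition~\ref{seifertplumbing} in the special case $q = p\pm 1$, where one of the qc-legs degenerates. (ii) Recognize that for $q \equiv \pm 1 \pmod p$ the complementary strings $(a_i)$, $(b_j)$ are linear strings of $2$'s of the appropriate length, so the graph $\Gamma_1$ is a specific star-shaped plumbing whose lattice was already shown to embed in the standard diagonal lattice in \cite{AcetoGolla}. (iii) Use that lattice embedding, together with the fact (cited in the introduction) that all of Lisca's and the authors' rational balls are built from handles of index $\le 2$, to realize the actual rational homology ball $B_{p,p\pm1; p^2}$ with a handle decomposition using one $0$--handle, one $1$--handle and one $2$--handle — or directly construct such a $B$ by hand via the standard ``$p^2$-surgery on $T_{p,p\pm1}$ bounds a ball'' construction coming from the rational cuspidal curve picture. (iv) Glue $X_{p^2}(T_{p,p\pm1})$ to $\overline{B_{p,p\pm1;p^2}}$ (the upside-down ball) along their common boundary $S^3_{p^2}(T_{p,p\pm1})$; the resulting closed $4$--manifold $Z$ has $\chi(Z) = \chi(X_{p^2}) + \chi(B) = 2 + 1 = 3$, is simply connected (no $1$--handles after turning $B$ upside down, since $B$ had only handles of index $\le 2$), has $b_2^+ \ge 1$ because it contains a positive-definite piece, and hence by Freedman plus the fact that it is smoothable with the right intersection form (one needs $b_2 = 1$ and $Q_Z \cong \langle 1 \rangle$, which follows from the lattice embedding being an isomorphism after the handle count) is diffeomorphic to $\CP^2$. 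Reading the decomposition backwards then gives the desired embedding of $X_{p^2}(T_{p,p\pm 1})$ in $\CP^2$ with rational-homology-ball complement built from one handle of each index $0$, $1$, $2$.

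The main obstacle I expect is step (iv), and more precisely the need to show that the closed manifold $Z$ obtained by the gluing is actually \emph{diffeomorphic} (not merely homeomorphic) to $\CP^2$, so that the construction genuinely produces an embedding of the trace in the honest $\CP^2$ rather than in an exotic copy. Freedman's theorem immediately gives a homeomorphism, and the argument sketched in the introduction already shows that $Z$ is a homotopy $\CP^2$; upgrading to a diffeomorphism requires either an explicit handle cancellation argument (showing directly that the Kirby diagram for $Z$ reduces to the standard one-$2$--handle diagram of $\CP^2$) or an appeal to the complex-geometric model, i.e. the existence of the rational cuspidal curve of degree $p$ and type $(p, p\pm 1)$ in $\CP^2$ from \cite{55letters}, whose regular neighborhood \emph{is} $X_{p^2}(T_{p,p\pm1})$ by construction and whose complement is by general position a rational homology ball. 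I would take the latter route for cleanliness: invoke the classification of rational unicuspidal curves to get the curve, note that its tubular neighborhood realizes the trace embedding, and then separately run the Kirby-calculus handle count to certify that the complementary rational ball can be arranged with exactly one handle of each index $0,1,2$. The secondary technical point — checking the precise lengths of the strings of $2$'s and that the relevant handleslides go through — is routine given the structure of $\Gamma_1$ when $q = p \pm 1$, and I would relegate it to the figure accompanying the proof.
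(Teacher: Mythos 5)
There is a genuine gap in the route you say you would take. Your ``clean'' path is to invoke the classification of rational unicuspidal curves from \cite{55letters} and take a tubular neighborhood of a degree-$p$ curve with one cusp of type $(p,p\pm1)$. This works for the sign $-$: the curve $\{zx^{p-1}=y^p\}$ has degree $p$, a single cusp of type $(p-1,p)$, and self-intersection $p^2$, so its neighborhood is $X_{p^2}(T_{p,p-1})$. But for the sign $+$ no such curve exists: a degree-$d$ rational cuspidal curve with one cusp of link $T_{a,b}$ must satisfy $(a-1)(b-1)=(d-1)(d-2)$, and for $(a,b)=(p,p+1)$ with $d=p$ this reads $p(p-1)=(p-1)(p-2)$, which is false. (This is exactly why the paper's introduction lists family $(p,p+1;p^2)$ among the cases \emph{not} covered by the cuspidal-curve literature and flags Proposition~\ref{embedding-fam12} as the place where those traces are shown to embed.) So the complex-geometric shortcut proves only half the statement, and the half it misses is the one the proposition is really needed for. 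A secondary problem is step (iii): a lattice embedding is an obstruction-theoretic datum and does not by itself produce a rational homology ball, let alone one with a prescribed handle decomposition; you cannot ``use that lattice embedding to realize the actual rational homology ball.''

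Your fallback — an explicit Kirby-calculus identification — is in fact the paper's actual proof, but you leave it entirely unexecuted, and it is not quite the computation you describe. The paper does not start from the standard diagram of $\CP^2$ and split off the trace; it starts from the \emph{double} $DX$ of $X=X_{p^2}(T_{p,p\pm1})$, drawn as a $-p^2$-framed $m(T_{p,p\pm1})$ (the closure of the braid $(\sigma_1\cdots\sigma_{p-1})^{-(p\pm1)}$) together with a $0$-framed meridian. A single blow-up along the braid axis unknots the attaching curve and gives it framing $0$; a zero-dot surgery converts it into a carved $1$-handle, and the decomposition of the closed manifold as (something)$\,\cup X$ is preserved because only the $K$-handle is slid over the new $+1$-framed unknot. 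The resulting closed manifold is identified with $\CP^2$ either by explicitly cancelling the $1$-handle against the $0$-framed meridian, or by quoting Property P (a closed $4$-manifold with a single $2$-handle and no $1$- or $3$-handles is $\pm\CP^2$) — this is how the paper resolves your (legitimate) worry in step (iv) about getting the honest smooth $\CP^2$ rather than a homotopy one. The rational ball complement is then read off as the $0$-handle, the $1$-handle, and the $+1$-framed $2$-handle. If you want a complete proof you must either carry out this handle calculus for both signs, or find a replacement for the cuspidal-curve argument in the $(p,p+1;p^2)$ case.
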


\begin{proof}
For convenience, call $X = X_{p^2}(T_{p,p\pm 1})$ and $DX$ its double, viewed as the union of $-X$ and $X$ turned upside down. We note (but do not use) that $DX$ is diffeomorphic to $S^2 \times S^2$ if $p$ is even, and to $\CP^2 \# \CPbar$ if $p$ is odd.

$DX$ has a handle decomposition with two 2--handles, one attached along $K = m(T_{p,p\pm 1})$ with framing $-p^2$, and the other attached along a meridian of $K$ with framing $0$. This is what is depicted on the left in Figure~\ref{f:families12}, where we view $K$ as the closure of the $p$--braid $(\sigma_1\cdots\sigma_{p-1})^{-(p\pm 1)}$.

\begin{figure}
\labellist
\pinlabel $-\frac{p\pm 1}{p}$ at 24 59
\pinlabel $0$ at 68 0
\pinlabel $-p^2$ at 59 117
\pinlabel $\mp\frac{1}{p}$ at 137 59
\pinlabel $+1$ at 148 0
\pinlabel $0$ at 197 0
\pinlabel $0$ at 186 117
\pinlabel $\mp\frac{1}{p}$ at 265 59
\pinlabel $0$ at 325 0
\pinlabel $+1$ at 276 0
\endlabellist
\includegraphics{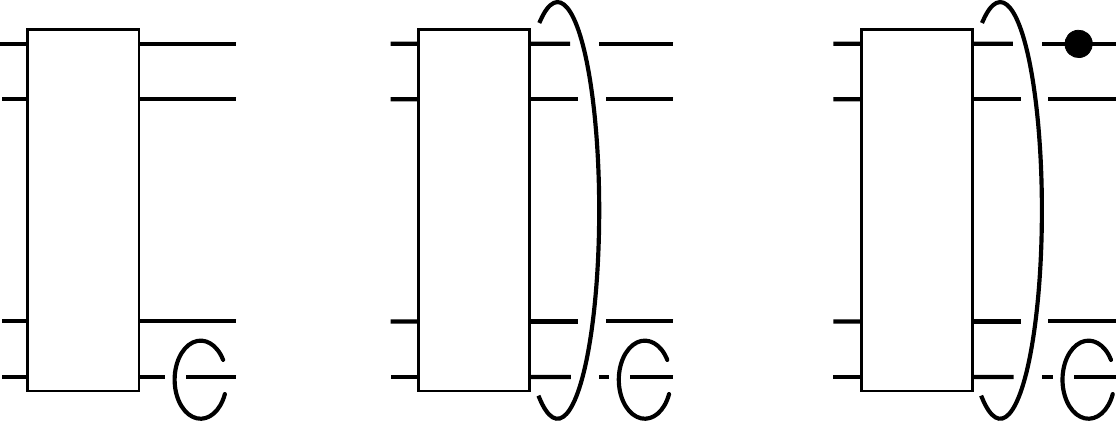}
\caption{Kirby calculus for embedding knot traces in $\CP^2$ for families (1) and (2). In all figures there is a 4--handle.}\label{f:families12}
\end{figure}

We perform a blow-up on $DX$ along the axis of the braid, and we obtain the handle decomposition in the middle of Figure~\ref{f:families12}. Note that the knot $K$ becomes unknotted and gets framing $0$; therefore, we can do a zero-dot surgery, and replace it with a carved 1--handle, as in Figure~\ref{f:families12} on the right. We also observe that since we only slid the $2$--handle with attaching curve $K$ along the $+1$--framed unknot, the new 4--manifold is the union of $-X\#\CP^2$ and $X$ along their boundary; in particular, the union of the $0$--framed $2$--handle attached along the meridian of $K$ and the $4$--handle is still diffeomorphic to $X$.

After sliding off the axis of the braid from the 1--handle, we can cancel the 1--handle with the meridian of the corresponding dotted curve, and we obtain a handle decomposition of $\CP^2$. Indeed, since the meridian of the dotted curve is 0--framed, the other 2--handle remains a $+1$--framed unknot. We did not need to do this explicitly, at the cost of using the Property P (by Property P, if a \emph{closed} 4-manifold admits a handle decomposition with a single 2--handle and no 1--handles or 3--handles, then it must be $\pm \CP^2$).

Therefore, the right-most picture shows a rational homology ball (the $0$--handle, the $1$--handle, and the $+1$--framed $2$--handle) embedded in $\CP^2$; its complement is the union of the other $2$--handle and the $4$--handle, which, as observed above, is diffeomorphic to $X$. This proves the claim.
\end{proof}

The argument for family (3) is quite similar, so we just outline it.

\begin{prop}\label{embedding-fam3}
The knot trace $X_{4p^2}(T_{p,4p\pm 1})$ embeds in $\CP^2$. Moreover, we can choose the embedding so that the complement is a rational homology ball built with one handle of each index 0, 1, and 2.
\end{prop}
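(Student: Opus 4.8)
The plan is to imitate almost verbatim the proof of Proposition~\ref{embedding-fam12}, adjusting the single blow-up to a sequence of blow-ups suited to the braid word for $T_{p,4p\pm1}$. Concretely, set $X = X_{4p^2}(T_{p,4p\pm 1})$ and let $DX$ be its double, viewed as $(-X) \cup (X \text{ upside down})$. As before, $DX$ has a handle decomposition with two $2$--handles: one attached along $K = m(T_{p,4p\pm 1})$, realized as the closure of the $p$--braid $(\sigma_1\cdots\sigma_{p-1})^{-(4p\pm1)}$, with framing $-4p^2$, and one attached along a meridian of $K$ with framing $0$. The framing $-4p^2 = -(2p)^2$ is exactly the surgery coefficient appearing in family (3), and the relevant ``cabling slope'' $-(4p\pm1)/p$ fed into the slam-dunk/Rolfsen-twist picture is what makes this work; this is the analogue of the left-hand picture in Figure~\ref{f:families12}.

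The key difference is that a single blow-up along the braid axis no longer unknots $K$: the exponent $4p\pm1$ means we must blow up (along the braid axis, with the appropriate sign) enough times to reduce the braid to the trivial one and simultaneously kill the framing. I would perform $4$ successive blow-ups along the axis (each a $+1$--framed unknot linking all $p$ strands), or equivalently a single $+1/4$--surgered axis that a Rolfsen twist turns into four $+1$--framed unknots; tracking the framing shows that after these blow-ups the image of $K$ becomes a $0$--framed unknot while the $0$--framed meridian is unaffected. As in the previous proof, we then do a zero-dot surgery to convert the $0$--framed unknot $K$ into a carved $1$--handle, slide the braid axes off the $1$--handle, and cancel the $1$--handle against the meridian of its dotted curve. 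Because we only ever slid the $K$--handle over $+1$--framed unknots (and never over the meridian handle), the union of the $0$--framed meridian $2$--handle and the $4$--handle remains diffeomorphic to $X$ throughout; by Property~P the remaining closed $4$--manifold (which has a handle decomposition with a single $2$--handle and no $1$-- or $3$--handles) must be $\pm\CP^2$, and since it contains a positively-framed handle it is $\CP^2$. The complementary piece — the $0$--handle, the carved $1$--handle, and the surviving $+1$--framed $2$--handle — is the desired rational homology ball built with one handle of each index $0$, $1$, $2$, embedded in $\CP^2$ with complement $X$.

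The main obstacle is bookkeeping the framings and the braid word through the sequence of blow-ups: one must check that blowing up along the axis $4$ times (with the correct sign, depending on whether we are in the $4p+1$ or the $4p-1$ case) both trivializes the braid $(\sigma_1\cdots\sigma_{p-1})^{-(4p\pm1)}$ and changes the framing of the $K$--handle from $-4p^2$ to exactly $0$. This is a routine linking-number computation — each blow-up along the axis changes the framing by $+p^2$ (the axis links the $p$--strand cable once, so the framing shift is $(+1)\cdot p^2$), and $-4p^2 + 4p^2 = 0$ — but the sign conventions and the interaction with the $\pm1$ in the braid exponent need to be handled with care. Since the excerpt explicitly says the argument ``is quite similar, so we just outline it,'' I would present only this outline: reproduce the three-step Kirby picture (double, blow up the axis four times, zero-dot surgery and cancel the $1$--handle), note the framing arithmetic $-4p^2 \mapsto 0$, and invoke Property~P exactly as in Proposition~\ref{embedding-fam12} to conclude.
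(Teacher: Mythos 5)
Your setup is fine up to a point: realizing $m(T_{p,4p\pm1})$ as the closure of $(\sigma_1\cdots\sigma_{p-1})^{-(4p\pm1)}$, blowing up four times along the braid axis, and checking that the framing changes by $+p^2$ per blow-up so that $-4p^2\mapsto 0$ while the braid word becomes trivial --- all of this matches the paper's Figure~\ref{f:family3}(a)--(b). The gap is in the endgame. Each blow-up introduces a genuine $+1$--framed $2$--handle into the complement of $X$, so after four blow-ups and the zero-dot surgery the closed manifold is $DX\#4\CP^2$ modified by surgery on a sphere: it has Euler characteristic $4+4-2=6$ and $b_2=4$, whereas $\CP^2$ has Euler characteristic $3$. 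Your claim that after cancelling the $1$--handle against the meridian ``the remaining closed $4$--manifold has a single $2$--handle'' is therefore false --- four exceptional $2$--handles survive, Property~P does not apply, and the piece you call the rational homology ball (one $0$--, one $1$--, one $2$--handle) simply omits the other three exceptional handles, so it is not even a rational homology ball ($\chi$ would have to be $1$, but the complement of $X$ at that stage has $\chi=4$).

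This is precisely why the paper's proof of this case is longer than the one for families (1)--(2): after the four blow-ups one must kill three extra units of $b_2^+$ sitting in the complement of $X$. The paper does this by sliding the four $+1$--framed handles off one another (turning three of them into a chain of $+2$--framed curves), adding two extra $2$--handles in a way that preserves the decomposition $X'\cup X$, and only then blowing down the $+1$--framed unknotted curves; the resulting complement $Q$ has \emph{two} $1$--handles and \emph{two} $2$--handles, and one then observes a cancelling $1$--/$2$--handle pair to reduce to a decomposition with one handle of each index. None of this is routine framing bookkeeping --- it is the substantive content of the family-(3) argument, and your outline as written does not produce an embedding of $X$ in $\CP^2$.
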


\begin{proof}
We refer to Figure~\ref{f:family3}. On the left we have the double $DX$ of $X = X_{4p^2}(T_{p,4p\pm 1})$, where $X$ is the union of the $0$--framed $2$--handle and the $4$--handle.

\begin{figure}
\labellist
\pinlabel {\tiny $-\frac{4p\pm1}p$} at 24 95
\pinlabel {\tiny $0$} at 50 31
\pinlabel {\tiny $4p^2$} at 65 155
\pinlabel {\tiny $\mp\frac{1}p$} at 122 95 
\pinlabel {\tiny $0$} at 220 31
\pinlabel {\tiny $+1$} at 144 30
\pinlabel {\tiny $+1$} at 160 30
\pinlabel {\tiny $+1$} at 176 30
\pinlabel {\tiny $+1$} at 192 30
\pinlabel {\tiny $\mp\frac{1}p$} at 280 95 
\pinlabel {\tiny $0$} at 330 31
\pinlabel {\tiny $+1$} at 316 155
\pinlabel {\tiny $+2$} at 288 35
\pinlabel {\tiny $+2$} at 288 21
\pinlabel {\tiny $+2$} at 288 7
\pinlabel {\tiny $\mp\frac{1}p$} at 393 95 
\pinlabel {\tiny $0$} at 450 31
\pinlabel {\tiny $+1$} at 430 155
\pinlabel {\tiny $+2$} at 430 35
\pinlabel {\tiny $+2$} at 430 20
\pinlabel {\tiny $+2$} at 430 5
\pinlabel {\tiny $+1$} at 402 9
\pinlabel {\tiny $+0$} at 383 10
\pinlabel {\tiny $\mp\frac{1}p$} at 506 95 
\pinlabel {\tiny $0$} at 563 31
\pinlabel {\tiny $+1$} at 543 155
\pinlabel {\tiny $-3$} at 500 14
\endlabellist
\includegraphics[width=\textwidth]{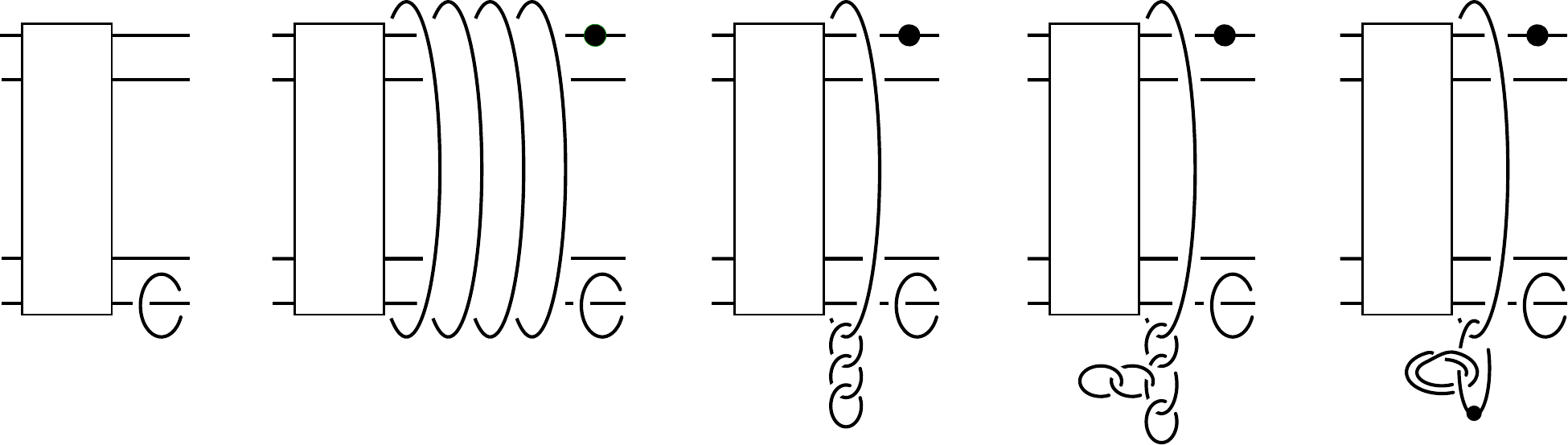}
\caption{Kirby calculus for embedding knot traces in $\CP^2$, for family (3). In all figures there is a 4--handle.}\label{f:family3}
\end{figure}

We blow up four times along the axis of the braid, obtaining a decomposition of $DX$ as $(-X \# 4\CP) \cup X$, and then we do a zero-dot surgery along the $0$--framed $2$--handle that corresponded to $m(T_{p,4p\pm1})$ (Figure~\ref{f:family3}(b)). We then slide the $+1$--framed $2$--handles one off the other (Figure~\ref{f:family3}(c)). We then add two extra $2$--handles, which preserve the decomposition of a $4$--manifold as $X'\cup X$ (Figure~\ref{f:family3}(c)), and we blow down the $+1$-framed unknotted attaching curves, to get to Figure~\ref{f:family3}(e). This exhibits a handle decomposition of a $4$--manifold as $X \cup Q$, where $Q$ is the rational homology ball constructed with the two $1$--handles, and two of the $2$--handles.

We have a cancelling pair of a $1$-- and a $2$--handle in $Q$; so $Q$ has a handle decomposition with only one $1$--handle and one $2$--handle. Moreover, either by using the property P or by handle-sliding and explicitly cancelling the remaining $1$--handle with the $2$--handle in $X$, we see that the $4$--manifold in Figure~\ref{f:family3}(e) is indeed $\CP^2$.
\end{proof}

\subsection{Family (4)}

In this section we will show that the manifolds $S^{3}_{n}(T_{p,q})$ obtained as surgeries on torus knots that correspond to the graphs in Proposition~\ref{p:mainA} (via Proposition~\ref{seifertplumbing}) do bound rational homology balls constructed with exactly one 1--handle and one 2--handle. We do \emph{not} verify that gluing the 2--ball and the trace of the cobordism we obtain an embedding in $\CP^2$.

The graphs under consideration are of the form
\[
  \begin{tikzpicture}[xscale=1,yscale=-0.5]
	\node (A0_4) at (4, 0) {$3$};
	\node (A0_4) at (5, 0) {$6$};
    \node (A0_6) at (7, 0) {$6$};
    \node (A0_7) at (8, 0) {$5+1$};
    \node (A1_4) at (4, 1) {$\bullet$};
    \node (A1_5) at (5, 1) {$\bullet$};
    \node (A1_6) at (6, 1) {$\dots$};
    \node (A1_7) at (7, 1) {$\bullet$};
    \node (A1_8) at (8, 1) {$\bullet$};
    \node (A2_0) at (0, 2) {$2$};
    \node (A2_2) at (1, 2) {$3$};
    \node (A2_3) at (2, 2) {$2$};
    \node at (3, 2) {$2$};
    \node  at (-2, 2) {$2$};
    \node  at (-1, 2) {$2$};
    \node  at (0, 2) {$2$};
    \node  at (-4, 2) {$2$};
    \node  at (-5, 3) {$\Lambda_{s}=$};
    \node (A3_-3) at (-4, 3) {$\bullet$};
    \node (A3_-2) at (-3, 3) {$\dots$};
    \node (A3_-1) at (-2, 3) {$\bullet$};
    \node (A3_0) at (-1, 3) {$\bullet$};
    \node (A3_1) at (0, 3) {$\bullet$};
    \node (A3_2) at (1, 3) {$\bullet$};
    \node (A) at (2, 3) {$\bullet$};
    \node (A3_3) at (3, 3) {$\bullet$};
    \node (A4_4) at (4, 4) {$2$};
    \node (A5_4) at (4, 5) {$\bullet$};
    \draw[decorate,decoration={brace,amplitude=7pt,mirror},xshift=0.4pt,yshift=-0.3pt](4.9,1.3) -- (7.1,1.3) node[black,midway,yshift=-0.5cm] {\footnotesize $s$};
   \draw[decorate,decoration={brace,amplitude=7pt,mirror},xshift=0.4pt,yshift=-0.3pt](-2.1,3.3) -- (1.1,3.3) node[black,midway,yshift=-0.5cm] {\footnotesize repeated $s$ times}; 
    \path (A1_4) edge [-] node [auto] {$\scriptstyle{}$} (A1_5);
    \path (A3_0) edge [-] node [auto] {$\scriptstyle{}$} (A3_1);
    \path (A1_6) edge [-] node [auto] {$\scriptstyle{}$} (A1_7);
    \path (A1_7) edge [-] node [auto] {$\scriptstyle{}$} (A1_8);
    \path (A1_5) edge [-] node [auto] {$\scriptstyle{}$} (A1_6);
    \path (A3_3) edge [-] node [auto] {$\scriptstyle{}$} (A1_4);
    \path (A3_3) edge [-] node [auto] {$\scriptstyle{}$} (A5_4);
    \path (A3_1) edge [-] node [auto] {$\scriptstyle{}$} (A3_0);
    \path (A3_1) edge [-] node [auto] {$\scriptstyle{}$} (A3_2);
    \path (A3_2) edge [-] node [auto] {$\scriptstyle{}$} (A);
    \path (A) edge [-] node [auto] {$\scriptstyle{}$} (A3_3);
    \path (A3_0) edge [-] node [auto] {$\scriptstyle{}$} (A3_-1);
    \path (A3_-1) edge [-] node [auto] {$\scriptstyle{}$} (A3_-2);
    \path (A3_-2) edge [-] node [auto] {$\scriptstyle{}$} (A3_-3);
   \end{tikzpicture}
  \]
and the corresponding parameters $n,p,q$ that describe the boundaries of the plumbed 4-manifolds can be defined recursively 
using the auxiliary sequence defined in the introduction
$\{R_s\}$ as
\[
\left\{
\begin{array}{l}
R_0 = 1,\\
R_1 = 3,\\
R_{s+1} = 6R_s - R_{s-1}.
\end{array}
\right.
\]

We now verify that this is the correct plumbing. Let $p = R_s$ and $q = R_{s+1}$. Since the surgery coefficient is $n=R_sR_{s+1}-2$, the corresponding graph is of type 1, with 2-chain of length $N = pq-n-1 = 1$.

Let us look at the two qc-legs. The sequence $R_s$ is easily checked to be increasing, and since $R_{s+1} = 6R_{s} - R_{s-1}$, we have that $q = 5p + r$, where $0 < r = p-R_{s-1} < p$, so that $k=5$. We want to compute the sequence $(b_1, \dots, b_m)$ first: we know that $[b_m, \dots, b_1]^- = \frac{p}{p-r}$; however, from the computation above, this is
\[
\frac{p}{p-r} = \frac{R_{s}}{R_{s-1}} = \frac{6R_{s-1}-R_{s-2}}{R_{s-1}} = 6 - \left(\frac{R_{s-1}}{R_{s-2}}\right)^{-1}.
\]
An easy induction shows that the corresponding $b_j$ is $6$ as long as $R_{j-1} > 1$, when $b_j = R_1 = 3$. This verifies that the top right leg is the same as in $\Lambda_s$.
Now we know that $(a_1,\dots,a_n)$ is dual to $(b_1,\dots,b_n)$, and Riemenschneier's point rule (that we recall in the Section~\ref{ss:riemen} below) readily recovers $(a_1,\dots,a_n)$ as claimed.

Indeed, from Proposition~\ref{seifertplumbing} we obtain that the boundary of the 4-manifold associated to $\Lambda_{s} $ is $S^{3}_{R_{s}R_{s+1}-2}(T_{R_{s},R_{s+1}})$, the fourth family in Theorem~\ref{maintheorem}.
\begin{prop}\label{bounding}
$S^{3}_{R_{s}R_{s+1}-2}(T_{R_{s},R_{s+1}})$ bounds a rational homology ball for each $s\geq0$.
\end{prop}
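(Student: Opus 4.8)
The plan is to produce, for every $s\ge 0$, an explicit handle decomposition of a $4$--manifold $B_s$ with exactly one handle of each index $0$, $1$, and $2$, and then to check that $B_s$ is a rational homology ball with $\partial B_s\cong S^3_{R_sR_{s+1}-2}(T_{R_s,R_{s+1}})$. Since the discussion preceding the statement identifies the latter manifold with the boundary of the plumbed $4$--manifold $X_{\Lambda_s}$ associated to $\Lambda_s$, it is enough to arrange $\partial B_s\cong\partial X_{\Lambda_s}$. The arithmetic that makes this possible is the identity $R_sR_{s+1}-2=d_s^2$, where $d_s:=\tfrac12(R_{s+1}-R_s)$ satisfies the same recursion $d_{s+1}=6d_s-d_{s-1}$ with $d_0=1$ and $d_1=7$; this follows from the Cassini--type relation $R_{s-1}R_{s+1}-R_s^2=8$, itself an immediate induction from the recursion for $\{R_s\}$. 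In particular the surgery coefficient is a perfect square, which is the numerical shadow of $B_s$ having $b_1=b_2=0$ and $|H_1(\partial B_s)|=d_s^2$.

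To build $B_s$ I would start from a surgery diagram for $S^3_{R_sR_{s+1}-2}(T_{R_s,R_{s+1}})$ --- either the plumbing diagram $\Lambda_s$ or the $(R_sR_{s+1}-2)$--framed $T_{R_s,R_{s+1}}$ --- introduce a cancelling $1$--/$2$--handle pair, trade a $0$--framed unknotted $2$--handle for a $1$--handle by a zero-dot surgery, and then simplify using Rolfsen twists, slam dunks, handle slides, and blow-downs of $(+1)$--framed unknots, until nothing is left but the carved $1$--handle $C$ and a single $2$--handle glued along a knot $\gamma_s$ that runs over $C$ with algebraic winding number $d_s$; this is $B_s$. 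The recursion $R_{s+1}=6R_s-R_{s-1}$ --- visible in the repeating blocks $[2,2,2,3]$ and $[6]$ of the two qc-legs of $\Lambda_s$ --- lets one run the reduction as an induction on $s$: passing from $\Lambda_s$ to $\Lambda_{s+1}$ inserts one further copy of each block, and the chain of moves that collapses the diagram for $B_s$ extends across the inserted blocks; alternatively the whole reduction can be performed once and for all in the brace notation.

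Two checks then complete the proof. For the homology, $\chi(B_s)=1-1+1=1$ and the $2$--handle is attached along a curve with nonzero algebraic winding number around the unique $1$--handle, hence rationally nontrivial in $H_1(S^1\times D^3)$; so $H_2(B_s;\Q)=0$, $B_s$ is a rational homology ball, and $H_1(\partial B_s)\cong\Z/d_s^2=\Z/(R_sR_{s+1}-2)$, which agrees with $H_1\big(S^3_{R_sR_{s+1}-2}(T_{R_s,R_{s+1}})\big)$. For the boundary, undoing the Kirby moves --- in particular reinstating the $1$--handle as a $0$--framed $2$--handle --- returns the surgery description of $S^3_{R_sR_{s+1}-2}(T_{R_s,R_{s+1}})$, so $\partial B_s\cong\partial X_{\Lambda_s}$ as required. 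Finally, because the $2$--handle passes over the $1$--handle as a single band, $B_s$ is visibly the double cover of $B^4$ branched over a ribbon surface for the associated Montesinos link, with that band responsible for fusion number $1$ when $n$ is odd; this is the input for Proposition~\ref{p:ribbon}.

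The main obstacle is the middle step: pinning down the correct attaching curve $\gamma_s$ (equivalently, the correct band for the Montesinos link) and proving the resulting Kirby equivalence uniformly in $s$. The delicate points are keeping the induction honest --- so that the local modification $\Lambda_s\rightsquigarrow\Lambda_{s+1}$ is mirrored by a local modification of the already-simplified picture --- and correctly bookkeeping framings and winding numbers through the Rolfsen twists; the homology computation, by contrast, is routine once the diagram is in hand.
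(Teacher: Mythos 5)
Your outline matches, in spirit, what the paper does (and what its remark after the proof makes explicit: the ball is $S^1\times B^3$ together with a single $2$--handle), but as written it has a genuine gap, and you have named it yourself: nothing in the proposal actually produces the attaching curve $\gamma_s$ or proves the claimed Kirby equivalence uniformly in $s$. That step is not a technical afterthought --- it is the entire content of the proposition. Saying ``simplify using Rolfsen twists, slam dunks, handle slides, and blow-downs until only a carved $1$--handle and one $2$--handle remain'' asserts exactly what must be proved; without an explicit curve and an explicit, $s$--uniform sequence of moves, the homology check and the ``undo the moves'' argument have nothing to apply to. The auxiliary arithmetic ($R_sR_{s+1}-2=d_s^2$ with $d_s=\tfrac12(R_{s+1}-R_s)$) is correct but is neither needed nor a substitute for the construction; likewise, identifying the algebraic winding number $d_s$ is more than is required.

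The paper closes this gap by turning the problem around: instead of collapsing the whole diagram, it exhibits one integral surgery curve (the gray curve in Figure~\ref{f:exceptional_family1}, drawn on the plumbing diagram for $\Lambda_s$) and shows that surgery along it converts $S^3_{R_sR_{s+1}-2}(T_{R_s,R_{s+1}})$ into $S^1\times S^2$. The verification is short and uniform in $s$: three blow-downs leave a $+1$--framed unknot flanked by two linear chains whose coefficient strings are complementary, and such a configuration is a surgery diagram for $S^1\times S^2$ by \cite[Proposition 2.4]{Paolo} --- this is where the specific block structure of the qc-legs of $\Lambda_s$ (the repeating $[2,2,2,3]$ and $[6]$ patterns) is actually used, rather than in an open-ended induction on Kirby moves. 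Once that cobordism exists, the ball is simply $S^1\times B^3$ with the $2$--handle attached (equivalently, one can carry a $0$--framed meridian through the same moves to get your one $0$--, one $1$--, one $2$--handle decomposition), and the rational homology check is the easy part, as you say. So to complete your argument you would need to supply precisely this missing datum: a concrete surgery curve, and a criterion such as the complementary-strings fact that certifies the target is $S^1\times S^2$ for all $s$ at once.
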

\begin{proof}
We show that $S^{3}_{R_{s}R_{s+1}-2}(T_{R_{s},R_{s+1}})$ bounds a rational homology ball by exhibiting an integral surgery from this manifold to $S^1 \times S^2$. Such a surgery corresponds to attaching a 2--handle to $\partial(S^1 \times B^3)$, and since the resulting 3-manifold boundary is a rational homology sphere, it is easy to check that the 4-manifold must be a rational homology ball. After representing the graph $\Lambda_s$ as a surgery diagram, the gray curve in Figure~\ref{f:exceptional_family1} shows an integral surgery to $S^1 \times S^2$. To see this, simply perform three blow-downs to arrive at Figure~\ref{f:exceptional_family2}. Note that after these blow-downs, the left qc-leg in the graph is completely unchanged, the central vertex now has weight 1, the short leg with one weight 2 vertex is no longer there and the top qc-leg has lost the first 3 weight vertex and now the former second vertex with weight 6 is linked to the old central vertex and has now weight 3.

It is immediate to check that the surgery coefficients on the strings of unknots on either side of the component with framing 1 in Figure~\ref{f:exceptional_family2} are complementary. This in turn implies that Figure~\ref{f:exceptional_family2} is a surgery diagram for $S^1 \times S^2$, since there is a sequence of blowdowns that terminates with a single 0--framed unknot (see \cite[Proposition 2.4]{Paolo}).
\end{proof}

\begin{re}
Note that it is possible, in principle, to construct the required rational homology ball from the surgery cobordism exhibited in the proof: just attach a $0$--framed $2$--handle along the meridian of the surgery curve and follow the same moves as in the previous proof. This gives an explicit handle decomposition of the ball described above.
\end{re}

\begin{figure}[h]
\labellist
\pinlabel $2$ at 130 20
\pinlabel $1$ at 350 20
\pinlabel $6$ at 335 185
\pinlabel $3$ at 260 185
\pinlabel $2$ at 185 185
\pinlabel $2$ at 110 185
\endlabellist
\centering
\includegraphics[scale=.70]{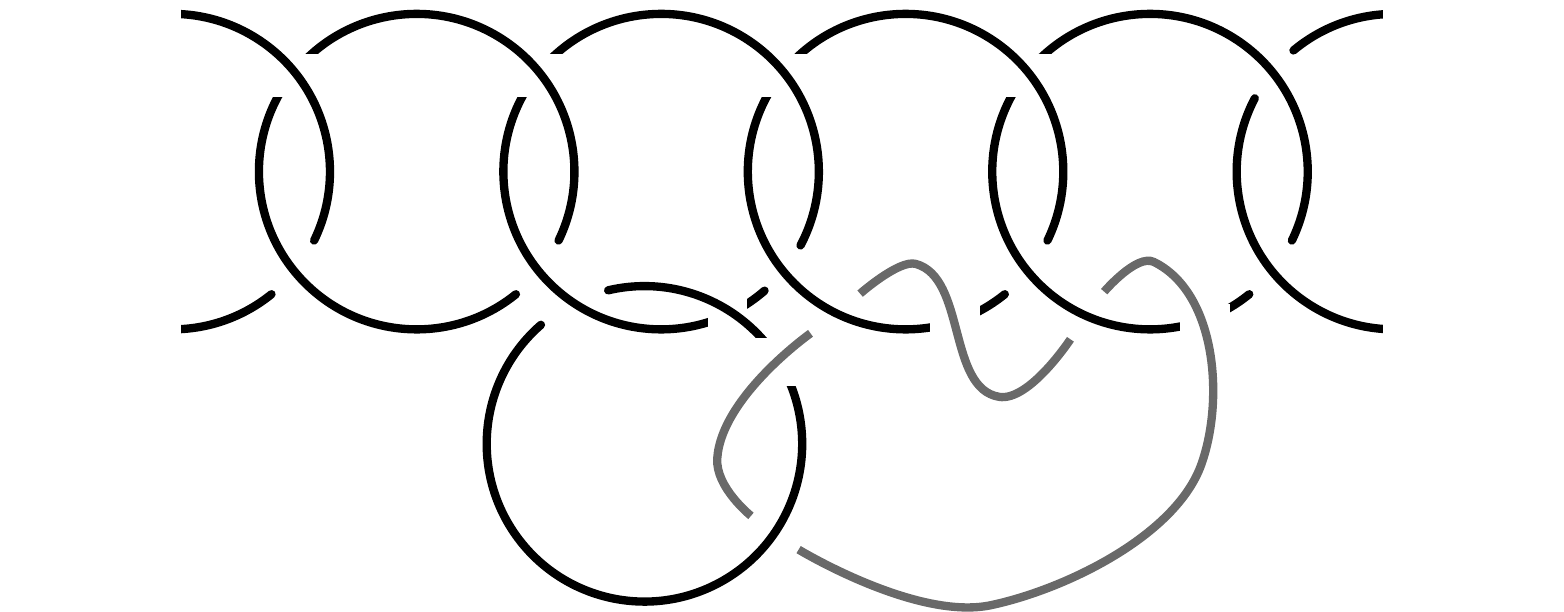}
\caption{A surgery from $\Lambda_s$ to $S^1 \times S^2$.}\label{f:exceptional_family1}
\end{figure}

\begin{figure}[h]
\labellist
\pinlabel $2$ at 120 120
\pinlabel $1$ at 190 120
\pinlabel $3$ at 255 120
\endlabellist
\centering
\includegraphics[scale=.70]{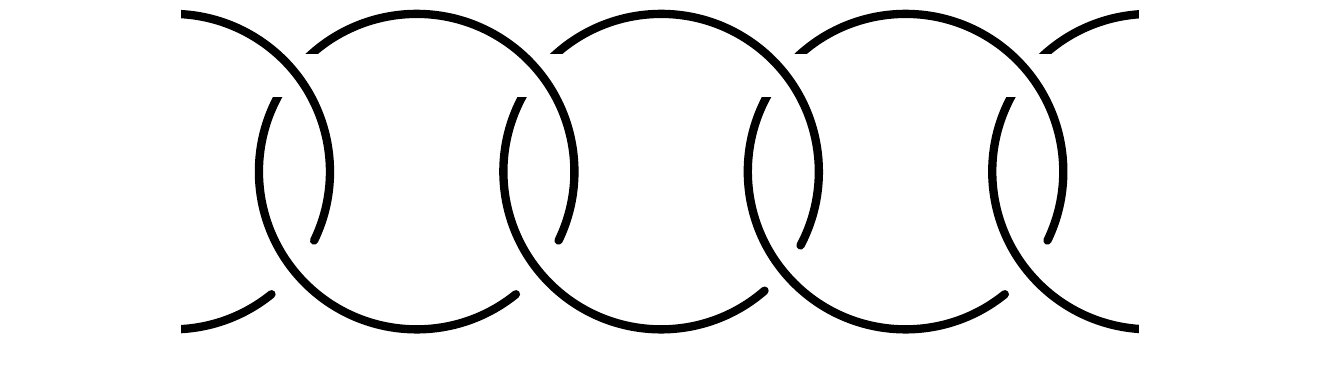}
\caption{A surgery diagram for $S^1 \times S^2$.}
\label{f:exceptional_family2}
\end{figure}

\subsection{The two sporadic cases in (5)}

Similar to what has been done in the previous subsection, we exhibit surgeries along knots in $S^3_{64}(T_{3,22})$ and $S^3_{256}(T_{6,43})$ yielding $S^1\times S^2$. The proof is analogous to the one in the previous subsection, so we omit it, and rather just display the two handle attachments. We also observe that we already know from~\cite{55letters} that the corresponding knot traces embed in $\CP^2$. We do \emph{not} verify that the rational homology balls that we obtain from the next proposition can be glued to the traces to form $\CP^2$, but we believe this to be the case.

\begin{prop}
$S^3_{64}(T_{3,22})$ and $S^3_{256}(T_{6,43})$ both bound rational homology balls constructed with one $0$--handle, one $1$--handle and one $2$--handle.
\end{prop}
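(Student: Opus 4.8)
The plan is to follow verbatim the strategy used in Proposition~\ref{bounding}: for each of the two sporadic triples $(3,22;64)$ and $(6,43;256)$, exhibit an integral surgery from the three-legged Seifert fibered space to $S^1\times S^2$, and conclude that the trace of this surgery, capped off with a $0$--handle, is a rational homology ball. The point is that attaching a $2$--handle to $\partial(S^1\times B^3)$ along a curve in $S^3_n(T_{p,q})$ so that the resulting boundary is $S^1\times S^2$ produces a cobordism $W$ from $S^3_n(T_{p,q})$ to $S^1\times S^2$; filling in the $S^1\times S^2$ end with $S^1\times B^3$ yields a compact $4$--manifold with boundary $S^3_n(T_{p,q})$. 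A homology computation (identical to the one implicit in the proof of Proposition~\ref{bounding}, using that both $S^3_n(T_{p,q})$ and $S^1\times S^2$ have the homology they do, together with Mayer--Vietoris) shows this $4$--manifold is a rational homology ball built with one handle of each index $0$, $1$, $2$.

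The concrete content is therefore the two Kirby-calculus pictures. First I would write down the plumbing graph $\Gamma_1$ associated to $(3,22;64)$ via Proposition~\ref{seifertplumbing}: here $q=22=7\cdot3+1$, so $k=7$, $r=1$, $N=pq-n-1=66-64-1=1$, and the qc-legs are computed from $[b_m,\dots,b_1]^-=p/(p-r)=3/2=[2,2]^-$ and $[a_n,\dots,a_1]^-=p/r=3=[3]^-$ (with the trivalent vertex adjusted as in the graph). Similarly for $(6,43;256)$: $q=43=7\cdot6+1$, so $k=7$, $r=1$, $N=258-256-1=1$, with $[b_m,\dots,b_1]^-=6/5=[2,2,2,2,2]^-$ and $[a_n,\dots,a_1]^-=6=[6]^-$. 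Realizing each $\Gamma_1$ as a surgery diagram (a chain of unknots with the indicated framings, meeting in a trivalent clasp), I would then overlay a gray surgery curve and perform a short sequence of blow-downs — exactly as in Figures~\ref{f:exceptional_family1} and~\ref{f:exceptional_family2} — reducing the diagram to one in which the framings on the two linear strings flanking a single $+1$--framed (or after a further blow-down, $0$--framed) unknot are complementary continued fractions. By \cite[Proposition~2.4]{Paolo} (equivalently \cite[Proposition~2.4]{Paolo} as cited in the previous subsection), any such diagram admits a sequence of blow-downs terminating in a single $0$--framed unknot, hence represents $S^1\times S^2$.

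Since the two computations are completely parallel to the one already carried out in detail for the family $\{R_s\}$, and differ only in the specific framings appearing on the legs, the proof reduces to displaying the two handle attachments; we do this and leave the routine verification of the blow-down sequence to the reader, as stated. The main (and only real) obstacle is bookkeeping: one must correctly track the framings through the three blow-downs and check, at the end, that the resulting pair of strings on either side of the distinguished unknot are genuinely complementary — i.e. that their continued-fraction expansions $[c_1,\dots,c_a]^-$ and $[d_1,\dots,d_b]^-$ satisfy $\frac1{[c_1,\dots,c_a]^-}+\frac1{[d_1,\dots,d_b]^-}=1$ — so that \cite[Proposition~2.4]{Paolo} applies. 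Given the explicit small size of both graphs ($k=7$, $N=1$, short $a$-legs), this is a finite check that can be performed directly on the diagrams, and no new idea beyond those in Proposition~\ref{bounding} is needed.
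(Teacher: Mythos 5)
Your proposal follows exactly the paper's strategy: the paper's proof likewise consists only of displaying the two surgery diagrams (Figures~\ref{f:322attack} and~\ref{f:643attack}) giving integral surgeries to $S^1\times S^2$, with the homological argument and the blow-down verification deferred to the proof of Proposition~\ref{bounding}. Your identification of the two plumbings (central weight $2$ with legs $[3]$, $[2,2,8]$, $[2]$, respectively $[6]$, $[2,2,2,2,2,8]$, $[2]$) is correct, and the only content you leave implicit — the explicit attaching curve in each diagram — is precisely what the paper's figures supply.
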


\begin{proof}
The two corresponding handle attachments are displayed in Figures~\ref{f:322attack} and~\ref{f:643attack}. In both figures, the bracketed components represent the corresponding 3--manifold as boundary of a positive definite plumbing, and the unbracketed 2--handle gives the cobordism.
\end{proof}

\begin{figure}
\begin{subfigure}[t]{0.45\textwidth}
\includegraphics[width=\textwidth]{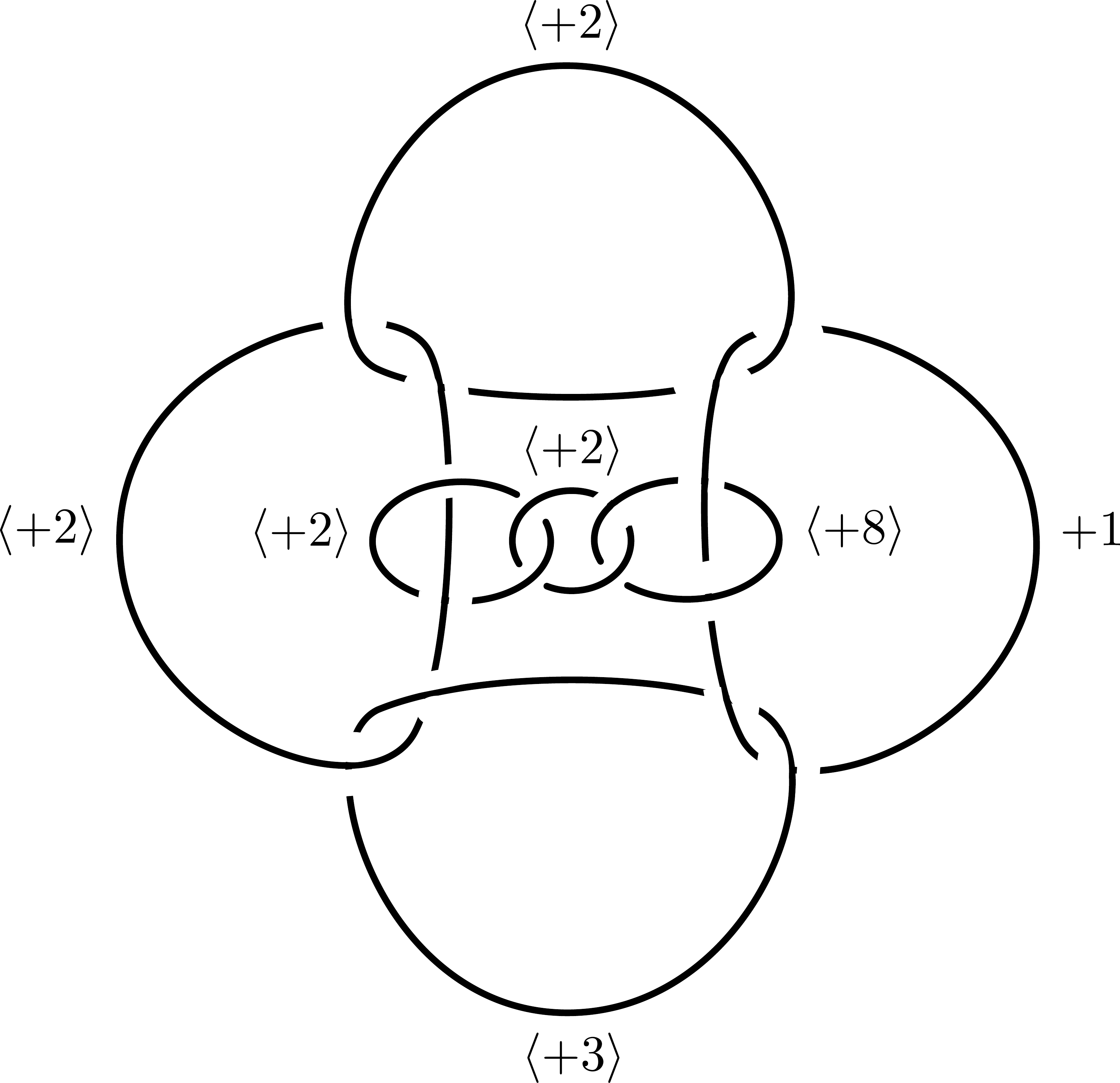}
\caption{The surgery cobordism from $S^3_{64}(T_{3,22})$ to $S^1\times S^2$.}
\label{f:322attack}
\end{subfigure}
\hspace{0.05\textwidth}
\begin{subfigure}[t]{0.45\textwidth}
\includegraphics[width=\textwidth]{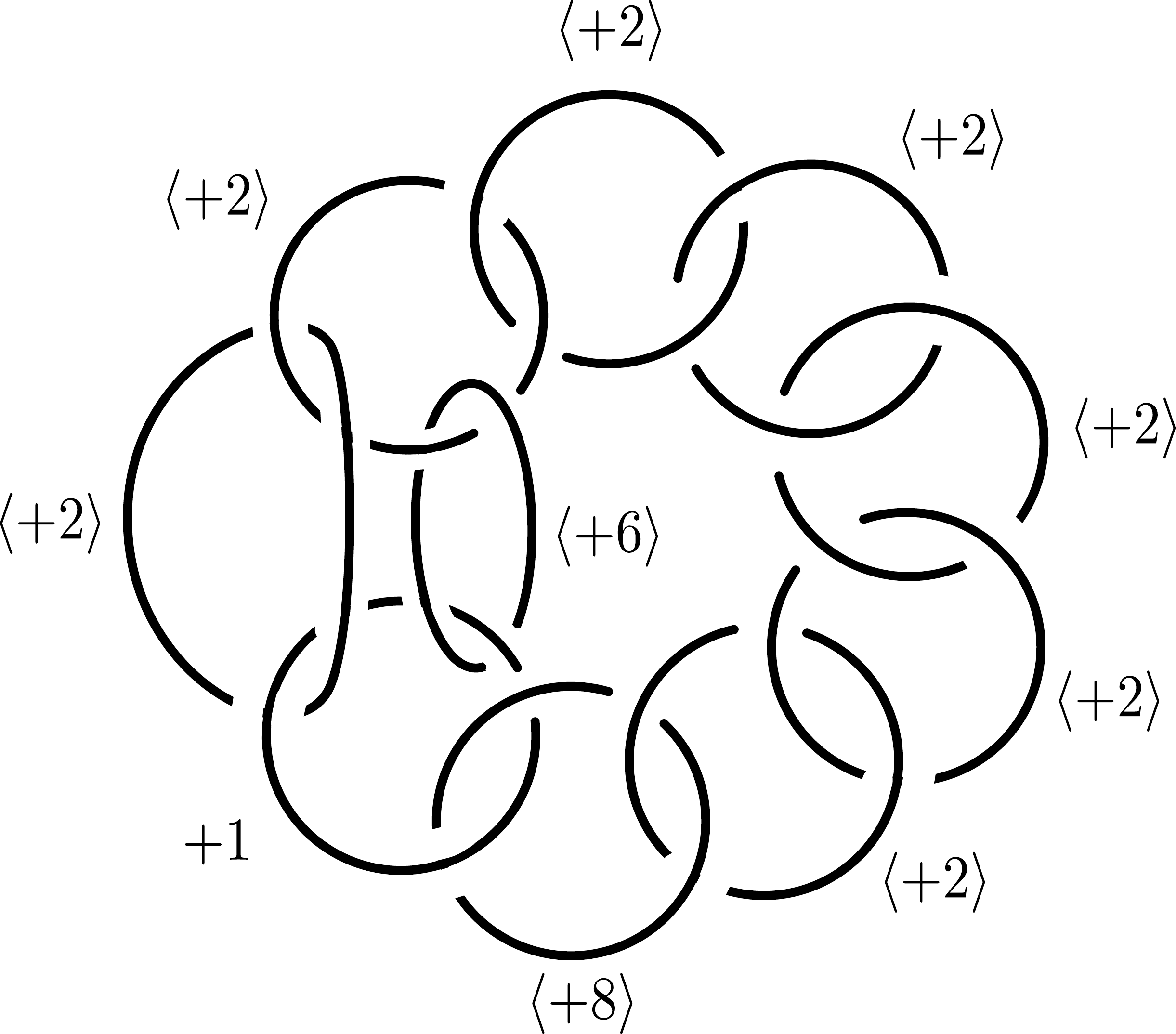}
\caption{The surgery cobordism from $S^3_{256}(T_{6,43})$ to $S^1\times S^2$.}
\label{f:643attack}
\end{subfigure}
\caption{}
\end{figure}

\subsection{Relationship to Montesinos links}

Each Seifert fibered rational homology 3--sphere is the double branched cover of a Montesinos link in $S^3$~\cite{montesinos}. The Seifert fibered spaces in Theorem~\ref{maintheorem}, that is, the surgeries on torus knots determined by the triples in $\mathcal G$, all have the following property, which we will show below: the rational homology balls they bound are double covers of $B^{4}$ branched over surfaces of Euler characteristic one. These surfaces are properly embedded in $B^{4}$ and their boundaries are the above mentioned Montesinos links.

If the Montesinos link is a knot, the surface bounded in $B^{4}$ is a disc and the knot is said to be slice. There is a vast literature on the subject, probably starting with the work of Casson and Harer~\cite{CassonHarer} and followed by a more systematic study of links in the Montesinos family bounding surfaces of Euler characteristic $1$ by several authors (see for example~\cite{Lisca-ribbon, Lisca-sums, greene-jabuka, 5pretzel, Lecuona, lec-pretzel, miller-pretzel, owens-chi}).

In general a Seifert space does not determine a unique Montesinos link. Indeed, a single Seifert space might be the double cover of $S^{3}$ branched over non isotopic mutant Montesinos links~\cite{bedient}. However, in the case of three-legged Seifert fibered spaces, the ones appearing in Theorem~\ref{maintheorem}, the correspondence between the double covers and the Montesinos links is one-to-one~\cite[Theorem~12.29]{BurdeZ}. In light of this, we will denote with $M(r_1,r_2,r_3)$ the Montesinos link whose branched double cover is the Seifert fibered space $Y(0;r_1,r_2,r_3)$.

We now want to prove Proposition~\ref{p:ribbon}, asserting that if $M(r_1,r_2,r_3)$ has branched double cover $S^3_n(T_{p,q})$ for some $p,q,n$, then it is slice if and only if it ribbon, and this happens if and only if $(p,q;n) \in \Gc$. Moreover, if $n$ is odd, then $M(r_1,r_2,r_3)$ is a knot with fusion number $1$.

%

\begin{proof}
We will argue family by family. We start with the first two families.

It is easy to verify that $S^3_{p^2}(T_{p,p+1}) = Y(0;p,-p,-p-1)$ and $S^3_{(p+1)^2}(T_{p,p+1}) = Y(0;p,-p-1,p+1)$. Notice that the corresponding families of Montesinos links will be either knots, when the surgery coefficient is odd, or two-component links, when the surgery coefficient is even. In this case, since the parameters are all integers, the Montesinos links belong to the well-studied families of pretzel links. The pretzel knots under consideration have all precisely one even parameter and the fact that they are ribbon was established in~\cite{lec-pretzel}. They have all fusion number one, as can be extrapolated from Figure~\ref{f.pretzels}. In the link case, the construction of a non-orientable surface of Euler characteristic one is also sketched in Figure~\ref{f.pretzels}.

\figure
\begingroup%
  \makeatletter%
  \providecommand\color[2][]{%
    \errmessage{(Inkscape) Color is used for the text in Inkscape, but the package 'color.sty' is not loaded}%
    \renewcommand\color[2][]{}%
  }%
  \providecommand\transparent[1]{%
    \errmessage{(Inkscape) Transparency is used (non-zero) for the text in Inkscape, but the package 'transparent.sty' is not loaded}%
    \renewcommand\transparent[1]{}%
  }%
  \providecommand\rotatebox[2]{#2}%
  \newcommand*\fsize{\dimexpr\f@size pt\relax}%
  \newcommand*\lineheight[1]{\fontsize{\fsize}{#1\fsize}\selectfont}%
  \ifx\svgwidth\undefined%
    \setlength{\unitlength}{293.14782571bp}%
    \ifx\svgscale\undefined%
      \relax%
    \else%
      \setlength{\unitlength}{\unitlength * \real{\svgscale}}%
    \fi%
  \else%
    \setlength{\unitlength}{\svgwidth}%
  \fi%
  \global\let\svgwidth\undefined%
  \global\let\svgscale\undefined%
  \makeatother%
  \begin{picture}(1,0.60975395)%
    \lineheight{1}%
    \setlength\tabcolsep{0pt}%
    \put(0,0){\includegraphics[width=\unitlength,page=1]{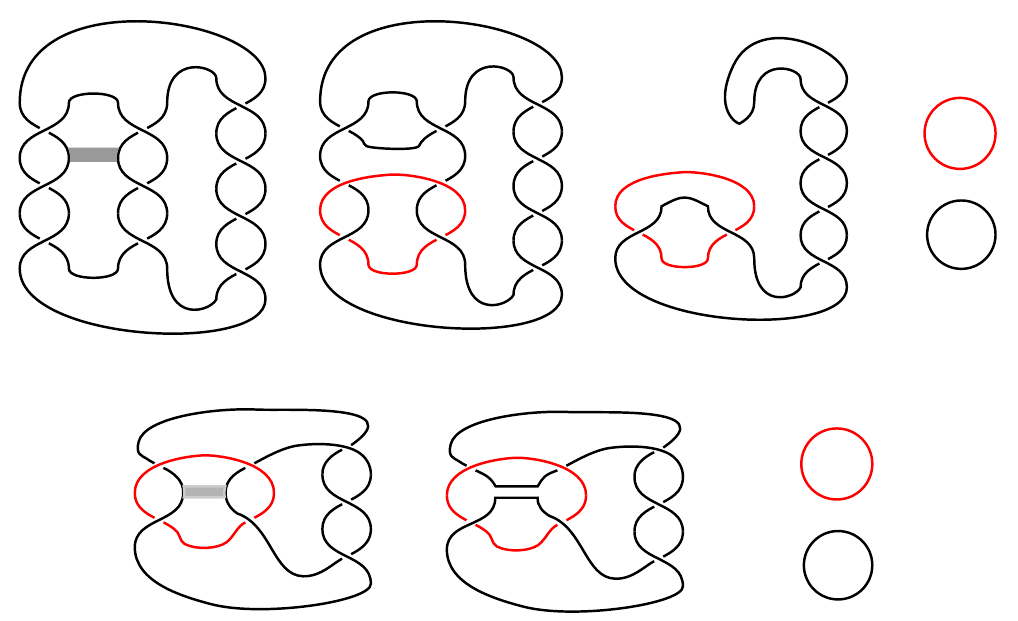}}%
  \end{picture}%
\endgroup%

\caption{On the top row we have the pretzel knot $M(3,-3,-4)$ with a band. The next diagrams show the effect of the band move, describing a ribbon disk of fusion number 1. It is clear that the same band move will produce the same result for all knots $M(p,-p,-p-1)$ with $p$ odd. In the second row we have an example of the link case, precisely $M(2,-2,-3)$. This time the band move describes the union of a disk and a M\"obius band and again it can be generalized for arbitrary even $p$. The knots and links in the family $M(p,-p-1,p+1)$ are dealt with in complete analogy.}
\label{f.pretzels}
\endfigure

Likewise, we have that $S^3_{4p^2}(T_{p,4p-1}) = Y(0;p,-p,\frac{4p-1}{4})$ and that $S^3_{4p^2}(T_{p,4p+1}) = Y(0;p,-p,-\frac{4p+1}4)$. 
These Seifert manifolds can be described respectively as the boundaries of the plumbing graphs:
 \[
  \begin{tikzpicture}[xscale=1.6,yscale=-0.4]
	\node (A0_4) at (4, 0) {$p$};
    \node (A1_4) at (4, 1) {$\bullet$};
    \node (A2_0) at (1, 2) {$4$};
    \node (A2_2) at (2, 2) {$p$};
    \node (A2_3) at (3, 2) {$0$};
    \node (A3_1) at (1, 3) {$\bullet$};
    \node (A3_2) at (2, 3) {$\bullet$};
    \node (A3_3) at (3, 3) {$\bullet$};
    \node (A4_4) at (4, 4) {$-p$};
    \node (A4) at (4, 5) {$\bullet$};
    \node at (5,3) {and};
    \node (B0_4) at (9, 0) {$p$};
    \node (B1_4) at (9, 1) {$\bullet$};
    \node (B2_0) at (6, 2) {$4$};
    \node (B2_2) at (7, 2) {$-p$};
    \node (B2_3) at (8, 2) {$0$};
    \node (B3_1) at (6, 3) {$\bullet$};
    \node (B3_2) at (7, 3) {$\bullet$};
    \node (B3_3) at (8, 3) {$\bullet$};
    \node (B4_4) at (9, 4) {$-p$};
    \node (B4) at (9, 5) {$\bullet$};
    \path (A3_3) edge [-] node [auto] {$\scriptstyle{}$} (A4);
    \path (A3_2) edge [-] node [auto] {$\scriptstyle{}$} (A3_3);
    \path (A3_3) edge [-] node [auto] {$\scriptstyle{}$} (A1_4);
    \path (A3_1) edge [-] node [auto] {$\scriptstyle{}$} (A3_2);
    \path (B3_3) edge [-] node [auto] {$\scriptstyle{}$} (B4);
    \path (B3_2) edge [-] node [auto] {$\scriptstyle{}$} (B3_3);
    \path (B3_3) edge [-] node [auto] {$\scriptstyle{}$} (B1_4);
    \path (B3_1) edge [-] node [auto] {$\scriptstyle{}$} (B3_2);
   \end{tikzpicture}
  \]
In Figure~\ref{f.montesinoslinks} the reader can find the diagrams of the corresponding families of Montesinos links. In the same figure, through band moves, the surfaces of Euler characteristic one bounded by these links are described. How to obtain a projection of the Montesinos link from the above plumbing graphs can be found in \cite[Figure~3]{Lecuona}.

\begin{figure}
\includegraphics[scale=1]{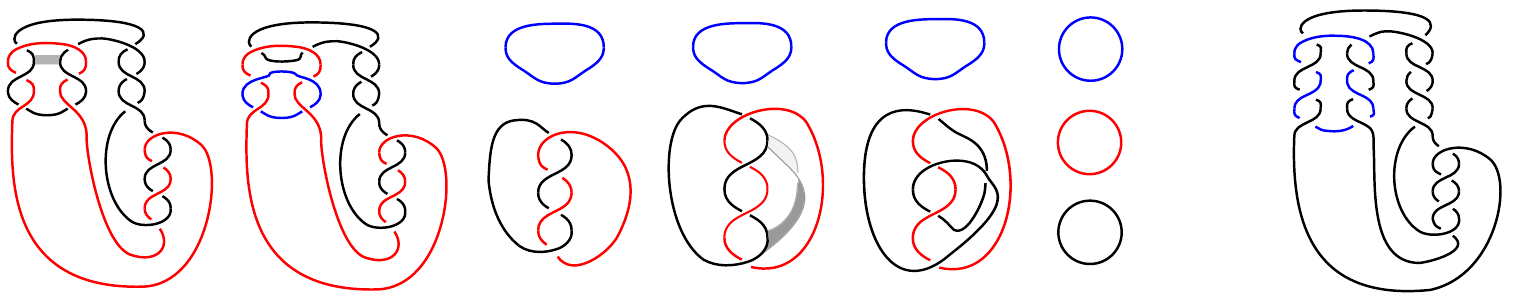}
\caption{The left and right most diagrams are the Montesinos links $M(3,-3,-\frac{13}{4})$ and $M(4,-4,-\frac{17}{4})$. In general, the diagram of the Montesinos link $M(p,-p,-\frac{4p+1}{4})$ will have 3 top columns of $p,-p$ and $-p$ half crossings and the third column will be connected to a set of 4 half crossings. The band move depicted in the top left diagram yields the third diagram independently of the value $p$. The analogous move on the right most diagram yields also the third diagram from the left. The two band moves describe the union of a disk and a M\"oebius band. The only difference in the diagrams of the links  $M(p,-p,\frac{4p-1}{4})$ is that there the 3 top columns have $p,-p$ and $p$ half crossings. The same band moves apply to describe the Euler characteristic one surfaces.}
\label{f.montesinoslinks}
\end{figure}

Finally, for each member of the remaining family, in which all corresponding Montesinos links are knots, and each of the two sporadic cases, for which we are dealing with two component links, we have shown in the previous subsections that there is a 2--handle attachment from their branched double covers $Y$ to $S^1\times S^2$. In Figure~\ref{f.eqsurgeries} we show that these handle attachments can be done in an equivariant fashion with respect to the covering involution. It follows that in $S^{3}$ these handle attachments correspond to band moves in the link diagrams (see~\cite[Section~3]{Lecuona} for details). Since $S^{1}\times S^{2}$ as branched cover of $S^{3}$ has branching set the unlink of two components, it follows that the result of the band moves on the diagrams is necessarily the unlink of two components. If we started with a knot, this band move describes a ribbon disk with fusion number 1; on the other hand, if we had started with a two component link, the band move shows how to construct a M\"obius band and a disk, which is a surface of Euler characteristic one with two boundary components. 
\end{proof}

\figure
\begingroup%
  \makeatletter%
  \providecommand\color[2][]{%
    \errmessage{(Inkscape) Color is used for the text in Inkscape, but the package 'color.sty' is not loaded}%
    \renewcommand\color[2][]{}%
  }%
  \providecommand\transparent[1]{%
    \errmessage{(Inkscape) Transparency is used (non-zero) for the text in Inkscape, but the package 'transparent.sty' is not loaded}%
    \renewcommand\transparent[1]{}%
  }%
  \providecommand\rotatebox[2]{#2}%
  \newcommand*\fsize{\dimexpr\f@size pt\relax}%
  \newcommand*\lineheight[1]{\fontsize{\fsize}{#1\fsize}\selectfont}%
  \ifx\svgwidth\undefined%
    \setlength{\unitlength}{416.95745392bp}%
    \ifx\svgscale\undefined%
      \relax%
    \else%
      \setlength{\unitlength}{\unitlength * \real{\svgscale}}%
    \fi%
  \else%
    \setlength{\unitlength}{\svgwidth}%
  \fi%
  \global\let\svgwidth\undefined%
  \global\let\svgscale\undefined%
  \makeatother%
  \begin{picture}(1,0.63656202)%
    \lineheight{1}%
    \setlength\tabcolsep{0pt}%
    \put(0,0){\includegraphics[width=\unitlength,page=1]{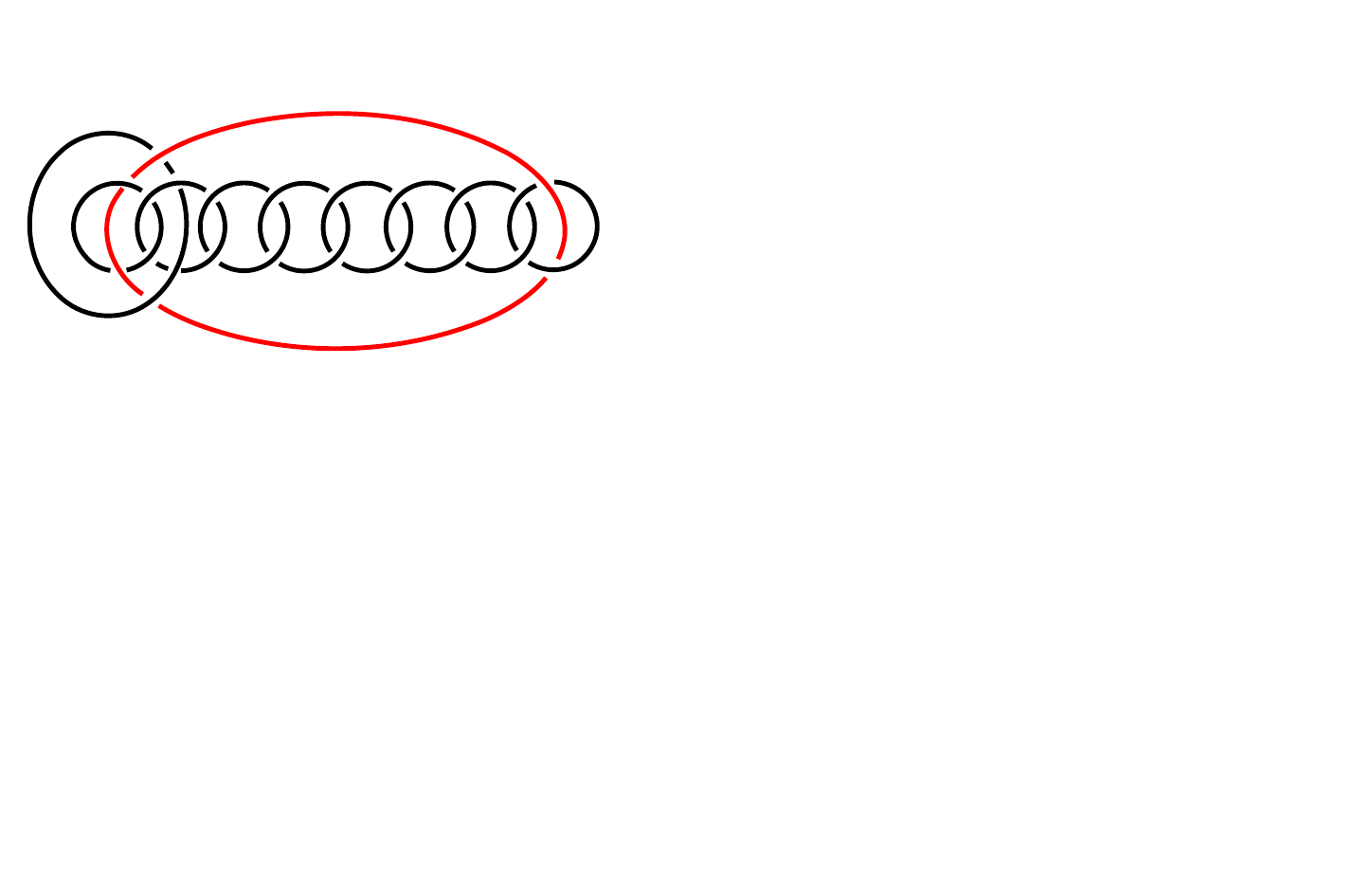}}%
    \put(0.05205946,0.50515866){\color[rgb]{0,0,0}\makebox(0,0)[lt]{\lineheight{1.25}\smash{\begin{tabular}[t]{l}$6$\end{tabular}}}}%
    \put(0.06461037,0.54459138){\color[rgb]{0,0,0}\makebox(0,0)[lt]{\lineheight{1.25}\smash{\begin{tabular}[t]{l}$2$\end{tabular}}}}%
    \put(0.13476907,0.41781072){\color[rgb]{0,0,0}\makebox(0,0)[lt]{\lineheight{1.25}\smash{\begin{tabular}[t]{l}$2$\end{tabular}}}}%
    \put(0.17154582,0.41611334){\color[rgb]{0,0,0}\makebox(0,0)[lt]{\lineheight{1.25}\smash{\begin{tabular}[t]{l}$2$\end{tabular}}}}%
    \put(0.2100199,0.41611334){\color[rgb]{0,0,0}\makebox(0,0)[lt]{\lineheight{1.25}\smash{\begin{tabular}[t]{l}$2$\end{tabular}}}}%
    \put(0.2535517,0.4131534){\color[rgb]{0,0,0}\makebox(0,0)[lt]{\lineheight{1.25}\smash{\begin{tabular}[t]{l}$2$\end{tabular}}}}%
    \put(0.30175752,0.41552974){\color[rgb]{0,0,0}\makebox(0,0)[lt]{\lineheight{1.25}\smash{\begin{tabular}[t]{l}$2$\end{tabular}}}}%
    \put(0.34588985,0.41451133){\color[rgb]{0,0,0}\makebox(0,0)[lt]{\lineheight{1.25}\smash{\begin{tabular}[t]{l}$2$\end{tabular}}}}%
    \put(0.42091412,0.42141401){\color[rgb]{0,0,0}\makebox(0,0)[lt]{\lineheight{1.25}\smash{\begin{tabular}[t]{l}$8$\end{tabular}}}}%
    \put(0.37930356,0.53030148){\color[rgb]{0,0,0}\makebox(0,0)[lt]{\lineheight{1.25}\smash{\begin{tabular}[t]{l}$1$\end{tabular}}}}%
    \put(0,0){\includegraphics[width=\unitlength,page=2]{eqsurgeries.pdf}}%
    \put(0.20658972,0.5823234){\color[rgb]{0,0,0}\makebox(0,0)[lt]{\lineheight{1.25}\smash{\begin{tabular}[t]{l}$S^3_{256}(T_{6,43})$\end{tabular}}}}%
    \put(0,0){\includegraphics[width=\unitlength,page=3]{eqsurgeries.pdf}}%
    \put(0.58393774,0.52256866){\color[rgb]{0,0,0}\makebox(0,0)[lt]{\lineheight{1.25}\smash{\begin{tabular}[t]{l}$3$\end{tabular}}}}%
    \put(0.55188141,0.56570096){\color[rgb]{0,0,0}\makebox(0,0)[lt]{\lineheight{1.25}\smash{\begin{tabular}[t]{l}$2$\end{tabular}}}}%
    \put(0.70458983,0.40369888){\color[rgb]{0,0,0}\makebox(0,0)[lt]{\lineheight{1.25}\smash{\begin{tabular}[t]{l}$2$\end{tabular}}}}%
    \put(0.7582377,0.40122289){\color[rgb]{0,0,0}\makebox(0,0)[lt]{\lineheight{1.25}\smash{\begin{tabular}[t]{l}$2$\end{tabular}}}}%
    \put(0.81436152,0.40122289){\color[rgb]{0,0,0}\makebox(0,0)[lt]{\lineheight{1.25}\smash{\begin{tabular}[t]{l}$2$\end{tabular}}}}%
    \put(0.94142046,0.43064753){\color[rgb]{0,0,0}\makebox(0,0)[lt]{\lineheight{1.25}\smash{\begin{tabular}[t]{l}$8$\end{tabular}}}}%
    \put(0.86856962,0.55293552){\color[rgb]{0,0,0}\makebox(0,0)[lt]{\lineheight{1.25}\smash{\begin{tabular}[t]{l}$1$\end{tabular}}}}%
    \put(0,0){\includegraphics[width=\unitlength,page=4]{eqsurgeries.pdf}}%
    \put(0.70964115,0.5945424){\color[rgb]{0,0,0}\makebox(0,0)[lt]{\lineheight{1.25}\smash{\begin{tabular}[t]{l}$S^3_{64}(T_{3,22})$\end{tabular}}}}%
    \put(0,0){\includegraphics[width=\unitlength,page=5]{eqsurgeries.pdf}}%
    \put(0.40972956,0.22845119){\color[rgb]{0,0,0}\makebox(0,0)[lt]{\lineheight{1.25}\smash{\begin{tabular}[t]{l}$2$\end{tabular}}}}%
    \put(0.44534477,0.28757101){\color[rgb]{0,0,0}\makebox(0,0)[lt]{\lineheight{1.25}\smash{\begin{tabular}[t]{l}$1$\end{tabular}}}}%
    \put(0.70086099,0.25576039){\color[rgb]{0,0,0}\makebox(0,0)[lt]{\lineheight{1.25}\smash{\begin{tabular}[t]{l}$6$\end{tabular}}}}%
    \put(0.5773475,0.25771479){\color[rgb]{0,0,0}\makebox(0,0)[lt]{\lineheight{1.25}\smash{\begin{tabular}[t]{l}$3$\end{tabular}}}}%
    \put(0.22326991,0.24479578){\color[rgb]{0,0,0}\makebox(0,0)[lt]{\lineheight{1.25}\smash{\begin{tabular}[t]{l}$2$\end{tabular}}}}%
    \put(0,0){\includegraphics[width=\unitlength,page=6]{eqsurgeries.pdf}}%
  \end{picture}%
\endgroup%

\caption{The top two diagrams are isotopic to the diagrams in Figures~\ref{f:322attack} and~\ref{f:643attack} respectively. Here the emphasis is that the 1-framded 2-handle in red can be added equivariantly with respect to the involution. Analogously, the third diagram is isotopic to Figure~\ref{f:exceptional_family1}, where the two black squares indicate that there is a chain of unlinks.}
\label{f.eqsurgeries}
\endfigure


\section{Bounds on cables from correction terms}\label{cablesHF}


In this section, we use (Heegaard Floer) correction terms to prove Theorem~\ref{bound}.

Correction terms are $\Q$--valued invariants of \spinc rational homology spheres, introduced by Ozsv\'ath and Szab\'o in~\cite{OzsvathSzabo-absolutely}.
We do not recall the definition here, and we refer to the original paper, and to~\cite[Sections 2 and 3]{AcetoGolla} for an exposition more tailored to the scope of this paper. 

The set of \spinc structures on $S^3_n(K)$ has labelling by $\Z/n\Z$, given in~\cite{OzsvathSzabo-integralsurgeries}; when $S^3_n(K)$ bounds a rational homology ball, $n$ is a square, and $m=\sqrt n$ of the correction terms of $S^3_n(K)$ vanish; more precisely, those that vanish are those labelled with $\frac12m(m-1) - km$ as $k$ varies among the integers~\cite[Lemma~5.3]{AcetoGolla}. In particular, we observed that if $S^3_{m^2}(K)$ bounds a rational homology ball, then for $k \le \frac m2$
\begin{equation}\label{e:AG5.3}
V_{m(m-2k-1)/2}(K) = \frac{k(k+1)}2.
\end{equation}

Here $\{V_i(K)\}_{i\ge 0}$ is a sequence of invariants of $K$ that can be used to describe the correction terms of $S^3_n(K)$~\cite{Rasmussen-Goda, NiWu, HomWu}.
Recall that we denote with $\unknot$ the unknot.

\begin{te}[\cite{Rasmussen-Goda, NiWu}]\label{t:RasmussenNiWu}
The sequence $\{V_i(K)\}_{i\ge 0}$ takes values in the non-negative integers and is eventually $0$. Moreover, $V_{i}(K)-1 \le V_{i+1}(K) \le V_i(K)$ for every $i\ge 0$.

For every rational number $p/q$ and for an appropriate indexing of \spinc structures on $S^3_{p/q}(\unknot)$ and $S^3_{p/q}(K)$, we have
\begin{equation}\label{e:NiWu}
d(S^3_{p/q}(K),i) = -2\max\{V_{\lfloor i/q\rfloor}(K),V_{\lceil (p-i)/q\rceil}(K)\} + d(S^3_{p/q}(\unknot),i).
\end{equation}
\end{te}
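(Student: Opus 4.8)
The plan is to derive both parts from the Ozsv\'ath--Szab\'o mapping cone formula for Dehn surgery together with the algebraic structure of the knot Floer complex; this is the argument of~\cite{Rasmussen-Goda} and~\cite{NiWu}, which I now outline. First I would recall the large surgery formula: for $N\gg 0$ one has relatively graded isomorphisms $\mathrm{HF}^+(S^3_N(K),[i])\cong H_*(A_i^+)$, where $A_i^+$ is the subquotient complex $C\{\max(\alpha,\beta-i)\ge 0\}$ of $CFK^\infty(K)$ (here $\alpha$ records the $U$-power and $\beta$ the Alexander filtration). Each $A_i^+$ is a finitely generated $\mathbb{F}[U]$-module whose reduced part is $U$-torsion and whose free part is a single tower $\mathcal{T}^+$; hence the natural vertical projection $v_i^+\colon A_i^+\to B^+:=C\{\alpha\ge 0\}\simeq \mathcal{T}^+$ acts on the free part as multiplication by $U^{V_i}$ for a well-defined integer $V_i=V_i(K)\ge 0$. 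This both defines the $V_i$ and gives non-negativity. Eventual vanishing is immediate: for $|i|$ large, $A_i^+$ already equals $B^+$ up to an acyclic summand, so $v_i^+$ is an isomorphism on towers and $V_i=0$.

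For the inequalities $V_i-1\le V_{i+1}\le V_i$, I would compare $A_i^+$ and $A_{i+1}^+$ as subquotient complexes of $CFK^\infty(K)$: passing from $A_{i+1}^+$ to $A_i^+$ enlarges the ``vertical'' region by a single lattice step, giving natural maps between the two complexes that fit into commuting triangles with $v_i^+$ and $v_{i+1}^+$. Chasing these triangles on the towers shows that $V_i$ and $V_{i+1}$ differ by at most one and that the sequence is non-increasing. Equivalently, one uses the conjugation symmetry $A_i^+\simeq A_{-i}^+$ (so that the ``horizontal'' invariant satisfies $H_i=V_{-i}$) together with the fact that the two maps $v_i^+,h_i^+$ cannot simultaneously have large $U$-power. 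Both approaches are elementary once the complexes are in place.

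The surgery formula is the main computation. I would invoke the Ozsv\'ath--Szab\'o rational surgery mapping cone: $\mathrm{HF}^+(S^3_{p/q}(K))$ is the homology of a complex $\mathbb{X}^+(p/q)$ built from vertices $(A^+)_s:=A_{\lfloor s/q\rfloor}^+$ and $(B^+)_s\simeq \mathcal{T}^+$ for $s\in\Z$, with edge maps the vertical $v^+_s\colon (A^+)_s\to (B^+)_s$ and the diagonal $h^+_s\colon (A^+)_s\to (B^+)_{s+p}$. Restricting to the \spinc structure labelled $i$ (i.e.\ $s\equiv i \bmod p$) and truncating to a finite zig-zag subcomplex, the free part of the homology is a single tower, and a direct grading computation---using only the $U$-powers $V_{\lfloor s/q\rfloor}$ of the maps entering that \spinc class, of which exactly two, indexed by $\lfloor i/q\rfloor$ and $\lceil (p-i)/q\rceil$, control the bottom---places the bottom generator in degree
\[
-2\max\{V_{\lfloor i/q\rfloor}(K),\,V_{\lceil (p-i)/q\rceil}(K)\}
\]
relative to the corresponding quantity for the unknot; the reduced summands of the $A_s^+$ do not affect this bottom grading. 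Running the identical recipe for $\unknot$, where every $A_s^+=B^+$ and every $V_s=0$, computes $d(S^3_{p/q}(\unknot),i)$, and taking the difference yields~\eqref{e:NiWu}.

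The hard part is the bookkeeping in the last step: fixing compatible labellings of \spinc structures on $S^3_{p/q}(K)$ and $S^3_{p/q}(\unknot)$, tracking the absolute grading shifts in the mapping cone (which carry the framing and lens space correction terms), and verifying that after truncation the surviving tower really has its bottom controlled by precisely those two $V$-values rather than by a more intricate interaction among the many $A_s^+$ appearing in the zig-zag. Once this is set up, both the monotonicity and the surgery formula follow from the grading computations sketched above.
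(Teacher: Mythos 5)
The paper does not prove this statement: it is quoted directly from \cite{Rasmussen-Goda, NiWu}, so there is no internal proof to compare against. Your outline is a faithful sketch of the standard argument in those references --- defining $V_i$ via the $U$-power of $v_i^+$ on the tower of $H_*(A_i^+)$, deducing monotonicity from the comparison of adjacent $A_i^+$'s (equivalently from the symmetry $H_i = V_{-i}$), and extracting~\eqref{e:NiWu} from the truncated rational-surgery mapping cone, where only the two maps indexed by $\lfloor i/q\rfloor$ and $\lceil (p-i)/q\rceil$ govern the bottom of the surviving tower. One small slip: eventual vanishing holds because $A_i^+=B^+$ for $i\ge g(K)$; for $i\le -g(K)$ the complexes do \emph{not} coincide and $V_i$ grows, so ``for $|i|$ large'' should read ``for $i$ large'' --- harmless here since the theorem only concerns $i\ge 0$.
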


The invariant $\nu^+(K)$ is defined as the minimal index $i$ such that $V_i(K)=0$~\cite{HomWu}, and in particular it is always non-negative. It gives a lower bound for the slice genus, $g_4$, and an upper bound for the $\tau$-invariant~\cite{OzsvathSzabo-tau}.

\begin{prop}[\cite{HomWu}]\label{p:squeezed}
The invariant $\nu^+(K)$ satisfies $\tau(K) \le \nu^+(K) \le g_4(K)$.
\end{prop}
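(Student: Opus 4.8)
The statement packages two inequalities, $\tau(K)\le\nu^+(K)$ and $\nu^+(K)\le g_4(K)$, and I would treat them separately, using throughout the defining property $\nu^+(K)=\min\{i\ge 0\mid V_i(K)=0\}$ (the minimum exists because $\{V_i(K)\}$ is eventually $0$ by Theorem~\ref{t:RasmussenNiWu}). From minimality together with $V_i(K)\ge 0$ we get $V_i(K)\ge 1$ for every $i<\nu^+(K)$; conversely, to prove $\nu^+(K)\le m$ it suffices to show $V_m(K)=0$. Hence the lower bound is automatic when $\tau(K)\le 0$, since $\nu^+(K)\ge 0$, and otherwise reduces to the single assertion $V_{\tau(K)-1}(K)\ge 1$, while the upper bound reduces to $V_{g_4(K)}(K)=0$.

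For the lower bound, set $t:=\tau(K)>0$; I would deduce $V_{t-1}(K)\ge 1$ from the relationship between the $V_i$ and correction terms of surgeries encoded in \eqref{e:NiWu}, together with the description of $\tau$ as a filtration level in the knot complex $CFK^\infty(K)$. The invariant $V_i(K)$ records minus half the $d$-invariant contribution of the quotient complex $\hat A_i(K)\simeq\HFp$, and $\tau(K)$ is exactly the threshold below which a generator of the homology of $\hat A_i(K)$ cannot be represented at the bottom of the tower; this forces $V_i(K)>0$ for $i<t$. (Alternatively, one may invoke the known inequalities $\nu^+(K)\ge\nu(K)$ and $\nu(K)\ge\tau(K)$ for the Ozsv\'ath--Szab\'o invariant $\nu$, which again gives $\tau(K)\le\nu^+(K)$.)

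For the upper bound I would realize the four-ball genus geometrically. Let $g=g_4(K)$ and let $F\subset B^4$ be a properly embedded genus-$g$ surface with $\partial F=K$. Choose an integer $N$ of large absolute value (and of the sign that makes the trace negative definite), and consider the trace $X_N(K)=B^4\cup_K h$ of $N$-surgery, with $\partial X_N(K)=S^3_N(K)$ and $H_2(X_N(K);\Z)\cong\Z$ generated by a class of square $N$. Pushing $F$ into the interior of $B^4$ and capping it off with the core of the $2$-handle $h$ yields a closed genus-$g$ surface $\hat F\subset X_N(K)$ generating $H_2$. Applying the adjunction-type inequality for $d$-invariants of (negative-)definite fillings (Ozsv\'ath--Szab\'o) to $\hat F$ in the \spinc structures $\ft_i$ with $\langle c_1(\ft_i),[\hat F]\rangle$ suitably prescribed, and subtracting the analogous computation for the unknot (where $\hat F$ is replaced by a sphere), one bounds the difference $d(S^3_N(\unknot),i)-d(S^3_N(K),i)=2V_{\min(i,\,|N|-i)}(K)$ by a quantity that vanishes once $\min(i,|N|-i)\ge g$. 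Letting $|N|\to\infty$ gives $V_i(K)=0$ for all $i\ge g$, i.e.\ $V_{g_4(K)}(K)=0$, hence $\nu^+(K)\le g_4(K)$.

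The main obstacle is the bookkeeping in this last step: one must choose the evaluations $\langle c_1(\ft_i),[\hat F]\rangle$ so that the genus-$g$ adjunction inequality is applied in the correct \spinc structure and is sharp precisely at index $i=g$ — otherwise one obtains only a weaker vanishing such as $V_i(K)=0$ for $i\ge 2g$ — and one must verify that the closed-surface term cancels cleanly against the unknot contribution rather than leaving an error term. The lower bound, by contrast, is comparatively soft, amounting to a repackaging of the standard relations between $\tau$, $\nu$, and the local $h$-invariants $V_i$.
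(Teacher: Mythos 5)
The paper does not prove Proposition~\ref{p:squeezed} at all: it is quoted directly from \cite{HomWu}, so the only comparison available is with the standard argument in the literature. Against that benchmark, your lower bound is fine in its parenthetical form: the chain $\tau(K)\le\nu(K)\le\nu^+(K)$, with the first inequality due to Ozsv\'ath--Szab\'o and the second to Hom--Wu, is exactly the usual route, and your reduction to $V_{\tau(K)-1}(K)\ge 1$ (using the monotonicity $V_{i+1}(K)\le V_i(K)$ from Theorem~\ref{t:RasmussenNiWu}) is correct. Your primary justification, however, is too loose to stand alone: $\hat A_i$ is a quotient of the knot complex, not of $\HFp$, and the ``threshold below which a generator cannot be represented at the bottom of the tower'' is essentially the definition of $\nu$, not of $\tau$, so as written it presupposes the comparison between $\tau$ and the $V_i$'s that is to be proved. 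Keep the soft route.

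The upper bound has a genuine gap. Choosing $N<0$ so that the trace $X_N(K)$ is negative definite puts you on the wrong side of the surgery formula: equation~\eqref{e:NiWu} and the identity $d(S^3_N(\unknot),i)-d(S^3_N(K),i)=2V_{\min(i,N-i)}(K)$ hold for \emph{positive} surgery coefficients, whereas for $N<0$ one has $S^3_N(K)=-S^3_{-N}(m(K))$, so the correction-term comparison in your definite filling measures the $V_i$ of the mirror (and with the opposite sign), not the $V_i(K)$ you are bounding. One can mirror the whole argument, but then the real issue surfaces: the negative-definite $d$-invariant inequality by itself carries no genus information, and the ``adjunction-type inequality for $d$-invariants of definite fillings'' you invoke, in the precise form needed (sharp at $i\ge g$ rather than $i\ge 2g$, with the closed-surface term cancelling against the unknot contribution), is not an off-the-shelf statement: its proof is the turned-around-cobordism argument for large \emph{positive} surgeries in \cite{OzsvathSzabo-tau}, which is exactly the assertion $V_i(K)=0$ for $i\ge g_4(K)$ that you are trying to establish. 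So as written the upper bound is circular-or-incomplete; the honest fix is either to run that cobordism argument explicitly with $N\gg 0$, or simply to quote the known vanishing $V_i(K)=0$ for $i\ge g_4(K)$ (Rasmussen's bound $V_i(K)\le\lceil (g_4(K)-i)/2\rceil$), which is essentially how \cite{HomWu} conclude.
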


Wu studied the behavior of the sequence $\{V_i(K)\}$ under cabling~\cite{Wu}.
In the following proposition, we let $\phi_1(i)$ be the remainder of the division of $i-\frac{(p-1)(q-1)}2$ by $q$. We also abandon our convention that torus knots $T_{p,q}$ have $p<q$, and we shall do so throughout this section.

\begin{prop}[\cite{Wu}]
For every $0\le i \le \frac{pq}2$ the following holds:
\begin{equation}\label{e:Wu-cables}
V_i(K_{p,q}) = V_i(T_{p,q}) + \max\left\{
V_{\left\lfloor {\frac{\phi_1(i)}{p}} \right\rfloor}(K),
V_{\left\lfloor {\frac{p+q-1-\phi_1(i)}{p}} \right\rfloor}(K)
\right\}.
\end{equation}
\end{prop}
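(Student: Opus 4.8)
The plan is to reduce \eqref{e:Wu-cables} to two computations of the correction terms of a surgery on $K_{p,q}$ and compare them. On one side, by the $q=1$ case of \eqref{e:NiWu} (the large surgery formula), $d(S^3_N(K_{p,q}),i) = d(S^3_N(\unknot),i) - 2V_i(K_{p,q})$ for $0\le i\le N/2$ and any $N$, so the $V_i(K_{p,q})$ are encoded in $d(S^3_N(K_{p,q}),\cdot)$. On the other side, Gordon's analysis of Dehn surgeries on satellite knots presents $S^3_N(K_{p,q})$ (for $N$ large) as the union along an incompressible torus of $S^3\setminus\nu(K)$ and a Seifert fibered piece obtained by filling the cable space; consequently its correction terms are controlled by the rational surgery $S^3_{N/p^2}(K)$ together with a ``Seifert correction'' that depends only on $p$, $q$, $N$ and the \spinc label. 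The rational surgery $S^3_{N/p^2}(K)$ is handled by the Ni--Wu formula \eqref{e:NiWu}, and this is exactly where the $\max$ of two $V$'s of $K$ on the right-hand side of \eqref{e:Wu-cables} originates; specializing to $K=\unknot$ (so that $K_{p,q}=T_{p,q}$, surgeries on $K$ become lens spaces, and $V_\bullet(\unknot)\equiv 0$) forces the $K$-independent part of the comparison to equal $-2V_i(T_{p,q})$. Equating the two expressions then yields \eqref{e:Wu-cables}; since $V_\bullet(T_{p,q})$ is itself computable from the semigroup generated by $p$ and $q$, this also makes the formula fully explicit.

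Concretely I would: (1) fix $N\ge 2g(K_{p,q})-1$; (2) write out Gordon's splitting and track how a \spinc structure on $S^3_N(K_{p,q})$, labelled by $\Z/N$, restricts to the two pieces, deriving in particular the induced label on $S^3_{N/p^2}(K)$ — it is obtained from $i$ by a shift governed by the Seifert data of $T_{p,q}$, namely by $g(T_{p,q})=\tfrac{(p-1)(q-1)}2$, followed by reduction mod $q$ and division by $p$, which is precisely the recipe producing $\phi_1(i)$ and the two indices $\lfloor\phi_1(i)/p\rfloor$ and $\lfloor(p+q-1-\phi_1(i))/p\rfloor$, the second being the conjugate index that reflects the two boundary slopes of the cable space; (3) substitute \eqref{e:NiWu} for $S^3_{N/p^2}(K)$ and the large surgery formula for $S^3_N(K_{p,q})$, observe that the lens/Seifert $d$-invariant terms match label-for-label and cancel, and read off $V_i(K_{p,q}) = V_i(T_{p,q}) + \max\{V_{\lfloor\phi_1(i)/p\rfloor}(K),\,V_{\lfloor(p+q-1-\phi_1(i))/p\rfloor}(K)\}$; (4) check that restricting to $0\le i\le \tfrac{pq}2$ recovers Wu's range, using the conjugation symmetry of both the $V$'s and of the \spinc labelling.

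The main obstacle will be step (2): promoting Gordon's purely topological decomposition to a clean statement about correction terms — i.e.\ an additivity of $d$-invariants under this splicing valid beyond the reducible/lens-space case — and carrying out the \spinc bookkeeping carefully enough to see the shift by $g(T_{p,q})$, the reduction mod $q$, and the floor-by-$p$ all appear. A different and perhaps cleaner route avoids surgery descriptions entirely: one feeds Hedden's and Hom's computation of the filtered chain homotopy type of $CFK^\infty(K_{p,q})$ — morally, the staircase complex of $T_{p,q}$ with each generator ``inflated'' into a rescaled copy of $CFK^\infty(K)$ — into $H_*(A_i^+(K_{p,q}))$ and extracts $V_i$ directly. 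There the $\max$ and the floor functions emerge structurally from the two staircase edges incident to the vertex selected by $i$, but the difficulty shifts to isolating the part of the (genuinely complicated) filtered homotopy type that controls the $U$-tower, on top of the same index bookkeeping.
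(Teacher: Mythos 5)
First, a point of order: the paper does not prove this proposition; it is quoted from~\cite{Wu}, so the only meaningful comparison is with Wu's own argument. Measured against that, your first route has the correct ingredients but, as written, breaks exactly where you predict it will. If you fix $N\ge 2g(K_{p,q})-1$ and cut $S^3_N(K_{p,q})$ along the incompressible cabling torus, the result is a genuine graph manifold, and there is no additivity statement (or any general formula) for correction terms under such a splice; your step (2) cannot be carried out. The fix --- and the actual mechanism in Wu's proof --- is to abandon large $N$ and take the one slope at which Gordon's theorem produces a connected sum rather than a splice, namely $N=pq$: one has $S^3_{pq}(K_{p,q})\cong S^3_{q/p}(K)\# L(p,q)$. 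Correction terms are additive under connected sum, and the Ni--Wu formula~\eqref{e:NiWu} is valid for \emph{all} positive slopes, so it applies both to the rational surgery $S^3_{q/p}(K)$ (producing $\max\{V_{\lfloor j/p\rfloor}(K),V_{\lceil (q-j)/p\rceil}(K)\}$, whose second index is exactly $\lfloor(p+q-1-j)/p\rfloor$) and, with denominator $1$, to the integral surgery $S^3_{pq}(K_{p,q})$. Specializing to $K=\unknot$ isolates the lens-space constants exactly as you propose, and the \spinc bookkeeping is what produces $\phi_1$. The restriction to $0\le i\le \frac{pq}2$ is then not a convention but a necessity: it is precisely the range in which $\max\{V_i(K_{p,q}),V_{pq-i}(K_{p,q})\}=V_i(K_{p,q})$, i.e.\ the range of $V$'s that $pq$-surgery can see; the second index in~\eqref{e:Wu-cables} is the ceiling term of~\eqref{e:NiWu}, not a reflection of ``two boundary slopes of the cable space.''

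Your alternative route through the filtered chain homotopy type of $CFK^\infty(K_{p,q})$ is substantially harder and assumes more than is available: the computations of Hedden and Hom give $\tau$ and $\epsilon$ of cables, not the full filtered complex for arbitrary $(p,q)$, so extracting all the $V_i$ that way would require input strictly stronger than the statement being proved. The connected-sum argument at the reducible slope is the elementary path, and it is the one Wu takes.
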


As mentioned in the introduction, correction terms of surgeries along torus knots (and hence, in light of Wu's formula~\eqref{e:Wu-cables}, of surgeries along cables) are related to semigroups. More precisely, Borodzik and Livingston proved the following relationship~\cite[Theorems~5.4 and~5.6]{BorodzikLivingston}:
\begin{equation}\label{e:BL}
V_i(K) = R_{p,q}(g(T_{p,q}) + i) - i,
\end{equation}
where $R_{p,q}$ is the semigroup-counting function associated to the semigroup $\Gamma_{p,q} = \{hp+kq \mid h,k \in \Z_{\ge0}\}$, defined as:
\[
R_{p,q}(n) = \#(\Gamma_{p,q} \cap [0,n)).
\]
We find it convenient to translate~\eqref{e:BL} into the following relation. Let $\Gamma_{p,q}(n)$ be the $n^{\rm th}$ element of $\Gamma_{p,q}$, so that we have $\Gamma_{p,q}(0) = 0$ and $\Gamma_{p,q}(1) = \min\{p,q\}$; by convention, we let $\Gamma_{p,q}(-1) = -\infty$. Then, for each $0 \le i$ and $k \ge 0$:
\begin{equation}\label{e:BL2}
V_i(T_{p,q}) = a \Longleftrightarrow g(T_{p,q})-\Gamma_{p,q}(a) \le i < g(T_{p,q})-\Gamma_{p,q}(a-1).
\end{equation}

Recall also from~\cite[Theorem~5.2]{AcetoGolla} that if $S^3_{m^2}(K)$ bounds a rational homology ball, then
\begin{equation}\label{e:AG5.2}
\frac{1 + \sqrt{1+8\nu^+(K)}}2 \le m < \frac{3+\sqrt{9+8\nu^+(K)}}2,
\end{equation}
or, equivalently,
\begin{equation}\label{e:AG5.2v2}
\frac{m(m-3)}2 < \nu^+(K) \le \frac{m(m-1)}2.
\end{equation}

The theme of this section is to combine Equations~\eqref{e:Wu-cables},~\eqref{e:BL2}, and~\eqref{e:AG5.3} when $q/p$ is either very large or very small, so as to produce a contradiction with~\eqref{e:AG5.2v2}.

\subsection{An example: cables of the trefoil}


As an example, we start by considering cables of a knot $K$ such that $\tau(K) = g_*(K) = 1$, e.g. the trefoil $K = T_{2,3}$.
While this is a toy case, it displays some of the features of the general case; also, in this case we get better upper and lower bounds for $\acc(K)$.

Note that, by Proposition~\ref{p:squeezed}, the assumption implies that $\nu^+(K) = 1$.
Moreover, for these knots we have $\tau(K_{p,q}) =  p + \frac{(p-1)(q-1)}2$; this follows readily from~\cite[Theorem~1]{Hom-epsilon}, once we observe that $\epsilon(K) = 1$ (see~\cite[Proposition~3.6]{Hom-epsilon}).

\begin{prop}
Let $K$ be a knot such that $\tau(K) = g_*(K) = 1$, and $p,q$ be positive and coprime, with $p>1$. If $K_{p,q}$ has a positive integral surgery that bounds a rational homology ball, then $\frac1{7} < \frac qp < 10$.
\end{prop}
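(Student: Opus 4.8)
The plan is to combine the cabling formula~\eqref{e:Wu-cables} for $V_i(K_{p,q})$ with the constraint~\eqref{e:AG5.2v2} (which pins down the only possible surgery coefficient $m^2$ once $\nu^+(K_{p,q})$ is known) and the equations~\eqref{e:AG5.3} that the correction terms must satisfy. Concretely, suppose $S^3_{m^2}(K_{p,q})$ bounds a rational homology ball. Since $\nu^+(K) = 1$, I first want to compute $\nu^+(K_{p,q})$ and hence constrain $m$; then I will evaluate a few of the quantities $V_{m(m-2k-1)/2}(K_{p,q})$ using~\eqref{e:Wu-cables} and the semigroup description~\eqref{e:BL2} of $V_\bullet(T_{p,q})$, and check when the resulting equalities~\eqref{e:AG5.3} can hold. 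The key arithmetic input is that, by~\eqref{e:BL2}, the values of $V_\bullet(T_{p,q})$ near the top are governed by the small gaps of the semigroup $\Gamma_{p,q}$; when $q/p$ is very large (resp. very small) the semigroup $\Gamma_{p,q}$ looks, in the relevant range, essentially like $p\Z_{\ge 0}$ (resp. $q\Z_{\ge 0}$), so $V_i(T_{p,q})$ decreases in long constant runs of length about $p$ (resp. $q$), and this rigidity is what will clash with~\eqref{e:AG5.3}.

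The first step is the $\nu^+$ computation: I would use $\tau(K_{p,q}) \le \nu^+(K_{p,q}) \le g_4(K_{p,q})$ (Proposition~\ref{p:squeezed}), together with the stated value $\tau(K_{p,q}) = p + \frac{(p-1)(q-1)}{2}$ and the fact that $g_4(K_{p,q}) \le g_4(T_{p,q}) + p\,g_4(K) = \frac{(p-1)(q-1)}{2} + p$, so in fact $\nu^+(K_{p,q}) = p + \frac{(p-1)(q-1)}{2} = g(K_{p,q})$. Plugging this into~\eqref{e:AG5.2v2} determines $m$ up to at most two consecutive integers in terms of $p,q$; roughly $m \approx \sqrt{2\nu^+} \approx \sqrt{pq}$. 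Having fixed $m$, I would then specialize~\eqref{e:AG5.3} at $k=0$ (forcing $V_{m(m-1)/2}(K_{p,q}) = 0$, equivalently $m(m-1)/2 \ge \nu^+(K_{p,q})$, which is essentially the upper bound in~\eqref{e:AG5.2v2}) and at $k=1$, which forces $V_{m(m-3)/2}(K_{p,q}) = 1$. The latter is the heart of the argument: using~\eqref{e:Wu-cables}, $V_{m(m-3)/2}(K_{p,q}) = V_{m(m-3)/2}(T_{p,q}) + \max\{V_{\lfloor \phi_1/p\rfloor}(K), V_{\lfloor (p+q-1-\phi_1)/p\rfloor}(K)\}$ where $\phi_1 = \phi_1(m(m-3)/2)$. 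Since $\nu^+(K)=1$, the $\max$ term is $0$ or $1$, so~\eqref{e:AG5.3} forces $V_{m(m-3)/2}(T_{p,q}) \in \{0,1\}$, i.e. $m(m-3)/2$ must lie within $2$ elements of the semigroup $\Gamma_{p,q}$ of the top of the $V$-sequence of $T_{p,q}$, namely $g(T_{p,q}) = \frac{(p-1)(q-1)}{2}$; this says $g(T_{p,q}) - \Gamma_{p,q}(2) \le m(m-3)/2$.

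The main obstacle, and the part requiring real care, is the arithmetic: I must translate ``$m(m-3)/2$ is that close to $g(T_{p,q})$'' — knowing $m$ only up to $\pm 1$ and only approximately $\sqrt{pq}$ — into the bound $1/7 < q/p < 10$. The two directions are symmetric up to swapping $p \leftrightarrow q$ (since $T_{p,q} = T_{q,p}$ and $\Gamma_{p,q} = \Gamma_{q,p}$), so it suffices to handle, say, $q/p$ large. In that regime $\Gamma_{p,q}(1) = p$, $\Gamma_{p,q}(2) = 2p$ (as long as $q > 2p$, which holds once $q/p$ is large), so the condition becomes $g(T_{p,q}) - 2p \le m(m-3)/2$, i.e. $\frac{(p-1)(q-1)}{2} - 2p \le \frac{m(m-3)}{2}$; combined with $\frac{m(m-3)}{2} < \nu^+(K_{p,q}) = p + \frac{(p-1)(q-1)}{2}$ from~\eqref{e:AG5.2v2}, one gets a window $\frac{(p-1)(q-1)}{2} - 2p \le \frac{m(m-3)}{2} < \frac{(p-1)(q-1)}{2} + p$; but since $\frac{m(m-1)}{2} \ge \nu^+ = \frac{(p-1)(q-1)}{2} + p$ as well, the two constraints on $m(m-3)/2$ versus $m(m-1)/2$ pinch $m$ so tightly that — after a short computation controlling the difference $\frac{m(m-1)}{2} - \frac{m(m-3)}{2} = m$ against the allowed slack $3p$ — one is forced into $q < 10p$ (and, symmetrically, $q > p/7$, noting $p>1$). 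I would run this pinching argument carefully, being explicit about the small-$p$ and small-gap boundary cases (e.g. $p=2$, or $q$ only slightly larger than $2p$ where $\Gamma_{p,q}(2)$ might equal $q$ rather than $2p$), since those are exactly where the constant $10$ (rather than something smaller) is needed; the constants $\tfrac17$ and $10$ are presumably not sharp but fall out of this estimate.
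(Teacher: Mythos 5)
Your setup coincides with the paper's: the computation $\nu^+(K_{p,q}) = p + \frac{(p-1)(q-1)}2$ via Proposition~\ref{p:squeezed}, the reduction of Wu's formula to $V_i(K_{p,q}) = V_i(T_{p,q}) + V_{\lfloor\phi_1(i)/p\rfloor}(K)$, and the plan of playing instances of~\eqref{e:AG5.3} against~\eqref{e:AG5.2v2} via~\eqref{e:BL2}. But the key step fails with your choice of $k\in\{0,1\}$. The $k=0$ instance yields only the one-sided inequality $\frac{m(m-1)}2 \ge \nu$ (no upper bound on $\frac{m(m-1)}2$), and the $k=1$ instance, in the regime $q\gg p$, yields $g - p \le \frac{m(m-3)}2 < \nu = g+p$ with $g = \frac{(p-1)(q-1)}2$. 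These constraints are simultaneously satisfiable for arbitrarily large $q/p$: take $p=2$ and $q = m(m-3)+5$, so that $g = \frac{m(m-3)}2+2$ and $\nu = \frac{m(m-3)}2+4$; then $\frac{m(m-3)}2 = g-p$ exactly and $\frac{m(m-1)}2 = \nu + (m-4) \ge \nu$ for $m\ge 4$. So no bound on $q/p$ comes out of the constraints you list, and the ``pinching'' you describe does not close: to upper-bound $m = \frac{m(m-1)}2 - \frac{m(m-3)}2$ you would need an \emph{upper} bound on $\frac{m(m-1)}2$, which $V_{m(m-1)/2}=0$ does not provide. The indispensable ingredient is the $k=2$ instance: $V_{m(m-5)/2}(K_{p,q}) = 3$ forces the two-sided window $g - 3p \le \frac{m(m-5)}2 < g-2p$, and subtracting it from the $k=1$ window gives $m = \frac{m(m-3)}2 - \frac{m(m-5)}2 < 3p$; only then does $\nu \le \frac{m(m-1)}2 < \frac{9p^2-3p}2$ contradict $q > 10p$. (In the example above $V_{m(m-5)/2}(T_{2,q})$ is of order $m/2$, nowhere near $3$, which is exactly what the missing constraint detects.)

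A second, related problem is your claim that the two directions are symmetric under $p\leftrightarrow q$. The torus-knot ingredients are, but Wu's correction term $V_{\lfloor\phi_1(i)/p\rfloor}(K)$ is not: when $p \ge 7q$ one has $\phi_1(i) < q < p$, so the correction is identically $V_0(K)=1$ and $V_i(K_{p,q}) = V_i(T_{p,q}) + 1$ on $[0,\frac{pq}2]$, whereas for $q\gg p$ the correction vanishes on the range relevant to~\eqref{e:AG5.3} and $V_i(K_{p,q}) = V_i(T_{p,q})$ there. Accordingly the paper runs two genuinely different computations --- $k\in\{0,2\}$ when $p\ge 7q$, extracting the \emph{lower} bound $2m > p+q$ and contradicting $\frac{m(m-3)}2 < \nu$; and $k\in\{1,2\}$ when $q > 10p$, extracting the \emph{upper} bound $m < 3p$ --- and this asymmetry is why the constants $\frac17$ and $10$ differ. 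A minor further slip: $V_i(T_{p,q}) \le 1$ is equivalent to $i \ge g - \Gamma_{p,q}(1)$, not $i \ge g - \Gamma_{p,q}(2)$; your stated condition is weaker still.
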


\begin{proof}
We know that $\tau(K_{p,q}) \le \nu^+(K_{p,q}) \le g_*(K_{p,q})$ and that $g_*(K_{p,q}) \le pg_*(K)+\frac{(p-1)(q-1)}2$. Thus we have:
\begin{equation}\label{e:nu-cables-23}
\nu := \nu^+(K_{p,q}) = \tau(K_{p,q}) = g_*(K_{p,q}) = p + \frac{(p-1)(q-1)}2.
\end{equation}

We want to apply Wu's formula~\eqref{e:Wu-cables}.
Since $\nu^+(K) = 1$, $V_i(K) = 0$ if and only if $i>0$.
Since $0\le\phi_1(i)\le q-1$ for each $i$, $\frac{p+q-1-\phi_1(i)}{p}\ge1$, and~\eqref{e:Wu-cables} reads:
\begin{equation}\label{e:V-cables-23}
V_i(K_{p,q}) = V_i(T_{p,q}) +
V_{\left\lfloor {\frac{\phi_1(i)}{p}} \right\rfloor}(K).
\end{equation}

Suppose, by contradiction, that $p \ge 7q$.
In particular, $p>q>\phi_1(i)$ for each $i$, from which $\frac{\phi_1(i)}p < 1$; it follows that the second summand on the right-hand side of equation~\eqref{e:V-cables-23} is always $1$.
That is, $V_i(K_{p,q}) = V_i(T_{p,q}) + 1$ for each $i\le \frac{pq}2$; moreover, since the sequence of $V_i$'s is non-increasing and
\[
\textstyle \nu^+(T_{p,q}) = \frac{(p-1)(q-1)}2 < \frac{pq}2 < \frac{pq+p-q+1}2 = \nu.
\]
We also obtain that $V_i(K) = 1$ for each $\frac{pq}2 \le i<\nu$, and that $V_i(K_{p,q})=0$ for $i\ge\nu$.
From the previous formula, and since $p \ge 7q \ge 7$, we have $\nu \ge 7$; in particular, from~\eqref{e:AG5.2} we get that $m \ge 5$.

We want to apply~\eqref{e:AG5.3} for $k=0,2$, which we can do since we just showed that $m\ge 5$, in conjunction with~\eqref{e:BL2} and~\eqref{e:Wu-cables}. Therefore, we have the following inequalities:
\begin{align*}
\nu \le &{\textstyle \frac{m(m-1)}2},\\
& {\textstyle \frac{m(m-5)}2} < {\textstyle \frac{(p-1)(q-1)}2} - q.
\end{align*}
Subtracting the second inequality from the first yields $2m > \nu - {\textstyle \frac{(p-1)(q-1)}2} + q$, from which $2m > p+q > 8q$.
We now compare this with~\eqref{e:AG5.2v2}:
\[
\textstyle \frac{p+q}2\cdot\big(\frac{p+q}2-3\big) < \frac{m(m-3)}2 < \nu = p+\frac{(p-1)(q-1)}2,
\]
which yields
\[
(p-q)^2 < 10p + 2q + 4,
\]
which in turn never holds if $p>7q$.

Suppose now that $q > 10p$.
Observe that, by definition, $\phi_1(i) \ge p$ whenever $\frac{(p-1)(q-1)}2-q+p < \frac{(p-1)(q-1)}2-14p \le i < \frac{(p-1)(q-1)}2$. In particular, in this interval, equation~\eqref{e:V-cables-23} reads $V_i(K_{p,q}) = V_i(T_{p,q})$.
Moreover, $V_i(K_{p,q}) = 1$ for $\frac{(p-1)(q-1)}2 \le i < \nu$ as well.

We can assume that $m \ge 3$, since $\nu \ge p + \frac12(q-1) > 6$ (recall that $p>1$, otherwise we would be including trivial cables). From~\eqref{e:BL2} and~\eqref{e:AG5.3} for $k=1,2$ we then obtain
\begin{align*}
{\textstyle \frac{(p-1)(q-1)}2} - p & \le {\textstyle \frac{m(m-3)}2} < \nu,\\
{\textstyle \frac{(p-1)(q-1)}2} - 3p & \le  {\textstyle \frac{m(m-5)}2} < {\textstyle \frac{(p-1)(q-1)}2} - 2p,
\end{align*}
and subtracting the second inequality from the first we get $m < 3p$.
We now plug this into~\eqref{e:AG5.2v2} to obtain:
\[
\textstyle p + \frac{(p-1)(q-1)}2  = \nu < \frac{m(m-1)}2 < \frac{9p^2-3p}2,
\]
which is impossible if $q>10p$.
\end{proof}

\subsection{The general case}

In this section we want to prove a quantitative version of Theorem~\ref{bound}. More precisely, we want to show that if $\nu^+(K) > 0$, then $\acc(K)$ is bounded \emph{away from $0$} and from above, by two constants that depend only on $\nu^+(K)$.

When $\nu^+(K) = 0$, we cannot exclude $0$ as an accumulation point for $\acc(K)$. In fact, whenever $K$ is slice (which, in particular, implies $\nu^+(K) = 0$), then $S^3_1(K_{p,1})$ bounds a rational homology ball for each $p$. Viewed differently, $\acc(K)$ is a concordance invariant, so for $K$ slice $\acc(K) = \acc(\unknot)$, which has $0$ as an accumulation point.
Moreover, since $\nu^+(K) = 0$ implies that $d(S^3_n(K_{p,q}), i) = d(S^3_n(T_{p,q}),i)$ for each $i$ and each positive $n$; then the same proof as in~\cite[Theorem~1.3]{AcetoGolla} shows that $q/p < 9$. (In fact, the proof can be refined to show that $q/p < 8$.)
For these reason, in what follows we only treat the case $\nu^+(K) > 0$.

\begin{re}
If the answer to Question~\ref{q:dinvariants-converse} is positive (that is, if correction terms detect whether a positive integral surgery along a torus knot bounds a rational homology ball), then for every knot with $\nu^+(K) = 0$ we have $\acc(K) \subset \acc(\unknot)$. In particular, $q/p \in \acc(K)$ if and only if either $q=1$, or $(p,q;n)\in \Gc \cup \Rc \cup \Lc$ or $(q,p;n)\in \Gc \cup \Rc \cup \Lc$ or for some $n$.
\end{re}

We prove the lower and upper bounds for $\acc(K)$ in Theorems~\ref{t:p<Nq} and~\ref{t:q<Np} below.

\begin{te}\label{t:p<Nq}
Let $K$ be a knot in $S^3$ with $\nu^+(K) > 0$, and $p,q$ positive, coprime integers such that $K_{p,q}$ has an integral surgery that bounds a rational homology ball.
Then either $q=1$ and
\[
\frac qp \ge \frac1{2+\sqrt{1+8V_0(K)}} \ge \frac1{2+\sqrt{1+8\nu^+(K)}},
\]
or $q > 1$ and
\[
\frac{q}{p} > \frac1{9V_0(K)} \ge \frac1{9\nu^+(K)}.
\]
\end{te}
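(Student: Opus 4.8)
The plan is to prove both bounds by contradiction: assume $K_{p,q}$ has an integral surgery bounding a rational homology ball while $q/p$ is below the asserted threshold, and extract a contradiction from the correction-term constraints. Since the surgery coefficient must be a perfect square, write it as $m^2$ and set $\nu:=\nu^+(K_{p,q})$. The hypothesis $\nu^+(K)>0$ gives $V_0(K)\ge1$, and because the $V$-sequence of $K$ drops by at most $1$ at each step it also gives $V_0(K)\le\nu^+(K)$; since $x\mapsto 1/x$ is decreasing, it therefore suffices to establish the bounds with $V_0(K)$, and the $\nu^+(K)$-versions follow. Now the contradiction hypothesis ``$p$ large compared with $q$'' forces $0\le\phi_1(i)\le q-1<p$ for all $i$, so $\lfloor\phi_1(i)/p\rfloor=0$ and $\lfloor(p+q-1-\phi_1(i))/p\rfloor=1$, and Wu's formula~\eqref{e:Wu-cables} collapses, using $V_0(K)\ge V_1(K)$, to
\[
V_i(K_{p,q})=V_i(T_{p,q})+V_0(K)\qquad\text{for }0\le i\le\tfrac{pq}{2},
\]
which when $q=1$ reads simply $V_i(K_{p,1})=V_0(K)$ on this range, since $T_{p,1}$ is the unknot.

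Next I would pin down $m$. As $g(T_{p,q})=\tfrac{(p-1)(q-1)}{2}<\tfrac{pq}{2}$, the collapsed formula at $i=\lfloor pq/2\rfloor$ gives $V_{\lfloor pq/2\rfloor}(K_{p,q})=V_0(K)\ge1$, so $\nu>\lfloor pq/2\rfloor$; plugging this into~\eqref{e:AG5.2v2} gives $m(m-1)\ge2\nu>pq$, i.e.\ $m>\sqrt{pq}$. Meanwhile~\eqref{e:AG5.3} says that for $0\le k\le m/2$ we have $V_{j_k}(K_{p,q})=\binom{k+1}{2}$ with $j_k=\tfrac{m(m-1)}{2}-km$, and the $j_k$ decrease by $m$ as $k$ increases (with $j_0=\tfrac{m(m-1)}{2}$ already exceeding $pq/2$, consistently with the previous line).

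The core of the argument is to run~\eqref{e:AG5.3} for $k$ close to $m/2$, where $j_k$ is small and hence lies in the range $[0,pq/2]$ covered by the collapsed Wu formula. There we get $V_{j_k}(T_{p,q})=\binom{k+1}{2}-V_0(K)$, so $\binom{k+1}{2}\ge V_0(K)$; and then~\eqref{e:BL2}, together with the fact that $\Gamma_{p,q}(a)=aq$ as long as $a\le\lfloor(p-1)/q\rfloor$ (which covers all the small levels in play because $p\gg q$), confines $j_k$ to an interval of length $q$ of the form $[\,g(T_{p,q})-L_kq,\ g(T_{p,q})-(L_k-1)q\,)$ with $L_k=\binom{k+1}{2}-V_0(K)$. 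Incrementing $k$ decreases $j_k$ by $m$ while increasing the level by $k+1$; reconciling these shows $|m-kq|<q$ for every admissible $k$, so only a bounded number of consecutive values of $k$ can be admissible. On the other hand the admissible $k$ form an interval whose length is governed from below by $p/q$ (via the constraint $j_k\le pq/2$ and via how large $L_k$ may grow while remaining in the linear range of $\Gamma_{p,q}$); forcing this interval to be short bounds $p/q$ from above by a multiple of $V_0(K)$, the desired conclusion when $q\ge2$. When $q=1$ the argument is more transparent: $V_i(K_{p,1})=V_0(K)$ for $0\le i\le\lfloor p/2\rfloor$ means every $j_k$ in $[0,\lfloor p/2\rfloor]$ forces $\binom{k+1}{2}=V_0(K)$, and since $\binom{k+1}{2}$ is strictly increasing at most one such $k$ exists, which bounds $\lfloor p/2\rfloor$ (hence $p$) in terms of $m$; evaluating~\eqref{e:AG5.3} at the largest admissible $k$ gives $\binom{\lfloor(m-1)/2\rfloor+1}{2}\le V_0(K)$, which bounds $m$ in terms of $V_0(K)$; combining the two yields $p\le 2+\sqrt{1+8V_0(K)}$.

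The main obstacle is the arithmetic bookkeeping required to produce the sharp constants $9$ and $2+\sqrt{1+8V_0(K)}$: one must control the floor functions in~\eqref{e:BL2}, identify the exact threshold $\lfloor(p-1)/q\rfloor$ past which $\Gamma_{p,q}(a)\ne aq$, keep track of the parity of $m$ (which fixes the largest admissible $k$ and whether $j_k=0$ is attained), decide which of the competing bounds on $k$ is binding in each sub-range of $q/p$, and dispose separately of the finitely many small values of $m$ for which the asymptotics above are too coarse. None of these steps is deep, but together they constitute the bulk of the work.
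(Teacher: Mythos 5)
Your outline for the case $q>1$ follows the paper's strategy in broad strokes: collapse Wu's formula to $V_i(K_{p,q})=V_i(T_{p,q})+V_0(K)$ for $i\le pq/2$, deduce $\nu^+(K_{p,q})\ge \lfloor pq/2\rfloor+V_0(K)$, and then play~\eqref{e:AG5.3} at two consecutive values of $k$ against~\eqref{e:BL2} in the range where $\Gamma_{p,q}(a)=aq$, so that subtracting the two localizations traps $m$ within $q$ of a multiple of $q$, contradicting the size of $m$ forced by $\nu^+(K_{p,q})$. But the values of $k$ you propose (``$k$ close to $m/2$'') are exactly the ones for which this cannot work when $q\ge 2$: there $\tfrac{k(k+1)}2$ is of order $m^2\approx pq$, so $L_k=\tfrac{k(k+1)}2-V_0(K)$ lies far outside the linear range $L_kq<p$ of the semigroup and~\eqref{e:BL2} no longer confines $j_k$ to an interval of length $q$. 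The paper instead takes $k=x,\,x+1$ with $x$ the smallest integer satisfying $\tfrac{x(x+1)}2>V_0(K)+1$ (so $x=O(\sqrt{V_0(K)})$), and must check both $x+2<m/2$ (so that~\eqref{e:AG5.3} applies) and $(2x+2)q<p$ (so that the semigroup is still linear); those checks, the subtraction giving $m<(x+2)q$, and the final comparison $V_0(K)(1+9q^2)-1\le\tfrac{m(m-1)}2<2q^2x^2$ are exactly where the constant $9$ comes from. Your sketch defers all of this as ``bookkeeping,'' but it is the substance of the proof, and as written your choice of $k$ would make it fail.

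More seriously, your $q=1$ argument does not reach the stated bound. From ``at most one $j_k$ lies in $[0,\lfloor p/2\rfloor]$'' you can conclude only $p<2m+2$ for $m$ odd (and $p<3m+2$ for $m$ even, where you additionally need $m\le p$ to know the largest $k$ is admissible at all, which you never justify); from $\tfrac{k^*(k^*+1)}2\le V_0(K)$ with $k^*=\lfloor(m-1)/2\rfloor$ you get only $m\le\sqrt{1+8V_0(K)}$ (approximately). Combining gives at best $p<2\sqrt{1+8V_0(K)}+2$, which is strictly weaker than the asserted $p\le 2+\sqrt{1+8V_0(K)}$, so the sentence ``combining the two yields $p\le 2+\sqrt{1+8V_0(K)}$'' is not a valid deduction from the two facts you establish. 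The paper's $q=1$ case rests on an ingredient absent from your sketch: the inequality $\nu^+(K_{p,1})\ge\tfrac{p-1}2+V_0(K)$, obtained from $V_{\lfloor p/2\rfloor}(K_{p,1})=V_0(K)\ge 1$ together with the property $V_{i+1}\ge V_i-1$, which is then played against $\nu^+(K_{p,1})\le\tfrac{m(m-1)}2$ and a lower bound for $V_0(K)$ in terms of $m$ to squeeze $p\le 2m-1\le 2\ell-1$. Without some such additional comparison between $\nu^+(K_{p,1})$ and $m$, your combination cannot produce the claimed denominator.
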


The strategy of the proof will follow closely the one we adopted for the trefoil in the previous subsection. The main difference with that proof is that now we have no a priori control on $\nu^+(K_{p,q})$.

\begin{proof}
First, we note that the two right-most inequalities in the statement follows immediately from the inequality $\nu^+(K) \ge V_0(K)$, which in turn follows from Theorem~\ref{t:RasmussenNiWu}.

Suppose that $p>q$; then~\eqref{e:Wu-cables} shows that $V_i(K_{p,q}) = V_i(T_{p,q}) + V_0(K)$ for each $i \le \frac{pq}2$. Since we assumed $\nu^+(K) > 0$, then also $V_0(K) > 0$, and $\nu^+(K_{p,q}) \ge \lfloor \frac{pq}2 \rfloor + V_0(K)$.
We now need to distinguish two cases in the proof: $q = 1$ and $q > 1$.

Let then $q=1$, so that $V_i(K_{p,q}) = V_0(K)$ for each $i \le \frac{pq}2$, and suppose that $S^3_{m^2}(K_{p,1})$ bounds a rational homology ball. Applying~\eqref{e:AG5.3} for $k=\lfloor \frac m2 \rfloor$ we obtain
\[
V_0(K) \ge \frac{(m-1)(m-2)}2,
\]
that translates into $m \le \frac{3+\sqrt{1+8V_0(K)}}2 =: \ell$. Note that the denominator of the fraction in the statement is $2\ell -1$.

We observed above that $\nu^+(K_{p,q}) \ge \frac{p-1}2 + V_0(K)$; we now combine this with~\eqref{e:AG5.2v2} and the previous inequality to get:
\[
V_0(K) + {\textstyle\frac{p-1}2} - m + 1 \le \nu^+(K_{p,q}) - m + 1 \le {\textstyle\frac{m(m-1)}2}-(m-1) =  {\textstyle\frac{(m-1)(m-2)}2} \le V_0(K),
\]
from which we obtain $\frac{p-1}2 \le m-1 \le \ell -1$, i.e. $p \le 2\ell-1$, as required.

Suppose now that $q > 1$ and $p > 9V_0(K)q$, and that $S^3_{m^2}(K_{p,q})$ bounds a rational homology ball.
Call $x$ the smallest integer such that $\frac{x(x+1)}2 > V_0(K)+1$; note that this is equivalent to asking that $\frac{\sqrt{9+8V_0(K)}-1}2 < x \le \frac{\sqrt{9+8V_0(K)}+1}2$.

We want to make sure that $x+1 \le \frac{m}2$: from~\eqref{e:AG5.2}:
\begin{align*}
m &\ge \left\lceil \frac{1 + \sqrt{1+8\nu^+(K_{p,q})}}2\right\rceil \ge \left\lceil\frac{1 + \sqrt{3+8V_0(K) + 4pq}}2\right\rceil\\
& > \left\lceil\frac{1 + \sqrt{3+152V_0(K)}}2\right\rceil \ge \left\lfloor\sqrt{9+8V_0(K)}+3\right\rfloor \ge 2(x+1).
\end{align*}
Call $\delta_0 := \frac{x(x+1)}2 - V_0(K)-1$, $\delta_1 := \frac{(x+1)(x+2)}2 - V_0(K)$, and note that $\delta_1 = \delta_0 + x + 2 < 2x+2$.
We also want to make sure that $q\cdot (2x+2) < p$: this is clear, since $p/q > 9V_0$, and $x \le \frac{\sqrt{1+8V_0(K)}+1}2$.

Since $x+2 < \frac{m}2$, we can apply Equation~\eqref{e:AG5.3} for $k=x$ and $k=x+1$, thus obtaining:
\[
V_{m(m-2x-1)/2}(K_{p,q}) = {\textstyle \frac{x(x+1)}2} \quad {\rm and} \quad V_{m(m-2x-1)/2}(K_{p,q}) = {\textstyle \frac{(x+1)(x+2)}2}.
\]
From Wu's formula~\eqref{e:Wu-cables} and Borodzik and Livingston's formula~\eqref{e:BL2}, we then obtain:
\begin{align*}
V_{\frac{m(m-2x-1)}2}(T_{p,q}) = \delta_0+1 &\Longrightarrow {\textstyle\frac{m(m-2x-1)}2 < \frac{(p-1)(q-1)}2 - \Gamma_{p,q}(\delta_0) = \frac{(p-1)(q-1)}2 - \delta_0q}, \\
V_{\frac{m(m-2x-3)}2}(T_{p,q}) = \delta_1 &\Longrightarrow {\textstyle \frac{(p-1)(q-1)}2 - \delta_1 q = \frac{(p-1)(q-1)}2 - \Gamma_{p,q}(\delta_1) \le \frac{m(m-2x-3)}2}.
\end{align*}
Subtracting the second inequality from the first yields $m < (\delta_1 - \delta_0)q = (x+2)q$.

We now use~\eqref{e:AG5.2} to get a contradiction:
\begin{align*}
V_0(K)(1+ 9q^2) - 1 &\le {\textstyle  V_0(K) + \frac{pq-1}2 \le \frac{m(m-1)}2 < \frac{m^2}2} <\\
&< {\textstyle\frac{q^2(x+2)^2}2 < \frac{q^2\cdot (2x)^2}2 < 2q^2x^2,}
\end{align*}
where we used that $x > 1$ (since $V_0(K) \ge 1$) and for each such $x$, $x+2 < 2x$.
Since this is clearly a contradiction for $V_0(K) \ge 1$, the proof is concluded.
\end{proof}

\begin{te}\label{t:q<Np}
Let $K$ be a knot with $\nu^+(K) > 0$, and $p,q$ positive, coprime integers such that $K_{p,q}$ has an integral surgery that bounds a rational homology ball.
Then
\[
\frac{q}{p} < 20\nu^+(K).
\]
\end{te}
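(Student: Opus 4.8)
The plan is to mirror the structure of the proof of Theorem~\ref{t:p<Nq} and of the $q>10p$ case of the trefoil example, but now driving everything from the side $q>p$. Since $\nu^+(K)>0$ forces $\nu^+(K)\ge 1$, whenever $q\le p$ we have $q/p<1\le 20\nu^+(K)$ and there is nothing to prove; so I assume $q>p$, and in fact suppose toward a contradiction that $q>20\nu^+(K)\,p$, writing $\nu^*:=\nu^+(K)$ and $g:=g(T_{p,q})=\frac{(p-1)(q-1)}2$. Let $m=\sqrt n$ for a surgery coefficient $n$ with $S^3_{n}(K_{p,q})$ bounding a rational homology ball. The first step is to pin down $\nu^+(K_{p,q})$ exactly using Wu's formula~\eqref{e:Wu-cables}. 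Since $q>20\nu^* p$, the argument of the correction term $\max\{V_{\lfloor\phi_1(i)/p\rfloor}(K),V_{\lfloor(p+q-1-\phi_1(i))/p\rfloor}(K)\}$ is controlled on a wide range of $i$; evaluating~\eqref{e:Wu-cables} at $i=g+p\nu^*$ and at $i=g+p\nu^*-1$ shows (using $q>20\nu^*p$ to see the relevant minimum is attained by the first argument) that $V_{g+p\nu^*}(K_{p,q})=V_{\nu^*}(K)=0$ while $V_{g+p\nu^*-1}(K_{p,q})\ge V_{\nu^*-1}(K)>0$, so that $\nu^+(K_{p,q})=g+p\nu^*$ by monotonicity of the $V$-sequence (Theorem~\ref{t:RasmussenNiWu}). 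In particular, from~\eqref{e:AG5.2v2},
\[
m(m-1)\ge 2\nu^+(K_{p,q})=(p-1)(q-1)+2p\nu^*,
\]
which forces $m$ to be large, of order $\sqrt{pq}\gtrsim p\sqrt{20\nu^*}$.

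Next I bound $m$ from above using the fine structure of the $V$-sequence, exactly as in the index-spacing argument of the trefoil case. By~\eqref{e:AG5.3}, for every $k\le m/2$ we have $V_{i_k}(K_{p,q})=\tfrac{k(k+1)}2$ with $i_k=\tfrac{m(m-2k-1)}2$, and consecutive indices satisfy $i_k-i_{k+1}=m$. Two locating inequalities suffice. On one hand, $V_{i_1}(K_{p,q})=1>0$ gives $i_1<\nu^+(K_{p,q})=g+p\nu^*$. On the other hand, for any $k''$ with $a_{k''}:=\tfrac{k''(k''+1)}2\le\lfloor q/p\rfloor$, the first $\lfloor q/p\rfloor$ nonzero elements of the semigroup $\Gamma_{p,q}$ are the multiples of $p$, so $\Gamma_{p,q}(a_{k''})=a_{k''}p$; since $V_{i}(K_{p,q})\ge V_i(T_{p,q})$ and $V_i(T_{p,q})\ge a_{k''}+1$ for $i<g-a_{k''}p$ by~\eqref{e:BL2}, the equality $V_{i_{k''}}(K_{p,q})=a_{k''}$ forces $i_{k''}\ge g-a_{k''}p$. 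Subtracting these and using $i_1-i_{k''}=(k''-1)m$ yields
\[
(k''-1)m<(g+p\nu^*)-(g-a_{k''}p)=(a_{k''}+\nu^*)p,\qquad\text{i.e.}\qquad m<\frac{(a_{k''}+\nu^*)p}{k''-1}.
\]

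Finally I choose $k''$ to make this upper bound as small as possible: minimizing $\frac{a_{k''}+\nu^*}{k''-1}=\frac{k''(k''+1)/2+\nu^*}{k''-1}$ over integers $k''\ge 2$ gives an optimum near $k''=1+\sqrt{2\nu^*}$ and a bound $m<C(\nu^*)\,p$, where $C(\nu^*)$ has order $\sqrt{2\nu^*}$; the key numerical point is that $C(\nu^*)^2<20\nu^*$ for all $\nu^*\ge1$. Feeding $m<C(\nu^*)p$ into the lower bound $m(m-1)\ge(p-1)(q-1)+2p\nu^*$ from the first step gives $(p-1)(q-1)<C(\nu^*)^2p^2$, which together with the standing assumption $q-1\ge 20\nu^* p$ is contradictory. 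I expect the main obstacle to be twofold. First, carrying out the Wu-formula computation of $\nu^+(K_{p,q})$ and of the relevant individual values $V_{i}(K_{p,q})$ cleanly enough that the two locating inequalities are rigorous (this is where the hypothesis $q>20\nu^* p$ is genuinely used: it makes the correction term vanish exactly where needed, keeps $k''\le m/2$, and guarantees $a_{k''}\le\lfloor q/p\rfloor$). Second, verifying that the single constant $20$ works \emph{uniformly} in $\nu^*$ and in $p$; the estimate is tight for the smallest parameters (e.g. $\nu^*=1$, $p=2$, where $C(\nu^*)=7/2$), and there one must use the \emph{exact} value $\nu^+(K_{p,q})=g+p\nu^*$ together with the integrality of $m$ (so that $m<C(\nu^*)p$ sharpens to $m\le\lceil C(\nu^*)p\rceil-1$) rather than the continuous estimate alone. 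A handful of such boundary cases are then checked directly.
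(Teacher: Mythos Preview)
Your proposal is correct and follows the same overall architecture as the paper's proof: apply~\eqref{e:AG5.3} at two values of $k$, locate the corresponding indices $i_k$ via~\eqref{e:BL2} and the semigroup structure of $\Gamma_{p,q}$, subtract to bound $m$ by a constant times $p$, and feed this into~\eqref{e:AG5.2v2} together with $\nu^+(K_{p,q})=g+p\nu^*$ to derive a contradiction with $q>20\nu^*p$.

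The execution differs in one interesting respect. The paper chooses two \emph{consecutive} values $k=x$ and $k=x+1$, where $x$ is the smallest integer with $\tfrac{x(x+1)}2>\nu^*+1$; the point of this choice is that both $i_x$ and $i_{x+1}$ fall in the window~\eqref{e:xinbetween} where the Wu correction term vanishes, so $V_{i_k}(K_{p,q})=V_{i_k}(T_{p,q})$ exactly and~\eqref{e:BL2} gives two-sided estimates. Subtracting then yields $m<(x+2)p$ with no optimization, and since $x\le\nu^*+1$ the contradiction is immediate. Your route instead pairs $k=1$ (where you use only the coarse bound $i_1<\nu^+(K_{p,q})$) with a general $k''$ (where you use only the one-sided inequality $V_i(K_{p,q})\ge V_i(T_{p,q})$, which holds for all $i$); this avoids having to pin down the vanishing window but forces you to optimise over $k''$ and to handle the small-$p$ boundary cases by hand. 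Both work; the paper's choice of consecutive $k$ in the vanishing window trades a small extra setup (verifying~\eqref{e:xinbetween}) for a cleaner endgame.
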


We note that the result is far from optimal, and can be refined by distinguishing more cases; for instance, the inequality gets better as $\nu^+(K)$ increases, or as $p$ gets larger. We content ourselves with showing that $\acc(K)$ is bounded, rather than finding the sharpest possible result.

\begin{proof}
Suppose, by contradiction, that $q > 20\nu^+(K)$. We claim that there exists $x$ such that
\begin{equation}\label{e:xinbetween}
\textstyle{ p\nu^+(K) + p < p\cdot \frac{x(x+1)}2 < p\cdot \frac{(x+1)(x+2)}2 < \frac{(p-1)(q-1)}2 - p\nu^+(K) - p.}
\end{equation}
Indeed, let $x$ be the smallest integer such that $\frac{x(x+1)}2 > \nu^+(K) + 1$, so that $x$ satisfies $x \le  \frac{\sqrt{9+8\nu^+(K)}+1}2 < \nu^+(K)+1$; then
\[
\frac{(x+1)(x+2)}2 = \frac{x(x+1)}2 + x + 1 \le \nu^+(K) + 2x + 3 < 3\nu^+(K) + 5.
\]
The inequality $p\cdot\frac{(x+1)(x+2)}2 < \frac{(p-1)(q-1)}2 - p\nu^+(K) - p$ is clearly satisfied if $q > 20\nu^+(K)p$.

The inequalities in~\eqref{e:xinbetween} ensure that, when we apply~\eqref{e:AG5.3} and~\eqref{e:BL2}, we are actually looking at $V_i(K_{p,q})$ in the range where $V_i(K_{p,q}) = V_i(T_{p,q})$.
So we have, from~\eqref{e:AG5.3}, similarly as in the proof of Theorem~\ref{t:p<Nq},
\begin{align*}
V_{\frac{m(m-2x-1)}2}(T_{p,q}) = {\textstyle \frac{x(x+1)}2} &\Longrightarrow {\textstyle\frac{m(m-2x-1)}2 < \frac{(p-1)(q-1)}2 - (\frac{x(x+1)}2-1)p}, \\
V_{\frac{m(m-2x-3)}2}(T_{p,q}) = {\textstyle \frac{(x+1)(x+2)}2} &\Longrightarrow {\textstyle \frac{(p-1)(q-1)}2 - {\textstyle \frac{(x+1)(x+2)}2} p \le \frac{m(m-2x-3)}2}.
\end{align*}
Subtracting the second inequality from the first, we obtain that $m < (x+2)p$.
We now apply~\eqref{e:AG5.2v2}:
\[
{\textstyle 9\nu+(K)p^2 < p\nu^+(K) + \frac{(p-1)(q-1)}2 = \nu^+(K_{p,q}) \le \frac{m(m-1)}2 < \frac{(x+2)^2p^2}2 < \frac{(\nu^+(K)+3)^2 p^2}2,}
\]
where we have used Wu's formula~\eqref{e:Wu-cables} to get $\nu^+(K_{p,q}) = p\nu^+(K) + \frac{(p-1)(q-1)}2$ (this is also the statement of~\cite[Theorem~1.1]{Wu}).
Since the last inequality never holds when $\nu^+(K) \ge 1$, we have reached a contradiction.
\end{proof}
\begin{section}{The proof of Proposition~\ref{p:mainA}}\label{latticeemb}

This long section is mainly devoted to the study of the embeddability of the positive lattices defined by the graphs in Proposition~\ref{seifertplumbing} 
into the standard positive lattice of the same rank. Since we are only interested in this property because of Corollary~\ref{Donaldsonobstructionforus}, there are some simplifying assumptions that we can make from the very beginning.
\begin{enumerate}
\item By Proposition~\ref{pruning1} we may assume that $7\geq k\geq 1$. (Recall that $q = kp+r$ with $0<r<p-1$.)
\item Since it is completely understood which integer surgeries on $T_{p,q}$ torus knots bound rational homology balls when $q\equiv\pm1\bmod p$ \cite[Theorem~1.4]{AcetoGolla}, we can restrict our analysis to the case $q\not\equiv \pm1\bmod p$. Via Proposition~\ref{seifertplumbing} it is easy to show that this condition is equivalent to requiring that none of the strings $(a_1,\dots,a_n)$ and $(b_1,\dots,b_m)$ in the graphs $\Gamma_{1}$ and $\Gamma_{2}$ from Proposition~\ref{seifertplumbing} is a $2$-chain. This in turn is equivalent to imposing $n,m>1$.
\end{enumerate}

These two simplifications explain the first assumptions in the statement of Proposition~\ref{p:mainA}. The third assumption, which concerns only the graphs in family $\Gamma_{1}$, will be justified in the next subsection. The proof of this proposition is long and convoluted. We have tried to do our best to guide the reader providing context between the technical results in this section. The idea behind the proof is quite simple and it can be very roughly described as follows. Let $\Gamma$ be a weighted graph with vertices $v_{1},\dots,v_{n}$ and suppose its associated lattice $(\Z\Gamma,Q_{\Gamma})$ embeds into $(\Z^{n}=\langle e_{1},\dots,e_{n}\rangle,\mathrm{Id})$. By an abuse of notation we will identify the $v_{i}$'s with their images and write $v_{i}=\sum_{j=1}^{n}\alpha_{j}e_{j}$ where $\alpha_{j}\in\Z$. To any $\Gamma$ one can associate the integer 
\[
I(\Gamma)=\sum_{i=1}^{n}(v_{i}Q_{\Gamma}v_{i}^{T}-3).
\] 
Now, if $\Gamma$ admits an embedding as described, then this number $I(\Gamma)$ can also be computed \emph{from the embedding}. We will show that if the graphs $\Gamma_{1}$ and $\Gamma_{2}$ with the restrictions of Proposition~\ref{p:mainA} embed, then the quantities $I(\Gamma_{1})$ and $I(\Gamma_{2})$ are bounded from below. To obtain these bounds it will be of particular relevance to understand the basis vectors $e_{i}$ with the property that $\langle v_{j},e_{i}\rangle\neq 0$ for exactly one or exactly two $v_{j}$'s. This is because it is only these types of basis vectors that contribute \emph{negatively} to the quantity $I(\Gamma)$. We will say that such basis vectors \emph{hit} one or two $v_{j}$'s. The first part of the proof shows that there is no basis vector hitting precisely one $v_{j}$. The second part is a very careful analysis of the basis vectors hitting precisely two $v_{j}$'s. We will show that the negative contribution to $I(\Gamma)$ of this type of basis vectors is always `compensated' by some other basis vector hitting many $v_{j}$'s. The end result is that in most cases we can show that, if the lattices embed, then $I(\Gamma_{1})$ and $I(\Gamma_{2})$ are necessarily greater than $-2$. However, if we compute this quantity directly from the above formula we arrive immediately at a contradiction.

\begin{subsection}{Pruning some cases}
This section is different in flavor to all the rest. Proposition~\ref{pruning1} uses Heegaard Floer techniques to show that we need only to deal with graphs $\Gamma_{1}$ and $\Gamma_{2}$ satisfying $7\geq k\geq 1$. Moreover, it also excludes from the lattice embedding analysis the graphs in family $\Gamma_{1}$ with $k=7$ and $N\in\{1,2,3\}$.
Indeed, in Proposition~\ref{pruning2} we show that the associated surgeries on torus knots do not bound rational homology balls. This completes the justification of the assumptions in Proposition~\ref{p:mainA}.

In both Propositions~\ref{pruning1} and~\ref{pruning2} we make use of the following inequality from from~\cite{AcetoGolla}: if $S^3_{m^2}(K)$ bounds a rational homology ball, then:
\begin{equation}\label{e:AG5.1}
m < \frac{3+\sqrt{9+8\nu^+(K)}}2.
\end{equation}
When $K$ is the torus knot $T_{p,q}$, $\nu^+(K) = g(K) = \frac{(p-1)(q-1)}2$.

\begin{prop}\label{pruning1}
If $S^3_{m^2}(T_{p,q})$ bounds a rational homology ball, where $q=kp+r$ and $2\leq r \leq p-2$, then $k\leq7$.
\end{prop}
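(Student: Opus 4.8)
\emph{Plan of the proof.} The idea is to push the two Heegaard Floer inputs recalled above through the combinatorics of the numerical semigroup $\Gamma_{p,q}$. Write $g := g(T_{p,q}) = \nu^+(T_{p,q}) = \tfrac{(p-1)(q-1)}{2}$, so that \eqref{e:AG5.1} reads $m(m-3) < 2g$, while \eqref{e:AG5.3} gives, for every $0 \le j \le m/2$,
\[
V_{i_j}(T_{p,q}) = \binom{j+1}{2}, \qquad i_j := \frac{m(m-2j-1)}{2}.
\]
Since $kp < q < (k+1)p$, the smallest elements of $\Gamma_{p,q}$ are $\Gamma_{p,q}(a) = ap$ for $0 \le a \le k$, then $\Gamma_{p,q}(k+1) = q = kp+r$, and for the next several indices they alternate between $(k+\ell)p+r$ and $(k+\ell+1)p$ (a multiple of $q$ interferes only much later). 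By \eqref{e:BL2}, $V_i(T_{p,q}) = a$ exactly when $i \in [\,g - \Gamma_{p,q}(a),\, g - \Gamma_{p,q}(a-1)\,)$, an interval of length $p$ whenever $1 \le a \le k$.

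The first step is a crude bound. If $k \le 5$ there is nothing to prove, so assume $k \ge 6$. From $V_{i_0}(T_{p,q}) = 0$ one gets $i_0 = \tfrac{m(m-1)}{2} \ge \nu^+(T_{p,q}) = g$, which in particular forces $m$ to be large enough that $j = 3$ lies in the allowed range $j \le m/2$. Since $\binom{j+1}{2} \le k$ for $j = 1$ and $j = 3$, the description above puts $i_1 \in [g-p,\, g)$ and $i_3 \in [g-6p,\, g-5p)$; as $i_1 - i_3 = 2m$ and these are integer intervals of length $p$ separated by $5p$, integrality of $m$ gives $2p+1 \le m \le 3p-1$. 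Combining $m \le 3p-1$ with $2g \le m(m-1)$ yields $(p-1)(q-1) \le (3p-1)(3p-2)$, hence $q \le 9p+1$, and therefore $k \le 8$.

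It remains to exclude $k = 8$. Here $\Gamma_{p,q}(9) = q = 8p+r$, $\Gamma_{p,q}(10) = 9p$, $\Gamma_{p,q}(11) = 9p+r$, and so on, and I would use \eqref{e:AG5.3} for $j = 0$ and $j = 2, 3, 4, 5, 6$ simultaneously. Comparing the interval for $V_i = 21$ (from $j = 6$) with that for $V_i = 15$ (from $j = 5$) via $i_5 - i_6 = m$ gives $m > 3p - r$; comparing the interval for $V_i = 10$ (from $j = 4$) with that for $V_i = 3$ (from $j = 2$) via $i_2 - i_4 = 2m$ gives $5p + r < 2m$, which together with $m \le 3p-1$ forces $r \le p-3$. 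Feeding these inequalities into the pinch $m(m-3) < 2g \le m(m-1)$ coming from \eqref{e:AG5.1} confines $(p, r, m)$ to a finite list; after discarding the many entries with $\gcd(p,q) > 1$, the few survivors are ruled out by computing the correction terms of $S^3_{m^2}(T_{p,q})$ directly and checking they are not those of a rational homology ball boundary.

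The hard part is precisely the case $k = 8$: by then essentially all the slack in \eqref{e:AG5.1} and \eqref{e:AG5.3} has been used up, so one has to juggle several of the equalities in \eqref{e:AG5.3} at once, keep precise track of the initial segment of $\Gamma_{p,q}$ past $8p$ (whose shape depends mildly on the size of $r$ relative to $p$), make sure the indices $j$ employed still satisfy $j \le m/2$ (which holds because the crude step forces $m$ to be of size roughly $2\sqrt{2}\,p$), and finish with an explicit finite verification.
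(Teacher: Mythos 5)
Your reduction to $k\le 8$ is correct, and it is a genuinely different argument from the paper's: you read the equalities \eqref{e:AG5.3} for $j=1,3$ through the semigroup description \eqref{e:BL2} to get $2p<m<3p$, and then $m(m-1)\ge 2g$ gives $q\le 9p+1$, i.e.\ $k\le 8$. The paper never does this: it invokes \cite[Theorem~2]{OwensStrle} — this is the only place the hypothesis $2\le r\le p-2$ is really used, and it gives $m^2\ge pq-\frac{p}{2}$ whenever the surgery bounds — and feeds that into \eqref{e:AG5.1}, which eliminates every $k\ge 8$ through a single quadratic inequality in $p$, with no case analysis, no leftover case $k=8$, and no computation.

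The gap is in your exclusion of $k=8$: the inequalities you actually derive ($m>3p-r$, $2m>5p+r$, $m\le 3p-1$, $r\le p-3$, together with the pinch $m(m-3)<2g\le m(m-1)$) do \emph{not} confine $(p,r,m)$ to a finite list. Take $p$ odd and large and $r=(p-1)/2$, so $q=8p+r$ is coprime to $p$; since consecutive intervals $\bigl(m(m-3),\,m(m-1)\bigr]$ overlap, there is for every such $p$ an integer $m$ with $m(m-3)<(p-1)(8p+r-1)\le m(m-1)$, and this $m$ is about $\sqrt{8.5}\,p\approx 2.92\,p$, hence satisfies $m>3p-r\approx\tfrac52p$, $2m>5p+r\approx\tfrac{11}{2}p$ and $m\le 3p-1$. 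So infinitely many triples pass every test you impose, there is no finite list of survivors, and the concluding computer check cannot be carried out; as written the case $k=8$ is not closed. The information you set up is in fact strong enough: keeping \emph{both} endpoints of $g-\Gamma_{p,q}\bigl(\tbinom{j+1}{2}\bigr)\le i_j<g-\Gamma_{p,q}\bigl(\tbinom{j+1}{2}-1\bigr)$ for $j=1,2,4,5,6$ simultaneously (not just the differences $i_j-i_{j'}$) rules out $k=8$ outright — writing $m=3p-s$ and $u:=6s+r-p$, the $j=1,2$ conditions pin $s^2+3s+r-1<pu\le s^2+5s+r-1$, while combining them with the $j=4,5,6$ conditions forces $4s<p<4s+r$ and $p>2s+r$, hence $u\ge 2s+1$ and $pu\ge(2s+r+1)(2s+1)$, a contradiction — but this two-sided bookkeeping is exactly what your sketch defers, so you should either carry it out or simply import the Owens--Strle bound as the paper does, which finishes in a few lines.
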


\begin{proof}
If $m^2<m(T_{p,q})$, where $m(T_{p,q})$ is either $pq-\frac{p}{q^*}$ or $pq-\frac{q}{p^*}$, then $S^3_{m^2}(T_{p,q})$ does not bound a rational homology ball~\cite[Theorem 2]{OwensStrle}.
Since we assumed $2\leq r \leq p-2$, in either of the above cases one can show that $pq-\frac{p}{2} \leq m(T_{p,q})$.
Thanks to the inequality~\eqref{e:AG5.1}, if
\[
\frac{1}{4}\left(3+\sqrt{9+8\nu^+(T_{p,q}})\right)^2 \leq pq-\frac{p}{2},
\]
then no integral surgery on $T_{p,q}$ bounds a rational homology ball.
Expanding and substituting $2\nu = (p-1)(q-1)$ yields 
\[
3\sqrt{9+8\nu} \leq p+2q-11.
\]
After squaring both sides (note the right-hand side is necessarily positive) and further simplifying we obtain
\[
32pq+8q \leq 4q^2+p^2+14p+4.
\]
We now substitute $q=kp+r$ and gather like powers of $p$ to get the inequality
\[
0 \leq (4k^2-32k+1)p^2 + (8kr-8k-32r+14)p +4(r-1)^2.
\]
When $k\geq8$ and $r\geq2$, each of the above coefficients is positive, and so the inequality holds for all $p>0$. For example, when $k=8$ we get
\[
0\leq p^2 +(32r-50)p +4(r-1)^2.
\]
It follows that if $S^3_{m^2}(T_{p,q})$ bounds a rational homology ball, then $k\leq7$.
\end{proof}

\begin{prop}\label{pruning2}
The Seifert spaces associated to graphs of type $\Gamma_{1}$ with $k=7$ and $N\in\{1,2,3\}$ do not bound rational homology balls. 
\end{prop}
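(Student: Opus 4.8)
The plan is to use the Heegaard Floer inequality \eqref{e:AG5.1} to collapse the problem to a finite arithmetic check, and then to clear the residual cases with the obstructions already available.

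First I would unwind the hypothesis. By Proposition~\ref{seifertplumbing}, a graph of type $\Gamma_1$ with $k=7$ arises from $S^3_n(T_{p,q})$ with $q=7p+r$, $2\le r\le p-2$ (recall $\gcd(p,q)=1$, equivalently $\gcd(p,r)=1$), and $N=pq-n-1$; thus $N\in\{1,2,3\}$ means $n\in\{pq-2,\,pq-3,\,pq-4\}$. Suppose, for contradiction, that $S^3_n(T_{p,q})$ bounds a rational homology ball. Then $n$ is a square, $n=m^2$, with $m^2=pq-1-N=7p^2+pr-1-N$; in particular $pr-1-N$ is positive and less than $p^2$, so $\sqrt{7}\,p<m<2\sqrt{2}\,p$.

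Next I would apply \eqref{e:AG5.1}. Since $T_{p,q}$ is an L--space knot, $\nu^+(T_{p,q})=g(T_{p,q})=\tfrac{(p-1)(q-1)}{2}$, and writing $m^2=(p-1)(q-1)+(p+q-2-N)$ one checks that \eqref{e:AG5.1} is equivalent to $m^2-3m<(p-1)(q-1)$, that is, to $m>\tfrac{p+q-2-N}{3}$. Squaring (both sides are positive here) and substituting $q=7p+r$ and $m^2=7p^2+pr-1-N$ rearranges this to
\[
0 > p\,\bigl(p+7r-32-16N\bigr)+(r-2-N)^2+9(1+N).
\]
As the last two summands are positive, this forces $p+7r<32+16N\le80$; together with $2\le r\le p-2$ we obtain $p\le65$ and $r\le9$, so only finitely many triples $(p,q;n)$ remain.

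Finally I would dispose of the residual list. For each surviving triple the number $n=7p^2+pr-1-N$ must be a perfect square; for the overwhelming majority it is not, and these nonexistence facts reduce to congruence or Pell--type statements, since $m$ is confined to the narrow window above and satisfies $m^2\equiv-(1+N)\pmod p$. For the finitely many triples for which $n=m^2$ is a square, one rules out $S^3_{m^2}(T_{p,q})$ directly: either from the correction--term equations \eqref{e:AG5.3}, computing $V_i(T_{p,q})$ through the semigroup formula \eqref{e:BL2}, or from the Owens--Strle obstruction $m^2\ge m(T_{p,q})$, recalling that $m(T_{p,q})\ge pq-p/2$ when $2\le r\le p-2$ and that, for the small values of $r$ in play, $q^*\equiv r^{-1}\pmod p$ is explicit and typically gives $p/q^*<1+N$, hence $m^2<m(T_{p,q})$. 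In every case one reaches a contradiction, which proves the proposition. The conceptual content lies entirely in the first two steps; the main obstacle is the last step, which is elementary but involves a tedious bookkeeping best organized (and double-checked) with a short computation.
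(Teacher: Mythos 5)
Your proposal is correct and is essentially the paper's own proof: both use \eqref{e:AG5.1} with $\nu^+(T_{p,q})=\tfrac{(p-1)(q-1)}{2}$ to force $p+7r<32+16N\le 80$, hence a finite list of triples, which is then searched for squares and cleared case by case (the paper finds eight residual triples $(7,52;19),\dots,(37,265;99)$). The only difference is the toolkit for that last finite check --- you propose Owens--Strle together with the correction-term identities \eqref{e:AG5.3} and \eqref{e:BL2}, where the paper invokes lattice embeddings or correction terms --- and this is immaterial: for instance the case $(11,82;30)$, where the Owens--Strle bound is borderline, already falls to \eqref{e:AG5.3} with $k=1$ because $V_{405}(T_{11,82})=0\neq 1$.
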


\begin{proof}
The statement is equivalent to the following assertion: if $7p+1 < q < 7p+p-1$ and $pq-4 \le m^2 \le pq-2$, then $S^3_{m^2}(T_{p,q})$ does not bound a rational homology ball.

In light of the inequality~\eqref{e:AG5.1}, in order to prove that $S^3_{m^2}(T_{p,q})$ does not bound a rational homology ball if $pq-4 \le m^2 \le pq-2$, it is enough to prove that
\[
\left(\frac{3+\sqrt{9+8\nu}}2\right)^2 \le pq-4.
\]
By expanding and squaring, we reduce it to the following, equivalent inequality:
\[
p^2 - 7pq + q^2 - 10p - 10q + 61 \ge 0.
\]
Call $r = q - 7p$, so that $1<r<p-1$. We now gather terms, and re-write:
\[
p^2 - 7pq + q^2 - 10p - 10q + 61 = (q-7p-10)q + p^2 - 10p + 61 = (r-10)q + (p-5)^2 + 36,
\]
and the latter quantity is clearly positive if $r \ge 10$.

Now suppose that $2 \le r < 10$ and write:
\[
(q-7p-10)q + p^2 - 10p + 61 \ge p^2 - 8q - 10p + 61 \ge p^2 - 66p - 11,
\]
and the latter is non-negative for $p \ge 67 > 33 + \sqrt{1098}$. For $p \le 66$, and $7p+2 \le q \le 7p+9$, we verified with a computer search that one among $pq-4$, $pq-3$, $pq-2$ is a square if and only if $(p,q;m)$ is one of the following:
\begin{align*}
&(7,52;19), (8,61;22), (11,82;30), (12,91;33), (13,100;36),\\
&(19,137;51), (28,201;75), (37,265;99).
\end{align*}
However, for each of these one can individually check that either the lattice embedding obstruction or the correction term obstruction (these correction terms can be computed by combining work of \cite{NiWu} and \cite{BorodzikLivingston}; see, for example, \cite[Section 5.2]{AcetoGolla}) shows that these examples do not bound rational homology balls.
\end{proof}

\end{subsection}

\begin{subsection}{Preliminaries on linear subsets and complementary strings}
The study of the embeddability of the lattices associated to the graphs in families $\Gamma_{1}$ and $\Gamma_{2}$ will heavily rely on previous results about lattice embeddings. Lisca did a complete analysis on the embeddability of \emph{linear subsets}, that is lattices coming from trees with vertices of valency at most two. For the reader's convenience, in this section we recall some basic terminology and facts from \cite{Lisca-ribbon} and \cite{Lisca-sums} and add some new technical results regarding linear subsets that will be useful later on. We warn the reader that even though Lisca's analysis concerns \emph{negative definite lattices} and we are working with positive definite ones, all his results translate verbatim to our context. (Several other sources we will cite in this paper will have the same feature.) At the end of this section, we will collect together the relevant results on \emph{complementary strings}.
\begin{de}
Let $S=\{v_1,\dots,v_n\}\subset(\Z^n,\mathrm{Id})$ be such that 
$$
\begin{cases}
v_{i}\cdot v_{i}\geq 1,\\
v_{i}\cdot v_{j}\in\{0,1\}\mathrm{\ if \ } i\neq j.
\end{cases}
$$
\begin{enumerate}
\item Define the \emph{intersection graph of S} as the graph having a vertex for each element of $S$ and an edge for every pair $(v_{i},v_{j})$ such that $v_{i}\cdot v_{j}=1$. This graph will be denoted $\Gamma_{S}$ and its vertices $v_{i}$ will be given the integral weights $v_{i}\cdot v_{i}$. $S$ will be called a \emph{linear subset} whenever $\Gamma_{S}$ is a linear graph.

\item Given $v\in\Z^{n}$ and some basis vector $e_{i}$ we say that $e_{i}$ \emph{hits} $v$ (or that $v$ hits $e_{i}$) if $v\cdot e_{i}\neq 0$.

\item Two vectors $v,w$ are \emph{linked} if there exists a basis vector that hits both of them.

\item A subset $S\subset \Z^{n}$ is \emph{irreducible} if for every pair of vectors $v,w\in S$ there exits a sequence of vectors in $S$
$$
v_{0}=v,v_{1},\dots,v_{n}=w
$$
such that $v_{i}$ and $v_{i+1}$ are linked for $i=0,\dots,n-1$. A subset which is not irreducible is said to be \emph{reducible}.

\item A linear irreducible subset such that $v_{i}\cdot v_{i}\geq 2$ for all $i$ is called a \emph{good subset}.

\item A good subset whose graph is connected is a \emph{standard subset}.

\item We indicate with $c(S)$ the number of connected components of $\Gamma_{S}$. (Not to be confused with the number of irreducible components for which we introduce no symbol.)

\item There is a particular type of connected component in a good subset $S$ called a \emph{bad component}.
We refer the reader to \cite[Definition~4.1]{Lisca-ribbon} for the details. The number of bad components of $S$ will be denoted by $b(S)$.
\end{enumerate}
\end{de}

\begin{lemma}\label{standardsubset}
Let $S=\{v_1,\dots,v_n\}\subset\Z^n$ be a standard subset. Write $[a_1,\dots,a_n]^-=\frac{p}{q}$ where $a_i=v_i\cdot v_i$. Suppose that there exists $v\in\Z^n$ such that $v\cdot v_1\neq 0$ and $v\cdot v_i=0$ for each $i>1$.
Then,
 $$
 \frac{(v\cdot v_1)^2}{v\cdot v}=\frac{p}{q}.
 $$
\end{lemma}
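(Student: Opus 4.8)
The plan is to use the hypothesis that $S=\{v_1,\dots,v_n\}$ is a standard subset, hence its intersection graph is a linear chain $v_1 - v_2 - \dots - v_n$ with weights $a_i = v_i\cdot v_i\ge 2$ and $v_i\cdot v_{i+1}=1$, $v_i\cdot v_j=0$ for $|i-j|\ge 2$. The key observation is that the vector $v$ satisfies $v\cdot v_i=0$ for all $i>1$, so if we consider the sublattice $L=\langle v_1,\dots,v_n\rangle$ together with $v$, the Gram matrix of $(v,v_1,\dots,v_n)$ in that ordering has a very sparse shape: $v$ only pairs nontrivially with $v_1$. First I would set up the linear-algebraic identity that computes, inside $\Z^n$ (or its tensor with $\Q$), the component of $v$ along the span of the $v_i$'s.

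\emph{Key step: project $v$ onto $\mathrm{span}_\Q(v_1,\dots,v_n)$.} Since the $a_i$-weighted chain is the tridiagonal matrix $M$ whose continued fraction is $[a_1,\dots,a_n]^- = \frac pq$ (so in particular $\det M = p$ and the $(1,1)$-cofactor of $M$ is $q$, using the standard continued-fraction/determinant identities — here one must be slightly careful about which end of the string corresponds to which cofactor, but the hypothesis $[a_1,\dots,a_n]^-=\frac pq$ with $v_1$ being the vector that $v$ hits pins this down), I would write the orthogonal projection $\pi(v)$ of $v$ onto $\mathrm{span}_\Q(v_i)$ as $\pi(v)=\sum_i c_i v_i$ where the coefficients $(c_i)$ solve $M\mathbf{c} = (v\cdot v_1, 0,\dots,0)^T = (v\cdot v_1)\,\mathbf{e}_1$. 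By Cramer's rule, $c_1 = (v\cdot v_1)\cdot \frac{q}{p}$ (the $(1,1)$-cofactor over the determinant). Then $\pi(v)\cdot \pi(v) = \pi(v)\cdot v = \sum_i c_i (v_i\cdot v) = c_1(v_1\cdot v) = \frac{(v\cdot v_1)^2 q}{p}$.

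\emph{Key step: $v$ already lies in $\mathrm{span}_\Q(v_i)$, so $\pi(v)=v$.} This is where irreducibility of the standard subset is used. Because $S$ is a standard subset (irreducible, connected, all weights $\ge 2$), Lisca's analysis — or a direct argument — shows that $\{v_1,\dots,v_n\}$ spans a finite-index sublattice of $\Z^n$, equivalently $\mathrm{span}_\Q(v_i) = \Q^n$, equivalently $|\det M|$ equals the square of the index and in particular the $v_i$ are a $\Q$-basis of $\Z^n\otimes\Q$. Hence $v\in\Q^n = \mathrm{span}_\Q(v_1,\dots,v_n)$, so $\pi(v)=v$ and $v\cdot v = \frac{(v\cdot v_1)^2 q}{p}$, i.e. $\frac{(v\cdot v_1)^2}{v\cdot v} = \frac pq$, as claimed. (If one prefers to avoid invoking that the $v_i$ span $\Q^n$, an alternative is: the component of $v$ orthogonal to $\mathrm{span}(v_i)$ is a vector $w=v-\pi(v)$ with $w\cdot v_i=0$ for all $i$; one then argues $w=0$ either from the spanning fact or by noting that adding $w$ to $S$ would violate the structure — but the spanning argument is cleanest.)

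\emph{Main obstacle.} The one genuinely delicate point is matching the right cofactor of the tridiagonal matrix $M$ to $q$ rather than to $q^*$ (the numerator of the "reversed" continued fraction $[a_n,\dots,a_1]^-$): the continued fraction $[a_1,\dots,a_n]^-$ reading from $v_1$ has numerator $p$ and the $(1,1)$-minor (delete row and column of $v_1$) is exactly the continued fraction $[a_2,\dots,a_n]^-$'s numerator, which by the standard recursion equals $q$. I would spell this out via the recursion $p_j = a_j p_{j-1} - p_{j-2}$ with the convention matching $[a_1,\dots,a_n]^- = p_n/p_{n-1}$ (indexed so that $v_1$ is at the "$a_1$" end), making sure the indices align with the statement's normalization. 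Everything else is routine linear algebra (Cramer's rule and the self-adjointness of orthogonal projection).
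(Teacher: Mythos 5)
Your argument is correct and is essentially the paper's proof in different packaging: the paper simply observes that the $(n+1)\times(n+1)$ Gram matrix of $\{v\}\cup S$ inside $\Z^n$ must be singular and expands its determinant to obtain $(v\cdot v)\,p-(v\cdot v_1)^2\,q=0$, which is exactly the identity your projection-plus-Cramer computation produces after noting that $\det M\neq 0$ forces $v\in\mathrm{span}_\Q(v_1,\dots,v_n)$. Your cofactor bookkeeping is also the correct one: deleting the row and column of $v_1$ from the tridiagonal Gram matrix yields the numerator of $[a_2,\dots,a_n]^-$, which is $q$.
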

\begin{proof}
 The pairing associated with the subset $T=\{v\}\cup S$ is described by the following matrix
 $$M_T=\left(
 \begin{array}{lllll}
  x&y&&&\\
  y&a_1&1&&\\
  &1&\ddots&\ddots&\\
  &&\ddots&&1\\
  &&&1&a_n
 \end{array}
\right)
 $$
 where $x=v\cdot v$ and $y=v\cdot v_1$. Since $T\subset \Z^n$ has $n+1$ elements we must have $\det M_T=0$. Expanding the determinant from the first column we obtain
 $$
 0=\det M_T=x\det M_S-y^2\det M_{S\setminus\{v_1\}}.
 $$
 The conclusion follows since $\det M_S=p$ and $\det M_{S\setminus\{v_1\}}=q$.
\end{proof}

\begin{lemma}\label{badcomponents}
Let $S\subset\Z^m$ be a subset such that $\Gamma_S$ is linear and connected. Suppose that there exists $v\in\Z^m$ linked once to a final vector of $S$ and orthogonal to all other elements of $S$.
 Then, $S$ is not a bad component of any linear subset in $\Z^m$.
\end{lemma}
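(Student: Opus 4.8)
The plan is to argue by contradiction, using the explicit description of bad components in \cite[Definition~4.1]{Lisca-ribbon} together with Lemma~\ref{standardsubset}.

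Suppose that $S$ is a bad component of some linear subset $T\subset\Z^m$. Being a connected component of a good subset, $S$ is itself a standard subset, so after ordering its vertices along the chain $\Gamma_S$ and relabelling if necessary we may write $S=\{v_1,\dots,v_n\}$ with intersection graph $v_1-v_2-\cdots-v_n$, weights $a_i=v_i\cdot v_i\geq 2$, and with $v_1$ the final vector to which $v$ is linked once. The hypotheses then say that $v\cdot v_i=0$ for every $i>1$ while $v\cdot v_1=(v\cdot e)(v_1\cdot e)\neq 0$, where $e$ is the unique basis vector hitting both $v$ and $v_1$; in particular $v\neq 0$, so $v\cdot v\geq 1$. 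Lemma~\ref{standardsubset} now applies and gives
\[
\frac{(v\cdot v_1)^2}{v\cdot v}=[a_1,\dots,a_n]^-=:\frac pq
\]
in lowest terms, whence $q\mid(v\cdot v)$ and $p\,(v\cdot v)=q\,(v\cdot v_1)^2$.

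It remains to show that no standard subset occurring as a bad component can carry such a vector $v$. The approach I would take is to go through the short, explicit list of bad components of \cite[Definition~4.1]{Lisca-ribbon} one family at a time: for each, the determinant $p=\det[a_1,\dots,a_n]^-$ and the divisibility relation above already eliminate many configurations for size reasons, and in the remaining configurations one uses the precise coordinate description of the bad component, together with the fact that all the $v_i$ and $v$ lie in the \emph{same} $\Z^m$, to see that a vector linked once to an endpoint and orthogonal to the rest of the chain cannot be fitted in. Equivalently, and more conceptually, the existence of such a $v$ would make available one of Lisca's reduction moves on $S$, contradicting the rigidity packaged into the definition of a bad component. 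I expect the genuine work — and the only real obstacle — to be this case analysis against Lisca's list, and in particular the families where the determinant bound alone is not decisive and the coordinate description must be used directly; everything else is bookkeeping.
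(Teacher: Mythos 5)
There is a genuine gap at the first, and crucial, step: Lemma~\ref{standardsubset} does not apply to the bad component $S$ itself. That lemma requires $S=\{v_1,\dots,v_n\}\subset\Z^n$, i.e.\ exactly as many vectors as the rank of the lattice containing them, because its proof rests on the Gram matrix of $\{v\}\cup S$ being singular ($n+1$ vectors in $\Z^n$). A bad component never satisfies this hypothesis: by definition it is obtained from a seed $\{v_1,v_2,v_3\}$ with $v_1\cdot v_1=v_3\cdot v_3=2$ and $v_2\cdot v_2>2$ via $2$-final expansions, and already the seed occupies at least four coordinates (write $v_1=e_1+e_2$, $v_3=e_1-e_2$; then $v_2=\pm e_1+w$ with $w$ orthogonal to $e_1,e_2$ and $w\cdot w\ge 2$), while each $2$-final expansion adds one vector and one new coordinate. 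So $S$ consists of $N$ vectors spanning a sublattice of rank at least $N+1$, the Gram matrix of $S\cup\{v\}$ need not be singular, and your displayed identity $\frac{(v\cdot v_1)^2}{v\cdot v}=[a_1,\dots,a_n]^-$ is unjustified; your assertion that a connected component of a good subset is a standard subset fails for the same reason. Note also two symptoms that something is off: if the identity did hold you would be done at once, since the final vector of a bad component has weight $2$ and hence coefficients $\pm1$, giving $(v\cdot v_1)^2\le v\cdot v$ while $[a_1,\dots,a_n]^->1$, so the deferred case analysis would be unnecessary; and conversely there is no finite ``list'' of bad components to run through --- they form an infinite family generated by $2$-final expansions --- so the second half of your plan is not executable as stated.

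The paper's proof repairs exactly this rank deficit. It deletes the distinguished central vertex $v_2$ (which is untouched by the expansions) and observes that $\tilde S=S\setminus\{v_2\}$ is obtained from $\{e_1+e_2,e_1-e_2\}\subset\Z^2$ by $2$-final expansions, hence consists of $N-1$ vectors using exactly $N-1$ coordinates. Adjoining the (nonzero) projection $\tilde v$ of $v$ to that $\Z^{N-1}$ now genuinely forces a vanishing Gram determinant; factoring it over the two connected components $S_1\cup S_2$ of $\Gamma_{\tilde S}$ gives $\det M_{\{\tilde v\}\cup S_1}=0$, whence $1/(\tilde v\cdot\tilde v)=p/q>1$ exactly as in Lemma~\ref{standardsubset}, a contradiction. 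If you want to salvage your approach, this reduction (or some substitute that restores the full-rank hypothesis before invoking the determinant argument) is the missing idea.
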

\begin{proof}
 Suppose $S$ is a bad component of some linear subset $S'\subset\Z^m$. By definition, $S$ is obtained from a subset $\{v_1,v_2,v_3\}$ with $v_1\cdot v_1=v_3\cdot v_3=2$ and $v_2\cdot v_2>2$ via $2$-final expansions.
 We may identify the vector $v_2$ with the corresponding element in $S$ since it is not affected by the sequence of expansions. Consider the subset $\tilde{S}:=S\setminus\{v_2\}$. Note that $\tilde{S}$ is obtained from 
 a subset of the form $\{e_1-e_2,e_1+e_2\}$ via $2$-final expansions. We may identify $\tilde{S}$ with its projection in a standard lattice $\Z^n$ where $n$ is the number of elements of $\tilde{S}$. Let $\tilde{v}$ be the projection 
 of $v$ in $\Z^n$. Note that $\tilde{v}\neq 0$ and that it is linked once to a final vector of $\tilde{S}$ and is orthogonal to any other element of $\tilde{S}$.
 
Now we proceed along the lines of Lemma \ref{standardsubset}. We may write  $\tilde{S}=S_1\cup S_2$ where each $S_i$ corresponds to a connected component of $\Gamma_{\tilde{S}}$. Suppose $\tilde{v}$ is linked to an element of $S_1$.
 The pairing associated with $T:=\{\tilde{v}\}\cup\tilde{S}$ is described by a matrix $M_T$ and we have
 $$
 0=\det M_T=\det M_{\{\tilde{v}\}\cup S_1}\cdot\det M_{S_2}\Rightarrow \det M_{\{\tilde{v}\}\cup S_1}=0.
 $$
 We obtain
 $$
 \frac{1}{\tilde{v}\cdot\tilde{v}}=\frac{p}{q}
 $$
 where $p$ and $q$ are obtained from $S_1$ as in Lemma \ref{standardsubset}. This gives us a contradiction since $p>q>0$.
\end{proof}

\begin{lemma}\label{badcomponents2}
Let $S$ be a good subset. If $c(S)=1$ and $I(S)<0$ then $I(S)\in\{-1,-2,-3\}$ and $S$ is not a bad component.
If $c(S)=2$ and $I(S)\leq -2$ then, one of the following holds
\begin{itemize}
 \item $S$ has no bad components and $I(S)=-2$;
 \item $S$ has one bad component and $I(S)\in\{-2,-3\}$;
 \item $S$ has two bad components and $I(S)=-4$.
\end{itemize}
\end{lemma}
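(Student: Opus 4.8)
The plan is to reduce the statement to Lisca's classification of the linear subsets of $(\Z^n,\mathrm{Id})$ that embed in codimension zero, together with the explicit description of bad components recalled above; the preparatory Lemmas~\ref{standardsubset} and~\ref{badcomponents} are precisely the tools that will make the bookkeeping work.

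First I would handle the connected case $c(S)=1$, in which $S$ is a standard subset with string $[a_1,\dots,a_n]^-$ and $I(S)=\sum_i(a_i-3)$. The inequality $I(S)<0$ reads $\sum_i a_i<3n$, so $S$ must contain many weight-$2$ vertices; running this constraint through Lisca's classification (disc being a square is far from sufficient, so this is where the real restriction comes from) leaves only a short explicit list of strings up to reversal, each obtained from a small core by a controlled sequence of expansions. Inspecting this list gives $I(S)\in\{-1,-2,-3\}$, and for each entry I would either exhibit a vector $v$ satisfying the hypothesis of Lemma~\ref{badcomponents} (linked once to a final vertex of $S$ and orthogonal to the remaining ones), or rule out the attaching pattern of a bad component directly via the determinant identity of Lemma~\ref{standardsubset}; either way $S$ is not a bad component, which proves the first assertion.

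Next I would deduce the two-component case from the first. Write $S=S_1\sqcup S_2$ with each $\Gamma_{S_i}$ connected. Since consecutive vertices of a connected linear graph share a basis vector, they are linked, so each $S_i$ is irreducible; replacing $\Z^m$ by the saturation of the span of $S_i$, which is isomorphic to $(\Z^{|S_i|},\mathrm{Id})$, identifies $\Gamma_{S_i}$ with the graph of a standard subset. Hence $I(S)=I(S_1)+I(S_2)$ and $b(S)=b(S_1)+b(S_2)$ with each $b(S_i)\in\{0,1\}$, and the connected case applies to each $S_i$: if $I(S_i)<0$ then $I(S_i)\in\{-1,-2,-3\}$ and $S_i$ is not bad, while if $S_i$ is bad then the description of bad components bounds $I(S_i)\ge -2$ from below. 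Feeding the hypothesis $I(S)\le -2$ into these bounds, and using that an irreducible two-component $S$ must possess a basis vector hitting both $S_1$ and $S_2$ — a bridging vector which, analysed through Lemmas~\ref{standardsubset} and~\ref{badcomponents}, prevents one component from being too negative once the other is constrained — a short case-check leaves exactly the three listed outcomes: $b(S)=0$ with $I(S)=-2$; $b(S)=1$ with $I(S)\in\{-2,-3\}$; $b(S)=2$ with $I(S)=-4$.

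The step I expect to be the main obstacle is this final case-check in the two-component case: the numerical bounds on $I(S_1)$ and $I(S_2)$ alone do not pin down $I(S)$, since a priori two very negative components could be linked together, so the argument genuinely needs the structure of the basis vector bridging $S_1$ and $S_2$, which is exactly where the preparatory lemmas earn their keep. A secondary, more mechanical, difficulty is extracting the correct finite list of negative-$I$ standard subsets from Lisca's classification and verifying the "not a bad component" assertion for each of them.
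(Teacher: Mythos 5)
Your treatment of the connected case is essentially the paper's (both reduce to Lisca's classification of standard subsets, namely Lemma~5.1 and Theorem~6.4 of \cite{Lisca-ribbon}), but the two-component case has a genuine gap. A good subset is by definition irreducible, so when $c(S)=2$ the two connected components $S_1,S_2$ of $\Gamma_S$ are necessarily linked through shared basis vectors of the ambient $\Z^m$ with $m=|S_1|+|S_2|$. Consequently each $S_i$ is a connected linear subset of a lattice of rank strictly larger than $|S_i|$: it is \emph{not} a standard subset, the saturation of its span need not be isomorphic to $(\Z^{|S_i|},\mathrm{Id})$, and the spans of $S_1$ and $S_2$ are not orthogonal. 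Your reduction ``apply the connected case to each $S_i$'' therefore does not get off the ground, since the classification you invoke only governs codimension-zero embeddings. This is exactly why the paper appeals instead to the global analysis of multi-component subsets in \cite{Lisca-sums}, following the case distinction $I(S)+b(S)<0$ versus $I(S)+b(S)=0$ from the proof of its main theorem, rather than arguing componentwise.

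Two further problems. First, the bound ``$S_i$ bad implies $I(S_i)\ge -2$'' is false: a bad component is obtained by $2$-final expansions (which preserve $I$) from $\{e_1+e_2,v,e_1-e_2\}$ with $v\cdot v\ge 3$, so $I=v\cdot v-7\ge -4$. Second, the configuration one most needs to exclude --- two bad components with $I(S)=-2$ --- is precisely the case the paper must handle by hand: contracting to a rank-$6$ subset forces $v\cdot v+w\cdot w=8$ with both summands at least $3$, and one checks that neither $(3,5)$ nor $(4,4)$ is realizable. Your outline defers all of this to an unspecified ``short case-check'', which is where the actual content of the lemma lies. Finally, note that Lemma~\ref{extravector} shows a standard subset admits no vector linked once to a \emph{final} vertex and orthogonal to the rest, so your proposed mechanism of verifying ``not a bad component'' by exhibiting a vector satisfying the hypothesis of Lemma~\ref{badcomponents} cannot succeed; one has to compare directly with the explicit structure of bad components.
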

\begin{proof}
This is essentially implicit in \cite{Lisca-ribbon} and \cite{Lisca-sums}. Suppose $c(S)=1$ and $I(S)<0$.
Then the conclusion follows from Lemma 5.1 and Theorem 6.4 in \cite{Lisca-ribbon}.

Now assume that $c(S)=2$ and that $I(S)\leq -2$. We distinguish two cases.\\
\emph{First case :} $I(S)+b(S)<0$. In this case we are in the situation described at the beginning of the proof of the main theorem in \cite[Page 2160]{Lisca-sums} (``First case: $S$ irreducible"). The conclusion then follows going through that proof.\\
\emph{Second case :} $I(S)+b(S)=0$. We claim that this possibility does not occur. Since
$b(S)\leq c(S)=2$, we must have $b(S)=2$ and $I(S)=-2$. Write $S=S_1\cup S_2$ where each $S_i$ is a bad component. By the definition of a bad component we can reduce $S$ via 2-final contractions to a subset
$\tilde{S}=\tilde{S_1}\cup\tilde{S_2}\subset\mathbb{Z}^6$ of the form
$$
\{e_1+e_2,v,e_1-e_2\}\cup\{e_3+e_4,w,e_3-e_4\}
$$
where 
\begin{itemize}
\item $I(\tilde{S})=I(S)$;
\item $v\cdot (e_1+e_2)=v\cdot (e_1-e_2)=w\cdot (e_3+e_4)=w\cdot (e_3-e_4)=1$;
\item each element of $\tilde{S_1}$ is orthogonal to each element of $\tilde{S_2}$;
\item $v\cdot v\geq 3$ and $w\cdot w\geq 3$.
\end{itemize}
In particular, since $I(\tilde{S})=-2$ we obtain
$$
v\cdot v+w\cdot w=8.
$$
Up to exchanging the role of $v$ and $w$ this leaves us with two possibilities,. Either
$(v\cdot v,w\cdot w)=(3,5)$ or $(v\cdot v,w\cdot w)=(4,4)$. It is easy to see that in either case such a subset does not exist. 
\end{proof}
\begin{lemma}\label{I=-2,b=1}
Let $S$ be a good subset such that $c(S)=2$, $b(S)=1$ and $I(S)<-1$. Then, one of the following conditions hold
\begin{enumerate}
\item $I(S)=-3$ and $S$ is obtained via $2$-final expansions from a subset of the form
$$
\{e_1+e_2,e_1+e_3+\dots+e_N,e_1-e_2\}\cup\{e_3-e_4,\dots,(-1)^{N-1}e_{N-1}-e_N\}
$$
where $N\geq 4$ and the $2$-final expansions are performed on the first component.
\item $I(S)=-2$ and $S$ is obtained via $2$-final expansions from a subset of the form
$$
\{e_1+e_2,e_1+e_3+\dots+e_N,e_1-e_2\}\cup\{e_3-e_4+e_{N+1},-e_4+e_5,\dots,
(-1)^{N-1}e_{N-1}-e_N,-e_3-e_N+e_{N+1}\}
$$
where $N\geq 4$ and the $2$-final expansions can be performed on both components. 
\end{enumerate}  
\end{lemma}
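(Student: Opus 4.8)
The plan is to combine Lemma~\ref{badcomponents2} with the explicit structure of bad components and with Lisca's classification of standard subsets having non-positive $I$. Since $I(S)$ is an integer with $I(S)<-1$, we have $I(S)\le-2$; as $c(S)=2$ and $b(S)=1$, Lemma~\ref{badcomponents2} forces $I(S)\in\{-2,-3\}$. I would then write $S=S_1\sqcup S_2$, where $S_1$ is the unique bad component and $S_2$ is the other connected component of $\Gamma_S$. Now $S_2$ is connected (otherwise $c(S)>2$), hence irreducible — an edge of $\Gamma_{S_2}$ forces its two endpoints to share a basis vector — so $S_2$ is a standard subset with $b(S_2)=0$; moreover $I(S)=I(S_1)+I(S_2)$.

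The next step is to pin down $S_1$. By \cite[Definition~4.1]{Lisca-ribbon}, $S_1$ is obtained by a sequence of $2$-final expansions from a core which, after a change of basis of the ambient lattice, has the form $C_1=\{e_1+e_2,\ e_1+e_3+\dots+e_N,\ e_1-e_2\}$ with $N\ge4$. A $2$-final expansion leaves $I$ unchanged (it adjoins a weight-$2$ vertex and raises the weight of a neighbour by $1$), so $I(S_1)=I(C_1)=(2-3)+(N-1-3)+(2-3)=N-6$, and therefore $I(S_2)=I(S)-N+6\le0$. Reversing the expansions inside $S$ only deletes final weight-$2$ vertices of $S_1$, each written as $e_a-e_b$ with $e_b$ met by no other element of $S$ (if some $v\in S_2$ met $e_b$ it would be joined to this vertex, contradicting $c(S)=2$); so after this reduction we may and do assume $S_1=C_1$. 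The only basis vectors of $C_1$ that $S_2$ can meet are $e_3,\dots,e_N$: if $v\in S_2$ met $e_1$ or $e_2$ then, since $v\cdot(e_1+e_2)=v\cdot(e_1-e_2)=0$, we would get $v\cdot e_1=v\cdot e_2=0$; and for each $v\in S_2$ the condition $v\cdot(e_1+e_3+\dots+e_N)=0$ reads $\sum_{j=3}^{N}v\cdot e_j=0$, so $S_2$ lies in the orthogonal complement of $e_3+\dots+e_N$. Writing $t\ge0$ for the number of basis vectors met by $S_2$ but not by $C_1$, and using that $S$ is a subset of $\Z^{|S|}$ (so it has exactly as many elements as basis vectors, $N+t$ of them), we get $|S_2|=(N-3)+t$; since the $|S_2|$ vectors are linearly independent they span a rank-$(N-3+t)$ lattice, which forces $S_2$ to meet \emph{every} one of $e_3,\dots,e_N$ (otherwise it would sit inside a lattice of smaller rank). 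As all weights of $S_2$ are $\ge2$ we obtain the basic inequality $I(S_2)\ge-(N-3)-t$, with equality exactly when every weight of $S_2$ equals $2$. Combining $I(S_2)=I(S)-N+6$ with $I(S_2)\ge-3$ from Lemma~\ref{badcomponents2} (applied to $S_2$, for which $c(S_2)=1$), only $(N,I(S_2))\in\{(4,-1),(5,-2),(6,-3)\}$ survives if $I(S)=-3$, and $(N,I(S_2))\in\{(4,0),(5,-1),(6,-2),(7,-3)\}$ if $I(S)=-2$.

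Finally I would run the case analysis, feeding each admissible pair $(N,I(S_2))$ into Lisca's list of standard subsets with that value of $I$ together with the coupling constraints above. When $I(S)=-3$ one has $I(S_2)=-(N-3)$, which by the basic inequality forces $t=0$ and all weights of $S_2$ equal to $2$; a standard subset of $N-3$ square-$2$ vectors that spans the sum-zero (root) lattice $A_{N-3}$ and whose intersection graph is a path must, up to a change of basis, be the set of simple roots, which is exactly form (1). When $I(S)=-2$ the value $I(S_2)=4-N$ lies one unit above the minimum $-(N-3)$; since $A_{N-3}$ contains no vector of square $3$ (three $\pm1$'s never sum to $0$), that extra unit cannot be realised with $t=0$, so $t\ge1$, and a dimension-and-weight count then forces $t=1$ together with a unique configuration in which the new basis vector $e_{N+1}$ is met by exactly the two extreme vertices of the chain $S_2$ — this keeps $\Gamma_{S_2}$ a path and yields precisely the ``wrapped'' chain of form (2). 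Re-inserting the $2$-final expansions (which can be applied on either side in case (2), but only on the $C_1$-side in case (1), where $S_2$ already saturates $A_{N-3}$) completes the proof; alternatively the whole statement can be extracted from Lisca's proof of the main theorem of \cite{Lisca-sums}, in the spirit of Lemma~\ref{badcomponents2}. I expect this last paragraph to be the main obstacle: ruling out branched or multiply-linked candidates for $S_2$ by hand, and making the sign conventions in (2) consistent, is where the genuine bookkeeping lies, whereas everything before it is routine manipulation of $I$ and of the definition of a bad component.
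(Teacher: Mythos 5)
Your proposal has a genuine error that invalidates the case analysis. You apply Lemma~\ref{badcomponents2} to $S_2$ alone to conclude $I(S_2)\ge -3$, and from this you restrict to $N\le 6$ (when $I(S)=-3$) or $N\le 7$ (when $I(S)=-2$). But Lemma~\ref{badcomponents2}, like Lisca's results it rests on, concerns \emph{good subsets}, i.e.\ subsets of $\Z^n$ with exactly $n$ elements. By your own count $S_2$ has $N-3+t$ elements but sits inside a lattice of rank $N-2+t$ (it hits all of $e_3,\dots,e_N$ plus $t$ further basis vectors), so it is a codimension-one linear subset, for which $I$ can be arbitrarily negative: a chain of $k$ weight-$2$ vectors $e_3-e_4,\dots$ in $\Z^{k+1}$ has $I=-k$. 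Indeed the statement you are proving exhibits exactly this: in conclusion (1) the second component has $I(S_2)=-(N-3)$ with $N\ge 4$ \emph{unbounded}, so your restriction $N\le 6$ contradicts the lemma itself. The same objection applies to your plan to ``feed each admissible pair into Lisca's list of standard subsets with that value of $I$'': $S_2$ is not a standard subset in the sense required by that classification, so the list does not apply to it. A secondary gap: even granting $I(S_2)=-(N-3)$, your ``basic inequality'' $I(S_2)\ge-(N-3)-t$ only yields $t\ge0$; it does not force $t=0$, since $t=1$ with one weight-$3$ element is consistent with the inequality and must be excluded by a separate lattice argument.

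The correct handling of the interaction between the two components is not component-by-component but global, and this is what the paper does: since $b(S)=1$, $c(S)=2$ and $I(S)\le-2$, one is exactly in the first case (``$S$ irreducible''), second subcase (``$b(S)=1$'') of the proof of the main theorem of~\cite{Lisca-sums}, which already classifies the possible weight strings as $(2,n+1,2)\cup(2^{[n-1]})$ or $(2,n+1,2)\cup(3,2^{[n-2]},3)$ up to $2$-final expansions; the only remaining work is to check that the embeddings of these strings are forced, which one does by writing down the weight-$2$ vectors first. Your closing remark that ``the whole statement can be extracted from Lisca's proof'' is the right route; the self-contained argument you build around it does not work as written.
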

\begin{proof}
Let $S$ be a subset satisfying the assumptions of the lemma. Let us write $S=S_1\cup S_2$ and assume the $S_1$ is a bad component. Note that we are under the situation described  in the proof of the main theorem in \cite{Lisca-sums}. More specifically we can apply the argument in the first case (``$S$ irreducible") and the second subcase (``$b(S)=1$"). We conclude that the string associated to $S$ is obtained via
$2$-final expansions from a string that is either of the form 
$$
(2,n+1,2)\cup (2^{[n-1]})
$$
or 
$$
(2,n+1,2)\cup (3,2^{[n-2]},3).
$$
Note that in the first case all $2$-final contractions are performed on $S_1$ while in the second case 
$2$-final contractions are in general required on both $S_1$ and $S_2$. Call $\tilde{S}=\tilde{S}_1\cup\tilde{S}_2$ the subset 
obtained after all possible $2$-final contractions. In order to prove the lemma it is enough to show that
$\tilde{S}$ can be described as in the conclusion. In both cases this is a straightforward verification.
First write down all the vectors whose weight is $2$ and at that point all other elements are uniquely determined.  
\end{proof}

\begin{lemma}\label{extravector}
Let $S=\{v_1,\dots,v_n\}\subset\Z^{n}$ be a subset with $\Gamma_S$ linear such that one of the following conditions is satisfied:
\begin{enumerate}
\item $S$ is a standard subset;
\item $S$ is a good subset and $(c(S),I(S))=(2,-2)$.
\end{enumerate}
 Suppose that there exists $w\in\Z^n$ and an index $1\leq i\leq n$ such that
 $w\cdot v_i=1$ and $w\cdot v_j=0$ for each $j\neq i$. Then, $v_i$ is internal and $S$ is standard.
\end{lemma}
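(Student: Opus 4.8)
The plan is to argue by contradiction, using the structural information about good subsets and bad components assembled in the preceding lemmas, together with the determinantal identity of Lemma~\ref{standardsubset}. First I would dispose of the possibility that $v_i$ is a \emph{final} vector of its connected component. Suppose $v_i$ is final. If $S$ is standard (case~(1)), then the connected component containing $v_i$ is all of $\Gamma_S$, and $w$ is linked once to a final vector of $S$ and orthogonal to everything else; then Lemma~\ref{standardsubset} applied to the component gives $\tfrac{(w\cdot v_i)^2}{w\cdot w} = \tfrac1{w\cdot w} = \tfrac pq$ with $p>q>0$, which is impossible. If instead $(c(S),I(S))=(2,-2)$ (case~(2)), write $S = S_1\cup S_2$ with $v_i$ a final vector of, say, $S_1$; then $w$ is linked once to a final vector of $S_1$ and orthogonal to $S_1\setminus\{v_i\}$ \emph{and} to $S_2$. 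Applying the same determinant expansion as in Lemma~\ref{standardsubset} to $T = \{w\}\cup S_1$ (inside the span of $S_1$, which embeds in a standard lattice of the appropriate rank) forces $\det M_{\{w\}\cup S_1}=0$, hence $\tfrac1{w\cdot w} = \tfrac{p_1}{q_1}$ for the numerator/denominator $p_1>q_1>0$ coming from the continued fraction of $S_1$ — again a contradiction. This shows $v_i$ must be internal in all cases.

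Next I would rule out case~(2) entirely, i.e. show that once we know such a $w$ exists hitting exactly one (internal) vertex, $S$ cannot be a good subset with $(c(S),I(S))=(2,-2)$ but must in fact be standard. For this I would invoke Lemma~\ref{badcomponents2}: a good subset with $c(S)=2$ and $I(S)\le -2$ either has no bad components and $I(S)=-2$, or has a bad component. The presence of a bad component is excluded by Lemma~\ref{badcomponents}, since Lemma~\ref{badcomponents} says precisely that a connected linear subset admitting a vector linked once to a final vector and orthogonal to the rest cannot be a bad component — and here one checks that the hypothesis transfers to whichever component $w$ does not meet, or more carefully, one uses the explicit normal forms from Lemma~\ref{I=-2,b=1} when $b(S)=1$. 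So we are reduced to the case $b(S)=0$, $c(S)=2$, $I(S)=-2$. In this situation I would argue that the existence of $w$ hitting a single internal vertex of one of the two components is incompatible with $I(S)=-2$: this is a finite case analysis on the possible shapes of a two-component good subset of vanishing "defect", analogous to the computation $(v\cdot v, w\cdot w)\in\{(3,5),(4,4)\}$ carried out at the end of the proof of Lemma~\ref{badcomponents2}, where one shows no such configuration actually embeds.

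The main obstacle, I expect, will be the bookkeeping in case~(2): one must be careful about \emph{which} component $w$ interacts with, and about the fact that $w$ itself need not lie in the span of $S$, so the determinantal argument has to be set up in the right sublattice (the one spanned by the relevant component together with $w$), exactly as in the proof of Lemma~\ref{badcomponents}. Once the reduction to "$v_i$ internal, $S$ standard" is in place, the conclusion is immediate. I would organize the write-up as: (i) the determinant computation showing $v_i$ internal (handling cases (1) and (2) uniformly by passing to the component of $v_i$); (ii) the exclusion of bad components via Lemmas~\ref{badcomponents} and~\ref{I=-2,b=1}; (iii) the finite check eliminating the residual $b(S)=0$, $c(S)=2$ case, leaving only the standard case.
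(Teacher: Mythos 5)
Your first step---excluding that $v_i$ is final---is sound and essentially self-contained: if $v_i$ were an endpoint of its component, then $\{w\}\cup S$ gives $n+1$ vectors in $\Z^n$, the Gram matrix splits as a block sum because $w$ and the component of $v_i$ are orthogonal to the other component, and the expansion of Lemma~\ref{standardsubset} yields $w\cdot w=q/p<1$, which is impossible for a nonzero integral vector. (The paper instead quotes an external result for case~(1) and never needs this step in case~(2), so your route here is a legitimate, mildly different reduction.)

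The genuine gap is in ruling out case~(2) once $v_i$ is known to be internal. First, Lemma~\ref{badcomponents} cannot be invoked as you suggest: its hypothesis requires a vector linked once to a \emph{final} vector of the component and orthogonal to the rest, whereas your $w$ hits an \emph{internal} vertex of one component and is orthogonal to the entirety of the other, so neither component inherits the hypothesis. Second, and more seriously, the residual case $b(S)=0$, $c(S)=2$, $I(S)=-2$ is not a finite list: these subsets are exactly those obtained from $\{e_1+e_2,e_1-e_2\}$ by arbitrary sequences of $2$-final expansions, so "a finite case analysis on the possible shapes" does not exist as stated, and the computation at the end of Lemma~\ref{badcomponents2} (which concerns a fully contracted configuration with two bad components) is not analogous. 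The missing idea in the actual proof is this: writing ${}^tA w=e_i$ for the matrix $A$ whose columns are the $v_j$, one gets $A\,{}^tA\,w=v_i$, and since every eigenvalue of ${}^tA A$ (the pairing matrix of $S$) exceeds $1$, this forces $w\cdot w<v_i\cdot v_i$; replacing $v_i$ by $w$ then produces a good subset $\tilde S=(S\setminus\{v_i\})\cup\{w\}$ with $I(\tilde S)<-2$ and $c(\tilde S)\in\{3,4\}$, contradicting \cite[Proposition~4.10]{Lisca-sums} when there are no bad components; the case $b(S)=1$ requires a separate and genuinely delicate argument via the normal forms of Lemma~\ref{I=-2,b=1}, in particular the subcase where $v_i$ is the central vertex of the bad component and $\tilde S$ becomes reducible, which needs its own projection-and-determinant computation. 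None of this is present, even in outline, in your plan, so the heart of the lemma remains unproved.
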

\begin{proof}
First assume $S$ is standard. Then, the conclusion follows from Proposition 8.1 in \cite{Paolo}.
Now assume $S$ is a good subset and $c(S)=2$. We need to show that there is no $w$ as above. Let
$A$ be the matrix whose columns are the elements of $S$. The conditions on the extra vector $w$
can be expressed as
$$
{^tA}w=e_i.
$$
 Multiplying both sides by $A$ we obtain
 $$
A{^tA}w=Ae_i=v_i. 
 $$
 Note that the matrix $A{^tA}$ is conjugated to ${^tA}A$ and this last matrix represents the pairing 
 associated with the subset $S$. In particular, these matrices have the same eigenvalues.
 It is easy to see that if $\lambda$ is an eigenvalue for ${^tA}A$ then $\lambda>1$. Therefore we have
 $$
 w\cdot w<(A{^tA}w)\cdot(A{^tA}w)=v_i\cdot v_i.
 $$ 
 Now consider the subset $\tilde{S}=(S\setminus\{v_i\})\cup\{w\}$. We want to reach a contradiction by examining the subset $\tilde{S}$. According to Lemma \ref{badcomponents2} we have $b(S)\in\{0,1\}$. We examine these two possibilities separately. 
 
 Suppose $b(S)=0$. It follows from \cite{Lisca-sums} that $S$ is obtained from
 a subset of the form $\{e_1+e_2,e_1-e_2\}$ via 2-final expansions (see the proof of the main theorem, first subcase of the first case). Using this fact it is easy to see that
 $\tilde{S}$ is a good subset with no bad components. Also, since $w\cdot w<v_i\cdot v_i$ we have 
 $I(\tilde{S})<I(S)=-2$. Finally, note that $c(\tilde{S})\in\{3,4\}$ depending on whether $v_i$
 is final or internal. In any case we obtain a contradiction with Proposition 4.10 in \cite{Lisca-sums}.
 
 Now assume that $b(S)=1$. Then, $S$ satisfies the assumption of Lemma \ref{I=-2,b=1}. In particular 
 the string associated to $S$ is obtained via
$2$-final expansions from a string of the form 
$$
(2,n+1,2)\cup (3,2^{[n-2]},3).
$$
Let us denote by $u$ the element in $S$ corresponding to the vertex of weight $n+1$ in the above string.
Suppose that $u\neq v_i$. Following the description of Lemma \ref{I=-2,b=1} we can verify that $\tilde{S}$ is irreducible and is therefore a good subset with $b(\tilde{S})\leq 1$ and $c(\tilde{S})\in\{3,4\}$. This contradicts Proposition 4.10 in \cite{Lisca-sums}. 
We are left with the possibility $u=v_i$. If $\tilde{S}$ is irreducible then we may argue as before 
and find a contradiction with Proposition 4.10 in \cite{Lisca-sums}. Assume that $\tilde{S}$ is reducible. Note that in $S\setminus\{u\}$ all the basis coordinates of $\mathbb{Z}^n$ are used. It follows that $\tilde{S}$ has two irreducible components. We may write $\tilde{S}=S_1\cup S_2\cup S_3\cup\{w\}$, where $S_1$ and $S_2$ correspond to
the two connected components of the graph associated with $\tilde{S}$ originally linked to $u$. Clearly $S_1$ and $S_2$ are contained in the same irreducible component. It follows that the decomposition of 
$\tilde{S}$ in irreducible components is given by 
$\tilde{S}=U_1\cup U_2$ where $U_1=S_1\cup S_2$ and 
$U_2=S_3\cup\{w\}$. Associated with this decomposition we have a splitting 
$\mathbb{Z}^n=\mathbb{Z}^{n_1}\oplus\mathbb{Z}^{n_2}$
so that each $U_i$ is embedded in $\mathbb{Z}^{n_i}$.
For $i\in\{1,2\}$ let 
$\pi_i:\mathbb{Z}^n\rightarrow\mathbb{Z}^{n_i}$ be the projection corresponding to the above splitting.
Note that $\pi_2(u)\cdot \pi_2(u)\geq 2$ and $\pi_2(u)\cdot w=u\cdot w=1$.
Now consider the subset 
$$
T:=S_3\cup\{w\}\cup\{\pi_2(u)\}.
$$
Since $|T|>n_2$, the pairing associated with this subset must have vanishing determinant. From this we obtain 
$$
(w\cdot w)(\pi_2(u)\cdot \pi_2(u))-1=0
$$
which is impossible.
\end{proof}

Recall that the strings $(a_1,\dots,a_n)$ and $(b_1,\dots,b_m)$ in both families of graphs $\Gamma_1$ and $\Gamma_2$ are \emph{complementary}, that is, they  satisfy  
\begin{equation}\label{e:1}
([a_n,\dots,a_1]^-)^{-1}+([b_m,\dots,b_1]^-)^{-1}=1.
\end{equation}
We proceed to list some of the basic properties of strings of integers satisfying \eqref{e:1} and their embeddings into standard definite lattices. 

\begin{facts}{Complementary string properties.}\label{f}
\begin{enumerate}

\item Given an arbitrary string of integers $(a_1,\dots,a_n)$ with $a_{i}\geq 2$ for all $i$, there is a unique complementary string $(b_1,\dots,b_m)$, where $b_{i}\geq 2$ for all $i$. The two strings are related to each other by Rimenschneider point rule and if $n,m>1$ and say $a_{1}=2$, then $b_{1}>2$ \cite{Riemensh}.
%
%
\item To any weighted graph $\Gamma$, with vertices $v_{i}$ of weight $v_{i}\cdot v_{i}\in\Z$, one can associate the following quantity
$$
I(\Gamma):=\sum_{v_{i}\in\Gamma}(v_{i}\cdot v_{i}-3).
$$ 
It is well known \cite[Lemma~2.6]{Lisca-ribbon} that for a graph $\Lambda$ consisting of two linear components with weights given by complementary strings we have: 
\begin{equation}\label{e:Icom}
I(\Lambda)=\sum_{i=1}^{n}a_{i}+\sum_{i=1}^{m}b_{i}-3(m+n)=-2.
\end{equation}
The two families of graphs in Proposition~\ref{seifertplumbing} satisfy:
\begin{equation}\label{e:I}
I(\Gamma_{1})=k-N-5\quad\mathrm{and}\quad I(\Gamma_{2})=-N-k-1.
\end{equation}
\item\label{f:unique} Let $(\Z\Lambda,Q_{\Lambda})$ be the lattice associated to two complementary strings $(a_1,\dots,a_n)$ and $(b_1,\dots,b_m)$ and consider an embedding $\varphi$ of $(\Z\Lambda,Q_{\Lambda})$ into $(\Z^{r},\mathrm{Id})$. Then,
	\begin{enumerate}
		\item $r\geq m+n;$
		\item if a basis vector $e_{i}$ hits both the vertices with weight $a_{1}$ and $b_{1}$, then the image of $\varphi$ is contained in $\Z^{{n+m}}$ 				and the embedding $\varphi$ is unique \cite[Lemma~3.1]{LiscaLec} and  \cite[Lemma~5.2]{BBL}. For the complementary strings $(2)$ and $(2)$ this unique embedding in $\Z^{2}$ with basis $\{e_{1},e_{2}\}$ is, up to sign, given by $(e_{1}+e_{2})$ and $(e_{1}-e_{2})$.
	\end{enumerate}
\item\label{f:+1} Given two strings of integers $(a_1,\dots,a_n)$ and $(b_1,\dots,b_m)$ we consider the following two operations
\[
\begin{array}{ccc}
\begin{cases}
  (a_1,\dots,a_n)\mapsto (a_1,\dots,a_n,2) \\  (b_1,\dots,b_m)\mapsto (b_1,\dots,b_m+1)    
  \end{cases}
  &\mathrm{and}&
  \begin{cases}
  (a_1,\dots,a_n)\mapsto (a_1,\dots,a_n+1)
 \\ (b_1,\dots,b_m)\mapsto (b_1,\dots,b_m,2) 
  \end{cases}.  
 \end{array}
\]
If we start with the strings $(2)$ and $(2)$, any combination of these operations yields two complementary strings (it is enough to compare this construction with the Riemenschneider point rule). Moreover, these operations are compatible with the unique embedding from the preceding point in the following sense. If we start with $(e_{1}+e_{2})$ and $(e_{1}-e_{2})$, the strings resulting from these operations  admit, up to sign, a unique embedding into $\Z^{3}$
\[
\begin{array}{lcl}
  (e_{1}+e_{2})\mapsto (e_{1}+e_{2},e_{2}+e_{3}) &\quad\quad & (e_{1}+e_{2})\mapsto (e_{1}+e_{2}+e_{3})    \\
  (e_{1}-e_{2})\mapsto (e_{1}-e_{2}+e_{3}) &\quad\quad &(e_{1}-e_{2})\mapsto (e_{1}-e_{2},-e_{2}+e_{3})   
 \end{array}
\]
An easy induction shows that, for generic complementary strings, the unique embedding $\varphi$ from the preceding point is the one compatible with these operations. 

We can extend the arguments above to show that given two complementary strings $(a_{1},\cdots,a_{n})$ and $(b_{1},\cdots,b_{m})$, if the two component linear graph
\[
  \begin{tikzpicture}[xscale=1.5,yscale=-0.5]
    \node (A0_0) at (0, 0) {$a_n$};
    \node (A0_2) at (2, 0) {$a_1$};
    \node (A0_4) at (4, 0) {$b_1$};
    \node (A0_6) at (6, 0) {$b_m+1$};
    \node (A1_0) at (0, 1) {$\bullet$};
    \node (A1_1) at (1, 1) {$\dots$};
    \node (A1_2) at (2, 1) {$\bullet$};
    \node at (6, 1.5) {$u$};
    \node (A1_4) at (4, 1) {$\bullet$};
    \node (A1_5) at (5, 1) {$\dots$};
    \node (A1_6) at (6, 1) {$\bullet$};
    \path (A1_4) edge [-] node [auto] {$\scriptstyle{}$} (A1_5);
    \path (A1_0) edge [-] node [auto] {$\scriptstyle{}$} (A1_1);
    \path (A1_1) edge [-] node [auto] {$\scriptstyle{}$} (A1_2);
    \path (A1_5) edge [-] node [auto] {$\scriptstyle{}$} (A1_6);
  \end{tikzpicture}
  \]
admits an embedding into a standard lattice such that there exists a basis vector hitting both $a_{1}$ and $b_{1}$, then this embedding is necessarily the unique embedding from the preceding point associated to the graph 
\[
  \begin{tikzpicture}[xscale=1.5,yscale=-0.5]
    \node (A0_0) at (0, 0) {$a_n$};
    \node (A0_2) at (2, 0) {$a_1$};
    \node (A0_4) at (4, 0) {$b_1$};
    \node (A0_6) at (6, 0) {$b_m$};
    \node at (6, 1.5) {$u'$};
    \node (A1_0) at (0, 1) {$\bullet$};
    \node (A1_1) at (1, 1) {$\dots$};
    \node (A1_2) at (2, 1) {$\bullet$};
    \node (A1_4) at (4, 1) {$\bullet$};
    \node (A1_5) at (5, 1) {$\dots$};
    \node (A1_6) at (6, 1) {$\bullet$};
    \path (A1_4) edge [-] node [auto] {$\scriptstyle{}$} (A1_5);
    \path (A1_0) edge [-] node [auto] {$\scriptstyle{}$} (A1_1);
    \path (A1_1) edge [-] node [auto] {$\scriptstyle{}$} (A1_2);
    \path (A1_5) edge [-] node [auto] {$\scriptstyle{}$} (A1_6);
  \end{tikzpicture}
  \]
where the only difference is that $\varphi(u)=\varphi(u')+e$ where $e$ is a basis vector which does not hit any other vertex in the graph. The main lines of the argument go as follows: without loss of generality assume $a_{1}=2$ and $b_{1}\geq 2$. Since there is a basis vector that hits both of them, the embedding of the vertex with weight $b_{1}$ necessarily contains all the basis vectors hitting the 2-chain starting with $a_{1}$. The pattern repeats: each time that there is a 2-chain on one of the complementary legs, the vectors hitting it hit also one vertex of weight greater than 2 in the other complementary leg. The basis vectors hitting the vertex $u$ will account for it being connected to another vertex in its complementary string and for being orthogonal to the vertices in the other string. Since the weight of $u$ is $b_{m}+1$, the only way to embed it is for it to be hit by a new basis vector $e$, different from all the rest hitting the complementary strings. 

\end{enumerate}
\end{facts}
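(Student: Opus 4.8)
Items~(1)--(3) of the statement are not new: item~(1) is the Riemenschneider point rule and the accompanying continued-fraction duality of \cite{Riemensh}; the identity $I(\Lambda)=-2$ in item~(2) is \cite[Lemma~2.6]{Lisca-ribbon}, and the formulas $I(\Gamma_1)=k-N-5$ and $I(\Gamma_2)=-N-k-1$ are obtained by summing the weights read off the graphs of Proposition~\ref{seifertplumbing} and applying \eqref{e:Icom} to the complementary pair on the two qc-legs (for $\Gamma_1$: the $N$-vertex $2$-leg contributes $-N$, its terminal vertex of weight $k+1$ contributes $k-2$, the extra weight-$2$ trivalent vertex contributes $-1$, and the complementary $a$- and $b$-strings contribute $-2$, totalling $k-N-5$; similarly for $\Gamma_2$); and item~(3) combines \cite[Lemma~3.1]{LiscaLec} with \cite[Lemma~5.2]{BBL}. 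So the work is entirely in item~(4), for which I describe the plan.

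\emph{The operations preserve complementarity.} I would argue by induction on the number of operations applied to the pair $((2),(2))$, which is complementary since $\tfrac12+\tfrac12=1$. For the inductive step, translate each operation into continued fractions: if $[a_n,\dots,a_1]^-=\alpha$ and $[b_m,\dots,b_1]^-=\beta$ with $\alpha^{-1}+\beta^{-1}=1$, the first operation replaces $(\alpha,\beta)$ by $\bigl(2-\alpha^{-1},\,\beta+1\bigr)=\bigl(\tfrac{2\alpha-1}{\alpha},\tfrac{2\alpha-1}{\alpha-1}\bigr)$, whose reciprocals again sum to $1$, and all entries stay $\ge 2$; the second operation is its mirror image. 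Equivalently, each operation is one of the two elementary moves enlarging a Riemenschneider staircase, so the claim follows from item~(1) as well.

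\emph{Compatibility with the unique embedding.} I would again induct on the number of operations, the base case $((2),(2))\mapsto(e_1+e_2,\,e_1-e_2)\subset\Z^2$ being item~(3). For the step, observe that by the previous paragraph the enlarged pair is still complementary; that the vectors prescribed by the displayed rule form a genuine lattice embedding into $\Z^{n+m+1}$, which is a direct check of the norms and pairings of the two vectors that change; and that this embedding still contains a basis vector — the unchanged $e_1$ — hitting both $a_1$ and $b_1$. By the uniqueness clause of item~(3) it is therefore \emph{the} embedding with that property, proving compatibility; iterating, the unique embedding of any complementary pair is the one built by the rule.

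\emph{The $b_m+1$ statement, and the main obstacle.} I would prove this by induction on $n+m$. In the base case $n=m=1$ the graph is a weight-$2$ vertex $v_{a_1}$ together with a disjoint weight-$3$ vertex $u$; writing $\varphi(v_{a_1})=e_1+e_2$ (forced up to sign and relabelling), the constraints $\varphi(u)\cdot\varphi(u)=3$, $\varphi(u)\cdot\varphi(v_{a_1})=0$, and $\varphi(u)$ hitting a basis vector of $\varphi(v_{a_1})$ force $\varphi(u)=\pm(e_1-e_2)\pm e_3$, which is exactly $\varphi(u')+e$ with $e$ a new orthogonal basis vector. For the inductive step I would assume $a_1=2$ (possible by item~(1); the case $b_1=2<a_1$ is the mirror argument run from the $b_1$-end), examine the basis vectors hitting the maximal $2$-chain at the $a_1$-end, and invoke the standard analysis of \cite{Lisca-ribbon,Lisca-sums} in the spirit of Lemmas~\ref{standardsubset} and~\ref{badcomponents}: none of these vectors hits a single vertex, so contracting that $2$-chain is a $2$-final contraction on the $a$-leg matched by a forced decrement of a higher-weight vertex on the $b$-leg. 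This reduces the graph to the one built from a strictly shorter complementary pair with its terminal $b$-vertex again raised by $1$; the induction hypothesis applies, and reversing the contraction via the expansions of the previous paragraph rebuilds $\varphi$ and carries the single extra orthogonal basis vector $e$ at the $u$-end through unchanged. The main obstacle is precisely this last step: making the $2$-chain bookkeeping of \cite{Lisca-ribbon,Lisca-sums} precise in this ``one-off'' situation — in particular verifying that the extra unit of weight at $u$ is absorbed by a fresh basis vector and cannot leak into the subgraph being contracted, and that the decrement on the $b$-leg is forced. The first two paragraphs, by contrast, are routine continued-fraction arithmetic and explicit lattice computations.
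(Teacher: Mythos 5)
Your proposal is correct and follows essentially the same route as the paper: items (1)--(3) are handled by the same citations, the continued-fraction check that the two operations preserve complementarity and the induction for compatibility with the unique embedding match the paper's (unwritten) verification, and your induction for the $b_m+1$ statement is a formalization of the propagation sketch the paper itself gives (``the pattern repeats\dots''), with the same key point that the extra unit of weight at $u$ must be carried by a fresh basis vector. The bookkeeping you flag as the main obstacle is indeed the only substantive content, and the paper likewise leaves it at the level of a sketch.
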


\end{subsection}

\begin{subsection}{Operations on graphs and Determinants}
The main strategy in the proofs in this section is to directly manipulate the graphs and embeddings seeking for contradictions. We highlight here some of the most used procedures.

\begin{facts}{Operations on graphs}\label{op}
\begin{enumerate}
\item Given a graph with an embedding $\varphi:(\Z\Gamma,Q_{\Gamma})\hookrightarrow(\Z^{r},\mathrm{Id})$ we might \emph{delete some basis vector} obtaining a new graph $\Gamma'$ with an embedding into $(\Z^{r-1},\mathrm{Id})$. If we are deleting the vector $e_{j}$, this amounts to composing $\varphi$ with the projection $\pi:(\Z^{r},\mathrm{Id})\rightarrow (\Z^{r}\setminus\{e_{j}\},\mathrm{Id})$. For example, deleting $e_{2}$ we obtain:
\[
  \begin{tikzpicture}[xscale=1.5,yscale=-0.5]
  	\node at (-0.5,1) {$\Gamma:$};
    \node (A0_0) at (0, 0) {$\scriptstyle e_{1}+e_{2}$};
    \node at (1, 0) {$\scriptstyle e_{1}+e_{3}$};
    \node (A0_2) at (2, 0) {$\scriptstyle e_{1}-e_{2}$};
    \node (A0_4) at (5, 0) {$\scriptstyle e_{1}$};
    \node at (6, 0) {$\scriptstyle e_{1}+e_{3}$};
    \node (A0_6) at (7, 0) {$\scriptstyle e_{1}$};
    \node (A1_0) at (0, 1) {$\bullet$};
    \node (A1_1) at (1, 1) {$\bullet$};
    \node (A1_2) at (2, 1) {$\bullet$};
    \node at (4.5,1) {$\Gamma':$};
    \node (A1_4) at (5, 1) {$\bullet$};
    \node (A1_5) at (6, 1) {$\bullet$};
    \node (A1_6) at (7, 1) {$\bullet$};
    \path (A1_4) edge [-] node [auto] {$\scriptstyle{}$} (A1_5);
    \path (A1_0) edge [-] node [auto] {$\scriptstyle{}$} (A1_1);
    \path (A1_1) edge [-] node [auto] {$\scriptstyle{}$} (A1_2);
    \path (A1_5) edge [-] node [auto] {$\scriptstyle{}$} (A1_6);
    \draw (A1_4) to[bend left=60] (A1_6);
    \end{tikzpicture}
  \]
\item In a graph $\Gamma$ we might \emph{delete some vertex}. If we delete say the vertex $w$, we mean to consider the graph $\Gamma'$ obtained from $\Gamma$ by deleting the vertex $w$ and all its adjacent edges. If $\Gamma$ has an embedding $\varphi$, then we consider $\Gamma'$ equipped with the restriction of $\varphi$.
\item\label{bd} Given a graph $\Gamma$ with an embedding in a standard lattice of arbitrary rank, and a vertex $v$ of valency 2 and weight 1, the blow down operation is compatible with the embedding: if the vertex with weight 1 had embedding $e_{j}$, then the graph obtained by blowing down the 1 is the graph with embedding $\Gamma'$ obtained by deleting $e_{j}$ and $v$ from $\Gamma$. There might be some issue with the sign of the edge in $\Gamma'$, but as long as we work with trees, this has no relevance \cite[Proposition~2.1,R0]{Neumann}.
\end{enumerate}
\end{facts}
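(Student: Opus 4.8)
The plan is to treat the three operations one at a time, in each case exhibiting the claimed embedded graph $\Gamma'$ explicitly and checking that it agrees with the combinatorial operation on the abstract plumbing graph. The single computational tool I need is the transformation rule for the standard pairing under a coordinate projection: if $\pi\colon(\Z^r,\mathrm{Id})\to(\Z^{r-1},\mathrm{Id})$ forgets the $e_j$-coordinate and $v=\sum_k\alpha_k e_k$, $w=\sum_k\beta_k e_k$, then $\pi(v)\cdot\pi(w)=v\cdot w-\alpha_j\beta_j$. I would record this identity first, since all three facts follow from it together with some bookkeeping.

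For the first operation (deleting a basis vector) I would simply observe that $\pi\circ\varphi$ is a homomorphism into $(\Z^{r-1},\mathrm{Id})$, and that by the identity above the intersection graph of the image $\{\pi(\varphi(v_i))\}$ has weights $v_i\cdot v_i-(\alpha_i)_j^2$ and mutual pairings $v_i\cdot v_{i'}-(\alpha_i)_j(\alpha_{i'})_j$; this is by definition the graph $\Gamma'$, and the only content is the tabulation illustrated in the displayed example, which I would verify directly. For the second operation (deleting a vertex) the check is immediate: restricting $\varphi$ to the subgroup of $\Z\Gamma$ generated by the surviving vertices is still injective and still preserves the form, so it is an embedding whose intersection graph is tautologically the induced subgraph.

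The one fact with genuine content is the third (compatibility of blow-down). First I would note that a vertex $v$ with $v\cdot v=1$ can only be carried to $\pm e_j$, since these are the only square-one vectors of $(\Z^r,\mathrm{Id})$; this is what licenses the phrase ``the vertex had embedding $e_j$''. Writing $u_1,u_2$ for the two neighbours of $v$, the relation $u_i\cdot e_j=\pm(u_i\cdot v)=\pm1$ shows each $u_i$ has $e_j$-coordinate $\pm1$, while every other vertex is orthogonal to $v$ and hence has $e_j$-coordinate $0$. Applying the first two operations --- delete the vertex $v$ and then delete the basis vector $e_j$ --- the projection formula then lowers the weights of $u_1$ and $u_2$ by exactly $1$ each and leaves all other vertices and edges untouched; since $\Gamma$ is a tree, $u_1$ and $u_2$ are joined only through $v$, so $u_1\cdot u_2=0$ beforehand and afterwards $\pi(u_1)\cdot\pi(u_2)=\mp1$, i.e. $u_1$ and $u_2$ become linked. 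This is precisely the combinatorial blow-down of the $+1$-weighted vertex.

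The hard part, such as it is, is the sign of the new edge: the coordinate product $(u_1)_j(u_2)_j$ may be $+1$ or $-1$, so the projection produces either $\pi(u_1)\cdot\pi(u_2)=+1$ or $-1$. I would dispose of this exactly as the statement suggests, by invoking that on a tree the signs of edge weights carry no information --- one can flip them by the basis changes $e_k\mapsto -e_k$ --- so the ambiguity does not affect $\Gamma'$ as a plumbing graph; this is Neumann's move R0 \cite[Proposition~2.1]{Neumann}. With the sign issue absorbed this way, all three facts are verified.
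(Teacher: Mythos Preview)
Your verification is correct, but note that the paper presents this block as a list of \emph{Facts} without any accompanying proof: the three operations are treated as definitions together with elementary observations, with only the sign ambiguity in the blow-down deferred to Neumann's R0 move. Your write-up supplies the routine checks the paper leaves implicit, and the argument you give for the third item (square-one vectors are $\pm e_j$, neighbours have $e_j$-coefficient $\pm1$, all others have coefficient $0$, so projection realises the combinatorial blow-down up to edge sign) is exactly the intended justification.
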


Many of the arguments in the upcoming proofs have to do with weighted graphs having vanishing determinant. We remind the reader that: 

\begin{facts}{Remarks on determinants}\label{rem}
\begin{enumerate}
\item Plumbing along a star-shaped three legged weighted graph yields a 4-manifold with boundary a Seifert space $Y$. If the Seifert invariants are $Y=Y(e_{0};r_{1},r_{2},r_{3})$ with $e_{0}\in\Z$ and $r_{i}\in(0,1)\cap\Q$, then the associated three legged graph has central weight $e_{0}$ and the weights on the legs are the coefficients of the negative continued fractions associated to $\frac{1}{r_{i}}$. The determinant of the graph, which coincides with the order of $H_{1}(Y;\Z)$, will vanish if and only if (see for example \cite[Lemma~4.2]{NeumannRaymond})
\begin{equation}\label{e:det=0}
e_{0}=r_{1}+r_{2}+r_{3}
\end{equation}
\item\label{ddN} The determinant associated to any connected weighted graph whose weights are at least the valency of the vertices and having at least one vertex with weight strictly bigger than its valency is non-vanishing \cite{Shivakumar_KimHoChew}.
\item\label{dd} A consequence of the preceding point is that a linear graph whose weights are all at least 2 has non-vanishing determinant. If a linear graph has exactly one vertex of weight 1 and vanishing determinant, then this vertex needs to be internal.
\item\label{small} A graph with $n$ vertices which admits an embedding into a standard lattice of rank smaller than $n$ has vanishing determinant. 
\end{enumerate}
\end{facts}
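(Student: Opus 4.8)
The plan is to treat the four assertions as the standard facts they are, assembling the correct references and a few lines of linear algebra; I would prove them in the order (2)$\to$(3)$\to$(1)$\to$(4), since (1) relies on (3) and (3) relies on (2).

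For (2) and (3): the plumbing matrix $Q_\Gamma$ of a connected graph is an irreducible matrix whose $i$-th diagonal entry is the weight of $v_i$ and whose $i$-th row has off-diagonal absolute-value sum equal to the valency of $v_i$. Hence ``weight $\ge$ valency at every vertex'' is exactly weak diagonal dominance and ``weight $>$ valency at some vertex'' is strict dominance in at least one row; the classical theorem on irreducibly diagonally dominant matrices (due to Taussky; see \cite{Shivakumar_KimHoChew}) then gives that $Q_\Gamma$ is nonsingular, which is (2). Statement (3) is the specialisation to linear graphs: every valency is then at most $2$ and the endpoints have valency at most $1$, so if all weights are $\ge 2$ weak dominance holds everywhere and strict dominance holds at an endpoint; while if exactly one vertex has weight $1$ and all the others weight $\ge 2$, the same argument still applies as soon as the weight-$1$ vertex is an endpoint (there weight $1$ equals valency $1$, and the opposite endpoint is strictly dominant), so a vanishing determinant forces that vertex to be internal.

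For (1): I would expand $\det Q_\Gamma$ by the Schur complement along the central vertex $v_0$ of weight $e_0$, obtaining $\det Q_\Gamma = \bigl(\prod_{i=1}^{3} D_i\bigr)\bigl(e_0 - \sum_{i=1}^{3} b_i^{T} A_i^{-1} b_i\bigr)$, where $A_i$ is the plumbing matrix of the $i$-th leg, $D_i = \det A_i$, and $b_i$ encodes the edge from $v_0$ into that leg. The scalar $b_i^{T} A_i^{-1} b_i$ is the ratio of the determinant of the $i$-th leg with its $v_0$-adjacent vertex deleted to $D_i$, which by the very definition of the negative continued fraction expansion of $1/r_i$ equals $r_i$. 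Since each leg is a linear graph with all weights $\ge 2$, part (3) gives $D_i \ne 0$, so $\det Q_\Gamma = 0$ exactly when $e_0 = r_1 + r_2 + r_3$; this recovers \cite[Lemma~4.2]{NeumannRaymond}, and matches the more conceptual statement that $|\det Q_\Gamma| = |H_1(Y;\Z)|$ when the latter is finite while $H_1$ of the Seifert space is infinite precisely when the orbifold Euler number $e_0 - \sum r_i$ vanishes. For (4): if the $n$ vertices $v_1,\dots,v_n$ of $\Gamma$ embed in $(\Z^r,\mathrm{Id})$ with $r < n$, they are $\Q$-linearly dependent, say $\sum_i c_i v_i = 0$ with some $c_i \ne 0$; then for every $j$ we have $\sum_i c_i (v_i\cdot v_j) = \bigl(\sum_i c_i v_i\bigr)\cdot v_j = 0$, so $(c_i)$ lies in the kernel of the Gram matrix $Q_\Gamma$ and $\det Q_\Gamma = 0$.

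I do not expect any genuine obstacle in proving this statement: the only non-elementary ingredient is the matrix-theoretic result of \cite{Shivakumar_KimHoChew} quoted in (2), and the one spot that needs care is matching the sign conventions for $e_0$ and the $r_i$ in (1) with those fixed in Figure~\ref{f:SFS-convention}.
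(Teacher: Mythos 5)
Your proposal is correct, and it reconstructs exactly the arguments the paper implicitly relies on: the paper states these four items as known facts with references rather than proving them, and your irreducible-diagonal-dominance (Taussky) argument for (2)--(3), the Schur-complement/continued-fraction computation for (1), and the rank/Gram-matrix argument for (4) are the standard proofs behind the cited sources \cite{Shivakumar_KimHoChew} and \cite[Lemma~4.2]{NeumannRaymond}. The one point you rightly flag --- that in (1) the leg adjacent to the central vertex must carry the \emph{first} coefficient of the negative continued fraction of $1/r_i$, so that $\det(A_i\ \text{minus its }v_0\text{-adjacent vertex})/\det A_i = r_i$ --- is indeed the only place where the convention of Figure~\ref{f:SFS-convention} matters, and your computation handles it correctly.
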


The following technical lemmas apply to a much wider range of graphs than the ones studied in this article and they will be extensively used in many of the subsequent proofs.

\begin{lemma}\label{l:det}
Let $\Psi$ be a $3$--legged star shaped positive definite weighted graph with central weight $2$ and all other weights at least $2$. If $\Psi$ admits an embedding into a standard lattice of any rank, then $\Psi$ has non-vanishing determinant. 
\end{lemma}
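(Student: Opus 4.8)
The plan is to prove the statement directly from the embedding, essentially without using the particular combinatorial shape of $\Psi$. Write $v_1,\dots,v_n$ for the vertices of $\Psi$, so that $n$ is the number of vertices and $v_1,\dots,v_n$ is a $\Z$-basis of $\Z\Psi$, and let $\varphi\colon(\Z\Psi,Q_\Psi)\hookrightarrow(\Z^{r},\mathrm{Id})$ be an embedding for some $r$. Assemble the $r\times n$ integer matrix $A$ whose $i$-th column is $\varphi(v_i)\in\Z^{r}$. Since $\varphi$ preserves the bilinear forms, the Gram matrix of $Q_\Psi$ with respect to $v_1,\dots,v_n$ is exactly $A^{T}A$, and hence $\det Q_\Psi=\det(A^{T}A)$.

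The second step is to apply the Cauchy--Binet formula: $\det(A^{T}A)=\sum_{S}\bigl(\det A_S\bigr)^2$, where $S$ runs over the $n$-element subsets of the set of rows of $A$ and $A_S$ denotes the corresponding $n\times n$ minor. Thus $\det Q_\Psi\ge 0$, with equality if and only if every such minor vanishes, i.e.\ if and only if the columns $\varphi(v_1),\dots,\varphi(v_n)$ are linearly dependent over $\Q$. But $\varphi$ is injective and $v_1,\dots,v_n$ is a basis of $\Z\Psi$, so $\varphi(v_1),\dots,\varphi(v_n)$ are $\Q$-linearly independent; therefore $\det Q_\Psi>0$, and in particular $\det Q_\Psi\neq 0$. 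Equivalently: an embedding realizes $(\Z\Psi,Q_\Psi)$ as a rank-$n$ sublattice of the positive definite lattice $(\Z^{r},\mathrm{Id})$, so $Q_\Psi$ is itself positive definite, hence nondegenerate.

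I do not expect any genuine obstacle in this proof: the content is just the observation that a lattice embedding is an isometry onto a linearly independent configuration inside a positive definite lattice. The hypotheses of the lemma — three-legged star shape, central weight exactly $2$, all remaining weights $\ge 2$ — are not needed for this argument; they record the shape of the graphs to which the statement will be applied later in the section, namely certain subgraphs of $\Gamma_1$ and $\Gamma_2$ obtained after the blow-down and deletion moves of Facts~\ref{op}. In those applications the lemma is used contrapositively, together with Fact~\ref{rem}(1): once such a $\Psi$ is seen to satisfy the Seifert-invariant identity $e_0=r_1+r_2+r_3$ (equivalently $\det Q_\Psi=0$), one concludes that $\Psi$ embeds into no diagonal lattice, which produces the desired contradiction. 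Finally, the phrase ``of any rank'' in the statement is essential here: a purely dimension-counting argument could at best rule out embeddings into $(\Z^{n},\mathrm{Id})$, whereas the positive-definiteness argument above disposes of all $r$ at once.
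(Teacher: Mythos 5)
Your argument has a genuine gap at the step ``$\varphi$ is injective \ldots so $\varphi(v_1),\dots,\varphi(v_n)$ are $\Q$-linearly independent; therefore $\det Q_\Psi>0$.'' In the sense in which ``embedding'' is used throughout this section, this assumes exactly what is to be proved. The deletion and projection operations of Facts~\ref{op}, and Remark~\ref{rem}(\ref{small}) in particular, only make sense if an ``embedding'' of a weighted graph into $(\Z^r,\mathrm{Id})$ means an assignment of vectors $w_1,\dots,w_n\in\Z^r$ realizing the prescribed pairings (Lisca's ``subsets''), with \emph{no} linear independence assumed: under your strict reading, a graph with $n$ vertices could never admit an embedding into a lattice of rank $<n$, Remark~\ref{rem}(\ref{small}) would be a statement about the empty set, and every contradiction in Section~\ref{latticeemb} derived from it would be free of content. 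For such a realization, linear independence of the $w_i$ is \emph{equivalent} to $\det Q_\Psi\neq 0$, so your proof is circular. The hypotheses you dismiss as unnecessary are in fact essential: the set $\{e_1+e_2,\,e_1+e_3,\,e_1-e_3,\,e_2+e_4,\,e_2-e_4\}\subset\Z^4$ realizes the star-shaped graph with central weight $2$ and four leaves of weight $2$ (affine $D_4$), whose determinant vanishes — here $2(e_1+e_2)$ is the sum of the four leaf vectors, so the configuration is linearly dependent. This shows that a star-shaped graph with all weights at least $2$ can perfectly well ``embed'' while having vanishing determinant; what rules this out in the lemma is the combination of three legs with central weight $2$ (more generally $r$ legs with central weight $r-1$, as the remark after the lemma indicates).

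The paper's actual proof is an obstruction argument: if $\det\Psi=0$ then by Equation~\eqref{e:det=0} the central weight satisfies $2=r_1+r_2+r_3$ with each $r_i\in(0,1)$, hence any two of the $r_i$ sum to more than $1$, and Lisca--Lecuona's Lemma~3.3 then forbids any realization of such a Seifert configuration by integer vectors in a diagonal lattice of any rank. To repair your write-up you would need to replace the appeal to injectivity by a genuine argument (such as this one) showing that the specific intersection pattern of $\Psi$ cannot be realized by linearly dependent vectors.
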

\begin{proof}
This lemma is an easy consequence of \cite[Lemma~3.3]{LiscaLec}. The determinant of $\Psi$ will vanish if and only if the sum of the inverses of the three continued fractions associated to $\Psi$ equals 2 (see Equation\eqref{e:det=0}). These inverses, which belong to the interval $(0,1)$, are denoted by $r_{1},r_{2},r_{3}$ in \cite{LiscaLec}. If the determinant of $\Psi$ vanishes, the sum of any two of the $r_{i}$'s has to be greater than $1$, so \cite[Lemma~3.3]{LiscaLec} applies and there is no embedding into any standard lattice (even if in \cite{LiscaLec} \emph{negative} lattices instead of positive ones were considered, every result applies to our positive setting). The statement follows.
\end{proof}

\begin{re}
The statement of the last lemma can be strengthened to the case of a star shaped graph with $r$ legs, central weight $r-1$ and all other weights at least $2$. The proof is the same.
\end{re}

\end{subsection}

\begin{subsection}{Embedding $\Gamma_{1}$ and $\Gamma_{2}$: generalities and definition of $\Gamma$}
In this very first part of the proof Proposition~\ref{p:mainA}, we establish some necessary properties of the embeddings of the graphs of type $\Gamma_{1}$ and $\Gamma_{2}$.

\begin{lemma}\label{l:firststep}
Call $v$ the only vector in $\Gamma_{i}$ adjacent to the central vertex and of weight strictly greater than 2 and let $N$ be the length of the 2-leg.
In any embedding of $(\Z\Gamma_{i},Q_{\Gamma_{i}})$ into $(\Z^{\mathrm{rk}(\Gamma_{i})},\mathrm{Id})$, all the vectors hitting the 2-leg hit $v$. Moreover, $|v|>N+1$. 
\end{lemma}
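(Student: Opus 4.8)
The plan is to exploit the rigidity of the embedding of the $2$--chain that runs through the central vertex, and then to keep careful track of how $v$ and its qc-leg neighbour meet the basis vectors that this chain uses.

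First I would fix notation. Let $c$ be the central (trivalent, weight-$2$) vertex, let $f_1-f_2-\cdots-f_N$ be the $2$--leg with $c\sim f_1$, and let $u$ be the neighbour of $c$ lying on the qc-leg that does not contain $v$. Since $n,m>1$, Facts~\ref{f}(1) gives that exactly one of $a_1,b_1$ is larger than $2$; by the very definition of $v$ that one is $v$, so $u\cdot u=2$. Hence $S:=\{u,c,f_1,\dots,f_N\}$ is an irreducible linear subset all of whose weights equal $2$, i.e.\ a standard subset, and by the classification of such subsets its embedding is, up to an automorphism of $\Z^{\mathrm{rk}(\Gamma_i)}$, the standard one. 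Concretely, there are basis vectors $g_0,g_1,\dots,g_{N+2}$ with $u=\pm(g_0-g_1)$, $c=\pm(g_1-g_2)$ and $f_j=\pm(g_{j+1}-g_{j+2})$; in particular the basis vectors hitting the $2$--leg are exactly $g_2,\dots,g_{N+2}$. Since $v$ is orthogonal to every $f_j$ and to $u$, and $v\cdot c=1$, we obtain $v\cdot g_2=\cdots=v\cdot g_{N+2}=:s$ and $v\cdot g_0=v\cdot g_1=s\pm1$. The assertion ``every basis vector hitting the $2$--leg hits $v$'' is then exactly the statement $s\ne0$.

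For this first part I would argue by contradiction, assuming $s=0$. Then $v$ is orthogonal to all of $g_2,\dots,g_{N+2}$, so the $N+1$ basis vectors hitting the $2$--leg interact trivially with $v$ (and hence with the whole leg-pattern sitting on $v$'s side). Repeating the same computation for the remaining vertices of the two qc-legs, and for $u$ and its qc-leg, one finds that $g_2,\dots,g_{N+2}$ can be hit only by the vertices of the $2$--leg itself, together with at most $u$. A rank count combined with the complementary-string rigidity of the two qc-legs (Facts~\ref{f}(3)--(4)) then forces the embedding of the two qc-legs together with $c$ into a standard lattice of rank one less than its number of vertices; equivalently, after blowing down along the freed basis vectors one is left with a positive-definite star-shaped graph with central weight $2$ and all remaining weights at least $2$ that would have to embed and yet has vanishing determinant, contradicting Lemma~\ref{l:det} (one may also route the contradiction through Lemma~\ref{standardsubset} or Lemma~\ref{extravector}). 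This bookkeeping of the values $v\cdot g_j$, $u\cdot g_j$ and the analogous quantities along the two legs is where the real work lies, and I expect it to be the main obstacle.

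Granting the first part, the bound $v\cdot v>N+1$ is quick. From $s\ne0$,
\[
v\cdot v\ \ge\ (v\cdot g_0)^2+(v\cdot g_1)^2+\sum_{j=2}^{N+2}(v\cdot g_j)^2\ =\ 2(s\pm1)^2+(N+1)s^2\ \ge\ N+1,
\]
with equality only if $s=\mp1$ and $v\cdot g_0=v\cdot g_1=0$; in that case equality throughout forces $v=\pm(g_2+\cdots+g_{N+2})$ and $v\cdot v=N+1$. But then, writing $v'$ for the neighbour of $v$ on its qc-leg other than $c$ (which exists since $n,m>1$, and which is orthogonal to $c$, to $u$ and to all $f_j$), the value $s':=v'\cdot g_2=\cdots=v'\cdot g_{N+2}$ satisfies $v'\cdot v=\pm(N+1)s'$, and this cannot equal $1$ when $N\ge1$. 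Hence $v\cdot v\ge N+2>N+1$, as claimed.
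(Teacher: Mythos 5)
Your strategy --- rigidifying the chain $u$--$c$--$f_1$--$\cdots$--$f_N$ and reducing everything to the single integer $s=v\cdot g_2$ --- is genuinely different from the paper's proof, which instead splits on whether some basis vector hits both qc-leg vertices adjacent to the centre and then leans on the rigidity of complementary-string embeddings (\cite[Lemma~3.1]{LiscaLec}) together with Lemma~\ref{extravector}. Your derivation of $|v|>N+1$ from the first assertion, via the divisibility obstruction $v'\cdot v=\pm(N+1)s'$, is correct and is a pleasant elementary alternative to the paper's use of Lemma~\ref{extravector} at that point. The first assertion, however, is where the content of the lemma lies, and there your argument has genuine gaps.

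First, the claim that the embedding of $S=\{u,c,f_1,\dots,f_N\}$ is standard fails for $N=1$: a connected chain of three weight-$2$ vectors also embeds as $\{g_0+g_1,\ g_1+g_2,\ g_1-g_0\}$ (the coincidence $A_3\cong D_3\subset\Z^3$), and in that configuration the basis vectors hitting $f_1$ are forced \emph{not} to hit $v$, so the very conclusion you are proving would fail there. This degenerate embedding can be excluded (the next vertex on $u$'s qc-leg, which exists since $n,m>1$, would need a half-integral pairing), but you must supply that argument, and $N=1$ is exactly the case that survives to family (4). Note also that $S$ is not a ``standard subset'' in the paper's technical sense, since its standard embedding does not lie in $\Z^{|S|}$, so the classification you invoke is not the statement you need. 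Second, and more seriously: in the case $s=0$ the assertion that $g_2,\dots,g_{N+2}$ are hit only by the $2$-leg and $c$ \emph{is} the heart of the lemma, and you do not prove it --- you flag it yourself as ``where the real work lies''. The orthogonality relations only give, for a qc-leg vertex $w$ at distance at least $2$ from the centre, that $w\cdot g_0=\cdots=w\cdot g_{N+2}$ is some common integer $t_w$, and nothing elementary forces $t_w=0$ because the qc-leg weights are unbounded. Observe that $s=0$ makes $g_0$ and $g_1$ hit both $u$ and $v$, so you are sitting inside the paper's first case, which it resolves only by appealing to \cite[Lemma~3.1]{LiscaLec} and Lemma~\ref{extravector}; your sketch does not replace that input. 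Finally, the endgame is misdescribed: after projecting away $g_2,\dots,g_{N+2}$ the central vertex becomes a bivalent weight-$1$ vertex and the remaining graph is linear, not a star-shaped graph with central weight $2$, so Lemma~\ref{l:det} does not apply --- one should instead blow down and use that a linear graph with all weights at least $2$ has non-vanishing determinant.
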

\begin{proof}
Let $\varphi$ be an embedding  of $(\Z\Gamma_{i},Q_{\Gamma_{i}})$ in the standard lattice of the same rank. Up to automorphisms of the standard lattice, we might assume that the embedding of the only trivalent vertex is of the form $e_{1}+e_{2}$. There are now two different cases to study: either there is an $e_{i}$, $i=1,2$, which appears with a non-zero coefficient in the embedding of both the vertices with weights $a_{1}$ and $b_{1}$, or there is not. If the former possibility holds then:
\begin{itemize}
\item If we consider the graph $\Gamma_{1}$, then by \cite[Lemma~3.1]{LiscaLec} the restriction of the embedding $\varphi$ to the subgraph
 \[
  {\Gamma'}:=
  \begin{tikzpicture}[xscale=1.5,yscale=-0.5]
    \node (A0_0) at (0, 0) {$a_n$};
    \node (A0_2) at (2, 0) {$a_1$};
    \node (A0_3) at (3, 0) {$2$};
    \node (A0_4) at (4, 0) {$b_1$};
    \node (A0_6) at (6, 0) {$b_m$};
    \node (A1_0) at (0, 1) {$\bullet$};
    \node (A1_1) at (1, 1) {$\dots$};
    \node (A1_2) at (2, 1) {$\bullet$};
    \node (A1_3) at (3, 1) {$\bullet$};
    \node (A1_4) at (4, 1) {$\bullet$};
    \node (A1_5) at (5, 1) {$\dots$};
    \node (A1_6) at (6, 1) {$\bullet$};
    \path (A1_4) edge [-] node [auto] {$\scriptstyle{}$} (A1_5);
    \path (A1_0) edge [-] node [auto] {$\scriptstyle{}$} (A1_1);
    \path (A1_1) edge [-] node [auto] {$\scriptstyle{}$} (A1_2);
    \path (A1_5) edge [-] node [auto] {$\scriptstyle{}$} (A1_6);
    \path (A1_2) edge [-] node [auto] {$\scriptstyle{}$} (A1_3);
    \path (A1_3) edge [-] node [auto] {$\scriptstyle{}$} (A1_4);
  \end{tikzpicture}
  \]
yields an embedding of $(\Z\Gamma',Q_{\Gamma'})$ into $(\Z^{n+m+1},\mathrm{Id})$. $\Gamma'$ is a standard subset and therefore the existence of $\varphi$ contradicts Lemma~\ref{extravector}, where the role of the vertex $w$ is played by the projection of the vertex in $\Gamma$ attached to the right of the $m^{\mathrm{th}}$ vector on the top right leg of the graph.
\item If we consider the graph $\Gamma_{2}$, then we know by Property~\ref{f}(\ref{f:+1}) that the restriction of the embedding $\varphi$ to the subgraph
 \[
  {\Gamma''}:=
  \begin{tikzpicture}[xscale=1.5,yscale=-0.5]
    \node (A0_0) at (0, 0) {$a_n$};
    \node (A0_2) at (2, 0) {$a_1$};
    \node (A0_4) at (4, 0) {$b_1$};
    \node (A0_6) at (6, 0) {$b_m+1$};
    \node at (6, 1.5) {$u$};
    \node (A1_0) at (0, 1) {$\bullet$};
    \node (A1_1) at (1, 1) {$\dots$};
    \node (A1_2) at (2, 1) {$\bullet$};
    \node (A1_4) at (4, 1) {$\bullet$};
    \node (A1_5) at (5, 1) {$\dots$};
    \node (A1_6) at (6, 1) {$\bullet$};
    \path (A1_4) edge [-] node [auto] {$\scriptstyle{}$} (A1_5);
    \path (A1_0) edge [-] node [auto] {$\scriptstyle{}$} (A1_1);
    \path (A1_1) edge [-] node [auto] {$\scriptstyle{}$} (A1_2);
    \path (A1_5) edge [-] node [auto] {$\scriptstyle{}$} (A1_6);
  \end{tikzpicture}
  \]
coincides, except for the vertex $u$, with the unique embedding of complementary strings discussed in Properties~\ref{f}(\ref{f:unique}) and \ref{f}(\ref{f:+1}). Notice that this restriction yields an embedding $\varphi''$ of $(\Z\Gamma'',Q_{\Gamma''})$ into $(\Z^{n+m+1},\mathrm{Id})$, a lattice of rank one bigger than the number of vertices in $\Gamma''$. To go from $\Gamma''$ to $\Gamma_{2}$ we need to add a series of vertices with weight 2: the central vertex, the final 2-chain attached to $u$ in the qc-leg and the 2-leg. Since 2-chains have a unique embedding into a standard lattice, we conclude that there is a unique embedding of $\Gamma_{2}$ compatible with the restriction $\varphi''$. However, in this fashion we have obtained an embedding of $(\Z\Gamma_{2},Q_{\Gamma_{2}})$ into $(\Z^{\mathrm{rk}(\Gamma_{2})+1},\mathrm{Id})$, and we conclude, by the uniqueness part of the argument, that there is no embedding into $(\Z^{\mathrm{rk}(\Gamma_{2})},\mathrm{Id})$.
\end{itemize}

It only remains to consider the case in which neither $e_{1}$ nor $e_{2}$ hit both the vectors with weights $a_{1}$ and $b_{1}$. It follows that one between $e_{1}$ and $e_{2}$ hits the first weight 2 vertex in the bottom-right $2$-chain and either the vertex with weight $a_{1}$ or weight $b_{1}$. Without loss of generality, let us assume that it is $e_{2}$ and let us call $v$ the described vertex. Since, up to automorphisms, the embedding of the 2-leg is of the form $(e_{2}+e_{3},e_{3}+e_{4},\dots,e_{N+1}+e_{N+2})$, the vertex $v$ with weight $a_{1}$ or $b_{1}$ is orthogonal to this 2-chain and $e_{1}$ does not hit $v$, we have $v=e_{2}-e_{3}+e_{4}\dots(-1)^{N}e_{N+2}+v'$ where $v'\cdot e_{i}=0$ for all $i=1,\dots,N+2$.

If $v'=0$, then the subgraph formed by the string $(|v|)$, the trivalent vertex and the 2-leg is a standard subset of $\Z^{N+2}$. As before, by Lemma~\ref{extravector}, we know that no vector can be attached to $v$ in such a way that the embedding extends. From here we deduce that $n$ or $m$ equals 1, which contradicts our assumptions on the graphs $\Gamma_{1}$ and $\Gamma_{2}$. We then necessarily have that $v'\neq 0$ and it follows that the vector $v$ of the statement has weight $N+1+|v'|>N+1$.
\end{proof}

\begin{re}
In the last lemma we have shown that every graph of type $\Gamma_{2}$ admits an embedding into $(\Z^{\mathrm{rk}(\Gamma_{2})+1},\mathrm{Id})$. In fact this embedding always exists, without any restrictions on the value of $k$ and also for the case when $m$ or $n$ equals 1. A simple example is given by:
\[
  \begin{tikzpicture}[xscale=1.6,yscale=-0.5]
    \node (A0_4) at (4, 0) {$\scriptstyle e_{1}-e_{6}+e_{7}$};
    \node at (5, 0) {$\scriptstyle e_{7}-e_{8}+e_{9}$};
    \node (A0_6) at (6, 0) {$\scriptstyle e_{9}-e_{10}+e_{11}$};
    \node (A0_7) at (7, 0) {$\scriptstyle e_{11}+e_{12}$};
    \node at (8, 0) {$\scriptstyle e_{12}+e_{13}$};
    \node (A1_4) at (4, 1) {$\bullet$};
    \node (A1_5) at (5, 1) {$\bullet$};
    \node (A1_6) at (6, 1) {$\bullet$};
    \node (A1_8) at (8, 1) {$\bullet$};
    \node (A2_0) at (0, 2) {$\scriptstyle e_{8}+e_{9}+e_{10}$};
    \node at (1, 2) {$\scriptstyle e_{6}+e_{7}+e_{8}$};
    \node (A1_7) at (7, 1) {$\bullet$};
    \node (A1_6) at (6, 1) {$\bullet$};
    \node (A2_2) at (2, 2) {$\scriptstyle e_{1}+e_{6}$};
    \node (A2_3) at (3, 2) {$\scriptstyle e_{1}+e_{2}$};
    \node (A3_0) at (0, 3) {$\bullet$};
    \node (A3_1) at (1, 3) {$\bullet$};
    \node (A3_2) at (2, 3) {$\bullet$};
    \node (A3_3) at (3, 3) {$\bullet$};
    \node (A4_4) at (4, 4) {$\scriptstyle e_{2}+e_{3}$};
    \node at (5, 4) {$\scriptstyle e_{3}+e_{4}$};
    \node (A4_6) at (6, 4) {$\scriptstyle e_{4}+e_{5}$};
    \node (A5_4) at (4, 5) {$\bullet$};
    \node (A5_5) at (5, 5) {$\bullet$};
    \node (A5_6) at (6, 5) {$\bullet$};
    \path (A1_4) edge [-] node [auto] {${}$} (A1_5);
    \path (A1_6) edge [-] node [auto] {$\scriptstyle{}$} (A1_7);
    \path (A1_7) edge [-] node [auto] {$\scriptstyle{}$} (A1_8);
    \path (A3_0) edge [-] node [auto] {$\scriptstyle{}$} (A3_1);
    \path (A1_5) edge [-] node [auto] {$\scriptstyle{}$} (A1_6);
    \path (A3_3) edge [-] node [auto] {$\scriptstyle{}$} (A1_4);
    \path (A5_4) edge [-] node [auto] {$\scriptstyle{}$} (A5_5);
    \path (A3_2) edge [-] node [auto] {$\scriptstyle{}$} (A3_3);
    \path (A3_3) edge [-] node [auto] {$\scriptstyle{}$} (A5_4);
    \path (A3_1) edge [-] node [auto] {$\scriptstyle{}$} (A3_2);
    \path (A5_5) edge [-] node [auto] {$\scriptstyle{}$} (A5_6);
  \end{tikzpicture}
\]
However, we will show in this section that no graph in the family $\Gamma_{2}$ admits an embedding into $(\Z^{\mathrm{rk}(\Gamma_{2})},\mathrm{Id})$.
\end{re}

Since many of the arguments in what follows can be applied to the two graphs we are studying in this section, we will use from now on the symbol $\Gamma$ to denote either $\Gamma_{1}$ or $\Gamma_{2}$ with the assumptions of the statement of Proposition~\ref{p:mainA}. We warn the reader that, as the section develops, the same symbol $\Gamma$ will be used to denote also some other graphs we will encounter. 

We summarize what we have learned in the preceding lemma about the embedding of $\Gamma$ in the diagram in Figure~\ref{f:convenzioni1}. We will fix these conventions in the remainder of this section: the central vertex of $\Gamma$ has embedding $e_{1}+e_{2}$, the 2-leg in the bottom of the diagram has embedding $(e_{2}+e_{3},\dots,e_{N+1}+e_{N+2})$, the vertex not in this chain adjacent to the central one with weight 2 embeds as $u=e_{1}+e_{r}$, and the remaining vertex linked to the trivalent one embeds as $v=\sum_{i=2}^{N+2}(-1)^{i}e_{i}+v'$ with $v'\neq 0$.

\begin{figure}
\[
  \begin{tikzpicture}[xscale=1.5,yscale=-0.5]
	\node (A3_0) at (2.25,0) {$\scriptstyle v=\sum_{i=2}^{N+2}(-1)^{i}e_{i}+v'$};
	\node (A3_0) at (2.25,1.5) {$\scriptstyle v$};
	\node (A1_3) at (2.25,1) {$\bullet$};
	\node (A3) at (3.75,0) {$\scriptstyle e_{1}+e_{r}$};
	\node (A1) at (3.75,1) {$\bullet$};
	\node (A3_0) at (3.75,1.5) {$\scriptstyle u$};
    \node (A2_3) at (2.5, 3) {$\scriptstyle e_{1}+e_{2}$};
    \node (A3_3) at (3, 3) {$\bullet$};
    \node (A4_4) at (4, 4) {$\scriptstyle e_{2}+e_{3}$};
    \node (A4_6) at (6, 4) {$\scriptstyle e_{N+1}+e_{N+2}$};
    \node (A5_4) at (4, 5) {$\bullet$};
    \node (A5_5) at (5, 5) {$\dots$};
    \node at (4.75, 1) {$\dots$};
    \node at (1.25, 1) {$\dots$};
    \node (A5_6) at (6, 5) {$\bullet$};
    \path (A3_3) edge [-] node [auto] {$\scriptstyle{}$} (A1_3);
    \path (A3_3) edge [-] node [auto] {$\scriptstyle{}$} (A1);
    \path (A5_4) edge [-] node [auto] {$\scriptstyle{}$} (A5_5);
    \path (A3_3) edge [-] node [auto] {$\scriptstyle{}$} (A5_4);
    \path (A5_5) edge [-] node [auto] {$\scriptstyle{}$} (A5_6);
    \path (A1_3) edge [-] node [auto] {$\scriptstyle{}$} (1.5,1);
    \path (A1) edge [-] node [auto] {$\scriptstyle{}$} (4.5,1);
   \end{tikzpicture}
  \]
  \caption{Conventions of the embedding of $\Gamma$ into the standard lattice after Lemma~\ref{l:firststep}.}\label{f:convenzioni1}
\end{figure}
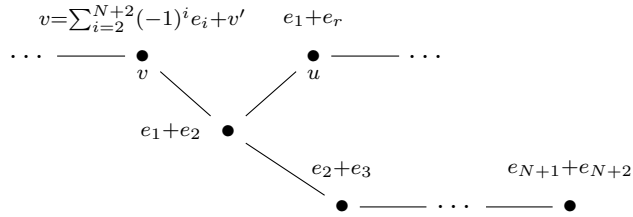
\end{subsection}

\begin{subsection}{Strategy and first bounds on $I(\Gamma)$}
The key idea in the proof of Proposition~\ref{p:mainA} is to compute the value of $I(\Gamma)$, which is given in \eqref{e:I}, via the embedding. Since the graph $\Gamma$ embeds, each vertex can be expressed as $v_{i}=\sum_{j}\alpha_{j}^{i}e_{j}$, where the $e_{j}$ form a basis of the standard lattice and $\alpha_{j}^{i}\in\Z$. We have then
$$
I(\Gamma)=\sum_{i}\left(v_{i}\cdot v_{i}-3\right)=\sum_{i}\left(\left(\sum_{j}(\alpha_{j}^{i})^{2}\right)-3\right).
$$
We will say that a vector $e_{j_{0}}$ \emph{appears n times in the embedding} if $|\{i\,|\,\alpha_{j_{0}}^{i}\neq 0\}|=n$, or equivalently, if it hits precisely $n$ different vertices in the graph.
Since we are assuming $\Gamma$ embeds in the standard lattice of the \emph{same rank}, in the above formula $i$ and $j$ range over identical values.
Hence we can permute the order of the summation to obtain
$$
I(\Gamma)=\sum_{j}\left(\left(\sum_{i}(\alpha_{j}^{i})^{2}\right)-3\right).
$$
From this it makes sense to define the \emph{contribution to $I$} of the basis vector $e_{j}$ to be the quantity $\left(\sum_{i}(\alpha_{j}^{i})^{2}\right)-3.$

Notice that any vector $e_{j}$ which appears exactly 3 times in the embedding with coefficient $\pm 1$ will have no contribution to $I$; and for a vector to contribute \emph{negatively} it needs to appear at most twice with coefficient $\pm 1$. 

In the next lemma we establish a lower bound on the contribution to $I(\Gamma)$ of the vectors in the embedding hitting the 2-leg.
\begin{lemma}\label{l:2rep}
Any embedding of $\Gamma$ into $(\Z^{|\Gamma|},\mathrm{Id})$ must satisfy that  the all the $e_{i}$'s hitting the $2$-leg, hit also a vector in $\Gamma$ different from $v$ in Figure~\ref{f:convenzioni1}. It follows that the vectors  $\{e_{2},\dots,e_{N+2}\}$ contribute at least $N$ to $I(\Gamma)$.
\end{lemma}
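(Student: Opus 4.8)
The plan is to evaluate the joint contribution of $e_2,\dots,e_{N+2}$ to $I(\Gamma)$ directly from the embedding, in the normalisation of Figure~\ref{f:convenzioni1}. Recall from Lemma~\ref{l:firststep} that $e_2,\dots,e_{N+2}$ are exactly the basis vectors meeting the $2$-leg, that each of them hits $v$, that $c=e_1+e_2$ and the $2$-leg is $(e_2+e_3,\dots,e_{N+1}+e_{N+2})$, and that $v=\sum_{i=2}^{N+2}(-1)^ie_i+v'$ with $v'\neq 0$ supported on $\{e_j:j\geq N+3\}$; arguing as in Lemma~\ref{l:firststep} one also sees that the other weight-$2$ neighbour of $c$ embeds as $u=e_1+e_r$ with $r\geq N+3$, so $u$ meets none of $e_2,\dots,e_{N+2}$. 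Writing the contribution of $e_j$ as $\big(\sum_{w\in\Gamma}(w\cdot e_j)^2\big)-3$, the vertices $c$, $w_1,\dots,w_N$ and $v$ alone contribute $1$, then $2$ each, then $N+1$ to $\sum_{i=2}^{N+2}\sum_{w}(w\cdot e_i)^2$, that is, a subtotal of $3N+2$.

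The actual content of the lemma is then the claim that, besides $v$ and the vertices among $c,w_1,\dots,w_N$ that it obviously meets, every $e_i$ with $2\leq i\leq N+2$ meets yet another vertex of $\Gamma$ — necessarily one lying on one of the two qc-legs and different from $v$. Granting this, each of $e_2,\dots,e_{N+2}$ adds at least one further unit, so $\sum_{i=2}^{N+2}\sum_w(w\cdot e_i)^2\geq (3N+2)+(N+1)=4N+3$, and therefore the contribution of $\{e_2,\dots,e_{N+2}\}$ to $I(\Gamma)$ is at least $(4N+3)-3(N+1)=N$.

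To prove the claim I would distinguish two cases, according to whether some qc-leg vertex $z\notin\{u,v\}$ has $z\cdot e_1\neq 0$. Such a $z$ is orthogonal to $c=e_1+e_2$ and to $w_1,\dots,w_N$, which forces $z\cdot e_i=(-1)^{i-1}(z\cdot e_1)$ for all $1\leq i\leq N+2$; hence if $z\cdot e_1\neq 0$ then $z$ is hit by \emph{all} of $e_2,\dots,e_{N+2}$, and this single vertex establishes the claim for every index simultaneously. In the opposite case the same identity shows that no qc-leg vertex other than $u,v$ meets any of $e_1,\dots,e_{N+2}$, so that the only vertices of $\Gamma$ meeting any of $e_2,\dots,e_{N+2}$ are those of $T:=\{c,w_1,\dots,w_N,v\}$; consequently $\Gamma\setminus T$ is supported on $\{e_1\}\cup\{e_j:j\geq N+3\}$, a set of only $|\Gamma|-N-1$ vectors, so that $\Gamma\setminus T$ — a disjoint union of the whole qc-leg through $u$ (a linear chain of weights $\geq 2$ with terminal weight-$2$ vertex $u$) and the qc-leg through $v$ with $v$ deleted (again a non-empty linear chain of weights $\geq 2$) — embeds in codimension at most $1$. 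One then rules this configuration out using the blow-down moves of Facts~\ref{op}, the determinant statements of Facts~\ref{rem}, and the linear-subset lemmas above (e.g.\ Lemmas~\ref{standardsubset} and~\ref{extravector}), exploiting that the two qc-strings are quasi-complementary.

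Excluding this ``$e_1$ does not leak out of $T$'' configuration — i.e.\ the codimension bookkeeping for $\Gamma\setminus T$ and the precise determination of which disjoint unions of linear chains of weights $\geq 2$ can coexist in a lattice one larger than their combined rank — is the hard part of the argument; once it is in place, the elementary bookkeeping of the first two paragraphs immediately yields the stated bound.
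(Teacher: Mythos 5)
Your bookkeeping in the first two paragraphs is correct and agrees with the paper's count, and your dichotomy is the right one: a vertex $z$ outside $\{c,w_1,\dots,w_N,u,v\}$ satisfies $z\cdot e_i=(-1)^{i-1}(z\cdot e_1)$ for $1\le i\le N+2$, so it is hit by some $e_i$ in that range if and only if it is hit by all of them, and a single such $z$ settles the claim for every index at once. The gap is in your second case. There you delete the vertices $c,w_1,\dots,w_N$ \emph{and} $v$ while retaining the basis vector $e_1$, which leaves $|\Gamma|-N-2$ vertices inside a lattice of rank $|\Gamma|-N-1$: a rank \emph{surplus}, not a deficit, so no determinant obstruction is available. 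Disjoint unions of linear chains with weights at least $2$ embed in excess rank with no difficulty whatsoever, so the configuration you arrive at is not intrinsically contradictory, and the step you defer as ``the hard part'' is never carried out; it is also not clear it could be carried out from that starting point without importing extra information (e.g.\ that $e_1$ now hits only $u$).

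The configuration dies immediately if you make the opposite choice: keep $v$ and discard $e_1$. In your second case every one of $e_1,\dots,e_{N+2}$ hits only the depicted vertices, so projecting the embedding away from $e_1,\dots,e_{N+2}$ annihilates $c$ and the $2$-leg but leaves both qc-legs, with $u\mapsto e_r$ of weight $1$ and $v\mapsto v'$ of weight $|v'|\ge 1$; the conclusion $v'\neq 0$ of Lemma~\ref{l:firststep} is exactly what keeps $v$ alive and makes the count come out right. This gives $|\Gamma|-N-1$ vertices in a lattice of rank $|\Gamma|-N-2$, so the resulting two-component linear graph must have vanishing determinant; but each component has all weights at least $2$ except possibly a single \emph{final} vertex of weight $1$, so by Remark~\ref{rem}(\ref{dd}) its determinant is non-zero. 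This one-line contradiction is the paper's entire argument for the claim; there is no hard residual case and no need for Lemmas~\ref{standardsubset} or~\ref{extravector} or the quasi-complementarity of the legs.
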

\begin{proof}
The existence of the embedding puts us in the assumptions of Lemma~\ref{l:firststep} and thus we follow the conventions in Figure~\ref{f:convenzioni1}. We want to show that the vectors $e_{i}$ with $i=1,2,...,N+2$ hit some vector in $\Gamma$ different from the ones depicted in Figure~\ref{f:convenzioni1}, which readily implies the statement, since then the $N$ vectors $e_{2},\cdots,e_{N+1}$ appear all at least 4 times in the embedding. If this were not the case, one could consider the graph with embedding obtained by deleting all the  $e_{i}$ with $i=1,2,...,N+2$. The resulting graph $\Psi$ with $n$ vertices is linear with two connected components, and admits an embedding in a standard lattice of rank $n-1$. However, this is not possible since its determinant is non-vanishing. Indeed, by our assumptions on $\Gamma$, we know that after deleting the $e_{i}$ with $i=1,2,...,N+2$, all the weights in the remaining linear graph are at least two, except for the weights associated to the final vertex $u$ and perhaps the final vertex $v$. It follows by Remark~\ref{rem}(\ref{dd}) that $\Psi$ has non-vanishing determinant. So at least one of the $e_{i}$ with $i=1,2,...,N+2$ must appear elsewhere in the embedding, but then, by othogonality with the 2-leg, they must all appear.
\end{proof}
\end{subsection}

\begin{subsection}{Vectors appearing only once in the embedding and determinant of $\Psi$}
There are only two possibilities for a vector $e_{j}$ to contribute negatively to $I$: either it appears exactly once or exactly twice in the embedding with coefficient $\pm 1$. In this subsection we show that the former possibility does not occur in the embedding of the graphs $\Gamma$.

\begin{lemma}\label{l:p1}
Suppose that $\Gamma$ with the assumptions of Proposition~\ref{p:mainA} embeds into $(\Z^{|\Gamma|},\mathrm{Id})$ and that there is a basis vector which appears only once in the embedding. Then, 
there is only one such vector, $\Gamma$ is of the form $\Gamma_{1}$, $I(\Gamma_{1})\geq -1$ and the parameters $N,k$ defining $\Gamma_{1}$ satisfy $$
(N,k)\in\{(1,5),(1,6),(2,6)\}.
$$
\end{lemma}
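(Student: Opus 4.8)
\textbf{Proof proposal for Lemma~\ref{l:p1}.}

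The plan is to analyze what happens when a basis vector $e_{j_0}$ appears only once in the embedding, say $\varphi(v_{i_0}) = \pm e_{j_0} + w$ with $w \perp e_{j_0}$, and to pin down very rigidly where such a vector can occur. First I would observe that by Lemma~\ref{l:2rep} and the conventions of Figure~\ref{f:convenzioni1}, no basis vector among $e_1,\dots,e_{N+2}$ can appear only once, so $e_{j_0}$ hits a vertex $v_{i_0}$ lying on one of the qc-legs (or possibly the top right leg). The key tool is the determinant bookkeeping from Facts~\ref{rem}: deleting $e_{j_0}$ from the embedding produces a graph on $|\Gamma|$ vertices embedding into $\Z^{|\Gamma|-1}$, which must therefore have vanishing determinant (Facts~\ref{rem}(\ref{small})). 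Since deleting $e_{j_0}$ only changes the weight of $v_{i_0}$, dropping it by one, I get that $\Gamma$ with the weight of $v_{i_0}$ decreased by $1$ has determinant zero. Now I would apply Lemma~\ref{l:det} (and Facts~\ref{rem}(\ref{ddN}), (\ref{dd})): a three-legged star-shaped graph with central weight $2$ and all other weights $\geq 2$ that embeds into a standard lattice has non-vanishing determinant; so the weight drop must either make some leg weight equal to $1$ at an \emph{internal} vertex, or bring us outside the hypotheses, which forces $v_{i_0}$ to be a very specific low-weight vertex adjacent to the trivalent vertex — precisely the vertex $v$ of Figure~\ref{f:convenzioni1} or one of its neighbors. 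Combined with Lemma~\ref{l:firststep} (which gives $|v| > N+1$) and the constraint that $v = \sum_{i=2}^{N+2}(-1)^i e_i + v'$ with $v' \neq 0$, a once-appearing vector can only be the extra basis vector in $v'$, and then $v'$ must be a single basis vector $\pm e_{j_0}$, forcing $|v| = N+2$.

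Next I would translate this into numerical constraints on $(N,k)$. With $|v| = N+2$ and the structure of $\Gamma_1$ fixed, the qc-legs are complementary strings (satisfying~\eqref{e:1}) whose associated linear graphs are essentially determined once we know that $v$ is the first vertex of a qc-leg with this minimal weight; the point-rule / Riemenschneider bookkeeping from Facts~\ref{f} then leaves only finitely many possibilities. Using $I(\Gamma_1) = k - N - 5$ from~\eqref{e:I}, and the fact that with exactly one once-appearing vector contributing $-2$, every other basis vector contributes $\geq -1$ but the total $I$ is controlled, I would extract the inequality forcing $k - N - 5 \geq -1$, i.e. $k - N \geq 4$, and then check the cases $k \le 7$, $N \ge 1$ by hand: this yields exactly $(N,k) \in \{(1,5),(1,6),(2,6)\}$. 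I would also rule out $\Gamma = \Gamma_2$ here: by Lemma~\ref{l:firststep} (second bullet) every $\Gamma_2$ already fails to embed in codimension $0$, or more directly $I(\Gamma_2) = -N-k-1 \le -3$, which combined with the determinant argument above (any once-appearing vector would have to sit at $v$ with $v'$ a single basis vector, and then counting contributions to $I$ over the remaining twice-or-more-appearing vectors gives $I(\Gamma_2) \ge -1$, a contradiction) shows no once-appearing vector can exist for $\Gamma_2$. Finally, uniqueness of the once-appearing vector follows because a second one would force a second weight-one internal vertex after a double deletion, contradicting Lemma~\ref{l:det} / Facts~\ref{rem} applied to the doubly-reduced star-shaped graph.

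The main obstacle I anticipate is the precise determinant-vanishing analysis of the reduced graph: after deleting $e_{j_0}$ I need to argue carefully that the only way a three-legged graph with one decreased weight can have determinant zero while still admitting an embedding is the configuration described — this requires invoking Lemma~\ref{l:det} in the slightly strengthened form of the remark following it (for the case where decreasing a leg weight creates a star with the "critical" central-weight configuration), and handling the boundary cases where $v_{i_0}$ is internal versus final, and where the decreased weight lands exactly on $2 \to 1$. The combinatorial enumeration of $(N,k)$ is then routine given the explicit bound $k - N \geq 4$ together with $1 \le k \le 7$ and the extra hypothesis $N \ge 4$ when $k = 7$ from Proposition~\ref{p:mainA}, which immediately kills $(3,7)$, $(2,7)$, $(1,7)$ and leaves the stated three pairs.
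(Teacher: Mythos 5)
There is a genuine gap, and in fact your structural identification of where the once-appearing vector lives is the opposite of what actually happens. You claim that after deleting $e_{j_0}$ the determinant argument "forces $v_{i_0}$ to be the vertex $v$ of Figure~\ref{f:convenzioni1} or one of its neighbors," with $v'=\pm e_{j_0}$ and $|v|=N+2$. But this configuration is impossible: if $e_{j_0}$ lived only in $v'$, deleting it would leave $v$ with weight $N+1\ge 2$ and every other weight unchanged, so the reduced graph would be a three-legged star with central weight $2$ and all weights $\ge 2$ admitting an embedding into a lattice of rank $|\Gamma|-1$; by Lemma~\ref{l:det} its determinant is nonzero, contradicting Facts~\ref{rem}(\ref{small}). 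What the determinant argument actually forces is that the vertex $w$ hit by the once-appearing vector $e$ satisfies $w=\alpha e\pm e_j$ (so $w$ drops to weight $1$ after deletion, and is supported on exactly two basis vectors, hence $w\ne v$); such a $w$ can sit anywhere on the qc-legs, typically on the leg containing $u$. The paper then runs an iterated blow-down on the resulting weight-one vertices, and the whole case analysis (whether $w$ is on the $v$-leg or the $u$-leg, final or internal) is the substance of the proof; none of it survives if one insists $w=v$.

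The second missing piece is the actual source of the bound $I(\Gamma)\ge -1$. Your "counting contributions" sketch does not work at this stage: nothing yet controls how many basis vectors appear exactly twice, so one cannot bound $I$ from below by inspecting coefficients. In the paper, the bound comes from the terminal configuration of the blow-down: after blowing down to a tree with central weight $1$ and deleting the trivalent vertex, one obtains a linear subset $L$ with $I(L)=I(\Gamma)-\alpha^2+2$; one then shows $L$ is a good subset with no bad components (via Lemma~\ref{badcomponents}, using that $e_2$ links once to a final vector of each component) and invokes Lisca's result that such subsets have $I(L)\ge 0$. This is what yields $I(\Gamma)\ge\alpha^2-2\ge -1$, kills $\Gamma_2$ (since $I(\Gamma_2)\le -3$), and, together with $I(\Gamma_1)=k-N-5$ and the exclusion of $k=7$ with $N\le 3$, produces the three pairs. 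Your enumeration of $(N,k)$ at the end is fine once the inequality is in hand, but the inequality itself is not established by your argument.
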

\begin{proof}
Let $e$ be a vector that appears only once in the embedding of $\Gamma$. Notice that $e$ is then necessarily different from all the basis vectors depicted in Figure~\ref{f:convenzioni1}, except possibly $e_{r}$, and thus appears in one vector $w$ of the left or top leg of $\Gamma$. If we delete $e$ from the embedding, we obtain again an embedding of a tree $\Gamma'$ with $n$ vertices in a standard lattice of rank $n-1$. This yields a contradiction if the determinant of $\Gamma'$ is non-vanishing. We proceed to analyze this determinant. If after deleting $e$ from the embedding the vertex $w$ has still weight greater than 1, then Lemma~\ref{l:det} applies and $\Gamma'$ has non-vanishing determinant. Then we must have that the embedding of the original $w$ is of the form $\alpha e\pm e_{j}$ for some $\alpha\in\Z$. Moreover, $e_{j}$ necessarily hits any vectors adjacent to $w$ with coefficient $\pm 1$ and it does not hit any other vertices in the graph. 

We assume now that after deleting the vector $e$ from the embedding of $w$ the graph $\Gamma'$ has a vertex of weight $1$. The blow down operation at the level of the graph is compatible with the embedding (see Operation~\ref{op}(\ref{bd})): blowing down the vertex with weight $1$ yields a graph $\tilde\Gamma'$ with $n-1$ vertices and an embedding in the standard lattice of rank $n-2$, which implies that this graph $\tilde\Gamma'$ has again vanishing determinant. At the level of the embedding we have deleted the basis vector $e_{j}$ and the vertex $w$. Now, several things can happen: 
\begin{enumerate}
\item if after the blow down we obtain a graph with no vertex of weight $1$ and central vertex of weight $2$, then Lemma~\ref{l:det} yields a contradiction.
\item if there is a vertex of weight 1, then we claim that $\tilde\Gamma'$ can have at most one vertex of weight 1. Indeed, if $w$ were final in $\Gamma$, this is evident. If $w$ were internal, then precisely two adjacent vertices in $\tilde\Gamma'$ weigh one less than they did in $\Gamma'$. If they have both weight 1 it means that in $\Gamma'$ we had the following configuration:
\[
  \begin{tikzpicture}[xscale=1.5,yscale=-0.4]
    \node (A0_2) at (2, 0) {$\scriptstyle e_{j}-e_{k}$};
    \node (A0_3) at (3, 0) {$\scriptstyle e_{j}$};
    \node at (2, 1.75) {$\scriptstyle w_{-}$};
    \node at (3, 1.75) {$\scriptstyle w$};
    \node at (4, 1.75) {$\scriptstyle w_{+}$};
    \node (A0_4) at (4, 0) {$\scriptstyle e_{j}+e_{k}$};
    \node (A1_1) at (1, 1) {$\dots$};
    \node (A1_2) at (2, 1) {$\bullet$};
    \node (A1_3) at (3, 1) {$\bullet$};
    \node (A1_4) at (4, 1) {$\bullet$};
    \node (A1_5) at (5, 1) {$\dots$};
    \path (A1_4) edge [-] node [auto] {$\scriptstyle{}$} (A1_5);
    \path (A1_1) edge [-] node [auto] {$\scriptstyle{}$} (A1_2);
    \path (A1_2) edge [-] node [auto] {$\scriptstyle{}$} (A1_3);
    \path (A1_3) edge [-] node [auto] {$\scriptstyle{}$} (A1_4);
  \end{tikzpicture}
  \]
%
However, this is not possible: $w_{-}$ and $w_{+}$ cannot both be final vertices, we can assume that $w_{-}$ has an adjacent vertex $t$ and since $e_j$ does not hit any other vertices in the graph, $t$ must be of the from $e_k + t'$ (with $t' \cdot e_j =0$), which contradicts $t\cdot w_{+}=0$.

So, if a vertex in $\tilde\Gamma'$ has weight $1$, there is only one, and we can iterate the blow down operation at the level of the graph and the embedding. Eventually we will be either in the the preceding case, obtaining a contradiction, or we will have one of the following situations:
	\begin{enumerate}
	\item the vertex $w$ was in the same leg as $v$ and after a series of blow 		downs we arrive to 
\[
  \begin{tikzpicture}[xscale=1.5,yscale=-0.5]
    \node (A3_0) at (2.25,0) {$\scriptstyle e_{s}+\sum_{i=2}^{N+2}(-1)^{i}e_{i}$};
	\node (A1_3) at (2.25,1) {$\bullet$};
	\node at (2.25,1.5) {$\scriptstyle v$};
	\node (B) at (1,0) {$\scriptstyle e_{s}$};
	\node (A) at (1,1) {$\bullet$};
	\node (C) at (0,0) {$\scriptstyle e_{s}+\dots$};
	\node at (0,1.5) {$\scriptstyle u$};
	\node (D) at (0,1) {$\bullet$};
	\node (A3) at (3.75,0) {$\scriptstyle e_{1}+e_{r}$};
	\node (A1) at (3.75,1) {$\bullet$};
    \node (A2_3) at (2.5, 3) {$\scriptstyle e_{1}+e_{2}$};
    \node (A3_3) at (3, 3) {$\bullet$};
    \node (A4_4) at (4, 4) {$\scriptstyle e_{2}+e_{3}$};
    \node (A4_6) at (6, 4) {$\scriptstyle e_{N+1}+e_{N+2}$};
    \node (A5_4) at (4, 5) {$\bullet$};
    \node (A5_5) at (5, 5) {$\dots$};
    \node (A5_6) at (6, 5) {$\bullet$};
    \node at (4.75, 1) {$\dots$};
    \node at (-1, 1) {$\dots$};
    \path (A3_3) edge [-] node [auto] {$\scriptstyle{}$} (A1_3);
    \path (A3_3) edge [-] node [auto] {$\scriptstyle{}$} (A1);
    \path (A5_4) edge [-] node [auto] {$\scriptstyle{}$} (A5_5);
    \path (A3_3) edge [-] node [auto] {$\scriptstyle{}$} (A5_4);
    \path (A5_5) edge [-] node [auto] {$\scriptstyle{}$} (A5_6);
    \path (A1_3) edge [-] node [auto] {$\scriptstyle{}$} (A);
    \path (A) edge [-] node [auto] {$\scriptstyle{}$} (D);
    \path (A1) edge [-] node [auto] {$\scriptstyle{}$} (4.5,1);
    \path (D) edge [-] node [auto] {$\scriptstyle{}$} (-0.75,1);
   \end{tikzpicture}
  \]
where, if the vertex $u$ exists, then it has weight at least 2. In this configuration, the $v'$ part of $v$ in Figure~\ref{f:convenzioni1} has been reduced to $e_{s}$ during the blow down iterations. Now, if $u$ does not exist and the vertex with embedding $e_{s}$ is final, then after blowing down this vertex we obtain a contradiction with the determinant via Lemma~\ref{l:det}. On the other hand, if $u$ exists, it is evident that the described situation is impossible since $u$ must be orthogonal to $v$, but this would require that $u$ is not orthogonal to the 2-leg.
	%
	\item the vertex $w$ was not in the same leg as $v$ and after a series 	of 		blow downs we obtain 
\[
  \begin{tikzpicture}[xscale=1.5,yscale=-0.5]
    \node (A3_0) at (2.25,0) {$\scriptstyle \sum_{i=2}^{N+2}(-1)^{i}e_{i}+v'$};
	\node (A1_3) at (2.25,1) {$\bullet$};
	\node at (2.25,1.5) {$\scriptstyle v$};
	\node (A) at (1,1) {$\bullet$};
	\node (D) at (0,1) {$\dots$};
	\node (A3) at (3.75,0) {$\scriptstyle e_{1}$};
	\node (A1) at (3.75,1) {$\bullet$};
    \node (A2_3) at (2.5, 3) {$\scriptstyle e_{1}+e_{2}$};
    \node (A3_3) at (3, 3) {$\bullet$};
    \node (A4_4) at (4, 4) {$\scriptstyle e_{2}+e_{3}$};
    \node (A4_6) at (6, 4) {$\scriptstyle e_{N+1}+e_{N+2}$};
    \node (A5_4) at (4, 5) {$\bullet$};
    \node (A5_5) at (5, 5) {$\dots$};
    \node (A5_6) at (6, 5) {$\bullet$};
    \node (F) at (4.75, 1) {$\bullet$};
    \node (J) at (5.75, 1) {$\dots$};
    \node at (4.75, 0) {$\scriptstyle e_{1}+\dots$};
    \node at (3.75, 1.5) {$\scriptstyle u$};
    \path (A3_3) edge [-] node [auto] {$\scriptstyle{}$} (A1_3);
    \path (A3_3) edge [-] node [auto] {$\scriptstyle{}$} (A1);
    \path (A5_4) edge [-] node [auto] {$\scriptstyle{}$} (A5_5);
    \path (A3_3) edge [-] node [auto] {$\scriptstyle{}$} (A5_4);
    \path (A5_5) edge [-] node [auto] {$\scriptstyle{}$} (A5_6);
    \path (A1_3) edge [-] node [auto] {$\scriptstyle{}$} (A);
    \path (A) edge [-] node [auto] {$\scriptstyle{}$} (D);
    \path (A1) edge [-] node [auto] {$\scriptstyle{}$} (F);
    \path (F) edge [-] node [auto] {$\scriptstyle{}$} (J);
   \end{tikzpicture}
  \]
where $u$ is the only vertex of weight 1. If $u$ is final, then we can blow down $u$, then the central vertex, and then the rest of the 2-leg. This results in a linear graph with $v$ as a final vertex with weight $v'$. Since $|v'|\geq 1$ and all other weights are greater than or equal to 2, it follows by Remark~\ref{rem}(\ref{dd}) that the determinant is non-vanishing which yields a contradiction.

Now assume that $u$ is not final. Then we can blow down the vertex $u$ obtaining a tree with central vertex of weight 1 and the following embedding into a standard lattice of rank one less than the total number of vertices:
	\[
  \begin{tikzpicture}[xscale=1.8,yscale=-0.5]
    \node (A3_0) at (2.25,0) {$\scriptstyle \sum_{i=2}^{N+2}(-1)^{i}e_{i}+v'$};
	\node (A1_3) at (2.25,1) {$\bullet$};
	\node at (2.25,1.5) {$\scriptstyle v$};
	\node (A) at (1,1) {$\bullet$};
	\node (D) at (0,1) {$\dots$};
	\node (A3) at (3.75,0) {$\scriptstyle \pm \sum_{i=2}^{N+2}(-1)^{i}e_{i}+u'$};
	\node (A1) at (3.75,1) {$\bullet$};
    \node (A2_3) at (3, 3.5) {$\scriptstyle e_{2}$};
    \node (A3_3) at (3, 3) {$\bullet$};
    \node (A4_4) at (4, 4) {$\scriptstyle e_{2}+e_{3}$};
    \node (A4_6) at (6, 4) {$\scriptstyle e_{N+1}+e_{N+2}$};
    \node (A5_4) at (4, 5) {$\bullet$};
    \node (A5_5) at (5, 5) {$\dots$};
    \node (A5_6) at (6, 5) {$\bullet$};
    \node (F) at (4.75, 1) {$\dots$};
    \node at (3.75, 1.5) {$\scriptstyle u$};
    \path (A3_3) edge [-] node [auto] {$\scriptstyle{}$} (A1_3);
    \path (A3_3) edge [-] node [auto] {$\scriptstyle{}$} (A1);
    \path (A5_4) edge [-] node [auto] {$\scriptstyle{}$} (A5_5);
    \path (A3_3) edge [-] node [auto] {$\scriptstyle{}$} (A5_4);
    \path (A5_5) edge [-] node [auto] {$\scriptstyle{}$} (A5_6);
    \path (A1_3) edge [-] node [auto] {$\scriptstyle{}$} (A);
    \path (A) edge [-] node [auto] {$\scriptstyle{}$} (D);
    \path (A1) edge [-] node [auto] {$\scriptstyle{}$} (F);
   \end{tikzpicture}
  \]
We are now going to consider the graph obtained by deleting the trivalent vertex in the above graph and all its adjacent edges. We claim that this linear graph with 3 connected components $L$, which has an embedding into the standard lattice of the same rank, has $I(L)=I(\Gamma)-\alpha^{2}+2$, where $\Gamma$ is the graph with which we started to obtain $L$. Indeed, deleting the basis vector $e$ with coefficient $\alpha$ from the embedding of $\Gamma$ yields $\Gamma'$ with $I(\Gamma')=I(\Gamma)-\alpha^{2}$, the blow downs leave $I$ unchanged, and the last operation, deleting the trivalent vertex with weight 1, adds two to $I$, therefore $I(L)=I(\Gamma)-\alpha^{2}+2$ as claimed.

Now, $L$ is a good subset and we claim that $L$ has no bad components. To see this note that the vector $e_2$ is linked once to a final vector of each connected component of the graph associated with $L$. It then follows from Lemma~\ref{badcomponents} that $L$ has no bad components. Therefore, we can apply \cite[Proposition~4.10]{Lisca-sums} to conclude that $I(L)\geq 0$, which implies  that $I(\Gamma)\geq \alpha^{2}-2\geq -1$. By \eqref{e:I} the graphs $\Gamma_{2}$ are excluded and we conclude that  $\Gamma=\Gamma_{1}$. Moreover, since $I(\Gamma_{1})=k-N-5$ we need $(N,k)\in\{(1,5),(1,6),(2,6),(1,7),(2,7),(3,7)\}$ and it follows also that $|\alpha|=1$. Finally, by Lemma~\ref{pruning2} we know that the graphs $\Gamma_{1}$ with 	$(N,k)\in\{(1,7),(2,7),(3,7)\}$ do not bound rational homology balls and they are excluded from our embeddability analysis.
\end{enumerate}
\end{enumerate} 

We finish the proof of this lemma by showing that there cannot be a second basis vector $f$ such that $f$ also appears only once in the embedding of $\Gamma$. If there were such a vector, then, by the above arguments, it needs to hit a vertex belonging to the same qc-leg as $u$. Observe that it cannot hit $w$, the vertex $e$ hits, since this vertex is not isolated and then it would have at least weight 3 and by deleting either $e$ or $f$ we obtain a contradiction with the determinant of $\Gamma$ via Lemma~\ref{l:det}. Without loss of generality suppose $f$ hits a vertex $w_{2}$ further from the trivalent vertex than $w$ in the qc-leg. By the above arguments there is a basis vector $e_{k}$ hitting exactly $w_{2}$ and its adjacent vertices. After deleting $f$ we obtain a graph $\Gamma'$ with an embedding and a vertex of weight 1 that we can blow down. The key point is that the blow down operation is compatible with the embedding and as explained above we need to be able to keep on blowing down vertices until we arrive to a configuration like 2.(b). This is only possible if to the sides of $w_{2}$ we have a configuration of complementary strings from the trivalent vertex up to $w_{2}$ and from $w_{2}$ to some vertex between $w_{2}$ and the final vertex in its qc-leg. Moreover, the embedding of these strings needs to be as in Property~\ref{f}(\ref{f:unique}). The existence of the vector $e$ makes this impossible.
\end{proof}

\begin{re}
The reader might find it interesting to know that there are indeed graphs with embeddings like the ones described in Case 2.(b) in the last lemma. We will show in what follows though that any such graph will not correspond to a surgery on a torus knot. A random example is:
\[
  \begin{tikzpicture}[xscale=2.,yscale=-0.5]
    \node (A3_0) at (2.25,0) {$\scriptstyle e_{1}-e_{2}+e_{3}+3f_{1}+f_{2}$};
	\node (A1_3) at (2.25,1) {$\bullet$};
	\node at (1,0) {$\scriptstyle f_{2}+3f_{3}$};
	\node (A) at (1,1) {$\bullet$};
	\node (A3) at (3.75,0) {$\scriptstyle e_{1}-e_{2}+e_{3}-f_{1}$};
	\node (A1) at (3.75,1) {$\bullet$};
    \node (A2_3) at (2.75, 3) {$\scriptstyle e_{1}$};
    \node (A3_3) at (3, 3) {$\bullet$};
    \node (A4_4) at (4, 4) {$\scriptstyle e_{1}+e_{2}$};
    \node (A4_6) at (5, 4) {$\scriptstyle e_{2}+e_{3}$};
    \node (A5_4) at (4, 5) {$\bullet$};
    \node (A5_5) at (5, 5) {$\bullet$};
    \node (F) at (4.75, 1) {$\bullet$};
    \node at (4.75, 0) {$\scriptstyle -f_{1}+3f_{2}-f_{3}$};
    \path (A3_3) edge [-] node [auto] {$\scriptstyle{}$} (A1_3);
    \path (A3_3) edge [-] node [auto] {$\scriptstyle{}$} (A1);
    \path (A5_4) edge [-] node [auto] {$\scriptstyle{}$} (A5_5);
    \path (A3_3) edge [-] node [auto] {$\scriptstyle{}$} (A5_4);
    \path (A1_3) edge [-] node [auto] {$\scriptstyle{}$} (A);
    \path (A1) edge [-] node [auto] {$\scriptstyle{}$} (F);
   \end{tikzpicture}
  \]
\end{re}

We now proceed to eliminate the possibilities left open in Lemma~\ref{l:p1}.
\begin{lemma}\label{l:no1}
Suppose that $\Gamma$ under the assumptions of Proposition~\ref{p:mainA} embeds into $(\Z^{|\Gamma|},\mathrm{Id})$, then no basis vector appears only once in the embedding.  
\end{lemma}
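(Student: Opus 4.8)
The plan is to combine Lemma~\ref{l:p1}, which already severely restricts when a basis vector can appear only once, with Proposition~\ref{pruning2}, which rules out the remaining Heegaard~Floer cases. By Lemma~\ref{l:p1}, if some basis vector appears only once in an embedding of $\Gamma$ into $(\Z^{|\Gamma|},\mathrm{Id})$, then $\Gamma$ must be of the form $\Gamma_{1}$ with $(N,k)\in\{(1,5),(1,6),(2,6)\}$, and moreover $I(\Gamma_{1})\geq -1$. So the entire statement reduces to excluding these three concrete pairs $(N,k)$. In each of these cases the graph $\Gamma_{1}$ is completely explicit, and the task is to show that the corresponding lattice $(\Z\Gamma_{1},Q_{\Gamma_{1}})$ does not embed in the diagonal lattice of the same rank.

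First I would recall, from the constraints in Lemma~\ref{l:p1} and Equation~\eqref{e:I}, that $I(\Gamma_{1})=k-N-5$, so the three surviving cases give $I(\Gamma_{1})\in\{-1,0,-1\}$ respectively; and the proof of Lemma~\ref{l:p1} shows that a basis vector appearing once forces $|\alpha|=1$ and produces, after a sequence of blow-downs, a good subset $L$ with $I(L)=I(\Gamma)-\alpha^{2}+2=I(\Gamma)+1\leq 1$ and no bad components. The cleanest route is to push this analysis a little further in the three remaining configurations: the graph $L$ obtained at the end of case 2.(b) of Lemma~\ref{l:p1} is a linear good subset with three connected components, and one can check directly that for $(N,k)\in\{(1,5),(1,6),(2,6)\}$ the combinatorics of the two qc-legs (whose weights are complementary strings, by Facts~\ref{f}) are too rigid to admit such an $L$: the vertex $w$ hit by the unique basis vector $e$ would have to sit between two blocks of complementary strings embedded as in Facts~\ref{f}(\ref{f:unique}), and a short count of the total weight $I(\Gamma_{1})=k-N-5$ against the contributions forced on the remaining basis vectors leaves no room. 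In practice this is a finite check: for each of the three pairs one writes out the strings $(a_1,\dots,a_n)$ and $(b_1,\dots,b_m)$ via Proposition~\ref{seifertplumbing} and Riemenschneider's point rule, and verifies by the determinant criteria of Facts~\ref{rem} (in particular Lemma~\ref{l:det}) that no embedding survives the blow-down reduction.

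Alternatively, and perhaps more safely, I would invoke Proposition~\ref{pruning2} together with a direct correction-term or small-rank lattice check: the three pairs $(N,k)\in\{(1,5),(1,6),(2,6)\}$ correspond to specific small surgeries $S^3_{m^2}(T_{p,q})$ with $q=kp+r$, $2\le r\le p-2$, and for each fixed $(N,k)$ the condition $q\not\equiv\pm1\bmod p$ together with $1\le k\le 7$ constrains $(p,q;m)$ to an explicit (small) list exactly as in the computer search of Proposition~\ref{pruning2}; one then checks case by case that either the lattice obstruction of Corollary~\ref{Donaldsonobstructionforus} or the correction-term obstruction rules them out. I expect the main obstacle to be organizing this finite case analysis cleanly — in particular making sure that the blow-down bookkeeping from Lemma~\ref{l:p1} is carried all the way to a graph to which Lemma~\ref{l:det} or \cite[Proposition~4.10]{Lisca-sums} applies without leaving a gap. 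Once that reduction is in place the contradiction is immediate from $I(\Gamma_{1})\geq -1$ clashing with the lower bound $I(L)\geq 0$ forced by the absence of bad components, so the real work is purely in handling the three exceptional pairs.
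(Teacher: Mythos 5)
There is a genuine gap, and it is structural rather than a matter of bookkeeping. You treat the three surviving pairs $(N,k)\in\{(1,5),(1,6),(2,6)\}$ as if they determined "completely explicit" graphs, reducing the lemma to "a finite check." They do not: fixing $N$ and $k$ fixes only the length of the $2$--leg and the weight of the final vertex $k+1$, while the two qc-legs are determined by the pair $(p,r)$ with $q=kp+r$ and $2\le r\le p-2$, which ranges over an infinite set. Each of the three pairs therefore labels an \emph{infinite family} of graphs, and no case-by-case verification of the strings via Riemenschneider's rule is available. This is precisely why the paper's proof of Lemma~\ref{l:no1} is long: it must extract structural information valid for all graphs in these families. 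It does so by deleting the basis vector $e$ and blowing down to the configuration of Lemma~\ref{l:p1}(2.b), then deleting the vectors $e_2,\dots,e_{N+2}$ to produce a graph $\Psi'$ with vanishing determinant and controlled $I$, bounding the weights $|u'|,|v'|$ from the determinant condition, and then invoking Lisca's classification of good subsets with $I\in\{-2,-3\}$ (Lemma~\ref{badcomponents2}, Lemma~\ref{I=-2,b=1}, \cite[Proposition~6.1]{Lisca-ribbon}) to pin down the finitely many \emph{shapes} the qc-legs could take, each of which is then shown to be incompatible with the quasi-complementary structure or with the existence of the once-appearing vector.

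Your fallback route is also not viable. Proposition~\ref{pruning2} only treats $k=7$ with $N\in\{1,2,3\}$ and says nothing about $k\in\{5,6\}$. More seriously, the surgeries with $(N,k)=(1,5)$ include family (4) of Theorem~\ref{maintheorem} (the graphs $\Lambda_s$), which genuinely bound rational homology balls and whose lattices genuinely embed; no correction-term or Donaldson obstruction can exclude them. Lemma~\ref{l:no1} is a statement about the \emph{structure of every embedding} (no basis vector hits exactly one vertex), not an obstruction to embeddability or to bounding a rational ball, so it cannot be deduced from any bounding obstruction. Finally, the "immediate contradiction" you cite between $I(\Gamma_1)\ge-1$ and $I(L)\ge 0$ is not a contradiction: the relation is $I(L)=I(\Gamma_1)-\alpha^2+2=I(\Gamma_1)+1$, and the two inequalities are simultaneously satisfiable — indeed that is exactly why these three pairs survive Lemma~\ref{l:p1} and require the separate, much more delicate argument of Lemma~\ref{l:no1}.
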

\begin{proof}
In light of Lemma~\ref{l:p1} we only need to consider the case $\Gamma=\Gamma_{1}$ and $(N,k)\in\{(1,5),(1,6),(2,6)\}$. Even though the argument is essentially the same, we will distinguish the two cases $N=1$ and $N=2$.

\emph{Case $N=2$}. From Lemma~\ref{l:p1} we know that after deleting the basis vector appearing only once and performing a series of blow downs we have the following scenario (where we have fixed the signs of the basis vectors $e_{i}$ after the blow downs):
\[
  \begin{tikzpicture}[xscale=1.8,yscale=-0.5]
    \node (A3_0) at (2.25,0) {$\scriptstyle e_{2}-e_{3}+e_{4}+v'$};
	\node (A1_3) at (2.25,1) {$\bullet$};
	\node at (2.25,1.5) {$\scriptstyle v$};
	\node (A) at (1,1) {$\bullet$};
	\node (D) at (0,1) {$\dots$};
	\node (A3) at (3.75,0) {$\scriptstyle e_{2}-e_{3}+e_{4}+u'$};
	\node (A1) at (3.75,1) {$\bullet$};
    \node (A2_3) at (3, 3.5) {$\scriptstyle e_{2}$};
    \node (A3_3) at (3, 3) {$\bullet$};
    \node (A4_4) at (4, 4) {$\scriptstyle e_{2}+e_{3}$};
    \node (A4_6) at (5, 4) {$\scriptstyle e_3+e_4$};
    \node (A5_4) at (4, 5) {$\bullet$};
    \node (A5_5) at (5, 5) {$\bullet$};
    \node (F) at (4.75, 1) {$\dots$};
    \node at (3.75, 1.5) {$\scriptstyle u$};
    \path (A3_3) edge [-] node [auto] {$\scriptstyle{}$} (A1_3);
    \path (A3_3) edge [-] node [auto] {$\scriptstyle{}$} (A1);
    \path (A5_4) edge [-] node [auto] {$\scriptstyle{}$} (A5_5);
    \path (A3_3) edge [-] node [auto] {$\scriptstyle{}$} (A5_4);
    \path (A1_3) edge [-] node [auto] {$\scriptstyle{}$} (A);
    \path (A) edge [-] node [auto] {$\scriptstyle{}$} (D);
    \path (A1) edge [-] node [auto] {$\scriptstyle{}$} (F);
   \end{tikzpicture}
  \]
The vectors $e_{2},e_{3}$ and $e_{4}$ do not reappear elsewhere in the embedding and the determinant of this graph $\Psi$ vanishes, since the described embedding has image in a standard lattice of rank one less than the number of vertices in the graph. Moreover, in this case we have that the quantity $I(\Psi)=I(\Gamma_{1})-1=6-2-5-1=-2$. 

We now proceed to delete the basis vectors $e_{2},e_{3},e_{4}$, which implies the loss of three vertices in the graph, yielding the graph $\Psi'$, which still admits an embedding into a standard lattice of rank one less than its number of vertices and satisfies $I(\Psi')= -4$.
\[
  \begin{tikzpicture}[xscale=1.5,yscale=1.5]
    \node at (2, 1.25) {$\scriptstyle v'$};
    \node at (3, 1.25) {$\scriptstyle u'$};
    \node at (2.5, 1.25) {$\scriptstyle -$};
    \node at (2.5, 1.05) {$\scriptstyle -$};
    \node at (2.5, 0.75) {$\scriptstyle -$};
    \node (B) at (0, 1) {$\dots$};
    \node (A1_1) at (1, 1) {$\bullet$};
    \node (A1_2) at (2, 1) {$\bullet$};
    \node (A1_3) at (3, 1) {$\bullet$};
    \node (A1_4) at (4, 1) {$\bullet$};
    \node (A1_5) at (5, 1) {$\dots$};
    \path (A1_4) edge [-] node [auto] {$\scriptstyle{}$} (A1_5);
    \path (B) edge [-] node [auto] {$\scriptstyle{}$} (A1_1);
    \path (A1_1) edge [-] node [auto] {$\scriptstyle{}$} (A1_2);
    \path (A1_2) edge [-] node [auto] {$\scriptstyle{}$} (A1_3);
    \draw (A1_2) to[bend right] (A1_3);
    \draw (A1_2) to[bend left] (A1_3);
    \path (A1_3) edge [-] node [auto] {$\scriptstyle{}$} (A1_4);
  \end{tikzpicture}
  \]
Since $\Psi'$ has vanishing determinant, the weight of at least one between $u'$ and $v'$ needs to be strictly less than $4$. Without loss of generality let us assume that $|u'|\leq 3$ and $|u'|\leq |v'|$. Since $u\cdot v=0$ in $\Psi$ and they are not both final vectors, we have that if $|u'|=1,2,3$ then $|v'|\geq 10,5,4$ respectively. Deleting from $\Psi'$ the vertex $v'$ we obtain a linear graph $L$. 

If $|u'|\neq 1$, then $I(L)\leq -5$. If $L$ is irreducible, then it is good with at most two connected components, which contradicts Lemma~\ref{badcomponents2}. If it were reducible, at least one of its components would have $I=-3$, which implies that in the embedding of this component a basis vector appears only once \cite[Proposition~6.1]{Lisca-ribbon}. This contradicts Lemma~\ref{l:p1}.

If $|u'|=1$, then $|v'|\geq 10$ and $I(L)\leq -11$.  If $u'$ is isolated in $L$, then  we eliminate it, obtaining in this way a standard subset $L'$ (note that in these circumstances $v'$ is not final) with $I(L')\leq -9$, which contradicts Lemma~\ref{badcomponents2}. If $u'$ is not isolated, then it is linked either to a vertex of weight greater than 2, or to the first vertex of a 2-chain $c$ of length $r\geq 1$. In the former case, we proceed to blow down the vertex $u'$, obtaining a subset $L'$ with at most two connected components (depending on whether or not $v$ was final) and $I(L')\leq -10$. Notice that since the vertex linked to $u'$ in $L$ was assumed of weight greater than 2, then all vertices in $L'$ have weight at least 2. If $L'$ is irreducible, then it is good and this contradicts Lemma~\ref{badcomponents2}. If $L'$ consisted of two irreducible components, each would be good and at least one would necessarily have $I< -3$ which again contradicts Lemma~\ref{badcomponents2}.
In the case in which $u'$ is linked to a 2-chain $c$, all the basis vectors hitting this 2-chain also hit $v'$ in $\Psi'$, and they do so with coefficient $\pm3$. It follows that in $L$, once we have deleted $v'$ from $\Psi'$, the quantity $I$ is actually bounded by $I(L)=I(\Psi')+3-v'\cdot v'\leq -4+3-9(r+1)=-10-9r.$ We proceed to blow down the vertex $u'$ and subsequently the 2-chain $c$ until we obtain a subset $L'$ such that all its vertices have weight at least 2. $L'$ has either one or two connected components (depending on whether or not the 2-chain $c$ was final) and we have $I(L')\leq -10-9r+r+2<-6$. This again contradicts Lemma~\ref{badcomponents2} both if $L'$ is irreducible, and thus good, or if it has two irreducible components and at least one needs then to have $I<-3$.

\emph{Case N=1}. With a similar strategy to the one used in the preceding case we proceed to examine the graphs of the form
\[
  \begin{tikzpicture}[xscale=1.8,yscale=-0.5]
    \node (A3_0) at (2.25,0) {$\scriptstyle e_{2}-e_{3}+v'$};
	\node (A1_3) at (2.25,1) {$\bullet$};
	\node at (2.25,1.5) {$\scriptstyle v$};
	\node (A) at (1,1) {$\bullet$};
	\node (D) at (0,1) {$\dots$};
	\node (A3) at (3.75,0) {$\scriptstyle e_{2}-e_{3}+u'$};
	\node (A1) at (3.75,1) {$\bullet$};
    \node (A2_3) at (3, 3.5) {$\scriptstyle e_{2}$};
    \node (A3_3) at (3, 3) {$\bullet$};
    \node (A4_4) at (4, 4) {$\scriptstyle e_{2}+e_{3}$};
    \node (A5_4) at (4, 5) {$\bullet$};
    \node (F) at (4.75, 1) {$\dots$};
    \node at (3.75, 1.5) {$\scriptstyle u$};
    \path (A3_3) edge [-] node [auto] {$\scriptstyle{}$} (A1_3);
    \path (A3_3) edge [-] node [auto] {$\scriptstyle{}$} (A1);
    \path (A3_3) edge [-] node [auto] {$\scriptstyle{}$} (A5_4);
    \path (A1_3) edge [-] node [auto] {$\scriptstyle{}$} (A);
    \path (A) edge [-] node [auto] {$\scriptstyle{}$} (D);
    \path (A1) edge [-] node [auto] {$\scriptstyle{}$} (F);
   \end{tikzpicture}
  \]
We will refer to them again as $\Psi$ and remark that depending on $k=5$ or $k=6$ we have $I(\Psi)=-2$ or $-1$ respectively. Just as before, this time we delete $e_{2}$ and $e_{3}$ obtaining $\Psi'$:
\[
  \begin{tikzpicture}[xscale=1.5,yscale=1.5]
    \node at (2, 1.25) {$\scriptstyle v'$};
    \node at (3, 1.25) {$\scriptstyle u'$};
    \node at (2.5, 1.25) {$\scriptstyle -$};
    \node at (2.5, 0.75) {$\scriptstyle -$};
    \node (B) at (0, 1) {$\dots$};
    \node (A1_1) at (1, 1) {$\bullet$};
    \node (A1_2) at (2, 1) {$\bullet$};
    \node (A1_3) at (3, 1) {$\bullet$};
    \node (A1_4) at (4, 1) {$\bullet$};
    \node (A1_5) at (5, 1) {$\dots$};
    \path (A1_4) edge [-] node [auto] {$\scriptstyle{}$} (A1_5);
    \path (B) edge [-] node [auto] {$\scriptstyle{}$} (A1_1);
    \path (A1_1) edge [-] node [auto] {$\scriptstyle{}$} (A1_2);
    \draw (A1_2) to[bend right] (A1_3);
    \draw (A1_2) to[bend left] (A1_3);
    \path (A1_3) edge [-] node [auto] {$\scriptstyle{}$} (A1_4);
  \end{tikzpicture}
  \]
a graph with $I(\Psi')= -3$ or $-2$ and vanishing determinant, since it embeds in a lattice of rank one less than its number of vertices. Moreover, we know from Lemma~\ref{l:p1} and the construction of $\Psi'$ that $v'$ is not a final vector, while $u'$ might be final. Notice also that the linear graphs, which we will denote by $L$, obtained from $\Psi'$ by deleting either $u'$ or $v'$ have at most two connected components. By Lemma~\ref{badcomponents} the connected component of $L$ that was linked only once with the deleted vertex of $\Psi'$ cannot be bad.  Since $\Psi'$ has vanishing determinant, the smaller between $|u'|$ and $|v'|$ needs to be either 1 or 2. We proceed to analyze separately the following cases:

\begin{enumerate}
\item $|u'|=1$. In this case there is a basis vector $e$ such that $u'=e$. We study the following two configurations. 
	\begin{itemize}
		\item[--] If $u'$ were final in $\Psi'$, after eliminating $e$ we obtain a linear graph $L$ which still embeds in a lattice of rank smaller than its number of vertices and therefore has vanishing determinant. However, this is not possible: $L$ has only one connected component and all weights greater or equal than 2 except possibly the now final vertex that corresponded to $v'$ in $\Psi'$. By Fact~\ref{rem}.\ref{ddN} we conclude that the determinant of $L$ is non-vanishing.
		\item[--] If $u'$ were not final in $\Psi'$, then 		it is linked to its right to a vertex $w=e+\dots$. Notice that the basis vector $e$ hits precisely $v',u'$ and $w$ in $\Psi'$ and moreover $v'=-2e+\dots$. The linear graph $L=\Psi'\setminus{u'}$ is good, with two linear components and $I(L)=-1$ or 0.
\[
  \begin{tikzpicture}[xscale=1.5,yscale=1.5]
    \node at (2, 1.25) {$\scriptstyle v'$};
    \node at (2, 0.75) {$\scriptstyle -2e+\dots$};
    \node at (4, 1.25) {$\scriptstyle w$};
	\node at (4, 0.75) {$\scriptstyle e+\dots$};
	\node at (-1, 1) {$L:$};
    \node (B) at (0, 1) {$\dots$};
    \node (A1_1) at (1, 1) {$\bullet$};
    \node (A1_2) at (2, 1) {$\bullet$};
    \node (A1_4) at (4, 1) {$\bullet$};
    \node (A1_5) at (5, 1) {$\dots$};
    \path (A1_4) edge [-] node [auto] {$\scriptstyle{}$} (A1_5);
    \path (B) edge [-] node [auto] {$\scriptstyle{}$} (A1_1);
    \path (A1_1) edge [-] node [auto] {$\scriptstyle{}$} (A1_2);
  \end{tikzpicture}
  \]
We proceed to change the $-2$ coefficient of the basis vector $e$ hitting $v'$ into a $-1$. This yields a standard subset $\tilde L$ where the vertex $\tilde v$ corresponding to $v'$ in $L$ satisfies $|\tilde v|+3=|v'|$		
\[
  \begin{tikzpicture}[xscale=1.5,yscale=1.5]
    \node at (2, 1.25) {$\scriptstyle \tilde v$};
    \node at (2, 0.75) {$\scriptstyle -e+\dots$};
    \node at (3, 1.25) {$\scriptstyle w$};
	\node at (3, 0.75) {$\scriptstyle e+\dots$};
	\node at (-1, 1) {$\tilde L:$};
    \node (B) at (0, 1) {$\dots$};
    \node (A1_1) at (1, 1) {$\bullet$};
    \node (A1_2) at (2, 1) {$\bullet$};
    \node (A1_4) at (3, 1) {$\bullet$};
    \node (A1_5) at (4, 1) {$\dots$};
    \path (A1_4) edge [-] node [auto] {$\scriptstyle{}$} (A1_5);
    \path (B) edge [-] node [auto] {$\scriptstyle{}$} (A1_1);
    \path (A1_1) edge [-] node [auto] {$\scriptstyle{}$} (A1_2);
    \path (A1_2) edge [-] node [auto] {$\scriptstyle{}$} (A1_4);
  \end{tikzpicture}
  \]
and therefore $I(\tilde L)\leq -3$. It follows by Lemma~\ref{badcomponents2} that $I(\tilde L)=-3$. However, this contradicts \cite[Proposition~6.1]{Lisca-ribbon} since the basis vector $e$ which appears only twice in the embedding hits internal vertices in $\tilde L$.	
	\end{itemize}
\item $|u'|=2$. In this case we assume that $u'=f_{1}+f_{2}$. If $u'$ were final, then, since $\Psi'$ has vanishing determinant, we need $|v'|\leq 2$ which implies $v'=-f_{1}-f_{2}$. This is incompatible with $v'$ not being final and having and adjacent vertex that is orthogonal to $u'$. We can assume then that $u'$ is not final in $\Psi'$ and that $|v'|\geq 3$.
	\begin{itemize}
		\item[--] If $|v'|\geq 5$ then the determinant of $\Psi$ cannot be zero. Indeed, under these assumptions the sum $r_{1}+r_{2}+r_{3}$ in Equation~\eqref{e:det=0} is strictly smaller than $\frac{1}{2}+\frac{1}{3}+\frac{1}{6}=1$.
		\item[--] If $|v'|=4$ we are dealing with the following graph $\Psi'$:
		\[
  \begin{tikzpicture}[xscale=2,yscale=1.5]
    \node at (2, 1.25) {$\scriptstyle v'$};
    \node at (2, 0.65) {$\scriptstyle -g_{1}-g_{2}-g_{3}-g_{4}$};
    \node at (3, 1.25) {$\scriptstyle u'$};
    \node at (3, 0.65) {$\scriptstyle g_{1}+g_{2}$};
    \node (B) at (0, 1) {$\dots$};
    \node (A1_1) at (1, 1) {$\bullet$};
    \node (A1_2) at (2, 1) {$\bullet$};
    \node (A1_3) at (3, 1) {$\bullet$};
    \node (A1_4) at (4, 1) {$\bullet$};
    \node (A1_5) at (5, 1) {$\dots$};
    \path (A1_4) edge [-] node [auto] {$\scriptstyle{}$} (A1_5);
    \path (B) edge [-] node [auto] {$\scriptstyle{}$} (A1_1);
    \path (A1_1) edge [-] node [auto] {$\scriptstyle{}$} (A1_2);
    \draw (A1_2) to[bend right] (A1_3);
    \draw (A1_2) to[bend left] (A1_3);
    \path (A1_3) edge [-] node [auto] {$\scriptstyle{}$} (A1_4);
  \end{tikzpicture}
  \]
The set $L=\Psi'\setminus\{v'\}$ is a linear 2--connected component set. We argue its irreducibility: notice that the vertex $z$ needs to be hit by at least one between $g_{3}$ and $g_{4}$ to guarantee orthogonality with $v'$. If $L$ were irreducible, then only one of these basis vectors would hit $z$, say it is $g_{3}$, and the linking between $w$ and $v'$ would be via the basis vector $g_{4}$. Moreover, $w$ would be the only vertex in its connected component hit by $g_{4}$ (because of reducibility and the needed orthogonality with $v'$). This is not possible since a standard set  with a basis vector appearing only once has the property that such a basis vector hits an internal vertex (see \cite[Lemma~3.2]{Lisca-ribbon}).

It follows that $L$ is a good set with at most one bad component (recall that by Lemma~\ref{badcomponents} the component linked to $v'$ is not bad). Moreover $I(L)=-4$ or $-3$ and by Lemma~\ref{badcomponents2} the only possible value is $I(L)=-3$, i.e.\ $k=6$, and the component linked to $v'$ is bad. Following line by line the proof of \cite[Lemma~5.4, $S$ irreducible, Second subcase] {Lisca-sums} we learn that $L$ is necessarily obtained from the following graph by final 2-expansions of the bad component:
		\[
  \begin{tikzpicture}[xscale=1.0,yscale=1.5]
    \node at (3, 1.25) {$\scriptstyle 2$};
    \node at (4, 1.25) {$\scriptstyle n+1$};
    \node at (5, 1.25) {$\scriptstyle 2$};
    \node (B) at (0, 1) {$\dots$};
    \node (A1_1) at (1, 1) {$\bullet$};
    \node (A1_2) at (2, 1) {$\bullet$};
    \node (A1_3) at (3, 1) {$\bullet$};
    \node (A1_4) at (4, 1) {$\bullet$};
    \node (A1_5) at (5, 1) {$\bullet$};
    \node (A1_11) at (0, 1) {$\dots$};
    \node (A1_12) at (-1, 1) {$\bullet$};
    \node (A1_13) at (-2, 1) {$\bullet$};
    \node at (-1, 1.25) {$\scriptstyle2$};
    \node at (-2, 1.25) {$\scriptstyle2$};
    \node at (1, 1.25) {$\scriptstyle2$};
    \node at (2, 1.25) {$\scriptstyle2$};
    \path (A1_4) edge [-] node [auto] {$\scriptstyle{}$} (A1_5);
    \path (B) edge [-] node [auto] {$\scriptstyle{}$} (A1_1);
    \path (A1_1) edge [-] node [auto] {$\scriptstyle{}$} (A1_2);
    \path (A1_3) edge [-] node [auto] {$\scriptstyle{}$} (A1_4);
    \path (A1_13) edge [-] node [auto] {$\scriptstyle{}$} (A1_12);
    \path (A1_12) edge [-] node [auto] {$\scriptstyle{}$} (A1_11);
  \end{tikzpicture}
  \]
where the left hand side 2-chain has length $n-1$. Since the leg with $v'$ in $\Psi$ is the same as the original leg in the graph in family $\Gamma_{1}$ from which we obtained $\Psi$, we conclude that the vertex with weight $k+1=7$ is linked to the leg starting with $u$. Moreover, since we have assumed that $|v'|=4$ it follows that $|v|=6$ and the corresponding leg in $\Gamma_{1}$ has associated string $(6,2,\dots,2)$ for a number $t$ of 2s. Then, by definition of quasi-complementary legs, the leg in $\Gamma_{1}$ starting with $u$ has associated string $(2,2,2,2,t+2,7)$. However, this leg should have the basis vector appearing only once in the embedding hitting the fourth 2; once this vector is deleted from the embedding, it is not possible to get by a series of blow downs from this string to one obtained by final 2-expansions from the above described bad component. There are not enough 2s at the beginning of the string.
		\item[--] If $|v'|=3$ we proceed like in the last case studying the graph $\Psi'$
		\[
  \begin{tikzpicture}[xscale=2,yscale=1.5]
    \node at (2, 1.25) {$\scriptstyle v'$};
    \node at (1, 1.25) {$\scriptstyle w$};
    \node at (4, 1.25) {$\scriptstyle z$};
    \node at (2, 0.65) {$\scriptstyle -g_{1}-g_{2}-g_{3}$};
    \node at (3, 1.25) {$\scriptstyle u'$};
    \node at (3, 0.65) {$\scriptstyle g_{1}+g_{2}$};
    \node (B) at (0, 1) {$\dots$};
    \node (A1_1) at (1, 1) {$\bullet$};
    \node (A1_2) at (2, 1) {$\bullet$};
    \node (A1_3) at (3, 1) {$\bullet$};
    \node (A1_4) at (4, 1) {$\bullet$};
    \node (A1_5) at (5, 1) {$\dots$};
    \path (A1_4) edge [-] node [auto] {$\scriptstyle{}$} (A1_5);
    \path (B) edge [-] node [auto] {$\scriptstyle{}$} (A1_1);
    \path (A1_1) edge [-] node [auto] {$\scriptstyle{}$} (A1_2);
    \draw (A1_2) to[bend right] (A1_3);
    \draw (A1_2) to[bend left] (A1_3);
    \path (A1_3) edge [-] node [auto] {$\scriptstyle{}$} (A1_4);
  \end{tikzpicture}
  \]
which this time satisfies $I(\Psi'\setminus\{v'\})=-3$ or $-2$. If $I=-3$, the same arguments as before yield this time a graph in $\Gamma_{1}$ with associated $v$-leg  string $(5,2,\dots,2)$ and $u$-leg string $(2,2,2,t+2,6)$ which again cannot be a $\Gamma_{1}$ graph with the desired properties. 

If $I=-2$ according to Lemma~\ref{badcomponents2} there are two cases to consider, either $L=\Psi'\setminus\{v'\}$ has zero or one bad component. By Lemma~\ref{I=-2,b=1}, in the latter case $L$ has associated strings obtained as final 2-expansions from $(2,t,2)$ and $(3,2,\dots,2,3)$ where the number of 2s in the second string is $t-3$ and $t\geq3$. Let us call $L=L_{w}\sqcup B_{u'}$ the two linear connected components of $L$. Since the embedding of $L$ is completely determined (Lemma~\ref{I=-2,b=1}) we know that the basis vectors hitting $w$, the vertex in $L_{w}$ linked to $v'$, do not hit the vertex $u'$. This implies that $g_{3}$ is the only basis vector hitting $v'$ and $L_{w}$ which is incompatible with the embedding of $L_{w}$.

Finally, if $I(L)=-2$ and $L$ has no bad components, then following Lisca's analysis in the proof of \cite[First case: $S$ irreducible, $b(S)=0$]{Lisca-sums} we conclude that the two strings of $L$ are complementary and that their embedding satisfies the following properties \cite[Lemma~4.7]{Lisca-sums}:
	\begin{enumerate}
		\item There are precisely two basis vectors appearing twice in the embedding. All other basis vectors appear exactly three times in the embedding.
		\item The basis vectors appearing only twice hit only final vertices. Each of these basis vectors hits precisely one vertex of weight 2.
		\item Every final vertex is hit by a basis vector appearing only twice in the embedding. 
	\end{enumerate}
$*$ If $g_{1}$ hits $w$, then $g_{2}$ does too since $w\cdot u'=0$ and we have $w=-g_{1}+g_{2}+g_{3}+\dots$. Since $u'$ is final in $L$, at least one between $g_{1}$ and $g_{2}$ appears only twice in the embedding of $L$, say it is $g_{1}$, which then hits precisely $w$ and $u'$. This forces $g_{2}$ to appear precisely 3 times (it cannot appear only twice since $u'$ was not final in $\Psi'$). We conclude that in $L$, the set of vertices hit by $g_{2}$ is $\{w,u',z\}$. The basis vector $g_{3}$ needs to hit at least one vertex different form $w$, and since this vertex will be orthogonal to $v'$ and we have pinned down all the appearances of the basis vectors $g_{1}$ and $g_{2}$ in the embedding of $L$, $g_{3}$ can only hit $z$. Now, since the basis vectors appearing only twice hit exclusively final vertices, it follows that both $w$ and $z$ are final in $L$ and the graph $\Psi'$ with its embedding is necessarily 
		\[
  \begin{tikzpicture}[xscale=2.5,yscale=1.5]\label{prueba}
    \node at (2, 1.25) {$\scriptstyle v'$};
    \node at (1, 1.25) {$\scriptstyle w$};
    \node at (0, 1) {$C:$};
    \node at (1, 0.65) {$\scriptstyle g_{3}+g_{2}-g_{1}$};
    \node at (4, 1.25) {$\scriptstyle z$};
    \node at (4, 0.65) {$\scriptstyle -g_{2}+g_{3}$};
    \node at (2, 0.65) {$\scriptstyle -g_{1}-g_{2}-g_{3}$};
    \node at (3, 1.25) {$\scriptstyle u'$};
    \node at (3, 0.65) {$\scriptstyle g_{1}+g_{2}$};
    \node (A1_1) at (1, 1) {$\bullet$};
    \node (A1_2) at (2, 1) {$\bullet$};
    \node (A1_3) at (3, 1) {$\bullet$};
    \node (A1_4) at (4, 1) {$\bullet$};
    \path (A1_1) edge [-] node [auto] {$\scriptstyle{}$} (A1_2);
    \draw (A1_2) to[bend right] (A1_3);
    \draw (A1_2) to[bend left] (A1_3);
    \path (A1_3) edge [-] node [auto] {$\scriptstyle{}$} (A1_4);
  \end{tikzpicture}
  \]
The $v$ leg in the graph $\Gamma_{1}$ from which we obtained $\Psi$ has associated string $(5,3)$ which implies that the vertex of weight $k+1=7$ is the last one on the $u$ leg which has associated string $(2,2,2,3,2,7)$. That is, we are dealing with the graph below, where the 2-leg is drawn to the left due to space constraints.
\[
  \begin{tikzpicture}[xscale=1.25,yscale=-0.5]
    \node (A3_0) at (3.75,5.75) {$\scriptstyle v$};
	\node (A1_3) at (3.75,5) {$\bullet$};
	\node at (3.75,4.5) {$\scriptstyle 5$};
	\node (A1_31) at (4.75,5) {$\bullet$};
	\node at (4.75,4.5) {$\scriptstyle3$};
	\node (A3) at (3.75,0) {$\scriptstyle 2$};
	\node (A1) at (3.75,1) {$\bullet$};
    \node (A2_3) at (3.5, 3) {$\scriptstyle e_{1}+e_{2}$};
    \node (A3_3) at (3, 3) {$\bullet$};
    \node (A4_4) at (2, 2) {$\scriptstyle e_{2}+e_{3}$};
    \node (A5_4) at (2, 3) {$\bullet$};
    \node (F) at (4.75, 1) {$\bullet$};
    \node (J) at (5.75, 1) {$\bullet$};
    \node (K) at (6.75, 1) {$\bullet$};
    \node (L) at (7.75, 1) {$\bullet$};
    \node (M) at (8.75, 1) {$\bullet$};
    \node at (4.75, 0) {$\scriptstyle 2$};
    \node at (5.75, 0) {$\scriptstyle 2$};
    \node at (6.75, 0) {$\scriptstyle 3$};
    \node at (7.75, 0) {$\scriptstyle 2$};
    \node at (8.75, 0) {$\scriptstyle 7=k+1$};
    \node at (3.75, 1.5) {$\scriptstyle u$};
    \path (A3_3) edge [-] node [auto] {$\scriptstyle{}$} (A1_3);
    \path (A3_3) edge [-] node [auto] {$\scriptstyle{}$} (A1);
    \path (A3_3) edge [-] node [auto] {$\scriptstyle{}$} (A5_4);
    \path (A1_3) edge [-] node [auto] {$\scriptstyle{}$} (A1_31);
    \path (A1) edge [-] node [auto] {$\scriptstyle{}$} (F);
    \path (F) edge [-] node [auto] {$\scriptstyle{}$} (J);
    \path (J) edge [-] node [auto] {$\scriptstyle{}$} (K);
    \path (K) edge [-] node [auto] {$\scriptstyle{}$} (L);
    \path (L) edge [-] node [auto] {$\scriptstyle{}$} (M);
   \end{tikzpicture}
  \]
It is very easy to show that such a graph does not admit an embedding into $\mathbb Z^{10}$ with a basis vector appearing only once hitting one of the weight 2 vertices on the $u$ leg.

$*$ If $g_{1}\cdot w=0$, then $g_{2}\cdot w=0$ and $g_{3}\cdot w=1$. Since $u'$ is not final in $\Psi'$, we have that precisely one between $g_{1}$ and $g_{2}$ hits $z$. Say it is $g_{2}$. Since $z\cdot v'=0$ it follows that $z=-g_{2}+g_{3}+\dots$. The basis vector $g_{1}$ hits at least one vertex $a$ in $L$ different from $u'$. Since $a\neq z$, then $a\cdot u'=0$ and thus $g_{2}$ also hits $a$. This implies that $g_{2}$ hits precisely the vertices $u',z$ and $a$. Since $u'$ is final in $L$, we deduce that $g_{1}$ appears only twice in the embedding hitting $u'$ and $a$, which is then necessarily a final vertex in $L$ different from $w$ and $u'$. If $g_{3}$ were to appear three times in the embedding of $L$, it would have to hit a vertex orthogonal to $v'$ different from $z$ and $w$. However, this is not possible, since this new vertex needs to be orthogonal to $v'$, it cannot be $a$ and we have pinned down all the appearances of $g_{1}$ and $g_{2}$ in the embedding of $L$. It follows that $g_{3}$ hits only $w$ and $z$ in $L$, which are therefore final vertices. Thus so far we have that $\Psi'$ is of the form
		\[
  \begin{tikzpicture}[xscale=2,yscale=1.5]
    \node at (2, 1.25) {$\scriptstyle v'$};
    \node at (1, 1.25) {$\scriptstyle w$};
    \node at (-1, 1.25) {$\scriptstyle a$};
    \node at (-1, 0.65) {$\scriptstyle g_{1}-g_{2}+\dots$};
    \node at (1, 0.65) {$\scriptstyle -g_{3}+\dots$};
    \node at (4, 1.25) {$\scriptstyle z$};
    \node at (4, 0.65) {$\scriptstyle g_{2}-g_{3}+\dots$};
    \node at (2, 0.65) {$\scriptstyle -g_{1}-g_{2}-g_{3}$};
    \node at (3, 1.25) {$\scriptstyle u'$};
    \node at (3, 0.65) {$\scriptstyle g_{1}+g_{2}$};
    \node (A1) at (-1, 1) {$\bullet$};
    \node (A1_0) at (0, 1) {$\dots$};
    \node (A1_1) at (1, 1) {$\bullet$};
    \node (A1_2) at (2, 1) {$\bullet$};
    \node (A1_3) at (3, 1) {$\bullet$};
    \node (A1_4) at (4, 1) {$\bullet$};
    \path (A1_1) edge [-] node [auto] {$\scriptstyle{}$} (A1_2);
    \path (A1) edge [-] node [auto] {$\scriptstyle{}$} (A1_0);
    \path (A1_0) edge [-] node [auto] {$\scriptstyle{}$} (A1_1);
    \draw (A1_2) to[bend right] (A1_3);
    \draw (A1_2) to[bend left] (A1_3);
    \path (A1_3) edge [-] node [auto] {$\scriptstyle{}$} (A1_4);
  \end{tikzpicture}
  \]
Since $g_{1}$ hits precisely $u'$ and $a$ we conclude that $|a|>2$. Analogously, since $g_{3}$ hits $w$ and $z$, and $|z|>2$ (since $z\cdot w=0$), we have that $|w|=2$. The string complementary to $(2,n)$, $n\geq 3$, is $(3,2,\dots,2)$ whit $n-2$ entries of value 2. We conclude that the only candidates are the graphs $\Psi'$ in the family
		\[
  \begin{tikzpicture}[xscale=1.75,yscale=1.5]
    \node at (2, 1.25) {$\scriptstyle v'$};
    \node at (1, 1.25) {$\scriptstyle w$};
    \node at (-2, 1.25) {$\scriptstyle a$};
    \node at (-3, 1) {$B_{n}:$};
    \node at (-0.5, 1.25) {$\scriptstyle 2$};
    \node at (-2, 0.65) {$\scriptstyle g_{1}-g_{2}+f_{n-2}$};
    \node at (-0.5, 0.65) {$\scriptstyle f_{n-3}+f_{n-2}$};
    \node at (1, 0.65) {$\scriptstyle -g_{3}+f_{1}$};
    \node at (4, 1.25) {$\scriptstyle z$};
    \node at (4, 0.65) {$\scriptstyle g_{2}-g_{3}\pm\sum f_{i}$};
    \node at (2, 0.65) {$\scriptstyle -g_{1}-g_{2}-g_{3}$};
    \node at (3, 1.25) {$\scriptstyle u'$};
    \node at (3, 0.65) {$\scriptstyle g_{1}+g_{2}$};
    \node (A1) at (-2, 1) {$\bullet$};
    \node (A11) at (-0.5, 1) {$\bullet$};
    \node (A1_0) at (0.25, 1) {$\dots$};
    \node (A1_1) at (1, 1) {$\bullet$};
    \node (A1_2) at (2, 1) {$\bullet$};
    \node (A1_3) at (3, 1) {$\bullet$};
    \node (A1_4) at (4, 1) {$\bullet$};
    \path (A1_1) edge [-] node [auto] {$\scriptstyle{}$} (A1_2);
    \path (A1) edge [-] node [auto] {$\scriptstyle{}$} (A11);
    \path (A11) edge [-] node [auto] {$\scriptstyle{}$} (A1_0);
    \path (A1_0) edge [-] node [auto] {$\scriptstyle{}$} (A1_1);
    \draw (A1_2) to[bend right] (A1_3);
    \draw (A1_2) to[bend left] (A1_3);
    \path (A1_3) edge [-] node [auto] {$\scriptstyle{}$} (A1_4);
  \end{tikzpicture}
  \]
with the appropriate sign adjustments in the embedding of the 2-chain. From here we deduce, just as in the preceding case, that the corresponding $\Gamma_{1}$ graphs are of the form
\[
  \begin{tikzpicture}[xscale=1.25,yscale=-0.5]
    \node (A3_0) at (3.75,5.75) {$\scriptstyle v$};
	\node (A1_3) at (3.75,5) {$\bullet$};
	\node at (3.75,4.5) {$\scriptstyle 5$};
	\node (A1_31) at (4.75,5) {$\bullet$};
	\node (N) at (5.75,5) {$\dots$};
	\node (O) at (6.75,5) {$\bullet$};
	\node at (6.75,4.5) {$\scriptstyle 2$};
	\node (P) at (7.75,5) {$\bullet$};
	\node at (7.75,4.5) {$\scriptstyle 3$};
	\node at (4.75,4.5) {$\scriptstyle2$};
	\node (A1) at (3.75,1) {$\bullet$};
    \node (A2_3) at (3.5, 3) {$\scriptstyle e_{1}+e_{2}$};
    \node (A3_3) at (3, 3) {$\bullet$};
    \node (A4_4) at (2, 2) {$\scriptstyle e_{2}+e_{3}$};
    \node (A5_4) at (2, 3) {$\bullet$};
    \node (F) at (4.75, 1) {$\bullet$};
    \node (J) at (5.75, 1) {$\bullet$};
    \node (K) at (6.75, 1) {$\bullet$};
    \node (L) at (7.75, 1) {$\bullet$};
    \node (M) at (8.75, 1) {$\bullet$};
    \node (A3) at (3.75,0) {$\scriptstyle 2$};
    \node at (4.75, 0) {$\scriptstyle 2$};
    \node at (5.75, 0) {$\scriptstyle 2$};
    \node at (6.75, 0) {$\scriptstyle n+1$};
    \node at (7.75, 0) {$\scriptstyle 2$};
    \node at (8.75, 0) {$\scriptstyle 7=k+1$};
    \node at (3.75, 1.5) {$\scriptstyle u$};
    \path (A3_3) edge [-] node [auto] {$\scriptstyle{}$} (A1_3);
    \path (A3_3) edge [-] node [auto] {$\scriptstyle{}$} (A1);
    \path (A3_3) edge [-] node [auto] {$\scriptstyle{}$} (A5_4);
    \path (A1_3) edge [-] node [auto] {$\scriptstyle{}$} (A1_31);
    \path (A1) edge [-] node [auto] {$\scriptstyle{}$} (F);
    \path (F) edge [-] node [auto] {$\scriptstyle{}$} (J);
    \path (J) edge [-] node [auto] {$\scriptstyle{}$} (K);
    \path (K) edge [-] node [auto] {$\scriptstyle{}$} (L);
    \path (L) edge [-] node [auto] {$\scriptstyle{}$} (M);
    \path (A1_31) edge [-] node [auto] {$\scriptstyle{}$} (N);
    \path (N) edge [-] node [auto] {$\scriptstyle{}$} (O);
    \path (O) edge [-] node [auto] {$\scriptstyle{}$} (P);  \draw[decorate,decoration={brace,amplitude=7pt,mirror},xshift=0.4pt,yshift=-0.3pt](4.75,5.5) -- (6.75,5.5) node[black,midway,yshift=-0.5cm] {\footnotesize $n-2$};
   \end{tikzpicture}
  \]
with $n\geq 3$. Again it is immediate to check that none of these admits an embedding into a standard lattice of the same rank with a basis vector hitting only once a weight 2 vertex in the $u$ leg.

\end{itemize}

\item $|v'|=1$ is, by symmetry of the problem, analogous to $|u'|=1$ and $u'$ not final. So, we only need to discuss the case with $u'$ final; that is, graphs $\Psi'$ of the form
		\[
  \begin{tikzpicture}[xscale=2,yscale=1.5]
    \node at (2, 1.25) {$\scriptstyle v'$};
    \node at (2, 0.65) {$\scriptstyle e$};
    \node at (1, 0.65) {$\scriptstyle e+\dots$};
    \node at (1, 1.25) {$\scriptstyle w$}; 
    \node at (3, 1.25) {$\scriptstyle u'$};
    \node at (3, 0.65) {$\scriptstyle -2e+\dots$};
    \node (B) at (0, 1) {$\dots$};
    \node (A1_1) at (1, 1) {$\bullet$};
    \node (A1_2) at (2, 1) {$\bullet$};
    \node (A1_3) at (3, 1) {$\bullet$};    \path (B) edge [-] node [auto] {$\scriptstyle{}$} (A1_1);
    \path (A1_1) edge [-] node [auto] {$\scriptstyle{}$} (A1_2);
    \draw (A1_2) to[bend right] (A1_3);
    \draw (A1_2) to[bend left] (A1_3);
  \end{tikzpicture}
  \]
Since $w\cdot u'=0$, we have $|u'|\geq 5$. If $|u'|= 5$ then $u'=-2e+f$ and $w=e+2f+\dots$. The standard set $L$ obtained by deleting from $\Psi'$ both the vertex $u'$ and the vector $e$ has $I(L)=-3$ or $-4$. By Lemma~\ref{badcomponents2} the only actual possibility is $I(L)=-3$, however this is incompatible with having the vector $f$ hitting with coefficient 2 the vertex $w$ \cite[Proposition~3.3 and Corollary~3.5]{Lisca-ribbon}. If $|u'|>5$ and $|w|>2$ then the standard set obtained as explained before has $I\leq -4$ which contradicts Lemma~\ref{badcomponents2}. Finally, if $w$ is the first vertex in a 2-chain of length $\ell$, then by orthogonality $|u'|\geq 4(\ell+1)$ and in $\Psi'$ there is some other vertex linked to the 2-chain (if not $I(\Psi')\geq 4\ell+4-3-2-\ell=3\ell-1>-2$). We can then proceed to delete $u'$ and $e$ from $\Psi'$ and to blow down the 2-chain until we obtain a standard subset $L$. However, this contradicts Lemma~\ref{badcomponents2} since in this case we have $I(L)\leq -2-4\ell-1+\ell+1<-3$.

\item $|v'|=2$ is the last case left to discuss. If $|u'|\geq 5$ we obtain as before a contradiction to the fact that $\Psi$ has vanishing determinant. The cases with  $|u'|=1,2$ have already been analyzed. The cases $|u'|=3,4$ and $u'$ is not final, can be ruled out with similar arguments to the ones used for $|u'|=2$ and  $|v'|=3,4$. 
	\begin{itemize}
		\item If $|v'|=2$ and $|u'|=4$, then the set $L=\Psi'\setminus{u'}$ is good with one bad component ending in $v'$, $k=6$ and $I(L)=-3$. From Lemma~\ref{I=-2,b=1} we know that $\Psi'$ has the form
		\[
  \begin{tikzpicture}[xscale=1.5,yscale=1.5]
    \node at (2, 1.25) {$\scriptstyle v'$};
    \node at (4, 1.25) {$\scriptstyle w$};
    \node at (2, 0.65) {$\scriptstyle 2$};
    \node at (3, 1.25) {$\scriptstyle u'$};
    \node at (3, 0.65) {$\scriptstyle 4$};
    \node at (4, 0.65) {$\scriptstyle 2$};
    \node at (6, 0.65) {$\scriptstyle 2$};
    \node (B) at (0, 1) {$\dots$};
    \node (A1_1) at (1, 1) {$\bullet$};
    \node (A1_2) at (2, 1) {$\bullet$};
    \node (A1_3) at (3, 1) {$\bullet$};
    \node (A1_4) at (4, 1) {$\bullet$};
    \node (A1_5) at (5, 1) {$\dots$};
    \node (A1_6) at (6, 1) {$\bullet$};
    \path (A1_4) edge [-] node [auto] {$\scriptstyle{}$} (A1_5);
    \path (A1_5) edge [-] node [auto] {$\scriptstyle{}$} (A1_6);
    \path (B) edge [-] node [auto] {$\scriptstyle{}$} (A1_1);
    \path (A1_1) edge [-] node [auto] {$\scriptstyle{}$} (A1_2);
    \draw (A1_2) to[bend right] (A1_3);
    \draw (A1_2) to[bend left] (A1_3);
\draw[decorate,decoration={brace,amplitude=7pt,mirror},xshift=0.4pt,yshift=-0.3pt](3.85,0.55) -- (6.15,0.55) node[black,midway,yshift=-0.5cm] {\footnotesize $t-1$};    \draw[decorate,decoration={brace,amplitude=7pt,mirror},xshift=0.4pt,yshift=-0.3pt](-0.75,0.55) -- (2.15,0.55) node[black,midway,yshift=-0.5cm] {\footnotesize expanded from $(2,t+1,2)$};
    \path (A1_3) edge [-] node [auto] {$\scriptstyle{}$} (A1_4);
  \end{tikzpicture}
  \]
and it is easy to check that for no value of $t$ there is an embedding of this graph into the standard lattice of rank one less than the number of vertices in $\Psi'$ (beware that there is such an embedding if $|u'|=3$ and $t=2$).
	\item If $|v'|=2$ and $|u'|=3$ and $b(\Psi'\setminus u')=1$, the argument is verbatim the one explained before with the roles of $v'$ and $u'$ switched. If there are no bad components in $L$, then the analysis performed with the roles of $v'$ and $u'$ exchanged goes through, except the final step, in which the values of the $v$ leg in $\Gamma_{1}$ come from the string that was before associated to $u'$. That is, from the graph $C$ we obtain 
	
\[
  \begin{tikzpicture}[xscale=1.5,yscale=-0.5]
    \node (A3_0) at (3.75,5.75) {$\scriptstyle v$};
	\node (A1_3) at (3.75,5) {$\bullet$};
	\node at (3.75,4.5) {$\scriptstyle 4$};
	\node (A1_31) at (4.75,5) {$\bullet$};
	\node at (4.75,4.5) {$\scriptstyle2$};
	\node (A3) at (3.75,0) {$\scriptstyle 2$};
	\node (A1) at (3.75,1) {$\bullet$};
    \node (A2_3) at (3.5, 3) {$\scriptstyle e_{1}+e_{2}$};
    \node (A3_3) at (3, 3) {$\bullet$};
    \node (A4_4) at (2, 2) {$\scriptstyle e_{2}+e_{3}$};
    \node (A5_4) at (2, 3) {$\bullet$};
    \node (F) at (4.75, 1) {$\bullet$};
    \node (J) at (5.75, 1) {$\bullet$};
    \node (K) at (6.75, 1) {$\bullet$};
    \node at (4.75, 0) {$\scriptstyle 2$};
    \node at (5.75, 0) {$\scriptstyle 3$};
    \node at (6.75, 0) {$\scriptstyle 7=k+1$};
    \node at (3.75, 1.5) {$\scriptstyle u$};
    \path (A3_3) edge [-] node [auto] {$\scriptstyle{}$} (A1_3);
    \path (A3_3) edge [-] node [auto] {$\scriptstyle{}$} (A1);
    \path (A3_3) edge [-] node [auto] {$\scriptstyle{}$} (A5_4);
    \path (A1_3) edge [-] node [auto] {$\scriptstyle{}$} (A1_31);
    \path (A1) edge [-] node [auto] {$\scriptstyle{}$} (F);
    \path (F) edge [-] node [auto] {$\scriptstyle{}$} (J);
    \path (J) edge [-] node [auto] {$\scriptstyle{}$} (K);
   \end{tikzpicture}
  \]
and respectively the family $B_{n}$ yields in this case
\[
  \begin{tikzpicture}[xscale=1.25,yscale=-0.5]
    \node (A3_0) at (3.75,5.75) {$\scriptstyle v$};
	\node (A1_3) at (3.75,5) {$\bullet$};
	\node at (3.75,4.5) {$\scriptstyle 4$};
	\node (A1_31) at (4.75,5) {$\bullet$};
	\node at (4.75,4.5) {$\scriptstyle n+1$};
	\node (A1) at (3.75,1) {$\bullet$};
    \node (A2_3) at (3.5, 3) {$\scriptstyle e_{1}+e_{2}$};
    \node (A3_3) at (3, 3) {$\bullet$};
    \node (A4_4) at (2, 2) {$\scriptstyle e_{2}+e_{3}$};
    \node (A5_4) at (2, 3) {$\bullet$};
    \node (F) at (4.75, 1) {$\bullet$};
    \node (J) at (5.75, 1) {$\bullet$};
    \node (K) at (6.75, 1) {$\bullet$};
    \node (L) at (7.75, 1) {$\dots$};
    \node (M) at (8.75, 1) {$\bullet$};
    \node (Q) at (9.75, 1) {$\bullet$};
    \node (A3) at (3.75,0) {$\scriptstyle 2$};
    \node at (4.75, 0) {$\scriptstyle 2$};
    \node at (5.75, 0) {$\scriptstyle 3$};
    \node at (6.75, 0) {$\scriptstyle 2$};
    \node at (8.75, 0) {$\scriptstyle 2$};
    \node at (9.75, 0) {$\scriptstyle 7=k+1$};
    \node at (3.75, 1.5) {$\scriptstyle u$};
    \path (A3_3) edge [-] node [auto] {$\scriptstyle{}$} (A1_3);
    \path (A3_3) edge [-] node [auto] {$\scriptstyle{}$} (A1);
    \path (A3_3) edge [-] node [auto] {$\scriptstyle{}$} (A5_4);
    \path (A1_3) edge [-] node [auto] {$\scriptstyle{}$} (A1_31);
    \path (A1) edge [-] node [auto] {$\scriptstyle{}$} (F);
    \path (F) edge [-] node [auto] {$\scriptstyle{}$} (J);
    \path (J) edge [-] node [auto] {$\scriptstyle{}$} (K);
    \path (K) edge [-] node [auto] {$\scriptstyle{}$} (L);
    \path (L) edge [-] node [auto] {$\scriptstyle{}$} (M);
    \path (M) edge [-] node [auto] {$\scriptstyle{}$} (Q);  \draw[decorate,decoration={brace,amplitude=7pt,mirror},xshift=0.4pt,yshift=-0.3pt](6.7,1.25) -- (8.8,1.25) node[black,midway,yshift=-0.5cm] {\footnotesize $n-1$};
   \end{tikzpicture}
  \]
(Notice that the vertex with weight $n+1$ cannot coincide with the vertex with weight $k+1$. This violates the condition on the length of the strings associated with the $u$ and $v$ legs when regarded without the vertex of weight $k+1$.)
These $\Gamma_{1}$ type graphs do not admit an embedding in the standard lattice of the same rank with a basis vector hitting only once a weight 2 vertex on the $u$ leg, so there are no examples to consider.
\end{itemize}

Finally, if $|u'|=3$ or $4$ and $u'$ is final, then we are looking at graphs $\Psi'$ of the following type
		\begin{equation}\label{forlater}
  \begin{tikzpicture}[xscale=2,yscale=1.5,baseline=(current  bounding  box.center)]
    \node at (2, 1.25) {$\scriptstyle v'$};
    \node at (2, 0.65) {$\scriptstyle f_{1}+f_{2}$};
    \node at (3, 1.25) {$\scriptstyle u'$};
    \node at (3, 0.65) {$\scriptstyle 3\mathrm{\ or\ } 4$};
    \node (B) at (0, 1) {$\dots$};
    \node (A1_1) at (1, 1) {$\bullet$};
    \node (A1_2) at (2, 1) {$\bullet$};
    \node (A1_3) at (3, 1) {$\bullet$};    \path (B) edge [-] node [auto] {$\scriptstyle{}$} (A1_1);
    \path (A1_1) edge [-] node [auto] {$\scriptstyle{}$} (A1_2);
    \draw (A1_2) to[bend right] (A1_3);
    \draw (A1_2) to[bend left] (A1_3);
  \end{tikzpicture}
  \end{equation}
Since they have vanishing determinant, we know from the proof of Lemma~\ref{standardsubset} that 
$$
\frac{(v'\cdot u')^{2}}{u'\cdot u'}=[2,\dots]^{-}.
$$
For $|u'|=4$ this yields no solutions. So we are left with the case $|u'|=3$ and $[2,\dots]^{-}=[2,2,2]^{-}$ which corresponds to a $v$-leg in $\Gamma_{1}$ with associated string $(4,2,2)$. This in turn implies that $k=5$, since $I(\Psi')=-3$, and that the quasi-complementary $u$-leg in $\Gamma_{1}$ has associated string $(2,2,4,6=k+1)$. Collecting all these findings we learn that if there were any examples in family $\Gamma_{1}$ under these assumptions they necessarily are of the following form, where the 2-leg is drawn to the left due to space constraints.
\[
  \begin{tikzpicture}[xscale=1.5,yscale=-0.5]
    \node (A3_0) at (3.75,5.75) {$\scriptstyle v$};
	\node (A1_3) at (3.75,5) {$\bullet$};
	\node at (3.75,4.5) {$\scriptstyle 4$};
	\node (A1_31) at (4.75,5) {$\bullet$};
	\node at (4.75,4.5) {$\scriptstyle2$};
	\node (A) at (5.75,5) {$\bullet$};
	\node at (5.75,4.5) {$\scriptstyle 2$};
	\node (A3) at (3.75,0) {$\scriptstyle 2$};
	\node (A1) at (3.75,1) {$\bullet$};
    \node (A2_3) at (3.5, 3) {$\scriptstyle e_{1}+e_{2}$};
    \node (A3_3) at (3, 3) {$\bullet$};
    \node (A4_4) at (2, 2) {$\scriptstyle e_{2}+e_{3}$};
    \node (A5_4) at (2, 3) {$\bullet$};
    \node (F) at (4.75, 1) {$\bullet$};
    \node (J) at (5.75, 1) {$\bullet$};
    \node (K) at (6.75, 1) {$\bullet$};
    \node at (4.75, 0) {$\scriptstyle 2$};
    \node at (5.75, 0) {$\scriptstyle 4$};
    \node at (6.75, 0) {$\scriptstyle 6=k+1$};
    \node at (3.75, 1.5) {$\scriptstyle u$};
    \path (A3_3) edge [-] node [auto] {$\scriptstyle{}$} (A1_3);
    \path (A3_3) edge [-] node [auto] {$\scriptstyle{}$} (A1);
    \path (A3_3) edge [-] node [auto] {$\scriptstyle{}$} (A5_4);
    \path (A1_3) edge [-] node [auto] {$\scriptstyle{}$} (A1_31);
    \path (A1_31) edge [-] node [auto] {$\scriptstyle{}$} (A);
    \path (A1) edge [-] node [auto] {$\scriptstyle{}$} (F);
    \path (F) edge [-] node [auto] {$\scriptstyle{}$} (J);
    \path (J) edge [-] node [auto] {$\scriptstyle{}$} (K);
   \end{tikzpicture}
  \]
It is not difficult to check that this graph admits no embedding into $\Z^{9}$, let alone one with a basis vector appearing only once.
\end{enumerate}
\end{proof}

Once that we have determined that in the embeddings of the graphs $\Gamma$ no basis vector appears only once, and before we continue with our analysis of the vectors appearing only twice in the embedding, we pause to prove the following lemma, which is a technical step that shows up in many of the forthcoming proofs. It is a non-vanishing determinant statement for a variation of the graphs $\Gamma$. The essential features we need from $\Gamma$ are the $2$-leg, the central vertex of weight 2 and the vertices $u$ and $v$ with their embeddings as in illustrated  Figure~\ref{f:convenzioni1}. We also admit the possibility  that the vertex $u$ is final in its leg with weight 1 and embedding $u=e_{1}$. Call any graph with these features $\Psi$.

\begin{lemma}\label{l:3final1}
If $\Psi$ admits an embedding into a standard lattice of any rank and all its vertices have weight at least 2 except possibly one of its final vertices which do not belong to the 2-leg, then $\Psi$ has non-vanishing determinant.
\end{lemma}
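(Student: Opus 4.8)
The plan is to reduce $\Psi$, by a sequence of blow-downs, to a graph whose determinant is already known to be non-zero. Recall that a blow-down is compatible with the ambient embedding (Operation~\ref{op}(\ref{bd})) and, being the move R0 of Neumann's plumbing calculus, preserves the boundary $3$-manifold and hence the absolute value of the determinant. The two target configurations are: a $3$-legged star with central weight $2$ and all other weights at least $2$, which has non-vanishing determinant by Lemma~\ref{l:det} as soon as it is known to embed; and a connected linear graph with all weights at least $2$ (non-zero by Remark~\ref{rem}(\ref{dd})), or a single vertex, whose determinant is its weight.

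If every vertex of $\Psi$ has weight at least $2$ then $\Psi$ is already of the first type and Lemma~\ref{l:det} applies. Otherwise there is exactly one vertex $w$ of weight $1$, and by hypothesis it is a leaf off the $2$-leg: either $w=u$ (so the $u$-leg is the single vertex $u=e_1$), or $w$ is the final vertex of the leg through $u$ or of the leg through $v$. I would then blow down $w$ and keep blowing down any weight-$1$ vertex of valency $\le 2$ until none remains, obtaining a graph $\Psi_0$ with $|\det\Psi_0|=|\det\Psi|$ that still embeds into a standard lattice; the claim is that $\Psi_0$ is always of one of the two target types, which finishes the proof.

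To prove the claim I would follow the blow-down cascade. Blowing down a weight-$1$ leaf on a leg turns the neighbouring weight-$2$ vertex into a weight-$1$ leaf, so the cascade runs along that leg until it reaches a vertex of weight $\ge 3$. If it halts inside the $u$- or $v$-leg, or if it exhausts the $v$-leg (in which case $v$, of weight $N+1+|v'|\ge N+2$, drops by exactly $1$ and the $v$-leg becomes the single vertex $v-1$ of weight $\ge N+1\ge 2$), the central vertex keeps valency $3$ and weight $2$, so $\Psi_0$ is a $3$-legged star of the first type. The remaining possibility is that the cascade reaches $u$ — this occurs when $w=u$, or when $u$ has weight $2$ and the leg past $u$ is a string of $2$'s — in which case $u$ is blown down to weight $1$ and removed, dropping the centre to weight $1$ and valency $2$; blowing the centre down then produces a weight-$1$ vertex at the start of the $2$-leg, whose removal cascades through the entire $2$-leg and decreases $v$ by exactly $N+1$, so that $\Psi_0$ becomes the linear graph obtained from the $v$-leg by replacing $v$ with $v-N-1=|v'|\ge 1$ and greedily blowing down a possible trailing weight-$1$ vertex — landing in the second type.

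The delicate point, and the one I expect to be the main obstacle, is checking that this greedy procedure never gets stuck, i.e.\ never produces a weight-$1$ vertex of valency $3$. Since the only vertex that ever has valency $3$ is the centre, it suffices to check that the central vertex can lose weight only after losing valency: the first vertex of the $2$-leg stays at weight $2$ until the centre itself is reached, the vertex $v$ (and its neighbour on the $v$-leg) never becomes a blow-down target because $v$ never drops below weight $2$, and when the $u$-leg cascade reaches $u$ the vertex $u$ has already become a leaf. With these local facts the three-way classification of $\Psi_0$ is immediate, and the lemma follows from $|\det\Psi|=|\det\Psi_0|\ne 0$.
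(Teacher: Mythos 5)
Your proof is correct and follows the same strategy as the paper: run the cascade of embedding-compatible blow-downs and invoke Lemma~\ref{l:det} whenever the cascade halts while the trivalent vertex still has weight $2$ and valency $3$ (which covers the case of a weight-$1$ vertex on the $v$-leg, and the case where the $u$-leg cascade stops before reaching $u$). The one place you diverge is the terminal case in which the cascade consumes the whole $u$-leg: the paper stops at the linear configuration where the former central vertex, now of weight $1$, sits between the $v$-leg and the $2$-leg, and rules out vanishing determinant by the complementary-fraction criterion (vanishing would force $\frac{N}{N+1}$ plus the reciprocal of the continued fraction of the $v$-side to equal $1$, which fails because $|v|\ge N+2$ makes that reciprocal strictly smaller than $\frac{1}{N+1}$), whereas you keep blowing down through the centre and the entire $2$-leg, cutting $v$ down to weight $|v'|\ge 1$ and finishing by diagonal dominance on the residual $v$-leg or by inspecting a single vertex. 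Both endgames are valid; yours trades the continued-fraction computation for extra bookkeeping, which you do carry out correctly -- the centre reaches weight $1$ only simultaneously with losing valency, each deleted basis vector can live only on the blown-down vertex and its neighbours because that vertex is a single $\pm e_j$, and the final one-directional cascade along the $v$-leg can never drive a weight below $1$.
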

\begin{proof}
If all vertices in $\Psi$ have weight at least 2, then we are in a particular case of Lemma~\ref{l:det} and we conclude that the determinant of $\Psi$ is non zero.

If a final vertex $x$ has weight 1 we can proceed to blow it down in a compatible way with the embedding (see Operation~\ref{op}(\ref{bd})). If we obtain a graph with no vertex of weight 1, then the determinant is non-vanishing by Lemma~\ref{l:det}. If we do obtain a graph with weight 1, we iterate the procedure. If the vertex $x$ were in the same qc-leg as the vertex $v$ (following the notation of Figure~\ref{f:convenzioni1}), then this blowing down process necessarily ends at the latest with the vertex $v$ and the determinant is non-vanishing by Lemma~\ref{l:det}. If $x$ had been in the qc-leg with the vertex $u$, then the determinant will be non-vanishing (again, by Lemma~\ref{l:det}) unless possibly if all the vertices in this qc-leg between the trivalent one and $x$ have weight precisely 2. After  blowing down we arrive to the following configuration with embedding:
\[
  \begin{tikzpicture}[xscale=1.7,yscale=-0.5]
    \node (A0_2) at (2, 0) {$\scriptstyle\sum_{i=2}^{N+2}(-1)^{i}e_{i}+v'$};
    \node at (2, 1.5) {$v$};
    \node (A0_3) at (3, 0) {$1$};
    \node (A0_4) at (4, 0) {$\scriptstyle e_{2}+e_{3}$};
    \node (A0_6) at (6, 0) {$\scriptstyle e_{N+1}+e_{N+2}$};
    \node (A1_0) at (0, 1) {$\bullet$};
    \node (A1_1) at (1, 1) {$\dots$};
    \node (A1_2) at (2, 1) {$\bullet$};
    \node (A1_3) at (3, 1) {$\bullet$};
    \node (A1_4) at (4, 1) {$\bullet$};
    \node (A1_5) at (5, 1) {$\dots$};
    \node (A1_6) at (6, 1) {$\bullet$};  
\draw[decorate,decoration={brace,amplitude=7pt,mirror},xshift=0.4pt,yshift=-0.3pt](3.9,1.3) -- (6.1,1.3) node[black,midway,yshift=-0.5cm] {\footnotesize $N$};
    \path (A1_4) edge [-] node [auto] {$\scriptstyle{}$} (A1_5);
    \path (A1_0) edge [-] node [auto] {$\scriptstyle{}$} (A1_1);
    \path (A1_1) edge [-] node [auto] {$\scriptstyle{}$} (A1_2);
    \path (A1_5) edge [-] node [auto] {$\scriptstyle{}$} (A1_6);
    \path (A1_2) edge [-] node [auto] {$\scriptstyle{}$} (A1_3);
    \path (A1_3) edge [-] node [auto] {$\scriptstyle{}$} (A1_4);
  \end{tikzpicture}
  \]
However, since $|v'| \geq 1$ by Lemma~\ref{l:firststep}, the determinant of the above configuration is non-vanishing. Indeed, the determinant will vanish if and only if the sum of the inverses of the continued fractions associated to the 2-leg, $\frac{N}{N+1}$, and the one associated to the string starting with $v$ add up to 1. However, since $|v'|\geq 1$, this second fraction is strictly smaller than $\frac{1}{N+1}$. 
\end{proof}
\end{subsection} 
 
\begin{subsection}{Vectors appearing twice in the embedding} 
We move on to study the basis vectors which appear exactly twice in the embedding of $\Gamma$ with coefficients $\pm1$.  
They are, together with the vectors which appear exactly once with coefficient $\pm1$, the only type of vectors in the embedding that contribute negatively to $I(\Gamma)$. The following lemmas are a careful study of different relevant properties that these vectors satisfy in the embedding of $\Gamma$. We start looking at a couple of such vectors and we establish that they either hit 4 different vertices in $\Gamma$ or they hit 3 in a particular configuration. 

\begin{lemma}\label{l:disjoint}
Suppose that $\Gamma$ embeds in a standard lattice of the same rank and that there are two basis vectors $f_{i}$ and $f_{j}$ which appear only twice with coefficient $\pm 1$ in the embedding. Then,
\begin{enumerate}
\item $f_{i}$ and $f_{j}$ do not both hit the same pair of vertices in $\Gamma$.
\item If there is a vertex $t$ in $\Gamma$ hit by both $f_{i}$ and $f_{j}$, then $f_{i}$ or $f_{j}$ hit an internal vertex $x$ (necessarily different from $t$) of weight 2. 
\end{enumerate}
\end{lemma}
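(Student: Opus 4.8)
\textbf{Proof proposal for Lemma~\ref{l:disjoint}.}

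The plan is to argue both parts by producing, in each forbidden configuration, a subgraph of $\Gamma$ with $n$ vertices admitting an embedding into a standard lattice of rank at most $n-1$, forcing a vanishing determinant, and then deriving a contradiction with one of the non-vanishing determinant statements (Lemma~\ref{l:det}, Lemma~\ref{l:3final1}, or Fact~\ref{rem}) together with the fact — just established in Lemma~\ref{l:no1} — that no basis vector appears only once in the embedding. Throughout I work with the conventions of Figure~\ref{f:convenzioni1}.

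For part (1), suppose $f_i$ and $f_j$ both hit exactly the vertices $t$ and $t'$, each with coefficient $\pm1$. After possibly changing sign of $f_j$, the vectors $f_i \pm f_j$ and $f_i \mp f_j$ reveal that the span of $\{f_i,f_j\}$ inside $\langle t,t'\rangle$ is degenerate in a way that lets me delete one basis vector from the embedding while keeping the graph unchanged as an abstract weighted graph: more precisely, replacing the pair $(f_i,f_j)$ by a single basis vector via the substitution $e\mapsto f_i+f_j$ (on $t$) and noting that $f_i-f_j$ then hits no vertex at all, I obtain a vector not in the image, hence an embedding of $\Z\Gamma$ into a standard lattice of rank $n-1$. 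Since $\Gamma$ is one of the three-legged graphs with central weight $2$, all other weights $\ge 2$, Lemma~\ref{l:det} says $\Gamma$ has non-vanishing determinant, but a graph on $n$ vertices embedding into rank $n-1$ has vanishing determinant by Fact~\ref{rem}(\ref{small}) — contradiction. (If $t$ or $t'$ happens to be the weight-$1$ final vertex $u$, I instead use Lemma~\ref{l:3final1}, whose hypotheses allow exactly one weight-$1$ final vertex off the $2$-leg.)

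For part (2), assume $f_i$ and $f_j$ both hit a common vertex $t$, and that neither of them hits an internal weight-$2$ vertex; I must derive a contradiction. Write $f_i$ hitting $\{t, x\}$ and $f_j$ hitting $\{t, y\}$ with $x\ne y$ (using part (1)), where by assumption each of $x$ and $y$ is either final or has weight $\ge 3$. Form the vector $w = f_i - f_j$ (after sign normalization so that the $t$-coefficients cancel); then $w$ hits only $x$ and $y$, i.e. $x$ and $y$ are linked via a new vector, and I can run a ``diagonal change of basis'' replacing $f_i$ by $w$: this produces an embedding of a graph $\Gamma'$ obtained from $\Gamma$ by adding the edge between $x$ and $y$ (or merging, if this creates a loop) while \emph{decreasing} the number of distinct basis vectors used — again contradicting the rank constraint via a vanishing-determinant argument. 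The one delicate point is bookkeeping when $x$ or $y$ coincides with one of the distinguished vertices $v$, $u$, or the ends of the $2$-leg; in those cases I use the explicit form of the embedding from Figure~\ref{f:convenzioni1} to check that the resulting graph still falls under Lemma~\ref{l:det} or Lemma~\ref{l:3final1}.

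The main obstacle I anticipate is precisely this case analysis at the boundary: $f_i$ or $f_j$ might hit the weight-$2$ vertices $e_2+e_3,\dots,e_{N+1}+e_{N+2}$ on the $2$-leg, and those are internal but the statement explicitly allows — wait, re-reading, the internal weight-$2$ vertex whose existence we want to exhibit \emph{may well be} a $2$-leg vertex, so part (2) is really asserting that a common hit forces some $f$ to reach into a $2$-chain. Thus the crux is to rule out the scenario where the two shared-vertex vectors $f_i,f_j$ stay confined to the qc-legs hitting only final or high-weight vertices; here I expect to need the structure of complementary strings (Facts~\ref{f}) and the uniqueness of their embeddings, combined with the determinant lemmas, to close the argument, and this is where the bulk of the technical work will lie.
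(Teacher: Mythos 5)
Both halves of your argument rest on ``changes of basis'' that are not actually available inside a standard lattice, and in both cases the step that would close the proof is missing. In part (1), your substitution $e\mapsto f_i+f_j$ only makes sense when the two vertices $t$ and $t'$ carry \emph{proportional} sign patterns in $f_i,f_j$ (i.e.\ $t=f_i+f_j+\dots$ and $t'=\pm(f_i+f_j)+\dots$); in that case the image of $\Z\Gamma$ does lie in a rank-$(n-1)$ subgroup and you get a contradiction with injectivity (though note that $\Z(f_i+f_j)\oplus\bigoplus_{k\neq i,j}\Z e_k$ is \emph{not} a standard lattice, so Lemma~\ref{l:det} and Fact~\ref{rem} about vanishing determinants are not the right tools here --- the contradiction is purely rank-theoretic). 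But flipping the sign of $f_j$ acts on both vertices simultaneously, so you cannot reduce the mixed case $t=f_i+f_j+t'$, $t'=f_i-f_j+\dots$ to the proportional one, and there $f_i-f_j$ \emph{does} pair nontrivially with a vertex, so no rank drop occurs. The paper instead deletes \emph{both} basis vectors \emph{and one of the two vertices}, landing $n-1$ vertices in rank $n-2$, and then contradicts the non-vanishing determinant statements; this forces a further case analysis on $|t'|$ (the cases $|t'|=0$ and $|t'|=|w'|=1$ need separate treatment, the latter using the uniqueness of embeddings of complementary strings and Lemma~\ref{extravector}). None of this appears in your write-up.

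In part (2) the proposed move is not legitimate: replacing $f_i$ by $w=f_i-f_j$ is not an isometry of $(\Z^n,\mathrm{Id})$ since $|f_i-f_j|^2=2$, so it does not produce an embedding of any modified graph into a standard lattice, and ``adding an edge between $x$ and $y$'' has no meaning --- their pairing is already fixed by the embedding. More importantly, you never actually derive the conclusion of the statement (that one of the two vectors must hit an \emph{internal weight-$2$} vertex); your last paragraph concedes this. The correct mechanism is again a deletion count: remove $f_i$, $f_j$ and the common vertex $t$ from the picture. If both of the remaining hit vertices $x$ and $y$ had weight $>2$ (or were final of weight $2$), the resulting graph would have all weights $\ge 2$ away from at most one final vertex, hence non-vanishing determinant by Lemma~\ref{l:det}, Lemma~\ref{l:3final1} and Fact~\ref{rem}, while it embeds into a lattice of rank one less than its number of vertices --- a contradiction. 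The only escape is that $x$ or $y$ is an internal vertex of weight $2$, which is exactly the claim. You would need to rebuild your argument around this deletion scheme for the proof to go through.
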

\begin{proof}
We will start with the first item in the statement arguing by contradiction. Suppose we had two vertices in $\Gamma$ with embeddings $t=f_{i}+f_{j}+t'$ and  $w=\pm f_{i}\pm f_{j}+w'$.
Since we are assuming $\Gamma$ embeds we follow the conventions of Figure~\ref{f:convenzioni1}. Notice that $f_{i}$ and $f_{j}$ do not hit the 2-leg or the trivalent vertex in $\Gamma$ (an immediate consequence of Lemma~\ref{l:2rep} is that the basis vector $e_{1}$ in Figure~\ref{f:convenzioni1} appears at least 3 times in the embedding). So, if there were vertices $t=f_{i}+f_{j}+t'$ and $w=\pm f_{i}\pm f_{j}+w'$, we can consider one of the following situations:
\begin{enumerate}
\item If $|t'|=0$ then, since $t$ is not isolated, $w$ needs to be adjacent to $t$, but $w\cdot t\in\{0,\pm2\}$. A contradiction. The case $|w'|=0$ is analogous. 
\item If $|t'|>1$ (the case $|w'|>1$ is analogous) we consider the graph with embedding $\Gamma'$ obtained from $\Gamma$ by deleting the vertex $w$ and the basis vectors $f_{i}$ and $f_{j}$. $\Gamma'$ embeds in a lattice of rank smaller than the number of its vertices and thus should have determinant zero. However, if $w$ was a final vertex, by Lemma~\ref{l:det} we know $\Gamma'$ has non-vanishing determinant. Finally, if $w$ were internal, $\Gamma'$ has two connected components: by Remark~\ref{rem}(\ref{dd}) any linear components have non vanishing determinant and again by Lemma~\ref{l:det} if there is a component with a trivalent vertex, then its determinant is also non-vanishing.
\item If $|w'|=|t'|=1$, then we know that $t$ and $w$ are different from the vertex $u$, since $|u|=2$, and also from the vertex $v$, since at least two vectors hitting $v$ appear at least 3 times in the embedding (this follows from Lemma~\ref{l:2rep}).
	\begin{itemize}
	\item If at least one between $w$ and $t$ is in the leg which starts with the vertex $v$, we assume without loss of generality that $w$ is closest to the vertex $v$ in its leg. Then we consider the graph with embedding $\Gamma'$ obtained by deleting the vertex $t$ and the basis vectors $f_{i}$ and $f_{j}$. This graph has vanishing determinant (Remark~\ref{rem}(\ref{small})). However, if $\Gamma'$ has a linear component, by Remark~\ref{rem}(\ref{dd}) this component has non-vanishing determinant. It follows that the component of $\Gamma'$ with a trivalent vertex should have determinant zero, but this component has an embedding and a vertex of weight 1 in the same leg as $v$.	
We are then in a situation equivalent to the one analyzed in Lemma~\ref{l:p1}, subcase 2.(a) and thus either the determinant is non zero or the embedding does not exist, and we obtain again a contradiction. 
	\item If both $w$ and $t$ are on the leg starting with the vertex $u$ we consider the graph $\Gamma'$ obtained by deleting the closest vertex to $u$, say $w$ and deleting once more the basis vectors $f_{i}$ and $f_{j}$. $\Gamma'$ needs to have vanishing determinant, however by Lemma~\ref{l:det} the connected component with the trivalent vertex has determinant different from zero. It follows that $t$ is internal in $\Gamma$ and that in $\Gamma'$ there is a linear component $L'$ with vanishing determinant, an embedding into a standard lattice and a vertex with weight 1. This vertex needs to be internal in $\Gamma'$ (Remark~\ref{rem}(\ref{dd})), and the linear component is of the following type:
	\[
  \begin{tikzpicture}[xscale=1.5,yscale=-0.5]
    \node at (-1, 0.75) {$L':$};
    \node (A0_0) at (0, 0) {$c_n$};
    \node (A0_2) at (2, 0) {$c_1$};
    \node (A0_3) at (3, 0) {$1$};
    \node at (0, 1.5) {$r'$};
    \node at (3, 1.5) {$t'$};
    \node (A0_4) at (4, 0) {$d_1$};
    \node (A0_6) at (6, 0) {$d_m$};
    \node (A1_0) at (0, 1) {$\bullet$};
    \node (A1_1) at (1, 1) {$\dots$};
    \node (A1_2) at (2, 1) {$\bullet$};
    \node (A1_3) at (3, 1) {$\bullet$};
    \node (A1_4) at (4, 1) {$\bullet$};
    \node (A1_5) at (5, 1) {$\dots$};
    \node (A1_6) at (6, 1) {$\bullet$};
    \path (A1_4) edge [-] node [auto] {$\scriptstyle{}$} (A1_5);
    \path (A1_0) edge [-] node [auto] {$\scriptstyle{}$} (A1_1);
    \path (A1_1) edge [-] node [auto] {$\scriptstyle{}$} (A1_2);
    \path (A1_5) edge [-] node [auto] {$\scriptstyle{}$} (A1_6);
    \path (A1_2) edge [-] node [auto] {$\scriptstyle{}$} (A1_3);
    \path (A1_3) edge [-] node [auto] {$\scriptstyle{}$} (A1_4);
  \end{tikzpicture}
  \]
Since the determinant vanishes, the strings $(c_{1},\cdots,c_{n})$ and $(d_{1},\cdots,d_{m})$ are complementary and since they are both adjacent to the vertex $t'$ there is a basis vector that hits both the vertices with weights $c_{1}$ and $d_{1}$. Then, by Property~\ref{f}(\ref{f:unique}) the embedding of $L'$ in a standard lattice of any rank is unique. Now, in $\Gamma$ the vertex $w=f_{i}+f_{j}+f$ is linked to $r'$ and since $r'\neq t'$ (recall $t'$ is internal in $L'$) the only basis vector that hits both $r'$ and $w$ is $f$. By the uniqueness of the embedding of $L'$ we know that $f$ hits some other vertex $s'$ in $L'$ which is then necessarily orthogonal to $w$, but this is impossible. Indeed, since $w\cdot s'=0$, then $f_{i}$ or $f_{j}$ must hit $s'$ and then we necessarily have $s'=t'$. It follows that the three vectors $f_{i}$, $f_{j}$ and $f$ hit both $t$ and $w$ while $t\cdot w=0$. A contradiction.
	\end{itemize}  
\end{enumerate}

We now consider the second item in the statement and suppose there are three vectors in $\Gamma$ whose embeddings are, up to signs, of the form $w=f_{i}+w'$, $t=f_{i}+f_{j}+t'$ and $y=f_{j}+y'$, and independently of the vertices $w$, $t$ and $y$ being adjacent or in the same or different legs, we consider the following manipulation: delete from the embedding the basis vectors $f_{i}$ and $f_{j}$ and the vertex $t$ from $\Gamma$ to obtain $\Gamma'$, a graph with embedding into a standard lattice of rank one less than its number of vertices. Now,  if $|w|,|y|>2$ then the vertices $w$ and $y$ have weight at least 2 in $\Gamma'$ and we arrive to the contradiction that $\Gamma'$ has non-vanishing determinant by Lemma~\ref{l:det} and Remark~\ref{rem}(\ref{dd}). Moreover, 
if we had $|w|=2$ and $|y|>2$ and $w$ were final, the above manipulation yields a contradiction with Lemma~\ref{l:3final1} or Remark~\ref{rem}(~\ref{ddN}). These two contradictions imply the second assertion in the statement. 
\end{proof}

Let $B:=\{f_{1},\dots,f_{n}\}$ be the set of all basis vectors which appear exactly twice in the embedding of $\Gamma$ with coefficients $\pm1$. By Lemma~\ref{l:disjoint} the set of pairs of vertices in $\Gamma$ defined as $V_{2}:=\{(u_{i},v_{i})\,|\, f_{i}\cdot u_{i}, f_{i}\cdot v_{i}\neq 0\}$ consists of $n$ couples of vertices, where the couple $(u_{i},v_{i})$ is formed of the only two vertices in $\Gamma$ hit by the vector $f_{i}$. Notice that the vertices in $V_{2}$ all belong to the qc-legs, since $\Gamma$ embeds and therefore the conventions in Figure~\ref{f:convenzioni1} hold. We will divide $V_{2}$ into two subsets: let $V_{2}^{f}$ be the subset containing couples with a final vertex and $V_{2}^{Nf}$ the subset of couples with only internal vertices. The next crucial lemma shows that the vectors appearing only twice in the embedding hit `mainly' internal vertices of weight at most 3. The key idea in the proof of this lemma is to manipulate $\Gamma$ in such a way that the trivalent vertex, the only one whose valency is greater than its weight, belongs to a separate connected component.

\begin{lemma}\label{l:2times}
If $\Gamma$ embeds, then at most two couples in the set $V_{2}^{Nf}$ and at most two couples in $V_{2}^{f}$ have no internal vertex of weight 2 or 3. 
\end{lemma}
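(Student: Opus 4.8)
\textbf{Proof strategy for Lemma~\ref{l:2times}.}

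The plan is to argue by contradiction, and to extract from a hypothetical bad configuration a subgraph with vanishing determinant that nonetheless satisfies the hypotheses of one of the non-vanishing determinant statements already established (Lemma~\ref{l:det}, Lemma~\ref{l:3final1}, or Remark~\ref{rem}). First I would recall the running conventions from Figure~\ref{f:convenzioni1}: the central (trivalent) vertex embeds as $e_1+e_2$, the $2$-leg as $(e_2+e_3,\dots,e_{N+1}+e_{N+2})$, and the basis vectors $f_i\in B$ that appear exactly twice are, by Lemma~\ref{l:2rep} and Lemma~\ref{l:disjoint}, disjoint from the $2$-leg and from the trivalent vertex, and each hits exactly the pair $(u_i,v_i)\in V_2$, both of which lie on the qc-legs. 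The key device, as the statement hints, is that deleting the basis vectors in $B$ that realize a would-be bad family, together with one vertex from each pair, isolates the trivalent vertex (the only vertex whose valency exceeds its weight) in its own connected component, so that all remaining connected components are linear with weights $\ge 2$ except possibly at $u$ or $v$ or at the final vertices involved in the deletion; such graphs have non-vanishing determinant by Remark~\ref{rem}(\ref{ddN})--(\ref{dd}) and Lemma~\ref{l:3final1}.

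Concretely, suppose for contradiction that three couples in $V_2^{Nf}$ have no internal vertex of weight $2$ or $3$; say these are realized by basis vectors $f_1,f_2,f_3$, hitting pairs $(u_1,v_1),(u_2,v_2),(u_3,v_3)$ of internal vertices, each of weight $\ge 4$. I would delete $f_1,f_2,f_3$ from the embedding and delete one vertex from each pair — choosing, in each pair, the vertex closer to the trivalent vertex along its qc-leg — obtaining a graph $\Gamma'$ with an embedding into a standard lattice of rank $3$ less than the number of its vertices, hence $\det\Gamma'=0$. The three deleted vertices being internal, each deletion breaks a qc-leg, and careful bookkeeping shows that the component of $\Gamma'$ containing the trivalent vertex is either the single vertex $e_1+e_2$ (now isolated because both its qc-legs were cut between the center and the deleted vertices, and the $2$-leg was not touched) — in which case that component has determinant $2\ne 0$ — or a short star/linear piece to which Lemma~\ref{l:det} applies; meanwhile every other component is linear with all weights $\ge 2$ except possibly at $u$, $v$, or a final vertex, so has non-vanishing determinant by Remark~\ref{rem}(\ref{dd}) or Lemma~\ref{l:3final1}. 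This contradicts $\det\Gamma'=0$. The same deletion scheme, applied to three couples in $V_2^f$ with no internal weight-$2$-or-$3$ vertex, works: now at least one vertex in each pair is final, so I delete the \emph{final} vertex of each pair together with $f_1,f_2,f_3$; the resulting $\Gamma'$ again has $\det=0$ but decomposes into the isolated (or small) trivalent-vertex component plus linear components, one of which may now have a final vertex of weight $1$ after the deletions — precisely the situation handled by Lemma~\ref{l:3final1}, which gives non-vanishing determinant and the desired contradiction.

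The main obstacle I anticipate is the bookkeeping in the degenerate cases: when two of the deleted vertices lie on the same qc-leg, or when a deleted internal vertex is adjacent to $u$ or $v$ (so that after deletion $u$ or $v$ becomes final, or its weight changes), one must verify that the leftover linear pieces still fall under Remark~\ref{rem}(\ref{dd}) or Lemma~\ref{l:3final1} rather than acquiring an internal weight-$1$ vertex that could kill the determinant. In those sub-cases I would instead delete the vertex on the \emph{far} side (away from the center) or reorganize which pair-member is removed, and if necessary first perform compatible blow-downs (Operation~\ref{op}(\ref{bd})) to reduce to a graph with weights $\ge 2$; Lemma~\ref{l:3final1} was precisely designed to absorb the ``one final vertex of weight $1$'' leftover. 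A secondary point to handle is that a couple in $V_2$ could a priori involve a vertex of weight $\ge 4$ that is \emph{both} in $u_i$ and hit by another $f_j$; here Lemma~\ref{l:disjoint}(1) guarantees no two $f$'s share the same \emph{pair}, and Lemma~\ref{l:disjoint}(2) controls the overlap at a single shared vertex, so one may choose the three witnesses $f_1,f_2,f_3$ with pairwise-distinct realized pairs and organize the deletions so that at most a controlled amount of weight is lost at any surviving vertex. Once these degenerate configurations are dispatched, the contradiction with $\det\Gamma'=0$ closes the argument.
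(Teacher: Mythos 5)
There is a counting error that breaks the central step of your argument. For the contradiction you need $\det\Gamma'=0$, which by Remark~\ref{rem}(4) requires the surviving vertices to embed into a lattice of rank \emph{strictly smaller} than their number. You delete three basis vectors $f_1,f_2,f_3$ \emph{and} three vertices (one from each pair), so the number of vertices and the rank both drop by $3$ and remain equal: $\Gamma'$ has $n-3$ vertices embedded in $\Z^{n-3}$, and no conclusion about its determinant follows. Your claim that the rank is ``$3$ less than the number of its vertices'' is simply miscounted. This is exactly why the paper's proof deletes three basis vectors but only \emph{two} vertices (one from two of the three pairs, chosen by walking out from the trivalent vertex along the qc-legs); then $\Gamma'$ has $n-2$ vertices in rank $n-3$ and genuinely has vanishing determinant. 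The price of keeping the third pair intact is that its two vertices each lose weight $1$ (and may become linked) after $f_3$ is deleted, which is harmless precisely because they were assumed to have weight at least $4$; your scheme avoids this bookkeeping but at the cost of losing the determinant obstruction entirely.

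The $V_2^f$ half also goes differently, and your proposed deletion cannot even be set up there: the qc-legs have only two final vertices in total, so three couples in $V_2^f$ cannot supply three distinct final vertices to delete. The paper instead uses this pigeonhole directly: some final vertex is hit by two distinct basis vectors appearing twice, and Lemma~\ref{l:disjoint}(2) then forces one of the relevant couples to contain a vertex of weight $2$ (with a short case analysis to ensure it is, or leads to, an \emph{internal} one). No determinant argument is needed for that half. I would suggest reworking your proof along these two separate lines: pigeonhole plus Lemma~\ref{l:disjoint}(2) for $V_2^f$, and the ``delete three $f_i$'s but only two vertices'' scheme for $V_2^{Nf}$.
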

\begin{proof}
Let us assume that there are at least three such couples in $V_{2}^{f}$. Then, at least one of the final vertices of $\Gamma$ is necessarily hit by two different basis vectors which appear only twice in the embedding with coefficient $\pm1$. It follows, by the second statement of Lemma~\ref{l:disjoint}, that one of these three couples has a vertex of weight 2. If it is internal, we are done. If it is final, either one final vertex is hit by three vectors appearing only twice, and we conclude with Lemma~\ref{l:disjoint} that at least one internal vertex hit by one of these vectors has weight 2. Or both final vertices are hit by 2 vectors appearing only twice, and again we conclude with Lemma~\ref{l:disjoint}.

We move onto studying now the set $V_{2}^{Nf}$. Let us assume by contradiction that there is a subset of $V_{2}^{Nf}$, the set $S:=\{(u_{i},v_{i})\,|\, i=1,2,3\}$, such that every vertex in it has weight at least 4. As before, we call $f_{i}$ the vector appearing exactly twice in the embedding with coefficient $\pm1$ hitting $u_{i}$ and $v_{i}$. From the second statement in Lemma~\ref{l:disjoint} we know that $S$ consists of three disjoint couples of vertices.

We will consider the graph with embedding $\Gamma'$ obtained from $\Gamma$ by deleting all $f_{i}$, $i=1,2,3$ from the embedding and deleting two vertices in $S$. The precise choice of the two vertices we need to delete in $S$ will depend on the configuration and it will be one vertex from two of the three pairs $(u_{i},v_{i})$, $i=1,2,3$: From the trivalent vertex move along one of the qc-legs to the first vector in $S$, which we call $x$. Now, consider the other qc-leg and again from the trivalent vertex move to the first vector in $S$ with different subindex from $x$. If this vertex exists call it $y$.
	\begin{enumerate}
	\item If both $x$ and $y$ exist delete these two vertices.
	\item If $y$ does not exist, delete $x$ and the next vertex in $S$ found on its qc-leg and having a different subindex from $x$.   
	\end{enumerate}
The graph $\Gamma'$ admits an embedding into a standard lattice of rank smaller than the number of vertices in $\Gamma'$ and therefore $\Gamma'$ has vanishing determinant (Remark~\ref{rem}(\ref{small})). The connected component of $\Gamma'$ which contains the only trivalent vertex of $\Gamma$ has an embedding, and thus, if it has a trivalent vertex in $\Gamma'$ by Lemma~\ref{l:det} the determinant of this component is non-vanishing; and if it is a linear set, then by Remark~\ref{rem}(\ref{dd}) its determinant is again non vanishing. It follows that at least one of the other connected components of $\Gamma'$ needs to have vanishing determinant. However, this is not possible: by Remark~\ref{rem}(\ref{ddN}) we need a connected component with a vertex with weight smaller than its valency, and by our assumptions on $S$ we do not have such a vertex.
\end{proof}

We have learned in Lemma~\ref{l:2times} that in $V_{2}$ there are at most 4 couples of vertices with no internal vertex of weight 2 or 3. Our aim now is to reduce this quantity to 2 vertices. Let us call $B_{2}$ ($B$ for `bad') the subset of $V_{2}$ consisting of couples in which the internal vertices have weight at least 4, and define consequently $B_{2}^{Nf}$ and $B_{2}^{f}$. In this language Lemma~\ref{l:2times} reads $|B_{2}^{Nf}|,|B_{2}^{f}|\leq 2 $ and implies $|B_{2}|\leq 4$. In the next lemma we show that if $|B_{2}|>2$, the set $B_{2}^{f}$ is formed of a particular kind of couple of vertices. The proof of Lemma~\ref{l:bad} builds on and uses the notation in the proof of Lemma~\ref{l:2times}.

\begin{lemma}\label{l:bad}
Suppose $\Gamma$ embeds and that we have $|B_{2}|>2$, then
\begin{enumerate}
\item if $|B_{2}^{Nf}|=2$, then every couple in $V_{2}^{f}$ has an internal vertex and the final vertex in every couple in $B_{2}^{f}$ has weight 2.
\item if $|B_{2}^{Nf}|=1$, then $|B_{2}^{f}|=2$ and at least one couple in $B_{2}^{f}$ has a final vertex of weight 2, and the other vertex in this couple is interior.
%
\end{enumerate}
\end{lemma}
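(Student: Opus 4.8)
The plan is to argue by contradiction, carefully tracking the parity of the determinant-vanishing constraints, in the same spirit as the proof of Lemma~\ref{l:2times}, but now extracting finer information about which vertices carry small weight. Suppose $|B_2|>2$; by Lemma~\ref{l:2times} we have $|B_2^{Nf}|,|B_2^{f}|\le 2$, so $|B_2|\in\{3,4\}$ and in particular $B_2^f\ne\emptyset$. As in Lemma~\ref{l:2times}, for each bad couple $(u_i,v_i)$ we have the basis vector $f_i$ hitting exactly these two vertices with coefficient $\pm1$, and by Lemma~\ref{l:disjoint}(1) distinct bad couples are distinct pairs; by Lemma~\ref{l:disjoint}(2), if two of these $f_i$'s share a vertex then one of them hits an internal vertex of weight $2$, which (since that vector hits exactly two vertices, both of which have weight $\ge4$ in a bad couple) forces that shared structure to live in $V_2^f$ rather than giving a bad couple --- this is the mechanism that will constrain $B_2^f$.

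First I would set up, exactly as in Lemma~\ref{l:2times}, the auxiliary graph $\Gamma'$ obtained by deleting all the $f_i$ attached to bad couples together with a suitable choice of one vertex from all but one of the bad couples, chosen so that the component containing the trivalent vertex of $\Gamma$ either still has a trivalent vertex or is linear; in either case Lemma~\ref{l:det} or Remark~\ref{rem}(\ref{dd}) gives that component non-vanishing determinant, so some other component of $\Gamma'$ must have vanishing determinant, and by Remark~\ref{rem}(\ref{ddN}) that component must contain a vertex whose weight is strictly less than its valency. Since bad couples have all internal vertices of weight $\ge4$, the only place such a low-weight vertex can come from is a \emph{final} vertex of $\Gamma$ (of weight $1$, $2$ or $3$) surviving in $\Gamma'$, or an interior weight-$2$ or weight-$3$ vertex adjacent to a deleted vertex that has dropped below its valency --- but an interior weight-$3$ vertex adjacent to one deleted vertex has residual weight $2=\,$valency, which is \emph{not} strictly smaller, so that option is excluded, and a weight-$2$ interior vertex would have residual weight $1<2$. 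The case analysis then splits according to $|B_2^{Nf}|\in\{1,2\}$ and according to whether the low-weight vertex forced in the bad component is final or the residue of a deleted neighbour.

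For part (1), $|B_2^{Nf}|=2$: here I would delete both interior bad couples' $f_i$'s and one vertex from each of them, leaving the bad component to be forced to have a low-weight vertex; I would show that this vertex cannot be produced from the interior bad couples (all residual weights $\ge3$), so it must be a final vertex of $\Gamma$, and running through which qc-leg it sits on, together with the constraint that a couple in $V_2^f$ with \emph{two} final vertices would (by Lemma~\ref{l:disjoint}) force an internal weight-$2$ vertex contradicting the bad-couple hypothesis, one concludes that every couple in $V_2^f$ has an internal vertex. The statement that the final vertex of each couple in $B_2^f$ has weight exactly $2$ then follows from Lemma~\ref{standardsubset} applied to the standard subset obtained by deleting the corresponding $f_i$ and its partner: the ratio $(v\cdot v_1)^2/(v\cdot v)$ is a continued-fraction value $p/q>1$, and since the partner vertex has weight $\ge4$ while the final vertex must close up the orthogonality relations, weight $\ge3$ on the final vertex would make the sum of reciprocals of the three continued fractions at the trivalent vertex too small to satisfy the determinant-zero equation~\eqref{e:det=0}, using Lemma~\ref{l:det}.

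For part (2), $|B_2^{Nf}|=1$: since $|B_2|>2$ and $|B_2^{Nf}|=1$ we get $|B_2^f|=2$ directly. Then I would delete the single interior bad couple's $f_i$ together with one of its vertices, and argue that the resulting bad component, which must carry a vertex of weight below its valency, forces one of the two $B_2^f$ couples to contain a weight-$2$ \emph{final} vertex (the same dichotomy as above: residual weight-$3$ interior vertices are not strictly below valency, so the low-weight vertex is final, and it must be weight $\le2$ by the reciprocal-sum estimate). That the other vertex in this couple is interior is immediate: a couple in $V_2^f$ with two final vertices again triggers Lemma~\ref{l:disjoint}(2) and produces an internal weight-$2$ vertex, contradicting membership in $B_2^f$. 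The main obstacle I anticipate is the bookkeeping in choosing \emph{which} vertices to delete so that (a) the trivalent component is handled by Lemma~\ref{l:det}, and (b) the forced low-weight vertex in the remaining component is pinned to a specific final vertex rather than being ambiguous among several candidates; getting this deletion strategy to work uniformly in all sub-configurations (in particular when the two bad qc-legs overlap, or when one qc-leg is short) is the delicate part, and will likely require separately treating the cases where the bad couples lie on the same qc-leg versus on different qc-legs, exactly as in the $x$/$y$ dichotomy of Lemma~\ref{l:2times}.
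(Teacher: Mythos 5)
Your overall strategy (delete the distinguished basis vectors together with carefully chosen vertices, force a vanishing determinant, and locate the low-weight vertex) is the right one, but two of your key steps fail as written. First, your deletion counts do not actually force a vanishing determinant. In part (1) you propose deleting the two $f_i$'s of the interior bad couples together with one vertex from each (two basis vectors, two vertices), and in part (2) a single $f_i$ with a single vertex; in both cases the resulting graph has as many vertices as the rank of the lattice it embeds into, so Remark~\ref{rem}(\ref{small}) gives nothing. The paper's argument always works with the full three-couple set $S$ (the couples of $B_2^{Nf}$ \emph{together with} the couple of $V_2^f$ or $B_2^f$ under scrutiny), deleting \emph{three} basis vectors $f_1,f_2,f_3$ and only \emph{two} vertices; it is precisely the presence of the third couple in $S$ that makes the rank drop. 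Once this is set up correctly, the weight-$2$ conclusion for the final vertex is a direct count via Remark~\ref{rem}(\ref{ddN}): the internal partner $u_3$ has weight $\ge 4$ in $\Gamma$, hence $\ge 3$ after $f_3$ is deleted, so the only way the surviving component can have vanishing determinant is for $v_3$ to drop to weight $1$, i.e.\ to have had weight $2$. Your alternative route through Lemma~\ref{standardsubset} and the reciprocal-sum identity~\eqref{e:det=0} is not needed and, as stated, does not obviously apply (the relevant component after deletion need not be star-shaped).

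Second, and more seriously, your mechanism for proving that no couple in $V_2^f$ has two final vertices does not cover all cases. Lemma~\ref{l:disjoint}(2) only bites when two of the distinguished basis vectors \emph{share} a vertex. A couple consisting of the two final vertices of the qc-legs is vacuously a member of $B_2^f$ (it has no internal vertices), so it can be the \emph{only} element of $V_2^f$, in which case nothing shares a vertex with it and Lemma~\ref{l:disjoint} is silent. This is exactly the configuration the paper spends most of the proof of part (1) on: one deletes vertices of $S$ so chosen that $u_3$ and $v_3$ become an isolated two-vertex-plus linear component $\Gamma'_0$ with vanishing determinant, then shows $\Gamma'_0$ must be a pair of complementary strings joined by an internal weight-$1$ vertex with the rigid embedding of Property~\ref{f}(\ref{f:unique}), and finally invokes Lemma~\ref{extravector} to contradict the fact that a final vertex of $\Gamma'_0$ is still attached to the rest of $\Gamma$. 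Without this argument (or a replacement for it) the first assertion of part (1) is unproved. Your sharing argument does correctly dispose of the analogous issue in part (2), where $|B_2^f|=2$ guarantees that two couples meet at a final vertex, but you should make the pigeonhole on the two qc-leg endpoints explicit rather than attributing the conclusion to Lemma~\ref{l:disjoint} alone.
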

\begin{proof}
 
We start with the first case assuming there are two couples $(u_{1},v_{1}),(u_{2},v_{2})$ in $B_{2}^{Nf}$ and we proceed to analyze an element $(u_{3},v_{3})\in V_{2}^{f}$. We will call $S$ the set formed by these three couples of vertices. We first show that $u_{3}$ and $v_{3}$ cannot be both final vertices in $\Gamma$. Suppose this were the case and consider the following manipulation of $\Gamma$ yielding the graph with embedding $\Gamma'$:
\begin{enumerate}
\item If $u_{3}$ or $v_{3}$ is the only vertex in its qc-leg belonging to $S$, then delete it from $\Gamma$ and delete in the other leg the first vertex among $u_{1},v_{1},u_{2}$ and $v_{2}$ which appears when moving along the qc-leg, starting from the trivalent vertex. Moreover delete from the embedding $f_{1},f_{2}$ and $f_{3}$.
\item If we are not in the preceding case, delete from $\Gamma$ the basis vectors $f_{1},f_{2}$ and $f_{3}$ and from each qc-leg the vertex in $S$ closest to $u_{3}$ or $v_{3}$ such that the two deleted vertices have different subindices.\end{enumerate}

In both cases, by Remark~\ref{rem}(\ref{small}), the graph $\Gamma'$ should have vanishing determinant. In the first case this is impossible: the only difference with the arguments presented in the proof of Lemma~\ref{l:2times} is that we might have in $\Gamma'$ the final vector $u_{3}$ or $v_{3}$ with weight $1$. However, since this vector is final in $\Gamma'$ and in its connected component there is at least one vertex with weight greater than its valency (recall that any other final vertex in its connected component has this property since all internal vertices in $S$ have weight at least 4 in $\Gamma$), it follows by Remark~\ref{rem}(\ref{dd}) that this component has non-vanishing determinant and then by Lemma~\ref{l:det} that the determinant of $\Gamma'$ is non zero.

In the second case, notice that $\Gamma'$ has two connected components: a linear one $\Gamma'_{0}$, containing $u_{3}$ and $v_{3}$, which are adjacent in $\Gamma'$, and another one containing the trivalent vertex in $\Gamma$. By Lemma~\ref{l:det} or Remark~\ref{rem}(\ref{dd}) this latter connected component has non vanishing determinant, so we need $\Gamma'_{0}$ to have determinant zero. Notice that by construction $\Gamma'_{0}$ contains at most one vertex from the set $S$ besides $u_{3}$ and $v_{3}$. If $u_{3}$ and $v_{3}$ have both weight at least 2 in $\Gamma'_{0}$, then we obtain a contradiction by Remark~\ref{rem}(\ref{dd}) with the fact that $\Gamma'_{0}$ has vanishing determinant. Since in $\Gamma$ they are connected to other vertices, they cannot both have weight equal to 1 in $\Gamma'_{0}$. The only possibility we are left to study is the case in which, say $|u_{3}|=1$ and $|v_{3}|\geq 2$ in $\Gamma'_{0}$. Notice that $\Gamma'_{0}$ is a linear graph with all vertices of weight at least 2 except for one internal vertex of weight 1 and that $\Gamma'_{0}$ has vanishing determinant. It follows that $\Gamma'_{0}$ consists of two complementary strings linked to a vertex of weight $1$, which implies that the embedding is as in Property~\ref{f}(\ref{f:unique}). We then have that $\Gamma'_{0}$ (without the vertex of weight 1) satisfies the assumptions of Lemma~\ref{extravector} part (2) and no vertex can be linked to this connected component without the introduction of new basis vectors. Now, since $\Gamma'_{0}$ contains at most one vertex of $S$ besides $u_{3}$ and $v_{3}$, at least one of its final vertices has the same weight and embedding in $\Gamma'_{0}$ as in $\Gamma$ and it is adjacent to some vertex in $\Gamma$. By Lemma~\ref{extravector} this is not possible and we conclude that in the couple $(u_{3},v_{3})\in V_{2}^{f}$ at least one of the vertices in internal. 

From now on we assume, without loss of generality, that $u_{3}$ is internal in $\Gamma$ and $v_{3}$ final. Notice that $u_{3}\not\in\{u_{1},v_{1},u_{2},v_{2}\}$, since if it belonged to this set, by Lemma~\ref{l:disjoint} we would have that one among $u_{1},v_{1},u_{2}$ and $v_{2}$ has weight 2, contradicting $(u_{1},v_{1}),(u_{2},v_{2})$ in $B_{2}^{Nf}$.
We have the following configuration in $\Gamma$: three basis vectors $f_{1},f_{2}$ and $f_{3}$ each appearing exactly twice in the embedding with coefficient $\pm1$; The vectors $f_{1}$ and $f_{2}$ hitting respectively the couples $(u_{1},v_{1})$ and $(u_{2},v_{2})$ of internal vertices of weight at least 4, and the vector $f_{3}$ hitting an internal vertex $u_{3}$, different from $u_{1},v_{1},u_{2}$ and $v_{2}$, and a final vertex $v_{3}$. We now proceed to manipulate $\Gamma$ in the same fashion we did in the proof of Lemma~\ref{l:2times} to obtain $\Gamma'$ and we distinguish the two following outcomes:
\begin{enumerate}
\item One of the two vertices of $\Gamma$ erased to obtain $\Gamma'$ belongs to the couple $(u_{3},v_{3})$.
\item The vertices $u_{3}$ and $v_{3}$ belong to $\Gamma'$.
\end{enumerate}
If we are in the first case, by the same arguments we have developed several times before based on Lemma~\ref{l:det} and Remarks~\ref{rem}(\ref{ddN}) and~\ref{rem}(\ref{dd}), we arrive to the contradiction that $\Gamma'$ has non vanishing determinant, and thus case 2 is the only possible option. In this case, if the weights of $u_{3}$ and $v_{3}$ in $\Gamma'$ are respectively at least 3 and 2, then the determinant of $\Gamma'$ would be non-vanishing by Remark~\ref{rem}(\ref{ddN}). So if we assume $u_{3}$ has weight at least 4 in $\Gamma$, that is $(u_{3},v_{3})\in B_{2}^{f}$, this implies that $v_3$ has weight 2 in $\Gamma$ and 1 in $\Gamma'$,  finishing the proof of the first case in the lemma.

We now deal with the second possibility in the statement, that is, $B_{2}^{Nf}=\{(u_{1},v_{1})\}$. Since we are assuming $|B_{2}|>2$ and we know from Lemma~\ref{l:2times} that $|B_{2}^{f}|\leq2$ it follows that $B_{2}^{f}=\{(u_{2},v_{2}),(u_{3},v_{3})\}$ and we need to show that one of the couples needs to have a final vertex of weight 2. Let us assume any final vertex in $(u_{2},v_{2})$ had weight greater than 2. One of the following 2 possibilities holds:
\begin{itemize}
\item[a)] $(u_{2},v_{2})$ consists of the two final vertices in the qc-legs. Then, by Lemma~\ref{l:disjoint}, only one between $u_{3}$ and $v_{3}$ is final. Let us assume it is $v_{3}$. It follows that $v_{3}$ is hit both by $f_{2}$ and $f_{3}$ and thus, the second statement in Lemma~\ref{l:disjoint} guarantees $|u_{3}|=2$ and we arrive to the contradiction $(u_{3},v_{3})\not\in B_{2}^{f}$.
\item[b)] $(u_{2},v_{2})$ consists of one internal vertex of weight at least 4 and one final of weight at least 3. We assume $u_{2}$ is internal and $v_{2}$ is final. By the second statement of Lemma~\ref{l:disjoint} only one between $u_{3}$ and $v_{3}$ is final and again we assume it is $v_{3}$. The exact same argument used before allows us to conclude as follows:  manipulate $\Gamma$ as explained in the proof of Lemma~\ref{l:2times} to obtain $\Gamma'$ and we distinguish the two following outcomes:
\begin{enumerate}
\item One of the two vertices of $\Gamma$ erased to obtain $\Gamma'$ belongs to the couple $(u_{3},v_{3})$.
\item The vertices $u_{3}$ and $v_{3}$ belong to $\Gamma'$.
\end{enumerate}
The first case leads to a contradiction through Lemma~\ref{l:det} and Remarks~\ref{rem}(\ref{ddN}) and~\ref{rem}(\ref{dd}).  In the second case, since the weight of $u_{3}$ is assumed to be at least 4, in order to avoid a contradiction with Remark~\ref{rem}(\ref{ddN}) we need $|v_{3}|=2$.
\end{itemize}
\end{proof}

We will use the couples in $B_{2}^{f}$ with a final vertex of weight 2, whose existence is guaranteed by Lemma~\ref{l:bad}, to reduce the bound of 4 on $|B_{2}|$, obtained from Lemma~\ref{l:2times}, to 2. In the process we will be forced to change the graph $\Gamma$ to a new graph $\Gamma_{r}$. Suppose $\Gamma$ embeds and has a final 2-chain $c$ of length $k$ and some internal vertex $y$ such that the embedding looks as follows:
\[
  \begin{tikzpicture}[xscale=1.5,yscale=-0.5]
    \node (A2_0) at (0, 3.5) {$x$};
    \node at (1, 3.5) {$v_{k}$};
    \node at (0, 2) {$\scriptstyle g_{k}+x'$};
    \node at (1, 2) {$\scriptstyle g_{k-1}+g_{k}$};
    \node (A3_0) at (0, 3) {$\bullet$};
    \node (A3_1) at (1, 3) {$\bullet$};
    \node (A3_2) at (2, 3) {$\dots$};
    \node (A3_3) at (3, 3) {$\bullet$};
    \node (C) at (-3, 3) {$\bullet$};
    \node at (-3, 2) {$\scriptstyle \pm f \pm \sum_{i=1}^{k}(-1)^{i}g_{i}+y'$};
    \node at (-3, 3.5) {$y$};
    \node at (3, 2) {$\scriptstyle g_{1}+g_{2}$};
    \node (A) at (-1, 3) {$\dots$};
    \node at (4, 3.5) {$w$};
    \node at (4, 2) {$\scriptstyle f+g_{1}$};
    \node (B) at (4, 3) {$\bullet$};
    \path (A3_0) edge [-] node [auto] {$\scriptstyle{}$} (A3_1);
    \path (C) edge [-] node [auto] {$\scriptstyle{}$} (-3.5,3);
    \path (C) edge [-] node [auto] {$\scriptstyle{}$} (-2.5,3);
    \path (A3_2) edge [-] node [auto] {$\scriptstyle{}$} (A3_3);
    \path (A3_1) edge [-] node [auto] {$\scriptstyle{}$} (A3_2);
    \path (A) edge [-] node [auto] {$\scriptstyle{}$} (A3_0);
    \path (A3_3) edge [-] node [auto] {$\scriptstyle{}$} (B);
  \end{tikzpicture}
\]
where $w$ is a final vertex in a qc-leg, $v_{k}$ might coincide with $w$, $x\neq y$ has weight at least 3 and the vector $f$ appears only twice in the embedding hitting $w$ and $y$. Then the graph $\Gamma_{r}$ is obtained from $\Gamma$ by deleting the basis vectors $f,g_{1},\dots,g_{k-1}$ and all the vertices in the 2-chain, obtaining the configuration:
\[
  \begin{tikzpicture}[xscale=1.5,yscale=-0.5]
    \node at (0, 3.5) {$x$};
    \node (A3_0) at (0, 3) {$\bullet$};
    \node (A) at (-1, 3) {$\dots$};
    \node at (0, 2) {$\scriptstyle g_{k}+x'$};
   \node (C) at (-3, 3) {$\bullet$};
    \node at (-3, 2) {$\scriptstyle g_{k}+y'$};
    \node at (-3, 3.5) {$y$};
    \path (C) edge [-] node [auto] {$\scriptstyle{}$} (-3.5,3);
    \path (C) edge [-] node [auto] {$\scriptstyle{}$} (-2.5,3);
    \path (A) edge [-] node [auto] {$\scriptstyle{}$} (A3_0);
  \end{tikzpicture}
\]
where now the vertex $x$, which still has weight greater than 2, is final in $\Gamma_{r}$ and the vector $g_{k}$ appears only twice in the embedding of $\Gamma_{r}$. Moreover, if the other qc-leg had an analogous configuration, we can contract it as well, obtaining a graph with two final vertices of weight at least 3, each hit by a different basis vector appearing only twice in the embedding with coefficient $\pm1$. We will also call $\Gamma_{r}$ this `doubly reduced' graph. Notice that $I(\Gamma)=I(\Gamma_{r})$ and that $\Gamma_{r}$ admits an embedding into a standard lattice of rank equal its number of vertices. 

\begin{lemma}\label{l:only2}
If $\Gamma$ embeds and $|B_{2}(\Gamma)|>2$, then we can modify $\Gamma$ as explained above to obtain a graph $\Gamma_{r}$ with $|B_{2}(\Gamma_{r})|=2$.
\end{lemma}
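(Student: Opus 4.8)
The plan is to prove the lemma by iterating the reduction $\Gamma\to\Gamma_{r}$ described just above. At each step we pick one couple of $B_{2}$ of the appropriate shape, perform the reduction, and check two things: that the number of vertices strictly drops, and that $|B_{2}|$ changes by $0$ or by $-1$. A graph cannot shrink indefinitely while still carrying three bad couples, and $|B_{2}|$ never drops by more than one unit at a time, so the iteration must stop exactly at $|B_{2}|=2$.

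First I would locate the couple to reduce. Suppose $|B_{2}(\Gamma)|>2$. Then $|B_{2}^{Nf}|\geq 1$, since otherwise $|B_{2}|=|B_{2}^{f}|\leq 2$ by Lemma~\ref{l:2times}, so Lemma~\ref{l:bad} applies. If $|B_{2}^{Nf}|=2$ it tells us that every couple of $V_{2}^{f}$ has an internal vertex and that the final vertex of every couple of $B_{2}^{f}$ (which is nonempty, as $|B_{2}|>2$) has weight $2$; if $|B_{2}^{Nf}|=1$ it produces a couple of $B_{2}^{f}$ with a weight-$2$ final vertex and an interior partner. Either way we obtain a couple $(y,w)\in B_{2}^{f}$ with $w$ final in a qc-leg, $|w|=2$, and $y$ internal of weight $\geq 4$; write $f$ for the basis vector with $f\cdot w\neq 0\neq f\cdot y$ that appears exactly twice in the embedding.

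The technical core is to show that the embedding near $w$ is exactly the configuration displayed just before the statement of Lemma~\ref{l:only2}. Since $|w|=2$ and $w$ is final, $w$ is the endpoint of a maximal final $2$-chain $c=(v_{1}=w,\dots,v_{k})$ of some length $k\geq 1$ in its qc-leg, and $c$ is followed by a vertex $x$ of weight $\geq 3$, because the qc-string is not a $2$-chain (an assumption of Proposition~\ref{p:mainA}). After adjusting the basis we may take $w=f+g_{1}$ and $v_{i}=g_{i-1}+g_{i}$ for $2\leq i\leq k$. As $y$ has weight $\geq 4$ it is adjacent to no weight-$2$ vertex of $c$, hence orthogonal to every $v_{i}$; reading off these relations from $v_{1}$ outward forces, by an easy induction, $y=\pm f\pm\sum_{i=1}^{k}(-1)^{i}g_{i}+y'$ with $y'$ orthogonal to $f$ and to all the $g_{i}$. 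For the vertex $x$ following $v_{k}$, orthogonality of $x$ to $v_{1},\dots,v_{k-1}$ together with the fact that $f$ appears only twice forces the coefficients of $f,g_{1},\dots,g_{k-1}$ in $x$ to vanish, and then $x\cdot v_{k}=1$ yields $x=g_{k}+x'$ with $x'$ orthogonal to $f$ and the $g_{i}$; the analogous analysis applies on the other qc-leg whenever it carries the same configuration. Thus the reduction is legitimate, and we already know that $\Gamma_{r}$ embeds into the standard lattice of rank equal to its number of vertices, that $I(\Gamma_{r})=I(\Gamma)$, and that $\Gamma_{r}$ has $k\geq 1$ fewer vertices than $\Gamma$.

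Finally I would track $B_{2}$ through the reduction. The only deleted vertices are those of $c$, which have weight $2$ and therefore lie in no couple of $B_{2}$; the only surviving vertex that is altered is $y$, which becomes $g_{k}+y'$ (indeed, by the orthogonality relations above, the deleted vectors $f,g_{1},\dots,g_{k-1}$ occur only in $w,y$ and in the vertices of $c$); and the only new basis vector appearing exactly twice is $g_{k}$, hitting the now-final vertex $x$ (of weight $\geq 3$) and $y$. By Lemma~\ref{l:disjoint}(2), $y$ cannot be the common vertex of two couples of $B_{2}$: two such couples would be realized by two twice-appearing vectors both meeting $y$, forcing one of them to hit an interior weight-$2$ vertex, whereas both in fact hit only $y$ (of weight $\geq 4$) and a vertex of weight $\geq 4$ or a final vertex. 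Hence the reduction removes exactly the couple $(y,w)$ from $B_{2}$ and introduces at most the single couple $(x,y)$, so $|B_{2}(\Gamma_{r})|\in\{|B_{2}(\Gamma)|-1,\,|B_{2}(\Gamma)|\}$. Moreover $\Gamma_{r}$ retains the structural features used in this subsection (the $2$-leg, the central vertex, and the vertices $u,v$ of Figure~\ref{f:convenzioni1} are untouched, and the reduced leg still ends at the weight-$\geq 3$ vertex $x$, so no qc-leg becomes a $2$-chain), so Lemmas~\ref{l:2times} and~\ref{l:bad} can be reapplied. Iterating, the graph strictly shrinks at each step and hence cannot keep $|B_{2}|>2$ forever; since $|B_{2}|$ changes by at most one unit per step, the first time it becomes $\leq 2$ it equals $2$, giving the desired $\Gamma_{r}$. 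The step I expect to be the main obstacle is this last piece of bookkeeping: showing that one reduction neither destroys nor creates more than one couple of $B_{2}$, and that $\Gamma_{r}$ still lies within the scope of the earlier lemmas — the disjointness statements of Lemma~\ref{l:disjoint} carry most of this, with some extra care needed at the degenerate boundary cases where a qc-leg becomes very short.
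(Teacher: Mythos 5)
There is a genuine gap at the heart of your argument: you never exclude the possibility that the vertex $x$ of weight at least $3$ adjacent to the final $2$-chain $c$ coincides with the internal vertex $y$ hit by $f$. Your deduction ``as $y$ has weight $\geq 4$ it is adjacent to no weight-$2$ vertex of $c$, hence orthogonal to every $v_i$'' does not follow: the only vertex outside $c$ adjacent to a vertex of $c$ is $x$, and nothing a priori prevents $x=y$ (then $y\cdot v_k=1$, not $0$, and your induction giving $y=\pm f\pm\sum(-1)^ig_i+y'$ breaks down). The same unproved assumption resurfaces when you argue that ``the fact that $f$ appears only twice forces the coefficient of $f$ in $x$ to vanish'' --- that is only true if $x\neq y$. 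Since the configuration defining $\Gamma_r$ explicitly requires $x\neq y$, this case must be ruled out, and it takes a real argument: working out the orthogonality relations when $x=y$ one finds $g_k\cdot x\in\{0,\pm2\}$; the first option makes $g_k$ appear only once in the embedding (contradicting Lemma~\ref{l:no1}), and in the second, deleting $f,g_1,\dots,g_k$ together with the vertices of $c$ produces a graph embedded in corank one, contradicting Lemma~\ref{l:3final1}. This is exactly the determinant-type argument the paper runs, and your proof is incomplete without it.

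Beyond that, your route to the final count differs in packaging from the paper's: you iterate the reduction and track $|B_2|$ step by step (removes exactly one couple, creates at most the one new couple $(x,y)$ via $g_k$, using Lemma~\ref{l:disjoint} to prevent $y$ from lying in two couples of $B_2$), whereas the paper contracts the offending leg(s) in one pass, observes that $B_2^{Nf}$ is unchanged and that $\Gamma_r$ has no final weight-$2$ vertex left on its qc-legs, and then reapplies Lemma~\ref{l:bad} to force $B_2^f(\Gamma_r)$ to be empty or a single couple. Your bookkeeping is essentially sound and, combined with the reapplication of Lemmas~\ref{l:2times} and~\ref{l:bad} that you invoke to keep the iteration going, it does close the loop (the iteration can run at most once per qc-leg). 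But none of this rescues the missing $x\neq y$ step, which is where the actual content of the lemma lies.
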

\begin{proof}
To start with we show that if $|B_{2}(\Gamma)|>2$, we can find a final 2-chain in $\Gamma$ such as the one described in the paragraph preceding this lemma which allows us to define $\Gamma_{r}$. By Lemma~\ref{l:bad} we have that there is a couple in $B_{2}^{f}$ with a final vertex $w$ of weight 2. Let us call $f$ the vector which appears exactly twice in the embedding with coefficient one and hits $w$ and an internal vertex $y$ of weight at least 4. We consider $w$ as the last vertex of a 2-chain $c$ of length $k\geq1$. Since we are assuming that no qc-leg in $\Gamma$ is a 2 chain, there exists some vertex $x$ with weight greater than 2 adjacent to $c$. Since the embedding of a 2-chain is unique up to automorphisms of the standard lattice, we can assume it is as described above. If the vertex $x$ coincided with $y$, then since $f$ has always coefficient $\pm1$ this would mean $g_{k}\cdot x=0$ and the vector $g_{k}$ would appear only once in the embedding, since $f$ appears only twice. This is a contradiction since $x$ is not the trivalent vertex in $\Gamma$. In the latter case, we are dealing with  
\[
  \begin{tikzpicture}[xscale=2.5,yscale=-0.5]
    \node (A2_0) at (0, 3.5) {$x=y$};
    \node at (1.5, 3.5) {$v_{k}$};
    \node at (0, 2) {$\scriptstyle 2g_{k}-g_{k-1}+\dots\pm g_{1}\mp f+x'$};
    \node at (1.5, 2) {$\scriptstyle g_{k-1}+g_{k}$};
    \node (A3_0) at (0, 3) {$\bullet$};
    \node (A3_1) at (1.5, 3) {$\bullet$};
    \node (A3_2) at (2.5, 3) {$\dots$};
    \node (A3_3) at (3, 3) {$\bullet$};
    \node at (3, 2) {$\scriptstyle g_{1}+g_{2}$};
    \node at (3, 3.5) {$v_{2}$};
    \node (A) at (-1, 3) {$\dots$};
    \node at (4, 3.5) {$w$};
    \node at (4, 2) {$\scriptstyle f+g_{1}$};
    \node (B) at (4, 3) {$\bullet$};
    \path (A3_0) edge [-] node [auto] {$\scriptstyle{}$} (A3_1);
    \path (A3_2) edge [-] node [auto] {$\scriptstyle{}$} (A3_3);
    \path (A3_1) edge [-] node [auto] {$\scriptstyle{}$} (A3_2);
    \path (A) edge [-] node [auto] {$\scriptstyle{}$} (A3_0);
    \path (A3_3) edge [-] node [auto] {$\scriptstyle{}$} (B);
  \end{tikzpicture}
\]
and deleting from the embedding the vectors $f,g_{1},\dots,g_{k}$ we obtain a graph that embeds in a lattice of rank one smaller than its number of vertices contradicting Lemma~\ref{l:3final1}. 
It follows that $x\neq y$, that $g_{k}\cdot x=1$ and $g_{i}\cdot x=0$ for every $i\neq k$ and that $y=\scriptstyle f+\sum_{i=1}^{k}(-1)^{i}g_{i}+y'$. That is, we are precisely in the situation described above that allows us to define $\Gamma_{r}$ which has a final vertex of weight at least 3 hit by a basis vector appearing only twice with coefficient $\pm1$. If both qc-legs end in 2-chains in the situation described, we call $\Gamma_{r}$ the graph obtained by contracting both of them by the described method, and arriving to a graph with no final vertex of weight 2 in the qc-legs.

Now, since the pairs of vertices in $B_{2}^{Nf}(\Gamma)$ are still the same in $\Gamma_{r}$ we have that $B_{2}^{Nf}(\Gamma)=B_{2}^{Nf}(\Gamma_{r})$. Moreover, since in Lemmas~\ref{l:2times} and~\ref{l:bad} we have never used the specific weights on the qc-legs of $\Gamma$, their conclusions hold also for the graph $\Gamma_{r}$ and therefore, by Lemma~\ref{l:bad} we conclude that if $|B_{2}^{Nf}(\Gamma)|=2$ then $B_{2}^{f}(\Gamma_{r})=\emptyset$  and if $|B_{2}^{Nf}(\Gamma)|=1$ then $|B_{2}^{f}(\Gamma_{r})|\leq 1$. In both cases we have $|B_{2}(\Gamma_{r})|=2$ and the lemma is proved.
\end{proof}

In the last lemma we have shown that either the graph with embedding $\Gamma$ satisfies $|B_{2}(\Gamma)|\leq 2$ or we can modify it to a graph $\Gamma_{r}$ which satisfies this assumption. From now on we will use the symbol $\Gamma$ to denote: any graph $\Psi$ in the  families $\Gamma_{1}$ or $\Gamma_{2}$ in the case in which $|B_{2}(\Psi)|\leq 2$; or the corresponding modification $\Psi_{r}$ in case we had $|B_{2}(\Psi)|\in\{3,4\}$. We will call these modified graphs the \emph{reduced version} of the graphs in $\Gamma_{1}$ or $\Gamma_{2}$. Thus, we have proved the following.

\begin{co}\label{c:controledset}
If $\Gamma$ embeds, then at most two vectors appearing only twice in the embedding with coefficient $\pm1$ hit no internal vertex of weight 2 or 3.
\end{co}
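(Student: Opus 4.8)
The goal is to establish Corollary~\ref{c:controledset}, which reformulates the results obtained through Lemmas~\ref{l:2times}, \ref{l:bad}, and \ref{l:only2} in the uniform language of the set $B_2$. Since this is a corollary, the proof should be short and mostly a matter of assembling the pieces already in place, together with the convention fixed in the paragraph preceding the statement: from now on $\Gamma$ denotes either a graph in the families $\Gamma_1$ or $\Gamma_2$ that already satisfies $|B_2|\le 2$, or its reduced version $\Psi_r$ built in Lemma~\ref{l:only2} when the original graph had $|B_2|\in\{3,4\}$.

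The plan is as follows. First, recall the definitions: $B$ is the set of basis vectors appearing exactly twice in the embedding with coefficients $\pm1$, $V_2$ is the associated set of pairs of hit vertices, and $B_2\subseteq V_2$ is the subset of pairs in which both internal vertices have weight at least $4$ (so the corresponding vector hits \emph{no} internal vertex of weight $2$ or $3$). A basis vector in $B$ hits no internal vertex of weight $2$ or $3$ precisely when its associated pair of vertices lies in $B_2$. Hence the statement ``at most two vectors appearing only twice in the embedding with coefficient $\pm1$ hit no internal vertex of weight $2$ or $3$'' is literally the assertion $|B_2|\le2$. So the entire content to be extracted is: \emph{for the graph $\Gamma$ (in the sense of the convention just fixed), $|B_2(\Gamma)|\le 2$.}

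The verification then splits into the two cases of the convention. If $\Gamma$ is a graph in $\Gamma_1$ or $\Gamma_2$ (with the assumptions of Proposition~\ref{p:mainA}) already satisfying $|B_2|\le 2$, there is nothing to prove. If instead the original graph $\Psi\in\Gamma_1\cup\Gamma_2$ had $|B_2(\Psi)|>2$, then Lemma~\ref{l:only2} applies: it produces the reduced graph $\Psi_r=\Gamma$, which embeds into the standard lattice of rank equal to its number of vertices, satisfies $I(\Psi)=I(\Psi_r)$, and has $|B_2(\Psi_r)|=2$. In either case $|B_2(\Gamma)|\le 2$, and translating back through the correspondence between elements of $B_2$ and basis vectors in $B$ hitting no internal weight-$2$-or-$3$ vertex gives exactly the corollary. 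I would spend one sentence noting that the original graph $\Psi$ with $|B_2(\Psi)|\le 2$ cannot be excluded by Lemma~\ref{l:2times} alone (which only gives $|B_2|\le4$), which is why the reduction of Lemma~\ref{l:only2} is genuinely needed; but no new argument is required here.

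There is essentially no obstacle: every inequality has already been proven. The only thing to be careful about is bookkeeping — making sure the dictionary between ``a basis vector hits no internal vertex of weight $2$ or $3$'' and ``the pair of vertices it hits lies in $B_2$'' is stated cleanly, and that the reader is reminded that after the convention the symbol $\Gamma$ may denote a reduced graph, so that Corollary~\ref{c:controledset} is being asserted for that object. Concretely I would write:

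\begin{proof}
Recall that, by definition, a vector in $B$ fails to hit any internal vertex of weight $2$ or $3$ if and only if the corresponding pair of vertices it hits belongs to $B_{2}$; thus the statement of the corollary is equivalent to the inequality $|B_{2}(\Gamma)|\leq 2$. If $\Gamma$ is one of the graphs in the families $\Gamma_{1}$ or $\Gamma_{2}$ (with the assumptions of Proposition~\ref{p:mainA}) for which already $|B_{2}(\Gamma)|\leq 2$, there is nothing to prove. Otherwise, by our convention, $\Gamma$ is the reduced version $\Psi_{r}$ of a graph $\Psi$ in $\Gamma_{1}$ or $\Gamma_{2}$ with $|B_{2}(\Psi)|\in\{3,4\}$, and Lemma~\ref{l:only2} precisely asserts that $|B_{2}(\Psi_{r})|=2$. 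In either case $|B_{2}(\Gamma)|\leq 2$, which is what we wanted.
\end{proof}
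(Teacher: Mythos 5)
Your proof is correct and matches the paper's treatment: the paper derives Corollary~\ref{c:controledset} directly from Lemma~\ref{l:only2} together with the convention that $\Gamma$ now denotes either the original graph (when $|B_{2}|\leq 2$) or its reduced version, exactly as you do. Your explicit unpacking of the dictionary between ``a vector in $B$ hits no internal vertex of weight $2$ or $3$'' and ``its couple lies in $B_{2}$'' is a harmless (and slightly more careful) elaboration of what the paper leaves implicit.
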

\end{subsection}

\begin{subsection}{Internal vertices of weight 2 and weight 3}
We have established so far that we need to focus on internal vertices of weight 2 and 3 to pursue in our study of the basis vectors which appear only twice in the embedding with coefficient $\pm1$. In the next lemma we show that if we deal with these low weight vertices, then we do not need to worry about the second possibility in Lemma~\ref{l:disjoint}.

\begin{lemma}\label{l:3no2}
If the graph $\Gamma$ embeds, then no vertex of weight 2 or 3 is hit by two different basis vectors appearing exactly twice in the embedding with coefficients $\pm1$. 
\end{lemma}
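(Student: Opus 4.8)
The plan is to argue by contradiction, using the same strategy of isolating the trivalent vertex that powers Lemmas~\ref{l:2times}, \ref{l:bad}, and \ref{l:only2}. Suppose some vertex $t$ with $|t|\in\{2,3\}$ is hit by two distinct basis vectors $f_i,f_j$ each appearing exactly twice with coefficient $\pm1$. By Lemma~\ref{l:disjoint}(1), $f_i$ and $f_j$ do not hit the same second vertex, so write $t=\pm f_i\pm f_j+t'$, with $f_i$ also hitting a vertex $a=\pm f_i+a'$ and $f_j$ also hitting a vertex $b=\pm f_j+b'$, where $a\neq b$. Since $\Gamma$ embeds we use the conventions of Figure~\ref{f:convenzioni1}; note $t$ cannot be the trivalent vertex, nor can it lie on the $2$--leg (both $f_i$ and $f_j$ appear only twice, while by Lemma~\ref{l:2rep} every basis vector hitting the $2$--leg appears at least four times), and $t\neq u$ if $|t|=3$, and $t\neq v$ since at least two of the vectors hitting $v$ appear at least three times. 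So $t$, $a$, and $b$ all lie on the qc-legs.

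The main step is: delete from the embedding the two basis vectors $f_i,f_j$ and delete the single vertex $t$ from $\Gamma$, obtaining a graph $\Gamma'$ with an embedding into a standard lattice of rank strictly less than the number of its vertices; hence $\Gamma'$ has vanishing determinant by Remark~\ref{rem}(\ref{small}). Now $\Gamma'$ splits into the connected component containing the (still trivalent, or now linear) former trivalent vertex and one or two further components. When $|t|=2$, deleting $t$ and the two basis vectors it used up lowers the weights of its (at most two) neighbours $a,b$ each by exactly $1$; when $|t|=3$, at least one neighbour keeps weight $\ge 2$ after the deletion since $t$ contributed only $f_i,f_j,t'$ with $|t'|=|t|-2\le 1$, so at most one neighbour is reduced, and that one is reduced by at most $1$. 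Thus in $\Gamma'$ every component other than the one carrying the former trivalent vertex is a graph all of whose vertices have weight $\ge 2$ except possibly one vertex of weight $1$ sitting at the end (if $a$ or $b$ was originally weight $2$ and final) or, in the worst case, weight $1$ internally. By Lemma~\ref{l:det} the component with the trivalent vertex (if it still has a trivalent vertex) has non-vanishing determinant, and by Remark~\ref{rem}(\ref{dd}) so does it if it has become linear with all weights $\ge2$ (this uses, as in the earlier lemmas, that the only potentially low-weight vertices $u$ and $v$ keep weight $\ge1$ and in fact $\ge 2$ once $v'\ne 0$). Hence one of the other components must have vanishing determinant, and we run the case analysis exactly as in Lemma~\ref{l:disjoint}(2): if that component is linear with all weights $\ge 2$, Remark~\ref{rem}(\ref{dd}) gives a contradiction; if it has one final vertex of weight $1$, then by Remark~\ref{rem}(\ref{dd}) such a linear graph cannot have vanishing determinant; and if it has one internal vertex of weight $1$, it consists of two complementary strings joined at that weight-$1$ vertex, so by Property~\ref{f}(\ref{f:unique}) its embedding into a standard lattice of any rank is unique and, by Lemma~\ref{extravector}(2), no extra vector can be linked to it without introducing new basis vectors — contradicting the fact that in $\Gamma$ the deleted vertex $t$ was linked (via $f_i$ or $f_j$, or via $t'$) to it. In every case we reach a contradiction with the existence of the embedding.

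The delicate point, which I expect to be the main obstacle, is the bookkeeping in the internal weight-$1$ sub-case: one has to check that the basis vector linking the removed vertex $t$ back into the offending complementary-string component is forced to hit a vertex of that component that is \emph{not} the internal weight-$1$ vertex (so that Lemma~\ref{extravector} genuinely applies), and simultaneously that it cannot be one of the final vertices already pinned down by the unique embedding of Property~\ref{f}(\ref{f:unique}); this is the same kind of pigeonhole argument carried out in the last paragraph of the proof of Lemma~\ref{l:disjoint}, concluding that three distinct basis vectors would then all hit both $t$ and a vertex orthogonal to $t$, which is absurd. The cases $|t|=2$ and $|t|=3$ differ only in whether one or both neighbours of $t$ drop a unit of weight, and in the latter case the presence of $t'$ with $|t'|=1$ means $t'$ itself hits some further vertex, which only strengthens the determinant non-vanishing conclusions; so the two cases can be treated uniformly once this is noted. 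This completes the proof.
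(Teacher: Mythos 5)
Your reduction of where $t$ can sit (not the trivalent vertex, not on the $2$--leg, not $u$ or $v$) is correct, and your treatment of the weight-$2$ case matches the paper's: there $t=\pm f_i\pm f_j$ forces $a$ and $b$ to be the two neighbours of $t$, so deleting $t$ and both basis vectors leaves components whose only possible weight-$1$ vertices are \emph{final}, and Lemma~\ref{l:3final1} together with Remark~\ref{rem}(\ref{ddN}) kills the vanishing determinant. The gap is in the weight-$3$ case, and it is not a bookkeeping detail. When $|t|=3$ we have $t=f_i+f_j+t'$ with $|t'|=1$, and the vertices $a=\pm f_i+a'$, $b=\pm f_j+b'$ need \emph{not} be neighbours of $t$: the extra coordinate $t'$ can cancel the $\pm1$ coming from $f_i$ or $f_j$, so $a\cdot t=0$ is possible. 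Your sentence ``at most one neighbour is reduced, and that one is reduced by at most $1$'' conflates the neighbours of $t$ with the vertices hit by $f_i,f_j$; the weight drops occur at $a$ and $b$, wherever they are. In particular, Lemma~\ref{l:disjoint}(2) guarantees that one of $f_i,f_j$ hits an \emph{internal} vertex of weight $2$, say $a=\pm f_i+f$; if $a$ is not adjacent to $t$ (the case $t=f_i+f_j+f$, $a=-f_i+f$), then after your deletion $a$ becomes an \emph{internal} weight-$1$ vertex that may well lie in the component containing the trivalent vertex. None of Lemma~\ref{l:det}, Lemma~\ref{l:3final1}, or Remark~\ref{rem}(\ref{dd}) obstructs a vanishing determinant for a star-shaped component with an internal weight-$1$ vertex, so the contradiction does not materialize.

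Your fallback for the internal weight-$1$ sub-case also does not go through: the ``two complementary strings joined at a $1$'' picture plus Property~\ref{f}(\ref{f:unique}) and Lemma~\ref{extravector} applies only to a \emph{linear} component, and even then the pigeonhole you import from the end of the proof of Lemma~\ref{l:disjoint} relies on $t$ being linked (with pairing $1$) to that component — but in the problematic configuration $t$ is \emph{orthogonal} to $a$, so there is no ``extra vector attached to the component'' to feed into Lemma~\ref{extravector}. This is precisely why the paper abandons the uniform deletion here and instead splits according to whether the weight-$2$ vertex $w$ is adjacent to $t$, deleting a single basis vector in the adjacent case, a different single vector when $t$ is internal, and, in the hardest sub-case ($t$ final of weight $3$ with $w$ not adjacent), running a page-long structural analysis that pins down $s$, $y$, a forced $2$--chain attached to the trivalent vertex, and finally a determinant contradiction on the residual linear graph $\Gamma''$. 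That sub-case is the real content of the lemma and your proposal does not engage with it.
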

\begin{proof}
By Lemma~\ref{l:disjoint} we know that if a vertex $t=f_{i}+f_{j}+t'$ 
is hit by $f_{i}$ and $f_{j}$ which appear only twice in the embedding with coefficient $\pm1$, then there are two other vertices with embeddings $w=f+f_{i}$ and $y=f_{j}+y'$ (up to signs), where $|y'| \geq 1$. If $t'=0$ this is impossible, since this would force the local configuration:
\[
  \begin{tikzpicture}[xscale=1.5,yscale=-0.4]
    \node (A0_2) at (2, 0) {$\scriptstyle f_{i}+f$};
    \node (A0_3) at (3, 0) {$\scriptstyle f_i + f_j$};
    \node at (2, 1.75) {$\scriptstyle w$};
    \node at (3, 1.75) {$\scriptstyle t$};
    \node at (4, 1.75) {$\scriptstyle y$};
    \node (A0_4) at (4, 0) {$\scriptstyle f_{j}+y'$};
    \node (A1_1) at (1, 1) {$\dots$};
    \node (A1_2) at (2, 1) {$\bullet$};
    \node (A1_3) at (3, 1) {$\bullet$};
    \node (A1_4) at (4, 1) {$\bullet$};
    \node (A1_5) at (5, 1) {$\dots$};
    \path (A1_4) edge [-] node [auto] {$\scriptstyle{}$} (A1_5);
    \path (A1_1) edge [-] node [auto] {$\scriptstyle{}$} (A1_2);
    \path (A1_2) edge [-] node [auto] {$\scriptstyle{}$} (A1_3);
    \path (A1_3) edge [-] node [auto] {$\scriptstyle{}$} (A1_4);
  \end{tikzpicture}
  \]
Deleting the vectors $f_i$ and $f_j$ produces a 2-component graph with embedding into the standard lattice of rank one less than the number of vertices, and so the determinant should be vanishing. However, the determinant must be non vanishing by Remark~\ref{rem}(\ref{ddN}) and Lemma~\ref{l:3final1}.

We now show that assuming $|t|=3$ again leads to a contradiction. We consider the two following scenarios:
\begin{enumerate}
\item The vertices $w=f_{i}+f$ and $t=f_{i}+f_{j}+g$ are adjacent in $\Gamma$. In this case consider the two  component graph $\Gamma'$ with embedding and vanishing determinant obtained from $\Gamma$ by deleting the vector $f_{i}$. In $\Gamma'$ the vertex $w$ has weight 1, but since it is final in its connected component, the determinant of $\Gamma'$ is non vanishing by Remark~\ref{rem}(\ref{ddN}) or Lemma~\ref{l:3final1}. This contradiction shows that the configuration is not possible.
\item The vertices $w$ and $t$ are not adjacent in $\Gamma$. We assume that $t=f_{i}+f_{j}+f$ and $w=\pm f \mp f_{i}$.\begin{itemize}
\item If $t$ were an internal vertex, then neither of its two adjacent vertices can be hit by $f_{i}$, since we are assuming $w\cdot t=0$. Moreover, both of them cannot be hit by $f_{j}$, since this basis vector appears only twice; nor by $f$, since at least one of these adjacent vertices is orthogonal to $w$ and $f_{i}$ appears only twice in the embedding. It follows that one vertex adjacent to $w$ is hit by $f$ and the other one is $y$ forcing the local configuration:
\[
  \begin{tikzpicture}[xscale=2,yscale=-0.5]
    \node (A2_0) at (0, 4) {$s$};
    \node at (0, 2) {$\scriptstyle f+\dots$};
    \node at (1, 2) {$\scriptstyle f_{i}+f_{j}+f$};
    \node (A2_2) at (1, 4) {$t$};
    \node (A2_2) at (2, 4) {$y$};
    \node at (2, 2) {$\scriptstyle f_{j}+y'$};
    \node (A3_0) at (0, 3) {$\bullet$};
    \node (A3_1) at (1, 3) {$\bullet$};
    \node (A3_2) at (2, 3) {$\bullet$};
    \node (A3_3) at (3, 3) {$\dots$};
    \node (A) at (-1, 3) {$\bullet$};
    \node at (-1, 4) {$w$};
    \node at (-1, 2) {$\scriptstyle -f_{i}+f$};
    \path (A3_0) edge [-] node [auto] {$\scriptstyle{}$} (A3_1);
    \path (A3_2) edge [-] node [auto] {$\scriptstyle{}$} (A3_3);
    \path (A3_1) edge [-] node [auto] {$\scriptstyle{}$} (A3_2);
    \path (A) edge [-] node [auto] {$\scriptstyle{}$} (A3_0);
  \end{tikzpicture}
\]
In this situation $t\cdot y=1$ and we consider the two connected component graph $\Gamma'$ obtained from $\Gamma$ by deleting the basis vector $f_{j}$. This graph with embedding needs to have vanishing determinant by Remark~\ref{rem}(\ref{small}), but this is impossible by Lemma~\ref{l:det} or by Lemma~\ref{l:3final1} if $|y'|=1$. We therefore conclude that this configuration is not possible.
\item If the vertex $t$ is final there is only one possible local configuration (up to sign):
\[
  \begin{tikzpicture}[xscale=2,yscale=-0.5]
    \node (A2_0) at (0, 4) {$s$};
    \node at (0, 2) {$\scriptstyle f+\dots$};
    \node at (1, 2) {$\scriptstyle f-f_{i}$};
    \node (A2_2) at (1, 4) {$w$};
    \node (A2_2) at (2, 4) {$y$};
    \node at (2, 2) {$\scriptstyle f-f_{j}+y'$};
    \node (A3_0) at (0, 3) {$\bullet$};
    \node (A3_1) at (1, 3) {$\bullet$};
    \node (A3_2) at (2, 3) {$\bullet$};
    \node (A3_3) at (3, 3) {$\dots$};
    \node (A) at (-1, 3) {$\bullet$};
    \node at (-1, 4) {$t$};
    \node at (-1, 2) {$\scriptstyle f_{i}+f_{j}+f$};
    \path (A3_0) edge [-] node [auto] {$\scriptstyle{}$} (A3_1);
    \path (A3_2) edge [-] node [auto] {$\scriptstyle{}$} (A3_3);
    \path (A3_1) edge [-] node [auto] {$\scriptstyle{}$} (A3_2);
    \path (A) edge [-] node [auto] {$\scriptstyle{}$} (A3_0);
  \end{tikzpicture}
\]
Indeed, the vertex $w$ is not isolated and therefore any adjacent vertex to it needs to be hit by $f$. Call $s$ such a vertex. Since we are assuming that $f_{i}$ and $f_{j}$ appear only twice in the embedding of $\Gamma$, the vertex $s$ is forced to be adjacent to $t$, which we are assuming is final. Now, the vertex $y$ is necessarily different from $s$, since $f\cdot s\neq 0$ and $f_{j}\cdot s\neq 0$, and therefore if $y=s$ we could not have $s\cdot t=1$. It follows that $y$ is orthogonal to $t$, which means in particular that $f$ hits $y$ and thus $y\cdot w=1$ and we have the claimed configuration.

Now, if there exists a basis vector $f'$ such that $y'=f'$ and $s=f-f'$ then the linear set formed by the vertices $t,s,w$ and $y$ is a standard subset of $\Z^{4}$ and by Lemma~\ref{extravector} we know that no final vertex can be attached to this configuration. It follows then that either $s$ or $y$ is hit yet by another basis vector, that is, we have the local configuration: 
\[
  \begin{tikzpicture}[xscale=2,yscale=-0.5]
    \node (A2_0) at (0, 4) {$s$};
    \node at (0, 2) {$\scriptstyle f-f'+s'$};
    \node at (1, 2) {$\scriptstyle f-f_{i}$};
    \node (A2_2) at (1, 4) {$w$};
    \node (A2_2) at (2, 4) {$y$};
    \node at (2, 2) {$\scriptstyle f-f_{j}+f'+y'$};
    \node (A3_0) at (0, 3) {$\bullet$};
    \node (A3_1) at (1, 3) {$\bullet$};
    \node (A3_2) at (2, 3) {$\bullet$};
    \node (A3_3) at (3, 3) {$\dots$};
    \node (A) at (-1, 3) {$\bullet$};
    \node at (-1, 4) {$t$};
    \node at (-1, 2) {$\scriptstyle f_{i}+f_{j}+f$};
    \path (A3_0) edge [-] node [auto] {$\scriptstyle{}$} (A3_1);
    \path (A3_2) edge [-] node [auto] {$\scriptstyle{}$} (A3_3);
    \path (A3_1) edge [-] node [auto] {$\scriptstyle{}$} (A3_2);
    \path (A) edge [-] node [auto] {$\scriptstyle{}$} (A3_0);
  \end{tikzpicture}
\]
where at least one between $s'$ and $y'$ is non zero. We proceed now with the following manipulation of the graph $\Gamma$: we delete the three basis vectors $f, f_{i}$ and $f_{j}$ (notice that $f$ appears exactly 4 times in the embedding, precisely in the local configuration depicted), obtaining a graph $\Gamma'$ with embedding and two vertices less than the original $\Gamma$. It follows that the determinant of $\Gamma'$ is zero (Remark~\ref{rem}(\ref{small})). However, we will show that this is not possible. The graph $\Gamma'$ has the following local configuration:
\[
  \begin{tikzpicture}[xscale=2,yscale=-0.5]
    \node (A2_0) at (0, 4) {$y$};
    \node at (-0.5, 2.75) {$\scriptstyle -$};
    \node at (0, 2) {$\scriptstyle f'+y'$};
    \node (A3_0) at (0, 3) {$\bullet$};
    \node (A3_3) at (1, 3) {$\dots$};
    \node (A) at (-1, 3) {$\bullet$};
    \node at (-1, 4) {$s$};
    \node at (-1, 2) {$\scriptstyle -f'+s'$};
    \path (A) edge [-] node [auto] {$\scriptstyle{}$} (A3_0);
    \path (A3_0) edge [-] node [auto] {$\scriptstyle{}$} (A3_3);
  \end{tikzpicture}
\]
and beyond these two vertices and the negative edge (which does not change anything in the argumentation, see Operation~\ref{op}(\ref{bd})) $\Gamma'$ is a tree with vanishing determinant, one trivalent vertex and all weights at least 2. If $s'$ and $y'$ were both non zero, then we have a contradiction with Lemma~\ref{l:det}. If $s'$ were zero, then $y'\neq 0$ and we have a contradiction with Lemma~\ref{l:3final1}. Finally, we consider the case $y'=0$. Since $y$ cannot be final, it is linked to a vertex different from $s$ and also hit by $f'$. We will argue now that in fact $y$ is necessarily connected to a 2-chain linked to the trivalent vertex. Indeed, if this were not the case, the local configuration would be:
\[
  \begin{tikzpicture}[xscale=1.7,yscale=-0.5]
	\node (A3) at (3.75,0) {$\scriptstyle h_{1}+h_{2}+g_{k}+\dots$};
	\node (A0) at (2.75,1) {$\dots$};
	\node (A1) at (3.75,1) {$\bullet$};
    \node (F) at (4.75, 1) {$\bullet$};
    \node (J) at (5.75, 1) {$\dots$};
    \node (K) at (6.75, 1) {$\bullet$};
    \node (L) at (7.75, 1) {$\bullet$};
    \node (M) at (8.75, 1) {$\bullet$};
    \node at (6.75, 0) {$\scriptstyle g_{1}+f'$};
    \node at (7.75, 1.5) {$\scriptstyle y$};
    \node at (7.75, 0) {$\scriptstyle f'$};
    \node at (8.25, 0.75) {$\scriptstyle -$};
    \node at (8.75, 1.5) {$\scriptstyle s$};
    \node at (8.75, 0) {$\scriptstyle -f'+s'$};
    \node at (4.75, 0) {$\scriptstyle g_{k}+g_{k-1}$};
    %
    \path (A0) edge [-] node [auto] {$\scriptstyle{}$} (A1);
    \path (A1) edge [-] node [auto] {$\scriptstyle{}$} (F);
    \path (F) edge [-] node [auto] {$\scriptstyle{}$} (J);
    \path (J) edge [-] node [auto] {$\scriptstyle{}$} (K);
    \path (K) edge [-] node [auto] {$\scriptstyle{}$} (L);
    \path (L) edge [-] node [auto] {$\scriptstyle{}$} (M);
   \end{tikzpicture}
  \]
However, for this embedding to exist we need $|s'|\geq k+1$ and, in these circumstances, after blowing down $y$ and the successive vertices with weight 1, we obtain a trivalent graph with embedding which should have vanishing determinant, but this contradicts Lemma~\ref{l:det} or Lemma~\ref{l:3final1}. So, we are only left with one possibility for $\Gamma'$:  $s$ is final and between the vertex $y$ and the trivalent vertex there is a 2-chain. This implies that $s$ and $y$ are not in the same qc-leg as the vertex $v$ (following the notation of Figure~\ref{f:convenzioni1}). In fact, the graph $\Gamma'$ is as follows:
\[
  \begin{tikzpicture}[xscale=1.2,yscale=-0.5]
    \node (A3_0) at (2.25,0) {$\scriptstyle \sum_{i=2}^{N+2}(-1)^{i}e_{i}+v'$};
	\node (A1_3) at (2.25,1) {$\bullet$};
	\node at (2.25,1.5) {$\scriptstyle v$};
	\node (A) at (1,1) {$\bullet$};
	\node (D) at (0,1) {$\dots$};
	\node (A3) at (3.75,0) {$\scriptstyle e_{1}+g_{k}$};
	\node (A1) at (3.75,1) {$\bullet$};
    \node (A2_3) at (2.5, 3) {$\scriptstyle e_{1}+e_{2}$};
    \node (A3_3) at (3, 3) {$\bullet$};
    \node (A4_4) at (4, 4) {$\scriptstyle e_{2}+e_{3}$};
    \node (A4_6) at (6, 4) {$\scriptstyle e_{N+1}+e_{N+2}$};
    \node (A5_4) at (4, 5) {$\bullet$};
    \node (A5_5) at (5, 5) {$\dots$};
    \node (A5_6) at (6, 5) {$\bullet$};
    \node (F) at (4.75, 1) {$\bullet$};
    \node (J) at (5.75, 1) {$\dots$};
    \node (K) at (6.75, 1) {$\bullet$};
    \node (L) at (7.75, 1) {$\bullet$};
    \node (M) at (8.75, 1) {$\bullet$};
    \node at (6.75, 0) {$\scriptstyle g_{1}+f'$};
    \node at (7.75, 1.5) {$\scriptstyle y$};
    \node at (7.75, 0) {$\scriptstyle f'$};
    \node at (8.25, 0.75) {$\scriptstyle -$};
    \node at (8.75, 1.5) {$\scriptstyle s$};
    \node at (8.75, 0) {$\scriptstyle -f'+s'$};
    \node at (4.75, 0) {$\scriptstyle g_{k}+g_{k-1}$};
    \node at (3.75, 1.5) {$\scriptstyle u$};
    \path (A3_3) edge [-] node [auto] {$\scriptstyle{}$} (A1_3);
    \path (A3_3) edge [-] node [auto] {$\scriptstyle{}$} (A1);
    \path (A5_4) edge [-] node [auto] {$\scriptstyle{}$} (A5_5);
    \path (A3_3) edge [-] node [auto] {$\scriptstyle{}$} (A5_4);
    \path (A5_5) edge [-] node [auto] {$\scriptstyle{}$} (A5_6);
    \path (A1_3) edge [-] node [auto] {$\scriptstyle{}$} (A);
    \path (A) edge [-] node [auto] {$\scriptstyle{}$} (D);
    \path (A1) edge [-] node [auto] {$\scriptstyle{}$} (F);
    \path (F) edge [-] node [auto] {$\scriptstyle{}$} (J);
    \path (J) edge [-] node [auto] {$\scriptstyle{}$} (K);
    \path (K) edge [-] node [auto] {$\scriptstyle{}$} (L);
    \path (L) edge [-] node [auto] {$\scriptstyle{}$} (M);
   \end{tikzpicture}
  \]
where $s'=\sum_{i=1}^{k}(-1)^{i+1}g_{i}+\sum_{i=1}^{N+2}(-1)^{k+i-1}e_{i}+s''$, and since $y$ in $\Gamma$ had weight 3, then $y\neq u$ and $k\geq 0$ (if $k=0$, then $u\cdot y =1$ and $u=e_{1}+f'$). It is clear that in the embedding of $\Gamma'$ the basis vector $f'$ cannot hit any vertices different from $y$ and its two adjacent ones. This in turn implies that the vectors $g_{i}$ and $e_{i}$ hit only the vertices depicted. We now proceed to delete from $\Gamma'$ the $N+k+3$ basis vectors $e_{1},\dots,e_{N+2},g_{1},\dots,g_{k},f'$. This reduces the number of vertices precisely by $N+k+3$ and leaves us with the following graph $\Gamma''$, which admits an embedding and has vanishing determinant (just as $\Gamma'$, it embeds into a standard lattice of rank smaller than its number of vertices):
 \[
  \begin{tikzpicture}[xscale=1.5,yscale=-0.5]
    \node (A0_2) at (2, 0) {$\scriptstyle v'$};
    \node at (2, 1.5) {$\scriptstyle v$};
    \node (A0_4) at (4, 0) {$\scriptstyle s''$};
    \node  at (4, 1.5) {$\scriptstyle s$};
    \node (A1_1) at (1, 1) {$\bullet$};
    \node (A1_2) at (2, 1) {$\bullet$};
    \node (A1_3) at (4, 1) {$\bullet$};
    \node (A1_0) at (0, 1) {$\dots$};
    \path (A1_0) edge [-] node [auto] {$\scriptstyle{}$} (A1_1);
    \path (A1_1) edge [-] node [auto] {$\scriptstyle{}$} (A1_2);
  \end{tikzpicture}
  \]
However, the linear graph $\Gamma''$ cannot have vanishing determinant by Remark~\ref{rem}(\ref{dd}), and this contradiction proves the lemma.
\end{itemize}
\end{enumerate}
\end{proof}
\end{subsection}

\begin{subsection}{Contributions to $I(\Gamma)$}
As we have mentioned before, the strategy of the proof of Proposition~\ref{p:mainA} is to compute the quantity $I(\Gamma) $ from the embedding. In the next lemma we show that each internal vertex of weight 2 or 3 hit by a vector contributing negatively to $I(\Gamma)$ is also hit by a vector with a positive contribution. A key result we will be using repeatedly is Lemma~\ref{l:3final1}, which we want to stress, applies also to the graphs $\Gamma_{r}$.

\begin{lemma}\label{l:3and2comp}
Suppose that $\Gamma$ embeds and that there is a basis vector $e$ which appears only twice in the embedding with coefficient $\pm1$. Moreover, assume $e$ hits an internal vertex of weight 3, $w=e+e_{i}+e_{j}$, or an internal vertex of weight 2, $w=e+e_{i}$. Then, the contribution to $I(\Gamma)$ of the set $\{e_{i},e_{j}\}$ is at least 1.
\end{lemma}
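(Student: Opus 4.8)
The key point is that $e$ appears exactly twice with coefficient $\pm 1$, so its own contribution to $I(\Gamma)$ is $(1+1)-3=-1$; hence we must compensate the negative contributions of the auxiliary basis vectors $e_i$ (and $e_j$) with at least one positively contributing vector. I would first reduce to the weight-$2$ case and the weight-$3$ case separately, but the philosophy is the same: the vector $e_i$ is forced, by orthogonality and the structure of $\Gamma$ established in Lemmas~\ref{l:firststep}, \ref{l:2rep}, \ref{l:no1}, and~\ref{l:3no2}, to be hit by several vertices. Concretely, since $w$ is \emph{internal}, it has two neighbours in $\Gamma$; both are orthogonal to $e$ (because $e$ appears only twice and its second appearance is elsewhere), so both neighbours must absorb the coordinates $e_i$ (and $e_j$) of $w$. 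Thus each of $e_i,e_j$ is hit by $w$ and by at least one neighbour of $w$; I would then argue, via the determinant obstructions (Lemma~\ref{l:det}, Lemma~\ref{l:3final1}, Remark~\ref{rem}(\ref{ddN}),(\ref{dd})), that $e_i$ (and $e_j$) cannot appear only twice, for otherwise deleting $e$ and $e_i$ (and $e_j$) and the vertex $w$ would leave a graph embedded in a standard lattice of rank strictly smaller than its number of vertices whose connected components all have nonvanishing determinant — a contradiction. This is the main obstacle: ruling out the borderline case where $e_i$ appears exactly three times but all three appearances are with coefficient $\pm 1$ and at internal vertices of low weight, since such a vector contributes $0$, not positively. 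Here I would invoke Lemma~\ref{l:3no2} (no weight-$2$ or weight-$3$ vertex is hit by two vectors appearing only twice) together with Corollary~\ref{c:controledset} and the local analysis of the configuration around $w$ to show that at least one of $e_i,e_j$ is forced to appear at a vertex of weight $\ge 4$, or at least four times, contributing $\ge 1$.

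In more detail, for the weight-$3$ case $w=e+e_i+e_j$: the two neighbours $w_-,w_+$ of $w$ satisfy $w_\pm\cdot e=0$, so the $e_i,e_j$ coordinates of $w$ must be cancelled among $w_-$ and $w_+$. If $e_i$ (say) were hit only by $w$ and one neighbour, then deleting $e$ (which kills $w$'s contribution by $1$) and $e_i$ and the vertex $w$ produces $\Gamma'$ with $|\Gamma'|-1$ vertices still embedded in $\Z^{|\Gamma'|-1}$, hence $\det\Gamma'=0$ by Remark~\ref{rem}(\ref{small}); but the component of $\Gamma'$ carrying the trivalent vertex has nonvanishing determinant by Lemma~\ref{l:det}, and every linear component has nonvanishing determinant by Remark~\ref{rem}(\ref{dd}) (or Lemma~\ref{l:3final1} if a final vertex dropped to weight $1$) — contradiction. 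So $e_i$ is hit at least three times; the same for $e_j$. If either of them is hit by a vertex of weight $\ge 4$ or is hit at least four times, its contribution is $\ge 1$ and we are done. The remaining case is that $e_i$ and $e_j$ each appear exactly three times, always with coefficient $\pm 1$, only at internal vertices of weight $2$ or $3$; I would show this forces $w$ to sit inside a long chain of low-weight vertices whose complementary-string structure (Property~\ref{f}(\ref{f:unique})) together with Lemma~\ref{extravector} is incompatible with $w$ being attached to the rest of $\Gamma$, or else directly produces a proper subgraph with nonvanishing determinant embedded in a lattice of smaller rank.

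For the weight-$2$ case $w=e+e_i$ the argument is cleaner: $w$'s two neighbours are both orthogonal to $e$, so both must be hit by $e_i$; together with $w$ itself this gives $e_i$ at least three appearances. If $e_i$ appears at a vertex of weight $\ge 4$, or appears $\ge 4$ times, or appears with a coefficient $\pm 2$ somewhere, its contribution is $\ge 1$. Otherwise $e_i$ appears exactly three times with coefficients $\pm 1$ at internal vertices of weight $2$ or $3$; I would then run the deletion argument: remove $e$ and $e_i$ and the vertex $w$, and show the resulting graph violates one of the determinant lemmas, using that both neighbours of $w$ had weight exactly $2$ or $3$ and now drop in weight. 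The only subtlety is when a neighbour of $w$ drops to weight $1$ and becomes final or internal; Lemma~\ref{l:3final1} (which applies to the $\Gamma_r$ as well, as emphasised before the statement) handles the final case, and the internal case is handled by iterating blow-downs (Operation~\ref{op}(\ref{bd})) until reaching a configuration with a trivalent vertex of weight $2$ and applying Lemma~\ref{l:det}. Assembling these cases gives the desired conclusion that $\{e_i,e_j\}$ (or $\{e_i\}$) contributes at least $1$ to $I(\Gamma)$.
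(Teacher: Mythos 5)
Your reduction is the right one: by Lemmas~\ref{l:no1} and~\ref{l:3no2} each of $e_i,e_j$ already contributes at least $0$, so the entire content of the lemma is to exclude the borderline configuration in which $e_i$ and $e_j$ each appear exactly three times, always with coefficient $\pm1$. You correctly isolate this as the main obstacle, but the argument you offer for it --- that such a configuration ``forces $w$ to sit inside a long chain of low-weight vertices'' incompatible with the complementary-string structure of Property~\ref{f}(\ref{f:unique}) and Lemma~\ref{extravector} --- is not an argument, and it is not how the case is closed. The mechanism you are missing is that the \emph{second appearance of $e$} is itself a source of extra appearances of $e_i$ and $e_j$: the other vertex $u$ hit by $e$ satisfies either $u\cdot w=1$ (a case that must be split off and is killed by a determinant contradiction after deleting $e$) or $u\cdot w=0$, and in the latter case orthogonality forces $u$ to be hit by $e_i$ or $e_j$, with a coefficient that, if of absolute value $\ge 2$, finishes the proof. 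Say $u$ picks up $e_i$ with coefficient $\mp1$. Then $e_j$, which by Lemma~\ref{l:3no2} must appear a third time, does so at a vertex $v\notin\{w,w_-,w_+,u\}$; since $v\cdot w=0$ and $e\cdot v=0$, one gets $e_i\cdot v\neq0$, so $e_i$ hits $w$, a neighbour of $w$, $u$, and $v$ --- four vertices --- and contributes at least $1$. Nothing about chains, complementary strings, or Lemma~\ref{extravector} is needed, and the statement is proved by exhibiting a fourth appearance, not by showing the local picture cannot be attached to $\Gamma$. The same observation disposes of the weight-$2$ case in one line ($u\cdot w=0$ forces $e_i\cdot u\neq0$, giving $e_i$ its fourth appearance at $w$, $w_-$, $w_+$, $u$), whereas the deletion argument you propose there is both unnecessary and unsafe: after deleting $e$, $e_i$ and the vertex $w$, a neighbour of $w$ may drop to weight $1$ while remaining \emph{internal} in a linear component, and Remark~\ref{rem}(\ref{dd}) explicitly allows such a component to have vanishing determinant.

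Two smaller gaps. First, your opening claim that both neighbours of $w$ are orthogonal to $e$ is not automatic: a priori the second appearance of $e$ could be at a neighbour of $w$, and this branch has to be treated separately (it leads to a determinant contradiction upon deleting $e$, since the graph then splits into two components with all weights at least $2$). Second, the case analysis must track coefficients throughout: if $e_i$ or $e_j$ hits a neighbour of $w$ with coefficient of absolute value $\ge2$ one is done immediately, and these branches must be peeled off before the counting argument above applies; as written, your proposal conflates ``hit by at least one neighbour'' with the much more specific sign patterns that determine which of the cases one is in.
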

\begin{proof}
We start with the case $w=e+e_{i}+e_{j}$. Notice that by Lemma~\ref{l:3no2} and Lemma~\ref{l:no1} we already know that the contribution of $\{e_{i},e_{j}\}$ to $I(\Gamma)$ is at least 0.

Call $w_{-}$ and $w_{+}$ the two vertices adjacent to $w$. If $e_{i}$ hits $w_{-}$ with a coefficient greater than $1$ in absolute value, then it contributes to $I(\Gamma)$ with at least $2$. 
If $e_{i}$ hits $w_{-}$ with coefficient $-1$ then, either $|e_{j}\cdot w_{-}|=2$, which implies that the contribution of $e_{j}$ to $I(\Gamma)$ is at least 2, or $e_{j}\cdot w_{-}=e\cdot w_{-}=1$. 
We will show that the latter case leads to a contradiction: since the vector $e$ hits precisely $w$ and $w_{-}$ in the embedding of $\Gamma$, we can consider the graph $\Gamma'$ obtained from $\Gamma$ by deleting the vector $e$. This graph $\Gamma'$ should have vanishing determinant, but it consists of two connected components to which Lemma~\ref{l:det} and Remark~\ref{rem}(\ref{dd}) apply and we deduce $\Gamma'$ has non-zero determinant.

Assume now that $e_{i}$ hits $w$, $w_{-}$ and $w_{+}$ always with coefficient $1$ and it hits no other vertex in $\Gamma$. If $e_{j}$ also hits $w_{-}$ or $w_{+}$, we fall back into previously analyzed scenarios.
If $e_{j}\cdot w_{-}=e_{j}\cdot w_{+}=0$, then $e_{j}$ appears exactly two times in the embedding of $\Gamma$ with coefficient $\pm1$: it hits $w$ and the other vertex in the graph which is hit by $e$, which we will call $v$. This contradicts Lemma~\ref{l:3no2} and we conclude that $e_{i}$ needs to hit at least another vertex  $u\in\Gamma\setminus\{w_{-},w,w_{+}\}$ and thus it contributes to $I(\Gamma)$ at least with $1$.

Summing up, we have shown so far that we can find a basis vector with a positive contribution to $I(\Gamma)$ except in the case yet to be studied where $w_{-}=e_{i}+...$ and $w_{+}=e_{j}+...$, that is $e_{i}$ and $e_{j}$ each hit with coefficient $+1$ only one of the adjacent vertices to $w$. We proceed to analyze further this configuration. The vector $e$ hits a vertex $u\in\Gamma$ such that $u\cdot w=0$ (if $u\cdot w\neq 0$ we are in a previously analyzed configuration), and without loss of generality we can assume that $u=\pm e \mp \alpha e_{i}+\dots$. If $|\alpha|\geq 2$, then $e_{i}$ contributes at least with 3 to $I(\Gamma)$ and we are done. We assume then that $u=\pm e \mp e_{i}+\dots$. Now, by Lemma~\ref{l:3no2} the vector $e_{j}$ needs to hit some other vertex in $\Gamma$ besides $w$ and $w_{+}$
: If it hits $w_{-}$ we fall back in one of the previously analyzed cases and it cannot hit $u$ since $u=\pm e \mp e_{i}+\dots$, $w=e+e_{i}+e_{j}$ and $u\cdot w=0$. The only possibility left is then the existence of a different vertex $v\in\Gamma\setminus\{w,w_{-},w_{+},u\}$ such that $v\cdot e_{j}\neq 0$. From this configuration we deduce that $e_{i}\cdot v\neq 0$, since $v\cdot w=0$ and we conclude that $e_{i}$ appears at least four times in the embedding, thus contributing to $I(\Gamma)$ at least with $1$. 

Consider now the case in which the internal vector $w$ has weight 2 with embedding $w=e+e_{i}$ and $e$ appears only twice in the embedding of $\Gamma$ with coefficients $\pm 1$. It is immediate to check that then the basis vector $e_{i}$ contributes at least with 1 to $I(\Gamma)$: since $w$ is internal, $e_{i}$ hits its two adjacent vertices (if this were not the case we arrive to a contradiction by deleting $e$ from the embedding) and since $e$ appears at least on another vertex $u$ with $u\cdot w=0$ it follows that $e_{i}\cdot u\neq 0$, so $e_{i}$ appears at least 4 times in $\Gamma$, contributing thus at least with 1 to $I(\Gamma)$.
\end{proof}

We would like to be able to deduce from Lemma~\ref{l:3and2comp} that the total contribution of the vectors hitting internal vertices of weight 2 or 3 is at least zero. However, this is not yet immediate, since we cannot exclude the possibility of having, for example, two vertices of weight 3, say $w=e+e_{i}+e_{j}$ and $w'=e'+e_{i}-e_{j}$, where $e$ and $e'$ contribute negatively to $I(\Gamma)$. In this case Lemma~\ref{l:3and2comp} only guarantees that the set $\{e,e',e_{i},e_{j}\}$ contributes with at least $-1$ to $I(\Gamma)$. The next two lemmas show that we need not worry about more than two vertices hit by a vector appearing only twice and sharing some other basis vector.

\begin{lemma}\label{l:3ad}
Suppose that $\Gamma$ embeds and that there are three different basis vectors $e,e'$, and $e''$ which appear each exactly twice in the embedding with coefficient $\pm1$. Moreover, assume that these vectors hit three different vertices $w, w'$, and $w''$ of weight 3. Then, there is no basis vector $f$ which hits all three vertices. 
\end{lemma}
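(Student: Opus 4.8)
The plan is to argue by contradiction, supposing that such a basis vector $f$ exists and hitting all three weight-$3$ vertices $w,w',w''$. First I would record the shape of the embeddings: up to signs we may write $w = e + f + \eta_w$, $w' = e' + f + \eta_{w'}$, $w'' = e'' + f + \eta_{w''}$, where each $\eta$ is a single basis vector (since the weights are $3$ and $e,e',e''$ each appear only twice, the remaining coordinate in each of $w,w',w''$ contributes exactly one more basis vector). By Lemma~\ref{l:3no2} no weight-$2$ or weight-$3$ vertex is hit by two distinct vectors appearing only twice, so $e,e',e''$ are genuinely distinct and none of $w,w',w''$ coincides with a vertex hit by two such vectors; this keeps the local pictures from degenerating. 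The first substantive step is to compare the pairwise products $w\cdot w'$, $w\cdot w''$, $w'\cdot w''$: since $f$ hits all three, these products are each at least $1$ in absolute value coming just from the $f$-coordinate, so each pair among $w,w',w''$ must be either adjacent in $\Gamma$ or forced into a controlled non-adjacent configuration. Because $\Gamma$ is a tree with only one trivalent vertex (the central one, of weight $2$, which none of $f,e,e',e''$ hit — this is the consequence of Lemma~\ref{l:2rep} used throughout, since $e_1$ appears $\ge 3$ times), three mutually adjacent vertices of degree $\le 2$ is impossible: at most two of $w,w',w''$ can be pairwise adjacent along a linear stretch, and then the third must be non-adjacent to at least one of them.

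The heart of the argument is then to eliminate the remaining configurations by the now-standard determinant manipulation. Having identified (say) $w$ and $w'$ as the pair that is \emph{not} adjacent, I would delete the basis vector $f$ from the embedding. This produces a graph $\Gamma'$ with one fewer basis vector than vertices, hence $\det \Gamma' = 0$ by Remark~\ref{rem}(\ref{small}). But after deleting $f$ the three vertices $w,w',w''$ drop in weight: each of $w,w',w''$ becomes a vertex of weight $2$, and crucially at least one of them, corresponding to a vertex that was \emph{not} adjacent to the others, becomes a \emph{final} weight-$2$ vertex in its connected component of $\Gamma'$ (here we use that $e$, $e'$, $e''$ appear only twice, so deleting $f$ does not drag in extra adjacencies). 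Now $\Gamma'$ splits into connected components; the component containing the central trivalent vertex of $\Gamma$ has non-vanishing determinant by Lemma~\ref{l:det} (if it still contains a trivalent vertex, using that the central weight is still $2$ and everything else $\ge 2$) or by Remark~\ref{rem}(\ref{dd}) (if that component became linear with all weights $\ge 2$, recalling the only possible low-weight vertices are the final $u$ and possibly $v$, which are handled by Lemma~\ref{l:3final1}). A linear component with all weights $\ge 2$ likewise has non-vanishing determinant, and Lemma~\ref{l:3final1} covers the case where the lone weight-$1$ final vertex survives. Since every component of $\Gamma'$ has non-vanishing determinant, $\det\Gamma'\ne 0$, contradicting $\det\Gamma'=0$.

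The case distinctions to be careful about are exactly which of $w,w',w''$ end up in which component of $\Gamma'$ after deleting $f$, and whether some $\eta$-coordinate among $w,w',w''$ coincides — e.g. whether $\eta_w=\pm\eta_{w'}$, which could create an extra edge or a weight-$1$ vertex. Each such subcase is resolved the same way: either directly by Lemma~\ref{l:det}/Remark~\ref{rem}(\ref{ddN})(\ref{dd}) if no weight-$1$ vertex appears, or by Lemma~\ref{l:3final1} if a final weight-$1$ vertex appears in the same qc-leg as $u$ or $v$. The main obstacle I anticipate is bookkeeping the signs and adjacencies so that the ``at least one of $w,w',w''$ becomes final of weight $2$ in $\Gamma'$'' claim is genuinely forced in every subcase — in particular ruling out the degenerate possibility that all three vertices lie consecutively on a single linear stretch, which would require two of the $\eta$'s to be $\pm$ each other and the third $\eta$ to link to the rest of the graph, forcing a weight mismatch that Lemma~\ref{l:3final1} again obstructs. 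Once that combinatorial dichotomy is pinned down, the determinant contradiction closes each branch uniformly.
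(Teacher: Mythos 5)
Your central move fails. You propose to delete the basis vector $f$ and conclude via the determinant lemmas, but $f$ appears in (at least) the three vertices $w,w',w''$, and in a tree at least one pair among them is non-adjacent, i.e.\ has pairing $0$ in $\Gamma$. For such a pair the pairing after deleting $f$ becomes $0-(f\cdot w)(f\cdot w')=\mp1$: a \emph{new edge} appears. Likewise an adjacent pair whose pairing was carried by $f$ loses its edge, and one whose pairing was not carried by $f$ acquires pairing $2$ or $-1$. The resulting $\Gamma'$ is therefore not a forest of linear graphs and star-shaped trees --- it can contain cycles, double edges, and a completely rearranged adjacency structure depending on where $e,e',e''$ and the third coordinates $\eta_w,\eta_{w'},\eta_{w''}$ reappear --- so Lemma~\ref{l:det}, Lemma~\ref{l:3final1} and Remarks~\ref{rem}(\ref{dd}),(\ref{ddN}) simply do not apply to it. For the same reason your claim that ``at least one of $w,w',w''$ becomes a \emph{final} weight-$2$ vertex in $\Gamma'$'' is unjustified: the vertex does drop to weight $2$, but its valency in the new intersection graph is governed by the new edges created by removing $f$, not by its valency in $\Gamma$. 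Note that throughout the paper a basis vector is only ever deleted when it appears exactly twice, in two \emph{adjacent} vertices with matching signs, precisely so that the deletion removes one edge and keeps a tree.

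The paper's proof is organized differently and you should compare it with your sketch. It cases on the adjacency pattern of $w,w',w''$. If all three are pairwise orthogonal, or if exactly one pair is adjacent, orthogonality forces one of the third coordinates $e_i,e_j,e_k$ to coincide with one of $e,e',e''$, contradicting Lemma~\ref{l:3no2} --- no determinant argument is needed there. The only substantive case is the path configuration $w-w'-w''$, where the embeddings are pinned down as $w=e+f+e_i$, $w'=e'+f+e_j$, $w''=e''+f-e_i$, and the contradiction comes from a sub-case analysis of where $e$, $e'$, $e''$ reappear; the deletions performed there are of vectors appearing exactly twice in adjacent vertices (cleanly removing an edge), or of specific vertices together with all the basis vectors supported on them, each time landing in a configuration covered by Lemma~\ref{l:det}, Lemma~\ref{l:3final1}, or Remark~\ref{rem}(\ref{dd}). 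Your proposal collapses all of this into a single deletion of $f$, which is exactly the step that cannot be made to work.
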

\begin{proof}
Suppose there were a basis vector $f$ which hits the three vertices with embeddings $w=e+f+e_{i}$, $w'=e'+f+e_{j}$ and $w''=e''+f+e_{k}$ up to signs. 
\begin{enumerate}
\item If every two of these three vertices were orthogonal, then $\{e_{i},e_{j},e_{k}\}\cap\{e,e',e''\}\neq\emptyset$ and we arrive to a contradiction because of Lemma~\ref{l:3no2}.
%
\item If we have $w\cdot w'=1$ and $w\cdot w''=w'\cdot w''=0$, then we need $w=e+f+e_{i}$ and $w'=e'+f+e_{j}$ with $e_{i}\neq e_{j}$. The embedding of the vertex $w''$ is then impossible, again because of Lemma~\ref{l:3no2}.
\item If the three vertices verify $w\cdot w'=1$ and $w'\cdot w''=1$, then the local configuration needs to be the following:
\[
  \begin{tikzpicture}[xscale=2,yscale=-0.5]
    \node (A2_0) at (0, 3.5) {$w'$};
    \node at (1, 3.5) {$w''$};
    \node at (0, 2) {$\scriptstyle e'+f+e_{j}$};
    \node at (1, 2) {$\scriptstyle e''+f-e_{i}$};
    \node (A3_0) at (0, 3) {$\bullet$};
    \node (A3_3) at (1, 3) {$\bullet$};
    \node (A) at (-1, 3) {$\bullet$};
    \node at (-1, 3.5) {$w$};
    \node at (-1, 2) {$\scriptstyle e+f+e_{i}$};
    \path (A) edge [-] node [auto] {$\scriptstyle{}$} (A3_0);
    \path (A3_0) edge [-] node [auto] {$\scriptstyle{}$} (A3_3);
 
  \end{tikzpicture}
\]
where $e_{i}$ and $e_{j}$ are different from each other. This configuration leads to a contradiction: there is some other vertex $u$ in the graph hit by the basis vector $e$ and
\begin{itemize}
\item If  $u\cdot w=u\cdot w''=0$, then the embedding of $u$ is not possible because of Lemma~\ref{l:disjoint}.
\item If $u\cdot w=1$ and $e\cdot u=1$, then we can delete from $\Gamma$ the basis vector $e$, obtaining a two connected component graph $\Gamma'$ whose determinant should vanish. However, $\Gamma'$ has at most one vertex of weight 1 which is final and thus by Lemma~\ref{l:3final1} or Remark~\ref{rem}(\ref{ddN}) its determinant is non-vanishing.
\item If $u\cdot w=1$ and $e\cdot u=-1$, and by the symmetry of the roles played by $w,w''$ and $e,e''$, the following configuration is forced: 
\[
  \begin{tikzpicture}[xscale=2.2,yscale=-0.5]
    \node (A2_0) at (0, 4) {$w$};
    \node at (0, 2) {$\scriptstyle e+f+e_{i}$};
    \node at (1, 2) {$\scriptstyle e'+f+e_{j}$};
    \node at (1, 4) {$w'$};
    \node at (2, 4) {$w''$};
    \node at (3, 4) {$u''$};
    \node at (2, 2) {$\scriptstyle e''+f-e_{i}$};
    \node at (3, 2) {$\scriptstyle -e''+f-e_{i}-e_{j}+\dots$};
    \node (A3_0) at (0, 3) {$\bullet$};
    \node (A3_1) at (1, 3) {$\bullet$};
    \node (A3_2) at (2, 3) {$\bullet$};
    \node (A3_3) at (3, 3) {$\bullet$};
    \node (A) at (-1, 3) {$\bullet$};
    \node at (-1, 4) {$u$};
    \node at (-1, 2) {$\scriptstyle -e+f+e_{i}-e_{j}+\dots$};
    \path (A3_0) edge [-] node [auto] {$\scriptstyle{}$} (A3_1);
    \path (A3_2) edge [-] node [auto] {$\scriptstyle{}$} (A3_3);
    \path (A3_1) edge [-] node [auto] {$\scriptstyle{}$} (A3_2);
    \path (A) edge [-] node [auto] {$\scriptstyle{}$} (A3_0);
  \end{tikzpicture}
\]
Now, deleting the basis vector $e'$ and the vertex $w'$ from the embedding yields a new graph $\Gamma'$ with two connected components and one of them is linear. Without loss of generality we assume that the linear component is the one containing the vertex $w''$. We now proceed to delete the vector $e''$ from $\Gamma'$. This yields a graph with vanishing determinant, but the connected component with the trivalent vertex has non vanishing determinant by Lemma~\ref{l:det} and the other, even if it has a double edge between the vertices $w''$ and $u''$, we know its determinant is non vanishing by Remark~\ref{rem}(\ref{ddN}). Here we use the fact that the other vertex $u'$ containing $e'$ has weight at least 2 after deleting $e'$. To see this, observe that $u' \cdot w'=0$ and $u'$ cannot contain $f$ (since $u'$ is also orthogonal to $w, w''$), so $u'=\pm e' \mp e_j + \dots$. Since $u' \neq u$ or $u''$, and $u'$ must be orthogonal to at least one of them, $u'$ must contain at least one more basis vector.
\item If $u\cdot w''=1$, then by symmetry of the roles played by $w$ and $w''$, the only vertex $u''\neq w''$ in the graph hit by $e''$ must satisfy $u''\cdot w=1$. The vertex $u'\neq w'$ hit by $e'$ satisfies $u'\cdot f=0$, since $u'\cdot w=u'\cdot w''=0$. The same argument implies that $f$ can only hit the vertices $\{u'',w,w',w'',u\}$ in $\Gamma$. We then have that the embedding of $u'$ is of the form $u'=e'-e_{j}+\dots$. Now, by Lemma~\ref{l:disjoint}, the basis vector $e_{j}$ must hit a vertex different from $w'$ and $u'$, and since any vertex different from $u'$ hit by $e_{j}$ must be hit by $f$, then $e_{j}$ hits at least one between $u$ and $u''$. Without loss of generality we may assume it hits $u$. 

Since $u\cdot w''=1$, $u\cdot w=0$ and $f\cdot u\neq 0$, we have $e_{i}\cdot u=0$. Again by Lemma~\ref{l:3no2} we know that the vector $e_{i}$ hits some vertex different from $w$ and $w''$, and since this vertex cannot be orthogonal to $w$ and $w''$ it needs to be $u''$. Moreover, we have $f\cdot u''=0$, since $e_{i}\cdot u''\neq 0$ and $u''\cdot w''=0$. We are left with the following local configuration: 
\[
  \begin{tikzpicture}[xscale=2,yscale=-0.5]
    \node (A2_0) at (0, 4) {$w$};
    \node at (0, 2) {$\scriptstyle e+f+e_{i}$};
    \node at (1, 2) {$\scriptstyle e'+f+e_{j}$};
    \node at (1, 4) {$w'$};
    \node at (2, 4) {$w''$};
    \node at (3, 4) {$u$};
    \node at (4.5, 4) {$u'$};
    \node at (2, 2) {$\scriptstyle e''+f-e_{i}$};
    \node at (3, 2) {$\scriptstyle -e+f-e_{j}+\dots$};
    \node at (4.5, 2) {$\scriptstyle e'-e_{j}+\dots$};
    \node (A3_0) at (0, 3) {$\bullet$};
    \node (A3_1) at (1, 3) {$\bullet$};
    \node (A3_2) at (2, 3) {$\bullet$};
    \node (A3_3) at (3, 3) {$\bullet$};
    \node (A3_5) at (4.5, 3) {$\bullet$};
    \node (A) at (-1, 3) {$\bullet$};
    \node at (-1, 4) {$u''$};
    \node at (-1, 2) {$\scriptstyle e''+e_{i}+\dots$};
    \path (A3_5) edge [-] node [auto] {$\scriptstyle{}$} (4.2,3);
    \path (A3_5) edge [-] node [auto] {$\scriptstyle{}$} (4.8,3);
    \path (A3_0) edge [-] node [auto] {$\scriptstyle{}$} (A3_1);
    \path (A3_2) edge [-] node [auto] {$\scriptstyle{}$} (A3_3);
    \path (A3_3) edge [-] node [auto] {$\scriptstyle{}$} (3.3,3);
    \path (A3_1) edge [-] node [auto] {$\scriptstyle{}$} (A3_2);
    \path (A) edge [-] node [auto] {$\scriptstyle{}$} (A3_0);
    \path (A) edge [-] node [auto] {$\scriptstyle{}$} (-1.3,3);
  \end{tikzpicture}
\]
and in the graph $\Gamma$ the basis vectors $e,e'',f$ and $e_{i}$ only appear in the depicted vertices. If we delete these 4 basis vectors we are left with a graph $\Gamma'$, which is still a tree and has an embedding into a standard lattice. In $\Gamma'$ we have the local configuration:
\[
  \begin{tikzpicture}[xscale=2,yscale=-0.5]
    \node at (1, 2) {$\scriptstyle e'+e_{j}$};
    \node at (1, 4) {$w'$};
    \node at (3, 4) {$u$};
    \node at (4.5, 4) {$u'$};
    \node at (2, 2.75) {$-$};
    \node at (3, 2) {$\scriptstyle -e_{j}+\dots$};
    \node at (4.5, 2) {$\scriptstyle e'-e_{j}+\dots$};
    \node (A3_1) at (1, 3) {$\bullet$};
    \node (A3_3) at (3, 3) {$\bullet$};
    \node (A3_5) at (4.5, 3) {$\bullet$};
    \node (A) at (-1, 3) {$\bullet$};
    \node at (-1, 4) {$u''$};
    \node at (-1, 2) {$\scriptstyle \dots$};
    \path (A3_5) edge [-] node [auto] {$\scriptstyle{}$} (4.2,3);
    \path (A3_5) edge [-] node [auto] {$\scriptstyle{}$} (4.8,3);
    \path (A3_1) edge [-] node [auto] {$\scriptstyle{}$} (A3_3);
    \path (A3_3) edge [-] node [auto] {$\scriptstyle{}$} (3.3,3);
    \path (A) edge [-] node [auto] {$\scriptstyle{}$} (-1.3,3);
  \end{tikzpicture}
\]
and the rest of $\Gamma'$ and its embedding coincides with that of $\Gamma$. Notice that if in $\Gamma$ we had $|u''|=2$, then $u''$ was necessary final and in this case $\Gamma'$ embeds into a lattice of rank one smaller than its number of vertices. On the other hand, if in $\Gamma$ we had $|u''|>2$ then $\Gamma'$ would embed into a lattice of rank two less than its number of vertices. In both cases the graph $\Gamma'$ must have vanishing determinant. We will finish the proof of the lemma by showing this cannot hold.

If in $\Gamma$ the vertex $u''$ had weight 2, then $\Gamma'$ has only one connected component. If it had weight greater than 2, then its connected component has at most one vertex of weight 1 which is final and thus by Lemma~\ref{l:3final1} or Remark~\ref{rem}(\ref{ddN}) its determinant is non vanishing. We focus then on the connected component of $\Gamma'$ containing the vertex $w'$, which we call $\Gamma'_{0}$. If in $\Gamma$ the vertices $u$ and $u'$ were orthogonal, or if $|u|>3$, then in $\Gamma'_{0}$ there is no vertex of weight 1 and therefore, if it is a linear graph, it has non-vanishing determinant by Remark~\ref{rem}(\ref{dd}); and if it has a trivalent vertex, since it embeds in a standard lattice, its determinant is non-vanishing by Lemma~\ref{l:det}. Finally, if in $\Gamma$ we had  $u\cdot u'=1$ and $|u|=3$, then in $\Gamma'_{0}$ we find:
\[
  \begin{tikzpicture}[xscale=2,yscale=-0.5]
    \node at (1, 2) {$\scriptstyle e'+e_{j}$};
    \node at (1, 4) {$w'$};
    \node at (3, 4) {$u$};
    \node at (4.5, 4) {$u'$};
    \node at (2, 2.75) {$-$};
    \node at (3, 2) {$\scriptstyle -e_{j}$};
    \node at (4.5, 2) {$\scriptstyle e'-e_{j}+\dots$};
    \node (A3_1) at (1, 3) {$\bullet$};
    \node (A3_3) at (3, 3) {$\bullet$};
    \node (A3_5) at (4.5, 3) {$\bullet$};
    \path (A3_5) edge [-] node [auto] {$\scriptstyle{}$} (4.8,3);
    \path (A3_1) edge [-] node [auto] {$\scriptstyle{}$} (A3_3);
    \path (A3_3) edge [-] node [auto] {$\scriptstyle{}$} (A3_5);
  \end{tikzpicture}
\]
If $|u'|\geq 3$ in $\Gamma'_{0}$, then after blowing down the vectors $e_{j}$ and $e'$ we obtain a graph with embedding and at most one final vertex with weight one, and thus, again by Lemma~\ref{l:3final1} or Remark~\ref{rem}(\ref{ddN}) its determinant, which coincides with the determinant of $\Gamma'_{0}$  is not vanishing. There is only one case left to discuss: $|u'|=2$ in $\Gamma'_{0}$, which implies $u'$ final in $\Gamma$. In this case, the original local configuration in $\Gamma$ had to be:
\[
  \begin{tikzpicture}[xscale=2,yscale=-0.5]
    \node (A2_0) at (0, 4) {$w$};
    \node at (0, 2) {$\scriptstyle e+f+e_{i}$};
    \node at (1, 2) {$\scriptstyle e'+f+e_{j}$};
    \node at (1, 4) {$w'$};
    \node at (2, 4) {$w''$};
    \node at (3, 4) {$u$};
    \node at (4, 4) {$u'$};
    \node at (2, 2) {$\scriptstyle e''+f-e_{i}$};
    \node at (3, 2) {$\scriptstyle -e+f-e_{j}$};
    \node at (4, 2) {$\scriptstyle e'-e_{j}$};
    \node (A3_0) at (0, 3) {$\bullet$};
    \node (A3_1) at (1, 3) {$\bullet$};
    \node (A3_2) at (2, 3) {$\bullet$};
    \node (A3_3) at (3, 3) {$\bullet$};
    \node (A3_5) at (4, 3) {$\bullet$};
    \node (A) at (-1, 3) {$\bullet$};
    \node at (-1, 4) {$u''$};
    \node at (-1, 2) {$\scriptstyle e''+e_{i}+\dots$};
    \path (A3_0) edge [-] node [auto] {$\scriptstyle{}$} (A3_1);
    \path (A3_2) edge [-] node [auto] {$\scriptstyle{}$} (A3_3);
    \path (A3_3) edge [-] node [auto] {$\scriptstyle{}$} (A3_5);
    \path (A3_1) edge [-] node [auto] {$\scriptstyle{}$} (A3_2);
    \path (A) edge [-] node [auto] {$\scriptstyle{}$} (A3_0);
    \path (A) edge [-] node [auto] {$\scriptstyle{}$} (-1.3,3);
  \end{tikzpicture}
\]
with $u''$ not final and thus $|u''|>2$. The same arguments presented before show that in this case, if we delete from $\Gamma$ the vectors $e,e',f$ and $e_{j}$ we arrive to a trivalent graph with an embedding into a lattice of rank one less than its number of vertices, but which has non-vanishing determinant. This contradiction finishes the proof.
\end{itemize}
\end{enumerate}
\end{proof}

\begin{lemma}\label{l:3adplus}
Suppose that $\Gamma$ embeds and that there are three different basis vectors $e,e'$, and $e''$ which appear each exactly twice in the embedding with coefficient $\pm1$. Moreover, assume that these vectors hit three different vertices $w, w'$, and $w''$ of weight at most three. Then, there is no basis vector $f$ which hits all three vertices. 
\end{lemma}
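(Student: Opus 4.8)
The plan is to bootstrap off Lemma~\ref{l:3ad}, which already settles the case where the three shared vertices $w,w',w''$ all have weight exactly $3$. What remains is the situation in which at least one of them, say $w$, has weight $2$. Since all of $e,e',e''$ appear only twice in the embedding with coefficient $\pm1$, by Lemma~\ref{l:no1} none of them appears only once, and by Lemma~\ref{l:3no2} no weight $2$ or weight $3$ vertex is hit by two distinct basis vectors that appear only twice. Combining this with the hypothesis that $f$ hits all three of $w,w',w''$, I would first record the local shape: $w = e + f$ (up to sign, since $|w|=2$), $w' = e' + f + (\text{one more basis vector, or nothing if }|w'|=2)$, $w'' = e'' + f + (\dots)$. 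In particular $f$ already appears at least three times in the embedding.

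The key move is the same determinant-killing strategy used throughout the section. Since $e$ appears only twice and hits $w$ together with exactly one other vertex $u$, I would delete the basis vector $e$ from the embedding: this produces a graph $\Gamma'$ with the same number of vertices but embedded in a standard lattice of one lower rank, hence with vanishing determinant (Remark~\ref{rem}(\ref{small})), while $w$ now has weight $1$. I then run the case analysis on how $w$ sits relative to $w',w''$ and whether $w$ is internal or final, exactly parallel to cases (1)--(3) in the proof of Lemma~\ref{l:3ad}: if the new weight-$1$ vertex is final in its connected component of $\Gamma'$, then that component still contains a vertex whose weight exceeds its valency (either the trivalent vertex, or one of $w',w''$, or some endpoint of a qc-leg), so by Remark~\ref{rem}(\ref{ddN}), Lemma~\ref{l:3final1}, or Lemma~\ref{l:det} its determinant is non-zero, contradicting the vanishing of $\det\Gamma'$. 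If the weight-$1$ vertex $w$ is internal, the blow-down operation (Operation~\ref{op}(\ref{bd})) is compatible with the embedding, and after blowing it down I argue as in Lemma~\ref{l:3ad}: the resulting graph still has vanishing determinant but splits so that the component carrying the trivalent vertex is handled by Lemma~\ref{l:det} while the linear pieces are handled by Remark~\ref{rem}(\ref{dd}). The extra orthogonality relations $w\cdot w'' = 0$ or $w\cdot w' = 0$, together with the fact that the partners of $e', e''$ cannot reuse $f$ (they are orthogonal to the other two of $w, w', w''$), force exactly the kind of rigid local pictures that appear in Lemma~\ref{l:3ad}, and one deletes the additional vectors $e', e'', f$ as needed to reach a trivalent graph on which Lemma~\ref{l:det} gives a non-vanishing determinant.

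The one genuinely new subcase is when two of the three vertices, say $w$ and $w'$, both have weight $2$, so $w = e + f$ and $w' = e' + f$: then $f$ hits $w, w', w''$ and the partners $u, u'$ of $e, e'$ must each be orthogonal to the other two of $\{w, w', w''\}$, which (since any vertex hitting $f$ other than these five is excluded) pins down $f$ to appear in at most five vertices. Here I would delete $f$ itself together with $e, e'$: this removes enough basis vectors to drop the rank by at least the number of vertices removed, so the leftover graph has vanishing determinant, yet it retains the trivalent vertex and otherwise has all weights $\ge 2$ except possibly one final vertex, contradicting Lemma~\ref{l:3final1} or Lemma~\ref{l:det}. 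The main obstacle I anticipate is bookkeeping: making sure that in each configuration the deleted basis vectors really do appear \emph{only} in the vertices I have drawn (so that the rank actually drops by the right amount), and checking that after the deletions the surviving graph is still a tree with the stated weight constraints. This is exactly the type of careful-but-routine verification carried out in Lemmas~\ref{l:3no2} and~\ref{l:3ad}, so I expect it to go through with the same tools, just with an extra layer of case splitting coming from the possibility $|w| = 2$.
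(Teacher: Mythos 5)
Your proposal has the right toolbox but two concrete problems, one of emphasis and one of substance.

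First, you have the case hierarchy inverted. Any configuration in which two or three of $w,w',w''$ have weight $2$ is impossible for a two-line reason that needs no determinant argument: if $w=e+f$ and $w'=e'+f$ then $w\cdot w'=1$ (nothing is available to cancel the shared $f$, since $e\neq e'$ and any third basis vector in a weight-$3$ vertex cannot equal $e$ or $e'$), and the same computation gives $w\cdot w''=w'\cdot w''=1$; three pairwise adjacent vertices cannot occur in a tree. So the subcase you single out as ``genuinely new'' never arises, and the determinant argument you sketch for it is solving a non-problem. The genuinely hard case is the opposite one: exactly one vertex, say $w$, of weight $2$, with $|w'|=|w''|=3$.

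Second, in that hard case your opening move does not work as described. Writing $w=e+f$, $w'=e'+f+g$, $w''=e''+f+h$, the same computation forces $w\cdot w'=w\cdot w''=1$, so $w$ is an internal weight-$2$ vertex whose two neighbours are exactly $w'$ and $w''$ (it cannot be the trivalent vertex, whose coordinates appear at least three times by Lemma~\ref{l:2rep}), and hence the second vertex $u$ hit by $e$ satisfies $u\cdot w=0$, which forces $f\cdot u\neq 0$. Deleting $e$ therefore does not produce a final or blow-downable weight-$1$ vertex: it produces a weight-$1$ vertex of valency \emph{three} (now adjacent to $w'$, $w''$, and $u$), to which none of Lemma~\ref{l:det}, Lemma~\ref{l:3final1}, Remark~\ref{rem}(\ref{ddN}), or the valency-$2$ blow-down of Operation~\ref{op}(\ref{bd}) applies. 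The paper instead keeps $e$, first pins down the rigid structure $h=-g$ and the adjacency pattern $w'-w-w''$, shows $u$ cannot be orthogonal to both $w'$ and $w''$, and only then splits on the sign of $e$ in $u$, killing each resulting configuration by deleting a tailored set of basis vectors \emph{and vertices} before invoking the determinant lemmas. That structural analysis of $u$ (and of the partners of $e'$, $e''$) is the missing idea; without it the generic ``delete, blow down, apply determinant obstructions'' scheme does not close the argument.
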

\begin{proof}
Since the case $w, w'$, and $w''$ all of weight 3 was dealt with in Lemma~\ref{l:3ad}, we might suppose $|w|=2$.
\begin{enumerate}
\item If the three vertices had weight 2 and they were all hit by a common basis vector $f$, the embeddings would be, up to signs, of the form $w=e+f$, $w'=e'+f$ and $w''=e''+f$. However, at least two of these vertices need to be orthogonal but since $e,e'$ and $e''$ are assumed to be different, this is impossible.
\item If $|w|=|w'|=2$ and $|w''|=3$ we are dealing with an embedding, up to signs, of the form $w=e+f$, $w'=e'+f$ and $w''=e''+f+g$. By Lemma~\ref{l:3no2} we know that $g\neq e,e'$ and just like in the preceding case we conclude that this configuration is impossible since $w''$ is orthogonal to at least one between $w$ and $w'$.
\item If $|w|=2$ and $|w''|=|w''|=3$ the embedding, up to signs, is of the form $w=e+f$, $w'=e'+f+g$ and $w''=e''+f+h$. To avoid the orthogonality issues of the two preceding cases we need to have $w$ linked to both $w'$ and $w''$ and $h=-g$. We are dealing with the following configuration: 
\[
  \begin{tikzpicture}[xscale=2,yscale=-0.5]
    \node (A2_0) at (0, 3.5) {$w$};
    \node at (1, 3.5) {$w''$};
    \node at (0, 2) {$\scriptstyle e+f$};
    \node at (1, 2) {$\scriptstyle e''+f-g$};
    \node (A3_0) at (0, 3) {$\bullet$};
    \node (A3_3) at (1, 3) {$\bullet$};
    \node (A) at (-1, 3) {$\bullet$};
    \node at (-1, 3.5) {$w'$};
    \node at (-1, 2) {$\scriptstyle e'+f+g$};
    \path (A) edge [-] node [auto] {$\scriptstyle{}$} (A3_0);
    \path (A3_0) edge [-] node [auto] {$\scriptstyle{}$} (A3_3);
 
  \end{tikzpicture}
\]
Since $e$ appears a second time in the embedding on a vertex that we call $u$ and $u\cdot w=0$, we have $u\cdot f\neq 0$. We claim that $u$ cannot be orthogonal to both $w'$ and $w''$. Indeed, if this were the case we would need one of the following to happen:
	\begin{itemize}
	\item $e'\cdot u\neq 0$, which implies $g\cdot u\in\{0,\pm2\}$. Since we are assuming $u\cdot w''=0$ we deduce that $e''\cdot u\neq 0$. It follows that $e'$ and $e''$ hit $u$, which contradicts the second statement in Lemma~\ref{l:disjoint}.
	\item $e''\cdot u\neq 0$, which by symmetry can be dealt with as the preceding case, yielding the same contradiction.
	\item $g\cdot u\neq0$ and $e''\cdot u=e'\cdot u=0$. However, since $g$ hits $w'$ and $w''$ with opposite signs, we cannot obtain $u\cdot w'=u\cdot w''=0$.
	\end{itemize}
Therefore, as claimed, $u$ is not orthogonal to both $w'$ and $w''$ and by symmetry we might assume $u\cdot w''\neq 0$. We study the following two possibilities:
	\begin{itemize}
	\item If $u=-f+e+\dots$, then either $u\cdot g=-2$ or $u\cdot e''=1$ and $u\cdot g=-1$. Since $w'\cdot u=0$ neither of these two cases can happen (recall that the coefficient of $e'$ is $\pm1$ and that $e''$ and $e'$ cannot hit the same vertex by Lemma~\ref{l:disjoint}).
	\item $u=f-e+\dots$, then either $e'\cdot u=0$ or $e'\cdot u\neq0$. The former case implies, since $w'\cdot u=0$ that $u\cdot g=-1$, from where it follows $u\cdot e''=-1$. Now, there is exactly one vertex in the graph different from $w'$, call it $u'$, such that $u'\cdot e'\neq 0$. Notice that then necessarily $f\cdot u'=g\cdot u'=0$. We are then dealing with the following situation:
\[
  \begin{tikzpicture}[xscale=2.3,yscale=-0.5]
    \node (A2_0) at (0, 4) {$w'$};
    \node at (0, 2) {$\scriptstyle e'+f+g$};
    \node at (1, 2) {$\scriptstyle e+f$};
    \node (A2_2) at (1, 4) {$w$};
    \node (A2_2) at (2, 4) {$w''$};
    \node at (3, 4) {$u$};
    \node at (2, 2) {$\scriptstyle e''+f-g$};
    \node at (3, 2) {$\scriptstyle f-e-g-e''+\dots$};
    \node (A3_0) at (0, 3) {$\bullet$};
    \node (A3_1) at (1, 3) {$\bullet$};
    \node (A3_2) at (2, 3) {$\bullet$};
    \node (A3_3) at (3, 3) {$\bullet$};
    \node (A) at (-1, 3) {$\bullet$};
    \node at (-1, 4) {$u'$};
    \node at (-1, 2) {$\scriptstyle e'+\dots$};
    \path (A3_0) edge [-] node [auto] {$\scriptstyle{}$} (A3_1);
    \path (A3_2) edge [-] node [auto] {$\scriptstyle{}$} (A3_3);
    \path (A3_1) edge [-] node [auto] {$\scriptstyle{}$} (A3_2);
    \path (A) edge [-] node [auto] {$\scriptstyle{}$} (A3_0);
  \end{tikzpicture}
\]
Deleting from the embedding the basis vector $e'$ we get a two component graph with an embedding, whose determinant should vanish, but this contradicts Lemma~\ref{l:3final1} or Remark~\ref{rem}(\ref{dd}).

There is thus only one possibility left to analyze, when $e'\cdot u\neq0$. In this case the graph we are considering is as follows:
\[
  \begin{tikzpicture}[xscale=2.3,yscale=-0.5]
    \node (A2_0) at (0, 4) {$w'$};
    \node at (0, 2) {$\scriptstyle e'+f+g$};
    \node at (1, 2) {$\scriptstyle e+f$};
    \node (A2_2) at (1, 4) {$w$};
    \node (A2_2) at (2, 4) {$w''$};
    \node at (3, 4) {$u$};
    \node at (2, 2) {$\scriptstyle e''+f-g$};
    \node at (3, 2) {$\scriptstyle f-e-e'+\dots$};
    \node (A3_0) at (0, 3) {$\bullet$};
    \node (A3_1) at (1, 3) {$\bullet$};
    \node (A3_2) at (2, 3) {$\bullet$};
    \node (A3_3) at (3, 3) {$\bullet$};
    \node (A) at (-1, 3) {$\bullet$};
    \node at (-1, 4) {$u''$};
    \node at (-1, 2) {$\scriptstyle e''+g+\dots$};
    \path (A3_0) edge [-] node [auto] {$\scriptstyle{}$} (A3_1);
    \path (A3_2) edge [-] node [auto] {$\scriptstyle{}$} (A3_3);
    \path (A3_1) edge [-] node [auto] {$\scriptstyle{}$} (A3_2);
    \path (A) edge [-] node [auto] {$\scriptstyle{}$} (A3_0);
  \end{tikzpicture}
\]
since in this scenario $e''$ cannot hit $u$ (Lemma~\ref{l:disjoint}) and the vertex different from $w''$ it hits cannot be orthogonal to $w'$. Now, either $|u''|>2$ or $u''$ is final, since $g$ cannot hit any vertex in the graph outside $\{u'',w',w''\}$; similarly either $|u|>3$ or $u$ is final. Moreover, notice that the basis vectors in $\{e,e',e'',f,g\}$ can only hit the depicted vertices. If we delete from the graph the basis vectors $\{e,e',e'',f,g\}$ we obtain a graph with an embedding into a lattice of rank smaller than its number of vertices, and it therefore should have vanishing determinant. However, this contradicts  Lemma~\ref{l:3final1} or Remark~\ref{rem}(\ref{dd}).
	\end{itemize}

\end{enumerate}

\end{proof}

Now that we have shown in the last two lemmas that at most two internal vertices of weight at most 3 hit by vectors contributing negatively to $I(\Gamma)$ share other vectors, we analyze thoroughly this possibility in the next two lemmas.

\begin{lemma}\label{l:cases}
Suppose that $\Gamma$ embeds and that there are two distinct basis vectors $e$ and $e'$ which appear each exactly twice in the embedding with coefficient $\pm1$. Moreover, assume that these vectors hit two internal vertices of weight 3 with embeddings $w=e+e_{i}+e_{j}$ and $w'=e'\pm e_{i}+e'_{j}$. Then, at least one of the following holds.
\begin{itemize}
\item[-] $e_{i}$ contributes to $I(\Gamma)$ with at least 2.
\item[-] the set $\{e_{j},e'_{j}\}$, if $e'_{j}\neq e_{j}$, or the set $\{e_{i},e_{j}\}$, if $e'_{j}=e_{j}$, contributes to $I(\Gamma)$ with at least 2.
\end{itemize}
\end{lemma}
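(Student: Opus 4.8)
The aim is to show that when two vectors $e,e'$ which appear exactly twice in the embedding hit internal weight-$3$ vertices $w=e+e_i+e_j$ and $w'=e'\pm e_i+e_j'$ sharing the basis vector $e_i$, then either $e_i$ alone contributes at least $2$ to $I(\Gamma)$, or the pair $\{e_j,e_j'\}$ (resp.\ $\{e_i,e_j\}$ when $e_j'=e_j$) contributes at least $2$. The strategy is a careful case analysis on how the basis vectors $e_i,e_j,e_j'$ distribute themselves among the neighbours of $w$ and $w'$, following exactly the style of Lemmas~\ref{l:3and2comp}, \ref{l:3ad}, and \ref{l:3adplus}: whenever a configuration would force a low contribution, I manipulate $\Gamma$ by deleting the appropriate basis vectors (and possibly some vertices) to obtain a smaller graph which must have vanishing determinant by Remark~\ref{rem}(\ref{small}), and then derive a contradiction using Lemma~\ref{l:det}, Lemma~\ref{l:3final1}, or Remark~\ref{rem}(\ref{ddN})/(\ref{dd}).

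\textbf{Key steps.} First I would record the constraints: by Lemma~\ref{l:3no2}, no weight-$2$ or weight-$3$ vertex is hit by two distinct twice-appearing vectors, so $e_i$, $e_j$, $e_j'$ are each distinct from $e$ and $e'$, and from each other except possibly $e_j=e_j'$. By Lemma~\ref{l:disjoint}(1), $w\ne w'$ and the pair $(w,w')$ is not the hit-set of any single twice-appearing vector. Next, since $w$ is internal, its two neighbours $w_-,w_+$ must together absorb $e_i+e_j$ (and analogously for $w'$): more precisely the argument of Lemma~\ref{l:3and2comp} shows that if neither neighbour of $w$ is hit by $e_i$ with a coefficient $\ge 2$, then each of $e_i$ and $e_j$ is forced to hit at least one neighbour of $w$ with coefficient $\pm1$, and moreover $e$'s second appearance on a vertex $u$ orthogonal to $w$ forces $e_i\cdot u\ne 0$ or $e_j\cdot u\ne 0$. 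I would then split into cases according to whether $e_i$ (or $e_j$) also appears among the neighbours of $w'$. If $e_i$ hits a neighbour of both $w$ and $w'$ (beyond $w,w'$ themselves), and also hits the second-appearance vertex of $e$ or $e'$, then $e_i$ appears at least $5$ times and contributes $\ge 2$; the delicate sub-case is when it appears exactly $4$ times, hitting $w,w',w_-$ (or $w_+$), and $w'_-$ (or $w'_+$) — here I would delete $e$ (and possibly $e'$) to produce a graph with at most one final weight-$1$ vertex and invoke Lemma~\ref{l:3final1}, as in the proof of Lemma~\ref{l:3ad}. If instead $e_j\ne e_j'$ and $e_j$ (resp.\ $e_j'$) hits neighbours of $w$ (resp.\ $w'$) as well as the second-appearance vertices, then $e_j$ and $e_j'$ each appear $\ge 4$ times, giving the pair $\{e_j,e_j'\}$ a contribution $\ge 2$; if $e_j=e_j'$, I would show $e_j$ hits $w,w'$ and at least two neighbours, forcing $e_j$ to appear $\ge 4$ times while $e_i$ appears $\ge 4$ times, so $\{e_i,e_j\}$ contributes $\ge 2$. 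The handful of residual configurations — where $w,w'$ are adjacent, or where the second appearances of $e$ and $e'$ coincide in a vertex forming a small standard subset — are ruled out exactly as in Lemmas~\ref{l:3ad} and~\ref{l:3adplus}: isolate a subconfiguration that is a standard subset of a low-rank lattice and apply Lemma~\ref{extravector}, or delete the shared vectors to reach a trivalent graph of one too-small rank and contradict Lemma~\ref{l:det}.

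\textbf{Main obstacle.} The hard part will be the bookkeeping of signs and coefficients in the sub-case where $e_i$ appears exactly four times and is shared between a neighbour of $w$ and a neighbour of $w'$ while $e_j,e_j'$ behave ``minimally'': one must rule out every way the orthogonality relations $w\cdot w'\in\{0,1\}$, $u\cdot w=0$, $u'\cdot w'=0$ can be satisfied simultaneously, and for each surviving diagram exhibit the correct basis-vector deletion that yields a determinant contradiction. As in the surrounding lemmas, the recurring engine is: after deletion the remaining component containing the trivalent vertex has nonzero determinant by Lemma~\ref{l:det}, so some linear component must be singular, but all its weights are $\ge 2$ (or it has a unique \emph{final} weight-$1$ vertex), contradicting Remark~\ref{rem}(\ref{dd}) or Lemma~\ref{l:3final1}. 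I expect no genuinely new idea is needed beyond those already deployed; the work is in organizing the cases so that the two alternatives in the statement cover all of them.
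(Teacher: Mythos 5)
Your outline identifies the same toolkit and the same overall strategy as the paper: track how $e_i,e_j,e'_j$ distribute over the neighbours $w_\pm,w'_\pm$ and over the second-appearance vertices $u,u'$ of $e,e'$, count appearances to get the contribution of $2$, and close every bad configuration by deleting suitable basis vectors and vertices to force a vanishing determinant that contradicts Lemma~\ref{l:det}, Lemma~\ref{l:3final1}, or Remark~\ref{rem}(\ref{dd}). But what you have written is a plan, not a proof, and for this lemma the exhaustive case analysis \emph{is} the proof. The paper first splits on $w\cdot w'=0$ (which forces $e'_j=\mp e_j$) versus $w\cdot w'=1$; in the second branch it must list the five mutually exclusive ways each of $w_+$ and $w_-$ can be hit by the set $A=\{e,e',e_i,e_j,e'_j\}$, then for each combination distinguish whether $A$ hits five or six vertices in total, and several of the resulting diagrams survive every appearance-counting argument and need a bespoke manipulation. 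None of this is routine bookkeeping that can be deferred; whether "the two alternatives cover all cases" is precisely what has to be demonstrated, configuration by configuration.

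Beyond incompleteness, at least one of your proposed moves is not the right one. In the sub-case where $e_i$ appears exactly four times, on $w$, $w'$, and one neighbour of each, you propose "deleting $e$ (and possibly $e'$)" and invoking Lemma~\ref{l:3final1}. That alone does not suffice: after deleting only $e$ and $e'$, the vertices $u$ and $u'$ may drop to weight $1$ while being \emph{internal} (they are only constrained to be orthogonal to $w$, resp.\ $w'$), and the rank drop is too small for the determinant argument to apply cleanly. What the paper actually does there is delete the \emph{vertices} $w$ and $w'$ together with $e$ and $e'$, observe that $e_i$ then survives on only two vertices, delete $e_i$ as well, and only then apply Lemma~\ref{l:det} and Lemma~\ref{l:3final1} — after separately checking that at most one affected vertex becomes a final weight-$1$ vertex (e.g.\ that $u$ and $w_-$ cannot both have weight $2$). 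Similar care is needed in the $w\cdot w'=0$ branch when $w_+=w'_-$ and only one of $e_i,e_j$ hits that common neighbour: there the paper deletes $e,e',e_i,e_j$ together with three vertices and reduces to a configuration already ruled out inside the proof of Lemma~\ref{l:3no2}. Until these cases are actually enumerated and each closed with a correct deletion, the argument has a genuine gap.
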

\begin{proof}
If any one of $e_{i},e_{j}$ and $e'_{j}$ appears somewhere in the embedding of $\Gamma$ with a coefficient different from $\pm1$ then we are done, since it would contribute to $I(\Gamma)$ at least with 2. So from now on we assume that these basis vectors appear always with coefficient $\pm1$.

If $w\cdot w'=0$, then we necessarily have $e'_{j}=\mp e_{j}$. Let us call $u$ the only vertex in $\Gamma$ besides $w$ hit by the vector $e$. If we assume $u\cdot w=u\cdot w'=0$ we immediately arrive to a contradiction: $u\cdot w=0$ requires $u$ to be hit by precisely one between $e_{i}$ and $e_{j}$, but $u\cdot w'=0$ requires $u$ to be hit by both these vectors because of Lemma~\ref{l:disjoint}. We analyze the following two cases separately:
\begin{enumerate}
\item If $u\cdot w=1$ then, if $e\cdot u=1$, by deleting the vector $e$ from the embedding we arrive to the usual contradiction with Lemma~\ref{l:det} and Remark~\ref{rem}(\ref{dd}) (note that in this scenario $u$ cannot hit $e_i$ or $e_j$ since if $u=e \pm e_i \mp e_j + \dots$, then $|u \cdot w'|=2$, impossible). So we assume $e\cdot u=-1$, which forces the embedding of $u$ to be of the form $u=-e+e_{i}+e_{j}+\dots$. Now, $e'$ also reappears in the embedding on a vertex which we call $u'$ and by symmetry of this problem $u'\cdot w=u'\cdot w'=0$ leads to a contradiction. 

If $u'\cdot w'=1$ then $u'\cdot w=0$ since $u\neq u'$ 
by Lemma~\ref{l:disjoint} and by the same arguments we have used to study the embedding of $u$ we conclude that the only possible configuration is:
\[
  \begin{tikzpicture}[xscale=2,yscale=-0.5]
    \node (A2_0) at (0, 4) {$w$};
    \node at (0, 2) {$\scriptstyle e+e_{i}+e_{j}$};
    \node (A2_2) at (1, 4) {$w_{+}$};
    \node  at (3, 4) {$w'$};
    \node  at (4, 4) {$u'$};
    \node at (2, 4) {$w'_{-}$};
    \node (A2_3) at (3, 2) {$\scriptstyle e'\pm e_{i} \mp e_{j}$};
    \node at (4, 2) {$\scriptstyle -e'\pm e_{i} \mp e_{j}+\dots$};
    \node (A3_0) at (0, 3) {$\bullet$};
    \node (A3_1) at (1, 3) {$\bullet$};
    \node at (1.5, 3) {$\cdots$};
    \node (A3_2) at (2, 3) {$\bullet$};
    \node (A3_3) at (3, 3) {$\bullet$};
    \node (A) at (-1, 3) {$\bullet$};
    \node (B) at (4, 3) {$\bullet$};
    \node at (-1, 4) {$u$};
    \node at (-1, 2) {$\scriptstyle -e+e_{i}+e_{j}+\dots$};
    \path (A3_0) edge [-] node [auto] {$\scriptstyle{}$} (A3_1);
    \path (A3_2) edge [-] node [auto] {$\scriptstyle{}$} (A3_3);
    \path (A) edge [-] node [auto] {$\scriptstyle{}$} (A3_0);
    \path (A3_3) edge [-] node [auto] {$\scriptstyle{}$} (B);
  \end{tikzpicture}
\]
Now, since the vertices $w$ and $w'$ are internal, and even in the case $w_{+}=w'_{-}$, at least one between $e_{i}$ and $e_{j}$ reappears in the embedding besides in the depicted vertices $u,w,w',u'$ and in this case the result is proved.

If $u'\cdot w=1$, then it is again immediate to check that $u'\cdot w'=0$  also holds and we are dealing with the local configuration:
\[
  \begin{tikzpicture}[xscale=2,yscale=-0.5]
    \node (A2_0) at (0, 4) {$w$};
    \node at (0, 2) {$\scriptstyle e+e_{i}+e_{j}$};
    \node at (1, 2) {$\scriptstyle \pm e'+\cdots$};
    \node (A2_2) at (1, 4) {$u'$};
    \node  at (3, 4) {$w'$};
    \node at (4, 4) {$w'_{+}$};
    \node at (2, 4) {$w'_{-}$};
    \node at (3, 2) {$\scriptstyle e'\pm e_{i} \mp e_{j}$};
    \node (A3_0) at (0, 3) {$\bullet$};
    \node (A3_1) at (1, 3) {$\bullet$};
    \node (A3_2) at (3, 3) {$\bullet$};
    \node (A3_3) at (4, 3) {$\bullet$};
     \node (B) at (2, 3) {$\bullet$};
    \node (A) at (-1, 3) {$\bullet$};
    \node at (-1, 4) {$u$};
    \node at (-1, 2) {$\scriptstyle -e+e_{i}+e_{j}+\dots$};
    \path (A3_0) edge [-] node [auto] {$\scriptstyle{}$} (A3_1);
    \path (A3_2) edge [-] node [auto] {$\scriptstyle{}$} (A3_3);
    \path (B) edge [-] node [auto] {$\scriptstyle{}$} (A3_2);
    \path (A) edge [-] node [auto] {$\scriptstyle{}$} (A3_0);

  \end{tikzpicture}
\]
where the vertex $u'$ embeds either as $u'=-e'+e_{i}+\dots$ or $u'=e'+e_{j}+\dots$. Since both vertices $w'_{-}$ and $w'_{+}$ need to be hit at least by one between $e_{i}$ and $e_{j}$ in this case the result is proved.
\item If $u\cdot w=0$ then $u\cdot w'=1$. If $u'\cdot w'=1$, then as before we have $u'\cdot w=0$ and we are in a case completely analogous to the last one studied. We therefore assume $u'\cdot w'=0$, which implies $u'\cdot w=1$ and the following configuration is then the only possibility left:
\[
  \begin{tikzpicture}[xscale=2,yscale=-0.5]
    \node (A2_0) at (0, 4) {$w$};
    \node at (0, 2) {$\scriptstyle e+e_{i}+e_{j}$};
    \node (A2_2) at (1, 4) {$w_{+}$};
    \node  at (3, 4) {$w'$};
    \node  at (4, 4) {$u$};
    \node at (2, 4) {$w'_{-}$};
    \node (A2_3) at (3, 2) {$\scriptstyle e'\pm e_{i} \mp e_{j}$};
    \node at (4, 2) {$\scriptstyle \pm e+\dots$};
    \node (A3_0) at (0, 3) {$\bullet$};
    \node (A3_1) at (1, 3) {$\bullet$};
    \node at (1.5, 3) {$\cdots$};
    \node (A3_2) at (2, 3) {$\bullet$};
    \node (A3_3) at (3, 3) {$\bullet$};
    \node (A) at (-1, 3) {$\bullet$};
    \node (B) at (4, 3) {$\bullet$};
    \node at (-1, 4) {$u'$};
    \node at (-1, 2) {$\scriptstyle \pm e'+\dots$};
    \path (A3_0) edge [-] node [auto] {$\scriptstyle{}$} (A3_1);
    \path (A3_2) edge [-] node [auto] {$\scriptstyle{}$} (A3_3);
    \path (A) edge [-] node [auto] {$\scriptstyle{}$} (A3_0);
    \path (A3_3) edge [-] node [auto] {$\scriptstyle{}$} (B);
  \end{tikzpicture}
\]
Now, if $w_{+}\neq w'_{-}$, 
then we are done since the total contribution to $I(\Gamma)$ of $\{e_{i},e_{j}\}$ would be at least 2. Moreover, if $x:=w_{+}=w'_{-}$, then exactly one of $e_i$ and $e_j$ hit $x$ depending on the signs of $e_i$ and $e_j$ in $w'$. Without loss of generality let us assume $e_i$ hits $x$. If in addition $e_{i}$ hits both $u$ and $u'$ we are done too, since $e_{i}\cdot x=1$.  We are thus left with the following configuration to further analyze:
\[
  \begin{tikzpicture}[xscale=2,yscale=-0.5]
    \node (A2_0) at (0, 4) {$w$};
    \node at (0, 2) {$\scriptstyle e+e_{i}+e_{j}$};
    \node at (1, 2) {$\scriptstyle e_{i}+\dots$};
    \node (A2_2) at (2, 4) {$w'$};
    \node at (1, 4) {$x$};
    \node at (3, 4) {$u$};
    \node at (2, 2) {$\scriptstyle e'+e_{i}-e_{j}$};
    \node (A2_3) at (3, 2) {$\scriptstyle e-e_{j}+\dots$};
    \node (A3_0) at (0, 3) {$\bullet$};
    \node (A3_1) at (1, 3) {$\bullet$};
    \node (A3_2) at (2, 3) {$\bullet$};
    \node (A3_3) at (3, 3) {$\bullet$};
    \node (A) at (-1, 3) {$\bullet$};
    \node (B) at (4, 3) {$\bullet$};
    \node at (4, 4) {$u_{+}$};
    \node at (-1, 4) {$u'$};
    \node at (-1, 2) {$\scriptstyle \pm e'+\cdots$};
    \path (A3_0) edge [-] node [auto] {$\scriptstyle{}$} (A3_1);
    \path (A3_2) edge [-] node [auto] {$\scriptstyle{}$} (A3_3);
    \path (A3_1) edge [-] node [auto] {$\scriptstyle{}$} (A3_2);
    \path (A) edge [-] node [auto] {$\scriptstyle{}$} (A3_0);
    \path (A3_3) edge [-] node [auto] {$\scriptstyle{}$} (B);
  \end{tikzpicture}
\]
where $u'=-e'+e_{i}+ \dots$ or $u'=e'+e_{j}+\dots$ and the vertex $u_{+}$ may or may not exist. We will be done if we are able to show that either $e_{i}$ or $e_{j}$ appears at least once more in the embedding of $\Gamma$. Suppose that this were not the case and that 
\begin{itemize}
\item $u'=-e'+e_{i}+ \dots$. In this case we have that $|u'|\geq 3$, since $u'\cdot x=0$. Moreover, either $u$ is final or $|u|\geq 3$. If we are in the latter case, after deleting from $\Gamma$ the basis vectors $e,e',e_{i}$ and $e_{j}$ and the vertices $x,w$ and $w'$ we obtain a graph $\Gamma'$ with two connected components and an embedding into a lattice of rank smaller than its number of vertices. It follows that det$(\Gamma')=0$. However, this contradicts Lemma~\ref{l:3final1} and Remark~\ref{rem}(\ref{dd}). 

If we are in the case where $u$ is a final vertex, the local configuration we are looking at is the following:
\[
  \begin{tikzpicture}[xscale=2,yscale=-0.5]
    \node (A2_0) at (0, 4) {$w$};
    \node at (0, 2) {$\scriptstyle e+e_{i}+e_{j}$};
    \node at (1, 2) {$\scriptstyle e_{i}+f+\tilde x$};
    \node (A2_2) at (2, 4) {$w'$};
    \node at (1, 4) {$x$};
    \node at (3, 4) {$u$};
    \node at (2, 2) {$\scriptstyle e'+e_{i}-e_{j}$};
    \node (A2_3) at (3, 2) {$\scriptstyle e-e_{j}$};
    \node (A3_0) at (0, 3) {$\bullet$};
    \node (A3_1) at (1, 3) {$\bullet$};
    \node (A3_2) at (2, 3) {$\bullet$};
    \node (A3_3) at (3, 3) {$\bullet$};
    \node (A) at (-1, 3) {$\bullet$};
    \node (B) at (-2, 3) {$\dots$};
    \node at (-1, 4) {$u'$};
    \node at (-1, 2) {$\scriptstyle -e'+e_{i}-f+\tilde u$};
    \path (A) edge [-] node [auto] {$\scriptstyle{}$} (B);
    \path (A3_0) edge [-] node [auto] {$\scriptstyle{}$} (A3_1);
    \path (A3_2) edge [-] node [auto] {$\scriptstyle{}$} (A3_3);
    \path (A3_1) edge [-] node [auto] {$\scriptstyle{}$} (A3_2);
    \path (A) edge [-] node [auto] {$\scriptstyle{}$} (A3_0);
  \end{tikzpicture}
\]
where $|\tilde u|,|\tilde x|\geq 0$. We delete again from the embedding the vectors $e,e',e_{i}$ and $e_{j}$ and the vertices $w,w'$ and $u$. This yields a connected graph with vanishing determinant and local configuration
\[
  \begin{tikzpicture}[xscale=2,yscale=-0.5]
    \node (A2_0) at (0, 4) {$x$};
    \node at (-0.5, 2.75) {$\scriptstyle -$};
    \node at (0, 2) {$\scriptstyle f+\tilde x$};
    \node (A3_0) at (0, 3) {$\bullet$};
    \node (A3_3) at (-2, 3) {$\dots$};
    \node (A) at (-1, 3) {$\bullet$};
    \node at (-1, 4) {$u'$};
    \node at (-1, 2) {$\scriptstyle -f+\tilde u$};
    \path (A) edge [-] node [auto] {$\scriptstyle{}$} (A3_0);
    \path (A) edge [-] node [auto] {$\scriptstyle{}$} (A3_3);
  \end{tikzpicture}
\]
which was shown in the proof of Lemma~\ref{l:3no2} second scenario, second bullet, to lead to a contradiction.

\item $u'=e'+e_{j}+\dots$. In this case, if we delete the basis vector $e_{i}$ which appears exactly 3 times in the embedding, we obtain a graph $\Gamma'$ with vanishing determinant, a connected component consisting of a single vertex, the vertex $x$, and another connected component in which every vertex has weight at least 2. There is one negative edge in $\Gamma'$, but this does not affect the arguments (Operation~\ref{op}(\ref{bd})) and $\Gamma'$ should have non-vanishing determinant by Lemma~\ref{l:det}, a contradiction.
\end{itemize}
\end{enumerate}

Now that we have studied all possible configurations with $w\cdot w'=0$ and have established the lemma in this case, we  move on to study the case $w\cdot w'=1$. We are dealing this time with the local configuration:
\[
  \begin{tikzpicture}[xscale=2,yscale=-0.5]
    \node (A2_0) at (0, 4) {$w$};
    \node at (0, 2) {$\scriptstyle e+e_{i}+e_{j}$};
    \node at (1, 2) {$\scriptstyle e'+e_{i}+e'_{j}$};
    \node (A2_2) at (2, 4) {$w_{+}$};
    \node at (1, 4) {$w'$};
    \node (A3_0) at (0, 3) {$\bullet$};
    \node (A3_1) at (1, 3) {$\bullet$};
    \node (A3_2) at (2, 3) {$\bullet$};
    \node (A) at (-1, 3) {$\bullet$};
    \node at (-1, 4) {$w_{-}$};
    \path (A3_0) edge [-] node [auto] {$\scriptstyle{}$} (A3_1);
    \path (A3_1) edge [-] node [auto] {$\scriptstyle{}$} (A3_2);
    \path (A) edge [-] node [auto] {$\scriptstyle{}$} (A3_0);
 \end{tikzpicture}
\]
where $e_{j}\neq\pm e'_{j}$. We remind the reader that by Lemma~\ref{l:3no2} the vectors $e_{i},e_{j}$ and $e_{j}'$ appear each at least 3 times in the embedding of $\Gamma$. We claim that $e'\cdot w_{+}\neq 1$ (and by symmetry $e\cdot w_{-}\neq 1$). Indeed: if $e'\cdot w_{+}=1$ by deleting $e'$ from the embedding we arrive to a contradiction with Lemma~\ref{l:3final1} and Remark~\ref{rem}(\ref{dd}). The possible embeddings of $w_{+}$ are then: 
	\begin{itemize}
		\item[$-$] $w_{+}=-e'+e_{i}+e_{j}'-e_{j}+\dots$,
		\item[$-$] $w_{+}=e_{i}-e_{j}+\dots$,
		\item[$-$] $w_{+}=e_{j}'+\dots$,
		\item[$-$] $w_{+}=e_{i}-e+\dots$, or
		\item[$-$] $w_{+}=e_{j}'\pm e_{j}\mp e+\dots$.		
	\end{itemize}
Notice that these possibilities are mutually exclusive: with this notation we mean that the only vertices from the set $A:=\{e,e',e_{i},e_{j},e_{j}'\}$ hitting $w_{+}$ are the ones evidenced.  We start by considering the two last cases, characterized by $e\cdot w_{+}\neq 0$. Notice that the five vectors in $A$ need to hit at least five vertices in $\Gamma$ (if this were not the case, by deleting at most 4 vertices we would have an embedding of a graph with non vanishing determinant into a smaller rank lattice). We show first that if the vectors in $A$ hit at least 6 vertices, then the result holds. In this case there is a vertex $v$, orthogonal to $w$ and $w'$, hit by a vector in $A$ and such that $e'\cdot v=0$. This immediately implies that $v$ is hit by $e_{i},e_{j}, e'_{j}$. There are at least two more vertices hit by the vectors in $A$: $w_{-}$ and some other vertex $v'$. Precisely one of these two is hit by $e'$. If it is $w_{-}$ we are done: by the same argument as before, $v'$ is hit by $e_{i},e_{j}, e'_{j}$ and looking at the vertices $v,v',w,w'$ and $w_{+}$ we have either $e_{i}$ appearing 5 times and $e_{j},e_{j}'$ three times each or $e_{i}$ appearing 4 times and $e_{j},e_{j}'$ four times each. If $w_{-}\cdot e'=0$, we are dealing with:
\[
  \begin{tikzpicture}[xscale=2.7,yscale=-0.75]
    \node (A2_0) at (0, 4) {$w$};
    \node at (0, 2) {$\scriptstyle e+e_{i}+e_{j}$};
    \node at (1, 2) {$\scriptstyle e'+e_{i}+e'_{j}$};
    \node (A2_2) at (2, 4) {$w_{+}$};
    \node at (1, 4) {$w'$};
    \node at (3, 4) {$v$};
    \node at (2, 2) {$\scriptstyle\pm e\begin{cases}\scriptstyle+e_{i}+\dots\\ \scriptstyle\mp e_{j}+e_{j}'+\dots\end{cases}$};
    \node at (3, 2) {$\scriptstyle e_{i}-e_{j}-e_{j}'+\dots$};
    \node at (4, 2) {$\scriptstyle e'\begin{cases}\scriptstyle-e_{i}+e_{j}+\dots\\ \scriptstyle -e_{j}'+\dots\end{cases}$};
    \node (A3_0) at (0, 3) {$\bullet$};
    \node (A3_1) at (1, 3) {$\bullet$};
    \node (A3_2) at (2, 3) {$\bullet$};
    \node (A3_3) at (3, 3) {$\bullet$};
    \node (A) at (-1, 3) {$\bullet$};
    \node (B) at (4, 3) {$\bullet$};
    \node at (4, 4) {$v'$};
    \node at (-1, 4) {$w_{-}$};
    \path (A3_0) edge [-] node [auto] {$\scriptstyle{}$} (A3_1);
    \path (A3_1) edge [-] node [auto] {$\scriptstyle{}$} (A3_2);
    \path (A) edge [-] node [auto] {$\scriptstyle{}$} (A3_0);
  \end{tikzpicture}
\]
where the braces mean that one of the two options must occur. Since $w_{-}$ must be hit either by both $e_{i}$ and $e_{j}'$ or by $e_{j}$, it is immediate to check that all possible embeddings depicted above yield $e_{i}$ or $\{e_{j},e_{j}'\}$ contributing to $I(\Gamma)$ with at least 2 except for the following configuration:
\[
  \begin{tikzpicture}[xscale=2,yscale=-0.5]
    \node (A2_0) at (0, 4) {$w$};
    \node at (0, 2) {$\scriptstyle e+e_{i}+e_{j}$};
    \node at (1, 2) {$\scriptstyle e'+e_{i}+e'_{j}$};
    \node (A2_2) at (2, 4) {$w_{+}$};
    \node at (1, 4) {$w'$};
    \node at (3, 4) {$v$};
    \node at (2, 2) {$\scriptstyle -e+e_{i}+\dots$};
    \node at (3, 2) {$\scriptstyle e_{i}-e_{j}-e_{j}'+\dots$};
    \node at (4, 2) {$\scriptstyle e'-e_{j}'+\dots$};
    \node (A3_0) at (0, 3) {$\bullet$};
    \node (A3_1) at (1, 3) {$\bullet$};
    \node (A3_2) at (2, 3) {$\bullet$};
    \node (A3_3) at (3, 3) {$\bullet$};
    \node (A) at (-1, 3) {$\bullet$};
    \node (B) at (4, 3) {$\bullet$};
    \node at (4, 4) {$v'$};
    \node at (-1, 4) {$w_{-}$};
    \node at (-1, 2) {$\scriptstyle e_{j}+\dots$};
    \path (A3_0) edge [-] node [auto] {$\scriptstyle{}$} (A3_1);
    \path (A3_1) edge [-] node [auto] {$\scriptstyle{}$} (A3_2);
    \path (A) edge [-] node [auto] {$\scriptstyle{}$} (A3_0);
  \end{tikzpicture}
\]
where no other vertex in $\Gamma$ is hit by a vector in $A$. This scenario, however, leads to a contradiction. Indeed, if $w_{+}$ has weight greater than 2, we can delete from the embedding the basis vectors $e,e',e_{i}$ and $e_{j}'$ and the vertices $w',v$ and $v'$ yielding:
\[
  \begin{tikzpicture}[xscale=2,yscale=-0.5]
    \node (A2_0) at (0, 4) {$w$};
    \node at (0, 2) {$\scriptstyle e_{j}$};
    \node (A2_2) at (2, 4) {$w_{+}$};
    \node at (2, 2) {$\scriptstyle +\dots$};
    \node (A3_0) at (0, 3) {$\bullet$};
    \node (A3_2) at (2, 3) {$\bullet$};
    \node (A) at (-1, 3) {$\bullet$};
    \node at (4, 4) {$\ $};
    \node at (-1, 4) {$w_{-}$};
    \node at (-1, 2) {$\scriptstyle e_{j}+\dots$};
    \path (A) edge [-] node [auto] {$\scriptstyle{}$} (A3_0);
  \end{tikzpicture}
\]
where $w$ and $w_{+}$ are now final vertices. The determinant of the graph thus obtained needs to be zero, but this contradicts Lemma~\ref{l:3final1} and Remark~\ref{rem}(\ref{dd}). If $|w_{+}|=2$, then $w_{+}\cdot v=1$, which implies $v\cdot w_{-}=0$, which in turn yields $|v|\geq 4$. This time we delete again from the embedding of $\Gamma$ the same basis vectors $e,e',e_{i}$ and $e_{j}'$, but the vertices $w',w_{+}$ and $v'$ yielding:
\[
  \begin{tikzpicture}[xscale=2,yscale=-0.5]
    \node (A2_0) at (0, 4) {$w$};
    \node at (0, 2) {$\scriptstyle e_{j}$};
    \node (A2_2) at (3, 4) {$v$};
    \node at (3, 2) {$\scriptstyle -e_{j}+\dots$};
    \node (A3_0) at (0, 3) {$\bullet$};
    \node (A3_2) at (3, 3) {$\bullet$};
    \node (A) at (-1, 3) {$\bullet$};
    \node at (4, 4) {$\ $};
    \node at (-1, 4) {$w_{-}$};
    \node at (-1, 2) {$\scriptstyle e_{j}+\dots$};
    \path (A) edge [-] node [auto] {$\scriptstyle{}$} (A3_0);
  \end{tikzpicture}
\]
where $w$ is final and $|v|\geq 2$. Just as before, the determinant of this graph needs to be zero, but this contradicts Lemma~\ref{l:3final1} and Remark~\ref{rem}(\ref{dd}).

We now need to analyze the case $e\cdot w_{+}\neq 0$ and precisely 5 vertices in $\Gamma$ are hit by the five vectors in $A$. The five vertices are $w_{-},w,w',w_{+}$ and another vertex, call it $v$. The basis vector $e'$ hits precisely one between $w_{-}$ and $v$. We start considering the case $e'\cdot w_{-}\neq 0$, which implies, since $v\cdot w,v\cdot w'=0$, that $e_{i},e_{j}$ and $e_{j}'$ all hit $v$. The possibilities for the embedding in this situation are:
\[
  \begin{tikzpicture}[xscale=2.7,yscale=-0.75]
    \node (A2_0) at (0, 4) {$w$};
    \node at (0, 2) {$\scriptstyle e+e_{i}+e_{j}$};
    \node at (1, 2) {$\scriptstyle e'+e_{i}+e'_{j}$};
    \node (A2_2) at (2, 4) {$w_{+}$};
    \node at (1, 4) {$w'$};
    \node at (3, 4) {$v$};
    \node at (2, 2) {$\scriptstyle \pm e\begin{cases}\scriptstyle+e_{i}+\dots\\ \scriptstyle +e_{j}'\mp e_{j}+\dots\end{cases}$};
    \node at (3, 2) {$\scriptstyle e_{i}-e_{j}-e_{j}'+\dots$};
    \node at (-1, 2) {$\scriptstyle \pm e'\begin{cases}\scriptstyle+e_{i}+\dots\\ \scriptstyle +e_{j}\mp e_{j}'+\dots\end{cases}$};
    \node (A3_0) at (0, 3) {$\bullet$};
    \node (A3_1) at (1, 3) {$\bullet$};
    \node (A3_2) at (2, 3) {$\bullet$};
    \node (A3_3) at (3, 3) {$\bullet$};
    \node (A) at (-1, 3) {$\bullet$};
    \node at (-1, 4) {$w_{-}$};
    \path (A3_0) edge [-] node [auto] {$\scriptstyle{}$} (A3_1);
    \path (A3_1) edge [-] node [auto] {$\scriptstyle{}$} (A3_2);
    \path (A) edge [-] node [auto] {$\scriptstyle{}$} (A3_0);
  \end{tikzpicture}
\]
If the embedding of $w_{-}$ and $w_{+}$ is ``of the same type'', then we are done since either $e_i$ or $\{e_{j},e_{j}'\}$ contributes with 2 to $I(\Gamma)$. By the symmetry of the problem we need only to analyze one of the two cases in which the embedding of these vertices is not of the same type. We pick this case:
\[
  \begin{tikzpicture}[xscale=2.5,yscale=-0.5]
    \node (A2_0) at (0, 4) {$w$};
    \node at (0, 2) {$\scriptstyle e+e_{i}+e_{j}$};
    \node at (1, 2) {$\scriptstyle e'+e_{i}+e'_{j}$};
    \node (A2_2) at (2, 4) {$w_{+}$};
    \node at (1, 4) {$w'$};
    \node at (3, 4) {$v$};
    \node at (2, 2) {$\scriptstyle \pm e +e_{j}'\mp e_{j}+\dots$};
    \node at (3, 2) {$\scriptstyle e_{i}-e_{j}-e_{j}'+\dots$};
    \node at (-1, 2) {$\scriptstyle -e'+e_{i}+\dots$};
    \node (A3_0) at (0, 3) {$\bullet$};
    \node (A3_1) at (1, 3) {$\bullet$};
    \node (A3_2) at (2, 3) {$\bullet$};
    \node (A3_3) at (3, 3) {$\bullet$};
    \node (A) at (-1, 3) {$\bullet$};
    \node at (-1, 4) {$w_{-}$};
    \path (A3_0) edge [-] node [auto] {$\scriptstyle{}$} (A3_1);
    \path (A3_1) edge [-] node [auto] {$\scriptstyle{}$} (A3_2);
    \path (A) edge [-] node [auto] {$\scriptstyle{}$} (A3_0);
  \end{tikzpicture}
\]
and show that it leads to a contradiction. If $|w_{-}|>2$, then we proceed to delete from $\Gamma$ the basis vectors $e,e_{i},e_{j}$ and $e_{j}'$ and the vertices $w,w_{+}$ and $v$ obtaining:
\[
  \begin{tikzpicture}[xscale=2.2,yscale=-0.5]
    \node (A2_0) at (1, 4) {$w'$};
    \node at (1, 2) {$\scriptstyle e'$};
    \node (A3_0) at (1, 3) {$\bullet$};
    \node (A3_2) at (3, 3) {$\ $};
    \node (A) at (-1, 3) {$\bullet$};
    \node at (4, 4) {$\ $};
    \node at (-1, 4) {$w_{-}$};
    \node at (-1, 2) {$\scriptstyle -e'+\dots$};
    \path (A) edge [-] node [auto] {$\scriptstyle{}$} (A3_0);
  \end{tikzpicture}
\]
where $w'$ is a final vertex and the only one in the graph with weight 1. The determinant of this graph should vanish, but this contradicts Lemma~\ref{l:3final1} and Remark~\ref{rem}(\ref{dd}). If $|w_{-}|=2$, then we are dealing with
\[
  \begin{tikzpicture}[xscale=2.5,yscale=-0.5]
    \node (A2_0) at (0, 4) {$w$};
    \node at (0, 2) {$\scriptstyle e+e_{i}+e_{j}$};
    \node at (1, 2) {$\scriptstyle e'+e_{i}+e'_{j}$};
    \node (A2_2) at (2, 4) {$w_{+}$};
    \node at (1, 4) {$w'$};
    \node at (-2, 4) {$v$};
    \node at (2, 2) {$\scriptstyle \pm e +e_{j}'\mp e_{j}+\dots$};
    \node at (-2, 2) {$\scriptstyle e_{i}-e_{j}-e_{j}'+\dots$};
    \node at (-1, 2) {$\scriptstyle -e'+e_{i}$};
    \node (A3_0) at (0, 3) {$\bullet$};
    \node (A3_1) at (1, 3) {$\bullet$};
    \node (A3_2) at (2, 3) {$\bullet$};
    \node (A3_3) at (-2, 3) {$\bullet$};
    \node (A) at (-1, 3) {$\bullet$};
    \node at (-1, 4) {$w_{-}$};
    \path (A3_0) edge [-] node [auto] {$\scriptstyle{}$} (A3_1);
    \path (A3_1) edge [-] node [auto] {$\scriptstyle{}$} (A3_2);
    \path (A) edge [-] node [auto] {$\scriptstyle{}$} (A3_0);
    \path (A3_3) edge [-] node [auto] {$\scriptstyle{}$} (A);
  \end{tikzpicture}
\]
Since $w_{+}$ and $v$ cannot both be final vertices in $\Gamma$, at least one of them has weight greater than 3. Eliminating from $\Gamma$ the five vectors in $A$ will then yield a graph with 3 or 4 vertices less (depending on the weights of $w_{+}$ and $v$) and final vertices with weight at least 1. This graph should have vanishing determinant, but this contradicts again Lemma~\ref{l:3final1} and Remark~\ref{rem}(\ref{dd}).

The last case we need to consider now is $e'\cdot w_{-}=0$, which implies $e'\cdot v\neq 0$. We are assuming again that the vectors in $A$ only hit the set of five vertices $B=\{w_{-},w,w'w_{+},v\}$. Notice that this time we cannot have $e_{i}$ hitting $w_{+}$: if this were the case, neither $e_{j}$ nor $e_{j}'$ could hit $w_{+}$; however, both $e_{j}$ and $e_{j}'$ need to hit at least 3 vertices in $B$, while they cannot both hit $w_{-}$ and they cannot both hit $v$. We have thus $w_{+}=\pm e\mp e_j+e_{j}'+\dots$ and the possible embeddings are: 
\[
  \begin{tikzpicture}[xscale=3,yscale=-0.75]
    \node (A2_0) at (0, 4) {$w$};
    \node at (0, 2) {$\scriptstyle e+e_{i}+e_{j}$};
    \node at (1, 2) {$\scriptstyle e'+e_{i}+e'_{j}$};
    \node (A2_2) at (2, 4) {$w_{+}$};
    \node at (1, 4) {$w'$};
    \node at (3, 4) {$v$};
    \node at (2, 2) {$\scriptstyle \pm e\mp e_{j}+e_{j}'+\dots$};
    \node at (3, 2) {$\scriptstyle e'\begin{cases}\scriptstyle -e_{i}+e_{j}+\dots\\ \scriptstyle -e_{j}'+\dots\end{cases}$};
    \node at (-1, 2) {$\scriptstyle \begin{cases}\scriptstyle e_{i}-e_{j}'+\dots\\ \scriptstyle e_{j}+\dots\end{cases}$};
    \node (A3_0) at (0, 3) {$\bullet$};
    \node (A3_1) at (1, 3) {$\bullet$};
    \node (A3_2) at (2, 3) {$\bullet$};
    \node (A3_3) at (3, 3) {$\bullet$};
    \node (A) at (-1, 3) {$\bullet$};
    \node at (-1, 4) {$w_{-}$};
    \path (A3_0) edge [-] node [auto] {$\scriptstyle{}$} (A3_1);
    \path (A3_1) edge [-] node [auto] {$\scriptstyle{}$} (A3_2);
    \path (A) edge [-] node [auto] {$\scriptstyle{}$} (A3_0);
  \end{tikzpicture}
\]
The only combination of these that guarantees that each of the vectors $e_{i},e_{j}$ and $e_{j}'$ appear at least three times in the embedding is
\[
  \begin{tikzpicture}[xscale=3,yscale=-0.5]
    \node (A2_0) at (0, 4) {$w$};
    \node at (0, 2) {$\scriptstyle e+e_{i}+e_{j}$};
    \node at (1, 2) {$\scriptstyle e'+e_{i}+e'_{j}$};
    \node (A2_2) at (2, 4) {$w_{+}$};
    \node at (1, 4) {$w'$};
    \node at (3, 4) {$v$};
    \node at (2, 2) {$\scriptstyle \pm e\mp e_{j}+e_{j}'+\dots$};
    \node at (3, 2) {$\scriptstyle e'-e_{i}+e_{j}+\dots$};
    \node at (-1, 2) {$\scriptstyle  e_{i}-e_{j}'+\dots$};
    \node (A3_0) at (0, 3) {$\bullet$};
    \node (A3_1) at (1, 3) {$\bullet$};
    \node (A3_2) at (2, 3) {$\bullet$};
    \node (A3_3) at (3, 3) {$\bullet$};
    \node (A) at (-1, 3) {$\bullet$};
    \node at (-1, 4) {$w_{-}$};
    \path (A3_0) edge [-] node [auto] {$\scriptstyle{}$} (A3_1);
    \path (A3_1) edge [-] node [auto] {$\scriptstyle{}$} (A3_2);
    \path (A) edge [-] node [auto] {$\scriptstyle{}$} (A3_0);
  \end{tikzpicture}
\]
However, this configuration is not possible in $\Gamma$: deleting from the embedding the basis vectors $e',e_{i}$ and $e_{j}$ and the vertices $w'$ and $v$ we obtain
\[
  \begin{tikzpicture}[xscale=3,yscale=-0.5]
    \node (A2_0) at (0, 4) {$w$};
    \node at (0, 2) {$\scriptstyle e$};
    \node (A2_2) at (2, 4) {$w_{+}$};
    \node at (2, 2) {$\scriptstyle \pm e+e_{j}'+\dots$};
    \node at (-1, 2) {$\scriptstyle  -e_{j}'+\dots$};
    \node (A3_0) at (0, 3) {$\bullet$};
    \node (A3_2) at (2, 3) {$\bullet$};
    \node (A3_3) at (3, 3) {$\ $};
    \node (A) at (-1, 3) {$\bullet$};
    \node at (-1, 4) {$w_{-}$};
    \path (A3_0) edge [-] node [auto] {$\scriptstyle{}$} (A3_2);
  \end{tikzpicture}
\]
where $w_{-}$ and $w_{+}$ are not linked since they were not before. The only vertices that have (or might have) weight 1 are final vertices, which implies that this graph should have a vanishing determinant, but this contradicts again Lemma~\ref{l:3final1} and Remark~\ref{rem}(\ref{dd}).

Now that we are done analyzing the case when $w_{+}\cdot e\neq 0$, and by symmetry of the problem also the case when $w_{-}\cdot e'\neq 0$, we still need to consider the possibilities:
	\begin{itemize}
		\item[$-$] $w_{+}=-e'+e_{i}+e_{j}'-e_{j}+\dots$,
		\item[$-$] $w_{+}=e_{i}-e_{j}+\dots$ and
		\item[$-$] $w_{+}=e_{j}'+\dots$.	
	\end{itemize}
There are three analogous possibilities for $w_{-}$.
We proceed by studying all possible combinations, which again by the symmetry of the roles played by $w_{+}$ and $w_{-}$ amounts to 6 cases.
\begin{enumerate}

\item If $w_{+}=-e'+e_{i}+e_{j}'-e_{j}+\dots$ and $w_{-}=-e+e_{i}+e_{j}-e_{j}'+\dots$, we are dealing with a graph $\Gamma$ which will have other vertices hit by the vectors in $A$, but that locally somewhere looks like:
\[
  \begin{tikzpicture}[xscale=2.5,yscale=-0.5]
    \node (A2_0) at (0, 4) {$w$};
    \node at (0, 2) {$\scriptstyle e+e_{i}+e_{j}$};
    \node at (1, 2) {$\scriptstyle e'+e_{i}+e'_{j}$};
    \node (A2_2) at (2, 4) {$w_{+}$};
    \node at (1, 4) {$w'$};
    \node (A3_0) at (0, 3) {$\bullet$};
    \node (A3_1) at (1, 3) {$\bullet$};
    \node (A3_2) at (2, 3) {$\bullet$};
    \node (A) at (-1, 3) {$\bullet$};
    \node at (-1, 4) {$w_{-}$};
    \node at (-1,2) {$\scriptstyle -e+e_{i}+e_{j}-e_{j}'+\dots$};
    \node at (2,2) {$\scriptstyle -e'+e_{i}+e_{j}'-e_{j}+\dots$};
    \path (A3_0) edge [-] node [auto] {$\scriptstyle{}$} (A3_1);
    \path (A3_1) edge [-] node [auto] {$\scriptstyle{}$} (A3_2);
    \path (A) edge [-] node [auto] {$\scriptstyle{}$} (A3_0);
 \end{tikzpicture}
\]
There must be at least another vertex in $\Gamma$ hit by some vector in $A$ (deleting the 4 evidenced vertices and the five vectors in $A$ yields to a contradiction). By our assumptions on $e,e'$ this vertex, call it $v$, needs to be hit by all $e_{i},e_{j}$ and $e_{j}'$. The claim follows.
\item Consider the case $w_{+}=-e'+e_{i}+e_{j}'-e_{j}+\dots$ and $w_{-}=e_{i}-e_{j}'+\dots$. If the vector $e_{i}$ reappears in the embedding then we are done since $e_{i}$ appears 5 times contributing 2 to $I(\Gamma)$. We assume that this is not the case and we will arrive at a contradiction. The vector $e$ reappears on a vertex $u$ and under the current assumptions  $u\neq w_{\pm}$. It follows that the embedding of $u$ is of the form $u=\pm e \mp e_{j}+\dots$. If $|u|=2$ then $w_{+}\cdot u=1$ and $u$ is final (we are assuming $e_{i}$ does not reappear). Moreover, if $|w_{-}|=2$, then $w_{-}$ is necessarily final, and thus only one between $u$ and $w_{-}$ can have modulus 2.  

We consider the following manipulation: delete from $\Gamma$ the vertices $w$ and $w'$ and the basis vectors $e$ and $e'$. This yields a disconnected graph without cycles and an embedding into a standard lattice of rank its number of vertices. We further proceed to delete the basis vector $e_{i}$ obtaining a graph $\Gamma'$ with one connected component and vanishing determinant from which we know:
\[
  \begin{tikzpicture}[xscale=2,yscale=-0.5]
    \node (A2_0) at (0, 4) {$w_{-}$};
    \node at (0, 2) {$\scriptstyle -e_{j}'+\dots$};
    \node at (1, 2) {$\scriptstyle e'_{j}-e_{j}+\dots$};
    \node at (1, 4) {$w_{+}$};
    \node at (0.5, 2.75) {$-$};
    \node (A3_0) at (0, 3) {$\bullet$};
    \node (A3_1) at (1, 3) {$\bullet$};
    \node (A3_2) at (2, 3) {$\dots$};
    \node (A) at (-1, 3) {$\dots$};
    \path (A3_0) edge [-] node [auto] {$\scriptstyle{}$} (A3_1);
    \path (A3_1) edge [-] node [auto] {$\scriptstyle{}$} (A3_2);
    \path (A) edge [-] node [auto] {$\scriptstyle{}$} (A3_0);
 \end{tikzpicture}
\]
If $|u|,|w_{-}|\geq 2$ in $\Gamma'$, we arrive to a contradiction with Lemma~\ref{l:det}. If one of them has modulus 1, since it is necessarily final, we arrive to a contradiction with Lemma~\ref{l:3final1}.
\item If $w_{+}=-e'+e_{i}+e_{j}'-e_{j}+\dots$ and $w_{-}=e_{j}+\dots$ then we have two possibilities for the vertex $u$ which is the only vertex in $\Gamma$ different from $w$ hit by $e$:
	\begin{itemize}
		\item $u\cdot e_{i}\neq 0$, which means $e_{i}$ appears already 4 times in the embedding of $\Gamma$ and we claim that if it does not reappear at least once more, we have a contradiction. Indeed, assume $e_{i}$ does not reappear and proceed to delete the vertices $u$ and $w_{+}$, and the vectors $e$ and  $e'$. We obtain a graph with possibly several connected components and an embedding into a lattice of the same rank as the number of vertices, and in which the vector $e_{i}$ appears only twice. The local configuration we are interested in is:
\[
  \begin{tikzpicture}[xscale=2,yscale=-0.5]
    \node (A2_0) at (0, 4) {$w_{-}$};
    \node at (0, 2) {$\scriptstyle e_{j}+\dots$};
    \node at (1, 2) {$\scriptstyle e_{i}+e_{j}$};
    \node at (2, 2) {$\scriptstyle e_{i}+e'_{j}$};
    \node at (1, 4) {$w$};
    \node at (2, 4) {$w'$};
    \node (A3_0) at (0, 3) {$\bullet$};
    \node (A3_1) at (1, 3) {$\bullet$};
    \node (A3_2) at (2, 3) {$\bullet$};
    \path (A3_0) edge [-] node [auto] {$\scriptstyle{}$} (A3_1);
    \path (A3_1) edge [-] node [auto] {$\scriptstyle{}$} (A3_2);
 \end{tikzpicture}
\]
Now, if we delete $e_{i}$ from this graph we obtain $\Gamma'$, a graph with embedding and vanishing determinant. But all weights in $\Gamma'$ are at least 2, except for the vertices $w$ and $w'$ and since these vertices are each final in their connected component, by Lemma~\ref{l:3final1} and Remark~\ref{rem}(\ref{ddN}) we arrive to the contradiction that $\Gamma'$ has non-vanishing determinant.
\item $u\cdot e_{i}=0$. In this case $u=\pm e \mp e_{j}+\dots$, and we know there needs to be another vertex $v$ in $\Gamma$ hit by the basis vector $e_{j}'$. Notice that by our current assumptions $v\not\in\{w_{-},u,w_{+},w,w'\}$. Since $v\cdot w'=0$ and $v\cdot e'=0$ we have $e_{i}\cdot v\neq 0$. Then since $v\cdot w=0$ and $v\cdot e=0$ we have $e_{j}\cdot v\neq 0$. This means that the basis vector $e_{j}$ appears at least 5 times in the embedding, contributing with at least 2 to $I(\Gamma)$ and in this case we are done.
	\end{itemize}
\item If $w_{+}=e_{i}-e_{j}+\dots$ and $w_{-}=e_{i}-e_{j}'+\dots$ then there are two different vertices in $\Gamma$, $u$ and $u'$, hit respectively by $e$ and $e'$. If $e_{i}\cdot u\neq 0$ or $e_{i}\cdot u'\neq 0$, then we are done since $e_{i}$ appears at least 5 times. We assume now that $e_{i}$ appears only in $\{w_{-},w,w',w_{+}\}$ and we show that this configuration leads to a contradiction. To start with notice that $w_{-}\cdot w_{+}=0$ implies that these two vertices have weight at least 3. Moreover, if $|u|=2$ or $|u'|=2$ we have then that $w_{+}\cdot u=1$ or $w_{-}\cdot u'=1$ and the vertices $u$ or $u'$ are necessarily final. It follows that at most one between $u$ and $u'$ has modulus 2 in $\Gamma$. Proceed to delete from $\Gamma$ the vertices $w$ and $w'$ and the vectors $e$ and $e'$. If we further delete the basis vector $e_{i}$, which now appears only two times, we obtain a graph $\Gamma'$ with an embedding and vanishing determinant. However, all its vertices have weight at least 2 except for possibly a final vertex of weight 1. It follows by Lemma~\ref{l:det}, Remark~\ref{rem}(\ref{dd}) and Lemma~\ref{l:3final1} that this is not possible.
\item If $w_{+}=e_{i}-e_{j}+\dots$ and $w_{-}=e_{j}+\dots$ then there are two different vertices $u$ and $u'$ hit respectively by $e$ and $e'$. 
	\begin{itemize}
	\item If both are also hit by $e_{i}$, we are done. 
	%
	%
	\item If one is hit by $e_{i}$ and the other by $e_{j}$, then each appear 4 times in the embedding and so together contribute at least 2 to $I(\Gamma)$.
	\item If one is hit by $e_{i}$ and the other by $e_{j}'$, then we still need $e_{j}'$ to hit another vertex, since it appears at least 3 times in the embedding of $\Gamma$. By orthogonality with $w'$ this forces $e_{i}$ to appear at least 5 times.
	\item If one is hit by $e_{j}$ and the other by $e_{j}'$, the fact that we need at least a third appearance of $e_{j}'$ finishes the proof since as above it implies that $e_{i}$ must appear once more. So $e_i$ and $e_j$ both appear 4 times and together contribute at least $2$ to $I(\Gamma)$.
	\end{itemize}
\item If $w_{+}=e_{j}'+\dots$ and $w_{-}=e_{j}+\dots$ then the vectors $e_{i},e_{j}$ and $e_{j}'$ need to reappear. 
	\begin{itemize}
	\item If the three of them hit the same vertex $v$ then $v\neq u,u'$ which are then either hit by $e_{j}$ and $e_{j}'$ respectively, and we are done since each of these appears at least 4 times; or both are hit by $e_{i}$, which then appears 5 times and we are done; or precisely one of them  is hit by $e_{i}$, say it is $u$, and it follows that $u=e-e_{i}+e_{j}'+\dots$ and $u'=e'-e_{j}'+\dots$ and we are done since $e_{j}'$ appears at least 5 times (had we chosen $u'$ the same argument finishes with $e_{j}$ appearing at least 5 times).
	\item If the three of them hit only two other vertices, they have to be $u$ and $u'$ and we are dealing with the configuration (the embeddings of $u$ and $u'$ up to sign):
\[
  \begin{tikzpicture}[xscale=2,yscale=-0.5]
    \node (A2_0) at (0, 4) {$w$};
    \node at (0, 2) {$\scriptstyle e+e_{i}+e_{j}$};
    \node at (1, 2) {$\scriptstyle e'+e_{i}+e'_{j}$};
    \node (A2_2) at (2, 4) {$w_{+}$};
    \node at (1, 4) {$w'$};
    \node at (3, 4) {$u$};
    \node at (2, 2) {$\scriptstyle e_{j}'+\dots$};
    \node at (3, 2) {$\scriptstyle e-e_{i}+e_{j}'+\dots$};
    \node at (4, 2) {$\scriptstyle e'-e_{i}+e_{j}+\dots$};
    \node (A3_0) at (0, 3) {$\bullet$};
    \node (A3_1) at (1, 3) {$\bullet$};
    \node (A3_2) at (2, 3) {$\bullet$};
    \node (A3_3) at (3, 3) {$\bullet$};
    \node (A) at (-1, 3) {$\bullet$};
    \node (B) at (4, 3) {$\bullet$};
    \node at (4, 4) {$u'$};
    \node at (-1, 4) {$w_{-}$};
    \node at (-1, 2) {$\scriptstyle e_{j}+\dots$};
    \path (A3_0) edge [-] node [auto] {$\scriptstyle{}$} (A3_1);
    \path (A3_1) edge [-] node [auto] {$\scriptstyle{}$} (A3_2);
    \path (A) edge [-] node [auto] {$\scriptstyle{}$} (A3_0);
  \end{tikzpicture}
\]
If $e_{i}$ reappears in the embedding, we are done. We show that if this is not the case, we arrive to a contradiction. The argument is the same one we have repeatedly used in this proof: delete $u$, $u'$ and the vectors $e$ and $e'$ from the embedding obtaining a graph in which $e_{i}$ appears only twice on two adjacent vertices. Now, delete $e_{i}$, obtaining a graph with embedding $\Gamma'$ which should have vanishing determinant, but its only vertices with weight smaller than 2 are final, and thus $\Gamma'$ has non vanishing determinant by Lemma~\ref{l:3final1} and Remark~\ref{rem}(\ref{dd}).
	\item If there are three vertices outside the set $\{w_{-},w,w',w_{+}\}$ hit by the vectors $\{e_{i},e_{j},e_{j}'\}$, then one of these vertices is different from $u$ and $u'$ and orthogonal to $w$ and $w'$ and it is thus hit by the 3 vectors $e_{i},e_{j}$ and $e_{j}'$.
Then, if neither $u$ nor $u'$ are hit by $e_{i}$ we have that the vectors $e_{j}$ and $e_{j}'$ appear each 4 times in the embedding, and we are done. If both $u$ and $u'$ are hit by $e_{i}$ we are also done since $e_{i}$ appears 5 times in the embedding. Finally, if $e_{i}$ hits say only $u$ between $u$ and $u'$, then $e_{j}'\cdot u\neq 0$ and $e_{j}'\cdot u'\neq 0$ and we are done since $e_{j}'$ appears 5 times. 
	\end{itemize} 
\end{enumerate}

We have completed the study of all possible cases and the lemma is proved. 
\end{proof}

In Lemmas~\ref{l:3ad} and~\ref{l:cases} we have studied the appearance in the embedding of basis vectors which hit more than one internal vertex of weight 3 hit by a vector appearing exactly twice in the embedding with coefficient $\pm 1$. There is one more possibility that we have to contemplate before concluding that the negative contribution to $I(\Gamma)$ is compensated by other vectors: in the next lemma we study the contribution to $I(\Gamma)$ of the basis vectors hitting two vertices of weight 2 or a vertex of weight 2 and a vertex of weight 3, which are hit by a basis vector contributing negatively to $I(\Gamma)$.

\begin{lemma}\label{l:case2}
Suppose that $\Gamma$ embeds and that there are two distinct basis vectors $e$ and $e'$ which appear each exactly twice in the embedding with coefficient $\pm1$. 
\begin{itemize}
\item[-] If $v=e+e_{i}$ and $v'=e'+e_{i}$ are two different internal vertices of weight 2, then $e_{i}$ contributes to $I(\Gamma)$ with at least 2.
\item[-] If $v=e+e_{i}$ is an internal vertex of weight 2 and $v'=e'+e_{i}+e_{j}$ is an internal vertex of weight 3, then $v\cdot v'=1$ and the set $\{e_{i},e_{j}\}$ contributes to $I(\Gamma)$ with at least 2.
\end{itemize}
\end{lemma}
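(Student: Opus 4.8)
The plan is to follow the same pattern of proof used throughout this subsection: assume the conclusion fails, delete a small number of basis vectors (and possibly vertices) from the embedding of $\Gamma$ to obtain a graph which embeds into a standard lattice of rank strictly smaller than its number of vertices, and then derive a contradiction with a non-vanishing-determinant statement (Lemma~\ref{l:det}, Lemma~\ref{l:3final1}, or Remark~\ref{rem}(\ref{dd}) and~(\ref{ddN})). The work splits into the two bullets. Throughout, we may use Lemma~\ref{l:3no2}, which tells us that $e_i$ (and $e_j$, when present) cannot be hit by a second basis vector appearing only twice; and Lemma~\ref{l:no1}, which guarantees that no basis vector appears only once, so in particular $e_i$ and $e_j$ each appear at least twice and, once we know they are not of that special type, at least three times.

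For the first bullet, suppose $v = e + e_i$ and $v' = e' + e_i$ are distinct internal weight-$2$ vertices. Since $v$ is internal, $e_i$ must hit both vertices adjacent to $v$ (otherwise, deleting $e$ disconnects $\Gamma$ into pieces of rank-deficient size to which Lemma~\ref{l:det} and Remark~\ref{rem}(\ref{dd}) apply, a contradiction as in the last paragraph of Lemma~\ref{l:3and2comp}). The same argument applied to $v'$ shows $e_i$ hits both neighbours of $v'$. If $v$ and $v'$ are not adjacent, these are four distinct vertices in addition to possibly $v, v'$ themselves being hit, so $e_i$ hits at least $\{v, v', v_-, v_+, v'_-, v'_+\}$ minus overlaps, and a short case check shows $e_i$ appears at least five times, contributing at least $2$. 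The only delicate sub-case is when $v$ and $v'$ are adjacent: then $v \cdot v' = e_i \cdot e_i = 1$, consistent, and the local picture is $v_- ,\, v = e+e_i,\, v'=e'+e_i,\, v'_+$ with $e_i$ hitting $v, v', v_-, v'_+$; this is already four appearances, and one must produce one more. Here the vector $e$ also hits a vertex $u \ne v$ with $u \cdot v = 0$ forcing $e_i \cdot u \ne 0$ unless $u$ contains $e$ with the opposite sign and cancels; examining both options (as in Lemma~\ref{l:cases}, case $w\cdot w'=1$) and using the usual deletion argument on $e, e', e_i$ and the vertices $v, v'$ to reach a rank-deficient embedding of a graph whose remaining vertices all have weight at least $2$ except final ones, we contradict Lemma~\ref{l:3final1}. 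So $e_i$ must appear at least five times and contributes at least $2$ to $I(\Gamma)$.

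For the second bullet, let $v = e+e_i$ be internal of weight $2$ and $v' = e' + e_i + e_j$ internal of weight $3$. First one shows $v \cdot v' = 1$: if $v \cdot v' = 0$ then, since both are orthogonal and $v$ is internal, $e_i$ hits both neighbours of $v$, none of which can equal $v'$, and deleting $e$ yields a graph disconnecting into rank-deficient pieces with all weights at least $2$ except one final vertex, contradicting Lemma~\ref{l:det}/Remark~\ref{rem}(\ref{ddN})/Lemma~\ref{l:3final1}. Hence $v \cdot v' = 1$, i.e. $e_i \cdot e_i = 1$ as it should be, and they are adjacent. Now by Lemma~\ref{l:3no2} both $e_i$ and $e_j$ appear at least three times in the embedding. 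Since $v$ is internal, its two neighbours $v_-$ and $v' (=v_+)$ are both hit by $e_i$; this already gives $e_i$ at least the appearances at $v, v', v_-$. For $e_j$: since $v'$ is internal, its second neighbour $v'_+$ (the one not equal to $v$) must be hit by $e_i$ or $e_j$, because $v'_+ \cdot v = 0$ would be violated if $v'_+$ contained $e$, and $e'$ appears only twice; a careful case analysis, parallel to Lemma~\ref{l:cases} for $w\cdot w' = 1$, shows that either $e_i$ picks up a fourth and fifth appearance (so $e_i$ alone contributes $2$), or $e_i$ and $e_j$ each pick up a fourth appearance among $\{v_-, v'_+, u, u'\}$ where $u, u'$ are the second vertices hit by $e$ and $e'$. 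In the remaining borderline configurations one deletes $e, e'$ (and one of $e_i, e_j$, whichever then appears only twice, as in the final cases of Lemma~\ref{l:3adplus}) together with the vertices $v, v'$, producing a rank-deficient embedding of a graph whose vertices all have weight $\ge 2$ with only final exceptions, contradicting Lemma~\ref{l:3final1} or Remark~\ref{rem}(\ref{dd}). In every surviving case the set $\{e_i, e_j\}$ appears at least $7$ times in total, hence contributes at least $7 - 3\cdot 2 = 1$; but actually the bound we need is $2$, which is achieved because the minimal surviving configuration has $e_i$ appearing $4$ times and $e_j$ appearing $4$ times ($8 - 6 = 2$), or $e_i$ appearing $5$ times ($5 - 3 = 2$ from $e_i$ alone while $e_j$ contributes $\ge 0$).

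The main obstacle I expect is the bookkeeping in the adjacent sub-cases, where one has four or five candidate vertices $\{v_-, v, v', v'_+, u, u'\}$ that may coincide in various ways depending on the relative positions of $v, v'$ and of the second occurrences of $e$ and $e'$; each coincidence pattern must be separately checked to see that the appropriate vector picks up enough extra appearances, and when it does not, that the deletion argument genuinely yields a rank-deficient embedding of a graph to which Lemma~\ref{l:3final1} applies — that is, that the deleted vectors really appear only at the deleted vertices. Managing this requires the same "trivalent vertex lies in a separate component" trick used in Lemmas~\ref{l:2times} and~\ref{l:3no2}, and the argument is essentially a specialization of those already carried out, so no new idea is needed, only a disciplined enumeration.
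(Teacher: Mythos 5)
Your overall strategy (force extra appearances of $e_i$, $e_j$, and in the borderline case delete a few basis vectors and vertices to get a rank-deficient embedding contradicting a non-vanishing-determinant statement) is the right one and matches the paper. But there are two concrete problems. First, in the first bullet the case split on whether $v$ and $v'$ are adjacent is spurious: $v\cdot v' = (e+e_i)\cdot(e'+e_i) = e_i\cdot e_i = 1$ automatically (and in the second bullet $v\cdot v'=1$ follows at once from Lemma~\ref{l:3no2}, which forbids $e_j=\pm e$ since $v'$ would then be hit by two vectors each appearing exactly twice), so the "non-adjacent, five distinct neighbours" branch is vacuous and the entire weight falls on the adjacent configuration you call "delicate."

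Second, and more seriously, in that adjacent configuration your proposed contradiction mechanism does not apply as stated. When $e_i$ hits exactly the four vertices $v_-,v,v',v'_+$, the forced local picture is $v_-=e_i-e'-f+\dots$, $v=e+e_i$, $v'=e_i+e'$, $v'_+=e_i-e+f+\dots$ for some auxiliary basis vector $f$; after deleting $e,e',e_i$ and the vertices $v,v'$, the weights of $v_-$ and $v'_+$ each drop by $2$. The paper first observes that $\{v_-,v,v',v'_+\}$ together with $e,e',e_i,f$ form a standard subset of $\Z^4$, so by Lemma~\ref{extravector} at least one of $v_-,v'_+$ has weight $\ge 4$; Lemma~\ref{l:3final1} then forces \emph{exactly one} of them to have weight $3$ in $\Gamma$, i.e.\ to become an \emph{internal weight-$1$} vertex of the reduced graph. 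So the reduced graph is \emph{not} "all weights $\ge 2$ except final ones," Lemma~\ref{l:3final1} gives no contradiction directly, and one must instead analyse the resulting adjacent pair $\{-f,\,f+\dots\}$ — which is exactly the configuration disposed of in the proof of Lemma~\ref{l:3no2} (second case, the sub-case $y'=0$), itself a non-trivial argument involving $2$-chains attached to the trivalent vertex. Your sketch is missing both the Lemma~\ref{extravector} step and this final reduction, and without them the "borderline" case (which is the only case) is not closed.
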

\begin{proof}
Let us suppose that there are two vertices $v=e+e_{i}$ and $v'=e'+e_{i}$. Then they are linked, and if $e_{i}$ appears anywhere in the embedding with a coefficient greater than 1, we are done. So let us suppose that the coefficient of $e_{i}$ is always $\pm 1$. If $e$ or $e'$ appear in two linked vertices with the same sign, then by deleting it from the embedding we obtain a contradiction with  Lemma~\ref{l:3final1} and Remark~\ref{rem}(\ref{dd}). It follows that the only possible local configuration is the following
\[
  \begin{tikzpicture}[xscale=2,yscale=-0.4]
    \node (A2_0) at (0, 4) {$v$};
    \node at (0, 2) {$\scriptstyle e+e_{i}$};
    \node at (1, 2) {$\scriptstyle e_{i}+e'$};
    \node (A2_2) at (1, 4) {$v'$};
    \node (A2_2) at (2, 4) {$v'_{+}$};
    \node at (2, 2) {$\scriptstyle e_{i}-e+f+\dots$};
    \node (A3_0) at (0, 3) {$\bullet$};
    \node (A3_1) at (1, 3) {$\bullet$};
    \node (A3_2) at (2, 3) {$\bullet$};
    \node (A3_3) at (3, 3) {$\dots$};
    \node (A) at (-1, 3) {$\bullet$};
    \node (B) at (-2, 3) {$\dots$};
    \node at (-1, 4) {$v_{-}$};
    \node at (-1, 2) {$\scriptstyle e_{i}-e'-f+\dots$};
    \path (A3_0) edge [-] node [auto] {$\scriptstyle{}$} (A3_1);
    \path (A3_2) edge [-] node [auto] {$\scriptstyle{}$} (A3_3);
    \path (A3_1) edge [-] node [auto] {$\scriptstyle{}$} (A3_2);
    \path (A) edge [-] node [auto] {$\scriptstyle{}$} (A3_0);
    \path (A) edge [-] node [auto] {$\scriptstyle{}$} (B);
  \end{tikzpicture}
\]
Observe that the subgraph formed by the 4 depicted vertices and the vectors $e,e',e_{i},f$ is a standard subset of $\Z^{4}$ and since by Lemma~\ref{extravector} no vertex can be linked to this set we conclude that at least one between $v_{-}$ and $v_{+}'$ has weight at least 4. If $e_{i}$ appears somewhere else in the embedding, then we are done. If it does not, then we consider the following manipulation: delete from the embedding the basis vectors $e,e',e_{i}$ and the vertices $v,v'$. The resulting graph with embedding, $\Gamma'$, has vanishing determinant, since it admits an embedding into a smaller rank lattice. We conclude, by Lemma~\ref{l:3final1}, that precisely one between $v_{-}$ and $v'_{+}$ has weight 3 in $\Gamma$ and this vertex is not final in $\Gamma'$. Without loss of generality we suppose $v_{-}=e_{i}-e'-f$ and thus the local configuration in $\Gamma'$ is 
\[
  \begin{tikzpicture}[xscale=2,yscale=-0.4]
    \node (A2_0) at (0, 4) {$v_{-}$};
    \node at (0, 2) {$\scriptstyle -f$};
    \node at (1, 2) {$\scriptstyle f+\dots$};
    \node (A2_2) at (1, 4) {$v'_{+}$};
    \node (A3_0) at (0, 3) {$\bullet$};
    \node  at (0.5, 2.7) {$\scriptstyle -$};
    \node (A3_1) at (1, 3) {$\bullet$};
    \node (A3_2) at (2, 3) {$\dots$};
    \node (A) at (-1, 3) {$\bullet$};
    \node (B) at (-2, 3) {$\dots$};
    \path (A3_0) edge [-] node [auto] {$\scriptstyle{}$} (A3_1);
    \path (A3_1) edge [-] node [auto] {$\scriptstyle{}$} (A3_2);
    \path (A) edge [-] node [auto] {$\scriptstyle{}$} (A3_0);
    \path (A) edge [-] node [auto] {$\scriptstyle{}$} (B);
  \end{tikzpicture}
\]
This sort of configuration has already been analyzed in the proof of Lemma~\ref{l:3no2}, second case, second item `If the vertex $t$ is final...', when the option `the case $y'=0$' is considered. Indeed, in that proof the vertex $s$ plays the same role as the vertex $v'_{+}$ here and the same argument goes through, yielding a contradiction. Notice that in the arguments presented in the proof of Lemma~\ref{l:3no2} the fact that $s$ is final does not play any part. 

We move on now to analyze the second claim in the statement of the lemma, with two internal vertices $v=e+e_{i}$ and $v'=e'+e_{i}+e_{j}$. By Lemma~\ref{l:3no2}, we know that $e_{j}\neq\pm e$ and therefore $v\cdot v'=1$ and we necessarily have a local configuration like the following one:
\[
  \begin{tikzpicture}[xscale=2,yscale=-0.4]
    \node (A2_0) at (0, 4) {$v$};
    \node at (0, 2) {$\scriptstyle e+e_{i}$};
    \node at (1, 2) {$\scriptstyle e_{i}+e'+e_{j}$};
    \node (A2_2) at (1, 4) {$v'$};
    \node (A2_2) at (2, 4) {$v'_{+}$};
    \node (A3_0) at (0, 3) {$\bullet$};
    \node (A3_1) at (1, 3) {$\bullet$};
    \node (A3_2) at (2, 3) {$\bullet$};
    \node (A3_3) at (3, 3) {$\dots$};
    \node (A) at (-1, 3) {$\bullet$};
    \node (B) at (-2, 3) {$\dots$};
    \node at (-1, 4) {$v_{-}$};
    \node at (-1, 2) {$\scriptstyle e_{i}+\dots$};
    \path (A3_0) edge [-] node [auto] {$\scriptstyle{}$} (A3_1);
    \path (A3_2) edge [-] node [auto] {$\scriptstyle{}$} (A3_3);
    \path (A3_1) edge [-] node [auto] {$\scriptstyle{}$} (A3_2);
    \path (A) edge [-] node [auto] {$\scriptstyle{}$} (A3_0);
    \path (A) edge [-] node [auto] {$\scriptstyle{}$} (B);
  \end{tikzpicture}
\]
Since $e$ appears somewhere else in the embedding, $e_{i}$ contributes at least with 1 to $I(\Gamma)$. We claim that so does $e_{j}$ in this situation. From Lemma~\ref{l:no1} we know that $e_{j}$ appears at least twice in the embedding, and from Lemma~\ref{l:3no2} we learn that $e_{j}$ appears at least three times. Suppose by means of contradiction that $e_{j}$ appears exactly 3 times with coefficient $\pm 1$. If $e_{j}\cdot v'_{+}=0$, then $e_{j}$ hits two vertices orthogonal to $v'$ and their embeddings must be of the form $\pm e_{j}\mp e'+\dots$ and $\pm e_{j}\mp e_{i}\pm e+\dots$. This forces the contradiction $e_{j}\cdot v'_{+}\neq 0$. If $e_{j}\cdot v'_{+}\neq 0$, then there is another vertex $u$ hit by $e_{j}$. There are two possibilities:
\begin{itemize}
\item $u\cdot e_{i}\neq 0$, which implies $u=e_{j}-e_{i}+e+\dots$, up to sign. We know that there is some other vertex, say $u'$, necessarily different from $v'_{+}$, hit by $e'$. By Lemma~\ref{l:3no2} we have $e\cdot u'=0$ and therefore $e_{j}\cdot u'\neq 0$, contradicting our assumptions.
\item $u\cdot e_{i}=0$ which implies $u=e_{j}-e'+\dots$. The unique vertex in $\Gamma$ different from $v$ and hit by $e$, call it $u'$, is different from $v'_{+}$ since $u'\cdot e, u'\cdot e_{i}\neq 0$ and we have assumed $v'_{+}\cdot e_{j}\neq 0$. However, the described configuration forces $u'\cdot e_{j}\neq 0$. Again a contradiction.
\end{itemize}

\end{proof}

All the above analysis yields the following conclusion.

\begin{co}\label{c:compensate}
Suppose that $\Gamma$ embeds and consider the set $W$ of internal vertices of weight 2 or 3 hit by a basis vector which appears only twice in the embedding with coefficient $\pm1$. Let $E$ be the set of all basis vectors hitting the vertices in the set $W$. Then, the total contribution to $I(\Gamma)$ of the set $E$ is non-negative. Moreover, if $N>1$ the set $E$ is disjoint from the basis vectors hitting the 2-leg.
\end{co}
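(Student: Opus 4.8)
\textbf{Proof plan for Corollary~\ref{c:compensate}.}
The plan is to bootstrap on the sequence of lemmas just proved. First I would recall the bookkeeping that the contribution of a basis vector $e_{j}$ to $I(\Gamma)$ is $\left(\sum_{i}(\alpha_{j}^{i})^{2}\right)-3$; by Lemma~\ref{l:no1} no basis vector appears only once in the embedding, so a basis vector contributes negatively to $I(\Gamma)$ only if it appears exactly twice with coefficients $\pm1$ (contribution $-1$), and contributes positively as soon as it appears four or more times or twice with a $\pm2$ somewhere. For each vertex $w\in W$, there is by definition a basis vector $e_{w}$ appearing exactly twice with coefficient $\pm1$ and hitting $w$; writing $w=e_{w}+e_{i}(+e_{j})$, Lemma~\ref{l:3and2comp} says the companion basis vectors $\{e_{i}\}$ (or $\{e_{i},e_{j}\}$) contribute at least $1$ to $I(\Gamma)$. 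So locally each $w$ ``pays for'' its own $e_{w}$. The content of the corollary is that this accounting is globally consistent: the only way it could fail is if the positive contributions claimed by Lemma~\ref{l:3and2comp} for two different vertices of $W$ overlap (share a basis vector), so that a single companion vector is being counted twice.

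The core of the argument is therefore a counting/disjointness argument on the bipartite incidence between $W$ and $E$, organized as follows. Partition $W$ according to the weight of the vertex and the way its companion vectors are shared. Lemmas~\ref{l:3ad} and~\ref{l:3adplus} rule out a single basis vector $f$ hitting three distinct vertices of $W$ of weight $\le 3$; so in any ``sharing cluster'' at most two vertices of $W$ are glued along a common basis vector. For a pair of vertices of $W$ of weight $3$ sharing a basis vector $e_{i}$ (configuration $w=e+e_{i}+e_{j}$, $w'=e'\pm e_{i}+e'_{j}$), Lemma~\ref{l:cases} gives that either $e_{i}$ alone contributes $\ge 2$, or the remaining companion set ($\{e_{j},e'_{j}\}$ or $\{e_{i},e_{j}\}$) contributes $\ge 2$; in either case the pair $\{w,w'\}$ has total companion contribution $\ge 2$, enough to pay for both $e_{w}$ and $e_{w'}$. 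Lemma~\ref{l:case2} handles the two remaining sharing patterns: two weight-$2$ vertices sharing $e_{i}$ (then $e_{i}$ contributes $\ge 2$), and a weight-$2$ vertex together with a weight-$3$ vertex sharing $e_{i}$ (then $\{e_{i},e_{j}\}$ contributes $\ge 2$). Thus in every case a ``cluster'' consisting of one or two vertices of $W$ has a set of companion vectors contributing at least as much as the number of negative vectors $e_{w}$ it contains. Summing over clusters — which are pairwise disjoint since the clustering is defined by which basis vectors are shared, and a basis vector counted as a companion in one cluster is by construction not one of the $e_{w}$'s — gives that the net contribution of $E$ to $I(\Gamma)$ is $\ge 0$.

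The one subtlety I would be careful about is the bookkeeping of \emph{which} basis vectors belong to $E$ and the disjointness of the clusters: $E$ is defined as \emph{all} basis vectors hitting the vertices of $W$, so it contains both the ``bad'' vectors $e_{w}$ (each contributing $-1$) and all their companions, and I must make sure that when I assign the positive contribution of a companion vector to one cluster I do not also count that same vector's appearances elsewhere inconsistently. The clean way to phrase this is: for each cluster $C$ let $n_{C}$ be the number of vertices of $W$ in $C$ (so $1$ or $2$), which equals the number of distinct bad vectors $e_{w}$ associated to $C$ (they are distinct by Lemma~\ref{l:disjoint}(1)); the lemmas above produce a set of companion vectors attached to $C$, disjoint from the $e_{w}$'s, whose total contribution is $\ge n_{C}$; hence the contribution of $\{e_{w}: w\in C\}$ together with these companions is $\ge 0$. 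Every element of $E$ falls into (exactly) one such collection after this grouping — here one uses that a vector appearing $\ge 4$ times has contribution $\ge 1$ and a vector not appearing as any $e_{w}$ and not as a designated companion appears $\ge 3$ times, hence contributes $\ge 0$ — so the sum over all of $E$ is non-negative. I expect this disjoint-grouping verification to be the main (though purely combinatorial) obstacle.

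Finally, for the last sentence: when $N>1$, Lemma~\ref{l:2rep} shows every basis vector $e_{2},\dots,e_{N+2}$ hitting the $2$-leg also hits a further vertex of $\Gamma$, so in particular each appears at least $4$ times (it hits $N$ of the $2$-chain vertices at once once $N\ge1$, plus one more), hence contributes non-negatively and, more importantly, none of them appears exactly twice with coefficient $\pm1$; therefore none of them is a bad vector $e_{w}$ for $w\in W$. Moreover the vertices of the $2$-leg all have weight $2$ but are \emph{not} hit by a vector appearing only twice with coefficient $\pm1$ in a way that makes them land in $W$, since $W$ consists of internal weight-$2$-or-$3$ vertices hit by such a vector, and by Lemma~\ref{l:2rep} the vectors hitting the $2$-leg hit $v$ and at least one other vertex; a short check (using that deleting all of $e_{1},\dots,e_{N+2}$ and the corresponding vertices as in Lemma~\ref{l:2rep} preserves the structure) shows that the companion vectors supplied by Lemmas~\ref{l:3and2comp},~\ref{l:cases},~\ref{l:case2} for vertices of $W$ can always be chosen outside $\{e_{2},\dots,e_{N+2}\}$, for instance because any such companion hitting the $2$-leg would, by orthogonality of $W$-vertices with the rest of the $2$-leg, force a coefficient pattern already excluded. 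Hence $E$ is disjoint from the basis vectors hitting the $2$-leg, which is exactly the disjointness needed to later add the bound $N$ of Lemma~\ref{l:2rep} to the bound $I(E)\ge 0$ when estimating $I(\Gamma)$.
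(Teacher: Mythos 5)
There is a genuine gap in the central combinatorial step. You correctly invoke Lemmas~\ref{l:3ad} and~\ref{l:3adplus} to conclude that a single basis vector hits at most two vertices of $W$, but you then slide from this to the claim that each ``sharing cluster'' contains at most two vertices of $W$, and you sum over clusters of size one or two. That does not follow: the sharing relation must be closed transitively, and even though each individual companion vector glues only two vertices of $W$, a chain $w_1=e_1+f+g_1$, $w_2=e_2+g_1+g_2$, $w_3=e_3+g_2+g_3,\dots$ produces a connected cluster of arbitrary length in which consecutive pairs overlap in exactly one companion vector. For such a chain the pairwise bounds of Lemmas~\ref{l:cases} and~\ref{l:case2} cannot simply be added, because the companion sets of consecutive pairs share the $g_i$'s; a chain of $n$ vertices carries $n$ negative vectors but, in the worst case, only $n-1$ interior companions each guaranteed to contribute just $1$. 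This is precisely the case the paper has to work hardest on: it traverses the chain greedily, matching pairs whose shared companion contributes $\ge 2$, then handles the leftover vertices, and finally shows by a separate embedding/determinant argument that the genuinely problematic configuration --- a chain whose two end vertices both have weight $2$, so that the only available companions are the $n-1$ interior vectors $g_1,\dots,g_{n-1}$ --- cannot actually occur. Your proposal contains none of this, and without it the non-negativity of the contribution of $E$ is not established.

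The final paragraph on the case $N>1$ is also looser than it needs to be. The clean reason the $2$-leg vectors are disjoint from $E$ is not about coefficient patterns being ``already excluded'': by Lemma~\ref{l:firststep} and Lemma~\ref{l:2rep}, any vertex of a qc-leg hit by one of $e_2,\dots,e_{N+2}$ is hit by all of them (together with $e_1$, or with $v'\ne 0$ in the case of $v$), hence has weight at least $N+2\ge 4$ when $N>1$ and therefore cannot lie in $W$. Since $E$ consists only of vectors hitting vertices of $W$, disjointness is immediate.
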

\begin{proof}
The contribution  to $I(\Gamma)$ of any basis vector which appears only twice in the embedding with coefficient $\pm1$ is $-1$. Let us briefly call these vectors in $E$ the `negative vectors'. Lemma~\ref{l:3no2} guarantees that every vertex in $W$ is hit by at most one such vector. In Lemma~\ref{l:3and2comp} we have shown that each of the vertices in $W$ is also hit by some basis vectors which contribute to $I(\Gamma)$ at least 1. Let us call these vectors in $E$ the `non-negative vectors'. From Lemmas~~\ref{l:3no2} and~\ref{l:3and2comp} alone we cannot conclude that the contribution to $I(\Gamma)$ of the negative vectors in $E$ is compensated by the non-negative ones: we might be counting the same non-negative vectors to compensate for different negative ones. This is where the analysis done in Lemmas~\ref{l:3ad}, \ref{l:3adplus}, \ref{l:cases} and~\ref{l:case2} becomes essential. The first two lemmas guarantee that each non-negative vector in $E$ hits at most two different vertices in $W$. We will match precisely which non-negative vectors compensate the negative ones:
\begin{enumerate}
\item Let $v\in W$ be hit by the negative vector $e_{v}$ and assume that the non-negative vectors hitting $v$ do not hit any other vertex in $W$. It follows from Lemma~\ref{l:3and2comp} that the negative contribution of $e_{v}$ is compensated. 
\end{enumerate}
Now that we have matched the negative and non-negative vectors in the first case, we consider the other vectors $E$.
\begin{enumerate}
\item[(2)] If two vertices $v,v'\in W$ are hit by negative vectors $e_{v}$ and $e_{v'}$ respectively, by one or two common non-negative vectors and \emph{none of their non-negative vectors hit other vertices in $W$}, then Lemmas~\ref{l:cases} and~\ref{l:case2} apply and the negative contribution is compensated. 

If we had $e_{v}=e_{v'}$, then, since we only need to compensate once for the negative contribution of $e_{v}$, we use Lemma~\ref{l:3and2comp}
to conclude that the non-negative vectors hitting $v$ and $v'$ compensate for $e_{v}$.
\end{enumerate}
Again, we can now ignore from $E$ the negative and non-negative vectors in this second case, since matched up together the negative contribution is compensated.
\begin{enumerate}
\item[(3)] There is one final situation to analyze: there is at least one vertex $v$ in $W$ whose non-negative vectors hit at least two other vertices in $W$. Notice that, by Lemma~\ref{l:3adplus}, the vertex $v$ cannot have weight 2, and thus has weight 3. For clarity, let us write $v=e+f+g$ with $e$ negative and $f,g$ non-negative. If there is any vertex in $W\setminus\{v\}$ hit by $e$, we can ``ignore it'' since we need to compensate the negative contribution of $e$ and this is independent of $e$ hitting more than one vertex in $W$. That is, if we had three vertices in $W$, for example $v_{1}=e+f+g$, $v_{2}=e-f+h$ and $v_{3}=e'+g+k$, since $v_{1}$ and $v_{2}$ are both hit by $e$ we can ignore say $v_{2}$ and consider $v_{1}$ and $v_{3}$ as an example of case (2) above (if $f,g,k$ do not hit any further vertices in $W$); or we could also ignore $v_{1}$ and consider $v_{2}$ and $v_{3}$ as two instances of case (1) (again if $f,g,k$ do not hit any further vertices in $W$).

It follows from the above that it suffices to look into the case of $v\in W$ whose non-negative vectors hit at least two other vertices in $W$ and all of these vertices in $W$ are hit by different negative vectors.
In this case, using Lemmas~\ref{l:3ad}, \ref{l:3adplus}, we are dealing
 
with the following local type of configuration:
\[
  \begin{tikzpicture}[xscale=2,yscale=-0.5]
    \node at (1, 2) {$\scriptstyle e_{2}+g_{1}+g_{2}$};
    \node at (2, 2) {$\scriptstyle e_{3}+g_{2}+g_{3}$};
    \node at (3, 2) {$\scriptstyle e_{4}+g_{3}+h$};
    \node (A3_0) at (0, 3) {$\bullet$};
    \node (A3_1) at (1, 3) {$\bullet$};
    \node (A3_2) at (2, 3) {$\bullet$};
    \node (A3_3) at (3, 3) {$\bullet$};
    \node (A) at (-1, 3) {$\bullet$};
    \node (B) at (4, 3) {$\bullet$};
    \node at (0, 2) {$\scriptstyle e_{1}+f+g_{1}$};
    \path (A3_0) edge [-] node [auto] {$\scriptstyle{}$} (A3_1);
    \path (A3_1) edge [-] node [auto] {$\scriptstyle{}$} (A3_2);
    \path (A3_2) edge [-] node [auto] {$\scriptstyle{}$} (A3_3);
    \path (A) edge [-] node [auto] {$\scriptstyle{}$} (A3_0);
    \path (A3_3) edge [-] node [auto] {$\scriptstyle{}$} (B);
  \end{tikzpicture}
\]
where the vectors $e_{i}\in E$ are negative, the vectors $f,g_{i},k\in E$ are non-negative and the 4 vertices with shown embeddings are in $W$ and the final ones are  either not in $W$ (recall that all vertices in $W$ are internal in $\Gamma$) or in $W$ and hit by negative vectors which we are ignoring as explained before. This local configuration is only meant to be an example: this `chain' in $W$, which needs to be at least of length 3 to not fall in case $(2)$, could be much longer and its two final vertices could have weight 2 instead of 3. We need to establish which non-negative vectors will compensate in this case for each of the $e_{i}$'s.
We start from one end of the chain and follow it in a chosen direction. If in a pair of consecutive vertices the non-negative vector hitting both vertices contributes to $I(\Gamma)$ with at least 2 (see Lemmas~\ref{l:cases} and~\ref{l:case2}), we compensate the two corresponding negative $e_{i}$'s with it. After finishing going through the chain in this manner, there might be some vertices with negative $e_{i}$'s that have not been compensated yet. If they are now between already compensated vertices, then the non-negative vectors hitting them do compensate by Lemma~\ref{l:3and2comp} and have not been considered before, so we use them. We are finally left with perhaps some non-compensated $e_{i}$'s in a subchain. In this case, that is, when the repeated non-negative vectors are not compensating for the negative contribution we use the example above to illustrate what happens. By Lemmas~\ref{l:cases} and~\ref{l:case2}, the pairs $\{f,g_{2}\},\{g_{1},g_{3}\}$ and $\{g_{2},h\}$ each contribute to $I(\Gamma)$ with at least two. Since we are assuming that none among $g_{1},g_{2},g_{3}$ appears more than 4 times in the embedding, we conclude that the set $\{f,g_{1},g_{2},g_{3},h\}$ contributes at least with 5 to $I(\Gamma)$ and it compensates for the negative contribution of $\{e_{1},\dots,e_{4}\}$. 
There is only one configuration that could seem problematic, and it is the following:
\[
  \begin{tikzpicture}[xscale=2,yscale=-0.5]
    \node at (1, 2) {$\scriptstyle e_{2}+g_{1}+g_{2}$};
    \node at (2, 2) {$\scriptstyle e_{3}+g_{2}+g_{3}$};
    \node at (3, 2) {$\scriptstyle e_{4}+g_{3}$};
    \node (A3_0) at (0, 3) {$\bullet$};
    \node (A3_1) at (1, 3) {$\bullet$};
    \node (A3_2) at (2, 3) {$\bullet$};
    \node (A3_3) at (3, 3) {$\bullet$};
    \node (A) at (-1, 3) {$\bullet$};
    \node (B) at (4, 3) {$\bullet$};
    \node at (0, 2) {$\scriptstyle e_{1}+g_{1}$};
    \node at (-1, 3.5) {$\scriptstyle x$};
    \path (A3_0) edge [-] node [auto] {$\scriptstyle{}$} (A3_1);
    \path (A3_1) edge [-] node [auto] {$\scriptstyle{}$} (A3_2);
    \path (A3_2) edge [-] node [auto] {$\scriptstyle{}$} (A3_3);
    \path (A) edge [-] node [auto] {$\scriptstyle{}$} (A3_0);
    \path (A3_3) edge [-] node [auto] {$\scriptstyle{}$} (B);
  \end{tikzpicture}
\]
since we could run into the situation of wanting to compensate the negative contribution of $\{e_{1},\dots,e_{4}\}$ with the set $\{g_{1},g_{2},g_{3}\}$; however, from our previous lemmas we can only guarantee that these three basis vectors contribute to $I(\Gamma)$ with at least 3. The point is that any such configuration is not possible. Under the current assumptions, each of these $g_{i}$'s appears exactly 4 times in the embedding and each of the $e_{i}$'s exactly twice. To start with let us focus on $g_{1}$. It hits the vertex $x$ and we have that either $g_{2}$ or $e_{2}$ also hit $x$. There is yet another vertex, orthogonal to $e_{1}+g_{1}$ hit by $g_{1}$ (since it appears exactly 4 times in the embedding). Let us call this vertex $v$ with embedding, up to sign, $v=e_{1}-g_{1}+\dots$. Again, $v$ is either hit by $g_{2}$ or $e_{2}$. Now, $g_{2}$, which appears twice in the chain we are discussing, needs to appear exactly 4 times in the embedding. If both $x$ and $v$ are hit by $g_{2}$, then neither is hit by $e_{2}$, which needs to hit some other vertex $w=e_{2}+\dots$. This leads to a contradiction since $w$ is orthogonal to $e_{2}+g_{1}+g_{2}$ but neither $g_{1}$ nor $g_{2}$ hit $w$. It follows that precisely one between $x$ and $v$ is hit by $e_{2}$ and the other by $g_{2}$. Again, this leads to a contradiction since the vector $g_{2}$ necessarily appears a fourth time in the embedding, hitting a vector $w'=g_{2}+\dots$ orthogonal to $e_{2}+g_{1}+g_{2}$, but neither $e_{2}$ nor $g_{1}$ hit $w'$.
\end{enumerate}

From this analysis we conclude that the vectors in the set $E=\{e\, |\, e\cdot w\neq 0, w\in W\}$ which contribute positively to $I(\Gamma)$ compensate the negative contribution of the vectors appearing only twice, and thus the total contribution to $I(\Gamma)$ of the set $E$ is non-negative

Since $\Gamma$ embeds we follow the conventions presented in Figure~\ref{f:convenzioni1}. We have that any vertex in the qc-legs, different from $v$ and hit by a vector in $\{e_{2},\dots,e_{N+2}\}$ is hit by at least all the vectors in $\{e_{2},\dots,e_{N+2}\}$ and $e_{1}$; in the case of the vertex $v$, it is hit by $\{e_{2},\dots,e_{N+2}\}$ and $v'\neq 0$. Assuming $N>1$, it follows that these vectors cannot hit any vertex in $W$ and thus the set $E$ is disjoint from $\{e_{2},\dots,e_{N+2}\}$.
\end{proof}

The more information we have about the contribution to $I(\Gamma)$ of the vectors in the set $E$ defined in the last corollary, the stronger the bound we may obtain. The following corollary will prove useful at the end of this section, when we will search for the family of graphs in the first statement of Proposition~\ref{p:mainA}.

\begin{co}\label{c:Emore}
Let $\Gamma$, $W$ and $E$ be as in the preceding corollary. Suppose there is a basis vector $f\in E$ whose contribution to $I(\Gamma)$ is $n\geq 2$ and define $V_{f}=\{w\in W\,|\, f\cdot w\neq 0\}$. Then, we have:
	\begin{itemize}
	\item If $|V_{f}|=1$, the contribution to $I(\Gamma)$ of the set $E$ is at least $n-1$.
	\item If $|V_{f}|=2$, the contribution to $I(\Gamma)$ of the set $E$ is at least $n-2$.
	\end{itemize}
\end{co}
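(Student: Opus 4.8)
\textbf{Proof proposal for Corollary~\ref{c:Emore}.}
The plan is to revisit the bookkeeping argument in the proof of Corollary~\ref{c:compensate} and simply track the basis vector $f$ separately throughout, rather than letting it be absorbed into the ``compensating'' pool. Recall that in that proof the set $E$ of basis vectors hitting the vertices of $W$ was partitioned, via Lemmas~\ref{l:3no2}, \ref{l:3and2comp}, \ref{l:3ad}, \ref{l:3adplus}, \ref{l:cases} and~\ref{l:case2}, into ``negative'' vectors (those contributing $-1$) and ``non-negative'' vectors (those contributing $\ge 0$), and the whole point was to match every negative vector to non-negative ones so that the total is $\ge 0$. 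Here I would run the identical matching, but first remove $f$ and the vertices of $V_f$ from consideration, show the remainder still has non-negative total contribution, and then add $f$'s contribution $n$ back in, minus a bounded correction for the negative vectors that used to be compensated with help from $f$.

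Concretely: first I would observe that, by Lemma~\ref{l:3no2}, each vertex of $W$ is hit by at most one negative vector, so $|V_f|=1$ or $2$ means $f$ ``interacts'' with at most one or two negative vectors respectively. Set $E' = E \setminus \{f\}$ and let $W' = W \setminus V_f$. Every vertex of $W'$ is still internal of weight $2$ or $3$, still hit (if by a negative vector) by at most one negative vector, and Lemmas~\ref{l:3and2comp}, \ref{l:cases}, \ref{l:case2} still apply verbatim to the vectors of $E'$ hitting vertices of $W'$, since those lemmas are about the local structure around the vertex and do not depend on which ambient family we are in. The one subtlety is that a non-negative vector $g \in E'$ used in the proof of Corollary~\ref{c:compensate} to compensate some negative vector $e$ might have been paired with $f$ (via Lemma~\ref{l:cases} or~\ref{l:case2}, in a pair $\{f,g\}$ contributing $\ge 2$). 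After deleting $f$, such a $g$ still has a non-negative contribution, but we can no longer invoke the ``$\{f,g\}$ contributes $\ge 2$'' estimate; however, the relevant negative vector $e$ is, by hypothesis, one of the at most $|V_f|$ negative vectors hitting $V_f$, so there are at most $|V_f|$ negative vectors whose compensation is disrupted. For those we simply do not claim compensation. Thus the total contribution of $E'$ to $I(\Gamma)$ is at least $-|V_f|$, by exactly the case analysis of Corollary~\ref{c:compensate} applied to $E'$ and $W'$. Adding back $f$, whose contribution is $n$, gives that $E = E' \cup \{f\}$ contributes at least $n - |V_f|$, which is the claim.

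The step I expect to be the main obstacle is making precise the claim that ``at most $|V_f|$ negative vectors have their compensation disrupted.'' One must check each of the three cases in the proof of Corollary~\ref{c:compensate}: in case (1) a negative vector $e_v$ is compensated purely by non-negative vectors hitting only $v$, so deleting $f$ affects this only if $f$ itself is one of those non-negative vectors, i.e. only if $v \in V_f$; in case (2) the pair $v,v'$ is handled by Lemmas~\ref{l:cases}/\ref{l:case2} and is disrupted only if $f \in \{v\text{'s or }v'\text{'s vectors}\}$, again forcing $v$ or $v' \in V_f$; in case (3), the long-chain situation, the compensating estimate for a sub-chain involves several non-negative vectors $g_i$, and one must verify that removing $f$ from such a configuration costs at most one unit per vertex of the chain that lies in $V_f$ — this follows because the chain estimates in Lemmas~\ref{l:cases} and~\ref{l:case2} are ``local'' (each consecutive pair contributing $\ge 2$), so dropping one vector $f$ that hits $|V_f|\le 2$ vertices of the chain lowers the guaranteed total by at most $|V_f|$. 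Since in all three cases the disrupted negative vectors are forced to hit $V_f$ and each vertex of $V_f$ carries at most one negative vector, the count $\le |V_f|$ is confirmed, and the corollary follows. Note that the hypothesis $n \ge 2$ guarantees that the bounds $n-1 \ge 1$ and $n-2 \ge 0$ are non-vacuous, which is how the corollary will be used.
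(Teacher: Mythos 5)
Your proposal is correct and follows essentially the same route as the paper's proof: both arguments track $f$ through the matching of Corollary~\ref{c:compensate} and observe that, since each vertex of $W$ carries at most one negative vector (Lemma~\ref{l:3no2}) and $f$ can only participate in compensation through the $|V_f|$ vertices it hits, $f$ is responsible for offsetting at most $|V_f|$ units of negative contribution, leaving a surplus of $n-|V_f|$. The paper states this more tersely (keeping $f$ in the pool and noting its surplus) whereas you remove $f$ and add it back, but the bookkeeping is identical.
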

\begin{proof}
To start with notice that by Lemmas~\ref{l:3ad} and~\ref{l:3adplus} we have $|V_{f}|\leq 2$. If $|V_{f}|=1$ there is only one vertex $w\in W$ hit by $f$, either $w=e+f+e_{j}$ or $w=e+f$, where $e$ has a contribution to $I(\Gamma)$ of $-1$ and $e_{j}$ a contribution of at least 0 (Lemma~\ref{l:3no2}). From the proof of Corollary~\ref{c:compensate}, we know that since $f$ hits only one vertex in $W$, its positive contribution is used to compensate at most one $-1$ coming from a negative vector. Since, in the set $E$ every negative vector is paired with some non-negative ones that compensate their negative contribution and $f$ is paired with one only negative vertex, we have that the contribution to $I(\Gamma)$ of the set $E$ is at least $n-1$ as claimed.

The case $|V_{f}|=2$ can be dealt with in a completely analogous fashion: this time, from the proof of Corollary~\ref{c:compensate} we know that the positive contribution of $f$ to $I(\Gamma)$ might be compensating for at most two negative vectors. It follows immediately that the contribution to $I(\Gamma)$ of the set $E$ is at least $n-2$. 
\end{proof}

\end{subsection}

\begin{subsection}{Final steps in the proof of Proposition~\ref{p:mainA}}
We are now ready to prove the second statement of Proposition~\ref{p:mainA} and to reduce the possibilities of the embedding of the graphs in the family $\Gamma_{1}$ to the case $N=1$. For the reader's convenience we will write the details of this part separately and then we will focus on the embeddings of the remaining graphs in $\Gamma_{1}$.

\begin{proof}[Proof of second statement of Proposition~\ref{p:mainA} and reduction of the cases in first statement] 

We argue by contradiction: Suppose there was an embedding of $\Gamma$ into a standard lattice of the same rank. We are now going to bound $I(\Gamma)$ from below with the information we have gathered from the embedding of $\Gamma$ in the preceding analysis. By Lemma~\ref{l:no1} we know that for a basis vector to contribute negatively to $I(\Gamma)$ it needs to appear precisely twice in the embedding with coefficient $\pm1$. By Corollary~\ref{c:controledset} we know that at most two such vectors in the embedding of $\Gamma$, (or of its reduced version $\Gamma_{r}$ which we still denote by $\Gamma$), do not hit an internal vertex of weight 2 or 3. Moreover, by Corollary~\ref{c:compensate} we know that every time such a vector hits an internal vertex of weight 2 or 3 their negative contribution to $I(\Gamma)$ is compensated by some other vector in the embedding. It follows that $I(\Gamma)\geq -2$. 

Since $I(\Gamma_{2})=-N-k-1\leq -3$ (see Facts~\ref{f} Equation~\eqref{e:I}), we conclude that no graph in the family $\Gamma_{2}$ admits an embedding into the standard lattice of the same rank and the second statement of the proposition is proved.

In the family $\Gamma_{1}$ we have $I(\Gamma_{1})=k-N-5$. We know by Lemma~\ref{l:2rep} that the vectors hitting the bottom 2-leg of length $N$ contribute with at least $N$ to $I(\Gamma_{1})$, and assuming $N>1$, Corollary~\ref{c:compensate} implies they do not intersect the set of vectors already accounted for. So, on the one hand we know from the analysis of the embedding that $I(\Gamma_{1})\geq N-2\geq 0$, and on the other $I(\Gamma_{1})=k-N-5\leq 0$. The two last inequalities, obtained assuming $N>1$ in $\Gamma_{1}$, are true at the same time only if $(N,k)=(2,7)$. However, we are not interested in this case since by Proposition~\ref{pruning2} we know that the associated surgeries on torus knots do not bound rational homology balls.
\end{proof}

To finish the proof of Proposition~\ref{p:mainA} we need to analyze the embeddability of the graphs in family $\Gamma_{1}$ with $N=1$ and $1\leq k\leq 6$. Following the conventions of Figure~\ref{f:convenzioni1}, we know, by Lemma~\ref{l:2rep}, that the basis vectors $e_{2}$ and $e_{3}$ hit at least one more vertex in $\Gamma$, call it $w$, different from $v$ and from the only vertex in the 2-leg, which we will denote $s$. We will show in the next lemma that the vertex $w$ is necessarily adjacent to $v$.

\begin{lemma}\label{l:adjacent}
Assume that $\Gamma_{1}$ with $N=1$ and $1\leq k\leq 6$ embeds into a standard lattice of the same rank. Then, with the above conventions, we have $v\cdot w=1$. 
Furthermore, the couple of vectors $e_{2},e_{3}$ hits only the vertices $s,v$ and $w$, and with coefficients $\pm1$.
\end{lemma}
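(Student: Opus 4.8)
\textbf{Proof plan for Lemma~\ref{l:adjacent}.}

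The plan is to use the same determinant-deletion technology that has been deployed throughout the section. We are in the $N=1$ situation, so the $2$--leg consists of a single vertex $s$ with embedding $s=e_2+e_3$, the central vertex is $e_1+e_2$, and $v=-e_2+e_3+v'$ with $v'\neq 0$ (Lemma~\ref{l:firststep}, Figure~\ref{f:convenzioni1}). By Lemma~\ref{l:2rep} each of $e_2,e_3$ must hit at least one vertex besides $s$ and $v$; by orthogonality of $s$ with everything not adjacent to it, and since $v$ is the unique vertex adjacent to the central vertex on the $v$--side, a vertex hit by $e_2$ or $e_3$ and lying on a qc-leg must in fact lie on the $v$--leg. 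Call $w$ such a vertex, chosen closest to $v$ among all vertices (other than $v$ itself and the central vertex) hit by $e_2$ or $e_3$. Suppose, for contradiction, that $w$ is not adjacent to $v$, i.e. there is at least one vertex strictly between $v$ and $w$ on the $v$--leg.

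First I would argue that all the vertices strictly between $v$ and $w$ on the $v$--leg are then hit by \emph{neither} $e_2$ nor $e_3$, by the minimality choice of $w$; consequently, since each of these vertices must be orthogonal to $s=e_2+e_3$ automatically, they carry no constraint from $e_2,e_3$. Now delete the basis vectors $e_2$ and $e_3$ from the embedding. This produces an embedding of a new graph $\Gamma'$ into a standard lattice of rank two less than the number of vertices of $\Gamma'$; hence $\det\Gamma'=0$ by Remark~\ref{rem}(\ref{small}). The graph $\Gamma'$ is obtained from $\Gamma$ by deleting $s$ (whose weight drops to $0$, so it splits off) together with the central vertex (whose weight drops from $2$ to $0$): after removing $s$ we have a linear graph, and removing the former central vertex disconnects it into the $v$--leg (now a linear component with $v$ as a final vertex, of weight $|v'|\geq 1$, all other weights $\geq 2$) and a second component containing $u$ and the top qc-leg. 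By Remark~\ref{rem}(\ref{dd}) any linear component with all weights $\geq 2$ except possibly one final vertex of weight $\geq 1$ has non-vanishing determinant; this disposes of the $v$--leg component. For the other component I would invoke Lemma~\ref{l:det} (if it still has the trivalent-type structure) or again Remark~\ref{rem}(\ref{dd}) if it is linear. Either way $\det\Gamma'\neq 0$, contradicting $\det\Gamma'=0$. Hence $w$ is adjacent to $v$, i.e. $v\cdot w=1$.

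For the last sentence, having established $v\cdot w=1$, I would argue that $e_2$ (and symmetrically $e_3$) cannot hit any vertex other than $s$, $v$, $w$: the only vertex adjacent to $w$ on the far side is another vertex $w_+$ of the $v$--leg, and the only vertices adjacent to $v$ are the central vertex (hit by $e_2$ but not $e_3$, or vice versa — here both, since the central vertex is $e_1+e_2$) and $w$. If $e_2$ hit a fourth vertex, delete $e_2$ alone: we would get an embedding of a two-component graph into a lattice of rank one smaller, forcing vanishing determinant, but the component carrying $v$ and the central vertex would then have at most one vertex of weight $1$, necessarily a final vertex of its component, so by Lemma~\ref{l:3final1} (or Remark~\ref{rem}(\ref{ddN})) its determinant is non-vanishing — a contradiction. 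Finally, the coefficients of $e_2,e_3$ on $w$ must be $\pm1$: since $s=e_2+e_3$ has weight $2$ and $s\cdot v$, $s\cdot w$ are $0$ or $1$, and since $w$ has bounded weight, a coefficient of absolute value $\geq 2$ would make the configuration $\{s,v,w\}$ violate the standard-subset inequalities $v_i\cdot v_j\in\{0,1\}$, or would be ruled out directly by deleting $e_2$ (resp.\ $e_3$) and applying the determinant arguments above. The main obstacle I anticipate is the bookkeeping in the case $w=w_+$ degenerations and making sure the component carrying the trivalent vertex genuinely retains a vertex whose weight exceeds its valency so that Lemma~\ref{l:det}/Remark~\ref{rem}(\ref{ddN}) applies; this is exactly the kind of case-check that recurs in Lemmas~\ref{l:2rep} and~\ref{l:3no2}, so the same reasoning should carry over.
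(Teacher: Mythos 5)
There are several genuine gaps here, and the overall route does not match the actual difficulty of the statement. First, your claim that a vertex hit by $e_2$ or $e_3$ must lie on the $v$--leg is unjustified. Orthogonality with the central vertex $e_1+e_2$ forces any such vertex to be hit by $e_1$, orthogonality with $u=e_1+e_r$ then forces it to be hit by $e_r$, and this cascades down the $2$--chain on the $u$--leg; the first vertex of weight $>2$ on the $u$--leg (the vertex $x$ terminating that chain) is therefore a genuine candidate for $w$, and ruling out this possibility (the case $w=x$, together with the case where $w$ lies on the $v$--leg but is not adjacent to $v$) is the bulk of the work in the paper's proof, carried out by deleting the whole set $S\cup\{e_{r+\ell-1}\}=\{e_1,e_2,e_3,e_r,\dots,e_{r+\ell-1}\}$ and analyzing the resulting linear subsets against Lisca's classification (Lemmas~\ref{badcomponents2}, \ref{I=-2,b=1} and \cite[Lemma~4.10]{Lisca-sums}). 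Second, your central determinant argument rests on a miscalculation: deleting $e_2$ and $e_3$ sends the central vertex $e_1+e_2$ to $e_1$, of weight $1$, not $0$, so the graph does not disconnect there; after discarding $s$ one is left with a \emph{connected linear} graph containing an internal weight--$1$ vertex, and by Remark~\ref{rem}(3) such a graph can perfectly well have vanishing determinant (this is exactly the complementary-strings configuration). Worse, your argument never actually uses the hypothesis that $w$ is not adjacent to $v$, so if it produced a contradiction it would equally obstruct the embeddings of the family $\Lambda_s$ in Proposition~\ref{p:mainA}(1), which do exist.

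Third, the ``furthermore'' part cannot be obtained by the determinant deletions you propose. Deleting $e_2$ alone does not yield a two-component graph (again $s\mapsto e_3$ and the central vertex $\mapsto e_1$ both retain weight $1$), and a coefficient $e_2\cdot w=\pm 2$ balanced by $e_3\cdot w=\mp2$ keeps $s\cdot w=0$ and can keep all pairings in $\{0,1\}$, so no ``standard subset'' inequality is violated. The paper instead runs the $I$--counting machinery: a fourth vertex hit by $e_2,e_3$ (or a coefficient of absolute value $\ge 2$ on $w$) forces the pair $\{e_2,e_3\}$ to contribute at least $3$ (resp.\ at least $7$) to $I(\Gamma_1)$; combined with Corollaries~\ref{c:controledset} and~\ref{c:compensate} (at most two negative vectors outside the compensated set $E$, and a careful check of whether $\{e_2,e_3\}$ can meet $E$, which when $N=1$ requires ruling out $v\in W$), this gives $I(\Gamma_1)\ge 1$, contradicting $I(\Gamma_1)=k-N-5\le 0$. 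You would need to import that counting argument, and then rebuild the adjacency claim along entirely different lines.
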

\begin{proof}
We start the proof by showing that the vectors $e_{2}$ and $e_{3}$ only hit the vertices $s,v$ and $w$ in the embedding of $\Gamma_{1}$ and they do so with coefficients $\pm1$. If there were another vertex, $w'$, hit by $e_{2}$ and $e_{3}$ we would have that $e_{2}$ appears at least 5 times in the embedding, and $e_{3}$ at least 4 times. It follows that the contribution to $I(\Gamma_{1})$ of the set $\{e_{2},e_{3}\}$ would be of at least 3. On the other hand, the contribution to $I(\Gamma_{1})$ of the set $E$ of Corollary~\ref{c:compensate} is non-negative. Since we are dealing with $N=1$, we cannot guarantee that the set $E$ is disjoint from $\{e_{2},e_{3}\}$, so we analyze both scenarios. If these sets are disjoint, then we would have $I(\Gamma_{1})\geq 3+0-2\geq 1$, since by Corollary~\ref{c:controledset} there are at most two vectors contributing negatively to $I(\Gamma_{1})$, each contributing $-1$, outside the set $E$. However, we know from Facts~\ref{f} Equation~\eqref{e:I} that $I(\Gamma_{1})=k-N-5\leq 0$. This contradiction implies that $\{e_{2},e_{3}\}$ and $E$ cannot be disjoint. Assuming the two sets are not disjoint, the only possible vertex in the set $W$ of Corollary~\ref{c:compensate} hit by the vectors $\{e_{2},e_{3}\}$ is $v$, since all other vertices are either final or their weight is greater than 3. This means that there is some basis vector $f$ such that $v=e_{2}-e_{3}+v'=e_{2}-e_{3}+f$ and $f$ appears only twice in the embedding with coefficient $\pm1$. However, this is not possible since at least one between $w$ and $w'$ is orthogonal to $v$, say it is $w$, and this requires $f$ to hit $w$ with coefficient $\pm2$. Finally, if $|e_{2}\cdot w|=\alpha>1$,
then also $|e_{3}\cdot w|=\alpha>1$ and the contribution to $I(\Gamma_{1})$ of the set $\{e_{2},e_{3}\}$ is at least $2\alpha^{2}-1\geq 7$. From here we arrive to the same contradiction as before with the same type of argument.

The vertex $u$ in Figure~\ref{f:convenzioni1} is the first one in a 2-chain of length $\ell\geq 1$. We might assume the embedding of this 2-chain $c$ is of the form $(e_{1}+e_{r},e_{r}+e_{r+1},\dots,e_{r+\ell-2}+e_{r+\ell-1})$. Furthermore, by the assumptions on $\Gamma_{1}$ we know that this 2-chain is not final, and therefore it is linked to a vertex $x$ of weight at least 3. Independently of $x=w$ or $x\neq w$ we have that $w$ is necessarily hit by all the basis vectors in the set $S:=\{e_{1},e_{2},e_{3},e_{r},\dots,e_{r+\ell-2}\}$ with coefficients $\pm1$. 

We proceed now to show that the vertex $w$ cannot be orthogonal to $v$. By means of contradiction let us assume $w\cdot v=0$. Since both $w$ and $v$ are hit by $e_{2}$ and $e_{3}$, and $w\cdot s=v\cdot s=0$, there is at least one more basis vector $f\not\in S$ hitting $w$ and guaranteeing $w\cdot v=0$. If there were only one such vector, we would have $f\cdot v=\pm2$ or $f\cdot w=\pm2$ and this is not possible. Since the argument is completely analogous, we discuss only the former possibility. If $f\cdot v=\pm2$ then we would have the set $\{e_{2},f\}$ contributing at least with 3 to $I$. Just as before, if $E$ and $\{e_{2},f\}$ are disjoint, then $I(\Gamma_{1})\geq 3+0-2\geq 1$, which contradicts $I(\Gamma_{1})\leq 0$. Since we are assuming $f\cdot v=\pm2$, then $e_{2}\not\in E$. To conclude, assume that $f\in E$. If $f$ hits only one vertex in $W$, then the contribution of $f$ to $I(\Gamma_{1})$ is at least 3, and if it hits two different vertices, the only other possibility, then its contribution is of at least $4$ (since $v,w\not\in W$). Then, by Corollary~\ref{c:Emore}, the contribution to $I(\Gamma_{1})$ of the set $E$ is at least 2 in both cases, and therefore $I(\Gamma_{1})\geq 1+2-2\geq 1$ (where the $+1$ comes from the contribution of $e_{2}$, the $+2$ of $E$ and the $-2$ from the only two possible vectors not in $E$ contributing negatively to $I(\Gamma_{1})$). This contradiction implies that there are at least two vectors $f_{1},f_{2}\not\in S$ which hit $w$ and $v$ and guarantee $w\cdot v=0$.

To end the proof of this lemma we will consider two cases separately:
\begin{enumerate}
\item If $w\neq x$, we have that the only vector in the set $S\cup\{e_{r+\ell-1}\}$ hitting $x$ is $e_{r+\ell-1}$ with coefficient $\pm1$. 
Furthermore, since both $e_{2}$ and $e_{3}$ only hit the vertices $s,v$ and $w$, it follows that the vectors in $S\cup\{e_{r+\ell-1}\}$ only hit the vertices $s,v,w,x$, the central vertex and the vertices in the 2-chain $c$. We proceed to delete from the graph the $\ell+3$ vertices $s,w$, the central vertex, and the whole 2-chain $c$ and to delete from the embedding the $\ell+3$ basis vectors in $S\cup\{e_{r+\ell-1}\}$. We obtain a linear graph $\Gamma_{1}'$ (recall $v=e_{2}-e_{3}+v'=e_{2}-e_{3}\pm f_{1}\pm f_{2}+\dots$) and $x=e_{r+\ell-1}+x'$ with $|x'|\geq 2$. We consider separately the case when $w$ is final, and thus $\Gamma_{1}'$ has two connected components, or not final.
\begin{itemize}
	\item Assume $w$ were final. We claim that $\Gamma_{1}'$ is a good set, that is, its two connected components form an irreducible set. Call these two components $C_{v}$ and $C_{x}$, where the subscript indicates which vertex in $\Gamma_{1}$ is a final one in $\Gamma_{1}'$. If $\Gamma_{1}'$ was reducible, then both $C_{v}$ and $C_{x}$ would be standard sets. In this case, if $w$ was linked to $C_{x}$ in $\Gamma_{1}$, we consider the graph with embedding
\[
  \begin{tikzpicture}[xscale=2.5,yscale=-1]
    \node at (3, 1.5) {$\tilde w$};
    \node (A0_3) at (3, 0.5) {$\scriptstyle f_{1}+f_{2}+\dots$};
    \node (A0_4) at (4, 0.5) {$\scriptstyle v'=f_{1}+f_{2}+\dots$};
    \node (A1_3) at (3, 1) {$\bullet$};
    \node (A1_4) at (4, 1) {$\bullet$};
    \node (A1_5) at (5, 1) {$\dots$};
    \node (A1_6) at (6, 1) {$\bullet$};  
\draw[decorate,decoration={brace,amplitude=7pt,mirror},xshift=0.4pt,yshift=-0.3pt](3.9,1.3) -- (6.1,1.3) node[black,midway,yshift=-0.5cm] {\footnotesize $C_{v}$};
    \path (A1_4) edge [-] node [auto] {$\scriptstyle{}$} (A1_5);
    \path (A1_5) edge [-] node [auto] {$\scriptstyle{}$} (A1_6);
	\draw (A1_3) to[bend right] (A1_4);
    \draw (A1_3) to[bend left] (A1_4);
  \end{tikzpicture}
  \]
where $\tilde w$ is the vertex $w$ from $\Gamma_{1}$ where in its embedding we only consider the basis vectors hitting the standard set $C_{v}$. Analogously, if $w$ was linked to $C_{v}$ in $\Gamma_{1}$, we consider the graph with embedding
\[
  \begin{tikzpicture}[xscale=2.5,yscale=-1]
    \node at (3, 1.5) {$\tilde w$};
    \node (A0_4) at (4, 0.5) {$\scriptstyle x'$};
    \node (A1_3) at (3, 1) {$\bullet$};
    \node (A1_4) at (4, 1) {$\bullet$};
    \node (A1_5) at (5, 1) {$\dots$};
    \node (A1_6) at (6, 1) {$\bullet$};  
\draw[decorate,decoration={brace,amplitude=7pt,mirror},xshift=0.4pt,yshift=-0.3pt](3.9,1.3) -- (6.1,1.3) node[black,midway,yshift=-0.5cm] {\footnotesize $C_{x}$};
    \path (A1_4) edge [-] node [auto] {$\scriptstyle{}$} (A1_5);
    \path (A1_5) edge [-] node [auto] {$\scriptstyle{}$} (A1_6);
	\draw (A1_3) to (A1_4);
  \end{tikzpicture}
  \]
This time, $\tilde w$ and $x'$ are linked precisely once since the only change in the embedding of $x'$ with respect to $x\in\Gamma_{1}$ is the basis vector $e_{r+\ell-1}$.
Notice that both of these graphs should have vanishing determinant, since they embed in a lattice of rank smaller than their number of vertices. However, this is not the case. Indeed, the latter case follows readily from Remark~\ref{rem}(\ref{dd}); while the former can be shown to have non-vanishing determinant by a dominant diagonal argument unless $|v'|=2$. If $|v'|=2$, then, following the same arguments used when analyzing the graph~\ref{forlater} in the proof of Lemma~\ref{l:no1}, we know that 
this graph will have vanishing determinant if and only if $|w'|=3$ and $C_{v}$ is a string of three vertices with weight 2. However, this yields an impossible configuration for $\Gamma_{1}$: under these assumptions, we have $|w|\geq 8$, so this vertex cannot be the one of weight $k+1$ and then the qc-legs do not have the correct structure. 

Now that we have shown that if $w$ was final in $\Gamma_{1}$ then $\Gamma_{1}'$ is a good subset, we proceed to compare $I(\Gamma_{1}')$ with $I(\Gamma_{1})$. We have that the former obtains a positive contribution of $2+\ell$, because of the suppression of the $2+\ell$ vertices of weight $2$, and also a negative one of at least $-2-1-3-2-\ell+3$, because of the new weights of the vertices $v,x$ and the deletion of $w$. It follows that $I(\Gamma_{1}')\leq I(\Gamma_{1})-3\leq -3$ and by Lemma~\ref{badcomponents2} this implies that $\Gamma_{1}'$ has either one or two bad components. We argue now that both scenarios yield to a contradiction. If $\Gamma_{1}'$ had two bad components and $I(\Gamma_{1}')=-4$, we infer that $w$ in $\Gamma_{1}$ is hit by precisely one more basis vector with coefficient $\pm1$ than the ones we had already considered (to yield the correct $I$). The structure of the two bad components, described in Lemma~\ref{I=-2,b=1}, forces there to be two basis vectors hitting the `central vertex' of each bad component that appear only twice in the embedding with coefficient $\pm1$ (they cannot hit $w$ because of its weight constraint and the embedding of all the other vertices in $\Gamma_{1}$ is fixed). This contradicts Lemma~\ref{l:3no2}. Finally, if $\Gamma_{1}'$ had precisely one bad component and $I(\Gamma_{1}')=-3$, then, it is a routine check to establish that the structure of the two components in $\Gamma_{1}'$, described explicitly in Lemma~\ref{l:3no2}, is incompatible with the qc-leg structure we assume in the legs of $\Gamma_{1}$.

	\item If $w$ was internal, $\Gamma_{1}'$ would have 3 connected components. There are two cases, either $w$ joins the components $C_{x}$ and $C_{3}$ and the third one is $C_{v}$ (Case I); or $w$ joins the components $C_{v}$ and $C_{3}$ and the other component is $C_{x}$ (Case II). In any of these cases let us denote by $C_{nw}$ (for \emph{no $w$}) the component not linked to $w$ and by $C_{lw}$ (for \emph{left of $w$}) the component linked to $w$ which is not $C_{3}$. We start arguing that $\Gamma_{1}'$ is a good subset by analyzing its different connected components.
	
	The component $C_{nw}$ cannot be irreducible: if it were, it would be a good subset and we could consider, exactly as in the preceding point, the effect of adding $\tilde w$ (that is the vertex $w$ with the vectors hitting the vertices in $C_{nw}$). This configuration needs to have a vanishing determinant, but, as explained in the preceding point, this does not happen.
	
	The component $C_{3}$ cannot be irreducible: again, if it were, it'd be a good subset and adding the appropriate $\tilde w$ we would obtain a configuration with vanishing determinant, however this is not possible since we are dealing with a diagonally dominant intersection matrix.
	
	If $C_{lw}$ is irreducible, then it is a standard subset and, by the above arguments, we necessarily have that $C_{nw}\cup C_{3}$ is a good subset. If we are in case I above, that is $C_{lw}=C_{x}$, then there is precisely one basis vector, call it $g$ hitting $w$ and $C_{x}$ (if there were more than one, by considering $\tilde w\cup C_{x}$ we would have a contradiction with the determinant). Moreover, since $w$ is linked to $C_{3}$ there has to be a basis vector different from $g$ which links $w$ and $C_{3}$. With these considerations in mind, it is easy to see that $I(\Gamma_{1}')\leq -5$. (The computation is the same as in the preceding bullet point but now we can guarantee the existence of two more basis vectors hitting $w$ which account for the bound going from $-3$ to $-5$.). The component $C_{x}$ cannot have $I(C_{x})=-3$, since standard subsets with $I=-3$ have an embedding in which a basis vector appears only once, see \cite[Proposition~6.1]{Lisca-ribbon},  and this contradicts Lemma~\ref{l:no1}. Moreover, it follows directly from Lemma~\ref{badcomponents}, that $C_{3}$ is not a bad component, and therefore, using Lemma~\ref{badcomponents2}, we conclude that in this Case~I we have $I(C_{x})=-2$ and $I(C_{v}\cup C_{3})=-3$ where $C_{v}$ is a bad component. The concrete structure of $C_{v}\cup C_{3}$ is given in Lemma~\ref{I=-2,b=1} and it is immediate to check that these conditions are incompatible with the qc-leg structure. We conclude that in Case~I $C_{lw}$ is not irreducible, which implies that $\Gamma_{1}'$ is a good subset.
	
	We have one more case to discuss, that is $C_{lw}$ irreducible under the assumptions of Case II above. We are now working with $C_{lw}=C_{v}$ and $C_{nv}=C_{x}$. The linear set $\tilde w\cup C_{x}\cup C_{3}$ needs to have vanishing determinant, and this is only possible if $w$ is hit precisely by one vector appearing in the embedding of the good subset $C_{x}\cup C_{3}$ and moreover, $C_{x}$ and $C_{3}$ need to be complementary strings. Once again, this is immediately seen to be in contradiction with the structure of the qc-legs. 
	
	We have shown so fart that the set $\Gamma_{1}'$ is irreducible, and therefore a good subset, with 3 connected components and $I(\Gamma_{1}')\leq -3$ (by the same argument used in the previous bullet point). However, this contradicts \cite[Lemma~4.10]{Lisca-sums} (recall that $C_{3}$ is not a bad component of $\Gamma_{1}'$).
\end{itemize}

\item If $w=x$ then, as shown before, we have that $w$ is hit by all the vertices in $S$ with coefficient $\pm1$. Notice that by Lemma~\ref{l:no1} and since $e_{2}$ and $e_{3}$ hit only the vertices $s,v,w\in\Gamma_{1}$, we have $e_{r+\ell-1}\cdot w\neq0$. This implies, since $w=x$, that $e_{r+\ell-1}\cdot w=\pm2$. As in the preceding case, we proceed to delete from the graph the $\ell+3$ vertices $s,w$, the central vertex, and the whole 2-chain $c$ and to delete from the embedding the $\ell+3$ basis vectors in $S\cup\{e_{r+\ell-1}\}$. The linear graph we obtain $\Gamma'_{1}$, whose vertices have all at least weight 2, and has at most 2 connected components (it is indeed a connected graph if $w$ is final). It  satisfies $I(\Gamma'_{1})\leq I(\Gamma_{1})+2+\ell-2-3-2-(\ell-1)-4+3\leq -5$, so if $\Gamma_{1}'$ is good, which in this case is equivalent to being irreducible, we obtain a contradiction with Lemma~\ref{badcomponents2}. If $\Gamma_{1}'$ were reducible, then it necessarily has two connected components and both of them have $I\leq -2$. The component linked to $w$, call it $L_{w}$, is a good subset and needs to be linked to $w$ via a basis vector, call it $g_{1}$ that has not been accounted for. Moreover, since $g_{1}$ hits at least two different vertices in $L_{w}$ there is at least a basis vector $g_{2}$ hitting $w$ different from all the ones accounted so far and guaranteeing orthogonality. This implies that the estimate we had for $I(\Gamma_{1}')$ can be improved in this case to $I(\Gamma_{1}')\leq -7$, which in turn implies that at least one of its good connected components has $I\leq -4$ which contradicts again Lemma~\ref{badcomponents2}.
\end{enumerate}
\end{proof}

We are now ready to finish the proof of Proposition~\ref{p:mainA} which involves the construction of the fourth family in Theorem~\ref{maintheorem}.

\begin{proof}[End of proof of second statement of Proposition~\ref{p:mainA}]

We have shown so far that, under the assumptions of Proposition~\ref{p:mainA}, if a graph $\Gamma$ embeds, then it belongs to the family $\Gamma_{1}$ with $N=1$ and $1\leq k\leq 6$. In the next figure we summarize what we know so far about these embeddings, from the qc-leg structure and from Lemma~\ref{l:adjacent}. Notice that due to space constraints the 2-leg is depicted to the left and the qc-legs are both to the right of the trivalent vertex.
\[
  \begin{tikzpicture}[xscale=1.5,yscale=-0.5]
    \node (A3_0) at (3.75,6) {$\scriptstyle e_{2}-e_{3}+v'$};
    \node at (6.75,6) {$\scriptstyle \pm(e_{2}-e_{3}-e_{1}+\sum_{i=0}^{\ell-1}(-1)^{i}e_{r+i}+w')$};
	\node (A1_3) at (3.75,5) {$\bullet$};
	\node at (3.75,4.5) {$\scriptstyle v$};
	\node (A) at (6.75,5) {$\bullet$};
	\node at (6.75,4.5) {$\scriptstyle w$};
	\node (D) at (8.75,5) {$\dots$};
	\node (A3) at (3.75,0) {$\scriptstyle e_{1}+e_{r}$};
	\node (A1) at (3.75,1) {$\bullet$};
    \node (A2_3) at (3.5, 3) {$\scriptstyle e_{1}+e_{2}$};
    \node (A3_3) at (3, 3) {$\bullet$};
    \node (A4_4) at (2, 2) {$\scriptstyle e_{2}+e_{3}$};
    \node (A5_4) at (2, 3) {$\bullet$};
    \node (F) at (4.75, 1) {$\bullet$};
    \node (J) at (5.75, 1) {$\dots$};
    \node (K) at (6.75, 1) {$\bullet$};
    \node (L) at (8.25, 1) {$\bullet$};
    \node (M) at (9.25, 1) {$\bullet$};
    \node (N) at (10.25, 1) {$\bullet$};
    \node (O) at (11.25, 1) {$\dots$};
    \node at (6.75, 0) {$\scriptstyle e_{r+\ell-2}+e_{r+\ell-1}$};
    \node at (8.25, 1.5) {$\scriptstyle x$};
    \node at (8.25, 0) {$\scriptstyle e_{r+\ell-1}+g+f_{1}$};
    \node at (9.25, 0) {$\scriptstyle f_{1}+f_{2}$};
    \node at (10.25, 0) {$\scriptstyle f_{2}+f_{3}$};
    \node at (4.75, 0) {$\scriptstyle e_{r}+e_{r+1}$};
    \node at (3.75, 1.5) {$\scriptstyle u$};
    \path (A3_3) edge [-] node [auto] {$\scriptstyle{}$} (A1_3);
    \path (A3_3) edge [-] node [auto] {$\scriptstyle{}$} (A1);
    \path (A3_3) edge [-] node [auto] {$\scriptstyle{}$} (A5_4);
    \path (A1_3) edge [-] node [auto] {$\scriptstyle{}$} (A);
    \path (A) edge [-] node [auto] {$\scriptstyle{}$} (D);
    \path (A1) edge [-] node [auto] {$\scriptstyle{}$} (F);
    \path (F) edge [-] node [auto] {$\scriptstyle{}$} (J);
    \path (J) edge [-] node [auto] {$\scriptstyle{}$} (K);
    \path (K) edge [-] node [auto] {$\scriptstyle{}$} (L);
    \path (L) edge [-] node [auto] {$\scriptstyle{}$} (M);
    \path (M) edge [-] node [auto] {$\scriptstyle{}$} (N);
    \path (N) edge [-] node [auto] {$\scriptstyle{}$} (O);
   \end{tikzpicture}
  \]
The length of the 2-chain $c$ starting with the vertex $u$ is $\ell=|v'|\geq 1$, $|x|=3$ and to the right of $x$ there is a 2-chain of length at least 2.

We start by showing that $|w'|\geq 2$. Notice that $|w'|\geq 1$, since $w\cdot x=0$ and $e_{r+\ell-1}$ hits both $w$ and $x$. If $f_{1}\cdot w\neq 0$ then we have also that $f_{2}$ and $f_{3}$ hit $w$ and we are done. So, let us assume $w\cdot g\neq0$ and $w\cdot f_{1}=0$. By contradiction we assume $w'=\pm g$ and distinguish the following two possibilities:
	\begin{itemize}
		\item $w$ is a final vertex. In this scenario the 2-chain to the right of $x$, call it $c'$, has length precisely $\ell+2$. Since $|v'|=\ell$, we cannot have any of the $f_{i}'s$ in $c'$ hitting $v$. However, $v\cdot w=1$ forces at least one more basis vector, apart from $e_{2},e_{3}$, to hit both $v$ and $w$. The only possible one is $g$, but this implies that either $e_{r+\ell-1}$ or $f_{1}$ hits $v$, and both possibilities yield an immediate contradiction. 
		\item $w$ is an internal vertex. In this case $g$ appears at least 4 times in the embedding, since it hits $x,v,w$ and the vertex to the right of $w$. Moreover, since $e_{1}\cdot v=0$, and since we know by Lemma~\ref{l:adjacent} that $e_{2}$ and $e_{3}$ do not reappear in the embedding, it follows that $f_{1}$ and $f_{2}$ also appear each at least 4 times in the embedding. Indeed, they necessarily hit $v$, the vertex to the right of $w$ and they appear twice in their depicted 2-chain.
It follows that the contribution to $I(\Gamma_{1})$ of the set $\{e_{2},g,f_{1},f_{2}\}$ is at least 4. Moreover, it is immediate to check that none of these basis vectors belong to the set $E$ in Corollary~\ref{c:compensate}. It follows, by the same argument we have developed several times in the preceding proofs, that $I(\Gamma_{1})\geq 4-2+0=2$ which contradicts $I(\Gamma_{1})\leq 0$.
\end{itemize}

Our next goal in this final proof of this section is to establish that $|v'|=1$. Since the vectors in the set $S=\{e_{1},e_{2},e_{3},e_{r},\dots,e_{r+\ell-1}\}$ only hit the depicted vertices of the graph $\Gamma_{1}$, after deleting them from the embedding together with the vertex $w$ we obtain the following linear graph $L$:
\[
  \begin{tikzpicture}[xscale=1.5,yscale=-0.5]
	\node (A1_3) at (5,5) {$\bullet$};
	\node at (5,5.5) {$\scriptstyle v'$};
	\node (D) at (8.75,5) {$\bullet$};
	\node (E) at (9.75,5) {$\dots$};
    \node (L) at (8.25, 1) {$\bullet$};
    \node (M) at (9.25, 1) {$\bullet$};
    \node (N) at (10.25, 1) {$\bullet$};
    \node at (8.75, 5.5) {$\scriptstyle w_{+}$};
    \node (O) at (11.25, 1) {$\dots$};
    \node at (8.25, 1.5) {$\scriptstyle x$};
    \node at (8.25, 0) {$\scriptstyle g+f_{1}$};
    \node at (9.25, 0) {$\scriptstyle f_{1}+f_{2}$};
    \node at (10.25, 0) {$\scriptstyle f_{2}+f_{3}$};
    \path (D) edge [-] node [auto] {$\scriptstyle{}$} (E);
    \path (L) edge [-] node [auto] {$\scriptstyle{}$} (M);
    \path (M) edge [-] node [auto] {$\scriptstyle{}$} (N);
    \path (N) edge [-] node [auto] {$\scriptstyle{}$} (O);
   \end{tikzpicture}
  \]
which admits an embedding into a lattice of rank its number of vertices, since we have deleted from $\Gamma_{1}$ the same number of vertices as basis vectors from the embedding. Moreover, this graph satisfies $I(L)=I(\Gamma_{1})-|w'|-1\leq -3$. The one-vertex component cannot be a bad component, since bad components have at least 3 vertices; the component starting with $w_{+}$ (which exists if $w$ is internal) cannot be bad by a direct application of Lemma~\ref{badcomponents}; finally, the component $L_{x}$ starting with the vertex labelled $x$ is not bad either: we can apply Lemma~\ref{badcomponents} when we consider $L_{x}\cup\tilde w$, where $\tilde w$ is $w$ without the basis vector $e_{r+\ell-1}$. So, since we have a three component set with $I\leq -3$ and no bad components, we conclude,  by \cite[Lemma~4.9]{Lisca-sums} and Lemma~\ref{badcomponents2} that $L$ is not a good set.
\begin{itemize}
\item If $L$ has two connected components, meaning $w$ was final in $\Gamma_{1}$, then, since $L$ is not a good set, there must be a unique basis vector, $e$, hitting $v'$. The coefficient of $e$ is necessarily $\pm 1$ since, if it were in absolute value greater or equal than 2, it would imply that the connected component starting with $x$, $L_{x}$, is a standard subset with $I(L_{x})\leq-4$ which contradicts Lemma~\ref{badcomponents2}. So, in this case $|v'|=\ell=1$, as claimed. 
\item The only possibility left to consider is when $L$ has 3 connected components, call them $v', L_{x}$ and $L_{+}$. Since $L$ is not good, it cannot consist of a unique irreducible set. (The only way this could happen is having $|v'|=1$, so that the condition of having every vertex with weight at least two is violated; however, this would make the component $v'$ irreducible.) It follows that $L$ can be decomposed into 2 or 3 irreducible sets. 

We claim that $L_{+}$ cannot be an irreducible good set of $L$. If it were, then $L_{+}$ has to be standard and in $\Gamma_{1}$ all the basis vectors hitting the vertices in $L_{+}$ would not appear elsewhere in the embedding except on $w$. It follows that deleting from $\Gamma_{1}$ all the basis vectors that do not hit $L_{+}$ we would obtain a linear set $\tilde w\cup L_{x}$ with non-vanishing determinant that admits an embedding into a standard lattice of rank one less (the one spanned by the vectors hitting the standard set $L_{+}$) than its number of vertices ($|L_{+}|+1$), a contradiction. So, assuming by contradiction that $|v'|\geq 2$, the irreducible sets of $L$ are either $L_{+}\cup v'$ and $L_{x}$ or $L_{+}\cup L_{x}$ and $v'$. The latter decomposition is not possible: 
since $L$ embeds in a lattice of rank its number of vertices and $L_{+}\cup L_{x}$ is under these assumptions a good set, there is only one basis vector left in the embedding of $L$ that can hit $v'$. Since we are assuming $|v'|\geq 2$, this means that the basis vector hitting $v'$ has a coefficient, and thus we actually have $|v'|\geq 4$. This configuration contradicts Lemma~\ref{badcomponents2}, since $I(L)=I(v')+I(L_{x}\cup L_{+})\leq -3$, which implies $I(L_{x}\cup L_{+})\leq -4$, but $L_{x}$ and $L_{+}$ are not bad components.

 The final task is excluding $L_{+}\cup v'$ and $L_{x}$ from being the irreducible good components of $L$ while $|v'|\geq 2$. In this case $L_{x}$ is a standard set. If we delete from $\Gamma_{1}$ all the basis vectors that do not hit $L_{x}$, we are left with $L_{x}$ linked once with the vertex called $w$ in $\Gamma_{1}$. Just as in the preceding paragraph, this connected linear set has non vanishing determinant while it embeds in a lattice of rank smaller than its number of vertices, a contradiction.
\end{itemize}

The above arguments show that $|v'|=1$ and now we move on to arguing  that $|w'|=2$ and that $w$ is not a final vertex. Notice that since $|v'|=1$ there is some basis vector $h$ such that $v=e_{2}-e_{3}+h$. Moreover, $w\cdot v=1$ implies that $h\cdot w\neq 0$. There are two possibilities, either  $w=-e_{2}+e_{3}+3h+\dots$ or $w=e_{2}-e_{3}-h+\dots$. The former one can be excluded, since it implies that the vector $h$ contributes at least 7 to $I(\Gamma_{1})$, $h\not\in E$ and therefore in this scenario we arrive to the contradiction $I(\Gamma_{1})\geq 7-2+0=5$. We now proceed to analyze the graph $\Gamma'_{1}$ we obtain by deleting from the embedding all the basis vectors in the set $S$.
\[
  \begin{tikzpicture}[xscale=1.5,yscale=-0.5]
    \node (A3_0) at (4.5,5.75) {$\scriptstyle h$};
    \node at (5.65,4.75) {$\scriptstyle -$};
    \node at (7.75,3) {$\scriptstyle -$};
    \node at (6.75,5.75) {$\scriptstyle -h+\dots$};
	\node (A1_3) at (4.5,5) {$\bullet$};
	\node at (4.5,4) {$\scriptstyle v$};
	\node (A) at (6.75,5) {$\bullet$};
	\node at (6.75,4) {$\scriptstyle w$};
	\node (D) at (8.75,5) {$\dots$};
    \node (L) at (8.25, 1) {$\bullet$};
    \node (M) at (9.25, 1) {$\bullet$};
    \node (N) at (10.25, 1) {$\bullet$};
    \node (O) at (11.25, 1) {$\dots$};
    \node at (8.25, 1.5) {$\scriptstyle x$};
    \node at (8.25, 0) {$\scriptstyle g+f_{1}$};
    \node at (9.25, 0) {$\scriptstyle f_{1}+f_{2}$};
    \node at (10.25, 0) {$\scriptstyle f_{2}+f_{3}$};
    \path (A1_3) edge [-] node [auto] {$\scriptstyle{}$} (A);
    \path (A) edge [-] node [auto] {$\scriptstyle{}$} (D);
    \path (A) edge [-] node [auto] {$\scriptstyle{}$} (L);
    \path (L) edge [-] node [auto] {$\scriptstyle{}$} (M);
    \path (M) edge [-] node [auto] {$\scriptstyle{}$} (N);
    \path (N) edge [-] node [auto] {$\scriptstyle{}$} (O);
   \end{tikzpicture}
  \]
The determinant of this graph needs to vanish, since it admits an embedding into a standard lattice of rank smaller than its number of vertices. For this to happen, we need at least one vertex with weight strictly less than its valency. The only possible such vertex is $w$, if it has valency 3 (so $w$ is not final) and $w'=-g-h$. 

Thus we have that the configuration we are dealing with is necessarily the following:
\[
  \begin{tikzpicture}[xscale=1.5,yscale=-0.5]
    \node (A3_0) at (3.75,5.75) {$\scriptstyle e_{2}-e_{3}+h$};
    \node at (5.75,5.75) {$\scriptstyle e_{2}-e_{3}-e_{1}+e_{r}-h-g$};
	\node (A1_3) at (3.75,5) {$\bullet$};
	\node at (3.75,4.5) {$\scriptstyle v$};
	\node (A) at (5.75,5) {$\bullet$};
	\node at (5.75,4.5) {$\scriptstyle w$};
	\node (D) at (7,5) {$\bullet$};
	\node (E) at (8,5) {$\dots$};
	\node (A3) at (3.75,0) {$\scriptstyle e_{1}+e_{r}$};
	\node (A1) at (3.75,1) {$\bullet$};
    \node (A2_3) at (3.5, 3) {$\scriptstyle e_{1}+e_{2}$};
    \node (A3_3) at (3, 3) {$\bullet$};
    \node (A4_4) at (2, 2) {$\scriptstyle e_{2}+e_{3}$};
    \node (A5_4) at (2, 3) {$\bullet$};
    \node (F) at (4.75, 1) {$\bullet$};
    \node (J) at (5.75, 1) {$\bullet$};
    \node (K) at (6.75, 1) {$\bullet$};
    \node (L) at (7.75, 1) {$\bullet$};
    \node (M) at (8.75, 1) {$\bullet$};
    \node (N) at (9.75, 1) {$\dots$};
    \node at (4.75, 1.5) {$\scriptstyle x$};
    \node at (4.75, 0) {$\scriptstyle e_{r}+g+f_{1}$};
    \node at (5.75, 0) {$\scriptstyle f_{1}+f_{2}$};
    \node at (6.75, 0) {$\scriptstyle f_{2}+f_{3}$};
    \node at (7.75, 0) {$\scriptstyle f_{3}+f_{4}$};
    \node at (3.75, 1.5) {$\scriptstyle u$};
    \path (A3_3) edge [-] node [auto] {$\scriptstyle{}$} (A1_3);
    \path (A3_3) edge [-] node [auto] {$\scriptstyle{}$} (A1);
    \path (A3_3) edge [-] node [auto] {$\scriptstyle{}$} (A5_4);
    \path (A1_3) edge [-] node [auto] {$\scriptstyle{}$} (A);
    \path (A) edge [-] node [auto] {$\scriptstyle{}$} (D);
    \path (D) edge [-] node [auto] {$\scriptstyle{}$} (E);
    \path (A1) edge [-] node [auto] {$\scriptstyle{}$} (F);
    \path (F) edge [-] node [auto] {$\scriptstyle{}$} (J);
    \path (J) edge [-] node [auto] {$\scriptstyle{}$} (K);
    \path (K) edge [-] node [auto] {$\scriptstyle{}$} (L);
    \path (L) edge [-] node [auto] {$\scriptstyle{}$} (M);
    \path (M) edge [-] node [auto] {$\scriptstyle{}$} (N);
   \end{tikzpicture}
  \]
Observe that the bottom leg begins $[3,6,\dots]$.
If, ignoring the final vertex $v_{k}$ of weight $k+1$ in the graph $\Gamma_{1}$, we have that the bottom complementary string is exactly $[3,6]$, then the other one is forced to be $[2,3,2,2,2,2]$. Moreover, since we have shown that the vertex $w$ is not final, the vertex $v_{k}$ needs to be linked to it. Since no vector in $S$ can hit $v_{k}$, it follows that $g\cdot v_{k}=-1$ and the embedding is forced, yielding the graph:
\[
  \begin{tikzpicture}[xscale=1.5,yscale=-0.5]
    \node (A3_0) at (3.75,5.75) {$\scriptstyle e_{2}-e_{3}+h$};
    \node at (5.75,5.75) {$\scriptstyle e_{2}-e_{3}-e_{1}+e_{r}-h-g$};
    \node at (8,5.75) {$\scriptstyle -g+f_{1}-f_{2}+f_{3}-f_{4}+f_{5}$};
    \node at (8,4.5) {$\scriptstyle v_{k}$};
	\node (A1_3) at (3.75,5) {$\bullet$};
	\node at (3.75,4.5) {$\scriptstyle v$};
	\node (A) at (5.75,5) {$\bullet$};
	\node at (5.75,4.5) {$\scriptstyle w$};
	\node (D) at (8,5) {$\bullet$};
	\node (A3) at (3.75,0) {$\scriptstyle e_{1}+e_{r}$};
	\node (A1) at (3.75,1) {$\bullet$};
    \node (A2_3) at (3.5, 3) {$\scriptstyle e_{1}+e_{2}$};
    \node (A3_3) at (3, 3) {$\bullet$};
    \node (A4_4) at (2, 2) {$\scriptstyle e_{2}+e_{3}$};
    \node (A5_4) at (2, 3) {$\bullet$};
    \node (F) at (4.75, 1) {$\bullet$};
    \node (J) at (5.75, 1) {$\bullet$};
    \node (K) at (6.75, 1) {$\bullet$};
    \node (L) at (7.75, 1) {$\bullet$};
    \node (M) at (8.75, 1) {$\bullet$};
    \node at (4.75, 1.5) {$\scriptstyle x$};
    \node at (4.75, 0) {$\scriptstyle e_{r}+g+f_{1}$};
    \node at (5.75, 0) {$\scriptstyle f_{1}+f_{2}$};
    \node at (6.75, 0) {$\scriptstyle f_{2}+f_{3}$};
    \node at (7.75, 0) {$\scriptstyle f_{3}+f_{4}$};
    \node at (8.75, 0) {$\scriptstyle f_{4}+f_{5}$};
    \node at (3.75, 1.5) {$\scriptstyle u$};
    \path (A3_3) edge [-] node [auto] {$\scriptstyle{}$} (A1_3);
    \path (A3_3) edge [-] node [auto] {$\scriptstyle{}$} (A1);
    \path (A3_3) edge [-] node [auto] {$\scriptstyle{}$} (A5_4);
    \path (A1_3) edge [-] node [auto] {$\scriptstyle{}$} (A);
    \path (A) edge [-] node [auto] {$\scriptstyle{}$} (D);
    \path (A1) edge [-] node [auto] {$\scriptstyle{}$} (F);
    \path (F) edge [-] node [auto] {$\scriptstyle{}$} (J);
    \path (J) edge [-] node [auto] {$\scriptstyle{}$} (K);
    \path (K) edge [-] node [auto] {$\scriptstyle{}$} (L);
    \path (L) edge [-] node [auto] {$\scriptstyle{}$} (M);
   \end{tikzpicture}
  \]
which is the first one in the infinite family described in the statement of Proposition~\ref{p:mainA}. 

To end the proof we analyze how the above graph yields the family in the statement, by induction on the number $n$ of vertices in the complementary string in the bottom leg. We have treated the initial case of $n=2$. If $n=3$, then, since the vertex to the right of $w$, call it $w_{+}$, is linked to $w$ via $g$, the embedding forces $w_{+}=-g+f_{1}-f_{2}+f_{3}-f_{4}+f_{5}+w_{+}'$ and $|w_{+}|\geq 6$. This in turn forces the other complementary string to be $[2,3,2,2,2,3,2,2,2,2,\dots]$, with forced embedding $(e_{1}+e_{r},e_{r}+g+f_{1},f_{1}+f_{2},\dots,f_{4}+f_{5}+t_{1},t_{1}+t_{2},t_{2}+t_{3},\dots)$.

We claim that $|w_{+}'|=0$ and that $w_{+}$ is not final in $\Gamma_{1}$. The argument is verbatim the one we used before to show that $|w'|=2$ and $w$ not final. This time we consider the graph obtained from $\Gamma_{1}$ by deleting from the embedding all the vectors in the set $\{e_{1},e_{2},e_{3},e_{r},h,g,f_{1},f_{2},f_{3},f_{4}\}$. This amounts to deleting 10 basis vectors and 9 vertices in the diagram, and therefore the resulting linear graph $L$ must have vanishing determinant:
\[
  \begin{tikzpicture}[xscale=1.5,yscale=-0.5]
    \node at (7.75,3) {$\scriptstyle -$};
    \node at (6.75,5.75) {$\scriptstyle f_{5}+w'_{+}$};
	\node (A) at (6.75,5) {$\bullet$};
	\node at (6.75,4) {$\scriptstyle w_{+}$};
	\node (D) at (8.75,5) {$\dots$};
    \node (L) at (8.25, 1) {$\bullet$};
    \node (M) at (9.25, 1) {$\bullet$};
    \node (N) at (10.25, 1) {$\bullet$};
    \node (O) at (11.25, 1) {$\dots$};
    \node at (8.25, 0) {$\scriptstyle f_{5}+t_{1}$};
    \node at (9.25, 0) {$\scriptstyle t_{1}+t_{2}$};
    \node at (10.25, 0) {$\scriptstyle t_{2}+t_{3}$};
    \path (A) edge [-] node [auto] {$\scriptstyle{}$} (D);
    \path (A) edge [-] node [auto] {$\scriptstyle{}$} (L);
    \path (L) edge [-] node [auto] {$\scriptstyle{}$} (M);
    \path (M) edge [-] node [auto] {$\scriptstyle{}$} (N);
    \path (N) edge [-] node [auto] {$\scriptstyle{}$} (O);
   \end{tikzpicture}
  \]
There is only one possibility for the determinant to be zero: $w_{+}$ is not final in $\Gamma_{1}$ and $|w_{+}'|=0$. 

Summing up, we have established that if $n=3$ the complementary strings are precisely $[3,6,6]$ and $[2,3,2,2,2,3,2,2,2,2]$ and the final vertex with weight $k+1$ needs to be linked to the final 6 in the complementary string. The embedding then forces this final vertex to have weight 6 and embedding $f_{5}-t_{1}+t_{2}-t_{3}+t_{4}-t_{5}$. This is the second member of the family in the statement. From here, a clear induction yields the whole family and the proposition is proved.
\end{proof} 
\end{subsection}

\end{section}

\appendix
\section{Lens space and reducible surgeries}\label{s:appendix}

In this appendix we will classify all pairs $(p,q)$ such that $S^3_{pq\pm 1}(T_{p,q})$ or $S^3_{pq}(T_{p,q})$ bounds a rational homology ball.
By work of Moser~\cite{Moser}, $S^3_{pq\pm 1}(T_{p,q})$ is the lens space $L(pq\pm 1, \mp q^2) = L(pq\pm 1, \mp p^2)$, while $S^3_{pq}(T_{p,q})$ is a connected sum of two lens spaces, namely $L(p,-q) \# L(q,-p)$.
Hence we can use Lisca's classification of which lens spaces~\cite{Lisca-ribbon} and sums of lens spaces~\cite{Lisca-sums} bound rational homology balls.

For convenience, we recall the definition of the Fibonacci sequence $\{F_n\}$, and the sequences $\{S_n\}$ and $\{T_n\}$.
\[
\left\{
\begin{array}{l}
F_0 = 0,\\
F_1 = 1,\\
F_{n+1} = F_n + F_{n-1};
\end{array}
\right.
\quad
\left\{
\begin{array}{l}
S_0 = 1,\\
S_1 = 1,\\
S_{n+1} = 6S_n - S_{n-1};
\end{array}
\right.
\quad
\left\{
\begin{array}{l}
T_0 = 0,\\
T_1 = 1,\\
T_{n+1} = 6T_n - T_{n-1}.
\end{array}
\right.
\]
The first few values of $F_n$ are: $0, 1, 1, 2, 3, 5, 8, 13, 21, 34, 55, \dots$
The first few values of $S_n$ are: $1, 1, 5, 29, 169, 985, \dots$, while the first few values of $T_n$ are $0, 1,6,35,204,1189\dots$.

The main results of this appendix are the following.

\begin{te}\label{reduciblebounding}
The $3$--manifold $S^3_{pq}(T_{p,q})$ bounds a rational homology ball if and only if $(p,q;pq) \in \Rc$.
\end{te}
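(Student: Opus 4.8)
The plan is to combine Moser's identification $S^3_{pq}(T_{p,q}) = L(p,-q)\#L(q,-p)$ with Lisca's classification in \cite{Lisca-sums} of connected sums of two lens spaces that bound rational homology balls, and to match the two lists via an arithmetic argument. First I would recall that, since $\gcd(p,q)=1$, we have $L(p,-q)\#L(q,-p) = L(p,p-q)\#L(q,q-p)$, and that by \cite{Lisca-sums} a connected sum $L(m_1,n_1)\#L(m_2,n_2)$ of two lens spaces bounds a rational homology ball if and only if it appears in one of the families of Lisca's list (after possibly reversing orientation and using the symmetries $L(m,n)\cong L(m,n')$ with $nn'\equiv 1$, and $-L(m,n)\cong L(m,-n)$). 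So the whole content of the theorem is: for which coprime pairs $(p,q)$ with $p<q$ is $L(p,-q)\#L(q,-p)$ on Lisca's list?

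The key step is the arithmetic matching. Lisca's families of summable sums of two lens spaces are parametrized (roughly) by linear recursions, and the constraint that \emph{both} summands be lens spaces of the specific shape $L(p,\pm q)$ and $L(q,\pm p)$ — i.e. that the two denominators $p$ and $q$ be ``complementary'' in the sense that the second parameters are determined by each other — is very restrictive. Concretely, I would go through each family in \cite[Theorem~1.1]{Lisca-sums} and impose that the pair $(m_1,m_2)$ of orders equals $(p,q)$ while simultaneously the pair of residues equals $(-q \bmod p,\, -p \bmod q)$ (up to the allowed symmetries). This forces $p$ and $q$ to satisfy one of a small number of Pell-type or Fibonacci-type recursions. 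I expect this to reproduce exactly families (6)–(13) of $\Rc$: the square families $(r^2,(r+1)^2)$, $(r^2,(2r\pm1)^2)$, $(r^2,(2r\pm2)^2)$ with the stated congruence conditions on $r$, the Fibonacci-square families $(F_k^2,F_{k+1}^2)$ and $(F_{2k-1}^2,F_{2k+1}^2)$, the Pell-square families $(S_k^2,4T_k^2)$ and $(4T_k^2,S_{k+1}^2)$, and the two sporadic pairs $(9^2,14^2)$ and $(11^2,14^2)$. The appearance of squares is natural here: if $S^3_{pq}(T_{p,q})$ bounds a rational homology ball then its order of $H_1$, which is $pq$, must be a square, and since $\gcd(p,q)=1$ this forces $p$ and $q$ individually to be squares.

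In carrying this out I would organize the computation as follows: (1) reduce to the case where $pq$ is a square, hence $p = a^2$, $q = b^2$ with $\gcd(a,b)=1$ and $a<b$; (2) rewrite the condition ``$L(a^2,-b^2)\#L(b^2,-a^2)$ bounds a rational ball'' using the known continued-fraction expansions $[\,\cdot\,]^-$ of $a^2/(a^2-b^2 \bmod a^2)$ etc., and compare with the continued fractions that characterize Lisca's families; (3) solve the resulting recursions/Diophantine conditions to extract exactly the pairs $(a,b)$ listed, keeping careful track of the congruence restrictions (such as $r\equiv 2 \pmod 4$ or $r\equiv \mp 3\pmod 8$) that come from Lisca's more delicate cases; and (4) conversely verify that each such pair does land on Lisca's list, so that the surgery genuinely bounds a rational homology ball. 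The main obstacle I anticipate is step (3): Lisca's list for sums of two lens spaces is long and its families overlap and interact with the orientation/residue symmetries in subtle ways, so the bookkeeping needed to show that no pair is missed and none is spuriously included — in particular pinning down the exact congruence conditions and the two sporadic exceptions — will be the delicate part. The rest is essentially a translation exercise between surgery descriptions and Lisca's normal forms.
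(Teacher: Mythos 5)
Your proposal is correct and follows essentially the same route as the paper: Moser's identification $S^3_{pq}(T_{p,q})=L(p,-q)\#L(q,-p)$, reduction (via coprimality of the orders, which rules out the ``paired'' summands in Lisca's classification of sums) to the requirement that each summand individually bound a rational homology ball, and then an arithmetic/continued-fraction matching against Lisca's eleven normal forms. The only organizational device you leave implicit is the one the paper leans on to make your step (3) finite: the linear plumbing for $L(p,-q)$ is the plumbing for $L(q,-p)$ with a single extra vertex of weight $k+1=\lceil q/p\rceil$ prepended, so the pair of Lisca-family types determines $k$ via the $I$-invariant and most combinations are excluded at once.
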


\begin{te}\label{lensspacesbounding}
The $3$--manifold $S^3_{pq\pm 1}(T_{p,q})$ bounds a rational homology ball if and only if $(p,q;pq\pm1) \in \Lc$.
\end{te}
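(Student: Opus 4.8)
The strategy is to combine Moser's identification of the surgered manifolds with Lisca's classifications, turning the problem into a purely number-theoretic matching question. By work of Moser~\cite{Moser}, $S^3_{pq+\varepsilon}(T_{p,q})$ is the lens space $L(pq+\varepsilon,\mp q^2) = L(pq+\varepsilon,\mp p^2)$ for $\varepsilon = \pm 1$; and since bounding a rational homology ball is independent of orientation, we may freely replace $L(m,n)$ by $L(m,-n)$ or by $L(m,n')$ with $nn'\equiv 1\pmod m$. Lisca's theorem~\cite{Lisca-ribbon} states that a lens space $L(m,n)$ bounds a rational homology ball if and only if the pair $(m,n)$ (up to the standard symmetries) belongs to an explicit list, which is phrased in terms of iterated continued-fraction expansions / the families denoted $\mathcal{R}, \mathcal{N}$ there. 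So the plan is: first I would write $S^3_{pq\pm1}(T_{p,q}) = L(pq\pm1, \pm p^2)$ (choosing signs conveniently), and then go through Lisca's list family by family, in each case solving the Diophantine condition ``$pq\pm1 = m$ and $p^2 \equiv \pm n \pmod m$ for $(m,n)$ in this Lisca family'' for coprime $p<q$.

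The key observation that makes this tractable is that $p^2 \equiv \pm n \pmod{pq\pm1}$ is a very rigid constraint: from $pq \equiv \mp1$ we get $q \equiv \mp p^{-1} \pmod{pq\pm1}$, so $q$ is essentially determined by $p$ modulo $pq\pm1$, and since $1\le p<q<pq\pm1$ there is at most a bounded amount of freedom. Concretely, for each Lisca family one parametrises $m = pq\pm1$ and the Lisca-numerator $n$ by the family's parameter (an integer $\ell$, or a Fibonacci-type index $k$), writes $p^2 \equiv n \pmod m$, and extracts $p$ as (roughly) a square root of $n$ modulo $m$; the requirement that this square root actually be an integer $p$ with $pq = m\mp1$ for an integer $q$ then forces $m$, $n$, $p$ into one of the recursively-defined sequences. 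This is where the sequences $\{F_k\}$, $\{S_k\}$ enter: the Fibonacci and Pell-type recursions are exactly the solution sets of the relevant Pell-like equations $p^2 - \text{(something)} \cdot pq + \text{(correction)} = \pm1$. For instance, family (14), $(2r+1,2r+3;4(r+1)^2)$, comes from $pq+1 = 4(r+1)^2$ being a perfect square with $q = p+2$, i.e. from $L(4(r+1)^2, (2r+1)^2)$ which is one of Lisca's ``square'' lens spaces; families (15)--(17) come from matching against Lisca's Fibonacci family with $q/p \to \phi^2$ or $\phi^4$; family (18) comes from the Pell analogue with $q/p \to \psi$.

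The hard part will be the bookkeeping in the direction ``Lisca's condition $\Rightarrow$ $(p,q;n)\in\Lc$'': Lisca's list for lens spaces has several infinite families (plus the continued-fraction ``subtraction'' moves relating them), and for each one must verify that the only coprime solutions $p<q$ of the resulting congruence are the claimed ones, which requires showing certain auxiliary Diophantine equations have no spurious solutions — this is the genuinely case-heavy step, and one must be careful about the symmetries $L(m,n)\cong L(m,n^{-1})\cong L(m,m-n)$ so as not to double-count or miss a family. For family (16)--(17) in particular one has to notice that $F_{2k+1}F_{2k+3}-1$ and $F_{2k+1}F_{2k+5}-1$ give \emph{different} lens spaces that nonetheless both land in Lisca's list, using the identities $F_{k}F_{k+2} - F_{k+1}^2 = (-1)^{k+1}$ and its $F_{2k+1}F_{2k+5}$-analogue. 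The converse direction (``$(p,q;n)\in\Lc \Rightarrow$ bounds a ball'') is easier: for each listed family one exhibits the continued-fraction expansion of the relevant $(m,n)$ and checks directly that it matches a Lisca family, equivalently one exhibits the explicit rational homology ball (or the ribbon band move, cf. Proposition~\ref{p:ribbon}); alternatively one can invoke that these are precisely the lens-space surgeries appearing in~\cite{55letters} together with the ``square surgery'' $L(d^2, \cdot)$ construction. I would present the computation as a lemma reducing everything to Lisca's tables, then a short subsection per family carrying out the congruence analysis, with the Fibonacci/Pell identities collected in advance as elementary lemmas.
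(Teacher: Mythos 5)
Your plan is sound in outline, but it takes a genuinely different --- and substantially heavier --- route than the paper. You propose to test $L(pq\pm1,\mp p^2)$ directly against Lisca's list of pairs $(m,n)$, solving the congruence $p^2\equiv \pm n\pmod m$ family by family. The paper instead uses the continued-fraction identity $\frac{pq\pm 1}{p^2}=\bigl[k+1,\frac{p^2}{p(p-r)\mp 1}\bigr]^-$ (where $q=kp+r$), which says that deleting the initial vertex of weight $k+1$ from the linear plumbing of $L(pq\pm1,p^2)$ leaves exactly a plumbing of type $E$ (for $pq+1$) or type $A$ (for $pq-1$), the two ``square'' families $L(x^2,xy^*\mp1)$. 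The question then becomes: which graphs in Lisca's list are obtained from a type-$A$ or type-$E$ graph by adjoining a single vertex of weight $k+1$? That is precisely two columns of the table already compiled in the proof of Theorem~\ref{reduciblebounding}, so the lens-space case is read off in a few lines by reinterpreting the fractions there as $(pq\pm1)/p^2$ rather than $q^2/p^2$. What your approach buys is independence from the reducible-case analysis; what it costs is that the ``case-heavy step'' you defer is essentially a reprise of that entire eleven-family casework. If you do pursue your route, sharpen the rigidity remark on which everything hinges: since $p<q$ and $pq=m\mp1$, you have $p^2<m$, so after normalising $n$ by the symmetries $n\mapsto n^{-1}$ and $n\mapsto m-n$ the congruence $p^2\equiv\pm n\pmod m$ is actually the \emph{equality} $p^2=n$ or $p^2=m-n$ together with the divisibility $p\mid m\mp1$; without stating this, ``solve the congruence'' is not visibly a finite problem. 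Your identification of the closing identities is correct: $(2r+1)(2r+3)+1=(2r+2)^2$, $F_{2k}F_{2k+2}+1=F_{2k+1}^2$, $F_{2k+1}F_{2k+3}-1=F_{2k+2}^2$, $F_{2k+1}F_{2k+5}-1=F_{2k+3}^2$, and $S_{k+1}S_{k+2}-1=4T_{k+1}^2$ are exactly what exhibit $pq\pm1$ as a perfect square and recover the five families of $\Lc$.
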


For convenience, we list here the pairs $(p,q)$ for which $(p,q;pq) \in \Rc$:
\begin{align*}
&(r^2,(r+1)^2), (F_n^2,F_{n+1}^2), (s^2, (2s\pm1)^2), (t^2, (2t\pm 2)^2), (S_n^2,4T_n^2), \\
&(4T_n^2,S_{n+1}^2), (F_{2n-1}^2,F_{2n+1}^2), (9^2, 14^2), (11^2, 14^2),
\end{align*}
where $n \ge 2$ (except for the pair $(F_2^2,F_{3}^2) = (1,4)$), $r \ge 2$, $s \equiv 2 \pmod 4$ and $s\ge 6$, and $t \equiv \mp 3 \pmod 8$ and $t\ge 5$.
The pairs $(p,q)$ for which $(p,q;pq+1) \in \Lc$ are:
\[
(2r+1,2r+3), (F_{2n},F_{2n+2}),
\]
with $r, n\ge 1$.
Finally, the pairs $(p,q)$ for which $(p,q;pq-1) \in \Lc$ are:
\[
(F_{2n+1},F_{2n+3}), (S_{n+1},S_{n+2}), (F_{2n+1},F_{2n+5}),
\]
with $n \ge 1$.

The strategy in the reducible case is the following; first we prove that $S^3_{pq}(T_{p,q})$ bounds if and only if each of the two summands bound, i.e. if $L(p,-q)$ and $L(q,-p)$ both bound rational homology balls.
Then, we notice that the plumbing graph for $L(p,-q)$ is obtained from that of $L(q,-p)$ by adding a single vertex with weight $k+1 = \lceil q/p \rceil$.
We will call the two plumbing graphs the \emph{short} and \emph{long} graph (or diagram, or plumbing).

We then go through Lisca's list of (plumbing graphs of) lens spaces that bound rational homology balls, and check case by case when this can happen.
There are eleven different families of plumbing graphs, so eleven candidates for the short diagram, and eleven for the long diagram.
Each combination determines the value of $k$;
some (few) of them can be excluded by using the a priori restriction $1\le k \le 7$ (see Proposition~\ref{pruning1}).
Whenever we find a short and a long graph, we check that their connected sum indeed arises as a surgery along a torus knot.

In the lens space case, we are in a similar situation: $S^3_{pq\pm1}(T_{p,q}) = \pm L(pq\pm1,p^2)$;
however,
\[
\frac{pq\pm 1}{p^2} = \frac{p(kp+r)\pm 1}{p^2} = \left[k+1,\frac{p^2}{p(p-r)\mp 1}\right]^-,
\]
so the associated continued fraction starts with $k+1$, and then contains the continued fraction of a lens space that bounds a rational homology ball.
That is, removing the vertex with weight $k+1$ from the graph, we obtain a lens space that again bounds a rational homology ball; moreover, the \emph{short} graph belongs to a specific subfamily (namely the families $A$ and $E$ in Section~\ref{ss:bounding-lens} below).

\begin{lemma}
If $p$ and $q$ are coprime, then $L(p,q)\# L(q,p)$ bounds a rational homology ball if and only if $L(p,q)$ and $L(q,p)$ both do.
\end{lemma}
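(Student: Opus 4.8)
The plan is to exploit the fact that the two lens space summands can be "separated" both on the $3$--manifold side and on the $4$--manifold side. The easy direction is immediate: if $L(p,q)$ and $L(q,p)$ each bound a rational homology ball $B_1$ and $B_2$, then the boundary connected sum $B_1 \natural B_2$ is a rational homology ball with boundary $L(p,q)\# L(q,p)$. So the content is entirely in the forward direction.

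For the forward direction, I would argue as follows. Suppose $W$ is a rational homology ball with $\partial W = L(p,q)\# L(q,p)$. The connected sum decomposition of the boundary is realized by an embedded separating $S^2$; I would like to realize this $S^2$ by an embedded $3$--ball or sphere in $W$ that splits $W$ into two pieces bounding the two summands, but of course there is no reason for the reducing sphere to bound a ball in $W$. The standard workaround is obstruction-theoretic and homological rather than geometric: one instead produces two positive-definite $4$--manifolds $X_1$, $X_2$ with $\partial X_i$ the $i$--th summand (e.g. the canonical linear plumbings), glues them to $W$ along the boundary to form a closed positive-definite (or negative-definite, after orientation reversal) $4$--manifold $Z = X_1 \cup_{L(p,q)} W \cup_{L(q,p)} (-?)$ — more precisely, since $W$ has two boundary components only after cutting along the sphere, the clean statement is: form $Z = X_1 \cup_{L(p,q)\#L(q,p)} (\text{something})$. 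The cleanest route is via Donaldson's theorem exactly as in Proposition~\ref{Donaldsonobstruction}: since $\gcd(p,q)=1$, the manifold $L(p,q)\# L(q,p)$ is a rational homology sphere, and $X := X_{\text{plumbing}}(L(p,q)) \,\natural\, X_{\text{plumbing}}(L(q,p))$ is a positive-definite $4$--manifold with $\partial X = L(p,q)\# L(q,p)$ whose intersection lattice is the orthogonal direct sum $(\Z\Lambda_1, Q_1) \oplus (\Z\Lambda_2, Q_2)$ of the two linear-plumbing lattices. By Proposition~\ref{Donaldsonobstruction}, the existence of $W$ forces an embedding $(\Z\Lambda_1 \oplus \Z\Lambda_2, Q_1\oplus Q_2)\hookrightarrow (\Z^m,\mathrm{Id})$.

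Now the key step is to show that an embedding of the \emph{direct sum} lattice into a diagonal lattice implies that each summand embeds into a diagonal lattice of the appropriate rank. This is not automatic for arbitrary lattices, but here the summands are exactly the linear-plumbing lattices $(\Z\Lambda_i, Q_i)$ associated to lens spaces, and these are precisely the "linear subsets" whose embeddings Lisca completely analyzed. Concretely: given the embedding of $\Z\Lambda_1 \oplus \Z\Lambda_2$ into $(\Z^m,\mathrm{Id})$, restrict to each summand to get embeddings $\Z\Lambda_i \hookrightarrow (\Z^m,\mathrm{Id})$; then use that a linear-plumbing lattice of rank $r_i$ has determinant $p$ (resp.\ $q$), which is nonzero, so $\Z\Lambda_i$ has rank exactly $r_i$ and its image spans a sublattice of rank $r_i$ — and by Lisca's analysis (the "if" directions of \cite{Lisca-ribbon}), an embedding of a linear-plumbing lens-space lattice into a diagonal lattice of rank $r_i$ is exactly equivalent to $L(p,q)$ (resp.\ $L(q,p)$) bounding a rational homology ball. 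More carefully, I would invoke the fact that $m = r_1 + r_2$ (since $W$ is a rational homology ball, $b_2$ of the glued-up manifold equals $b_2(X) = r_1+r_2$), and that the images of $\Z\Lambda_1$ and $\Z\Lambda_2$ are orthogonal sublattices of $\Z^m$ of ranks $r_1$ and $r_2$ summing to $m$; hence the span of $\Z\Lambda_1$ is a rank-$r_1$ primitive direct summand of $\Z^m$ isomorphic to a standard $\Z^{r_1}$ (its orthogonal complement in the indefinite-free diagonal lattice being the span of $\Z\Lambda_2$, which is also diagonal of rank $r_2$). Therefore $\Z\Lambda_1 \hookrightarrow \Z^{r_1}$, and symmetrically $\Z\Lambda_2 \hookrightarrow \Z^{r_2}$. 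Finally, by Lisca's theorem (the converse direction, which constructs the rational ball from such an embedding), $L(p,q)$ and $L(q,p)$ each bound a rational homology ball.

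The main obstacle is the orthogonal-splitting step: showing that $\mathrm{span}(\Z\Lambda_1)$ sits inside $\Z^m$ as a \emph{standard} $\Z^{r_1}$, not merely as some rank-$r_1$ definite sublattice. The point to get right is that in a diagonal lattice $(\Z^m, \mathrm{Id})$, if $L$ and $L'$ are orthogonal sublattices with $\mathrm{rk}\,L + \mathrm{rk}\,L' = m$, then the saturation of $L$ is an orthogonal direct summand and hence isometric to $\Z^{\mathrm{rk}\,L}$ — this is where one uses that the ambient form is the \emph{standard} diagonal one (so that a primitive sublattice of full rank in its own saturation, orthogonal to a complementary-rank sublattice, must be all of a coordinate-subspace's integer lattice up to isometry). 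One must also check that restricting the embedding of $\Z\Lambda_i$ to its saturation still yields a lattice embedding of $\Z\Lambda_i$ (automatic, since $\Z\Lambda_i$ already has full rank $r_i$ in its span and the span is the saturation because $\det Q_i \neq 0$ does not by itself guarantee primitivity — but here we may pass to the saturation freely because an embedding $\Z\Lambda_i \hookrightarrow \Z^{r_i}$ composed with inclusion into a saturation is what Lisca's criterion needs). I expect this to be a short lemma once the bookkeeping of ranks and saturations is set up, and the bulk of the argument is simply a clean invocation of Proposition~\ref{Donaldsonobstruction} together with Lisca's classification in both directions.
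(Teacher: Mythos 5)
Your reverse direction (boundary connected sum) is fine, but the key step of the forward direction --- that an embedding of $(\Z\Lambda_1\oplus\Z\Lambda_2,\,Q_1\oplus Q_2)$ into $(\Z^{r_1+r_2},\mathrm{Id})$ splits into embeddings $\Z\Lambda_i\hookrightarrow\Z^{r_i}$ --- is not established by the argument you give, and the justification you offer is false as stated. Two orthogonal sublattices of complementary ranks in $(\Z^m,\mathrm{Id})$ need not have saturations that are standard orthogonal summands: in $\Z^2$ the primitive vectors $e_1+e_2$ and $e_1-e_2$ span orthogonal, saturated, rank-one sublattices, each isometric to $\langle 2\rangle$ rather than $\langle 1\rangle$. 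This is not a technicality: it is precisely why $L(p,q)\#L(p,p-q)$ bounds a rational homology ball for every $p$ while the summands generally do not, so the lemma is \emph{false} without the hypothesis $\gcd(p,q)=1$ --- and your proof never uses that hypothesis. The step can be repaired, but only by feeding in coprimality: writing $\bar L_i$ for the saturation of the image of $\Z\Lambda_i$, one has $\det(\bar L_1)=\det(\bar L_2)$ (a primitive sublattice of a unimodular lattice and its orthogonal complement have isomorphic discriminant groups), while $\det(\bar L_i)=\det Q_i/[\bar L_i:L_i]^2$ divides $p$, resp.\ $q$; coprimality then forces both saturations to be unimodular, hence orthogonal summands of $\Z^m$, hence standard by uniqueness of orthogonal decompositions of positive definite unimodular lattices into indecomposables. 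A second gap: an embedding of the linear lattice of $L(p,q)$ for a \emph{single} orientation does not imply that $L(p,q)$ bounds a rational ball --- as the paper notes, Lisca's criterion is that the lattice obstruction must be passed for \emph{both} orientations --- so you would also need to run the entire argument on $-W$ before invoking any ``converse direction.''

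For comparison, the paper's proof is one line: Lisca classified in~\cite{Lisca-sums} exactly which connected sums of two lens spaces bound rational homology balls, and direct inspection of that list shows that no sum $L(a,b)\#L(a',b')$ with $\gcd(a,a')=1$ occurs unless each summand bounds individually. Your lattice-theoretic route, once repaired as above, does give a more conceptual explanation of why coprimality is the operative hypothesis, but it is considerably longer and still leans on Lisca's classification of single lens spaces at the final step.
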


\begin{proof}
In Lisca's list of linear relations among lens spaces in the rational homology cobordism groups~\cite{Lisca-ribbon}, by a direct check, there is no relation of the form $L(a,b)\# L(a',b')$ with $\gcd(a,a') = 1$.
\end{proof}

The rest of the section contains some background on Riemenschneider diagrams, continued fractions, and Lisca's classification on lens spaces that bound rational homology balls; finally, we prove the classification of surgeries bounding rational balls in the reducible case, and we then deduce the classification in the lens space case.

\subsection{Riemenschneider diagrams}\label{ss:riemen}

To a linear plumbing with weights $\mathbf{a} = [a_1,\dots,a_m]$ we associate a \emph{Riemenschneider diagram}, or \emph{R-diagram} for short, consisting of $m$ rows of dots, one at each height $m, m-1, \dots, 0$.
For each $j\ge 1$, the $j$th line contains $a_j-1$ points, sitting on the line $y=m-j$, and with $x$--coordinades $\sum_{i < j} (a_i-2),\sum_{i < j} (a_i-2)+1,\dots,(\sum_{i\le j} (a_i-2)$.
(We consider all such diagrams up to translations, for convenience; the concrete realization in the plane is of no importance.)
For example, the diagram associated to $[2,2,3,5]$ is
\[
  \begin{tikzpicture}[xscale=-0.35,yscale=-0.35]
    \node at (-1, 4) {$\bullet$};
    \node at (-1, 3) {$\bullet$};
    \node at (-1, 2) {$\bullet$};
    \node at (-2, 4) {$\bullet$};
    \node at (-2, 5) {$\bullet$};
    \node at (-3, 5) {$\bullet$};
    \node at (-4, 5) {$\bullet$};
    \node at (-5, 5) {$\bullet$};
  \end{tikzpicture}
\]
The string $\mathbf{a}'$ dual to $\mathbf{a}$ is obtained by Riemenschneider's point rule;
translated geometrically, it is the string $\mathbf{a}$ whose R-diagram is obtained from the R-diagram of $\mathbf{a}$ by reflection across the line $x+y = 0$ in the plane.
Continuing with the above example, the string dual to $[2,2,3,5]$ is $[4,3,2,2,2]$ with associated R-diagram
\[
  \begin{tikzpicture}[xscale=0.35,yscale=0.35]
    \node at (0, 0) {$\bullet$};
    \node at (1, 0) {$\bullet$};
    \node at (2, 0) {$\bullet$};
    \node at (2, -1) {$\bullet$};
    \node at (3, -1) {$\bullet$};
    \node at (3, -2) {$\bullet$};
    \node at (3, -3) {$\bullet$};
    \node at (3, -4) {$\bullet$};
  \end{tikzpicture}
\]

The following two remarks are elementary, but extremely useful.
\begin{re}\label{complementary-cloud-count}
In particular, both a string and its dual have $\sum_j (a_j -1)$ points in their R-diagram;
in analogy with Lisca's $I$-function, we call $J(\mathbf{a}) = \sum_j (a_j - 1)$, so that the R-diagram of $\mathbf{a}$ comprises $J(\mathbf{a})$ points.
\end{re}

\begin{re}\label{complementary-twos}
The string $[2]$ is self-dual;
if ${\bf b} \neq [2]$ is dual to $\bf c$, then exactly one among $b_1$ and $c_1$ is equal to 2, and exactly one among the last entry in the string $\bf b$ and  the last entry in the string $\bf c$ is equal to 2.
\end{re}

\subsection{Lens spaces bounding rational homology balls}\label{ss:bounding-lens}

We recast here Lisca's classification in terms of plumbing graphs.
This classification is implicit (and incomplete) in Lisca's paper~\cite{Lisca-ribbon}, and was completed by the fourth author~\cite{Lecuona}.
Here we get rid of some redundancies in the latter list for practical reasons, and we also make explicit the dual graphs to her three groups of families
\footnote{The differences are due to the relations: $B^1_{s,t} = B^1_{t,s}$, $B^1_{s,0} = B^2_{s,0}$, $B^2_{s,0} = B^3_{[s+2],[2^{[s+1]}]}$, and $C^1_{0,t} = C^2_{0,t}$.
}.

The lens space $L(p,q)$ will be associated to the linear plumbing graph with weights ${\bf a} = [a_1,\dots,a_m]$, where ${\bf a}$ is the negative continued fraction expansion of $p/q$.
For a lens space that bounds a rational homology ball, the associated plumbing belongs to one of the families $A, B^1,B^2,B^3,C^1,C^2,C^3,D^1,D^2,D^3,$ or $E$, described below.

Recall that ${\bf b} = [b_1,\dots,b_h]$ and ${\bf c} =[c_1,\dots,c_\ell]$ are complementary if $[{\bf b}]^- = x/y$ and $[{\bf c}]^- = x/(x-y)$.
As customary, $y^*$ will denote the representative of the inverse of $y$ modulo $x$ with $0<y^*<x$.
We recall that, for a string $\mathbf{s}$ and a non-negative integer $t$, we use the shorthand notation $\mathbf{s}^{[t]}$ for the concatenation of $t$ copies of $\mathbf{s}$.

From this point on, ${\bf b}$ and $\bf c$ will always be two complementary strings.

All facts about the families $A$, $B^3$, $D^3$, and $E$ follow from direct inspection of their R-diagrams, and from elementary computations with continued fractions.
The lens space associated to a string $\bf a$ will be the boundary of the linear plumbing with \emph{negative} weights $-a_1,\dots,-a_m$.
This is done to be more consistent with Lisca's and the fourth author's conventions.

\begin{itemize}
\item[(I)] $I = -3$: $A_{\bf b} = [b_h,\dots,b_1,2,c_1,\dots,c_\ell]$; equivalently, ${\bf a}$ is obtained from $[2,2,2]$ by final 2-expansions.
We have $[A_{\bf b}]^- = x^2/(xy^*+1)$, so the corresponding lens space is $L(x^2, xy^* + 1) = L(x^2,x(x-y^*)+1)$.
The R-diagram of $A_{\bf b}$ has a row containing a single point, such that the number of points above it equals the number of points below it;
we will call this point the \emph{center} of (the R-diagram of) $A_{\bf b}$.
Moreover, unless $\bb = [2]$, exactly one of the two adjacent rows contains exactly one point, and exactly one end of the string $A_{\bf b}$ is a 2.

\item[(II)] $I = -2$: there are three families:
\begin{enumerate}
\item $B^1_{s,t} = [2^{[t]},3,s+2,t+2,3,2^{[s]}]$, with $s\ge t > 0$;
\item $B^2_{s,t} = [2^{[t]},s+3,2,t+2,3,2^{[s]}]$, with $s\ge0$, $t > 0$;
\item $B^3_{\bf b} = [b_h,\dots,b_2,b_1+1,2,2,c_1+1,c_2,\dots,c_\ell]$;
equivalently, $B^3_{\bf b}$ is obtained from $[3,2,2,3]$ by final 2-expansions.
We have $[B^3_{\bf b}]^- = 4x^2/(4xy^*-1)$, so the corresponding lens space is $L(4x^2, 4xy^* - 1) = L(4x^2,4x(x-y^*)-1)$.
The R-diagram of $B^3_{\bf b}$ has two consecutive rows containing a single point, such that the number of points above it equals the number of points below it;
we will call these two points the \emph{center} of (the R-diagram of) $B^3_{\bf b}$.
Moreover, unless $\bb = [2]$, exactly one of the two adjacent rows contains exactly two points, and exactly one end of the string $B^3_{\bf b}$ is a 2.
\end{enumerate}

\item[(III)] $I = -1$: there are three families:
\begin{enumerate}
\item $C^1_{s,t} = [t+2,s+2,3,2^{[t]},4,2^{[s]}]$, with $s > 0$, $t \ge 0$;
\item $C^2_{s,t} = [t+2,2,s+3,2^{[t]},4,2^{[s]}]$, with $s,t \ge 0$;
\item $C^3_{s,t} = [t+3,2,s+3,3,2^{[t]},3,2^{[s]}]$, with $s, t \ge 0$.
\end{enumerate}

\item[(IV)] $I = 0$: there are three families:
\begin{enumerate}
\item $D^1_{s,t} = [t+3,3,2^{[s]},3,2^{[t]},3,s+3]$, $s \ge t \ge 0$;
\item $D^2_{s,t} = [t+3,2^{[s]},4,2^{[t]},3,s+2]$, $s,t\ge 0$;
\item $D^3_{\bf b} = [b_h,\dots,b_1,5,c_1,\dots,c_\ell]$;
equivalently, $D^3_{\bf b}$ is obtained from $[2,5,2]$ by final 2-expansions.
We have $[D^3_{\bf b}]^- = 4x^2/(4xy^*+1)$, so the corresponding lens space is $L(4x^2, 4xy^* + 1) = L(4x^2,4x(x-y^*)+1)$.
The R-diagram of $D^3_{\bf b}$ has a row containing exactly four points, such that the number of points above it equals the number of points below it;
we will call these points the \emph{center} of (the R-diagram of) $D^3_{\bf b}$.
Moreover, unless $\bb = [2]$, exactly one of the two adjacent rows contains exactly one point, and exactly one end of the string $D^3_{\bf b}$ is a 2.
\end{enumerate}

\item[(V)] $I = 1$: $E_{\bf b} = [b_h,\dots,b_2,b_1+c_1,c_2,\dots,c_\ell]$;
equivalently, $E_{\bf b}$ is obtained from $[4]$ by final 2-expansions.
We have $[E_{\bf b}]^- = x^2/(xy^*-1)$, so the corresponding lens space is $L(x^2, xy^* - 1) = L(x^2,x(x-y^*)-1)$.
If $\bb \neq [2]$, the R-diagram of $E_{\bf b}$ has a row $r$ with at least four points, and the vertical line through either the second or the second-to-last point of $r$ divides the R-diagram into two parts with the same number of points;
we will call this point the \emph{center} of (the R-diagram of) $E_{\bf b}$.
Moreover, if ${\bf b} \neq [2]$, exactly one end of the string $E_{\bf b}$ is a 2.
\end{itemize}

The dualities are the following: $A_{\bf b}$ is dual to $E_{\bf b}$; $B^1_{s+1,t+1}$ is dual to $D^1_{s,t} $; $B^2_{s,t+1}$ is dual to $D^2_{s,t}$; $B^3_{\bf b}$ is dual to $D^3_{\bf b}$; $C^1_{s,t}$ is dual to $C^3_{s-1,t}$ for $s>0$, $t \ge 0$; $C^2_{s,t}$ is dual to $C^2_{t,s}$ with $s,t\ge 0$.

\subsection{Continued fractions and the recursive sequences}

We recall here a few useful facts about the sequences $\{F_n\}$, $\{S_n\}$, and $\{T_n\}$, and their relation to certain continued fraction expansions, whose proofs are entirely elementary;
these facts will be used later without explicit mention.

First, we recall that the Fibonacci numbers satisfy the well-known identities:
\[
F_n^2 = F_{n+1}F_{n-1} - (\pm1)^n, \quad F_{2n+1}^2 = F_{2n-1}F_{2n+1} - 1.
\]
We also observe that
\[
4T_nT_{n+1} + 1 = S_{n+1}^2, \quad S_nS_{n+1}-1 = 4T_n^2.
\]
Both identities can be proven by using the explicit formulae for $S_n$ and $T_n$ given below;
alternatively, one can observe that $4T_nT_{n+1} + 1$, $S_{n+1}^2$, $S_nS_{n+1}$, and $4T_n^2$ all satisfy the recursion $Q_{n+1} = 34Q_n - Q_{n-1}$, and the first two and last two take the same values for $n=1$ and $n=2$.

Finally, we recall that, for each integer $m$,
\[
\lim_{n\to \infty} \frac{F_{n+m}}{F_{n}} = \phi^m,
\]
and we claim that
\[
\lim_{n\to \infty} \frac{S_{n+1}}{S_n} = \psi = 3+2\sqrt2\quad {\rm and} \quad
\lim_{n\to \infty} \frac{2T_n}{S_n} = \lim_{n\to\infty} \frac{S_{n+1}}{2T_n} = 1+\sqrt2.
\]
The latter identity can be proved by computing an explicit formula for $S_n$ and $T_n$, namely:
\[
S_n = {\textstyle \frac{2-\sqrt2}4}\cdot \psi^n + {\textstyle \frac{2+\sqrt2}4}\cdot \psi^{-n} \quad {\rm and} \quad T_n = {\textstyle \frac{1}{8\sqrt2}}\cdot(\psi^n - \psi^{-n}),
\]
where $\psi = 3+2\sqrt2 = (1+\sqrt2)^2$ is the largest root of the polynomial $\lambda^2 - 6\lambda + 1$.

In what follows, by convention we are letting $F_{-1} = 1$.

\begin{itemize}
\item If $[{\bf b}]^- =  x/y$, then $[2^{[t]},{\bf b}]^- = \frac{x + t(x-y)}{y + t(x-y)}$.
\item $[3^{[t]},2]^- = F_{2t+3}/F_{2t+1}$ for $t \ge 0$.
\item $[3^{[t]}]^- = F_{2t+2}/F_{2t}$ for $t\ge 1$.
\item $[6^{[t]},5]^- = S_{t+2}/S_{t+1}$ for $t \ge 0$.
\item $[6^{[t]}]^- = T_{t+2}/T_{t+1}$ for $t \ge 1$.
\item $[7^{[t]},2]^- = F_{4t+3}/F_{4t-1}$ for $t \ge 0$.
\item $[7^{[t]},5]^- = F_{4t+5}/F_{4t+1}$ for $t \ge 0$.
\end{itemize}

\subsection{The reducible case}

The proof will be on a case-by-case analysis;
for the reader's convenience, we collect all the results in Table~\ref{f:reducible_table} below.

\begin{table}[h!]
\begin{center}
\begin{tabular}{c|c|c|c|c|c|c|c|c}
& $A$ & $B^2$ & $B^3$ & $C^1$ & $C^2$ & $C^3$ & $D^2$ & $E$\\
\hline
$A$ & ${\frac{F_{2s+4}^{2}}{F_{2s+3}^{2}}}$ {\tiny (9)} &  & $\frac{(2s+3)^2}{(2s+2)^2}$ \tiny{(6)} & $\times$ & $\times$ & $\times$ & $\times$ & $\times$ \\
\hline
$C^1$  &  &  & $\frac{(8s+11)^2}{(4s+6)^2}$ {\tiny (7)} &  & & &  & $\times$ \\
\hline
$C^2$  &  &  & $\frac{11^2}{6^2}$ {\tiny (7)} &  &  &  &  & $\times$ \\
\hline
$C^3$  &  &  &  & $\frac{14^2}{9^2}$ {\tiny (13)}  &  &  & $\frac{11^2}{9^2}$ {\tiny (13)} & $\times$ \\
\hline
$D^2$  &  &  & $\frac{(8s+13)^2}{(4s+6)^2}$ {\tiny (7)} &  &  &  & & \\
\hline
$D^3$ &  $\frac{4T_{s+2}^2}{S_{s+2}^2}$ {\tiny (11)} & $\frac{(16s+28)^2}{(8s+13)^2}$ {\tiny (8)} &  &  & $\frac{20^2}{11^2}$ {\tiny (8)} & $\frac{(16s+36)^2}{(8s+19)^2}$ {\tiny (8)} &  &  $\frac{(2s+2)^2}{(2s+1)^2}$ {\tiny (6)} \\
\hline
$E$ & $\frac{F_{2s+3}^2}{F_{2s+1}^2}$ {\tiny (10)} &  & $\frac{S_{s+1}^2}{4T_{s}^2}$ {\tiny (12)} & &  &  &  & $\frac{F_{2s+3}^2}{F_{2s+2}^2}$ {\tiny (9)}
\end{tabular}
\end{center}
\vspace{0.4cm}
\caption{The columns in the table correspond to the possible short graphs, while the rows correspond to the possible long graphs;
note that we omitted empty rows and columns to ease readability.
The empty cells in the table correspond to cases where no connected sum of lens spaces arises;
the cells marked with a $\times$ are those for which the value of $k$, predicted using Lisca's $I$-function, is smaller than $1$.
In each box, the fraction are the values of $q/p$ for which $S^3_{pq}(T_{p,q})$ bounds a rational homology ball.
In the columns $A, B^3, D^3, E$, the parameter $s$ varies among positive integers; in the other columns, it varies in the same range as the parameter $s$ in the corresponding column (e.g. $s>0$ in the column $B^1$, and $s\ge0$ in the column $B^2$ (see definitions in Subsection~\ref{ss:bounding-lens})).
The number in square brackets denote the family it belongs to, according to the notation in the introduction.}\label{f:reducible_table}
\end{table}

We are now ready to prove Theorem~\ref{reduciblebounding}.

\begin{proof}[Proof of Theorem~\ref{reduciblebounding}]
As mentioned above, the value of $k$ is determined by the values of $I$ of the short and the long graphs;
more precisely, if we call $S$ the short graph, and $L$ the long graph, then
\[
I(L) = I(S) + I(k+1) = I(S) + k-2.
\]
In the course of the proof, we will use this to compute $k$ without explicit mention. It follows that whenever $(S,L)$ is one of the types $\{(C,A),(D,A),(D,B),(E,A),(E,B),(E,C)\}$, then $k < 1$, which is not compatible with our setting;
so, there are no families in either of these cases. (These are precisely the cases marked with a $\times$ in Table~\ref{f:reducible_table}.)

By looking at Table~\ref{f:reducible_table} we notice that, when neither of the two graphs involved is of type $A$, $B^3$, $D^3$, or $E$, there are only two sporadic examples;
since the arguments to rule the configurations out in these cases are simpler, and all quite similar, we collect here the type of observation we made, and we leave the (tedious, but straightforward) case-by-case checks to the reader.

Either one or more of these strategies rule out a case, or they force the values of sufficiently many parameters (sometimes all);
this leaves just smaller subfamilies to check (and, a posteriori, rule out), or even just a single case, which are easily ruled out by direct inspection.

\begin{itemize}
\item Counting the number of final 2s; for instance, in the case when $S$ and $L$ are both of type $B^1$, we need to add a vertex of weight 3 to the short graph, but in the family $B^1$ all strings start and end with a 2.

\item Looking at internal sequences of entries larger than 2; for instance, in the case when $S$ is of type $B^1$ and $L$ is of type $C^1$, in $S$ we have four consecutive internal vertices with weight greater than 2, while in $L$ there can be no such substring.

\item Counting the number of non-2 vertices;
for instance, when $S$ and $L$ are both of type $B^{2}$, namely $S = B^2_{s,t}$ and $L = B^2_{s',t'}$ (where $s,s'\geq 0$, $t,t'>0$);
here $k=2$, so we need to remove a vertex of weight 3 in $L$, but both $S$ and $L$ have exactly three vertices of weight at least 3.

\item Looking at left- and right-most non-$2$ vertices;
for example, when $S = B^2_{s,t}$, $L=C^{1}_{s',t'}$, with $t, s'>0$, $s, t'\geq 0$; here $k=3$, so we add a vertex of weight 4;
we immediately see that $t'=2$, since $L$ must start or end with a 4;
removing the 4 leaves a string that starts with $s'+2 \ge 3$, so in turn $s=0$;
but then the extremal non-2 vertices of $S$ are two 3s, while one of the extremal vertices of $L$ (after removing the 4) is 3.

\item Looking at internal vertices of weight $4$ or $5$;
for instance, when $S = C^1_{s,t}$ and $L = B^{1}_{s',t'}$.
Indeed, in $L$ there is no 4 next to a 2-chain and thus it cannot come from $S$ by adding a vertex.
\end{itemize}

We therefore restrict ourselves to the case when at least one among the short and long graph are of type $A$, $B^3$, $D^3$, or $E$, plus the two exceptions.
To fix the notation, the parameters of the short graph will always be $(s,t)$ or $\bf b$, while the parameters of the long graph will be $(s',t')$ or $\bf b'$;
the dual strings to $\bf b$ and $\bf b'$ will be $\bf c$ and $\bf c'$;
the length of the strings $\bf b$, $\bf b'$, $\bf c$, and $\bf c'$ will be $h$, $h'$, $\ell$, and $\ell'$, respectively.

We will start by choosing the type of the short graph, and then run through all possibilities for the long graph $L$;
that is, in terms of the table, we will sweep it column by column, top to bottom and then left to right.

Regardless of the assumption made during the proof, at the end of each case $L$ will be read starting from the entry with weight $k+1$, so that $[L] = [k+1,S]$.
With this convention, the fraction $q/p$ will be $q/p = [L]^- = [k+1,S]^-$.

The amount of details in each proof will be decreasing as we go along, since most arguments are similar.
We will use the general yoga of dualizing strings with the Riemenschneider point rule quite freely, and without mention.

\begin{itemize}
\item $S=A_{\bf b}$.
	\begin{itemize}
		\item $L=A_{\bf b'}$; in this case, since we have $I(S)=I(L)=-3$ the value of $k$ is constrained to be $k=2$. This translates to adding a vertex of weight 3 to $S$ to obtain $L$.
		Since $A_{\bf b}$ is symmetric in $\bf b$ and $\bf c$, we can assume, without loss of generality, that $b_h = 2$; moreover, unless ${\bf b} = [2]$, this implies $c_\ell > 2$.
		Since $A_{\bb'}$ is symmetric, too, we can assume that $b'_{h'} = 2$, and $c'_{\ell'} = k+1 = 3$.
		Since the R-diagram of $L$ has two more points on the right of the center of the R-diagram of $S$ (because of the added vertex of weight $k+1=3$), the center of $L$ is the the vertex with weight $c_{1}$ in $S$. From here we deduce, $c_1 = 2$, ${\bf b}' = [2,{\bf b}]$, $\cc' = [c_2,\dots,c_\ell,k+1]$.
		If the length $\ell$ of $\bf c$ is $\ell = 1$, then we have one example, namely $S = A_{[2]} = [2,2,2]$, and $L = [3,2,2,2]$.
		Suppose now $\ell > 1$; since $c_1 = 2$, $b_1 > 2$;
		since the center of $L$ is $c_1$, we also know that $c'_1 = c_2 > 2$, and since $b_1 > 2$ we deduce that $c_2 = c'_1 = 3$.
		An easy induction shows that the only possible case is $\cc = [2,3^{[s]}]$, $\bb = [3^{[s]},2]$;
		indeed, suppose the contrary, and consider the smallest $i>0$ such that $c_{i+1} \neq 3$;
		that is, $\cc$ starts with $[2,3^{[i-1]},c]$, with $c\neq 3$;
		we proved that $i \ge 2$.
		Then, $\cc'$ starts with $[3^{[i-1]},c]$;
		by duality, $\bb'$ starts with $[2,3^{[i-2]},b']$ with $b' > 2$.
		In turn, this implies that $\bb$ starts with $[3^{[i-2]},2]$;
		since $b_1 > 2$, we also have $i>2$, and by duality $\cc$ starts with $[2,3^{[i-3]},c]$, which contradicts the minimality of $i$.
		
		Summing up, we have found that, changing the direction in which we read the strings, $S = [3^{[s]},2,2,3^{[s]},2]$, and $L = [3,S] = [3^{[s+1]},2,2,3^{[s]},2]$.
		Note that this includes the case $\bf b=[2]$ by setting $s=0$.
		We have $L = A_{\cc'}$, where $\cc' = [3^{[s+1]}]$ for $s\ge 0$;
		since $[3^{[s+1]}]^- = F_{2s+4}/F_{2s+2}$, it holds
		\[
		\frac{q}{p} = \frac{F_{2s+4}^2}{F_{2s+4}F_{2s+2}+1}=\frac{F_{2s+4}^{2}}{F_{2s+3}^{2}}.
		\]			 
		
		\item $L=B^{1}_{s',t'}$; here $k=3$, so we add a vertex of weight $4$.
		Recall that $s'$ and $t'$ are both positive, so $L$ has only final vertices of weight 2, and therefore it cannot be obtained from $S$ by adding a vertex of weight 4.
		 
		\item $L=B^{2}_{s',t'}$; here $k=3$, and we add a vertex of weight $4$.
		Recall that $s'\geq 0$ and $t'>0$.
		Since $t'$ is positive, the leftmost end of the string $L$ is a $2$, so the added vertex of weight 4 needs to be the rightmost vertex of $L$; this implies $s'=0$, in which case the string $L$ ends with a 3, and not with a 4.
		Therefore, there are no examples.

		\item $L=B^{3}_{\bb'}$; also in this case $k=3$, and we add a vertex of weight 4.
		By symmetry, we can assume that we add the vertex attached to the rightmost vertex.
		Adding a vertex of weight 4 adds three points to the R-diagram of $S$;
		by counting the number of points in the R-diagram, it follows that the center of $L$ (which consists of two rows, each with one vertex), starts immediately to the right of the center of $S \subset L$;
		but on both sides of the center of $L$ we have two vertices of weight strictly larger than 2, while the center of $S$ has weight 2, so we obtain a contradiction.
		 
		\item $L=C^{1}_{s',t'}$; here $k=4$, and the new vertex has weight $5$.
		Recall that in this family $s'>0$, $t'\geq 0$.
		The final $5$ added to $S$ needs to be the vertex of weight $t'+2$ in $L$, and thus $t'=3$.
		We have that $S$ is of the form $[s'+2,3,2,2,2,4,2^{[s']}]$;
		by counting points in the R-diagram, the center of $S$ must be the fourth vertex, which is a 2 surrounded by two 2s, but this is impossible since $\bb$ and $\cc$ are complementary, and both have length at least 2.
		 
		\item $L=C^{2}_{s',t'}$; here $k=4$, and the new vertex has weight $5$.
		The parameters satisfy $s',t'\geq 0$.
		Since the right-most non-2 vertex has weight 4, similarly as in the previous case, we need $t'+2=5$;
		this implies $t'=3$, and a point count in the R-diagram shows that the center of $S$ is the central 2 in a chain of three 2s, leading to the same contradiction.

		\item $L=C^{3}_{s',t'}$; we have $k=4$ and the new vertex has weight 5.
		The parameters satisfy $s',t'\geq 0$
		With the same argument as in the previous case, we obtain that $t'=2$ and after deleting the added 5 from $L$ we obtain the string $[2,s'+3,3,2,2,3,2^{[s']}]$.
		This shows that $s'=0$, since $S$ cannot start and end with a 2 (the length of $S$ is at least 6, so $S$ cannot be $A_{[2]}$), and it is immediate to see that $[2,3,3,2,2,3,2]$ is not of type $A$.
		
		\item $L=D^{1}_{s',t'}$; now $k=5$ and the new vertex has weight 6.
		The parameter vary as $s',t'\geq 0$.
		Removing a final vertex from $L$ we obtain a string with no final 2s, so there are no examples.
		 
		\item $L=D^{2}_{s',t'}$; again, $k=5$ and the new vertex has weight $6$.
		also, $s',t'\geq 0$.
		If $t'+3\neq 6$, then the vertex with weight $k+1$ in $L$ is that labelled by $s'+2$, and removing it we get a string without final 2s, which cannot be of type $A$;
		so, $t' = 3$.
		After deleting the vertex with weight $t'+3$, we obtain the string $[2^{[s']},4,2,2,2,3,s'+2]$.
		As above, counting points in the R-diagram, the center of $S$ is the fourth vertex from the right, which is surrounded by two 2s, leading to a contradiction.
		
		\item $L=D^{3}_{\bb'}$; here $k=5$ and we add a vertex of weight $6$.
		Without loss of generality, the new vertex can be added by linking it to $c_\ell$;
		in $L$, we have $J(\cc) + 5$ points to the right of the center of $S$, and $J(\bb)$ points to its left.
		The four points comprising the center of $L$ must start immediately after the center of $S$ (so that we have $J(\bb)+1$ points to their left, and $J(\cc)+1$ to their right), which means that $c_1 = 5$ corresponds to the center of $L$.
		Now, if $\cc = [5]$, we get the strings $L = [6,S] = [6,5,2^{[5]}]$, corresponding to the fraction
		\[
		\frac{q}{p} = \frac{144}{25} = \frac{4T_2^2}{4T_2T_1+1} = \frac{4T_2^2}{S_2^2}.
		\]
		If $\cc$ has length more than 1, $\bb$ starts with exactly four 2s, and then with a number larger than 2.
		This implies, in turn, that, $\bb'$ starts with exactly five 2s, and therefore $\cc'$ starts with a 6.
		Now an easy induction shows that the only strings we obtain are $L = [6^{[s+1]},5,2,(2^{[3]},3)^{[s]},2^{[4]}]$, i.e. ${\bf b}' = [6^{[s+1]}]$ and $[{\bf b'}]^- = T_{s+3}/T_{s+2}$; the corresponding fractions are
		\[
		\frac{q}{p} = \frac{4T_{s+3}^2}{4T_{s+3}T_{s+2}+1} = \frac{4T_{s+3}^2}{S_{s+3}^2}.
		\]
		\item $L=E_{\bb'}$; here $k=6$ and we add a vertex of weight $7$.
		By symmetry, we can assume that we add the new vertex attaching it to $c_\ell$.
		Adding this vertex, we have added six points to the R-diagram of $S$.
		By counting points, we find out that there are two different possibilities: either $c_{1}=2$, or $c_1>2$.
		
		If $c_1 = 2$, then the center of $L$ belongs to the row in the R-diagram for $S$ corresponding $c_{2}$, and it necessarily splits as $b_1' + c_1' = 2+c_{1}'$, i.e. $b_1'=2$.
		We then obtain also that $b'_{2}=c_{1}=2$, $b_{3}'=2$ and by duality we also obtain $c_{1}'>2$.
		Since $b_{1}=2$, we know that $c_{1}>2$ which implies in turn that $c_{1}'=5$, so that $b_1'+c'_1 = 7$.
		From here an easy induction shows that the string $L$ we are looking at is either $[7,2,2,2]$, or  $[7^{[s+1]},2,2,(3,2^{[4]})^{[s]},2]$.
		Since this corresponds to $\bb' = [7^{[s+1]},5]$, we obtain the fractions
		\[
		\frac{q}{p} = \frac{F_{4s+5}^2}{F_{4s+5}F_{4s+1} - 1} = \frac{F_{4s+5}^2}{F_{4s+3}^2}.
		\]
		 
		In the case $c_{1}>2$, notice that because of the symmetry in the R-diagram we immediately conclude that $c_{1}=5$.
		Moreover, it follows that under the identification the center of $S$ is identified with the vertex of weight $b'_{2}$ in $L$.
		From here we infer that $b_{1}'+c_{1}'=5=3+2$.
		The first sequence verifying the desired properties is for $L$ to be $[7,5,2^{[5]}]$ and this is the first one in the family $[7^{[s+1]},5,2,(2^{[4]},3)^{[s]},2^{[5]}]$.
		Since this corresponds to $\bb' = [7^{[s+1]},2]$ we obtain
		\[
		\frac{q}{p} = \frac{F_{4s+3}^2}{F_{4s+3}F_{4s-1} - 1} = \frac{F_{4s+3}^2}{F_{4s+1}^2}.
		\]
	\end{itemize}

\item $S=B^{1}_{s,t}$, where the parameters $s$ and $t$ are both positive.

	\begin{itemize}

		\item $L=A_{\bb'}$; here $k=1$, so we add a vertex of weight 2.
	Regardless of which end we add the vertex to, though, we get the contradiction of the long string starting and ending with a 2.
	
		
		\item $L=B^{3}_{\bb'}$; here $k=2$, and we add a vertex of weight 3.
		If $\bb'=[2]$, then $L = [3,2,2,3]$, and $S=[2,2,3]$ which is not of type $B^1$.
		So we can assume $\bb' \neq [2]$, which implies that, even after removing a vertex of weight 3 from $L$, the string has two consecutive internal $2$s surrounded by two vertices of weight larger than 2; such a string does not appear in $S$, so there are no examples.
		


	
	
	
		\item $L=D_{\bb'}^{3}$; here $k=4$ and we add a vertex of weight 5.
		The center of $L$ is an \emph{internal} 5 linked to (at least) a 2; after removing the vertex with weight 5, this configuration survives; however, in $S$ there is no vertex of weight 5 linked to a 2.
		
		\item $L=E_{\bb'}$; here $k=5$ and we add a vertex of weight 6.
		If $\cc' = [2]$, then $L = [4]$, so this gives no examples;
		if $\cc' = [c'_1]$, with $c'_1 > 2$, then the only way we can obtain a vertex of weight 6 in $L$ is if $c'_1 = 4$, from which $L = [6,2,2,2]$, which, again, gives no examples of type $B^1$.
		By symmetry, we can now assume $h',\ell' > 1$ and that $c'_{\ell'} = 6$;
		by duality, $b'_{h'}=...=b'_{h'-4}=2$ and $b'_{h'-5}>2$.
		So $S$ ends with a string of exactly five 2s; by symmetry of $S$, we can assume $t = 5$, so $L = [2^{[5]},3,s+2,7,3,2^{[s]},6]$; but, by duality, since $s>0$, $b'_{h'-5}$ should be strictly larger than 3, a contradiction.
	\end{itemize}
\item $S=B_{s,t}^{2}$, where the parameters $s,t$ satisfy $s\geq 0$ and $t>0$.

	\begin{itemize}
		\item $L=A_{\bb'}$; here $k=1$, so we add a vertex of weight 2.
		Since $S$ has length 5, $\bb \neq [2]$, so $L$ ends with 2 on one side, and with something larger on the other.
		Since $t>0$, this means that the extra vertex has to be added on the left of $S$, and that $s = 0$;
		therefore, $L$ ends with a string 2s of length of $t+1 \ge 2$, and with a 3 on the other side; but this can only happen if $\cc' = [3]$, which is easily seen not be the case.
		
		\item $L=B^{3}_{\bb'}$; here $k=2$, so we add a vertex of weight 3.
		Unless $\bb' = [2]$, the two internal vertices of weight 2 surrounded by two vertices with larger weight survive even after removing the extra vertex;
		but in $S$ there is no such configuration.
		The case $\bb' = [2]$ is excluded by direct inspection.
		\item $L=D^{3}_{\bb'}$; here $k=4$ and we add a vertex of weight 5.
		Note that $\bb' \neq [2]$, since otherwise $L$ would not end in a 5;
		in particular, the central 5 in $L$ has a 2 only on one side, and it follows that $t+2=5$;
		Adding the 5 to the left of $S$ yields $[5,2,2,2,3+s,2,5,3,2^{[s]}]$, and this is never of the form $D^3_{[\bb']}$ (since, for instance, the dual string to $[3,2^{[s]}]$ is $[2,s+2]$).
		However, if we add the 5 to the right; indeed, we obtain the family $S = [2,2,2,3+s,2,5,3,2^{[s]}]$ and $L = [5,2^{[s]},3,5,2,3+s,2,2,2] = D^3_{\bb'_s}$ with $\bb'_s = [3,2^{[s]},5]$.
		We compute
		\[
		\frac{x}{y^*} = [5,2^{[s]},3] = \big[5,{\textstyle\frac{2s+3}{2s+1}}\big]^- = \frac{8s+14}{2s+3},
		\]
		so that
		\[
		\frac{q}{p} = [L]^- = \frac{4(8s+14)^2}{4(8s+14)(2s+3)+1} = \frac{(16s+28)^2}{(8s+13)^2}
		\]
		
		\item $L=E_{\bb'}$; now $k=5$, so we add a vertex of weight 6.
		If $\ell' = 1$, in order to find a final 6 in $L$, we need $c_1' = 4$, $\bb' = [2,2,2]$, and $L = [6,2,2]$, but then removing the 6-vertex we obtain a string that is not of type $B^2$.
		Now, by symmetry, we can assume $\ell' > 1$ and $c'_{\ell'}=6$;
		this implies $b'_{h'}=\dots=b'_{h'-4}=2$ and $b_{h'-5}>2$.
		We have two possibilities for $S$, by looking at the two chains of final 2s: either $t=5$ and the new vertex is attached on the right-hand side of $S$, or $s=5$ and the new vertex is attached on the left-hand side of $S$.
		In the first case, $S = [2,2,2,2,2,3+s,2,7,3,2^{[s]}]$;
		as argued above, the only possibility for the vertex of weight $b'_{1}+c'_{1}$ in $L$ is $b'_{1}+c'_{1}=7$;
		for there to be a two complementary legs, we need $c'_{1}=2$ and $b_{1}=5$, but this is easily checked not to yield any example.
		In the second case, then $L = [6,2^{[t]},8,2,t+2,3,2^{[5]}]$;
		since the first non-2 element on the right is a 3, the first sequence of 2s to the right must have length 0, i.e. $t=0$ which contradicts the assumption that $t$ be positive.
	\end{itemize}
\item $S=B^{3}_\bb$ ending with two complementary legs.

	\begin{itemize}
		\item $L=A_{\bb'}$; $k=1$, and the new vertex has weight 2.
		Since $S$ has length at least 4, $L$ has length at least 5, and $\bb' \neq [2]$;
		it follows that exactly one of the two ends of $L$ is a 2.
		We will examine the case where $\cc = [c_1]$ first;
		in this case, $S = [2^{[c_1-2]},3,2,2,c_1+1]$, and the extra vertex can only be added on the left-hand side;
		so the only possibility is $L = [2^{[c_1-1]},3,2,2,c_1+1] = A_{[3,2^{[c_1-1]}]}$;
		since $[2^{[s]},3]^- = \frac{2s+3}{2s+1}$, we then have a family
		\[
		\frac{q}{p} =  \frac{(2s+3)^2}{(2s+3)(2s+1)+1} = \frac{(2s+3)^2}{(2s+2)^2}.
		\]
		Having ruled out the case $\cc = [c_1]$ (and, by symmetry, the case $\bb = [b_1]$, too), we can assume $h, \ell > 1$;
		again, by symmetry assume $c_\ell > 2$, so that $b_h = \dots = b_{h-c_\ell+3} = 2$ and $b_{h-c_\ell+2} > 2$;
		note that $S$ has at least two vertices of weight larger than 2 (namely, $b_1 + 1$ and $c_1 + 1$), so also $h', \ell' > 1$;
		we can only add the new vertex on the side of $b_h$, but by doing so $\bb'$ ends with a string of 2s of length $c_1-1 = c_1'-1$, which contradicts the fact that $\cc'$ is dual to $\bb'$, and both have length larger than 1.

		\item $L=B_{s',t'}^{1}$; $k=2$, so we add a vertex of weight 3;
		but $L$ starts and ends with 2s, so there are no cases in this family.
		
		\item $L=B^{2}_{s',t'}$; $k=2$, so we add a vertex of weight 3;
		in $L$ there are no two consecutive 2's linked to vertices of weight greater than 2, so there are no cases here, either.
		
		\item $L=B^{3}_{\bb'}$; $k=2$, so the new vertex has weight 3;
		by symmetry, we assume that it is added on the side of $c_\ell$.
		We want to look for the center of $L$:
		since we added two points in the R-diagram of S, the centre of L must move one line down;
		but this is not possible because the next line of the diagram has more than one point in it (since the corresponding vertex had weight $c_1+1 > 2$).

		\item $L=C^{1}_{s',t'}$; $k=3$, so the new vertex has weight 4;
		since there is an internal string of two 2s in $S$, surrounded by non-2 vertices, this configuration must also appear in $L$, which readily implies $t'=2$;
		moreover, since $s'>0$, the extra vertex is the one with weight $t'+2 = 4$;
		thus $L = [4,s'+2,3,2,2,4, 2^{[s']}]$, and $S = [s'+2,3,2,2,4,2^{[s']}] = B^3_{[2,s'+2]}$.
		We have $\frac{x}{y^*} = [s'+2,2]^- = \frac{2s'+3}{2}$, so that
		\[
		[S]^- =  \frac{4(2s'+3)^2}{4(2s'+3)\cdot 2-1} = \frac{(4s'+6)^2}{16s'+23},
		\]
		and
		\[
		\frac{q}{p} = [4,S]^- = \left[4,\frac{(4s'+6)^2}{16s'+23}\right] = \frac{(8s'+11)^2}{(4s'+6)^2}
		\]
		
		\item $L=C^{2}_{s',t'}$; $k=3$ and the new vertex has weight 4;
		looking at the central string of 2s in $S$ as in the preceding case, we obtain $t'=2$;
		then $L$ is of the form $[4,2,s'+3,2,2,4, 2^{[s']}]$.
		Deleting the vertex of weight 4 we conclude that $s'=0$, since in $S$ we cannot have two final weight-2 vertices.
		Thus, $L$ is of the form $[4,2,3,2,2,4]$, and the corresponding short string is $S=[2,3,2,2,4] = B^3_{[2,2]}$. It follows that we have a single solution in this family, corresponding to $\frac{q}{p} = \frac{11^{2}}{6^{2}}$, which would be the case $s'=0$ in the previous family.
		
		\item $L=C^{3}_{s',t'}$; $k=3$ and the new vertex has weight 4;
		as in the two preceding cases, we deduce $t'=2$; but then $L$ starts with a 5 and ends with either a 2 or a 3, while it should have a vertex of weight 4 on (at least) one end.
		
		\item $L=D^{1}_{s',t'}$; $k=4$ and the new vertex has weight 5;
		removing either final vertex in $L$ we obtain a chain that ends with a 3 and ends with a vertex of weight at least 3; this is only possible if $S = [3,2,2,3]$, but a direct inspection shows that this is never the case.
		
		\item $L=D^{2}_{s',t'}$; $k=4$ and the new vertex has weight 5;
		the presence of the two internal vertices of weight 2 in $S$ implies that either $t'=2$ or $s'=2$. 
		If $s'=2$, then we also have $t'=2$, because $L$ has a final weight-5 vertex.
		We conclude $t'=2$ yielding for $L = [5,2^{[s']},4,2,2,3,s'+2]$.
		To obtain $S$, we need to remove either the left-most vertex or the vertex of weight $s'+2$.
		In the latter case, $s'=3$ and $S = [5,2,2,2,4,2,2,3]$, which is not of type $B^3$;
		If, on the other hand, we remove the left-most vertex, we obtain $S = [2^{[s']},4,2,2,3,s'+2] = B^3_{3,2^{[s']}}$.
		Since $\frac{x}{y^*} = [2^{[s']},3] = \frac{2s'+3}{2s'+1}$, we have
		\[
		[S]^- = \frac{4(2s'+3)^2}{4(2s'+3)(2s'+1)-1} = \frac{(4s'+6)^2}{(4s'+4)^2-5},
		\]
		and then
		\[
		\frac{q}{p}=[5,S]^- = \left[5,\frac{(4s'+6)^2}{(4s'+4)^2-5}\right]=\frac{(8s'+13)^{2}}{(4s'+6)^{2}}.
		\]
		
		\item $L=D^{3}_{\bb'}$; again, $k=4$, so we add a vertex of weight 5.
		Suppose we attach this vertex to $c_\ell$;
		we are adding four points in the R-diagram of $S$;
		the four central vertices of the R-diagram of $L$, however, overlap with the center of $S$, but this is impossible, since the center of $L$ is a contained in a single row.
		
		\item $L=E_{\bb'}$; here $k=5$, and we add a vertex of weight 6;
		suppose that we link this vertex to $c_{\ell}$ to obtain $L$ from $S$.
		Since we add four points to the R-diagram of $S$, the centre of $L$ is the second point in the row corresponding to $c_1+1$ (it is the unique point that divides the R-diagram of $L$ into two parts of equal size).
		From this we deduce that $b'_1 = \dots = b'_3 = 2$, and $b'_4 = b_1+1$;
		this, in turn, implies $c'_1 = 5$, and $c_1+1 = b'_1+c'_1 = 7$, so that $b'_4 = b_1 + 1 = 3$, $b'_5 = \dots b'_7 = b_2 = \dots = b_4 = 2$, and therefore $c_2' = 6$.
		The usual inductive argument shows that $L = [6^{[s]},7,2,2,(3,2^{[3]})^{[s]},2] = E_{[5,6^{[s]}]}$.
		Since $\frac{x}{y*} = [6^{[s]},5]^- = \frac{S_{s+1}}{S_s}$, we have
		\[
		\frac{q}{p} = \frac{S_{s+1}^2}{S_{s+1}S_s - 1} = \frac{S_{s+1}^2}{4T_s^2}.
		\]
	\end{itemize}
\item $S=C_{s,t}^{1}$ with $s>0$ and $t\geq 0$. Here we immediately rule out the case when $L$ is of type $A$, since this would force $k=0$.

	\begin{itemize}
		
		\item $L=B^{3}_{\bb'}$; $k=1$, and we add a vertex of weight 2.
		We can immediately rule out the case when $h'=1$ or $\ell' = 1$, as done several times above;
		it follows that the center of $L$ corresponds to the substring $2^{[t]}$ in $S$, so that $t=2$.
		Moreover, by duality, and since $h',\ell' > 1$, the new vertex must be added on the right of $S$;
		since $t+2 = 4$, $L$ must end with exactly two 2s, so that $s+1 = 2$, hence $s=1$;
		but then $L = [4,4,3,2,2,4,2,2]$, which is not of type $B^3$.


		\item $L=C^{3}_{s',t'}$; $k=2$, so we add a vertex of weight 3.
		Since there is an internal 4 in $S$, we deduce that $s'=1$;
		this in turn implies $t'=0$, $L$ must end start or end with a 3.
		Then $L = [3,2,4,3,3,2]$, and $S = [2,4,3,3,2]$, which is $C^1_{1,0}$ read backwards.
		Then
		\[
		\frac{q}{p}=\frac{14^{2}}{9^{2}}.
		\]
		
		\item $L=D^{3}_{\bb'}$; now $k=3$ and the new vertex has weight 4.
		It is clear that the only possibility for the central vertex of weight $5$ in $L$ is for it to correspond to the vertex of weight $s+2$ in in $S$, forcing $s=3$.
		By duality, $t=0$; since the dual of $[3,4,2^{[s]}]$, i.e. the string to the right of the central 5, is $[2,3,2,s+2]$, the duality structure is not respected.

		\item $L=E_{\bb'}$; $k=4$, and we add a vertex of weight 5.
		Since we can immediately rule out the case where $\bb'$ or $\cc'$ has length 1, there is an internal vertex of weight $c'_{1}+b'_{1} \ge 5$ in $L$.
		This cannot correspond to the vertex of weight $t+2$ in $S$, since otherwise the only possibilities for $\bb$ are $\bb = [2,5]$ and $[t,5]$, neither of which yields the correct structure for $S$.
		So the vertex of weight $b'_1+c'_1$ corresponds to the vertex of weight $s+2$ in $S$.
		Just as above, the structure to the sides of this vertex in $S$ does not correspond to complementary legs, as the right-hand side is too long to match the left-hand side.
	\end{itemize}
\item $S=C^{2}_{s,t}$ with $s,t\geq 0$.
	\begin{itemize}
		\item $L=B^{3}_{\bb'}$; $k=1$, so the new vertex has weight 2.
		Since we remove a 2 from $L$, the sequence of two internal 2s survive, and hence $t=2$;
		therefore, $S = [4,2,s+3,2,2,4,2^{[s]}]$.
		Looking at the right side of the string of 2s, $\cc'$ can only be $[3,2^{[s]}]$ or $[3,2^{[s+1]}]$ (depending on whether we add the new vertex attaching it to $c'_{\ell'}$ or not), but in neither case is the string on the left dual to it.
		\item $L=D^{3}_{\bb'}$; now $k=3$, so we add a vertex of weight 4.
		The central vertex of weight 5 in $L$ remains internal in $S$, unless $\bb' = [4]$ or $\cc' = [4]$;
		these cases are easily excluded, so $s+3=5$, and then $s=2$.
		Moreover, since to its side we have two complementary legs, $t=0$ and then $L = [4,2,2,5,4,2,2] = D^{3}_{[2,2,4]}$.
		The associated fraction is
		\[
		\frac{q}{p}=\frac{20^{2}}{11^{2}}.
		\]
		
		\item $L=E_{\bb'}$; now $k=4$, so we add a vertex of weight 5.
		Arguing as we did in the previous case, the vertex of weight $b'_1 + c'_1$ in $L$ remains central in $S$ after removing the extremal, weight-5 vertex.
		So the center of $L$ is one of the points in the row corresponding to the $s+2$-vertex;
		call $x$ the second-to-last point in that row;
		$x$ has $s+t+4$ points to its right, and $s+t+2$ points to its left, so the new vertex must be added to the left of $S$.
		But in $L$ there are $s+t+6$ points to the left of the $x$ of $S$, which means that the center of $L$ needs to be the point immediately to the left of $x$;
		this in turn implies that $s+3 = 4$, so $s=1$;
		but then $L$ starts with a 5 and ends with a single $2$, which contradicts the duality structure.
	\end{itemize}	
\item $S=C^{3}_{s,t}$ with $s,t\geq 0$.
	\begin{itemize}
		 \item $L=B^{3}_{\bb'}$; now $k=1$, so we add a vertex of weight 2.
		 In $L$, even after removing the added 2, we have a configuration of two consecutive internal 2s linked to larger weight vertices.
		 It follows that $t=2$, however, since in $S$ these two 2s are linked to 3s, the configuration is not possible unless $\bb' = \cc' = [2]$, i.e. $S = [3,2,2,3]$, $L = [2,3,2,2,3]$, which is not of type $C^3$.
		 
		 \item $L=D^{3}_{\bb'}$; now $k=3$, and we add a vertex of weight 4.
		 If the 5 in $L$ became final in $S$, we would not have two complementary legs to its sides;
		 it follows that we need to identify the 5 in $L$ with the vertex of weight $s+3$ in $S$, thus $s=2$.
		 The added vertex to $S$ needs to be linked to the vertex with weight $t+3$, since we need a final 2. Then $L = [4,t+3,2,5,3,2^{[t]},3,2,2] = D^3_{[2,t+3,4]}$.
		 Since
		 \[
		 \frac{x}{y^*} = [4,t+3,2]^- = \frac{8t+18}{2t+5},
		 \]
		 we have
		\[
		\frac{q}{p} = \frac{4(8t+18)^2}{4(8t+18)(2t+5)+1} =\frac{(16t+36)^{2}}{(8t+19)^{2}}.
		\]
				
		\item $L=E_{\bb'}$; $k=4$, and we add a vertex of weight 5.
		As in the preceding case, this vertex need to be linked to the vertex with weight $t+3$ in $S$, because of the complementary legs structure;
		it follows that $s=3$ and we have $S = [t+3,2,6,3,2^{[t]},3,2,2,2]$.
		Moreover, the vertex in $L$ of weight $b_{1}'+c_{1}'$ has to be the vertex in $S$ of weight $s+3=6$;
		when comparing it to the string $L$ there are two possibilities, either the $6$ splits as $2+4=b_{1}'+c_{1}'$ or as $4+2=b_{1}'+c_{1}'$.
		In neither case do we have two complementary legs, so we have no case in this family.
	\end{itemize}
\item $S=D^{1}_{s,t}$ with $s,t\geq 0$. Here, and whenever $S = D$, we can already rule out the cases $L = A, B$, because of the value of $k$.
	\begin{itemize}
		\item $L=D^{3}_{\bb'}$; now $k=2$, and we add a vertex of weight 3.
		In $S$ there is no internal vertex of weight 5, so necessarily $\bb'$ has length 1, and we need to remove the vertex labelled with $b_1$;
		then $\bb'=[3]$, and $S = [5,2,2]$, which is not of type $D^1$.

		\item $L=E_{\bb'}$; $k=3$, so we add a vertex of weight 4.
		To obtain $S$ we remove a 4 from $L$; then the other end in $L$ is a 2, but $S$ has two final vertices of weight greater than 2.
	\end{itemize}
\item $S=D^{2}_{s,t}$ with $s,t\geq 0$.
	\begin{itemize}
		\item $L=C^{3}_{s',t'}$; $k=1$, so we add a vertex of weight 2.
		There is an interior 4 in $S$, which henceforth is interior in $L$;
		the only chance is that this vertex is identified with the vertex of weight $s'+3$, so that $s'=1$.
		This implies that the extra vertex in $L$ is the weight-2, right-most vertex in the string $[t'+3,2,4,3,2^{[t']},3,2]$.
		Since in $S$ there are not two 3s separated by a 2-chain, we also need $t'=0$;
		hence the only possibility is $L = [2,3,3,4,2,3]$, which is $C^3_{1,0}$ read backwards, and $S = [3,3,4,2,3] = D^2_{1,0}$ read backwards.
		The associated fraction is
		\[
		\frac{p}{q}=\frac{14^{2}}{11^{2}}.
		\]

		\item $L=D^{3}_{\bb'}$; $k=2$, and we add a vertex of weight 3.
		If neither $\bb'$ nor $\cc'$ has length 1, the string $S$ has an internal 5 and this is not compatible with $S$.
		However, if $\bb' = [b'_1]$ and we can remove a 3, we must have $b_1 = 3$, which is not compatible with the structure of $S$.
		
		\item $L=E_{\bb'}$; now $k=3$, so we add a vertex of weight 4.
		Since $\bb \neq [2]$ by direct inspection, the vertex of weight $b_{1}'+c_{1}'$ stays internal in $S$, and it is and at least 5, unless $c_{2}'=4$ and the length of $c$ is 2.
		In these configurations the set $L$ is two small to be identified with $S$.
	\end{itemize}

\item $S = D^3_{\bb}$.

	\begin{itemize}
		\item $L = C^1_{s',t'}$; now $k=1$, so we add a vertex of weight 2.
		The vertex of weight 5 in $S$ is inner in $L$, so $s'=3$.
		Since the vertex to its right is a $3$, then $t'+2 = 2$, i.e. $t'=0$, and ${\bf c} = [2]$ but this is clearly a contradiction, since the dual of $[2]$ is $[2]$.

		\item $L = C^2_{s',t'}$; again $k=1$, and we add a vertex of weight 2.
		As above, the vertex $5$ in $S$ is inner in $L$, so $s'=2$.
		We need to remove a 2 from $L$;
		if we remove it from the left, $t'=0$, and the two remaining strings to the left and right of $5$ are not dual.
		So we have to remove it from the right. The two strings $\bf b$ and $\bf c$ are ${\bf b} = [2^{[t']}, 4,2,2]$, ${\bf c} = [2,t'+2]$, which forces $t'=0$ (otherwise they both start with a 2), but $[4,2,2]$ and $[2,2]$ are not dual.

		\item $L = C^3_{s',t'}$; $k=1$, and the new vertex has weight 2.
		As in the previous cases, $s'=2$, and the vertex we need to remove is the one far at the right.
		So, $S = [t'+3, 2,5,3,2^{[t']},3,2]$, so ${\bf b} = [3,2^{[t']},3,2]$, ${\bf c} = [2,t'+3]$. Since the dual of $\bf c$ is $[3,2^{[t'+1]}]$, this is never equal to $\bf b$.

		\item $L = D^1_{s',t'}$; $k = 2$, and we add a vertex of weight 3.
		The long string does not have an inner $5$, so this case is empty.

		\item $L = D^2_{s',t'}$; $k = 2$, and we add a vertex of weight 3.
		As above, here there are no inner $5$s.

		\item $L = D^3_{\bb'}$; again $k=2$, and the new vertex has weight 3.
		By symmetry, we can assume that we attach it to the vertex of weight $c_\ell$ in $S$.
		In the R-diagram of $L$, then, we have two more points below of the center of $S$ than above it;
		this means that the central four points in the R-diagram of $L$ are the last three points of the center of $S$, and the first point of the line of $c_1$;
		but the central four points of the R-diagram of $L$ should occupy a single row.

		\item $L = E_{\bb'}$; now $k=3$, and we add a vertex of weight 4;
		by symmetry, we can suppose that it it attached to the vertex of weight $c_\ell$.
		Then, in the R-diagram of $L$, there are three more points below the center of $S$ than above it;
		it follows that the center of $L$ is the right-most point of the center of $L$, which however is the final vertex in its row, which is impossible.
	\end{itemize}

\item $S = E^3_{\bb}$.
Here we rule out all cases when $L$ is of type $A$, $B$, or $C$, since the corresponding value of $k$ would be non-positive.

	\begin{itemize}
		\item $L = D^1_{s',t'}$; $k=1$, so we add a vertex of weight 2;
		but $L$ does not have any final vertex of weight 2.

		\item $L = D^2_{s',t'}$; again, $k=1$, and the new vertex has weight 2.
		The only final vertex that can have weight 2 in $L$ is the one labeled with $s'+2$, so $s'=0$;
		however, removing that vertex we obtain a string of length at least 2 that neither starts nor ends with a 2, which contradicts the assumption that $S$ be of type $E^3$.

		\item $L = D^3_{\bb'}$; $k=1$, and the new vertex has weight 2.
		Without loss of generality, we can assume that the new vertex is attached to $c_\ell$;
		then the center of $S$ has one more point to its right than to its left, which means that the center of $L$ comprises the center of $S$, the point to its left, and the next two points to its right.
		This, in turn, implies that $b_1=2$ and $c_1 = 3$.
		Now, since $c_1 = 3$, $b_1' = b_2 > 2$, which in turn forces $c_1'=2$.
		A straightforward induction yields the string $L = [s+1,5,2^{[s]}] = D^3_{[s+1]}$.
		The associated fraction is
		\[
		\frac{q}{p}=\frac{(2s+2)^{2}}{(2s+1)^{2}}.
		\]

		\item $L = E_{\bb'}$; $k=2$, so we add a vertex of weight 3.
		Again, by symmetry, we can assume we attach it to the right of $S$.
		In terms of R-diagrams, the center of $L$ is the point immediately to the right of the center of $S$, hence the central vertex has weight $b_1 + c_1 = 5 = b_1'+c_1'$.
		More precisely, $(b_1, c_1) = (2, 3)$ and $(b'_1, c'_1) = (3,2)$.
		Using the same argument as in the preceding case, we obtain $L = [3,3^{[s-1]},5,3^{[s-1]},2] = E_{[2,3^{[s]}]}$;
		hence
		\[
		\frac{x}{y^*} = [3^{[s]},2]^- = \frac{F_{2s+3}}{F_{2s+1}},
		\]
		from which
		\[
		\frac{q}{p} = \frac{F_{2s+3}^2}{F_{2s+3}F_{2s+1}-1} = \frac{F_{2s+3}^2}{F_{2s+2}^2}.\qedhere
		\]
	\end{itemize}
\end{itemize}
\end{proof}

\subsection{The lens space cases}

We now prove Theorem~\ref{lensspacesbounding}. The argument is essentially contained in the proof of Theorem~\ref{reduciblebounding}.

\begin{proof}[Proof of Theorem~\ref{lensspacesbounding}]
As noted at the beginning of the section, $S^3_{pq\pm 1}(T_{p,q}) = \pm L(pq\pm1,p^2)$, and by looking at continued fraction expansions we see that this corresponds to a long graph (of any type) beginning with a vertex of weight $k+1$;
removing this vertex we get a short graph of type $E_\bb$ (when the sign is $-$) or of type $A_\bb$ (when the sign is $+$). These situations have been already studied above, and here there is only a slight twist: the continued fraction expansions corresponding to the long legs now compute $\frac{pq\pm 1}{p^2}$, while in the previous analysis they computed $\frac{q^2}{p^2}$; this means that we obtain the following families.

We start with the case of $(pq+1)$-surgery, when the short leg is of type $E$, and then either:
\[
\frac{pq+1}{p^2} = \frac{(2s+2)^2}{(2s+1)^2} \Longrightarrow \frac{q}{p} = \frac{2s+3}{2s+1},
\]
or
\[
\frac{pq+1}{p^2} = \frac{F_{2s+3}^2}{F_{2s+2}^2} \Longrightarrow \frac{q}{p} = \frac{F_{2s+4}}{F_{2s+2}}.
\]
In the case of $(pq-1)$-surgery, when the short leg is of type $A$, then either:
\[
\frac{pq-1}{p^2} = \frac{F_{2s+4}^2}{F_{2s+3}^2} \Longrightarrow \frac{q}{p} = \frac{F_{2s+5}}{F_{2s+3}},
\]
or
\[
\frac{pq-1}{p^2} =  \frac{4T_{s+2}^2}{S_{s+2}^2}\Longrightarrow \frac{q}{p} = \frac{S_{s+3}}{S_{s+2}},
\]
or
\[
\frac{pq-1}{p^2} =  \frac{F_{2s+3}^2}{F_{2s+1}^2} \Longrightarrow \frac{q}{p} = \frac{F_{2s+5}}{F_{2s+1}}.\qedhere
\]
\end{proof}
\bibliographystyle{amsplain}
\bibliography{newaccpoints}

\end{document}